\newtheorem{definition}{Definition}
\newtheorem{theorem}{Theorem}
\newtheorem{proposition}{Proposition}
\newtheorem{corollary}{Corollary}
\newtheorem{lemma}{Lemma}
\title{Notes on symmetric spaces}
\author{Jonathan Holland}
\author{Bogdan Ion}
\newcommand{\ad}{\operatorname{ad}}
\newcommand{\Ad}{\operatorname{Ad}}
\newcommand{\Int}{\operatorname{Int}}
\newcommand{\tr}{\operatorname{tr}}
\newcommand{\im}{\operatorname{im}}
\newcommand{\inj}{\operatorname{inj}}
\newcommand{\End}{\operatorname{End}}
\newcommand{\Lie}{\operatorname{Lie}}
\newcommand{\Exp}{\operatorname{Exp}}
\newcommand{\Stab}{\operatorname{Stab}}
\renewcommand{\div}{\operatorname{div}}
\newcommand{\onebar}{\rule[2.5pt]{12pt}{.5pt}}
\newcommand{\twobar}{\rule[3.5pt]{12pt}{.5pt}\hspace{-12pt}\rule[1.5pt]{12pt}{.5pt}}
\newcommand{\threebar}{\rule[0.5pt]{12pt}{.5pt}\hspace{-12pt}\rule[2.5pt]{12pt}{.5pt}\hspace{-12pt}\rule[4.5pt]{12pt}{.5pt}}
\newcommand{\leftbars}{\twobar\hspace{-8pt}\langle\hspace{3pt}}
\newcommand{\rightbars}{\twobar\hspace{-7pt}\rangle\hspace{3pt}}
\newcommand{\rightthreebars}{\threebar\hspace{-7pt}\rangle\hspace{3pt}}
\newcommand{\commentx}[1]{\textcolor{red}{#1}}
\renewcommand{\commentx}[1]{}
\newcommand{\osarray}[2]{\overset{\displaystyle{\overset{\displaystyle{#1}}{#2}}}{\tfrac{}{}}}
\newcommand{\dynkinexample}{
$$
\def\objectstyle{\displaystyle}
\xymatrix{
&&&&&\bullet\\
\osarray{a}{\bullet}\ar@{-}[r]&\bullet\ar@{-}[r]&\bullet\ar@{..}[r]&\bullet\ar@{-}[r]&\bullet\ar@{-}[ur]\ar@{-}[dr]&\\
&&&&&\bullet
}
$$
}
\begin{document}
\maketitle 
\tableofcontents
\nocite{*}

\chapter*{Preface}

These are notes prepared from a graduate lecture course by the second author on symmetric spaces at the University of Pittsburgh during the Fall of 2010.  We make no claim to originality, either in the nature of the results or in exposition.  In both capacities, we owe a large debt to the notes of Borel \cite{Borel}, as well as to Kobayashi and Nomizu \cite{KN1}, \cite{KN2} for the earlier chapters, and Helgason \cite{Helgason} for the later chapters.

Several changes were made during the preparation of these notes from the material as it was presented in the course.  First, many proofs have been corrected or streamlined.  Whereever possible, results have been more carefully stated.  Some exercises have been added to the text.  In addition, we have tried to provide references to further reading.  We have also devoted an entire chapter to a naive treatment of examples in an effort to illustrate some of the general features of symmetric spaces.

Most of the material that we have drawn upon is quite standard, but some of it is difficult to find.  The earlier chapters draw primarily from Borel \cite{Borel} and Kobayashi and Nomizu \cite{KN1}.  Chapter \ref{CompactSemisimple} draws from Borel \cite{Borel} and Humphreys \cite{HumphreysReflectionGroups}, although there are many more detailed accounts of compact Lie groups; see Knapp \cite{Knapp} for a thorough treatment.  The use of Vogan diagrams in Chapter \ref{Classification} instead of the traditional Satake diagrams seems to be due to Knapp \cite{Knapp}.

\vspace{24pt}

At present, these notes are a first draft---still very much a work in progress.  As of this draft, there are a number of notable omissions:
\begin{itemize}
\item Certain portions of the text have not been optimally organized.  For instance, we have interspersed discussion of non-compact orthogonal symmetric Lie algebras in the chapter on compact orthogonal symmetric Lie algebras.  At the Lie algebra level, the two notions are dual, but doing it this way avoided certain awkwardness in discussing the ``maximally compact'' (as opposed to ``maximally split'') Cartan subalgebra.
\item Furthermore, the lectures themselves scrupulously avoided the familiar theorem--proof paradigm.  We have attempted in these notes to organize things more around results, though.  However, in some sections it was not clear exactly how to do this, and in these sections the text often becomes more obscure.
\item We have not yet prepared suitable figures, although figures were presented in the lecture, and would shed a great deal of light on certain examples.
\item We have omitted the discussion of Satake diagrams in favor of Vogan diagrams, although Satake diagrams were presented in one of the lectures.  We do not have plans to add this content, since it can be easily found in Helgason, and does not add any substantial results to those we have already obtained.
\item The final chapter on analysis on symmetric spaces is not yet completed.  The lectures presented a very skeletal outline of the theory of spherical functions that we should like to include here.  In the mean time, the expository articles \cite{ZonalSphericalFunction}, \cite{Plancherel} cover approximately the same content, although from a slightly different viewpoint.
\end{itemize}

\begin{flushright}
Jonathan Holland\\
Bogdan Ion\\
University of Pittsburgh\\
December, 2010
\end{flushright}

\chapter{Affine connections and transformations}
\section{Preliminaries}
This section sets the notational conventions and basic facts of differential geometry that will be essential to subsequent investigations.  We shall omit proofs of, and even precise citations for, these facts.  Two standard references for this material are Michor \cite{Michor} and Kol\'{a}\v{r}, Michor, Slov\'{a}k \cite{KMS}.

\subsection{Tensors}\index{Tensor}
Let $M$ be a smooth manifold of dimension $n$.  We denote by $\mathscr{C}^\infty$ the sheaf of smooth functions on $M$.  A tangent vector at a point $p\in M$ is an $\mathbb{R}$-linear derivation $v_p:\mathscr{C}^\infty_p\to \mathbb{R}$.  The vector space of tangent vectors at $p$ is denoted $TM_p$, and the disjoint union over all $p\in M$, denoted $TM$, is the {\em tangent bundle} and carries a natural vector bundle structure.  The dual bundle $TM^*$ is the {\em cotangent vector}.  Tensors on $M$ are formed by taking tensor products of copies of $TM$ with copies of $TM^*$.  Denote by $T^p_qM$ the bundle of tensors
$$T^p_qM = \underbrace{TM\otimes\cdots\otimes TM}_p\otimes\underbrace{TM^*\otimes\cdots\otimes TM^*}_q.$$
Tensor fields are sections of the tensor bundle $T^p_qM$, and here are denoted by $\mathscr{T}^p_qM$.  The space of vector fields is thus $\mathscr{T}^1_0M$ and the space of differential forms is $\mathscr{T}_1^0M$.  It is worth remarking that these are all sheaves, and in fact $\mathscr{T}^1_0$ is the sheaf of derivations of $\mathscr{C}^\infty$ to itself.  The resulting sheaf of $\mathscr{C}^\infty$-modules is locally free of rank $n$, and so this shows that $TM$ is actually a vector bundle.

\subsection{Lie derivatives}\index{Lie derivative}
If $\phi:M\to N$ is a differentiable map, then for each $p\in M$, the pushforward $\phi_{*,p}:TM_p\to TN_{\phi(p)}$ is defined by $(\phi_{*,p}X_p)(f) = X_p(f\circ\phi)$ for $f\in\mathscr{C}^\infty_{\phi(p)}$.  The pushforward is also denoted by $d\phi_p$.  The pullback $\phi^* : TN_{\phi(p)}^* \to TM_p^*$ is the transpose of $\phi_{*,p}$.  If $\phi$ is a diffeomorphism, then we defined $\phi_*$ on one-forms by $\phi_* = (\phi^{-1})^*$ and $\phi^*$ on vectors by $\phi^*=(\phi^{-1})_*$.  When $\phi$ is a diffeomorphism, these operators extend in a unique manner to isomorphisms of the full tensor algebra.

Let $X\in\mathscr{T}^1_0M$.  An integral curve of $X$ through a point $p$ is a differentiable curve $\gamma:(-\epsilon,\epsilon)\to M$ such that, for all $f\in\mathscr{C}^\infty(M)$, $(Xf)\circ\gamma=\frac{d}{d t}(f\circ\gamma)$.  Taking $f$ in turn to be each of $n$ functionally independent coordinates, this defines a first-order differential equation in the coordinates of $\gamma$, so that through each point $p\in M$, there exists an integral curve defined on a small enough interval.  For each $p\in M$, there is an open neighborhood $U\subset M$ containing $p$ and $\epsilon>0$ such that each point $q\in U$ has an integral curve $\gamma_q$ through it defined for all $t\in (-\epsilon,\epsilon)$.  The map $(q,t)\mapsto \gamma_q(t) : U\times (-\epsilon,\epsilon)\to M$ is smooth, by smooth dependence of solutions on initial conditions.  This mapping is the {\em local flow} of the vector field $X$, and we shall denote the local flow by $(q,t)\mapsto \exp(tX)(q)$.\index{Exponential map!Local flow}\index{Flow of a vector field}  Shrinking $U$ and choosing a smaller $\epsilon$ if necessary, $\exp$ is a diffeomorphism of $U$ onto its image for all $t\in (-\epsilon,\epsilon)$.  Indeed, $\exp^{-1}(tX) = \exp(-tX)$ is smooth on $U$.\footnote{We use $\exp$ to denote the local flow and $\Exp$ to denote the exponential map associated to an affine connection later.}

The Lie derivative of a tensor field $T$ along a vector field $X\in \mathscr{T}^1_0M$ is defined by
$$\mathscr{L}_X T = \lim_{t\to 0} \frac{\exp(tX)^*T - T}{t}.$$
For instance, it follows from the definition of the exponential map that $\mathscr{L}_Xf=Xf$ for a function $f\in\mathscr{C}^\infty(M)$.  The Lie derivative along $X$ is an $\mathbb{R}$-linear derivation of $\mathscr{C}^\infty$-modules.  In particular,
$$\mathscr{L}_X (S\otimes T) = \mathscr{L}_X S\otimes T + S\otimes \mathscr{L}_X T.$$
Note also that the Lie derivative commutes with tensor contraction, since by definition the pullback commutes with tensor contraction.  

If $X,Y\in\mathscr{T}^1_0M$, then the Lie bracket of $X$ and $Y$ is the commutator of derivations:
$$[X,Y]f = XYf-YXf.$$
With this operation, $\mathscr{T}^1_0M$ becomes a Lie algebra.  The Lie bracket is related to the Lie derivative by $\mathscr{L}_XY=[X,Y]$.  A proof of this is sketched below using differential forms.

\subsection{Differential forms}\index{Differential form}
Let $\wedge TM^*$ be the exterior algebra of the cotangent bundle, and let $\Omega$ denote the sheaf of sections of $\wedge TM^*$.  This is a graded algebra whose graded components $\Omega^k$ are the spaces of $k$-forms.  Let $Der(\Omega)$ be the space of graded derivations on $\Omega$, equipped with the supercommutator
$$[D,D'] = DD' - (-1)^{\deg D\deg D'}D'D.$$
Then $Der(\Omega)$ is a Lie superalgebra graded by degree.

A Lie superalgebra is a Lie algebra in the category of super vector spaces.  A super vector space is a vector space with a $\mathbb{Z}^2$ grading $V=V_0\oplus V_1$.  The category of super vector spaces is just like the category of vector spaces with a $\mathbb{Z}_2$ grading except that the natural braiding isomorphisms $\tau: V\otimes W\cong W\otimes V$ switch sign if both tensor factors have odd degree.  Thus a Lie superalgebra is a vector space $L=L_0\oplus L_1$ together with a bilinear bracket $[-,-]$ such that
\begin{itemize}
\item $[x_i,y_j]\in L_{i+j(2)}$ for $x_i\in L_i, y_j\in L_j$, $i,j=0,1$.
\item $[x,y] = -(-1)^{\deg x\deg y}[y,x]$
\item $[[x,y],z] + (-1)^{\deg x(\deg y+\deg z)}[[y,z],x] + (-1)^{\deg z(\deg x+\deg y)}[[z,x],y] = 0$. (Note this implies that $\ad(x)$ is itself a derivation of degree $\deg x$).
\end{itemize}
Ordinary Lie algebras are Lie superalgebras concentrated in even degree.

There is a unique derivation of degree 1, $d:\Omega\to\Omega$, characterized by the properties
\begin{itemize}
\item If $f\in\Omega^0=\mathscr{C}^\infty$, then $df$ is the ordinary differential of $f$: $df(X) = Xf$
\item $d(\alpha\wedge\beta)=d\alpha\wedge\beta + (-1)^{\deg\alpha}\alpha\wedge d\beta$
\item $d^2=\tfrac12[d,d]=0$
\end{itemize}
Since the action of $d$ on functions commutes with pullbacks, and $d$ is then characterized by algebraic properties that are also invariant under pullback, it follows easily that $d$ commutes with the pullback along smooth mappings.  Hence also $[d,\mathscr{L}_X]=0$.

For $v\in TM$, there is a derivation of degree $-1$, $i_V:\wedge TM^*\to \wedge TM^*$, characterized by
\begin{itemize}
\item $i_vf = 0$ for $f\in\wedge^0TM^*$
\item $i_v\alpha = \alpha(v)$ for $\alpha\in\wedge^1TM^*=TM^*$
\item $i_v(\alpha\wedge\beta) = i_v\alpha\wedge\beta + (-1)^{\deg\alpha}\alpha\wedge i_v\beta$.
\end{itemize}
Then, acting on forms, $\mathscr{L}_X = [i_X,d]$.  Indeed, this is true on $\Omega^0$ since $[i_X,d]f=i_xdf=Xf=\mathscr{L}_Xf$.  It is also true on $d\Omega^0$, since
$$[i_X,d]df=di_xdf =  d(Xf) = d\mathscr{L}_Xf=\mathscr{L}_Xdf.$$
Now, $\Omega$ is generated as an algebra by $\Omega^0$ and $d\Omega^0$, and thus any derivation is determined by its action on $\Omega^0$ and $d\Omega^0$.  Since $\mathscr{L}_X$ agree on these generators, $\mathscr{L}_X=[i_X,d]$.  The same method, together with the fact that Lie differentiation commutes with tensor contraction, shows that $i_{\mathscr{L}_XY}=[\mathscr{L}_X,i_Y]$.

Let us now prove that $[X,Y]=\mathscr{L}_XY$:
\begin{align*}
[X,Y]f &=XYf-YXf = i_Xdi_Ydf - i_Ydi_Xdf\\
&=[i_x,d]i_Ydf - i_y[d,i_X]df\\
&=[\mathscr{L}_X,i_Y]df = i_{\mathscr{L}_XY}df = (\mathscr{L}_XY)f.
\end{align*}

The operators $i_X,d,\mathscr{L}_X, X\in\mathscr{T}^1_0M$ generate a subalgebra of the Lie superalgebra $Der(\Omega)$ which is determined by {\em Cartan's identities}:\index{Cartan's identities}
\begin{itemize}
\item $[\mathscr{L}_X,\mathscr{L}_Y] = \mathscr{L}_{[X,Y]}$
\item $[\mathscr{L}_X,i_Y] = i_{[X,Y]}$
\item $[i_X,i_Y]=0$
\item $[d,i_X]=\mathscr{L}_X$
\item $[d,\mathscr{L}_X]=0$
\item $[d,d]=0$
\end{itemize}

This is a special case of a Weil algebra.\index{Weil algebra}  Abstractly, if $\mathfrak{g}$ is a Lie algebra, then we form the space
$$\mathfrak{W} = \underbrace{\mathfrak{g}}_{\text{degree $0$}}\  \oplus \quad\underbrace{\left( \mathfrak{g} \oplus \mathbb{R}d\right)}_{\text{degree $1$}}$$
together with a bracket defined as follows.  Let $\phi_0,\phi_1:\mathfrak{g}\to \mathfrak{W}$ be the inclusions of $\mathfrak{g}$ into the even and odd parts.  Then, for $X,Y\in\mathfrak{g}$, define the bracket on $\mathfrak{W}$ by
\begin{itemize}
\item $[\phi_0X,\phi_0Y_0]=\phi_0[X,Y]$
\item $[\phi_0X,\phi_1Y]=\phi_1[X,Y]$
\item $[\phi_1X,\phi_1Y]=0$
\item $[d,\phi_1X]=\phi_0X$
\item $[d,\phi_0]=0$
\item $[d,d]=0$
\end{itemize}
In the special case when $\mathfrak{g}=\mathscr{T}^1_0M$ is the Lie algebra of vector fields on $M$ and $\phi_0X=\mathscr{L}_X$, $\phi_1X=i_X$, and $d$ is the exterior derivative, the Cartan identities are precisely the relations for a Weil algebra.

The full algebra of derivations is much more difficult to describe.  The operation $X\mapsto i_X$ from $\mathscr{T}^1_0M\to Der_{-1}(\Omega)$ can be extended to an operator $\mathscr{T}^1_0M\otimes\Omega^k\to Der_{k-1}\Omega$ denoted by $K\mapsto i_K$.  One can then define the Lie derivative along $K$ by $\mathscr{L}_K=[i_K,d]$.  Then any graded derivation in $Der_k(\Omega)$ can be uniquely decomposed as $i_L + \mathscr{L}_K$ for some $L\in \mathscr{T}^1_0M\otimes\Omega^{k+1}$ and $K\in \mathscr{T}^1_0M\otimes\Omega^k$.\footnote{See  Kol\'{a}\v{r}, Michor, Slov\'{a}k, Theorem 8.3.}

\section{Affine connections}\index{Connection!affine}
An affine connection $\nabla$ on a smooth manifold $M$ is an $\mathbb{R}$-bilinear mapping
$$\nabla : T^1_0M\times\mathscr{T}^1_0M \to \mathscr{T}^1_0M,$$
typically written $\nabla_XY$ for $X\in T^1_0M$ and  $Y\in\mathscr{T}^1_0M$, such that for a fixed $X$, $\nabla_X$ is a derivation of $\mathscr{C}^\infty$-modules via the rule:
$$\nabla_X(fY) = (Xf)Y + f\nabla_XY.$$
The operator $\nabla_X$ extends uniquely to a derivation on the full tensor algebra, also denoted by $\nabla_X$.  It is degree preserving (i.e., degree zero).  The resulting derivation agrees with $X$ on functions and commutes with tensor contractions.\footnote{In fact, every derivation of the tensor algebra into the tensor algebra at $x$ that preserves degree and commutes with tensor contractions is of the form $\nabla_X + S$ for some $X\in T_xM$ and $S\in\operatorname{End}TM_x$.  See Kostant, B. {\em Holonomy and the Lie algebra in infinitesimal motions of a Riemannian maniold}, Trans. Amer. Math. Soc. 80 (1955), 528--542.}

Explicit formulas for the extended connection are therefore available using duality.  Thus if $\alpha\in\mathscr{T}^0_1M$ is a one-form, then $\nabla_X\alpha$ is also a one-form given by
$$(\nabla_X\alpha)(Y) = X(\alpha(Y)) - \alpha(\nabla_XY).$$
Similar expressions hold for higher degree tensors. This defines a derivation with respect to the product $\otimes$, but also with respect to the wedge product $\wedge$ under the natural inclusion of the exterior algebra of differential forms into the tensor algebra.  With this identification, the following analog of Cartan's main identity holds:
$$i_{\nabla_XY} = [\nabla_X,i_Y].$$

If $T\in\mathscr{T}^p_qM$, denote by $\nabla T$ the {\em covariant derivative}\index{Connection!covariant derivative} of $T$: the section of $\mathscr{T}^p_{q+1}M$ defined by double duality
$$\nabla T(X\otimes S) = (\nabla_XT)(S)$$
for all $S\in\mathscr{T}_p^qM$.

\subsection{Connection coefficients}\index{Frame}\index{Connection!coefficients}
Let $X_1,\dots,X_n$ be a {\em local frame} on $M$: a basis of sections of $\mathscr{T}^1_0U$ for some open subset $U\subset M$.  It is sometimes convenient to introduce the {\em connection coefficients} $\Gamma_{ij}^k$ relative to this local basis, via\footnote{The Einstein summation convention is implied here and elsewhere indices are used.}
$$\nabla_{X_i}X_j = \Gamma_{ij}^k X_k.$$
If $V = v^iX_i, W=w^iX_j\in \mathscr{T}^1_0U$, then the covariant derivative of $V$ along $W$ can be expressed in terms of the connection coefficients via
$$\nabla_W V = w^i\nabla_{X_i}(v^jX_j) = w^i\left( X_i(v^k) + \Gamma_{ij}^kv^j\right)X_k.$$

A related formalism is that of the {\em connection one-form}\index{Connection!one-form} relative to the frame.  If $X\in\mathscr{T}^1_0U$, then
$$\nabla_XX_i = \theta_i^k(X)X_k$$
where $\theta_i^j$ is a one-form on $U$.  Equivalently, $\nabla X_i = \theta_i^k\otimes X_k.$ In terms of the connection coefficients, $\theta_i^k = \Gamma_{ij}^k\omega^j$ where $\omega^j\in\mathscr{T}^0_1U$ is the dual coframe to $X_i$, defined by $\omega^j(X_i) = \delta_i^j.$

\subsection{Parallel transport}
Let $\gamma:[0,1]\to M$ be a continuously differentiable immersed curve in $M$ with $\gamma(0)=p, \gamma(1)=q\in M$.  A vector field along $\gamma$ is a (continuously differentiable) association of a vector $X_t\in TM_{\gamma(t)}$ to each $t\in [0,1]$ (that is, $X$ is a section of the pullback bundle $\gamma^{-1}TM$).  Given a vector field $X$ along $\gamma$, for each $t_0\in [0,1]$, there is an open interval $I\ni t_0$ such that $X|_I$ can be extended to a vector field $\widetilde{X}$ on an open subset of $M$ that includes $\gamma(I)$.  We then define $\nabla_{\dot{\gamma}(t)}X_t = (\nabla_{\dot{\gamma}(t)}\widetilde{X})_{\gamma(t)}$.  Note that if $f$ is a function vanishing on $\gamma(I)$ and $Y$ is any vector field in a neighborhood of $\gamma(I)$, then $(\nabla_{\dot{\gamma}(t)}(fY))_{\gamma(t)}=\frac{d}{dt}f(\gamma(t))\,Y_{\gamma(t)} + f(\gamma(t))\,(\nabla_{\dot{\gamma}(t)}Y)_{\gamma(t)}$ which vanishes on $I$.  Since any vector field that vanishes along $I$ can be expressed as a linear combination of vectors of this form, $\nabla_{\cdot{\gamma}(t)}X_t$ does not depend on how $X$ was extended off of $\gamma(I)$.  In a local frame, if $V=v^iX_i$, then
$$\nabla_{\dot{\gamma}(t)}V_t = \left(\frac{d}{dt}v^k(\gamma(t)) + \Gamma_{ij}^j(\gamma(t))\dot{\gamma}^i(t)v^k(\gamma(t))\right)X_{k,\gamma(t)}.$$
Since the right-hand side does not involve extending $V$, this verifies in an alternative way that the operation on the left-hand side is well-defined.

If $X_p\in TM_p$ is given, then the parallel transport of $X_p$ along $\gamma$ is the solution of the first-order ode\index{Parallel transport}
\begin{align*}
\nabla_{\dot{\gamma}(t)} X_t &= 0 \\
X_0 &= X_p\qquad\text{(the given initial value)}
\end{align*}
We then define
$$\pi_\gamma X_p = X_1.$$
This transports the vector $X_p\in TM_p$ to a vector $\pi_\gamma X_p \in TM_q$.

The parallel transport obeys the properties:
\begin{itemize}
\item The parallel transport is invariant under reparametrizations of the curve: if $s:[0,1]\to [0,1]$ is a continuously differentiable function with $s'>0$, then $\pi_\gamma = \pi_{\gamma\circ s}$.
\item If $\gamma$ and $\mu$ are two curves and $\gamma(1)=\mu(0)$, then $\pi_{\gamma\cup\mu} = \pi_\gamma\circ\pi_\mu$.
\item $\pi_\gamma$ is invertible, and $\pi_\gamma^{-1} = \pi_{-\gamma}$ where $-\gamma(t):=\gamma(1-t)$ is the curve obtained by following $\gamma$ in reverse.
\end{itemize}
Although we have only defined parallel transport along $C^1$ curves, we can define it along piecewise $C^1$ curves by decomposing such a curve $\gamma=\gamma_1\cup\gamma_2\cup\cdots\cup\gamma_k$ as the compositum of $C^1$ curves, and then defining $\pi_\gamma = \pi_{\gamma_1}\circ\pi_{\gamma_2}\circ\cdots\circ\pi_{\gamma_k}$.  The second property above ensures that this is compatible with the notion of parallel transport already defined.

From parallel transport, the affine connection can be recovered.  Let $X\in \mathscr{T}^1_0M$ and suppose that $Y=\dot{\gamma}(0)\in TM_p$.  Let $\gamma_s$ be the curve $\gamma_s(t) = \gamma(t/s)$ for $s\in (0,1)$. Then
$$(\nabla_Y X)_p = \lim_{s\to 0^+} \frac{\pi_{\gamma_s}^{-1}(X_{\gamma_s(1)}) - X_p}{s}.$$

{\em Holonomy.}\index{Holonomy}  If $\gamma$ is a closed curve, then the parallel transport $\pi_\gamma\in \operatorname{GL}(TM_p)$.  This map is called the {\em holonomy} around $\gamma$.  As $\gamma$ varies over all closed curves, this defines a subgroup of $\operatorname{GL}(TM_p)$, the {\em holonomy group} $\Psi_p$.  It is an analytic subgroup of $\operatorname{GL}(TM_p)$.\footnote{See Theorem 4.2 in Kobayashi and Nomizu, {\em Foundations of differential geometry, Vol. I}, Wiley--Interscience, 1963.}  Moreover, the connected component of the identity, $\Psi_{p,e}$, is the subgroup generated by the holonomy around closed curves that are homotopic to the constant curve: $\gamma\sim p$.

\subsection{Geodesics and the exponential map}\index{Geodesic}\index{Exponential map!Riemannian}
A geodesic is a $C^1$ curve whose tangent vector is parallel along itself.  Equivalently, $\gamma$ is a geodesic if and only if
$$\nabla_{\dot{\gamma}}\dot{\gamma} = 0.$$
More precisely, this characterizes the geodesics with a preferred class of parametrizations known as affine parametrizations, which are determined up to an affine transformation of time: $t\mapsto at+b$.  By the existence and uniqueness theorem for solutions of ordinary differential equations, there exists a unique geodesic through $p$ with prescribed initial velocity $X_p$.  This geodesic can be continued for sufficiently small time $t$.

For $p\in M$ and $X\in TM_p$, let $\gamma(t) = \Exp_p(tX)$ be the geodesic through $p$ with initial velocity $\gamma'(0)=X$.  This is well-defined in a neighborhood of $X\in TM_p$ for $|t|<\epsilon$.  Since $TM_p$ is locally compact, there is an open $U\subset TM_p$ containing the origin such that $\Exp_p(X)$ exists for all $X\in U$.

We now calculate $d\Exp_{p,0}$.  Let $X\in TM_p$ be a vector.  Since $TM_p$ is a vector space, we can think of $X$ as an element of the tangent space at the origin $T(TM_p)_0$.  If $F\in\mathscr{C}^\infty(TM_p)$, then $XF(0)=\left.\frac{d}{dt}F(tX)\right|_{t=0}$.  Hence, for $f\in\mathscr{C}^\infty M_p$,
$$d\Exp_{p,0}(X)f = X(f\circ \Exp_p)(0) = \left.\frac{d}{dt}f(\Exp_p(tX))\right|_{t=0} = Xf(p)$$
since $\gamma(t) = \Exp_p(tX)$ satisfies $\dot{\gamma}(0)=X$.  Thus $d\Exp_{p,0} = \operatorname{Id}$.  By the inverse function theorem, $\Exp_p$ is a diffeomorphism of a possibly smaller open neighborhood $U$ of the origin in $TM_p$ onto an open neighborhood of $p\in M$.

\subsection{Torsion and curvature}
The two tensorial invariants of an affine connection are its curvature tensor $R\in \Omega^2\otimes \mathscr{T}^1_1M$ and torsion $T\in \Omega^2\otimes\mathscr{T}^1M$ defined by duality as follows.  Let $X_p,Y_p,Z_p\in TM_p$ be vectors at $p$ and extend them to vector fields $X,Y,Z$ in a neighborhood of $p$.  Put\index{Torsion}\index{Curvature}
\begin{align*}
R_p(X_p,Y_p)Z_p &= ([\nabla_X,\nabla_Y]Z - \nabla_{[X,Y]}Z)_p\\
T_p(X_p,Y_p) &= (\nabla_XY- \nabla_YX - [X,Y])_p.
\end{align*}
In each of these, the right-hand side is multilinear in each of the vector fields, and it is readily verified that if one of the vector fields vanishes at $p$, then the whole quantity is zero at $p$.  Therefore these do not actually depend on how $X,Y,Z$ are extended away from $p$.

The curvature measures the failure of a vector $Z_p$ to return to its original position when it is parallel transported over an infinitely small loop.  In fact, if $X$ and $Y$ commute, then $\pi_{\exp(-tY/v)}\pi_{\exp(-tX/u)}\pi_{\exp(tY/v)}\pi_{\exp(tX/u)}Z_p$ is the result of parallel-transporting the vector $Z_p$ over the parallelogram whose sides are integral curves for the vector fields $X$ and $Y$, followed for parameter time $u$ and $v$, respectively.  Then the curvature is recovered from the parallel transport via
$$R_p(X_p,Y_p)Z_p = \lim_{u,v\to 0} \frac{Z_p-\pi_{\exp(-tY/v)}\pi_{\exp(-tX/u)}\pi_{\exp(tY/v)}\pi_{\exp(tX/u)}Z_p}{uv}.$$

Since curvature is generated by the parallel transport over infinitely small loops, naturally there is a correspondence between the holonomy and the curvature.  By taking a ``lasso''  from $x$ to each point $y\in M$, with an infinitely small loop at the end, the following Ambrose--Singer holonomy theorem is reasonable:
\begin{theorem}\index{Ambrose--Singer theorem}
$\operatorname{Lie}(\Psi_p)$ is generated as a Lie algebra by the set of all $\pi_\gamma^{-1} R(X_{\gamma(1)},Y_{\gamma(1)})\in\mathfrak{gl}(TM_p)$ such that $\gamma:[0,1]\to M$ is a piecewise $C^1$ curves with $\gamma(0)=p$, and $X_{\gamma(1)},Y_{\gamma(1)}\in TM_{\gamma(1)}$.
\end{theorem}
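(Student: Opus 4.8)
The plan is to prove the two inclusions between the holonomy algebra and the curvature-generated algebra separately. Write $\mathfrak{hol}_p := \Lie(\Psi_p)$, and let $\mathfrak{g}\subseteq\mathfrak{gl}(TM_p)$ be the Lie algebra generated by all the transported curvature endomorphisms $\pi_\gamma^{-1}R(X_{\gamma(1)},Y_{\gamma(1)})\pi_\gamma$; here the notation $\pi_\gamma^{-1}R(\cdots)$ of the statement abbreviates the conjugate $\pi_\gamma^{-1}\circ R(\cdots)\circ\pi_\gamma$, which genuinely lands in $\mathfrak{gl}(TM_p)$ because $\pi_\gamma^{-1}\colon TM_{\gamma(1)}\to TM_p$. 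Since $\Psi_p$ and its identity component $\Psi_{p,e}$ have the same Lie algebra, I may work throughout with the restricted holonomy group.

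For the inclusion $\mathfrak{g}\subseteq\mathfrak{hol}_p$ I would use the limit formula for the curvature in terms of parallel transport around an infinitesimal parallelogram established just above the statement. Fix $\gamma$ from $p$ to $y=\gamma(1)$ and let $\ell$ be a small loop at $y$ built from the flows of two vector fields. The lasso $\gamma\cup\ell\cup(-\gamma)$ is a null-homotopic loop at $p$, so its parallel transport, the conjugate $\pi_\gamma^{-1}\pi_\ell\pi_\gamma$, lies in $\Psi_{p,e}$. The curvature formula exhibits $\tfrac1{uv}(\operatorname{Id}-\pi_\ell)$ as converging to $R_y(X_y,Y_y)$; conjugating by $\pi_\gamma$ and passing to the limit shows that $\pi_\gamma^{-1}R_y(X_y,Y_y)\pi_\gamma$ is tangent at the identity to a family lying in $\Psi_{p,e}$, hence belongs to $\mathfrak{hol}_p$. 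As $\mathfrak{hol}_p$ is a Lie algebra, the subalgebra $\mathfrak{g}$ generated by these elements is contained in it.

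The reverse inclusion $\mathfrak{hol}_p\subseteq\mathfrak{g}$ is the substantial part, and I would carry it out on the frame bundle $P\to M$, with connection form $\omega$, horizontal distribution $H$, curvature form $\Omega$, and fundamental vertical fields $A^{*}$ for $A\in\mathfrak{gl}(n)$. Fix a frame $u_0$ over $p$; the holonomy bundle $P(u_0)$ of frames reachable from $u_0$ by horizontal curves is, by the reduction theorem (standard principal-bundle theory, as in Kobayashi--Nomizu), a principal subbundle whose structure group is $\Psi_p$ transported via $u_0$, with Lie algebra $\mathfrak{hol}_p$. Let $\mathfrak{g}'\subseteq\mathfrak{gl}(n)$ be the span of the values $\Omega_v(Y,Z)$ as $v$ ranges over $P(u_0)$; under the identification by frames this span is exactly $\mathfrak{g}$, and $\mathfrak{g}'\subseteq\mathfrak{hol}_p$ since the reduced curvature is $\mathfrak{hol}_p$-valued. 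The $\Ad$-equivariance $\Omega_{va}((r_a)_{*}Y,(r_a)_{*}Z)=\Ad(a^{-1})\Omega_v(Y,Z)$, where $r_a$ is right translation by $a$, together with the $\Psi_p$-invariance of $P(u_0)$, shows $\mathfrak{g}'$ is $\Ad(\Psi_p)$-invariant; hence $[\mathfrak{hol}_p,\mathfrak{g}']\subseteq\mathfrak{g}'$, and in particular $\mathfrak{g}'$ is a subalgebra. I then form the distribution $S_v=H_v\oplus\{A^{*}_v:A\in\mathfrak{g}'\}$ on $P(u_0)$ and verify involutivity by three bracket computations: for horizontal lifts $\tilde X,\tilde Y$ the vertical part of $[\tilde X,\tilde Y]$ is $-(\Omega(\tilde X,\tilde Y))^{*}\in(\mathfrak{g}')^{*}$ by the structure equation; the bracket of a horizontal lift with a fundamental field vanishes by right-invariance of $H$; and $[A^{*},B^{*}]=[A,B]^{*}\in(\mathfrak{g}')^{*}$ exactly because $\mathfrak{g}'$ is a subalgebra.

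Finally, since $S\supseteq H$, every horizontal curve issuing from $u_0$ is an integral curve of $S$, so the maximal integral manifold of $S$ through $u_0$ contains, hence equals, the holonomy bundle $P(u_0)$. Comparing dimensions gives $n+\dim\mathfrak{g}'=\dim P(u_0)=n+\dim\mathfrak{hol}_p$, so $\mathfrak{g}'=\mathfrak{hol}_p$, and translating back down to $TM_p$ yields $\mathfrak{g}=\mathfrak{hol}_p$. The main obstacle is precisely this reverse inclusion: the tempting heuristic of subdividing a bounding disk into infinitesimal loops must be replaced by the Frobenius and reduction machinery, and the one genuinely delicate point is establishing that the curvature span $\mathfrak{g}'$ is closed under the bracket—secured here by its $\Ad(\Psi_p)$-invariance—without which the distribution $S$ would fail to be integrable.
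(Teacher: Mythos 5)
Your proof is correct, but note that the paper does not actually prove this theorem: it offers only the ``lasso'' heuristic, asserting the statement is ``reasonable,'' and defers the analytic facts to Kobayashi--Nomizu in a footnote. What you have supplied is the standard Kobayashi--Nomizu proof in full, and your division of labor is the right one. Your first inclusion makes the paper's heuristic rigorous: the small loop bounding the parallelogram of flows is contained in a coordinate ball, hence null-homotopic, so the lasso's holonomy lies in $\Psi_{p,e}$, and the paper's limit formula for curvature does the rest --- though be aware that passing from ``limit of difference quotients of a two-parameter family in $\Psi_{p,e}$'' to membership in $\operatorname{Lie}(\Psi_p)$ leans on the analytic-subgroup fact the paper only footnotes; the cleanest justification is the one you give in passing in the bundle picture, namely that the curvature of the reduced connection on the holonomy bundle $P(u_0)$ is $\operatorname{Lie}(\Psi_p)$-valued. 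Your reverse inclusion, via the reduction theorem, the $\Ad(\Psi_p)$-invariance of the curvature span $\mathfrak{g}'$, and the Frobenius argument for the distribution $S_v = H_v \oplus \{A^{*}_v : A \in \mathfrak{g}'\}$, correctly identifies the genuinely delicate step: closure of $\mathfrak{g}'$ under the bracket, without which $S$ is not involutive and the dimension count $n + \dim\mathfrak{g}' = \dim P(u_0)$ never gets off the ground. Two editorial repairs you made silently are worth recording: the statement's $\pi_\gamma^{-1}R(X_{\gamma(1)},Y_{\gamma(1)})$ does not literally lie in $\mathfrak{gl}(TM_p)$ and must be read as the conjugate $\pi_\gamma^{-1}\circ R(X_{\gamma(1)},Y_{\gamma(1)})\circ\pi_\gamma$, as you say; and your argument in fact proves slightly more than the statement asks, since once $\Ad(\Psi_p)$-invariance is established the linear \emph{span} of the transported curvature operators is already a Lie algebra equal to $\operatorname{Lie}(\Psi_p)$, so the phrase ``generated as a Lie algebra'' is redundant.
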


\section{Affine diffeomorphisms}
Let $(M,\nabla)$, $(M',\nabla')$ be two manifolds equipped with affine connections.  A diffeomorphism $\phi:M\to M'$ is called an {\em affine diffeomorphism} if\index{Affine diffeomorphism}
$$\nabla'_{\phi_*X}\phi_*Y = \phi_*(\nabla_XY)$$
for all $X,Y\in\mathscr{T}^1_0M$.  A local affine diffeomorphism is an affine diffeomorphism from an open subset of $M$ to an open subset of $M'$.

If $\phi$ is a local affine diffeomorphism, then $\phi$ maps geodesics to geodesics (together with the affine parameter).  Hence $(\phi\circ\Exp_p)(X) = \Exp_{\phi(p)}(\phi_{*,p}X)$.  In particular,
\begin{proposition}\label{faithful}
Let $M$ be connected and $f,g:M\to M'$ two local affine diffeomorphisms (near all points of $M$).  If $f_{*,p}=g_{*,p}$ at some $p\in M$, then $f=g$.
\end{proposition}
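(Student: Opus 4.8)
The plan is to run the standard connectedness argument on the set where $f$ and $g$ agree to first order. First I would define
$$S = \{\, q\in M : f(q)=g(q)\ \text{and}\ f_{*,q}=g_{*,q}\,\}.$$
Here the second condition presupposes the first, since $f_{*,q}$ and $g_{*,q}$ become comparable only once their codomains $TM'_{f(q)}$ and $TM'_{g(q)}$ coincide. By hypothesis $p\in S$, so $S$ is nonempty. Since $M$ is connected, it then suffices to show that $S$ is both open and closed; this forces $S=M$, and in particular $f(q)=g(q)$ for every $q$, that is, $f=g$.

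Openness is the step where the geodesic-preserving property does the real work. I would take $q\in S$. Because $f$ is a local affine diffeomorphism near $q$, the identity established just above the statement gives, for all sufficiently small $X\in TM_q$,
$$f(\Exp_q X)=\Exp_{f(q)}(f_{*,q}X),$$
and likewise $g(\Exp_q X)=\Exp_{g(q)}(g_{*,q}X)$. Since $q\in S$ we have $f(q)=g(q)$ and $f_{*,q}=g_{*,q}$, so the two right-hand sides coincide; hence $f\circ\Exp_q=g\circ\Exp_q$ on a neighborhood of the origin in $TM_q$. As $\Exp_q$ is a diffeomorphism from such a neighborhood onto an open neighborhood $U$ of $q$, this yields $f=g$ on all of $U$. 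Differentiating the equality $f=g$ on the open set $U$ shows $f_{*}=g_{*}$ throughout $U$ as well, so $U\subseteq S$ and $S$ is open.

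For closedness I would use only continuity: $f$ and $g$ are smooth, hence continuous together with their differentials. The locus $\{q:f(q)=g(q)\}$ is closed, being the preimage of the diagonal under the continuous map $(f,g):M\to M'\times M'$; and on this locus the remaining condition $f_{*,q}=g_{*,q}$ is cut out, in any local chart, by the vanishing of the continuous matrix-valued function $df-dg$, and is therefore closed as well. Thus $S$ is closed, and connectedness of $M$ completes the argument. The one point I expect to be the crux is the openness step: one cannot simply appeal to a naive ODE-uniqueness statement on $M$, because $f$ and $g$ are maps between manifolds rather than solutions of a single equation. It is precisely the rigidity of affine maps---their being determined near a point by their one-jet through their action on radial geodesics, encoded in the exponential identity---that lets agreement propagate from $q$ to an entire neighborhood.
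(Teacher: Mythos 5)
Your proof is correct and is essentially the paper's own argument: the paper derives the identity $(\phi\circ\Exp_q)(X)=\Exp_{\phi(q)}(\phi_{*,q}X)$ immediately before the proposition and states the result as a direct consequence, leaving implicit exactly the open--closed connectedness bookkeeping that you carry out. Your explicit treatment of the set $S$ (including the observation that $f_{*,q}=g_{*,q}$ presupposes $f(q)=g(q)$) fills in the details the paper omits, with no gaps.
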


Let $Z\in \mathscr{T}^1_0M$.  Then we call $Z$ an {\em infinitesimal affine diffeomorphism}\index{Affine diffeomorphism!infinitesimal} if $\exp(tZ)$ is a local affine diffeomorphism of $M$ for all sufficiently small $t$.  For such a $Z$, by definition
$$\exp(tZ)_*\nabla_X Y = \nabla_{\exp(tZ)_*X} (\exp(tZ)_*Y)$$
for all $Y$.  Differentiating at $t=0$ gives
$$\mathscr{L}_Z\nabla_XY = \nabla_{\mathscr{L}_ZX} Y + \nabla_X(\mathscr{L}_ZY)$$
or,
\begin{equation}\label{InfinitesimalAffine}
[\mathscr{L}_Z,\nabla_X] = \nabla_{[Z,X]}
\end{equation}
for every vector field $X\in\mathscr{T}^1_0M$.  Conversely, if $Z$ is a vector field such that \eqref{InfinitesimalAffine} holds for all vector fields $X,Y$, then the local flow of $Z$ is a local affine diffeomorphism, and so $Z$ itself is an infinitesimal affine diffeomorphism.

The space of infinitesimal affine transformations is a Lie algebra:
\begin{proposition}
If $X,Y$ are infinitesimal affine transformations, then $[X,Y]$ is an infinitesimal affine transformation.
\end{proposition}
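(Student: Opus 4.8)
The plan is to work entirely at the infinitesimal level, using the characterization \eqref{InfinitesimalAffine}: a vector field $Z$ is an infinitesimal affine transformation if and only if $[\mathscr{L}_Z,\nabla_W]=\nabla_{[Z,W]}$ for every vector field $W$. Since $X$ and $Y$ are assumed to be infinitesimal affine transformations, I may take as hypotheses the two identities $[\mathscr{L}_X,\nabla_W]=\nabla_{[X,W]}$ and $[\mathscr{L}_Y,\nabla_W]=\nabla_{[Y,W]}$, valid for all $W$. The goal is then to verify the single identity $[\mathscr{L}_{[X,Y]},\nabla_W]=\nabla_{[[X,Y],W]}$ for all $W$, which is exactly the condition that $[X,Y]$ be an infinitesimal affine transformation.

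The first key step is to replace $\mathscr{L}_{[X,Y]}$ by $[\mathscr{L}_X,\mathscr{L}_Y]$, which is the first of Cartan's identities established earlier in the excerpt (and which holds as operators on the full tensor algebra, not merely on forms, since both sides are degree-zero derivations commuting with contraction that agree with $[X,Y]$ on functions and with $\mathscr{L}_{[X,Y]}$ on vector fields). The second step is to expand
$$[\mathscr{L}_{[X,Y]},\nabla_W]=[[\mathscr{L}_X,\mathscr{L}_Y],\nabla_W]$$
using the Jacobi identity for the commutator bracket of operators on the tensor algebra, rewriting it as $[\mathscr{L}_X,[\mathscr{L}_Y,\nabla_W]]-[\mathscr{L}_Y,[\mathscr{L}_X,\nabla_W]]$. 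Applying the two hypotheses to the inner brackets turns this into $[\mathscr{L}_X,\nabla_{[Y,W]}]-[\mathscr{L}_Y,\nabla_{[X,W]}]$.

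The third step applies the hypotheses a second time, now to these brackets, yielding $\nabla_{[X,[Y,W]]}-\nabla_{[Y,[X,W]]}$, and then uses the $\mathbb{R}$-linearity of $\nabla$ in its subscript argument to combine the two terms into $\nabla_{[X,[Y,W]]-[Y,[X,W]]}$. Finally, the Jacobi identity for the Lie algebra of vector fields gives $[X,[Y,W]]-[Y,[X,W]]=[[X,Y],W]$, so the whole expression collapses to $\nabla_{[[X,Y],W]}$, which is precisely \eqref{InfinitesimalAffine} for $Z=[X,Y]$.

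Since every individual manipulation is a direct substitution, the argument has no genuinely hard step; the only point requiring care is bookkeeping. Specifically, I must make sure the Jacobi identity is invoked correctly in two distinct guises—once for the commutator of the operators $\mathscr{L}_X,\mathscr{L}_Y,\nabla_W$ acting on tensors, and once for the Lie bracket of the vector fields $X,Y,W$—and that the sign conventions match so that the two applications combine cleanly. I should also confirm at the outset that the operator identity $[\mathscr{L}_X,\mathscr{L}_Y]=\mathscr{L}_{[X,Y]}$ is being used on the whole tensor algebra, where $\nabla_W$ lives, rather than only on differential forms, as noted above.
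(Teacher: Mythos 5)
Your proof is correct and follows essentially the same route as the paper's: both reduce the claim via the characterization \eqref{InfinitesimalAffine}, write $\mathscr{L}_{[X,Y]}=[\mathscr{L}_X,\mathscr{L}_Y]$, expand with the operator Jacobi identity, apply \eqref{InfinitesimalAffine} twice, and finish with the Jacobi identity for vector fields. Your added remark that $[\mathscr{L}_X,\mathscr{L}_Y]=\mathscr{L}_{[X,Y]}$ is being used on the full tensor algebra (not merely on forms) is a careful point the paper leaves implicit, but it does not change the argument.
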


\begin{proof}
Let $K\in\mathscr{T}^1_0M$.  Then
\begin{align*}
[\mathscr{L}_{[X,Y]},\nabla_K] &= [[\mathscr{L}_X,\mathscr{L}_Y],\nabla_K]\\
&= [\mathscr{L}_X,[\mathscr{L}_Y,\nabla_K]] - [\mathscr{L}_Y,[\mathscr{L}_X,\nabla_K]] \qquad\text{by \eqref{InfinitesimalAffine}}\\
&= [\mathscr{L}_X,[\nabla_{[Y,K]}]] - [\mathscr{L}_Y,[\nabla_{[X,K]}]] \\
&= \nabla_{[X,[Y,K]] - [Y,[X,K]]} \qquad\text{by \eqref{InfinitesimalAffine} again}\\
&= \nabla_{[[X,Y],K]}.
\end{align*}
as required.
\end{proof}

Let $o\in M$ and $\mathfrak{g}_o$ be the space of germs of infinitesimal affine transformations at $o$.  Let
$$\mathfrak{k}_o = \{ Z\in \mathfrak{g}_o \mid Z_o=0\}$$
be the {\em isotropy algebra} of $o$.\index{Isotropy algebra $\mathfrak{k}_o$}  Now the kernel of $\mathscr{L}_{Z,o} : \mathscr{T}^1_0M_o\to TM_o$ contains all vector fields that vanish at $o$, for if $f(o)=0$, and $X\in\mathscr{T}^1_0M$, then, then $[Z,fX]_o = (Z_of)X_o + f(o)[Z,X]_o=0$ since $f(o)=0$ and $Z_o=0$.  Thus $\mathscr{L}_Z$ factors through the quotient and defines a map $\mathscr{L}_{Z,o} : TM_o\to TM_o$.  That is,$\mathscr{L}_{Z,o} \in \mathfrak{gl}(TM_o).$  This defines a representation $\mathfrak{k}_o\to \mathfrak{gl}(TM_o)$.  The representation is faithful by Proposition \ref{faithful}.  In particular $\mathfrak{k}_o$ is finite-dimensional.

By Proposition \ref{faithful}, a local affine diffeomorphism is determined by its differential at a point.  The following theorem answers the question of {\em which} linear isomorphisms $\psi:TM_x \to T\overline{M}_y$ are the differentials of affine transformations.  Its proof will occupy the next subsection.

\begin{theorem}\label{characterizationofaffine}
Let $M,\overline{M}$ be two affine manifolds, $x\in M$, $y\in\overline{M}$.  Let $\psi:TM_x\to T\overline{M}_y$ be a given linear isomorphism, and define $\Psi=\Exp_y\circ\psi\circ\Exp_x^{-1} : U_x\xrightarrow{\sim} U_y$, where $U_x$ is a star-shaped neigbhorhood of the origin in $TM_x$ small enough that $\Psi$ is a diffeomorphism.  Then $\Psi$ is an affine diffeomorphism if and only if $\Psi^*\overline{R} = R$ and $\Psi^*\overline{T}=T$.
\end{theorem}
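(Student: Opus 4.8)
The statement has two directions, and I would dispatch the necessity direction first since it is merely naturality. If $\Psi$ is an affine diffeomorphism, then by definition $\nabla'_{\Psi_*X}\Psi_*Y=\Psi_*(\nabla_XY)$, and since any diffeomorphism satisfies $\Psi_*[X,Y]=[\Psi_*X,\Psi_*Y]$, the defining formulas for curvature and torsion immediately give $\Psi_*(R(X,Y)Z)=\overline{R}(\Psi_*X,\Psi_*Y)\Psi_*Z$ and $\Psi_*(T(X,Y))=\overline{T}(\Psi_*X,\Psi_*Y)$ for all vector fields. Read as identities of tensors on $M$, these are exactly $\Psi^*\overline{R}=R$ and $\Psi^*\overline{T}=T$. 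No computation beyond the two defining formulas is needed.

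The substance is the converse. The plan is to exploit that, in the normal coordinates furnished by $\Exp_x$ and $\Exp_y$, the map $\Psi$ is literally the fixed linear isomorphism $\psi$; hence it carries radial geodesics through $x$ to radial geodesics through $y$, commutes with the dilations of the two tangent spaces, and intertwines their Euler (position) vector fields, $\Psi_*P=\overline P$. I would fix a frame $e_1,\dots,e_n$ of $TM_x$, set $\overline e_i=\psi e_i$, and parallel transport both along radial geodesics to obtain radially parallel frames $E_i$ on the star-shaped $U_x$ and $\overline E_i$ on $U_y$. Relative to these I write the coframes $\omega^i,\overline\omega^i$, the connection one-forms $\theta^i_j,\overline\theta^i_j$, and the torsion and curvature two-forms $\Theta^i=\tfrac12 T^i_{kl}\,\omega^k\wedge\omega^l$ and $\Omega^i_j=\tfrac12 R^i_{jkl}\,\omega^k\wedge\omega^l$, with the bars denoting the analogues on $\overline M$.

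The key computation converts Cartan's structure equations into radial ODEs. Because the frame is radially parallel one has $i_P\theta^i_j=0$, while $i_P\omega^i=x^i$ since the velocity of a radial geodesic is parallel and has constant frame components. Applying $\mathscr{L}_P=i_Pd+d\,i_P$ to the structure equations $d\omega^i=-\theta^i_j\wedge\omega^j+\Theta^i$ and $d\theta^i_j=-\theta^i_k\wedge\theta^k_j+\Omega^i_j$ yields a coupled first-order system of the schematic form $\mathscr{L}_P\omega^i=dx^i+x^j\theta^i_j+i_P\Theta^i$ and $\mathscr{L}_P\theta^i_j=i_P\Omega^i_j$, whose inhomogeneous terms are built solely from $T$, $R$, and the unknowns $\omega,\theta$ themselves. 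The same manipulation on $\overline M$, pulled back by $\Psi$ (using $\Psi_*P=\overline P$ and $\Psi^*d\overline x^i=dx^i$), produces the identical system for $\Psi^*\overline\omega^i$ and $\Psi^*\overline\theta^i_j$, now with source terms assembled from $\Psi^*\overline T$ and $\Psi^*\overline R$. Under the hypothesis these equal $T$ and $R$, and the two solutions share their initial data at the origin, where $\psi$ identifies the tangent spaces together with the frames (so $\omega^i=dx^i$ there and the connection forms have matching base values). Uniqueness for the radial ODE then forces $\Psi^*\overline\omega^i=\omega^i$ and $\Psi^*\overline\theta^i_j=\theta^i_j$ on all of $U_x$; the first says $d\Psi$ carries $E_i$ to $\overline E_i$, and together with the second it says precisely that $\Psi$ intertwines the two connections, i.e. $\Psi$ is affine.

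The main obstacle I anticipate is making the comparison genuinely closed so that uniqueness applies. The source terms involve the component functions $T^i_{jk},R^i_{jkl}$ in the radially parallel frames, whereas the hypothesis is the frame-free identity $\Psi^*\overline T=T$; equating the scalar components presupposes that $d\Psi$ already sends $E_i$ to $\overline E_i$, which is part of what is being solved for. The disciplined way around this circularity is to phrase the entire system invariantly — the coframe as a $TM$-valued one-form, the connection as an $\End(TM)$-valued one-form, and the comparison isomorphism $d\Psi$ as a further unknown with initial value $\psi$ — so that $\Psi^*\overline T=T$ and $\Psi^*\overline R=R$ enter directly as equalities of the vector- and endomorphism-valued inhomogeneous terms. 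One must also verify that the system is regular enough along each ray for the standard existence–uniqueness theorem to apply at the origin. Once this bookkeeping is in place, the conclusion follows at once from uniqueness of solutions.
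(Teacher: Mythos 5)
Your proposal is correct and follows essentially the same route as the paper's proof: both erect radially parallel frames out of $\psi$, convert Cartan's structure equations into a first-order radial ODE system for the pulled-back coframe and connection forms (your $\mathscr{L}_P=i_Pd+d\,i_P$ along the Euler field is exactly the paper's $\partial/\partial t$ acting on the pullbacks under the polar map $\Phi(t;a^1,\dots,a^n)=\Exp_x(ta^iX_{i,x})$, with the same zero initial data at $t=0$), and conclude by uniqueness for the shared initial-value problem. The circularity you flag is real and is exactly the point the paper papers over with its unproved parenthetical claim that ``$\Psi_*X_i$ is also a local frame on $V_y$ that is parallel along radial geodesics'' --- false a priori (it is part of the conclusion; consider $M$ flat and $\overline{M}$ a sphere), so your invariant reformulation carrying $d\Psi$ as an additional unknown with initial value $\psi$ is not a departure from the paper's method but a more careful execution of it.
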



\subsection{Proof of Theorem \ref{characterizationofaffine}}
Our proof uses Cartan's structure equations.  Let $V_x=\Exp_x(U_x)\subset M$ and $V_y=\Exp_y(U_y)\subset\overline{M}$.  Let $X_{i,x}$ be a basis of $TM_x$, and extend these by parallel translation along geodesics through $x$ to a local frame $X_i$ on $V_x$.  (Geodesics through the center of a normal neighborhood are called {\em radial}.) Note for later use that $\Psi_*X_i$ is also a local frame on $V_y$ that is parallel along radial geodesics as well.

Let $\omega^i$ be the dual forms to $X_i$ and $\theta_i^j$ the connection forms of $\nabla$ in the frame:
$$\nabla X_i = \theta_i^j\otimes X_j.$$
Let $T^i_{jk}=\omega^i(T(X_i,X_j))$ and $R^\ell_{kij}=\omega^\ell(R(X_i,X_j)X_k)$ be the components of the torsion and curvature in the frame and define
$$T^i = \frac{1}{2}T^i_{jk}\omega^j\wedge\omega^k,\quad\text{and}\quad R^\ell_k = \frac{1}{2}R^\ell_{kij}\omega^i\wedge\omega^j.$$
A computation in the frame gives the Cartan structure equations:
\begin{align*}
d\omega^i &= \omega^k\wedge\theta_k^i + T^i \\
d\theta^i_j &= \theta_j^k\wedge\theta_k^i + R_j^i.
\end{align*}

Define a map $\Phi:\mathbb{R}\times\mathbb{R}^n\to M$ by
$$\Phi(t;a^1,\dots,a^n) = \Exp_x(ta^iX_{i,x}).$$
Then $\Phi$ is called a polar normal coordinate system.  For each fixed $t$, it defines a coordinate system in a neighborhood of $U_x$, and $\Phi(t;a^1,\dots,a^n) =\Phi(1;ta^1,\dots,ta^n)$.  Our strategy is to compute $\Phi^*\omega^i$ and $\Phi^*\theta^i_j$.  Cartan's equations will then give a differential equation that governs the evolution in $t$ of these forms whose coefficients involve the curvature and torsion.  The analogous equations on $\overline{M}$ will then be the diffeomorphic image of the equation on $\overline{M}$ under the mapping $\Phi$.  So by uniqueness of solutions of differential equations, the connection coefficients on $\overline{M}$ are the diffeomorphic image of the connection coefficients on $M$.

Write
$$\Phi^*\omega^i = f^i(t;a^1,\dots,a^n)\,dt + \overline{\omega}^i,\qquad \Phi^*\theta_j^i = g^i_j(t;a_1,\dots,a_n)\,dt + \overline{\theta}_j^i$$
where $\overline{\omega}^i$ and $\overline{\theta}^i_j$ do not involve $dt$.  To compute $f^i$, fix $a^1,\dots,a^n\in\mathbb{R}$, and compute the pullback of $\Phi^*\omega^i$ under the map $t\mapsto (t;a^1,\dots,a^n)$.  That is, let $\gamma(t)=\Phi(ta^iX_{i,x})$ be the geodesic in $M$ through $x$ with initial velocity $\gamma'(0)=a^iX_{i,x}$.  Then
$$\gamma^*\omega^i = f_i(t;a_1,\dots,a_n)dt.$$
But on the other hand
$$\gamma^*\omega^i(d/dt) = \omega^i(\gamma_*d/dt) = \omega^i(a^kX_{k,x}) = a^i.$$

Similarly, to compute $g^i_j$, we compute $\gamma^*\theta^i_j = g^i_j(t;a^1,\dots,a^n)\,dt$ via
$$\gamma^*\theta^i_j(d/dt) = \theta^i_j(a^kX_{k,x})\,dt = \Gamma^i_{jk}(x)a^k\,dt = 0$$
since the frame $X_i$ is parallel translated along geodesics through $x$, and so $0=(\nabla_{X_j}X_k)_x = \Gamma_{jk}^i(x)$.  Thus
\begin{align*}
\Phi^*\omega^i &= a^i\,dt +  \overline{\omega}^i\\
\Phi^*\theta_j^i &= \overline{\theta}_j^i.
\end{align*}
Note also that $\overline{\omega}^i|_{t=0}=0$ for $\Phi_{t=0}$ is constant, and therefore the pullback of any one-form along $\Phi_{t=0}$ must vanish.  For the same reason $\overline{\theta}^i_j|_{t=0}=0$.

Now on the one hand,
\begin{align*}
\Phi^*d\omega^i &= d\Phi^*\omega^i = da^i\,dt + dt\wedge\frac{\partial\overline{\omega}^i}{\partial t} + (\text{terms not involving $dt$})\\
\Phi^*d\theta^i_j &= dt\wedge\frac{\partial \overline{\theta}^i_j}{\partial t} + (\cdots).
\end{align*}
On the other hand, by Cartan's structure equations,
\begin{align*}
\Phi^*d\omega^i &= a^j\theta^i_j\wedge dt + \Phi^* T^i + (\cdots)
&=a^j\theta^i_j\wedge dt + T_{jk}^i\circ\Phi a^j\, dt\wedge\overline{\omega}^j  + (\cdots)\\
\Phi^*d\theta^i_j &= \Phi^*R^i_j + (\cdots)\\
&=R^i_{jk\ell}\circ\Phi a^k\,dt\wedge\overline{\omega}^\ell.
\end{align*}
Equating coefficients gives
\begin{align*}
\frac{\partial\overline{\omega}^i}{\partial t} &= da^i + (T_{jk}^i\circ\Phi)a^j\overline{\omega}^k + a^j\overline{\theta}^i_j\\
\frac{\partial\overline{\theta}^i_j}{\partial t} &= (R^i_{jk\ell}\circ\Phi)a^k\overline{\omega}^\ell.
\end{align*}
With the initial conditions
\begin{align*}
\overline{\omega}^i(0;a^1,\dots,a^n) &= 0\\
\overline{\theta}^i_j(0;a^1,\dots,a^n) &= 0
\end{align*}
this defines an initial value problem for a system of first-order ordinary differential equations.

Now, the following diagram is commutative
$$\xymatrix{
&U\subset\mathbb{R}^{n+1}\ar[ld]\ar[rd]\ar[ldd]^(0.7){\Phi}\ar[rdd]_(0.7){\overline{\Phi}}&\\
U_x\subset TM_x\ar[rr]^\psi\ar[d]_{\Exp_x} && U_y\subset TM_y\ar[d]^{\Exp_y}\\
V_x\subset M\ar[rr]_{\Psi}&&V_y\subset \overline{M}
}
$$
If $\Psi^*\overline{R} = R$ and $\Psi^*\overline{T}=T$, then $\overline{\Phi}^*\omega^i, \overline{\Phi}^*\theta^i_j$ solve the same initial value problem on $\mathbb{R}^{n+1}$.  (We here use the same letters to denote the corresponding forms on $M$ and on $\overline{M}$, since it is clear from the context which is which.)  Hence $\overline{\Phi}^*\omega^i = \Phi^*\omega^i$ and $\overline{\Phi}^*\theta^i_j = \Phi^*\theta^i_j$.  By commutativity of the diagram, $\Psi^*\theta^i_j = \theta^i_j$ and $\Psi^*\omega^i=\omega^i$.  Hence
$$\Psi_*(\nabla_XX_i) = \Psi_*(\theta^j_i(X)X_j) = \theta^j_i(\Psi_*X)\Psi_*(X_j) = \nabla_{\Psi_*X}\Psi_*(X_j)$$
where in the second equality we used $\theta^j_i(X)=(\Psi^*\theta^j_i)(X) = \theta^j_i(\Psi_*X)$.

\vspace{18pt}

We end with a corollary:
\begin{corollary}\label{covariantlyconstant}
Let $M,\overline{M}$ be two affine manifolds, $x\in M$, $y\in\overline{M}$.  Let $\psi:TM_x\to T\overline{M}_y$ be a given linear isomorphism, and define $\Psi=\Exp_y\circ\psi\circ\Exp_x^{-1} : U_x\xrightarrow{\sim} U_y$, where $U_x$ is a neigbhorhood of the origin in $TM_x$ small enough that $\Psi$ is a diffeomorphism.  Suppose that $\nabla R = 0$ and $\nabla T=0$.  Then $\Psi$ is an affine diffeomorphism if and only if $\psi R_x = \overline{R}_y$ and $\psi T_x=\overline{T}_y$.
\end{corollary}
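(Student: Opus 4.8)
The plan is to deduce the corollary from Theorem~\ref{characterizationofaffine}. That theorem already characterizes affineness of $\Psi$ by the \emph{field} equations $\Psi^*\overline R = R$ and $\Psi^*\overline T = T$ on $V_x$, so it suffices to prove that, under the standing hypotheses $\nabla R=\nabla T=0$ on $M$ together with the corresponding identities on $\overline M$, these field equations are equivalent to the single \emph{pointwise} equations $\psi R_x=\overline R_y$ and $\psi T_x=\overline T_y$ at the centers. The whole work is thus to pass between ``pointwise at the center'' and ``everywhere on the normal neighborhood.''

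Necessity is the easy half and needs no parallelism. I would argue directly: an affine diffeomorphism respects the two connections and always satisfies $\Psi_*[X,Y]=[\Psi_*X,\Psi_*Y]$, so the defining formulas for $R$ and $T$ force $\Psi^*\overline R=R$ and $\Psi^*\overline T=T$ (equivalently, invoke the ``only if'' half of Theorem~\ref{characterizationofaffine}). Evaluating these at $x$ and using that $d\Psi_x=\psi$ — which follows from $d\Exp_{x,0}=\mathrm{Id}=d\Exp_{y,0}$ and the linearity of $\psi$ — yields $\psi R_x=\overline R_y$ and $\psi T_x=\overline T_y$.

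The substance is the converse, and here I would exploit parallelism through the frames already set up in the proof of Theorem~\ref{characterizationofaffine}. Work in the frame $X_i$ on $V_x$ that is parallel along the radial geodesics through $x$, with dual coframe $\omega^i$; on $\overline M$ use $\overline X_i:=\Psi_*X_i$, which by that same proof is parallel along radial geodesics through $y$ and satisfies $\overline X_i|_y=\psi X_{i,x}$. The key observation is that parallelism makes the frame components of $R$ and $T$ \emph{constant}: since the covariant derivative commutes with contraction and annihilates each of $X_i,\omega^i,R$ along a radial geodesic $\gamma$, one gets $\dot\gamma(R^\ell_{kij})=0$, so $R^\ell_{kij}\equiv R^\ell_{kij}(x)$ throughout $V_x$, and likewise $T^i_{jk}\equiv T^i_{jk}(x)$; the analogous statements hold for $\overline R^\ell_{kij},\overline T^i_{jk}$ on $V_y$. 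Because $\Psi_*X_i=\overline X_i$, a one-line naturality computation gives $(\Psi^*\overline R)^\ell_{kij}=\overline R^\ell_{kij}\circ\Psi$, so the pulled-back components are constant as well, equal to $\overline R^\ell_{kij}(y)$. The hypothesis $\psi R_x=\overline R_y$ says exactly that the components of $R_x$ in the basis $X_{i,x}$ match those of $\overline R_y$ in the basis $\psi X_{i,x}$, i.e. $R^\ell_{kij}(x)=\overline R^\ell_{kij}(y)$; combined with the two constancy statements this forces $\Psi^*\overline R=R$ on all of $V_x$, and the identical argument gives $\Psi^*\overline T=T$. Theorem~\ref{characterizationofaffine} then delivers affineness.

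I expect the main obstacle to be this converse propagation step — turning the point data at the centers into a global field identity — and in particular being careful on two points: that it genuinely uses the vanishing of $\nabla R,\nabla T$ on \emph{both} manifolds, so that both frames have constant curvature and torsion components; and that it leans on the fact, borrowed from the proof of Theorem~\ref{characterizationofaffine}, that $\Psi_*X_i$ is again parallel along radial geodesics. Once constancy of components in these two matched parallel frames is in hand, matching a single value at the centers is precisely what is needed, and the rest is routine bookkeeping with frame components.
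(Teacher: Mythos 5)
Your proposal is correct and follows essentially the same route as the paper: the paper's proof likewise notes that the components of $T$ and $R$ in the radially parallel frame $X_i$ (and of $\overline{T},\overline{R}$ in $\Psi_*X_i$) are constant throughout $V_x$ and $V_y$ by $\nabla R=\nabla T=0$, so that agreement at the single point $x$ propagates to $\Psi^*\overline{R}=R$ and $\Psi^*\overline{T}=T$ everywhere, whereupon Theorem \ref{characterizationofaffine} applies. Your explicit remarks that $d\Psi_x=\psi$ and that covariant constancy is needed on \emph{both} manifolds merely spell out details the paper leaves implicit.
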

That is, when the curvature and torsion are covariantly constant, it is enough to check that they agree at a single point.

\begin{proof}
The basis $X_i$ is parallel transported along $\gamma$, and so the components of $T$ and $R$ relative to the basis are constant along $\gamma$.  This holds throughout $V_x$, and so the components of $T$ and $R$ are constant in $V_x$.  Similarly, the components of $\overline{T}$ and $\overline{R}$ are constant throughout $V_y$.  Hence $\Psi^*\overline{R}=R$ and $\Psi^*\overline{T}=T$ hold throughout the domain, because they hold at a single point.
\end{proof}

\section{Connections invariant under parallelism}
Let $M$ be a manifold with affine connection $\nabla$.  Let $a,b\in M$ be two points and $\gamma$ a piecewise $C^1$ curve from $a$ to $b$.  We say that $\pi_\gamma:TM_a\to TM_b$ (the parallel transport map) is the {\em differential of a local affine diffeomorphism} if there exist neighborhoods $U_a$ and $U_b$ of $a$ and $b$ and there is a local affine diffeomorphism $\Phi : U_a\to U_b$ such that $\Phi_{*,a}=\pi_\gamma$.   The connection $\nabla$ is said to be {\em invariant under its parallelism} if $\pi_\gamma$ is the differential of a local affine diffeomorphism for all $\gamma$.\index{Invariant under parallelism}

\begin{proposition}
The following are equivalent:
\begin{enumerate}
\item $\nabla$ is invariant under its parallelism.
\item $\pi_\gamma$ is the differential of a local affine diffeomorphism for all {\em geodesics} $\gamma$.
\item $\nabla T = \nabla R = 0$.
\end{enumerate}
\end{proposition}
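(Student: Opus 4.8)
The plan is to prove the cycle of implications $(1)\Rightarrow(2)\Rightarrow(3)\Rightarrow(1)$, leaning on Corollary \ref{covariantlyconstant} for the last step and on the infinitesimal recovery of $\nabla$ from parallel transport for the middle one. The implication $(1)\Rightarrow(2)$ is immediate, since geodesics are a special case of piecewise $C^1$ curves: if every $\pi_\gamma$ is the differential of a local affine diffeomorphism, then in particular this holds for the geodesics.

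For $(2)\Rightarrow(3)$, fix a point $a$ and a tangent vector $Y\in TM_a$, and let $\gamma(t)=\Exp_a(tY)$ be the geodesic with $\dot\gamma(0)=Y$. By hypothesis, for each small $s$ the parallel transport $\pi_{\gamma|_{[0,s]}}$ from $a$ to $\gamma(s)$ is the differential at $a$ of some local affine diffeomorphism $\Phi$. Because an affine diffeomorphism intertwines the connections, it carries the curvature and torsion of $\nabla$ to themselves; evaluating the resulting identities $\Phi_*R=R$ and $\Phi_*T=T$ at $a$ and using $\Phi_{*,a}=\pi_{\gamma|_{[0,s]}}$ gives $\pi_{\gamma|_{[0,s]}}R_a=R_{\gamma(s)}$ and $\pi_{\gamma|_{[0,s]}}T_a=T_{\gamma(s)}$. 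Here the tensor extension of $\pi_{\gamma|_{[0,s]}}$ agrees with that of $\Phi_{*,a}$, since a linear isomorphism extends uniquely to the tensor algebra. Thus $R$ and $T$ are invariant under parallel transport along $\gamma$, so $\pi_{\gamma|_{[0,s]}}^{-1}R_{\gamma(s)}-R_a=0$ for all $s$; the tensorial analogue of the limit formula recovering $\nabla$ from parallel transport then gives $(\nabla_Y R)_a=\lim_{s\to 0^+}s^{-1}(\pi_{\gamma|_{[0,s]}}^{-1}R_{\gamma(s)}-R_a)=0$, and likewise $(\nabla_Y T)_a=0$. Since $a$ and $Y$ were arbitrary, $\nabla R=\nabla T=0$.

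For $(3)\Rightarrow(1)$, let $\gamma$ be any piecewise $C^1$ curve from $a$ to $b$. Since $\nabla R=\nabla T=0$, the tensors $R$ and $T$ are parallel, so parallel transport preserves them: $\pi_\gamma R_a=R_b$ and $\pi_\gamma T_a=T_b$. Apply Corollary \ref{covariantlyconstant} with $\overline M=M$, $x=a$, $y=b$, and $\psi=\pi_\gamma$: the hypotheses $\nabla R=\nabla T=0$ together with $\psi R_a=R_b$ and $\psi T_a=T_b$ are exactly met, so $\Psi=\Exp_b\circ\pi_\gamma\circ\Exp_a^{-1}$ is a local affine diffeomorphism. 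Finally $\Psi_{*,a}=d\Exp_{b,0}\circ\pi_\gamma\circ d(\Exp_a^{-1})_a=\pi_\gamma$, using $d\Exp_{p,0}=\operatorname{Id}$, so $\pi_\gamma$ is the differential of a local affine diffeomorphism. As this holds for every $\gamma$, we obtain $(1)$.

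The main obstacle is the implication $(2)\Rightarrow(3)$: one must correctly translate the statement ``$\pi_\gamma$ is the differential of an affine diffeomorphism'' into invariance of $R$ and $T$ under parallel transport---taking care that the tensor extension of the differential coincides with the parallel-transport extension, which is what lets the two descriptions be compared---and then differentiate this invariance to extract $\nabla R=\nabla T=0$. The remaining two implications are essentially bookkeeping layered on top of Corollary \ref{covariantlyconstant} and the identity $d\Exp_{p,0}=\operatorname{Id}$.
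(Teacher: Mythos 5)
Your proposal is correct and takes essentially the same route as the paper: the cycle $(1)\Rightarrow(2)\Rightarrow(3)\Rightarrow(1)$, with $(2)\Rightarrow(3)$ obtained by differentiating the parallel-transport invariance of $R$ and $T$ along geodesics and $(3)\Rightarrow(1)$ by applying Corollary \ref{covariantlyconstant} to $\psi=\pi_\gamma$. You merely make explicit two details the paper leaves tacit, namely the tensorial version of the limit formula recovering $\nabla$ from parallel transport and the verification $\Psi_{*,a}=\pi_\gamma$ via $d\Exp_{p,0}=\operatorname{Id}$.
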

\begin{proof}

$(1) \implies (2)$: Immediate from the definition of invariance under parallelism.

$(2) \implies (3)$: If $\nabla$ is invariant under parallelism, then since local affine diffeomorphisms preserve curvature and torsion, $\nabla_{\dot{\gamma}}R = \nabla_{\dot{\gamma}}T=0$ for all geodesics $\gamma$.  Therefore $\nabla R=\nabla T=0$.

$(3)\implies (1)$: Since $R$ and $T$ are covariantly constant, for any curve $\gamma$ from $a$ to $b$, $\pi_\gamma R_a = R_b$ and $\pi_\gamma T_a=T_b$.  By Corollary \eqref{covariantlyconstant}, there is a local affine diffeomorphism $\Psi:V_a\to V_b$ such that $\Psi_{*,a}=\pi_\gamma$.
\end{proof}

Suppose that $a$ and $b$ are close enough that each is the center of a star-shaped normal neighborhood containing the other, and let $[a,b]$ be the unique geodesic inside this neighborhood that connects $a$ and $b$.  Suppose that $\pi_{[a,b]} = (\tau_{a,b})_{*,a}$ where $\tau_{a,b}$ is a local affine diffeomorphism.  Then $\tau_{a,b}$ leaves the geodesics (locally near $a,b$) invariant.  As a result, if $c$ is a point on $[a,b]$ and $d$ is the point past $b$ such that $[a,c]:[b,d]=1$ (same parameter time),
$$(\tau_{a,b})_{*,c} \circ\phi_{a,c} = \pi_{b,d}\circ(\tau_{a,b})_{*,c} = \pi_{[b,d]}\circ\pi_{[a,b]}=\pi_{[a,d]}.$$
The local affine diffeomorphism $\tau_{a,b}$ is called the {\em transvection} with support $[a,b]$.\index{Transvection}

Fix $o\in M$ and let $x:I\to M$ be a geodesic through $o$.  Let $\tau_{o,x(t)}$ be the transvection with support $x(s), 0\le s\le t$.  Then, by the above,
$$\tau_{o,x(t)}\circ\tau_{o,x(s)} = \tau_{o,x(s+t)}.$$
Hence $t\mapsto \tau_{o,x(t)}$ defines a mapping $I\to \operatorname{Aff}(U_o)$, the pseudogroup of affine diffeomorphisms of $U_o$.  

\begin{definition}\index{Transvection!infinitesimal}
Let $\tau(t) = \tau_{o,x(t)}$.  Then $\left.\frac{d\tau}{dt}\right|_{t=0} = X\in\mathscr{T}^1_0U_o$ is called the infinitesimal transvection with support $o$.
\end{definition}

Let $\mathfrak{m}_o=\{\text{infinitesimal tranvections with $o$ in the support}\}\subset\mathfrak{g}_o$.  Evaluation at $o$ defines a homomorphism $\operatorname{ev}_o:\mathfrak{g}_o\to TM_o$, $\operatorname{ev}_o(X) = X_o$.  When restricting this homomorphism to $\mathfrak{m}_o$, we have
\begin{proposition}
Let $(M,\nabla)$ be an affine manifold with $\nabla$ invariant under parallelism.  Then $\operatorname{ev}_o:\mathfrak{m}_o\to TM_o$ is a linear isomorphism.
\end{proposition}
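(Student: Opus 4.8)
The plan is to exhibit an explicit inverse to $\operatorname{ev}_o$. For $v\in TM_o$ let $x_v(t)=\Exp_o(tv)$ be the geodesic with initial velocity $v$, and let $X^{(v)}=\left.\frac{d}{dt}\right|_{t=0}\tau_{o,x_v(t)}$ be the associated infinitesimal transvection, so that $\mathfrak{m}_o=\{X^{(v)}:v\in TM_o\}$. First I would record the explicit form of the transvection: applying Corollary \ref{covariantlyconstant} with $\psi=\pi_{[o,x_v(t)]}$, which carries the covariantly constant tensors $R_o,T_o$ to $R_{x_v(t)},T_{x_v(t)}$, one sees that the transvection is
$$\tau_{o,x_v(t)}=\Exp_{x_v(t)}\circ\,\pi_{[o,x_v(t)]}\circ\Exp_o^{-1}.$$
In particular $\tau_{o,x_v(t)}(o)=x_v(t)$, so differentiating at $t=0$ gives $\operatorname{ev}_o(X^{(v)})=X^{(v)}_o=\dot x_v(0)=v$. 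This already shows $\operatorname{ev}_o|_{\mathfrak{m}_o}$ is surjective and that the set-map $\sigma:v\mapsto X^{(v)}$ satisfies $\operatorname{ev}_o\circ\sigma=\operatorname{Id}_{TM_o}$.

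The substance of the proposition is that $\sigma$ is \emph{linear}, equivalently that $\mathfrak{m}_o$ is a linear subspace; once this is known, $\operatorname{ev}_o|_{\mathfrak{m}_o}$ is a linear surjection with linear right inverse $\sigma$, hence an isomorphism (injectivity is then automatic, and follows directly anyway from $X^{(v)}_o=v$, since $X^{(0)}=0$). To prove linearity I would pass to the $1$-jet at $o$, defining $j_o:\mathfrak{g}_o\to TM_o\oplus\mathfrak{gl}(TM_o)$ by $j_o(Z)=(Z_o,(\nabla Z)_o)$. This map is injective: if $Z_o=0$ and $(\nabla Z)_o=0$, then $Z\in\mathfrak{k}_o$, and for $Z\in\mathfrak{k}_o$ the identity $[Z,\widetilde W]=\nabla_Z\widetilde W-\nabla_{\widetilde W}Z-T(Z,\widetilde W)$ evaluated at $o$ gives $\mathscr{L}_{Z,o}=-(\nabla Z)_o=0$, whence $Z=0$ by the faithfulness of the isotropy representation $\mathfrak{k}_o\to\mathfrak{gl}(TM_o)$ established above.

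It then suffices to show $v\mapsto j_o(X^{(v)})$ is linear, and since $X^{(v)}_o=v$ is already linear, everything reduces to computing $(\nabla X^{(v)})_o$. Here I would fix $W\in TM_o$ and set $F(t,s)=\Exp_{x_v(t)}(\pi_{[o,x_v(t)]}(sW))$, so that $X^{(v)}_{\Exp_o(sW)}=\partial_tF(t,s)|_{t=0}$ and hence $(\nabla_WX^{(v)})_o=\frac{D}{\partial s}\big|_{s=0}\partial_tF|_{t=0}$. Because $\partial_sF(t,0)=\pi_{[o,x_v(t)]}W$ is the parallel transport of $W$ along $x_v$ (using $d(\Exp_q)_0=\operatorname{Id}$), its covariant $t$-derivative vanishes, so the torsion form of the symmetry lemma, $\frac{D}{\partial s}\partial_tF=\frac{D}{\partial t}\partial_sF-T(\partial_tF,\partial_sF)$, collapses at $(0,0)$ to
$$(\nabla_WX^{(v)})_o=-T(v,W).$$
This is linear in $v$, so $j_o(X^{(v)})=(v,-T(v,\cdot))$ is linear; injectivity of $j_o$ then forces $\sigma(v+w)=\sigma(v)+\sigma(w)$ and $\sigma(av)=a\sigma(v)$ (the scaling relation also follows directly from the reparametrization $x_{av}(t)=x_v(at)$). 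Thus $\mathfrak{m}_o=\operatorname{im}\sigma$ is a subspace and $\operatorname{ev}_o|_{\mathfrak{m}_o}$ is the inverse of the linear bijection $\sigma$, completing the proof.

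I expect the main obstacle to be this last covariant-derivative computation: getting the torsion term and the order of the mixed covariant derivatives right, and justifying carefully that $\partial_sF(t,\cdot)|_{s=0}$ is genuinely the parallel transport of $W$ along $x_v$. Everything else—surjectivity, the injectivity of $j_o$, and the final linear-algebra assembly—is routine given the results already available.
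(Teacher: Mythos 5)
Your proof is correct, and it is in fact substantially more complete than the paper's. The paper's proof consists only of your first step: each $v\in TM_o$ generates a geodesic, hence a family of transvections, hence an infinitesimal transvection with $\operatorname{ev}_o(X^{(v)})=v$ --- that is, surjectivity --- and it silently takes for granted that $\mathfrak{m}_o$ is a linear subspace of $\mathfrak{g}_o$ and that $\operatorname{ev}_o$ restricted to it is linear (facts it then uses to split the exact sequence and write $\mathfrak{g}_o=\mathfrak{k}_o\oplus\mathfrak{m}_o$). You correctly identify this as the real content and supply it with three ingredients absent from the paper: the explicit formula $\tau_{o,x_v(t)}=\Exp_{x_v(t)}\circ\pi_{[o,x_v(t)]}\circ\Exp_o^{-1}$, which is a legitimate combination of Corollary \ref{covariantlyconstant} (since invariance under parallelism gives $\nabla T=\nabla R=0$, so parallel transport carries $T_o,R_o$ to $T_{x_v(t)},R_{x_v(t)}$) with the uniqueness in Proposition \ref{faithful}; the injectivity of the $1$-jet map $Z\mapsto(Z_o,(\nabla Z)_o)$ on $\mathfrak{g}_o$, where your sign $\mathscr{L}_{Z,o}=-(\nabla Z)_o$ for $Z\in\mathfrak{k}_o$ is right; and the computation $(\nabla_W X^{(v)})_o=-T(v,W)$. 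That last sign is independently confirmed by the paper's later identities: for infinitesimal transvections $\nabla_Y X$ agrees at $o$ with $\mathscr{L}_YX=[Y,X]$, and $T(X_o,Y_o)=[X,Y]_o$, so indeed $(\nabla_{Y_o}X)_o=[Y,X]_o=-T(X_o,Y_o)$. The torsion form of the symmetry lemma is stated with the correct sign, and $\partial_sF(t,0)=\pi_{[o,x_v(t)]}W$ does follow from $d(\Exp_q)_0=\operatorname{id}$, so the step you flagged as the main obstacle goes through. What your route buys is a genuinely complete proof of the proposition as stated, linearity included --- essentially Kostant's observation that an affine Killing field is determined by its $1$-jet at a point; what the paper's two-line argument buys is brevity, at the cost of leaving the subspace property of $\mathfrak{m}_o$ unproved.
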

\begin{proof}
Each element $X_o\in TM_o$ generates a unique geodesic for sufficiently small time.  This geodesic then generates a transvection $\tau$ such that $\left.\frac{d\tau}{dt}\right|_{t=0} = X$ which, when evaluated at $o$ is $\operatorname{ev}_o(X) = X_o$.
\end{proof}

Let $\ker\operatorname{ev}_o = \mathfrak{k}_o\subset\mathfrak{g}_o$.   The inclusion $\mathfrak{m}_o$ splits the exact sequence
$$0\to \mathfrak{k}_o \xrightarrow{\subset} \mathfrak{g}_o \xrightarrow{\operatorname{ev}_o} TM_o \cong\mathfrak{m}_o\to 0.$$
Hence,
\begin{theorem}
Let $(M,\nabla)$ be an affine manifold with $\nabla$ invariant under parallelism.  Then $\mathfrak{g}_o=\mathfrak{k}_o\oplus\mathfrak{m}_o$.  Furthermore, $[\mathfrak{k}_o,\mathfrak{k}_o]\subset\mathfrak{k}_o$ and $[\mathfrak{k}_o,\mathfrak{m}_o]\subset\mathfrak{m}_o$.
\end{theorem}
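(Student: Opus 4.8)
The plan is to prove the three assertions in turn: the vector-space decomposition $\mathfrak{g}_o=\mathfrak{k}_o\oplus\mathfrak{m}_o$, the fact that $\mathfrak{k}_o$ is a subalgebra, and the crucial bracket relation $[\mathfrak{k}_o,\mathfrak{m}_o]\subset\mathfrak{m}_o$. The first two are immediate from what precedes; the third carries all the weight.

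For the decomposition I would simply unwind the split exact sequence. Since $\operatorname{ev}_o$ restricts to an isomorphism $\mathfrak{m}_o\to TM_o$, any $X\in\mathfrak{k}_o\cap\mathfrak{m}_o$ satisfies $\operatorname{ev}_o(X)=X_o=0$ and hence $X=0$; and given an arbitrary $X\in\mathfrak{g}_o$, the unique $Y\in\mathfrak{m}_o$ with $Y_o=X_o$ makes $X-Y\in\ker\operatorname{ev}_o=\mathfrak{k}_o$, so $X=(X-Y)+Y$. Thus $\mathfrak{g}_o=\mathfrak{k}_o\oplus\mathfrak{m}_o$. For $[\mathfrak{k}_o,\mathfrak{k}_o]\subset\mathfrak{k}_o$, recall that $\mathfrak{g}_o$ is a Lie algebra (shown earlier), so $[X,Y]\in\mathfrak{g}_o$ whenever $X,Y\in\mathfrak{g}_o$; and the bracket of two vector fields that both vanish at $o$ again vanishes at $o$ (visible in any chart from $[X,Y]^k=X^i\partial_iY^k-Y^i\partial_iX^k$), so $\operatorname{ev}_o[X,Y]=0$ and $[X,Y]\in\mathfrak{k}_o$.

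The heart of the argument is to show that the adjoint action of $\mathfrak{k}_o$ preserves $\mathfrak{m}_o$, and the key geometric fact I would establish first is that an affine diffeomorphism fixing $o$ conjugates transvections to transvections. Concretely, let $\phi$ be a local affine diffeomorphism with $\phi(o)=o$, and let $\tau_{o,x(t)}$ be the transvection supported on a geodesic $x$ through $o$, so that $\tau_{o,x(t)}(o)=x(t)$ and $(\tau_{o,x(t)})_{*,o}=\pi_{[o,x(t)]}$. I claim $\phi\circ\tau_{o,x(t)}\circ\phi^{-1}=\tau_{o,\phi(x(t))}$. Both sides are local affine diffeomorphisms sending $o$ to $\phi(x(t))$, so by Proposition \ref{faithful} it suffices to match their differentials at $o$. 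Using that parallel transport is natural under affine maps (applying $\phi_*$ to $\nabla_{\dot\gamma}X=0$ gives $\phi_{*}\circ\pi_\gamma=\pi_{\phi\circ\gamma}\circ\phi_{*}$), and that $\phi$ carries the radial geodesic $[o,x(t)]$ to $[o,\phi(x(t))]$, I compute
$$(\phi\circ\tau_{o,x(t)}\circ\phi^{-1})_{*,o}=\phi_{*,x(t)}\circ\pi_{[o,x(t)]}\circ(\phi_{*,o})^{-1}=\pi_{[o,\phi(x(t))]},$$
which is exactly $(\tau_{o,\phi(x(t))})_{*,o}$. Then I would take $\phi=\exp(sK)$ for $K\in\mathfrak{k}_o$, which fixes $o$ because $K_o=0$, and apply the formula to the whole transvection flow: if $M\in\mathfrak{m}_o$ generates $t\mapsto\tau_{o,x(t)}$, then $(\exp(sK))_*M$ generates the conjugated flow $t\mapsto\tau_{o,\,\exp(sK)(x(t))}$, and since $\exp(sK)(x(t))$ is again a geodesic through $o$ this is a transvection flow, whence $(\exp(sK))_*M\in\mathfrak{m}_o$ for every $s$. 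As $\mathfrak{m}_o$ is a finite-dimensional linear subspace, differentiating $s\mapsto(\exp(sK))_*M$ at $s=0$ keeps us inside $\mathfrak{m}_o$, and that derivative is $-\mathscr{L}_KM=-[K,M]$; hence $[K,M]\in\mathfrak{m}_o$.

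The main obstacle is the conjugation formula: getting the bookkeeping of base points right, since the transvection moves $o$ to $x(t)$, so one must track differentials at the moving point $x(t)$ rather than at $o$, and then invoke naturality of parallel transport together with Proposition \ref{faithful} in precisely the right way. Once that identity is in hand, the passage to infinitesimal generators is routine.
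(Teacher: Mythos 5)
Your proof is correct and follows essentially the same route as the paper: split the evaluation sequence for the decomposition, note that brackets of fields vanishing at $o$ vanish at $o$, and prove $[\mathfrak{k}_o,\mathfrak{m}_o]\subset\mathfrak{m}_o$ by pushing the transvection flow forward under $\exp(sK)$ and differentiating at $s=0$. The only difference is one of detail: where the paper simply asserts that $\exp(sX)_{*,o}Y$ is again an infinitesimal transvection, you justify this via the conjugation identity $\phi\circ\tau_{o,x(t)}\circ\phi^{-1}=\tau_{o,\phi(x(t))}$ using naturality of parallel transport and Proposition \ref{faithful}, which is a welcome filling-in of the paper's implicit step.
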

\begin{proof}
Only the statements about the Lie bracket remain to be proven.  Firstly, since the bracket of two vector fields that vanish at $o$ also vanishes at $o$, $\mathfrak{k}_o\subset\mathfrak{g}_o$ is a Lie subalgebra: $[\mathfrak{k}_o,\mathfrak{k}_o]$.

To prove that $[\mathfrak{k}_o,\mathfrak{m}_o]\subset \mathfrak{m}_o$, let $X\in \mathfrak{k}_o$ and $Y\in \mathfrak{m}_o$.  Then $\exp(sX)$ is a local affine diffeomorphism and $\exp(sX)o=o$.  Hence $\exp(sX)_{*,o}Y$ is an infinitesimal transvection with support $o$.  So $\exp(sX)_{*,o}Y\in\mathfrak{m}_o$.  The derivative at $s=0$ is $\mathscr{L}_XY\in\mathfrak{m}_o$.  Thus $[X,Y]\in\mathfrak{m}_o$, as required.
\end{proof}

\begin{proposition}
Let $(M,\nabla)$ be an afine manifold with base point $o\in M$ and with $\nabla$ invariant under parallelism.  For $X\in\mathfrak{g}_o$, write $X=X_{\mathfrak{k}}+X_{\mathfrak{m}}$ be the splitting of $X$ into $\mathfrak{k}_o$ and $\mathfrak{m}_o$ parts.  Then for $X,Y,Z\in\mathfrak{m}_o$,
\begin{align*}
T(X_o,Y_o) &= [X,Y]_o\\
R(X_o,Y_o)Z_o &= (\mathscr{L}_{[Y,X]_{\mathfrak{k}}}Z)_o
\end{align*}
\end{proposition}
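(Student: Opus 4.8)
The plan is to distill both formulas from a single geometric property of infinitesimal transvections and then reduce everything else to bracket computations at $o$. The crucial input is the following claim: for $Z\in\mathfrak{m}_o$ and any $v\in TM_o$, one has $(\nabla_v Z)_o = T(v,Z_o)$. To prove it, I would use that the geodesic $c(s)=\Exp_o(sZ_o)$ is the integral curve of $Z$ through $o$ and that the flow $\tau(t)=\exp(tZ)$ of $Z$ satisfies $\tau(t)_{*,o}=\pi_{[o,c(t)]}$ by the very definition of the transvection. Extend $v$ to the field $W$ along $c$ given by parallel transport, $W(s)=\pi_{[o,c(s)]}v$. Then $(\nabla_Z W)_o=\nabla_{\dot c(0)}W=0$ since $W$ is parallel, while $(\mathscr{L}_Z W)_o=0$ because pulling $W$ back by $\tau(t)$ returns $v$ for every $t$; feeding these into the definition of torsion gives $T(Z_o,v)=-(\nabla_v Z)_o$, which is the claim. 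An entirely parallel computation at a point where $K_o=0$ yields, for $K\in\mathfrak{k}_o$, the companion identity $(\nabla_v K)_o=-\mathscr{L}_{K,o}(v)$, which governs the isotropy directions.

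Granting this, the torsion formula is immediate. By the claim $(\nabla_XY)_o=T(X_o,Y_o)$ and $(\nabla_YX)_o=T(Y_o,X_o)=-T(X_o,Y_o)$, so the definition $T(X_o,Y_o)=(\nabla_XY-\nabla_YX-[X,Y])_o$ collapses to $T(X_o,Y_o)=2T(X_o,Y_o)-[X,Y]_o$, that is, $[X,Y]_o=T(X_o,Y_o)$.

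For the curvature I would expand $R(X_o,Y_o)Z_o=([\nabla_X,\nabla_Y]Z-\nabla_{[X,Y]}Z)_o$ using the two relations $\nabla_XV=\mathscr{L}_XV+\nabla_VX+T(X,V)$ and, from the affine property \eqref{InfinitesimalAffine}, $\mathscr{L}_X(\nabla_YZ)=\nabla_{[X,Y]}Z+\nabla_Y[X,Z]$. Applying the claim, its $\mathfrak{k}$-companion, and the torsion formula to each resulting term (splitting $[X,Z]$ and $[Y,Z]$ into their $\mathfrak{k}$- and $\mathfrak{m}$-parts) rewrites $R(X_o,Y_o)Z_o$ as a combination of iterated-torsion terms $T(T(\cdot,\cdot),\cdot)$ plus the two isotropy contributions $\mathscr{L}_{[Y,Z]_{\mathfrak{k}},o}$ and $\mathscr{L}_{[X,Z]_{\mathfrak{k}},o}$. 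Independently, I would expand the target $(\mathscr{L}_{[Y,X]_{\mathfrak{k}}}Z)_o=[[Y,X]_{\mathfrak{k}},Z]_o$ by writing $[Y,X]_{\mathfrak{k}}=[Y,X]-[Y,X]_{\mathfrak{m}}$, applying the Jacobi identity to $[[Y,X],Z]$, and reducing each bracket at $o$ with the same three tools.

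The main obstacle is bookkeeping: making the two expansions match. The isotropy terms cancel identically between them, and the remaining difference reduces to the single identity
$$T(T(Y_o,Z_o),X_o)-T(T(X_o,Z_o),Y_o)=T(Y_o,T(X_o,Z_o))-T(X_o,T(Y_o,Z_o)),$$
which is merely antisymmetry of $T$ in its arguments. It is worth noting that, had one instead substituted the curvature formula back into this step, the identity would become the first Bianchi identity under $\nabla T=0$; the advantage of the present arrangement is that it establishes the formula directly, with no circularity. The genuinely delicate points are therefore keeping the signs consistent across the two expansions and ensuring that, in the key claim, the parallel extension of $v$ is used only along the integral curve of $Z$, where $\tau(t)$ truly acts by parallel transport.
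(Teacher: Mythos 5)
Your proposal is correct, and it takes a genuinely more computational route than the paper's, though both rest on the same geometric foundation: the flow of $Z\in\mathfrak{m}_o$ consists of transvections whose differential at $o$ is parallel transport along the geodesic $\Exp_o(tZ_o)$. The paper distills this into the tensorial identity $(\nabla_X S)_o=(\mathscr{L}_X S)_o$, valid for \emph{every} tensor field $S$ and $X\in\mathfrak{m}_o$; the torsion formula is then a one-liner (from $(\nabla_XY)_o=[X,Y]_o$ directly, rather than your self-referential substitution $T(X_o,Y_o)=2T(X_o,Y_o)-[X,Y]_o$, which is valid but roundabout), and the curvature formula follows by replacing only the outer derivatives $\nabla_X,\nabla_Y$ by $\mathscr{L}_X,\mathscr{L}_Y$ at $o$ and invoking \eqref{InfinitesimalAffine} twice, collapsing everything to $(\mathscr{L}_{[Y,X]}Z-\nabla_{[Y,X]}Z)_o$ before the $\mathfrak{k}$/$\mathfrak{m}$ splitting is needed --- no iterated-torsion terms ever appear. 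Your key claim $(\nabla_v Z)_o=T(v,Z_o)$ is the pointwise shadow of the paper's identity (your parallel extension $W$ makes $\nabla_ZW$ and $\mathscr{L}_ZW$ vanish at $o$ separately instead of merely agreeing), and your handling of it is sound: the Lie derivative $(\mathscr{L}_ZW)_o$ samples $W$ only along the flow line of $Z$ through $o$, and $\tau(-t)_{*,c(t)}=(\tau(t)_{*,o})^{-1}=\pi_{[o,c(t)]}^{-1}$ by the chain rule, which is exactly what the computation needs. I verified that your double expansion closes as claimed: both $R(X_o,Y_o)Z_o$ and $(\mathscr{L}_{[Y,X]_{\mathfrak{k}}}Z)_o$ reduce to
$$T(T(X_o,Y_o),Z_o)+T\bigl(Y_o,T(X_o,Z_o)\bigr)-T\bigl(X_o,T(Y_o,Z_o)\bigr)+\mathscr{L}_{[Y,Z]_{\mathfrak{k}},o}X_o-\mathscr{L}_{[X,Z]_{\mathfrak{k}},o}Y_o,$$
so after applying antisymmetry of $T$ to $T(T(Y_o,Z_o),X_o)$ and $T(T(X_o,Z_o),Y_o)$ the match is term-by-term, and the isotropy terms cancel exactly as you say. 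What your route buys is visibility --- the quadratic torsion structure, the isotropy operators $\mathscr{L}_{[\cdot,\cdot]_{\mathfrak{k}},o}$, and the link to the first Bianchi identity under $\nabla T=0$ --- and a claim stated for arbitrary $v\in TM_o$, which cleanly handles directions like $[X,Y]_o$; what it costs is the bookkeeping that the paper's commutator manipulation avoids entirely.
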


\begin{proof}
The proof relies on the following fact: if $X$ is an infinitesimal transvection at $o$ and $T$ is a tensor, then parallel transport along $X$ agrees with the pushforward along the vector flow of $X$, ad so
$$(\nabla_XT)_o = \lim_{t\to 0}\frac{1}{t}\left(\exp(-tX)_{*,\exp(tX)o}T_{\exp(tX)o}-T_o\right) = (\mathscr{L}_XT)_o.$$
Hence,
$$T(X_o,Y_o) = (\nabla_XY-\nabla_YX-[X,Y])_o = ([X,Y]-[Y,X]-[X,Y])_o = [X,Y]_o.$$
Also
\begin{align*}
R(X_o,Y_o)Z_o &= (\nabla_X\nabla_YZ - \nabla_Y\nabla_XZ - \nabla_{[X,Y]}Z)_o\\
&=(\mathscr{L}_X\nabla_YZ-\mathscr{L}_Y\nabla_X Z - \nabla_{[X,Y]}Z)_o.
\end{align*}
Now recall that for an infinitesimal affine diffeomorphism $X$, $\nabla_{[X,W]}=[\mathscr{L}_X,\nabla_W]$ for all vector fields $W$.  So continuing the above calculation of the curvature:
\begin{align*}
R(X_o,Y_o)Z_o &=(\nabla_{[X,Y]}Z + \nabla_Y\mathscr{L}_XZ - \nabla_{[Y,X]}Z - \nabla_X\mathscr{L}_YZ-\nabla_{[X,Y]}Z)_o.\\
&=(-\nabla_{[Y,X]}Z - \nabla_X\mathscr{L}_YZ + \nabla_Y\mathscr{L}_XZ)_o\\
&=(\mathscr{L}_{[Y,X]}Z - \nabla_{[Y,X]}Z)_o\\
&=(\mathscr{L}_{[Y,X]_{\mathfrak{k}}}Z - \nabla_{[Y,X]_{\mathfrak{k}}}Z)_o\qquad\!\!\text{since $\mathfrak{L}_W=\nabla_W$ for $W\in\mathfrak{m}_0$}\\
&=(\mathscr{L}_{[Y,X]_{\mathfrak{k}}}Z)_o\qquad\qquad\qquad\text{since $([X,Y]_{\mathfrak{k}})_o=0$}
\end{align*}
as required.
\end{proof}

We have the following important corollary: the structure of these Lie algebras is invariant under affine diffeomorphism.  More precisely,

\begin{corollary}
Let $(M,\nabla)$ and $(\overline{M},\overline{\nabla})$ be affine manifolds containing respective base points $o,\overline{o}$.  Suppose that $\nabla$ and $\overline{\nabla}$ are invariant under their parallelism.  Let $\Phi:U_o\to U_{\overline{o}}$ be a local affine diffeomorphism.  Then $\Phi$ induces a Lie algebra isomorphism between $\mathfrak{g}_o$ and $\mathfrak{g}_{\overline{o}}$ such that the decompositions $\mathfrak{g}_o=\mathfrak{k}_o\oplus\mathfrak{m}_o$ and $\mathfrak{g}_{\overline{o}}=\mathfrak{k}_{\overline{o}}\oplus\mathfrak{m}_{\overline{o}}$ are preserved.
\end{corollary}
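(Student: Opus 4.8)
The plan is to take the induced map to be the pushforward $\Phi_*$ on germs of vector fields. Since $\Phi$ is a diffeomorphism with $\Phi(o)=\overline{o}$, it carries germs of vector fields at $o$ to germs at $\overline{o}$, and it is automatic from the naturality of the Lie bracket under diffeomorphisms that $\Phi_*[X,Y]=[\Phi_* X,\Phi_* Y]$. The inverse of $\Phi_*$ is $(\Phi^{-1})_*$, and since $\Phi^{-1}$ is again a local affine diffeomorphism, every assertion below applies symmetrically; thus once $\Phi_*$ is shown to carry each distinguished subspace into its counterpart, it will follow that it restricts to an isomorphism on each.

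First I would check that $\Phi_*$ sends $\mathfrak{g}_o$ into $\mathfrak{g}_{\overline{o}}$. If $Z\in\mathfrak{g}_o$, its local flow $\exp(tZ)$ consists of local affine diffeomorphisms. Conjugating a flow by a diffeomorphism pushes forward its generator, so the local flow of $\Phi_* Z$ is $\Phi\circ\exp(tZ)\circ\Phi^{-1}$, a composition of local affine diffeomorphisms and hence itself one; thus $\Phi_* Z\in\mathfrak{g}_{\overline{o}}$. The isotropy part is then immediate: if $Z_o=0$ then $(\Phi_* Z)_{\overline{o}}=\Phi_{*,o}(Z_o)=0$, so $\Phi_*\mathfrak{k}_o\subseteq\mathfrak{k}_{\overline{o}}$.

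The substantive step, and the one I expect to be the main obstacle, is that $\Phi_*$ carries $\mathfrak{m}_o$ into $\mathfrak{m}_{\overline{o}}$, i.e.\ that it sends infinitesimal transvections to infinitesimal transvections. Here I would first note that $\Phi$ commutes with parallel transport: applying the defining identity $\overline{\nabla}_{\Phi_* A}\Phi_* B=\Phi_*(\nabla_A B)$ to the parallel-transport equation shows that if $V$ is parallel along a curve $\gamma$ from $o$ then $\Phi_* V$ is parallel along $\Phi\circ\gamma$, whence $\Phi_{*,\gamma(1)}\circ\pi_\gamma=\pi_{\Phi\circ\gamma}\circ\Phi_{*,o}$. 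Because $\Phi$ also carries geodesics to geodesics, for a geodesic $x(t)$ through $o$ the image $\overline{x}=\Phi\circ x$ is a geodesic through $\overline{o}$, and since $\tau_{o,x(t)}$ maps $o\mapsto x(t)$ with differential $\pi_{[o,x(t)]}$ there, one computes that $\Phi\circ\tau_{o,x(t)}\circ\Phi^{-1}$ is a local affine diffeomorphism whose differential at $\overline{o}$ equals $\Phi_{*,x(t)}\circ\pi_{[o,x(t)]}\circ(\Phi_{*,o})^{-1}=\pi_{[\overline{o},\overline{x}(t)]}$. By Proposition \ref{faithful} an affine diffeomorphism is determined by its differential at one point, so $\Phi\circ\tau_{o,x(t)}\circ\Phi^{-1}=\tau_{\overline{o},\overline{x}(t)}$.

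Finally, differentiating this conjugation identity at $t=0$, and again using that conjugating a one-parameter family by $\Phi$ pushes forward its infinitesimal generator (recall $\tau_{o,x(t)}$ is a genuine one-parameter group by the composition law $\tau_{o,x(t)}\circ\tau_{o,x(s)}=\tau_{o,x(s+t)}$), the infinitesimal transvection with support $o$ along $x$ is carried by $\Phi_*$ to the infinitesimal transvection with support $\overline{o}$ along $\overline{x}$; hence $\Phi_*\mathfrak{m}_o\subseteq\mathfrak{m}_{\overline{o}}$. Combining this with $\Phi_*\mathfrak{k}_o\subseteq\mathfrak{k}_{\overline{o}}$ and the symmetric inclusions obtained from $\Phi^{-1}$, the map $\Phi_*$ is a Lie algebra isomorphism $\mathfrak{g}_o\to\mathfrak{g}_{\overline{o}}$ restricting to isomorphisms $\mathfrak{k}_o\to\mathfrak{k}_{\overline{o}}$ and $\mathfrak{m}_o\to\mathfrak{m}_{\overline{o}}$, so the decompositions are preserved.
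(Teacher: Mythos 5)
Your proof is correct, but it takes a genuinely different route from the paper's. The paper disposes of this corollary in one line, by combining the immediately preceding proposition---which encodes the bracket structure in the curvature and torsion via $T(X_o,Y_o)=[X,Y]_o$ and $R(X_o,Y_o)Z_o=(\mathscr{L}_{[Y,X]_{\mathfrak{k}}}Z)_o$ for $X,Y,Z\in\mathfrak{m}_o$---with Theorem \ref{characterizationofaffine}: an affine diffeomorphism preserves $R$ and $T$, hence transports the bracket data sitting at the base point. Your argument, by contrast, never mentions curvature or torsion. You push the vector fields themselves forward and verify membership in each distinguished subspace directly: conjugation of local flows shows $\Phi_*\mathfrak{g}_o\subseteq\mathfrak{g}_{\overline{o}}$, evaluation at the base point handles $\mathfrak{k}_o$, and for $\mathfrak{m}_o$ you prove the conjugation identity $\Phi\circ\tau_{o,x(t)}\circ\Phi^{-1}=\tau_{\overline{o},\overline{x}(t)}$ by intertwining parallel transport with $\Phi$ and invoking the rigidity statement, Proposition \ref{faithful}; differentiating in $t$ then lands you in $\mathfrak{m}_{\overline{o}}$, and applying everything to $\Phi^{-1}$ upgrades the inclusions to equalities. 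What the paper's route buys is brevity, given the machinery already assembled; what yours buys is completeness and self-containedness: the curvature--torsion formulas only control brackets of elements of $\mathfrak{m}_o$, so the paper's one-line citation leaves implicit both why $\Phi_*$ carries infinitesimal affine transformations to infinitesimal affine transformations at all and why it matches the isotropy algebras $\mathfrak{k}_o$ and $\mathfrak{k}_{\overline{o}}$ (equivalently, conjugates the isotropy representations)---points your conjugation-of-flows steps establish explicitly. Your argument works verbatim from the definitions of flows, parallel transport, and transvections, and could stand as a replacement proof; the one hypothesis you use silently is that $\Phi(o)=\overline{o}$, which is indeed implicit in the statement.
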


\begin{proof}
Follows from the previous proposition and Theorem \ref{characterizationofaffine}.
\end{proof}

\chapter{Symmetric spaces}\label{SymmetricSpaces}
\section{Locally symmetric spaces}\index{Symmetric space!affine}
Let $(M,\nabla)$ be an affine manifold, and let $\sigma_x=-\operatorname{id}_{TM_x} \in\End(TM_x)$.  Let $s_x=\Exp_x\sigma_x\Exp_x^{-1} : U_x\to U_x$ for some normal neighborhood $U_x$ that is symmetric with respect to $s_x$ (so that $s_xU_x=U_x$).  We will sat that $(M,\nabla)$ is {\em locally symmetric at $x$} if $s_x$ is a local affine diffeomorphism for some $U_x$.  We say that $(M,\nabla)$ is {\em locally symmetric} if it is locally symmetric at all $x$.  The manifold is {\em globally symmetric} if is locally symmetric, and each $s_x$ extends to a global affine diffeomorphism at every $x$.

The local symmetry at $x$ fixes $x$ and reverses the direction of every geodesic through $x$.  Note that $x$ is the only fixed point of $s_x$ in $U_x$.  Indeed, if $f$ is an affine diffeomorphism such that $f(x)=x$ and $f(y)=y$ for some $y\in U_x$, $y\not=x$, then $f$ must preserve the geodesic $\gamma:I\to M$ connecting $x$ and $y$: $f(\gamma(t))=\gamma(t)$ for all $t\in I$.  In particular, $f_{*,x}(\dot{\gamma}(0))=\dot{\gamma}(0)$.  Thus $x$ is an isolated fixed point of a local affine diffeomorphism $f$ if and only if $1$ is not an eigenvalue of $f_{*,x}$.  As a result, $s_x$ is the only potential local affine diffeomorphism of order $2$ with $x$ an isolated fixed point.

\begin{proposition}
Let $(M,\nabla)$ be an affine manifold.  The following are equivalent:
\begin{enumerate}
\item $M$ is locally symmetric.
\item $T=0$ and $\nabla R=0$.
\item For all $x\in M$ and all symmetric normal neighborhoods $U_x$, $s_x$ is a local affine diffeomorphism.
\end{enumerate}
\end{proposition}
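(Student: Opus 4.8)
The plan is to prove the cyclic chain $(1)\Rightarrow(2)\Rightarrow(3)\Rightarrow(1)$, with the middle implication carrying the real content through Corollary \ref{covariantlyconstant}. The organizing principle is a parity observation about the geodesic symmetry $\sigma_x=-\operatorname{id}_{TM_x}$. Regarding a $(1,k)$-tensor as a multilinear map sending $k$ vectors to a vector, the action of $\sigma_x$ transforms both the $k$ arguments and the output, so the tensor is multiplied by $(-1)^{k+1}$. Hence the torsion $T$ (with $k=2$) and the covariant derivative $\nabla R$ (with $k=4$) are \emph{odd}, acquiring a sign $-1$, whereas the curvature $R$ (with $k=3$) and $\nabla T$ (with $k=3$) are \emph{even} and so are automatically fixed by $\sigma_x$. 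This dichotomy is what drives the whole argument.

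For $(1)\Rightarrow(2)$, I would first record that since $d\Exp_{x,0}=\operatorname{Id}$, the differential of $s_x=\Exp_x\circ\sigma_x\circ\Exp_x^{-1}$ at its fixed point is $(s_x)_{*,x}=\sigma_x=-\operatorname{id}_{TM_x}$. Because $s_x$ is a local affine diffeomorphism, it preserves every tensor canonically built from $\nabla$; in particular it fixes $T$ and $\nabla R$. Evaluating these invariances at $x$ and applying the parity computation gives $-T_x=T_x$ and $-(\nabla R)_x=(\nabla R)_x$, whence $T_x=0$ and $(\nabla R)_x=0$. Since $M$ is locally symmetric at every point, this yields $T=0$ and $\nabla R=0$ globally.

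For $(2)\Rightarrow(3)$, assume $T=0$ and $\nabla R=0$. Then $\nabla T=0$ holds automatically, so the hypotheses of Corollary \ref{covariantlyconstant} are satisfied with $\overline{M}=M$, $y=x$, $\psi=\sigma_x$, and $\Psi=s_x$. The corollary reduces the affineness of $s_x$ to the pointwise conditions $\sigma_x R_x=R_x$ and $\sigma_x T_x=T_x$. The first holds for any connection by the evenness of $R$; the second reads $-T_x=T_x$, which holds because $T=0$. Thus $s_x$ is a local affine diffeomorphism for every $x$ and every symmetric normal neighborhood $U_x$, which is exactly (3). Finally $(3)\Rightarrow(1)$ is immediate: condition (3) furnishes, at each $x$, a symmetric normal neighborhood on which $s_x$ is a local affine diffeomorphism, which is the definition of local symmetry.

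The main thing to get right is the sign bookkeeping under $\sigma_x=-\operatorname{id}$: one must correctly identify the tensor type (the number of vector arguments) of each invariant and track the resulting parity, since the entire equivalence turns on $T$ and $\nabla R$ being odd while $R$ is even. A secondary point worth flagging is the verification that Corollary \ref{covariantlyconstant} applies---specifically that $T=0$ supplies the required $\nabla T=0$---so that the clean pointwise criterion can be used in $(2)\Rightarrow(3)$ rather than the global criterion of Theorem \ref{characterizationofaffine}.
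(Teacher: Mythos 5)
Your argument is essentially the paper's own: the same sign computation under $\sigma_x=-\operatorname{id}_{TM_x}$ yields $(1)\Rightarrow(2)$, Corollary \ref{covariantlyconstant} (with $\psi=\sigma_x$, noting $\sigma_xR_x=R_x$ and $\sigma_xT_x=T_x=0$) yields $(2)\Rightarrow(3)$, and $(3)\Rightarrow(1)$ is definitional; your uniform parity bookkeeping, that a $(1,k)$-tensor scales by $(-1)^{k+1}$ so $T$ and $\nabla R$ are odd while $R$ and $\nabla T$ are even, is a clean repackaging of the paper's ad hoc signs. The one point you elide is the quantifier \emph{all} symmetric normal neighborhoods in (3): Corollary \ref{covariantlyconstant} only produces an affine diffeomorphism on a possibly smaller neighborhood $U_x$, small enough that the construction applies, whereas (3) asserts affineness of $s_x$ throughout every symmetric normal neighborhood. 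The paper closes this by observing that, since $T=0$ and $\nabla R=0$, the components of $T$ and $R$ in the radially parallel frame are constant, so the Cartan structure ODEs in the proof of Theorem \ref{characterizationofaffine} have constant coefficients and their solutions exist on the largest possible domain of $\Exp_x^{-1}$; hence the affineness of $s_x$ holds on the whole neighborhood. Adding that one remark would make your $(2)\Rightarrow(3)$ complete.
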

\begin{proof}
$(3)\implies (1)$ is the definition of locally symmetric space.

$(1)\implies (2)$: $T_x=\sigma_xT_x = -T_x$, so $T_x=0$ for all $x$.  Similarly, $\nabla R=\sigma_x\nabla R = -\nabla R$, so $\nabla R=0$ as well.

$(2)\implies (3)$: Since $T=0$ and $\nabla R=0$, the hypotheses of Corollary \ref{covariantlyconstant} are satisfied, and since $\sigma_xT_x=T_x=0$ and $\sigma_xR_x=R_x$, $s_x$ is a local affine diffeomorphism.  This holds for all symmetric normal neighborhoods because Cartan's equations in the proof of Theorem \ref{characterizationofaffine} have constant coefficients, and therefore the solution is defined on the largest possible domain of $\Exp_x^{-1}$.
\end{proof}

If $(M,\nabla)$ is a locally symmetric space, then since $T=0$ and $\nabla R=0$, the connection is invariant under parallelism.  In fact, it is possible to describe the transvections explicitly.  Suppose that $x,y\in M$ are each in a normal symmetric neighborhood of each other.  Let $[x,y]$ be the geodesic connecting $x$ and $y$.  Let $m$ be the midpoint of this geodesic (defined so that $[x,m]:[m,y]=1$).  Then
$$\tau_{[x,y]} = (s_y\circ s_m)_* = (s_m\circ s_x)_*.$$
Indeed, note first that $s_m$ the geodesic $[x,y]$ is stable under each of $s_m,s_x,s_y$.  In particular, if $T$ is a tangent vector to $[a,b]$, then $s_{m_*}T$ is also a tangent vector.  Thus if $X$ is any parallel vector along $[a,b]$, so is $s_{m,*}X$ because $s_m$ is an affine diffeomorphism and so $\nabla_Ts_{m,*}X = s_{m,*}(\nabla_{s_{m,*}T}X) = 0$.  Now, $s_{m,*}X$ is a parallel vector field along $[a,b]$ such that $s_{m,*}X_m=-X_m$.  As a result, $s_{m,*}X=-X$.  In particular $s_{m,*}X_x=-X_y$.  Thus
$$(s_y\circ s_m)_*X_x = -s_{y,*}X_y = X_y.$$
So $(s_y\circ s_m)_{*,x}=\pi_{[x,y]}$, as required.

Now, since the connection on a locally symmetric space is invariant under parallelism, the local Lie algebra $\mathfrak{g}_o\subset\mathscr{T}^1_0M_o$ of infinitesimal affine diffeomorphisms splits $\mathfrak{g}_o=\mathfrak{k}_o\oplus\mathfrak{m}_o$ as in the previous section.  The involution $s_o : U_o\to U_o$ induces a map on vector fields $s_{o,*}:\mathscr{T}^1_0M_o\to \mathscr{T}^1_0M_o$ that preserves the subspace $\mathfrak{g}_o$ as well as the splitting of $\mathfrak{g}_o$.  In fact, since $s_{o,*}\circ s_{o,*}=\operatorname{id}$, $s_{o,*}$ is a semisimple operator with eigenvalues $\pm 1$.  We have the following:

\begin{proposition}
\mbox{}
\begin{enumerate}
\item $\mathfrak{k}_o$ is the $+1$ eigenspace and $\mathfrak{m}_o$ is the $-1$ eigenspace of $s_{o,*}$.
\item The following hold:
\begin{align*}
[\mathfrak{k}_o,\mathfrak{k}_o]&\subset\mathfrak{k}_o\\
[\mathfrak{k}_o,\mathfrak{m}_o]&\subset\mathfrak{m}_o\\
[\mathfrak{m}_o,\mathfrak{m}_o]&\subset\mathfrak{k}_o
\end{align*}
\end{enumerate}
\end{proposition}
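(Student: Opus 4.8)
The plan is to establish part (1) first, after which part (2) follows formally from the resulting eigenvalue grading. Since $s_{o,*}$ is an involution of $\mathscr{T}^1_0M_o$ preserving $\mathfrak{g}_o$, it is semisimple with eigenvalues $\pm 1$, so $\mathfrak{g}_o = V_+\oplus V_-$ into its $(+1)$- and $(-1)$-eigenspaces. I would prove the two inclusions $\mathfrak{k}_o\subseteq V_+$ and $\mathfrak{m}_o\subseteq V_-$, and then promote them to equalities by a direct-sum argument: given $v\in V_+$, decompose $v=X+Y$ with $X\in\mathfrak{k}_o\subseteq V_+$ and $Y\in\mathfrak{m}_o\subseteq V_-$ via $\mathfrak{g}_o=\mathfrak{k}_o\oplus\mathfrak{m}_o$; then $Y=v-X\in V_+\cap V_-=\{0\}$, so $v=X\in\mathfrak{k}_o$, giving $V_+=\mathfrak{k}_o$, and symmetrically $V_-=\mathfrak{m}_o$.

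For the inclusion $\mathfrak{m}_o\subseteq V_-$ I would evaluate at $o$. Given $Y\in\mathfrak{m}_o$, the fact (recorded in the text) that $s_{o,*}$ preserves the splitting gives $s_{o,*}Y\in\mathfrak{m}_o$. Because $s_o(o)=o$ and $(s_o)_{*,o}=\sigma_o=-\operatorname{id}$, the pushforward formula yields $(s_{o,*}Y)_o=(s_o)_{*,o}(Y_o)=-Y_o=(-Y)_o$. Thus $s_{o,*}Y$ and $-Y$ both lie in $\mathfrak{m}_o$ and agree under $\operatorname{ev}_o$; since $\operatorname{ev}_o:\mathfrak{m}_o\to TM_o$ is a linear isomorphism, $s_{o,*}Y=-Y$. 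Alternatively one can argue geometrically: using $\tau_{o,x(t)}=s_{x(t/2)}\circ s_o$, the naturality $s_o\circ s_m\circ s_o^{-1}=s_{s_o(m)}$, and the fact that $s_o$ reverses geodesics through $o$ so that $s_o(x(s))=x(-s)$, conjugation sends $\tau_{o,x(t)}$ to $\tau_{o,x(-t)}$, whence the infinitesimal generator flips sign.

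For $\mathfrak{k}_o\subseteq V_+$ the evaluation trick gives no information, since $\operatorname{ev}_o$ annihilates $\mathfrak{k}_o$; here I would instead invoke the rigidity of affine maps. For $X\in\mathfrak{k}_o$ the local flow $\exp(tX)$ fixes $o$ (as $X_o=0$), and $s_o\circ\exp(tX)\circ s_o^{-1}$ is again a local affine diffeomorphism fixing $o$, with differential at $o$ equal to $(s_o)_{*,o}\circ\exp(tX)_{*,o}\circ(s_o)_{*,o}=(-\operatorname{id})\exp(tX)_{*,o}(-\operatorname{id})=\exp(tX)_{*,o}$. By Proposition \ref{faithful}, two local affine diffeomorphisms with the same differential at a point coincide, so $s_o\circ\exp(tX)\circ s_o^{-1}=\exp(tX)$; differentiating at $t=0$ gives $s_{o,*}X=X$. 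I expect this to be the main obstacle, precisely because evaluation at $o$ is blind to the isotropy algebra, forcing one to pass through the faithfulness of affine diffeomorphisms rather than a direct computation.

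Finally, part (2) is formal. Since $s_o$ is a diffeomorphism, its pushforward respects brackets, $s_{o,*}[X,Y]=[s_{o,*}X,s_{o,*}Y]$, so $s_{o,*}$ is an involutive automorphism of $\mathfrak{g}_o$, and part (1) identifies $\mathfrak{k}_o$ and $\mathfrak{m}_o$ as its $(+1)$- and $(-1)$-eigenspaces. For homogeneous elements of eigenvalues $\lambda,\mu\in\{\pm 1\}$ one has $s_{o,*}[X,Y]=\lambda\mu\,[X,Y]$, so $[X,Y]$ lands in the $\lambda\mu$-eigenspace: the product $(+1)(+1)=+1$ gives $[\mathfrak{k}_o,\mathfrak{k}_o]\subseteq\mathfrak{k}_o$, the product $(+1)(-1)=-1$ gives $[\mathfrak{k}_o,\mathfrak{m}_o]\subseteq\mathfrak{m}_o$, and the product $(-1)(-1)=+1$ gives the new relation $[\mathfrak{m}_o,\mathfrak{m}_o]\subseteq\mathfrak{k}_o$. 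This recovers the two inclusions already obtained from invariance under parallelism and supplies the third.
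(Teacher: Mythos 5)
Your proof is correct, and two of its three pillars coincide with the paper's own: the $+1$ eigenspace claim is handled by exactly the paper's argument (conjugate the isotropy flow $\exp(tX)$ by $s_o$, observe the differentials at $o$ agree since $(-\operatorname{id})\phi_{t,*,o}(-\operatorname{id})=\phi_{t,*,o}$, invoke Proposition \ref{faithful}, differentiate), and part (2) is the same eigenvalue-grading computation the paper uses for $[\mathfrak{m}_o,\mathfrak{m}_o]\subset\mathfrak{k}_o$. Where you genuinely diverge is the $-1$ eigenspace. The paper works with the explicit transvection formula $\tau_{[o,x_t]}=s_{m_t}\circ s_o$, computes $s_o\tau_{[o,x_t]}s_o=s_os_{m_t}=\tau_{[o,x_t]}^{-1}$, and differentiates; your primary route instead evaluates at the base point, $(s_{o,*}Y)_o=(s_o)_{*,o}(Y_o)=-Y_o$, and concludes $s_{o,*}Y=-Y$ from the injectivity of $\operatorname{ev}_o$ on $\mathfrak{m}_o$. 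This is shorter and avoids the transvection formula entirely, but it leans on the fact that $s_{o,*}$ preserves the splitting (needed so that $s_{o,*}Y$ lands back in $\mathfrak{m}_o$, where $\operatorname{ev}_o$ is an isomorphism); the text does record this just before the proposition, as a consequence of the corollary that local affine diffeomorphisms preserve $\mathfrak{g}_o=\mathfrak{k}_o\oplus\mathfrak{m}_o$, so the dependence is legitimate rather than circular, though it makes your argument less self-contained than the paper's direct conjugation, which also exposes the geometric content that $s_o$ conjugates each transvection through $o$ to its inverse. Your sketched geometric alternative, via $s_o\circ s_m\circ s_o^{-1}=s_{s_o(m)}$ and $s_o(x(s))=x(-s)$, is essentially the paper's computation in disguise. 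Two small tightenings on your side are worth keeping: you make explicit the direct-sum argument upgrading the inclusions $\mathfrak{k}_o\subseteq V_+$ and $\mathfrak{m}_o\subseteq V_-$ to equalities (the paper leaves this implicit in calling them eigenspaces), and you derive all three bracket relations uniformly from the eigenvalue grading, whereas the paper proves only the third here, the first two having been established earlier from invariance under parallelism.
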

\begin{proof}
Let $X\in\mathfrak{k}_o$ generate the flow $\phi_t=\exp(tX)$.  Then $\phi_t$ is a local affine diffeomorphism such that $\phi_t(o)=o$.  Now $\phi_t$ is determined by its differential at $o$: $\phi_{t,*,o}\in \End(TM_o)$.  On the other hand, $(s_O\phi_ts_o)_{*,o} = (-\operatorname{id}_{TM_o})\phi_{t,*,o}(-\operatorname{id}_{TM_o})=\phi_{t,*,o}$, and so $s_o\phi_t s_o=\phi_t$.  Taking the derivative at $t=0$ gives $s_{o,*}X=X$.

To prove that $\mathfrak{m}_o$ is the $-1$ eigenspace, let $X\in\mathfrak{m}_o$ generate the group of transvections $\tau_{[0,x_t]} = \exp(tX)$.  Then $\tau_{[0,x_t]}=s_{m_t}\circ s_o$ and $s_o\tau_{[0,x_t]}s_o=s_os_{m_t}s_o^2=s_os_{m_t}=\tau_{[0,x_t]}^{-1}$.  Differentiating at $t=0$ gives $s_{o,*}X=-X$.

Finally, it remains only to show that $[\mathfrak{m}_o,\mathfrak{m}_o]\subset\mathfrak{k}_o$.  Let $X,Y\in\mathfrak{m}_o$.  Then
$$s_{o,*}[X,Y] = [s_{o,*}X,s_{o,*}Y] = [-X,-Y] = [X,Y].$$
So $[X,Y]$ is in the $+1$ eigenspace of $s_{o,*}$, which is $\mathfrak{k}_o$.
\end{proof}

\section{Riemannian locally symmetric spaces}\index{Symmetric space!Riemannian}
Let $(M,g)$ be a Riemannian manifold with Levi--Civita connection $\nabla$.  We say that $(M,g)$ is a Riemannian locally symmetric space if $(M,\nabla)$ is locally symmetric.

\begin{theorem}
Let $(M,g)$ be a Riemannian manifold.  The following are equivalent:
\begin{enumerate}
\item $M$ is locally symmetric.
\item $\nabla R=0$.
\item For all $x$ and all normal symmetric neighborhoods $U_x$, $s_x$ is a local affine diffeomorphism.
\item $\nabla$ is invariant under parallelism: for every geodesic $[x,y]$, there exists a local affine diffeomorphism $\phi:U_x\to U_y$ such that $\pi_{[x,y]}=\phi_{*,x}$.
\item $\pi_{[x,y]}=(s_o\circ s_x)_{*,x}$ for all $x\in U_o$ and $y=s_o(x)\in U_o$.
\item For all $o\in M$, $s_o$ is a local isometry.
\end{enumerate}
\end{theorem}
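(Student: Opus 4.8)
The plan is to leverage the fact that the Levi--Civita connection is torsion-free, so that $T=0$ and hence $\nabla T=0$ hold automatically; this is what lets every torsion hypothesis from the general affine theory fall away, reducing each condition to a statement about curvature alone. With this in hand I would first dispatch $(1)\Leftrightarrow(2)\Leftrightarrow(3)$ by direct appeal to the proposition characterizing locally symmetric affine manifolds, which states that local symmetry is equivalent to $T=0$ together with $\nabla R=0$, and equivalent to $s_x$ being a local affine diffeomorphism for every symmetric normal neighborhood. Since $T=0$ is automatic, these three conditions are exactly the earlier ones with the torsion clause deleted. Likewise $(2)\Leftrightarrow(4)$ follows from the proposition on connections invariant under parallelism, which gives invariance under parallelism (in its geodesic form) equivalent to $\nabla T=\nabla R=0$; with $\nabla T=0$ automatic this collapses to $\nabla R=0$, which is $(2)$.

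It then remains to fold in the two genuinely Riemannian conditions $(5)$ and $(6)$, and I would do this by closing the cycle $(1)\Rightarrow(5)\Rightarrow(6)\Rightarrow(1)$. For $(1)\Rightarrow(5)$ I would invoke the explicit transvection formula established earlier, namely $\pi_{[x,y]}=(s_m\circ s_x)_{*,x}$ with $m$ the midpoint of the geodesic $[x,y]$. The point to check is that when $y=s_o(x)$ the midpoint is $o$ itself: the symmetry $s_o$ carries the radial geodesic from $o$ to $x$ onto the oppositely oriented radial geodesic from $o$ to $y$, so $x$ and $y$ lie on a single geodesic through $o$ at equal parameter distance, forcing $m=o$ and hence $\pi_{[x,y]}=(s_o\circ s_x)_{*,x}$.

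For $(5)\Rightarrow(6)$ I would first record the purely formal computation
$$(s_o\circ s_x)_{*,x} = s_{o,*,x}\circ s_{x,*,x} = -\,s_{o,*,x},$$
which uses only $s_x(x)=x$ and $s_{x,*,x}=-\operatorname{id}_{TM_x}$ and so holds whether or not $s_x$ is affine. Condition $(5)$ then reads $s_{o,*,x}=-\,\pi_{[x,y]}$. Because the connection is Levi--Civita we have $\nabla g=0$, so parallel transport $\pi_{[x,y]}:TM_x\to TM_y$ is a linear isometry; composing with $-\operatorname{id}$ keeps it an isometry, so $s_{o,*,x}$ is an isometry for every $x\in U_o$, that is, $s_o$ is a local isometry. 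Finally $(6)\Rightarrow(1)$ is immediate: a local isometry preserves the metric and hence its uniquely determined Levi--Civita connection, so each $s_o$ is a local affine diffeomorphism and $M$ is locally symmetric.

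The main obstacle I anticipate is conceptual rather than computational: it is the Riemannian upgrade encapsulated in $(6)$, namely seeing that an affine symmetry is automatically an isometry. The route above sidesteps the need for a separate argument by passing through $(5)$ and using that Levi--Civita parallel transport is metric-preserving; an alternative I would keep in reserve is the direct observation that $s_o^*g$ is again $\nabla$-parallel (since $s_o$ is affine) and agrees with $g$ at $o$ (as $s_{o,*,o}=-\operatorname{id}$ preserves $g_o$), whence $s_o^*g=g$ by uniqueness of parallel extension. Everything else is bookkeeping against the already-established general affine results.
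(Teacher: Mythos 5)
Your proposal is correct and takes essentially the same approach as the paper: the affine equivalences $(1)\Leftrightarrow(2)\Leftrightarrow(3)\Leftrightarrow(4)$ are obtained by specializing the earlier affine propositions with $T=0$ automatic for Levi--Civita, and the one genuinely new step, $(5)\Rightarrow(6)$, is handled by the identical computation $s_{o,*,x}=-(s_o\circ s_x)_{*,x}=-\pi_{[x,s_o(x)]}$, which is an isometry because Levi--Civita parallel transport preserves $g$. Your explicit treatment of $(1)\Rightarrow(5)$ (showing the midpoint of $[x,s_o(x)]$ is $o$ via the transvection formula) and of $(6)\Rightarrow(1)$ simply spells out what the paper leaves implicit when it says the only new item is $(5)\Rightarrow(6)$.
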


\begin{proof}
The only new item is that $(5)\implies (6)$.  This follows since $s_{o,*,x} = -(s_o\circ s_x)_{*,x} = -\pi_{[x,s_o(x)]}$ which is an isometry.
\end{proof}

We shall not prove the following theorem, due to Kobayashi (1955):
\begin{theorem}
Let $(M,\nabla)$ be a complete affine manifold that is connected and simply connected, and such that $\nabla$ is invariant under parallelism.  Then any local affine diffeomorphism can be extended (uniquely) to a global affine diffeomorphism.  In particular, if $M$ is locally symmetric, then $M$ is globally symmetric.
\end{theorem}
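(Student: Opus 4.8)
The plan is to extend a given local affine diffeomorphism by \emph{analytic continuation along curves}, using completeness to guarantee that the continuation can be carried out all the way across $M$ and simple connectivity to guarantee that the result is independent of the path chosen (a monodromy argument). Three inputs set this up. First, since $\nabla$ is invariant under its parallelism, the Proposition of the previous section gives $\nabla R = \nabla T = 0$, so $R$ and $T$ are covariantly constant. Second, completeness means each $\Exp_x$ is defined on all of $TM_x$, so the normal--coordinate maps $\Exp_y\circ\psi\circ\Exp_x^{-1}$ of Corollary \ref{covariantlyconstant} are available, their domains limited only by where $\Exp_x$ is a diffeomorphism. Third, and crucially, Corollary \ref{covariantlyconstant} is the engine of the whole argument: because $R$ and $T$ are parallel, a linear isomorphism $\psi:TM_x\to TM_y$ is the differential of a local affine diffeomorphism exactly when $\psi R_x = R_y$ and $\psi T_x = T_y$. (I treat the target as $M$ itself, which is what the application to $s_x$ requires; the argument for a map into another complete, simply connected, parallelism--invariant manifold is identical.)

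Now for the continuation. Let $f$ be the given local affine diffeomorphism near $p$, with $\psi = f_{*,p}:TM_p\to TM_{f(p)}$; since $f$ preserves curvature and torsion, $\psi R_p = R_{f(p)}$ and $\psi T_p = T_{f(p)}$. Given a piecewise $C^1$ curve $\gamma:[0,1]\to M$ with $\gamma(0)=p$, I would build simultaneously an image curve $\overline\gamma$ and a family of linear isomorphisms obtained from $\psi$ by conjugating with parallel transport, $\psi_t = \pi_{\overline\gamma|[0,t]}\circ\psi\circ\pi_{\gamma|[0,t]}^{-1}:TM_{\gamma(t)}\to TM_{\overline\gamma(t)}$. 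This is forced, since any affine extension must intertwine $f_*$ with parallel transport. Because parallel transport preserves the covariantly constant $R$ and $T$, each $\psi_t$ again satisfies $\psi_t R_{\gamma(t)} = R_{\overline\gamma(t)}$ and $\psi_t T_{\gamma(t)} = T_{\overline\gamma(t)}$, so by Corollary \ref{covariantlyconstant} the map $\Exp_{\overline\gamma(t)}\circ\psi_t\circ\Exp_{\gamma(t)}^{-1}$ is a local affine diffeomorphism near $\gamma(t)$; the image point $\overline\gamma(t)$ is pinned down by requiring these germs to agree on overlaps, which Proposition \ref{faithful} guarantees once they agree at a single point. Covering the compact set $\gamma([0,1])$ by finitely many normal neighborhoods, and using completeness so that each step stays inside $M$, I obtain a germ of affine diffeomorphism at the endpoint $q=\gamma(1)$, hence a value $\tilde f_\gamma(q)=\overline\gamma(1)$ and a differential $\psi_{\gamma(1)}$.

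The heart of the proof, and the step I expect to be the main obstacle, is the monodromy argument showing that the continuation depends only on the homotopy class of $\gamma$ rel endpoints. Here I would take a homotopy $H:[0,1]^2\to M$ between two curves from $p$ to $q$, subdivide the square into cells small enough that the $H$--image of each cell lies in a single normal neighborhood, and compare, cell by cell, the continuations along the two boundary routes of each cell; each such comparison collapses to the uniqueness statement of Proposition \ref{faithful}. A compactness argument then propagates agreement across the whole square, so homotopic curves yield the same germ at $q$. Since $M$ is simply connected, any two curves from $p$ to $q$ are homotopic rel endpoints, so $\tilde f(q):=\tilde f_\gamma(q)$ and $\psi_q$ are well defined, independent of $\gamma$.

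Finally I would assemble the global map. Near each $q$ the construction exhibits $\tilde f$ as equal to the local affine diffeomorphism $\Exp_{\tilde f(q)}\circ\psi_q\circ\Exp_q^{-1}$, so $\tilde f$ is smooth, affine, and agrees with $f$ near $p$; carrying out the same continuation for the inverse germ produces a two--sided inverse, so $\tilde f$ is a global affine diffeomorphism. Uniqueness is immediate from Proposition \ref{faithful}, since two global affine extensions of $f$ share the differential $\psi$ at $p$ on the connected manifold $M$ and therefore coincide. For the concluding assertion, if $M$ is locally symmetric then $T=0$ and $\nabla R=0$, so $\nabla$ is invariant under its parallelism and each $s_x$ is a local affine diffeomorphism; applying the theorem just proved extends every $s_x$ to a global affine diffeomorphism, which is precisely the statement that $M$ is globally symmetric.
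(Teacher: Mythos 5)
Your proposal cannot be compared with the paper's own argument for a simple reason: the paper gives none, introducing the statement with ``We shall not prove the following theorem, due to Kobayashi (1955).''  The closest in-paper relative is Theorem \ref{localglobal}, the Riemannian version, which the authors do prove, and your argument runs parallel to it: both extend along curves and then kill the path-dependence by a homotopy argument, invoking simple connectivity at the end.  The differences are instructive.  The Riemannian proof leans on metric tools --- the injectivity radius, uniform continuity, and Cauchy completeness --- to show that the set of points of $\gamma$ to which the map extends is closed; in the bare affine setting you have no distance function, and you correctly substitute geodesic completeness used in a different way: since $\Exp_{\overline{\gamma}(t)}$ is defined on all of $TM_{\overline{\gamma}(t)}$, each application of Corollary \ref{covariantlyconstant} produces an extension on an \emph{entire} source normal neighborhood, so a finite subdivision of the compact curve replaces the open-and-closed argument altogether.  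Your identification of Corollary \ref{covariantlyconstant} as the engine --- with $\psi_t R_{\gamma(t)}=R_{\overline{\gamma}(t)}$ and $\psi_t T_{\gamma(t)}=T_{\overline{\gamma}(t)}$ propagated because parallel transport preserves the covariantly constant $R$ and $T$ --- is exactly right, and Proposition \ref{faithful} correctly plays the role of the unique-continuation principle both in the cell-by-cell monodromy step and in the final uniqueness claim.  This is the standard Kobayashi--Nomizu route, and it is sound.

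Two points of bookkeeping deserve explicit care if you write this up, though neither is a gap.  First, the simultaneous definition of $\overline{\gamma}$ and $\psi_t$ is circular as literally stated; your stepwise construction repairs it --- on each normal neighborhood one first produces the affine extension via Corollary \ref{covariantlyconstant}, then reads off $\overline{\gamma}$ and its differential, and only afterwards observes that this differential equals the parallel-transport conjugate because affine diffeomorphisms intertwine parallel transports.  Second, Corollary \ref{covariantlyconstant} as stated requires $U_x$ small enough that $\Psi$ is a diffeomorphism; completeness guarantees $\Psi$ is defined and (since $\Psi^*\omega^i=\omega^i$ in the proof of Theorem \ref{characterizationofaffine}) everywhere regular on the whole normal neighborhood, but global injectivity may fail, so each step should be phrased as producing a local affine diffeomorphism near the current point --- which is all the germ-continuation needs.
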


Isometries of Riemannian locally symmetric spaces are determined at a point:

\begin{proposition}\label{isometriesdeterminedatapoint}
Let $M,\overline{M}$ be Riemannian locally symmetric spaces and $f:TM_o\to T\overline{M}_o$ a linear isomorphism.  Then there exists a local isometry $\phi : U_o\to \overline{U}_{\overline{o}}$ such that $\phi_{*,o}=f$ if and only if $f$ is an isometry and $f_*R_o=R_{\overline{o}}$.
\end{proposition}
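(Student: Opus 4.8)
The plan is to reduce the statement to Corollary \ref{covariantlyconstant} and then to upgrade the affine diffeomorphism it produces to an isometry by exploiting the parallelism of the metric. Since $M$ and $\overline{M}$ are Riemannian locally symmetric, their connections are the respective Levi--Civita connections, so the torsions vanish identically, $T=0$ and $\overline{T}=0$, and local symmetry gives $\nabla R=0$ together with $\overline{\nabla}\,\overline{R}=0$. Thus the hypotheses $\nabla R=\nabla T=0$ (and likewise on $\overline{M}$) needed to invoke Corollary \ref{covariantlyconstant} are in force from the outset.

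For the forward implication I would suppose $\phi:U_o\to\overline{U}_{\overline{o}}$ is a local isometry with $\phi_{*,o}=f$. Then $f$ is a linear isometry because the differential of an isometry at a point preserves the metric. Moreover an isometry is an affine diffeomorphism for the Levi--Civita connections, and affine diffeomorphisms preserve the curvature tensor; hence $\phi_*R=\overline{R}$, and evaluating at $o$ gives $f_*R_o=R_{\overline{o}}$.

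For the converse, assume $f$ is a linear isometry and $f_*R_o=R_{\overline{o}}$, and set $\Psi=\Exp_{\overline{o}}\circ f\circ\Exp_o^{-1}$. Since $f_*T_o=0=\overline{T}_{\overline{o}}$ holds trivially and $f_*R_o=R_{\overline{o}}$ by hypothesis, Corollary \ref{covariantlyconstant} shows that $\Psi$ is an affine diffeomorphism onto a connected normal neighborhood of $\overline{o}$. It remains to promote $\Psi$ to an isometry. The idea is to compare the two symmetric $2$-tensors $g$ and $\Psi^*\overline{g}$ on the domain of $\Psi$. Both are $\nabla$-parallel: the first because $\nabla g=0$ for the Levi--Civita connection, and the second because $\Psi$ intertwines $\nabla$ with $\overline{\nabla}$, whence $\nabla(\Psi^*\overline{g})=\Psi^*(\overline{\nabla}\,\overline{g})=0$. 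A parallel tensor is determined by its value at a single point through parallel transport along radial geodesics, so on the connected normal neighborhood it suffices to check agreement at $o$. There one computes $(\Psi^*\overline{g})_o(X,Y)=\overline{g}_{\overline{o}}(fX,fY)=g_o(X,Y)$, using $\Psi_{*,o}=f$ and that $f$ is a linear isometry. Hence $\Psi^*\overline{g}=g$ throughout, so $\Psi$ is the required local isometry with $\Psi_{*,o}=f$.

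The main obstacle is precisely this last step: Corollary \ref{covariantlyconstant} only guarantees that $\Psi$ preserves the affine structure, and the metric must be recovered separately. What makes this tractable is that both candidate metrics $g$ and $\Psi^*\overline{g}$ are $\nabla$-parallel, so each is rigidly determined by its value at the base point $o$, where the isometry hypothesis on $f$ forces them to coincide.
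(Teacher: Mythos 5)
Your proof is correct and follows essentially the same route as the paper: both directions reduce to Corollary \ref{covariantlyconstant}, and the affine diffeomorphism is then upgraded to an isometry by exploiting parallelism of the metric on a normal neighborhood. The only cosmetic difference is that the paper phrases this last step via a frame that is orthonormal at $o$ and parallel along radial geodesics, while you compare the two parallel tensors $g$ and $\Psi^*\overline{g}$ and check agreement at $o$ --- the same argument in tensorial rather than frame language.
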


\begin{proof}
$\implies$: Local isometries preserve the curvature.

$\Longleftarrow$: If $f_*R_o=R_{\overline{o}}$, then Corollary \ref{covariantlyconstant} implies that there exists a local affine diffeomorphism $\phi : U_o\to \overline{U}_o$.  Let $X_i$ be a frame on $U_o$ that is orthonormal at $o$ and parallel along geodesics through $o$.  Then $X_i$ is orthonormal throughout $U_o$.  Since $\phi$ maps this orthonormal frame to another such frame, $\phi$ is an isometry.
\end{proof}

{\em Conventions:}
\begin{itemize}
\item Henceforth, ``symmetric space'' will mean ``Riemannian globally symmetric space''.  ``Locally symmetric space'' will mean ``Riemannian locally symmetric space''.
\item We will change notation.  Henceforth $\mathfrak{g}_o$ and $\mathfrak{k}_o$ refer to the Lie algebras of infinitesimal isometries and infinitesimal isometries vanishing at $o$, and $\mathfrak{g}_o^{\operatorname{aff}}$ and $\mathfrak{k}_o^{\operatorname{aff}}$ will refer to infinitesimal affine diffeomorphisms and infinitesimal affine diffeomorphisms vanishing at $o$.  Then $\mathfrak{g}_o=\mathfrak{k}_o\oplus\mathfrak{m}_o$ where
$$\mathfrak{k}_o = \mathfrak{k}_o^{\operatorname{aff}}\cap\mathfrak{g}_o,\quad \mathfrak{m}_o=\mathfrak{m}_o^{\operatorname{aff}}.$$
\end{itemize}

{\em Remark.} Infinitesimal isometries are typically known as {\em Killing fields} in the literature.

\begin{theorem}\label{affineversusriemannian}
Let $M,\overline{M}$ be (Riemannian) locally symmetric spaces.  A local isometry induces a Lie algebra isomorphism $\mathfrak{g}_o\cong\mathfrak{g}_{\overline{o}}$ preserving the decomposition and such that its restriction to $\mathfrak{m}_o\,\, (=\!TM_o)$ is an isometry to $\mathfrak{m}_{\overline{o}}\,\, (=\!T\overline{M}_{\overline{o}})$.  Conversely, a local diffeomorphism of $M$ to $\overline{M}$ with these properties is a local isometry.
\end{theorem}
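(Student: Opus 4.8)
The plan is to treat the two directions separately, using throughout the evaluation isomorphism $\operatorname{ev}_o:\mathfrak{m}_o\xrightarrow{\sim}TM_o$ and the curvature formula $R(X_o,Y_o)Z_o = (\mathscr{L}_{[Y,X]_{\mathfrak{k}}}Z)_o$ from the preceding proposition.

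For the forward direction, let $\phi:U_o\to\overline{U}_{\overline{o}}$ be a local isometry with $\phi(o)=\overline{o}$. First I would observe that pushforward carries Killing fields to Killing fields: if $X$ generates the isometric flow $\exp(tX)$, then $\phi_*X$ generates $\phi\circ\exp(tX)\circ\phi^{-1}$, a composite of isometries, so $\phi_*$ restricts to a Lie algebra isomorphism $\mathfrak{g}_o\to\mathfrak{g}_{\overline{o}}$ with inverse $(\phi^{-1})_*$. Since $(\phi_*X)_{\overline{o}}=\phi_{*,o}(X_o)$, a field vanishing at $o$ maps to one vanishing at $\overline{o}$, so $\phi_*\mathfrak{k}_o=\mathfrak{k}_{\overline{o}}$. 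To see that $\mathfrak{m}_o$ is preserved, I would note that $\phi\circ s_o\circ\phi^{-1}$ is a local affine diffeomorphism of order two whose differential at $\overline{o}$ is $\phi_{*,o}(-\operatorname{id})\phi_{*,o}^{-1}=-\operatorname{id}$; by the characterization of the geodesic symmetry as the unique such map it equals $s_{\overline{o}}$, so $\phi_*$ intertwines $s_{o,*}$ with $s_{\overline{o},*}$ and hence preserves the $-1$ eigenspaces, giving $\phi_*\mathfrak{m}_o=\mathfrak{m}_{\overline{o}}$. Finally, under the evaluation isomorphisms the restriction $\phi_*|_{\mathfrak{m}_o}$ corresponds to $\phi_{*,o}:TM_o\to T\overline{M}_{\overline{o}}$, which is a linear isometry because $\phi$ is a local isometry.

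For the converse, suppose $\phi$ is a local diffeomorphism inducing such a decomposition-preserving Lie algebra isomorphism. The hypothesis that $\phi_*|_{\mathfrak{m}_o}$ is an isometry says precisely that $\phi_{*,o}:TM_o\to T\overline{M}_{\overline{o}}$ is a linear isometry. The first substantive step is to show $\phi_{*,o}$ also intertwines the curvatures, i.e. $(\phi_{*,o})_*R_o=\overline{R}_{\overline{o}}$. For $X,Y,Z\in\mathfrak{m}_o$ write $X'=\phi_*X$, and so on; since $\phi_*$ preserves brackets and the decomposition, $[Y',X']_{\mathfrak{k}}=\phi_*([Y,X]_{\mathfrak{k}})$ and $\mathscr{L}_{\phi_*W}\phi_*Z=\phi_*(\mathscr{L}_WZ)$, both brackets being the vector-field commutator. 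Applying the curvature formula on $\overline{M}$ and evaluating at $\overline{o}$ then yields $\overline{R}(\phi_{*,o}X_o,\phi_{*,o}Y_o)\phi_{*,o}Z_o=\phi_{*,o}(R(X_o,Y_o)Z_o)$, and since $\operatorname{ev}_o$ is onto $TM_o$ this is exactly the claimed identity of curvature tensors. By Proposition \ref{isometriesdeterminedatapoint} there is then a local isometry $\psi:U_o\to\overline{U}_{\overline{o}}$ with $\psi_{*,o}=\phi_{*,o}$.

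It remains to identify $\phi$ with $\psi$, and this is where the main obstacle lies: $\phi$ is not assumed affine, so Proposition \ref{faithful} does not apply directly. I would resolve this using transvections. For $X\in\mathfrak{m}_o$ the transvection flow satisfies $\exp(tX)(o)=\Exp_o(tX_o)$, and conjugating by $\phi$ gives $\exp(t\,\phi_*X)=\phi\circ\exp(tX)\circ\phi^{-1}$; evaluating at $\overline{o}$ and using that $\phi_*X\in\mathfrak{m}_{\overline{o}}$ produces $\phi(\Exp_o(tX_o))=\Exp_{\overline{o}}(t\,\phi_{*,o}X_o)$. As $X_o$ ranges over $TM_o$, this shows $\phi=\Exp_{\overline{o}}\circ\phi_{*,o}\circ\Exp_o^{-1}$ near $o$, which is exactly the map $\psi$ produced via Corollary \ref{covariantlyconstant}; hence $\phi$ coincides with the local isometry $\psi$ and is itself a local isometry. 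The points requiring care are the transvection identity $\exp(tX)(o)=\Exp_o(tX_o)$ and the fact that $\phi_*X$ is again an infinitesimal transvection at $\overline{o}$, so that its flow indeed realizes $\Exp_{\overline{o}}$.
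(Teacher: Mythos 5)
Your proof is correct, and its skeleton is the same as the paper's: the forward direction via pushforward of Killing fields (a composite of isometries is an isometry, and $\phi_*$ respects vector-field brackets), and the converse via the curvature identity $R_o(X,Y)Z=(\mathscr{L}_{[Y,X]_{\mathfrak{k}}}Z)_o$ on $\mathfrak{m}_o$ together with Proposition \ref{isometriesdeterminedatapoint}. Where you genuinely depart from the paper is the closing step, and your instinct there fixes a real gap. The paper's converse ends with ``so from Proposition \ref{isometriesdeterminedatapoint}, $\phi$ is a local isometry,'' but that proposition only produces \emph{some} local isometry $\psi$ with $\psi_{*,o}=\phi_{*,o}$; since $\phi$ is not assumed affine, Proposition \ref{faithful} cannot be invoked to identify $\phi$ with $\psi$, and the paper never does. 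Your transvection argument supplies the missing identification: conjugating the flow of $X\in\mathfrak{m}_o$ by $\phi$ gives $\exp(t\,\phi_*X)=\phi\circ\exp(tX)\circ\phi^{-1}$, and since $\phi_*X\in\mathfrak{m}_{\overline{o}}$ is again an infinitesimal transvection whose flow moves $\overline{o}$ along the geodesic $\Exp_{\overline{o}}(t(\phi_*X)_{\overline{o}})$, evaluating at $\overline{o}$ yields $\phi=\Exp_{\overline{o}}\circ\phi_{*,o}\circ\Exp_o^{-1}$ near $o$, i.e.\ $\phi$ \emph{is} the canonical map of Corollary \ref{covariantlyconstant}, hence equals $\psi$. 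All the ingredients you rely on are established earlier in the text: the identity $\exp(tX)(o)=\Exp_o(tX_o)$ follows from the one-parameter group property $\tau_{o,x(t)}\circ\tau_{o,x(s)}=\tau_{o,x(s+t)}$ of transvections, and the eigenspace characterization of $\mathfrak{k}_o,\mathfrak{m}_o$ under $s_{o,*}$ justifies your intertwining $\phi\circ s_o\circ\phi^{-1}=s_{\overline{o}}$ in the forward direction (via Proposition \ref{faithful}, since both are affine with differential $-\operatorname{id}$ at $\overline{o}$). The only cost of your route is length; what it buys is a complete argument where the paper's is elliptical.
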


\begin{proof}
$\implies$: If $\phi:M\to\overline{M}$ is a local isometry, then $\phi_*\mathfrak{g}_o=\phi_*\mathfrak{g}_{\overline{o}}$ since the composition of two isometries is an isometry.  Since $\phi_*:\mathscr{T}^1_0M\to\mathscr{T}^1_0\overline{M}$ is a Lie algebra homomorphism, it follows that $\phi_*\mathfrak{g}_o=\phi_*\mathfrak{g}_{\overline{o}}$ is an isomorphism of Lie algebras. Likewise, $\phi_*\mathfrak{k}_o=\phi_*\mathfrak{k}_{\overline{o}}$.

$\Longleftarrow$:  If $X,Y,Z\in\mathfrak{m}_o$, then $R_o(X,Y)Z=[[X,Y],Z]$.  If $\phi:M\to\overline{M}$ is a local diffeomorphism such that $\phi_*:\mathfrak{m}_o\xrightarrow{\cong}\mathfrak{m}_{\overline{o}}$ is an isometry, then $\phi_*R_o=R_{\overline{o}}$.  So from Theorem \ref{isometriesdeterminedatapoint}, $\phi$ is a local isometry.
\end{proof}

\section{Completeness}
Let $(M,g)$ be a Riemannian manifold.  Then $(M,g)$ is naturally a metric space.  Indeed, the length of a piecewise $C^1$ continuous curve $\gamma:I\to M$ is defined by
$$ L[\gamma] = \int_I g(\dot{\gamma},\dot{\gamma})^{1/2}dt.$$
The metric can be defined by
$$d(p,q) = \inf\{L[\gamma]\mid \text{$\gamma$ piecewise $C^1$ continuous curve from $p$ to $q$}\}.$$
To show that this does indeed define a metric, the only slightly tricky thing is to prove that $d(p,q)=0$ implies that $p=q$.  For this, suppose $p\not=q$ and take a relatively compact coordinate neighborhood $\phi:U\to\mathbb{R}^n$ of $p$ that excludes $q$.  Let $r = \operatorname{dist}(\phi(p),\partial \phi(U))$, and let $\lambda=\inf_{\phi(U)}\min\sigma(\phi_*g)>0$ be the smallest eigenvalue of $\phi_*g$ on $\phi(U)$.  Then, for any curve $\gamma$ from $p$ to $q$, $L[\gamma]\ge r\lambda > 0$.

There are several potential characterizations of completeness of Riemannian manifolds.  The following standard theorem of Hopf--Rinow guarantees that the most common such notions are in fact equivalent:\index{Hopf--Rinow theorem}

\begin{theorem}
Let $M$ be a connected Riemannian manifold.  Then the following are equivalent:
\begin{enumerate}
\item The Levi--Civita connection is complete, meaning that $\Exp_p$ is defined on all of $TM_p$ for every $p\in M$.
\item $(M,d)$ is a complete metric space.
\item $M$ satisfies the Heine--Borel property: a subset of $M$ is compact if and only if it is closed and bounded.
\end{enumerate}
Furthermore, any of these equivalent statements implies that any two points $p,q\in M$ can be joined by a geodesic of length $d(p,q)$.
\end{theorem}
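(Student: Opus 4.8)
The plan is to establish the cycle of implications $(1)\implies(3)\implies(2)\implies(1)$ and to derive the concluding ``furthermore'' clause from a \emph{minimizing-geodesic lemma} that is the analytic heart of the theorem. That lemma states: if $\Exp_p$ is defined on all of $TM_p$, then every $q\in M$ can be joined to $p$ by a geodesic of length exactly $d(p,q)$. I would prove this lemma first, since the remaining implications are comparatively soft, and I note that it needs only completeness at the single point $p$.

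For the lemma, fix $q\neq p$, set $r=d(p,q)$, and choose $\delta>0$ small enough that $q\notin\overline{B_\delta(p)}$ and the geodesic sphere $S=\{\Exp_p(\delta u)\mid |u|=1\}$ bounds a normal ball. Since $S$ is compact, the continuous function $x\mapsto d(x,q)$ attains its minimum on $S$ at some $x_0=\Exp_p(\delta u_0)$, and because every curve from $p$ to $q$ must meet $S$ (on which $d(p,\cdot)=\delta$), one gets $d(x_0,q)=r-\delta$. Let $\gamma(t)=\Exp_p(tu_0)$ be the unit-speed geodesic through $x_0$, defined for all $t$ by completeness. I would then run a connectedness argument on
$$A=\{t\in[0,r]\mid d(\gamma(t),q)=r-t\},$$
which is closed (by continuity) and contains $0$ and $\delta$. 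Setting $t_0=\sup A$, the goal is $t_0=r$, for then $\gamma(r)=q$ and $\gamma|_{[0,r]}$ is the desired minimizing geodesic. If $t_0<r$, I repeat the geodesic-sphere construction centered at $\gamma(t_0)$ to find a point $x_0'$ at distance $\delta'$ with $d(x_0',q)=(r-t_0)-\delta'$; the triangle inequality then forces $d(p,x_0')=t_0+\delta'$, so the broken path running along $\gamma$ to $\gamma(t_0)$ and then along the short radial geodesic to $x_0'$ is length-minimizing. The \textbf{main obstacle} is precisely here: I must invoke the fact that a length-minimizing broken geodesic cannot actually have a corner (a corner can be shortcut, via the first variation of arc length), so this path is a single smooth geodesic and hence the continuation of $\gamma$. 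This yields $x_0'=\gamma(t_0+\delta')$ and $t_0+\delta'\in A$, contradicting the choice of $t_0$.

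With the lemma in hand, $(1)\implies(3)$ is short: if $K$ is closed and bounded it lies in some metric ball $B_R(p)$, and the lemma gives $\overline{B_R(p)}\subset\Exp_p(\overline{B}_R(0))$, the continuous image of a compact ball in $TM_p$; thus $K$ is a closed subset of a compact set, hence compact, while the converse direction of Heine--Borel is automatic. Next $(3)\implies(2)$ is point-set topology: a Cauchy sequence is bounded, so its closure is compact by Heine--Borel, and a Cauchy sequence with a convergent subsequence converges. Finally $(2)\implies(1)$ is an extension argument: were a unit-speed geodesic $\gamma$ defined only on a maximal interval $[0,T)$ with $T<\infty$, then $d(\gamma(s),\gamma(t))\le|s-t|$ shows $\gamma(t_n)$ is Cauchy as $t_n\uparrow T$, hence converges to some $x$ by completeness, and uniform local existence of geodesics near $x$ (compactness of the unit sphere bundle over a compact neighborhood of $x$) lets me extend $\gamma$ past $T$, a contradiction. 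Since all three conditions are now equivalent and each implies $(1)$, the minimizing-geodesic lemma applies under any of them, which is exactly the concluding assertion.
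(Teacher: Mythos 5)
The paper does not prove this theorem at all: it states the Hopf--Rinow theorem as a standard result and moves on, so there is no in-text argument to compare yours against. Your proposal is the canonical proof (the one in do Carmo and Kobayashi--Nomizu) and it is correct: the minimizing-geodesic lemma via the geodesic sphere and the connectedness argument on $A=\{t\in[0,r]\mid d(\gamma(t),q)=r-t\}$, followed by the soft cycle $(1)\implies(3)\implies(2)\implies(1)$, is exactly how this is done, and your observation that the lemma needs completeness only at the single point $p$ is the standard sharpening. Two details you handle implicitly but should make explicit if you write this up: first, running the sphere construction at $\gamma(t_0)$ requires $t_0\in A$, which does follow from the closedness of $A$ you established; second, the identity $d(x_0,q)=r-\delta$ uses the Gauss lemma (radial geodesics minimize within the normal ball, so $d(p,\cdot)=\delta$ on $S$ and every path from $p$ to $q$ pays at least $\delta$ before reaching $S$). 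The corner-smoothing step you flag as the main obstacle is correctly resolved as you say: a length-minimizing broken geodesic parametrized by arclength is a geodesic, and uniqueness of geodesics with given initial data then forces $x_0'=\gamma(t_0+\delta')$.
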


So far, most of the results concern local isometries and local affine diffeomorphisms.  Under suitable topological conditions, local isometries can be extended to global isometries.  
\begin{theorem}\label{localglobal}
Let $(M,g) ,(\overline{M},\overline{g})$ be connected and complete Riemannian symmetric spaces with $M$ simply connected.  Then any local isometry from $M$ to $\overline{M}$ can be extended to a global isometry.
\end{theorem}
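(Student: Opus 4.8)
The plan is to extend the given local isometry $\phi\colon U_o\to\overline U_{\overline o}$ by analytic continuation along curves issuing from $o$, and then to use simple connectivity of $M$ to show that the result does not depend on the curve chosen. Two facts carried over from the earlier development make this possible: a local isometry between locally symmetric spaces is rigidly determined by its differential at a single point (Proposition \ref{isometriesdeterminedatapoint}), and on such a space $\nabla R=0$, so the curvature tensor is invariant under parallel transport. Completeness and the Hopf--Rinow theorem will supply the quantitative input that prevents the continuation from stalling.

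First I would set up a single continuation step. Suppose $\psi$ is a local isometry defined near a point $p=\gamma(t_i)$, with $\overline p=\psi(p)$. Since local isometries preserve curvature, for a nearby point $q=\gamma(t_{i+1})$ the differential $\psi_{*,q}$ is a linear isometry $TM_q\to T\overline M_{\overline q}$ carrying $R_q$ to $\overline R_{\overline q}$, where $\overline q=\psi(q)$; Proposition \ref{isometriesdeterminedatapoint} (whose availability rests on Corollary \ref{covariantlyconstant}, and hence on $\nabla R=0$) then produces a local isometry $\psi'$ centered at $q$ with $\psi'_{*,q}=\psi_{*,q}$, agreeing with $\psi$ on the overlap of their domains. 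The crucial quantitative input is that, by the Hopf--Rinow theorem, $\Exp$ is everywhere defined and the injectivity radius is bounded below along the compact curve $\gamma$; thus every $\psi'$ is defined on a normal ball of some fixed radius $r>0$, and re-centering at the advancing points $\gamma(t_i)$ lets us march along all of $\gamma$ in finitely many steps, even when the original domain of $\phi$ covers only a short initial arc. Because each step is forced by Proposition \ref{isometriesdeterminedatapoint}, the continuation along $\gamma$ is independent of the chosen subdivision and yields a well-defined germ of local isometry at $\gamma(1)$.

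Next comes path independence, which is the heart of the matter. Given two curves $\gamma_0,\gamma_1$ from $o$ to a common endpoint that are homotopic rel endpoints, I would take a homotopy $H\colon[0,1]^2\to M$ and subdivide the square into a grid fine enough that each small rectangle is carried into a single normal ball; across such a ball the continuation is uniquely determined, so the monodromy around the boundary of each rectangle is trivial, and composing these shows the continuations along $\gamma_0$ and $\gamma_1$ agree at the endpoint. Since $M$ is connected, every point is reachable from $o$, and since $M$ is simply connected, any two such curves are homotopic; hence the continuations assemble into a single globally defined smooth map $\Phi\colon M\to\overline M$ that restricts to $\phi$ near $o$ and is a local isometry near every point. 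Completeness of $M$ then upgrades $\Phi$ to a Riemannian covering of the connected manifold $\overline M$, so $\Phi$ is the desired globally defined isometry extending $\phi$.

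The step I expect to be the main obstacle is precisely this monodromy argument: everything else is a bookkeeping application of the rigidity Proposition \ref{isometriesdeterminedatapoint} together with the constancy of the geometric data guaranteed by $\nabla R=0$ (through Corollary \ref{covariantlyconstant}), and completeness enters only to keep the normal balls of uniform size. By contrast, showing that the continuation is invariant under homotopy — and is therefore genuinely well defined as a map on $M$ rather than on its universal path space — is what essentially uses the hypothesis that $M$ is simply connected, and care is needed to choose the grid fine enough that every small cell, on both $M$ and $\overline M$, lies in a region where uniqueness applies.
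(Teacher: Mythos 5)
Your proposal is correct and follows essentially the same route as the paper's proof: continuation of the local isometry along curves using the rigidity of Proposition \ref{isometriesdeterminedatapoint} together with Corollary \ref{covariantlyconstant} and a positive lower bound on the injectivity radii of $M$ and $\overline{M}$ along compact sets, then homotopy invariance over the homotopy square, with simple connectedness giving a well-defined global extension. The only differences are implementational—you march in finitely many uniform steps and check trivial monodromy on a fine grid, where the paper runs an open-and-closed connectedness argument along the curve and on the image of the homotopy—and your closing observation that the extension is a Riemannian covering is a harmless (and, for the subsequent corollary, useful) strengthening.
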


The proof requires the use of the {\em injectivity radius}:\index{Injectivity radius}
\begin{definition}
For each $x\in M$, define the injectivity radius $\inj_M(x)\in (0,\infty]$ by
$$\inj_M(x) =  \sup\{r\mid \Exp_x^{-1} : B(x,r)\to TM_x \ \text{\ is well-defined}\}.$$
\end{definition}

We note without proof that $\inj_M:M\to (0,\infty]$ is a continuous function.

\begin{proof}[Proof of Theorem]
Let $\phi$ be a local isometry with $\phi(x)=\overline{x}$.  Let $y\in M$ be another point, and let $\gamma$ be a curve of length $\ell$ in $M$ connecting $x$ and $y$.  We claim that $\phi$ can be extended to a local isometry in a neighborhood of $\gamma$.

To prove this, let $\Gamma\subset\gamma$ be the set of points in $\gamma$ that are contained in a connected open neighborhood extending $\phi$.  This is clearly an open set in the relative topology.  We claim it is also closed.  Let $U_\Gamma$ be a bounded open neighborhood of $\gamma$ on which $\phi$ extends.   Since $\phi : U_\Gamma\to \overline{M}$ is a uniformly continuous function from a metric space to a complete metric space, it extends to a continuous function on the completion (=closure) of $U_\Gamma$.  Let
$$r=\min_{u\in\operatorname{cl}U_\Gamma} \min\{\inj_M(u),\inj_{\overline{M}}(\phi(u))\}.$$
Then $r>0$ because it is the minimum of a continuous function on a compact set.  If $u$ is a limit point of $\Gamma$, then $B(u,r/2)\cap \Gamma$ is nonempty; say it contains a point $z$.  Then $B(z,r)$ is a normal neighborhood of $z$ and $B(\phi(z),r)$ is a normal neighborhood of $\phi(z)\in\overline{M}$.  By Corollary \ref{covariantlyconstant}, since $\phi_*R_z=\overline{R}_{\phi(z)}$, $\phi$ can be extended to an affine diffeomorphism $\phi:B(z,r)\to B(\phi(z),r)$ which, by virtue of Theorem \ref{affineversusriemannian}, is also an isometry.  Since $u\in B(z,r)$, it follows that $\Gamma$ is closed.  Thus $\Gamma\subset\gamma$ is both open and closed, and so $\Gamma=\gamma$.

Now the extension of $\phi$ depends only on the homotopy class of $\gamma$.  Indeed, if $\gamma_1,\gamma_2$ are homotopic curves from $x$ to $y$, then let $F:[0,1]\times [0,1]\to M$ be a homotopy for these curves.  The image $F([0,1]\times [0,1])$ of the unit square under the homotopy is a connected set, and we can consider the largest connected open subset $A\subset F([0,1]\times [0,1])$ that contains $\gamma_1$ such that $\phi$ can be extended in an open neighborhood of $A$.  Then we show by the same argument as the previous paragraph that $A$ must be closed as well, and therefore that $A=F([0,1]\times [0,1])$.

Since $M$ is simply connected by assumption, the result follows.
\end{proof}

\begin{corollary}
If $(M,g)$ is a connected, complete, simply connected Riemannian locally symmetric space, then $M$ is globally symmetric.
\end{corollary}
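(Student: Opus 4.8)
The plan is to extend each local geodesic symmetry $s_x$ to a global map and recognize the result as a global symmetry. First I would recall, from the characterization of Riemannian locally symmetric spaces (specifically the fact that each $s_o$ is a local isometry), that every local symmetry $s_x \colon U_x \to U_x$ is a local isometry. Thus $s_x$ is in particular a local isometry from $M$ into itself, defined on a normal neighborhood of $x$.

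Next I would apply Theorem \ref{localglobal} with $\overline{M} = M$. Since $M$ is connected, complete, and simply connected, the theorem extends the local isometry $s_x$ to a \emph{global} isometry $\widetilde{s}_x \colon M \to M$. Because an isometry preserves the Levi--Civita connection, $\widetilde{s}_x$ is in particular a global affine diffeomorphism agreeing with $s_x$ on $U_x$. As this holds for every $x \in M$, the defining condition for global symmetry---that each $s_x$ extend to a global affine diffeomorphism---is met, so $M$ is globally symmetric.

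For completeness I would also verify that the extension is genuinely the symmetry at $x$ and, if one wishes, an involution. Since $\widetilde{s}_x$ agrees with $s_x$ near $x$, it fixes $x$ with $(\widetilde{s}_x)_{*,x} = -\operatorname{id}_{TM_x}$; then $\widetilde{s}_x \circ \widetilde{s}_x$ is a global affine diffeomorphism fixing $x$ whose differential there is the identity, so by Proposition \ref{faithful} it equals $\operatorname{id}_M$. Hence $\widetilde{s}_x$ is the involutive global geodesic symmetry at $x$.

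The one genuinely delicate point is an apparent circularity in invoking Theorem \ref{localglobal}: by the stated conventions ``symmetric space'' already means ``globally symmetric'', which is exactly the conclusion sought, so taking $\overline{M}=M$ seems to presuppose the result. I expect this to be the main obstacle to a clean citation. The resolution is that the proof of Theorem \ref{localglobal} nowhere uses global symmetry: it relies only on completeness, simple connectivity, and the locally-symmetric facts packaged in Corollary \ref{covariantlyconstant} (applicable since $T=0$ and $\nabla R = 0$) together with Theorem \ref{affineversusriemannian}. Accordingly I would either invoke Theorem \ref{localglobal} with ``symmetric'' weakened to ``locally symmetric''---which its proof supports verbatim---or, to stay self-contained, re-run that extension-along-a-curve-and-homotopy argument directly for the local isometries $s_x$.
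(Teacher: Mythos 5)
Your proof is correct and follows essentially the same route as the paper, which simply observes that the involutions $s_x$ are local isometries and extends them to global isometries via Theorem \ref{localglobal}. Your added remarks---verifying involutivity via Proposition \ref{faithful} and noting that the proof of Theorem \ref{localglobal} only uses local symmetry, so no circularity arises---are sound refinements of the same argument.
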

Indeed, the involutions $s_x$ are local isometries, which therefore extend to global isometries.  As a result of the theorem and its corollary, connected, complete, simply connected Riemannian locally symmetric spaces are globally symmetric as well, and these are completely determined by the local structure.  Moreover, the restriction of simple connectedness is not really essential:
\begin{corollary}
Let $M$ be a complete connected Riemanian locally symmetric space. The universal covering space $\widetilde{M}$ is globally symmetric.
\end{corollary}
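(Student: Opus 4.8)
The plan is to reduce the statement to the previous corollary, which settles the simply connected case, by checking that the universal cover inherits every hypothesis appearing there: connectedness, simple connectedness, completeness, and the locally symmetric property. Let $p:\widetilde{M}\to M$ be the universal covering map and equip $\widetilde{M}$ with the pullback metric $\widetilde{g}=p^*g$. Since $p$ is a local diffeomorphism, this makes $p$ a local isometry, so that $\widetilde{M}$ becomes a Riemannian manifold whose local geometry is identified, via $p$, with that of $M$.

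First I would verify that $\widetilde{M}$ is locally symmetric. For a Riemannian manifold, local symmetry is equivalent to $\widetilde{\nabla}\widetilde{R}=0$, where $\widetilde{R}$ is the curvature of $\widetilde{g}$. Because $p$ is a local isometry, the Levi--Civita connection and curvature of $\widetilde{g}$ are the pullbacks of the corresponding objects on $M$; in particular $\widetilde{R}=p^*R$ and $\widetilde{\nabla}\widetilde{R}=p^*(\nabla R)$. Since $M$ is locally symmetric we have $\nabla R=0$, whence $\widetilde{\nabla}\widetilde{R}=0$, and therefore $\widetilde{M}$ is locally symmetric.

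Next I would establish completeness of $\widetilde{M}$. By Hopf--Rinow it suffices to show that every geodesic extends for all time. Given a geodesic $\widetilde{\gamma}$ in $\widetilde{M}$, its projection $\gamma=p\circ\widetilde{\gamma}$ is a geodesic in $M$, since a local isometry is a local affine diffeomorphism and such maps carry geodesics to geodesics. As $M$ is complete, $\gamma$ extends to a geodesic defined on all of $\mathbb{R}$. The path-lifting property of the covering map $p$ then lifts this extended geodesic back into $\widetilde{M}$, and by uniqueness of lifts it agrees with $\widetilde{\gamma}$ on the original interval. Hence $\widetilde{\gamma}$ extends for all time and $\widetilde{M}$ is complete.

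Finally, $\widetilde{M}$ is connected and simply connected by the very construction of the universal cover. All hypotheses of the previous corollary are now in force, so $\widetilde{M}$ is globally symmetric. The one step requiring genuine (if standard) argument is completeness: each ingredient---that geodesics lift through $p$ and that geodesics in $M$ extend indefinitely---is elementary, but transferring completeness upstairs hinges on the geodesic-lifting property of covering maps. Everything else follows formally from the fact that local symmetry and the Levi--Civita structure are local invariants preserved under a local isometry such as $p$.
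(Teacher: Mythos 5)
Your proof is correct, but your completeness argument takes a genuinely different route from the paper's. Both proofs begin identically: pull back the metric, observe that $p$ is a local isometry so $\widetilde{M}$ is a simply connected Riemannian locally symmetric space, and reduce everything to the preceding corollary, so that only completeness remains. At that point the paper works directly with the metric-space characterization: it takes a Cauchy sequence $\{x_n\}$ in $\widetilde{M}$, pushes it down to a Cauchy sequence in $M$ (using that $p$ is distance nonincreasing), takes an evenly covered ball $B(X,\epsilon)$ around the limit, argues that the tail of $\{x_n\}$ must lie in a single sheet $B(X_\alpha,\epsilon/2)$ because distinct sheets are at distance at least $\epsilon$ apart, and concludes $x_n\to X_\alpha$ since $p$ restricts to an isometry on that sheet. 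You instead prove \emph{geodesic} completeness: project a geodesic $\widetilde{\gamma}$ down, extend it in $M$ using completeness there, lift the extension back by the path-lifting property (noting, as you should make explicit, that the lift starting at $\widetilde{\gamma}$'s initial point is itself a geodesic because $p$ is a local isometry, and agrees with $\widetilde{\gamma}$ by uniqueness of lifts), and then invoke Hopf--Rinow. Your route is shorter and has the advantage of working at the level of the connection alone---the same geodesic-lifting argument shows completeness of covers of complete \emph{affine} manifolds, with no metric in sight---while the paper's route is self-contained metric topology, makes no appeal to Hopf--Rinow at this point, and yields metric completeness directly. Either argument is acceptable here, since the Hopf--Rinow theorem stated earlier in the chapter makes the two notions of completeness interchangeable on a connected Riemannian manifold.
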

\begin{proof}
Let $\pi:\widetilde{M}\to M$ be the covering map.  Then $\widetilde{M}$ is Riemannian with the pullback metric $\widetilde{g}=\pi^*g$.  This is a simply connected Riemannian locally symmetric space since it is locally isometric to $M$.  It is therefore enough to show that $\widetilde{M}$ is complete.

Let $\{x_n\}$ be a Cauchy sequence in $\widetilde{M}$.  Then $\{\pi x_n\}$ is Cauchy in $M$, since $d(\pi x,\pi y)\le d(x,y)$ for all $x,y\in\widetilde{M}$.  Let $\pi x_n\to X\in M$.  Let $\epsilon>0$ be small enough that the ball $B(X,\epsilon)$ is an evenly-covered neighbordhood.  So $\pi^{-1}B(X,\epsilon) = \bigcup_\alpha B(X_\alpha,\epsilon)$ is a disjoint union of balls in $\widetilde{M}$, each of which is isometric to $B(X,\epsilon)$ under the projection $\pi$.  Now, since $\{x_n\}$ is Cauchy, there exists $N>0$ such that for all $n,m>N$, $d(x_n,x_m)<\epsilon/2$.  So $\{x_n\}_{n>N}$ lies entirely within some $B(X_\alpha,\epsilon/2)$, since $\operatorname{dist}(B(X_\alpha,\epsilon/2), B(X_\beta,\epsilon/2)) \ge \epsilon$ for $\alpha\not=\beta$.  Now because $\pi:B(X_\alpha,\epsilon/2)\to B(X,\epsilon/2)$ is an isometry and $\pi x_n\to X$, it follows that $x_n\to X_\alpha$.  So $(\widetilde{M},d)$ is a complete metric space.
\end{proof}

\section{Isometry groups}  Let $(M,g)$ be a Riemannian manifold and $I(M)$ denote the group of isometries of $M$ to itself.  The group $I(M)$ is a topological group relative to the compact-open topology.\footnote{This is the topology whose basis sets are of the form $B_{K,U} = \{f\in I(M)\mid f(K)\subset U\}$ where $K,U\subset M$ with $K$ compact and $U$.} If $p\in M$, denote by $\Stab_{I(M)}(p)$ the stabilizer of $p$ in $I(M)$.  By a theorem of Myers--Steenrod (1936), $I(M)$ carries a unique differentiable structure making it a Lie group and $\Stab_{I(M)}(p)$ is a compact subgroup of $I(M)$.

Let $M$ be a Riemannian globally symmetric space with a given base point $o\in M$, and involution $s_o\in \Stab_{I(M)}(o)$.  Then $\Lie(I(M))$, the Lie algebra of $I(M)$, is $\mathfrak{g}_o$, the Lie algebra of infinitesimal isometries.  On the one hand, $\Lie(I(M))\subset\mathfrak{g}_o$.  On the other hand, since $M$ is complete, every infinitesimal isometry integrates to a local isometry, and any local isometry generates a global isometry (by Theorem \ref{localglobal}); so $\mathfrak{g}_o\subset\Lie(I(M))$.

Note that if $\phi\in\Stab_{I(M)}(o)$, then $\phi$ commutes with $s_o$.  Indeed, the differentials of $\phi$ and $s_o$ commute at $o$:
$$(\phi\circ s_o)_{*,o} = -\phi_{*,o} = (s_o\circ\phi)_{*,o}$$
and so $\phi$ and $s_o$ must also commute globally, since affine diffeomorphisms (in particular isometries) are determined by their differentials at a point.  

Now, the Lie algebra of the centralizer $C_{I(M)}(s_o)$ is the set of fixed points of the differential $s_{o,*}:\mathfrak{g}_o\to\mathfrak{g}_o$, which is just
$$\Lie(C_{I(M)}(s_o)) = \mathfrak{k}_o.$$

{\em Exercise.}  If $G$ is a topological group, denote by $G_e$ the identity component of $G$.  Show that $(C_{I(M)}(s_o))_e, (C_{I(M)_e}(s_o))_e, \Stab_{I(M)_e}(o),C_{I(M)_e}(s_o)$ all have the same Lie algebra $\mathfrak{k}_o$ and that $(C_{I(M)}(s_o))_e = (C_{I(M)_e}(s_o))_e\subset\Stab_{I(M)_e}(o)\subset C_{I(M)_e}(s_o)$.

\chapter{Orthogonal symmetric Lie algebras}
\begin{definition}
\mbox{}\index{Orthogonal symmetric Lie algebra}
\begin{enumerate}
\item A pair $(\mathfrak{g}_o,s)$ consisting of a Lie algebra $\mathfrak{g}_o$ and order $2$ Lie algebra automorphism $s:\mathfrak{g}_o\to\mathfrak{g}_o$ is called a symmetric Lie algebra.
\item A symmetric Lie algebra is called effective if $\mathfrak{k}_o=C_{\mathfrak{g}_o}(s)$, the set of fixed points of $s$, contains no nonzero ideal of $\mathfrak{g}_o$.
\item An orthogonal symmetric Lie algebra is a symmetric Lie algebra $(\mathfrak{g}_o,s)$ such that $\mathfrak{k}_o$ is a compactly embedded subalgebra of $\mathfrak{g}_o$, meaning that the Lie subgroup of $\Int(\mathfrak{g}_o)$ generated by $\ad_{\mathfrak{g_o}}\mathfrak{k}_o$ is compact.
\end{enumerate}
\end{definition}

If $(\mathfrak{g}_o,s)$ is a symmetric Lie algebra, then we can decompose $\mathfrak{g}_o=\mathfrak{k}_o\oplus\mathfrak{p}_o$ into the $+1$ and $-1$ eigenspaces for the automorphism $s$.  The eigenspaces satisfy the familiar properties:
\begin{equation}\label{commutators}
[\mathfrak{k}_o,\mathfrak{k}_o]\subset \mathfrak{k}_o,\quad [\mathfrak{k}_o,\mathfrak{p}_o]\subset\mathfrak{p}_o,\quad [\mathfrak{p}_o,\mathfrak{p}_o]\subset\mathfrak{k}_o.
\end{equation}

The following criterion explains the use of the term {\em effective}:
\begin{lemma}
A symmetric Lie algebra $(\mathfrak{g}_o,s)$ is effective if and only if $\ad: \mathfrak{k}_o\to\mathfrak{gl}(\mathfrak{p}_o)$ is injective.
\end{lemma}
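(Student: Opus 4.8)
The plan is to identify the kernel of $\ad\colon\mathfrak{k}_o\to\gl(\mathfrak{p}_o)$ with the largest ideal of $\mathfrak{g}_o$ contained in $\mathfrak{k}_o$; the lemma then falls out immediately from the definition of effectivity. Set
\[
\mathfrak{z} = \ker\bigl(\ad\colon\mathfrak{k}_o\to\gl(\mathfrak{p}_o)\bigr) = \{X\in\mathfrak{k}_o \mid [X,\mathfrak{p}_o]=0\},
\]
noting that this $\ad$ is well-defined because $[\mathfrak{k}_o,\mathfrak{p}_o]\subset\mathfrak{p}_o$. Since $\ad$ is injective exactly when $\mathfrak{z}=0$, it suffices to prove that $\mathfrak{z}$ is precisely the maximal ideal of $\mathfrak{g}_o$ lying inside $\mathfrak{k}_o$.

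First I would show that $\mathfrak{z}$ is an ideal of $\mathfrak{g}_o$. Using $\mathfrak{g}_o=\mathfrak{k}_o\oplus\mathfrak{p}_o$, it is enough to bracket $\mathfrak{z}$ against $\mathfrak{p}_o$ and against $\mathfrak{k}_o$ separately. The former is trivial, since $[Y,X]=-[X,Y]=0$ for $X\in\mathfrak{z}$ and $Y\in\mathfrak{p}_o$. For the latter, given $K\in\mathfrak{k}_o$ and $X\in\mathfrak{z}$, one has $[K,X]\in\mathfrak{k}_o$ by \eqref{commutators}, and the Jacobi identity gives, for every $Y\in\mathfrak{p}_o$,
\[
[[K,X],Y]=[K,[X,Y]]-[X,[K,Y]]=0,
\]
because $[X,Y]=0$ and $[K,Y]\in\mathfrak{p}_o$ is again killed by $X$. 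Hence $[K,X]\in\mathfrak{z}$, so $\mathfrak{z}$ is an ideal contained in $\mathfrak{k}_o$.

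Next I would check maximality: if $\mathfrak{a}\subset\mathfrak{k}_o$ is any ideal of $\mathfrak{g}_o$, then $[\mathfrak{a},\mathfrak{p}_o]\subset\mathfrak{a}\subset\mathfrak{k}_o$ since $\mathfrak{a}$ is an ideal, while $[\mathfrak{a},\mathfrak{p}_o]\subset\mathfrak{p}_o$ by \eqref{commutators}; as $\mathfrak{k}_o\cap\mathfrak{p}_o=0$, this forces $[\mathfrak{a},\mathfrak{p}_o]=0$, i.e. $\mathfrak{a}\subset\mathfrak{z}$. Thus $\mathfrak{z}$ contains every ideal of $\mathfrak{g}_o$ sitting inside $\mathfrak{k}_o$ and is itself one, so it is the maximal such ideal. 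Effectivity is exactly the statement that this maximal ideal is zero, i.e. $\mathfrak{z}=0$, which is injectivity of $\ad$; this delivers both implications at once.

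The only genuine content is the Jacobi-identity step verifying that $\mathfrak{z}$ is stable under $\ad\mathfrak{k}_o$; the remainder is bookkeeping with the eigenspace relations \eqref{commutators}. The one point to stay careful about is that the map in question is the restriction of $\ad X$ to $\mathfrak{p}_o$ only, so the inclusion $[\mathfrak{k}_o,\mathfrak{p}_o]\subset\mathfrak{p}_o$ must be invoked both to define the representation and to close the Jacobi computation.
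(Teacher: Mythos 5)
Your proof is correct and is essentially the paper's argument: your maximality check ($\mathfrak{a}\subset\mathfrak{k}_o$ an ideal of $\mathfrak{g}_o$ forces $[\mathfrak{a},\mathfrak{p}_o]\subset\mathfrak{a}\cap\mathfrak{p}_o=0$) is exactly the paper's converse direction, and your Jacobi computation showing $\mathfrak{z}$ is an ideal of $\mathfrak{g}_o$ is the paper's forward direction with the step ``the kernel is an ideal of $\mathfrak{k}_o$'' made explicit. Packaging both directions as ``$\ker\ad$ is the largest ideal of $\mathfrak{g}_o$ contained in $\mathfrak{k}_o$'' is a tidy reformulation, but the content is the same.
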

\begin{proof}
If $\ad:\mathfrak{k}_o\to\mathfrak{gl}(\mathfrak{p}_o)$ were not injective, then its kernel $\mathfrak{i}$ would be an ideal of $\mathfrak{k}_o$.  But then $\mathfrak{i}$ would also be an ideal of $\mathfrak{g}_o$, since if $X=X_{\mathfrak{k}}+X_{\mathfrak{p}}\in\mathfrak{g}_o$ and $Y\in \mathfrak{i}$, then $[X,Y] = [X_{\mathfrak{k}}+X_{\mathfrak{p}}, Y] \in \mathfrak{i}$ because $[X_{\mathfrak{p}}, Y]=0$ since $\mathfrak{i}$ commutes with $\mathfrak{p}_o$ and $[X_{\mathfrak{k}},Y]\in \mathfrak{i}$ since $\mathfrak{i}$ is an ideal of $\mathfrak{k}_o$.

Conversely, if there were an ideal $\mathfrak{i}$ of $\mathfrak{g}_o$, $\mathfrak{i}\subset \mathfrak{k}_o$, then $[\mathfrak{i},\mathfrak{p}_o]\subset \mathfrak{i}$ (being an ideal) and $[\mathfrak{i},\mathfrak{p}_o]\subset\mathfrak{p}_o$ (by \eqref{commutators}).  Hence $[\mathfrak{i},\mathfrak{p}_o]\subset \mathfrak{i}\cap\mathfrak{p}_o = 0$.  Thus $\mathfrak{i}\subset \ker [\ad:\mathfrak{k}_o\to\mathfrak{gl}(\mathfrak{p}_o)]$.
\end{proof}

{\em Remark.} Helgason defines an effective symmetric Lie algebra as a Lie algebra such that $\mathfrak{k}_o\cap Z(\mathfrak{g}_o)=0$.  This is a weaker condition equivalent to the adjoint representation of $\mathfrak{k}_o$ on all of $\mathfrak{g}_o$ being effective.  Our definition is from Kobayashi and Nomizu.

\vspace{24pt}

The following criterion explains the use of the term {\em orthogonal}:
\begin{lemma}
A symmetric Lie algebra is orthogonal if and only if there exists a positive definite bilinear form on $\mathfrak{g}_o$ that is invariant under $\ad(\mathfrak{k}_o)$.
\end{lemma}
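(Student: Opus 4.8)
The plan is to pass back and forth between the group $K$ generated by $\ad_{\mathfrak{g}_o}\mathfrak{k}_o$ inside $\Int(\mathfrak{g}_o)$ and the existence of an invariant inner product, using a standard ``averaging'' argument in the forward direction and a ``skew-symmetry'' argument in the converse. Throughout I view elements of $\Int(\mathfrak{g}_o)$ as living in $\operatorname{GL}(\mathfrak{g}_o)$ acting on $\mathfrak{g}_o$ in the natural way.

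For the forward implication I would assume $(\mathfrak{g}_o,s)$ orthogonal, so that $K$ is compact, fix an arbitrary positive definite symmetric bilinear form $B_0$ on the finite-dimensional space $\mathfrak{g}_o$, let $dk$ be the normalized Haar measure on $K$, and set
\[
B(X,Y) = \int_K B_0(kX, kY)\, dk.
\]
This $B$ is symmetric and bilinear; it is positive definite because for $X\neq 0$ the integrand $B_0(kX,kX)$ is everywhere positive; and it is $K$-invariant, $B(gX,gY)=B(X,Y)$ for $g\in K$, by right-invariance of Haar measure. Differentiating this invariance along $t\mapsto\exp(t\,\ad Z)$ for $Z\in\mathfrak{k}_o$ would then give
\[
B([Z,X], Y) + B(X, [Z,Y]) = 0,
\]
which is exactly the assertion that $B$ is invariant under $\ad(\mathfrak{k}_o)$.

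For the converse I would start from a positive definite $\ad(\mathfrak{k}_o)$-invariant form $B$. The displayed identity above says precisely that each $\ad Z$, $Z\in\mathfrak{k}_o$, is skew-symmetric with respect to $B$, so that $\ad_{\mathfrak{g}_o}\mathfrak{k}_o$ lies in the Lie algebra $\mathfrak{so}(\mathfrak{g}_o, B)$ of the orthogonal group $O(\mathfrak{g}_o, B)$. Each one-parameter subgroup $\exp(t\,\ad Z)$ therefore preserves $B$, so the connected subgroup $K$ it generates sits inside the identity component $SO(\mathfrak{g}_o, B)$, which is compact because $B$ is definite.

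The hard part will be the very last step, and it is a genuine subtlety rather than a routine verification: containment in a compact group only shows that $K$ is \emph{relatively} compact, i.e. that its closure $\overline{K}$ is compact, not that the analytic subgroup $K$ is itself closed (an irrational winding line in a torus is the usual cautionary example, and indeed under a literal reading the stated equivalence would fail in that direction). I would resolve this by passing to $\overline{K}$: since $\Aut(\mathfrak{g}_o)$ is closed (indeed algebraic) in $\operatorname{GL}(\mathfrak{g}_o)$ and $K$ is connected, $\overline{K}$ is a compact connected subgroup of $\Aut(\mathfrak{g}_o)_e=\Int(\mathfrak{g}_o)$; this is exactly the compactness demanded by the notion of a compactly embedded subalgebra, so that ``the subgroup generated by $\ad\mathfrak{k}_o$ is compact'' is understood as the compactness of this closure. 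With that reading, the two directions match up cleanly, and everything else — positivity of the averaged form, existence of Haar measure, and the differentiation of the invariance identity — is routine.
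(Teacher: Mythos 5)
Your proof is correct and takes essentially the same route as the paper: in the forward direction you average an arbitrary positive definite form over the compact group generated by $\ad_{\mathfrak{g}_o}(\mathfrak{k}_o)$, and in the converse you observe that invariance makes each $\ad Z$ skew, so $\ad(\mathfrak{k}_o)\subset\mathfrak{so}(\mathfrak{g}_o,Q)$ and the generated subgroup sits inside a compact orthogonal group. Your explicit handling of the closure subtlety (the analytic subgroup is a priori only relatively compact, so ``compactly embedded'' should be read as compactness of its closure) is a more careful rendering of what the paper's terse phrase ``is a compact embedding'' tacitly assumes.
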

\begin{proof}
For one direction, take any positive definite bilinear form on $\mathfrak{g}_o$ and average it over the (compact) Lie subgroup of $\Int(\mathfrak{g}_o)$ generated by $\ad_{\mathfrak{g}_o}(\mathfrak{k}_o)$.  Conversely, if $Q$ is a $\mathfrak{k}_o$-invariant positive-definite form on $\mathfrak{g}_o$, then $\mathfrak{k}_o\xrightarrow{\ad}\mathfrak{so}(\mathfrak{g}_o,Q)\subset\mathfrak{int}(\mathfrak{g}_o)$ is a compact embedding.
\end{proof}

{\em Remark.} A symmetric Lie algebra is orthogonal if and only if $\mathfrak{k}$ is a compact Lie algebra that leaves invariant a positive definite form on $\mathfrak{p}_o$.

\section{Structure of orthogonal symmetric Lie algebras}
Henceforth, all orthogonal symmetric Lie algebras are assumed to be effective.
\begin{lemma}
Let $(\mathfrak{g}_o,s)$ be an orthogonal symmetric Lie algebra.  Then $\mathfrak{g}_o$ is semisimple if and only if $C_{\mathfrak{g}_o}(\mathfrak{p}_o)=0$.
\end{lemma}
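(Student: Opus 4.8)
The statement is a biconditional, and the plan is to prove each direction by relating $C_{\mathfrak{g}_o}(\mathfrak{p}_o)$ to abelian ideals, using the standard fact that a Lie algebra is semisimple if and only if it has no nonzero abelian ideal. Throughout I would use the eigenspace decomposition $\mathfrak{g}_o=\mathfrak{k}_o\oplus\mathfrak{p}_o$ with $[\mathfrak{k}_o,\mathfrak{p}_o]\subset\mathfrak{p}_o$ and $[\mathfrak{p}_o,\mathfrak{p}_o]\subset\mathfrak{k}_o$, effectiveness in the form that $\ad:\mathfrak{k}_o\to\mathfrak{gl}(\mathfrak{p}_o)$ is injective (the preceding lemma), and the positive-definite $\ad(\mathfrak{k}_o)$-invariant form $Q$ on $\mathfrak{g}_o$ supplied by orthogonality.

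For the forward direction I would first show that $\mathfrak{c}:=C_{\mathfrak{g}_o}(\mathfrak{p}_o)$ is an abelian ideal contained in $\mathfrak{p}_o$. It is $s$-invariant, since for $X\in\mathfrak{c}$ and $Y\in\mathfrak{p}_o$ one has $[sX,Y]=-s[X,Y]=0$; hence $\mathfrak{c}=(\mathfrak{c}\cap\mathfrak{k}_o)\oplus(\mathfrak{c}\cap\mathfrak{p}_o)$. Effectiveness kills the first summand, because $\mathfrak{c}\cap\mathfrak{k}_o=\ker(\ad:\mathfrak{k}_o\to\mathfrak{gl}(\mathfrak{p}_o))=0$, so $\mathfrak{c}\subset\mathfrak{p}_o$. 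Then $[\mathfrak{c},\mathfrak{c}]\subset[\mathfrak{c},\mathfrak{p}_o]=0$ shows $\mathfrak{c}$ is abelian, and a short Jacobi computation (for $Z=Z_{\mathfrak{k}}+Z_{\mathfrak{p}}$ the bracket $[Z_{\mathfrak{p}},X]$ vanishes and $[Z_{\mathfrak{k}},X]$ lands back in $\mathfrak{c}$) shows it is an ideal. If $\mathfrak{g}_o$ is semisimple it has no nonzero abelian ideal, so $\mathfrak{c}=0$.

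For the converse I would argue by contraposition: assuming $\mathfrak{g}_o$ is not semisimple, I produce a nonzero element centralizing $\mathfrak{p}_o$. The radical $\mathfrak{r}$ is nonzero and characteristic, hence $s$-invariant, and the last nonzero term $\mathfrak{a}$ of its derived series is a nonzero $s$-invariant abelian ideal of $\mathfrak{g}_o$; write $\mathfrak{a}=\mathfrak{a}_{\mathfrak{k}}\oplus\mathfrak{a}_{\mathfrak{p}}$. The key observation is that for any $x$ in an abelian ideal one has $\ad(x)^2=0$, since $\ad(x)$ maps $\mathfrak{g}_o$ into $\mathfrak{a}$ and then $\mathfrak{a}$ to $0$. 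For $U\in\mathfrak{a}_{\mathfrak{k}}$ the operator $\ad(U)$ is moreover skew-symmetric for $Q$, and a skew operator $A$ with $A^2=0$ satisfies $Q(Av,Av)=-Q(v,A^2v)=0$, forcing $\ad(U)=0$; thus $U$ is central, and effectiveness forces $\mathfrak{a}_{\mathfrak{k}}=0$ because $\mathfrak{a}_{\mathfrak{k}}$ then lies in $Z(\mathfrak{g}_o)\cap\mathfrak{k}_o$, an ideal of $\mathfrak{g}_o$ contained in $\mathfrak{k}_o$. Hence $\mathfrak{a}\subset\mathfrak{p}_o$, and for $X\in\mathfrak{a}$, $Y\in\mathfrak{p}_o$ we get $[X,Y]\in\mathfrak{k}_o\cap\mathfrak{a}=\mathfrak{a}_{\mathfrak{k}}=0$, so the nonzero $\mathfrak{a}$ lies in $C_{\mathfrak{g}_o}(\mathfrak{p}_o)$, contradicting the hypothesis.

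The main obstacle is the converse, and specifically the treatment of the $\mathfrak{k}_o$-component $\mathfrak{a}_{\mathfrak{k}}$ of the abelian ideal: there is no purely bracket-theoretic reason for it to vanish, and this is exactly where orthogonality must enter. The clean way through is the combination $\ad(U)^2=0$ (from $\mathfrak{a}$ being an abelian ideal) with skew-symmetry of $\ad(U)$ with respect to the positive-definite $Q$, which together force $\ad(U)=0$. I would take care to justify the two supporting facts the argument rests on: that the last nonzero derived term of an ideal is again an ideal of the ambient algebra (a standard Jacobi induction), and the equivalence between semisimplicity and the absence of nonzero abelian ideals. Neither is difficult, but both are load-bearing.
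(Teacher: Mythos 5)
Your proof is correct. The forward direction coincides with the paper's: split $C_{\mathfrak{g}_o}(\mathfrak{p}_o)$ under $s$, kill the $\mathfrak{k}_o$-part by effectiveness and the $\mathfrak{p}_o$-part as an abelian ideal. Your converse, however, takes a genuinely different route. The paper works with the radical of the \emph{Killing form}: since $B_{\mathfrak{g}_o}$ is $s$-invariant, $\mathfrak{r}=\ker B_{\mathfrak{g}_o}$ splits, and the key input is that $B_{\mathfrak{g}_o}|_{\mathfrak{k}_o\times\mathfrak{k}_o}$ is negative definite (because $\ad(\mathfrak{k}_o)\subset\mathfrak{so}(\mathfrak{g}_o,Q)$, plus effectiveness), forcing $\mathfrak{r}\subset\mathfrak{p}_o$, whence $[\mathfrak{r},\mathfrak{p}_o]\subset\mathfrak{r}\cap\mathfrak{k}_o=0$ and $\mathfrak{r}\subset C_{\mathfrak{g}_o}(\mathfrak{p}_o)=0$; semisimplicity then comes from Cartan's criterion. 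You instead take the \emph{solvable} radical, extract the last nonzero term of its derived series as an $s$-invariant abelian ideal $\mathfrak{a}$, and dispose of $\mathfrak{a}_{\mathfrak{k}}$ by the observation that an element $U$ of an abelian ideal has $\ad(U)^2=0$, while $\ad(U)$ is $Q$-skew, and a skew square-zero operator vanishes since $Q(\ad(U)v,\ad(U)v)=-Q(v,\ad(U)^2v)=0$. This is in effect a pointwise, trace-free version of the paper's negative-definiteness step (there one has $B(U,U)=\tr(\ad U)^2\le 0$ with equality iff $\ad U=0$ by skewness; in your setting $\ad(U)^2=0$ gives the conclusion without any trace). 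What the paper's route buys is brevity, at the cost of invoking Cartan's criterion and the compact-Lie-algebra fact footnoted there; your route is more elementary in those respects but shifts the load onto two standard facts you rightly flag as load-bearing: that the terms of the derived series of an ideal remain ideals of the ambient algebra, and that semisimplicity is equivalent to the absence of nonzero abelian ideals. Both arguments use orthogonality in exactly one place---the existence of the positive definite $\ad(\mathfrak{k}_o)$-invariant $Q$---and both use effectiveness twice, so the two proofs consume the same hypotheses.
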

\begin{proof}
If $(\mathfrak{g}_o,s)$ is semisimple, then $C_{\mathfrak{g_o}}(\mathfrak{p}_o)$ is invariant under $s$, and therefore splits as the direct sum
$$C_{\mathfrak{g_o}}(\mathfrak{p}_o)=(C_{\mathfrak{g_o}}(\mathfrak{p}_o)\cap\mathfrak{k}_o)\oplus (C_{\mathfrak{g_o}}(\mathfrak{p}_o)\cap\mathfrak{p}_o).$$
However, $C_{\mathfrak{g_o}}(\mathfrak{p}_o)\cap\mathfrak{p}_o$ is an abelian ideal of $\mathfrak{g}_o$, and must therefore be zero by semisimplicity.  On the other hand, $C_{\mathfrak{g_o}}(\mathfrak{p}_o)\cap\mathfrak{k}_o=\ker\ad(\mathfrak{k}_o)|_{\mathfrak{p}_o}=0$ by effectiveness.

Conversely, assume that $C_{\mathfrak{g_o}}(\mathfrak{p}_o)=0$.  Let $\mathfrak{r} = \ker B_{\mathfrak{g}_o}$ be the radical  of $\mathfrak{g}_o$.  Then since $B_{\mathfrak{g}_o}$ is $s$-invariant, $\mathfrak{r}=(\mathfrak{r}\cap\mathfrak{k}_o)\oplus(\mathfrak{r}\cap\mathfrak{p}_o)$. Note that $B_{\mathfrak{g}_o}|_{\mathfrak{k}_o\times\mathfrak{k}_o}$ is the Killing form of the adjoint representation $\ad(\mathfrak{k}_o)\subset \mathfrak{so}(\mathfrak{g}_o,Q)$, which is negative definite.\footnote{A Lie algebra is compact if and only if its Killing form is negative definite.}  Thus $\mathfrak{r}\cap \mathfrak{k}_o=0$ which implies that $\mathfrak{r}\subset\mathfrak{p}_o$.  But then $[\mathfrak{r},\mathfrak{p}_o]\subset \mathfrak{r}\cap\mathfrak{k}_o = 0$, so $\mathfrak{r}\subset C_{\mathfrak{g}_o}(\mathfrak{p}_o)=0$, and it follows that $\mathfrak{g}_o$ is semisimple.
\end{proof}

\begin{definition}
Let $(\mathfrak{g}_o,s)$ be an orthogonal symmetric Lie algebra.  An extension of $(\mathfrak{g}_o,s)$ is an orthogonal symmetric Lie algebra $(\overline{\mathfrak{g}}_o,\overline{s})$ such that
\begin{enumerate}
\item $\mathfrak{g}_o\subset\overline{\mathfrak{g}}_o$ is a subalgebra.
\item $\overline{s}|_{\mathfrak{g}_o} = s$
\item $\overline{\mathfrak{p}}_o=\mathfrak{p}_o$
\end{enumerate}
An orthogonal symmetric Lie algebra is {\em maximal} if it has no proper extensions.\index{Orthogonal symmetric Lie algebra!maximal}
\end{definition}

\begin{lemma}
Let $(\mathfrak{g}_o,s)$ be orthogonal symmetric.  If $\mathfrak{g}_o$ is semisimple, then it is maximal.  Furthermore, in that case $\mathfrak{k}_o=[\mathfrak{p}_o,\mathfrak{p}_o]$.
\end{lemma}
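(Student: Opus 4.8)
The plan is to prove the two assertions in the reverse order, establishing the structural identity $\mathfrak{k}_o=[\mathfrak{p}_o,\mathfrak{p}_o]$ first and then deducing maximality from it by a bootstrapping argument. Throughout I use the preceding lemma, which says that an (effective) orthogonal symmetric Lie algebra is semisimple if and only if $C_{\mathfrak{g}_o}(\mathfrak{p}_o)=0$; so the hypothesis that $\mathfrak{g}_o$ is semisimple is equivalent to $C_{\mathfrak{g}_o}(\mathfrak{p}_o)=0$, which will be the workhorse.

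To prove $\mathfrak{k}_o=[\mathfrak{p}_o,\mathfrak{p}_o]$, I would set $\mathfrak{k}'=[\mathfrak{p}_o,\mathfrak{p}_o]\subseteq\mathfrak{k}_o$ and consider $\mathfrak{g}'=\mathfrak{k}'\oplus\mathfrak{p}_o$. First I would check that $\mathfrak{g}'$ is an $s$-invariant ideal of $\mathfrak{g}_o$; the only nonroutine inclusion is $[\mathfrak{k}_o,\mathfrak{k}']\subseteq\mathfrak{k}'$, which follows from the Jacobi identity together with $[\mathfrak{k}_o,\mathfrak{p}_o]\subseteq\mathfrak{p}_o$ and \eqref{commutators}. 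Since $\mathfrak{g}_o$ is semisimple, the orthogonal complement $\mathfrak{g}''$ of $\mathfrak{g}'$ with respect to the Killing form is a complementary ideal, and it is $s$-invariant because the Killing form is $s$-invariant. As $\mathfrak{p}_o\subseteq\mathfrak{g}'$, the $\mathfrak{p}$-part of $\mathfrak{g}''$ is zero, so $\mathfrak{g}''\subseteq\mathfrak{k}_o$; being an ideal of $\mathfrak{g}_o$ contained in $\mathfrak{k}_o$, effectiveness forces $\mathfrak{g}''=0$. Hence $\mathfrak{g}_o=\mathfrak{g}'$, and comparing the two $s$-eigenspace decompositions gives $\mathfrak{k}_o=\mathfrak{k}'=[\mathfrak{p}_o,\mathfrak{p}_o]$.

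For maximality, let $(\overline{\mathfrak{g}}_o,\overline{s})$ be an extension, so that $\overline{\mathfrak{g}}_o=\overline{\mathfrak{k}}_o\oplus\mathfrak{p}_o$ with $\overline{\mathfrak{k}}_o\supseteq\mathfrak{k}_o$ and $\overline{s}|_{\mathfrak{g}_o}=s$. The crucial step is to show the extension is again semisimple, i.e. $C_{\overline{\mathfrak{g}}_o}(\mathfrak{p}_o)=0$. This centralizer is $\overline{s}$-invariant since $\mathfrak{p}_o$ is, so it splits into a $\overline{\mathfrak{k}}_o$-part and a $\mathfrak{p}_o$-part. Any element of the $\mathfrak{p}_o$-part lies in $\mathfrak{p}_o\subseteq\mathfrak{g}_o$ and centralizes $\mathfrak{p}_o$, hence lies in $C_{\mathfrak{g}_o}(\mathfrak{p}_o)=0$; any element of the $\overline{\mathfrak{k}}_o$-part lies in $\overline{\mathfrak{k}}_o$ and centralizes $\mathfrak{p}_o=\overline{\mathfrak{p}}_o$, hence vanishes because the extension is effective, so $\ad:\overline{\mathfrak{k}}_o\to\mathfrak{gl}(\mathfrak{p}_o)$ is injective. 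Thus $C_{\overline{\mathfrak{g}}_o}(\mathfrak{p}_o)=0$, and the preceding lemma makes $\overline{\mathfrak{g}}_o$ semisimple. Applying the identity already proved, now to the extension, yields $\overline{\mathfrak{k}}_o=[\overline{\mathfrak{p}}_o,\overline{\mathfrak{p}}_o]=[\mathfrak{p}_o,\mathfrak{p}_o]=\mathfrak{k}_o$, so $\overline{\mathfrak{g}}_o=\mathfrak{g}_o$ and the extension is trivial.

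I expect the main obstacle to be the bootstrapping move in the maximality argument: the right idea is not to attack $\overline{\mathfrak{k}}_o$ head-on but to first promote the extension to a \emph{semisimple} orthogonal symmetric Lie algebra via the centralizer criterion, and only then reuse $\mathfrak{k}=[\mathfrak{p},\mathfrak{p}]$. The supporting verifications---that $[\mathfrak{p}_o,\mathfrak{p}_o]\oplus\mathfrak{p}_o$ is an ideal and that the various centralizer pieces are $s$-invariant---are routine but must be done carefully, particularly the Jacobi-identity check and the repeated use of effectiveness to discard ideals sitting inside $\mathfrak{k}_o$.
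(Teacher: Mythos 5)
Your proof is correct, and it reaches both conclusions by a route that differs from the paper's in its mechanics, though the central object is the same. The paper also works with the ideal $\ell=[\mathfrak{p}_o,\mathfrak{p}_o]\oplus\mathfrak{p}_o$, but it asserts that $\ell$ is a \emph{semisimple} ideal of both $\mathfrak{g}_o$ and $\overline{\mathfrak{g}}_o$ and invokes the splitting $\mathfrak{g}=\ell\oplus C_{\mathfrak{g}}(\ell)$ valid for a semisimple ideal of an arbitrary Lie algebra; a single centralizer computation ($C_{\mathfrak{g}_o}(\ell)\subset C_{\mathfrak{g}_o}(\mathfrak{p}_o)=0$, and $C_{\overline{\mathfrak{g}}_o}(\ell)\subset C_{\overline{\mathfrak{g}}_o}(\mathfrak{p}_o)\subset\mathfrak{p}_o\subset\ell$, hence $C_{\overline{\mathfrak{g}}_o}(\ell)\subset Z(\ell)=0$) then yields $\mathfrak{g}_o=\ell=\overline{\mathfrak{g}}_o$, delivering the identity $\mathfrak{k}_o=[\mathfrak{p}_o,\mathfrak{p}_o]$ and maximality in one stroke. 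You instead complement $\ell$ inside the semisimple ambient algebra by its Killing-orthogonal complement and discard that complement using effectiveness; this needs only the standard fact about ideals of semisimple Lie algebras and never requires knowing that $\ell$ itself is semisimple---a claim the paper uses but does not justify (it does require an argument, e.g.\ nondegeneracy of $B|_{\ell\times\ell}$, since at that stage one does not yet know $\ell=\mathfrak{g}_o$). Your two-stage architecture---identity first, then semisimplicity of the extension via $C_{\overline{\mathfrak{g}}_o}(\mathfrak{p}_o)=0$, then reuse of the identity---also supplies exactly the verification the paper compresses into the unexplained remark that ``the Killing criterion implies that $\overline{\mathfrak{g}}_o$ is semisimple as well.'' What the paper's route buys is brevity: one decomposition handles both algebras simultaneously. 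What yours buys is self-containedness and a cleaner dependency structure, at the cost of running the complement argument twice. One hypothesis to keep visible: when you discard $C_{\overline{\mathfrak{g}}_o}(\mathfrak{p}_o)\cap\overline{\mathfrak{k}}_o$ and when you reapply the identity to the extension, you are using the section's standing convention that all orthogonal symmetric Lie algebras---hence the extension---are effective; the paper's proof relies on the same convention (its line ``$C_{\overline{\mathfrak{g}}_o}(\mathfrak{p}_o)\cap\mathfrak{k}_o=0$'' should in fact read $\overline{\mathfrak{k}}_o$), so you are on equal footing there.
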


\begin{proof}
Let $(\overline{\mathfrak{g}}_o,s)$ be an extension of $(\mathfrak{g}_o,s)$.  Note that the Killing criterion implies that $\overline{\mathfrak{g}}_o$ is semisimple as well.  Let $\ell = [\mathfrak{p}_o,\mathfrak{p}_o]\oplus\mathfrak{p}_o$.  This is an ideal in $\mathfrak{g}_o,\overline{\mathfrak{g}}_o$ which is semisimple and $s$-invariant.  Hence, since $\ell$ is semisimple,
$$\mathfrak{g}_o = \ell\oplus C_{\mathfrak{g}_o}(\ell), \quad \overline{\mathfrak{g}}_o = \ell\oplus C_{\overline{\mathfrak{g}}_o}(\ell).$$

Now, $C_{\mathfrak{g}_o}(\ell)\subset C_{\mathfrak{g}_o}(\mathfrak{p}_o)=0$.  Also, by effectiveness, $C_{\overline{\mathfrak{g}}_o}(\mathfrak{p}_o)\cap\mathfrak{k}_o=\ker\ad(\mathfrak{k}_o)|_{\mathfrak{p}_o}=0$.  Hence, $C_{\overline{\mathfrak{g}}_o}(\ell)\subset C_{\overline{\mathfrak{g}}_o}(\mathfrak{p}_o)\subset \mathfrak{p}_o\subset\ell$. So $C_{\overline{\mathfrak{g}}_o}(\ell) = 0$ as well.
\end{proof}

\begin{theorem}\label{decomposition}\index{Orthogonal symmetric Lie algebra!irreducible}
Let $(\mathfrak{g}_0,s)$ be an orthogonal symmetric Lie algebra.  Then there is a direct sum decomposition
$$(\mathfrak{g}_o,s) = (\mathfrak{h}_f,s_f)\oplus (\mathfrak{h}_1,s_1)\oplus\cdots\oplus (\mathfrak{h}_t,s_t)$$
such that $(\mathfrak{h}_f,s_f)$ is flat and $(\mathfrak{h}_i,s_i)$ is irreducible.  The decomposition is unique up to ordering.
\end{theorem}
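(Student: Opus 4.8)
The plan is to peel off a single flat ideal, reduce to the semisimple case, split the semisimple remainder into irreducible factors by a Killing form argument, and finally prove uniqueness by characterizing each piece intrinsically. I take \emph{flat} to mean that the factor has abelian $\mathfrak{p}$-part and \emph{irreducible} to mean that $\ad(\mathfrak{k}_o)$ acts irreducibly on a non-abelian $\mathfrak{p}$-part; note that an irreducible factor is then automatically semisimple, since its centralizer of $\mathfrak{p}$ vanishes (the $\mathfrak{k}$-part by effectiveness, the $\mathfrak{p}$-part because it is a proper $\ad(\mathfrak{k})$-submodule), so the semisimplicity lemma applies. Throughout I fix a positive definite $\ad(\mathfrak{k}_o)$-invariant form $Q$ on $\mathfrak{g}_o$, and after replacing $Q$ by $\tfrac12(Q+s^*Q)$ I may assume $Q$ is $s$-invariant, so $\mathfrak{k}_o$ and $\mathfrak{p}_o$ are $Q$-orthogonal.

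\emph{Phase 1: splitting off the flat part.} Set $\mathfrak{p}_f=C_{\mathfrak{g}_o}(\mathfrak{p}_o)$. It is $s$-invariant, and its intersection with $\mathfrak{k}_o$ is the kernel of $\ad\colon\mathfrak{k}_o\to\mathfrak{gl}(\mathfrak{p}_o)$, which vanishes by effectiveness; hence $\mathfrak{p}_f\subseteq\mathfrak{p}_o$ and $[\mathfrak{p}_f,\mathfrak{p}_o]=0$, so $\mathfrak{p}_f$ is abelian. Let $\mathfrak{p}'=\mathfrak{p}_f^{\perp_Q}$ in $\mathfrak{p}_o$, an $\ad(\mathfrak{k}_o)$-invariant complement. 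The crucial computation is $[[\mathfrak{p}',\mathfrak{p}'],\mathfrak{p}_f]=0$ (Jacobi, using $[\mathfrak{p}',\mathfrak{p}_f]=0$); this decouples the blocks and shows $\mathfrak{g}_s:=[\mathfrak{p}',\mathfrak{p}']\oplus\mathfrak{p}'$ is an $s$-invariant subalgebra with $[\mathfrak{g}_s,\mathfrak{p}_f]=0$, and then an $s$-invariant ideal of $\mathfrak{g}_o$. One checks $C_{\mathfrak{g}_s}(\mathfrak{p}')=0$, so $\mathfrak{g}_s$ is semisimple by the semisimplicity lemma; being a semisimple ideal it admits a complementary ideal $\mathfrak{g}_f=C_{\mathfrak{g}_o}(\mathfrak{g}_s)$, which is $s$-invariant, satisfies $\mathfrak{g}_f\cap\mathfrak{p}_o=\mathfrak{p}_f$, and is therefore flat. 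This gives $(\mathfrak{g}_o,s)=(\mathfrak{g}_f,s)\oplus(\mathfrak{g}_s,s)$ with $\mathfrak{g}_f$ flat.

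\emph{Phase 2: decomposing the semisimple part.} Write $\mathfrak{g}_s=\mathfrak{k}'\oplus\mathfrak{p}'$ with $\mathfrak{k}'=[\mathfrak{p}',\mathfrak{p}']$. Since $\mathfrak{k}'$ is compactly embedded, each $\ad(K)$ with $K\in\mathfrak{k}'$ is skew for $Q$, whence the Killing form $B_{\mathfrak{g}_s}$ is negative definite on $\mathfrak{k}'$; this is the one essential use of the orthogonality hypothesis. As $B_{\mathfrak{g}_s}$ is nondegenerate and $\ad(\mathfrak{k}')$-invariant, decompose $\mathfrak{p}'=\mathfrak{p}_1\oplus\cdots\oplus\mathfrak{p}_t$ into mutually $B_{\mathfrak{g}_s}$-orthogonal $\ad(\mathfrak{k}')$-irreducibles (with no trivial summand, since $\mathfrak{g}_s$ is perfect and $\mathfrak{p}'=[\mathfrak{k}',\mathfrak{p}']$). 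For $i\ne j$, $X\in\mathfrak{p}_i$, $Y\in\mathfrak{p}_j$ and any $K\in\mathfrak{k}'$, invariance gives $B_{\mathfrak{g}_s}([X,Y],K)=B_{\mathfrak{g}_s}(X,[Y,K])=0$, and negative definiteness forces $[\mathfrak{p}_i,\mathfrak{p}_j]=0$. A short Jacobi computation then yields $[\mathfrak{h}_i,\mathfrak{h}_j]=0$ for $\mathfrak{h}_i:=[\mathfrak{p}_i,\mathfrak{p}_i]\oplus\mathfrak{p}_i$, so each $\mathfrak{h}_i$ is an $s$-invariant ideal, $\mathfrak{g}_s=\mathfrak{h}_1\oplus\cdots\oplus\mathfrak{h}_t$, and $\ad(\mathfrak{k}_i)$ acts irreducibly on $\mathfrak{p}_i$ by construction; effectiveness and compactness are inherited, so each $(\mathfrak{h}_i,s)$ is irreducible. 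Taking $\mathfrak{h}_f=\mathfrak{g}_f$ gives the asserted decomposition.

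\emph{Uniqueness, and the main obstacle.} The pieces are intrinsic: $\mathfrak{k}'=[\mathfrak{p}_o,\mathfrak{p}_o]$ (because $\mathfrak{p}_f$ centralizes $\mathfrak{p}_o$) and $\mathfrak{p}'=[\mathfrak{k}',\mathfrak{p}_o]$ (because $\mathfrak{g}_s$ is perfect), so $\mathfrak{g}_s$ and $\mathfrak{g}_f=C_{\mathfrak{g}_o}(\mathfrak{g}_s)$ depend on no choice. In any decomposition of the stated form each irreducible factor is semisimple and each flat factor self-brackets to zero, so these same formulas recover $\mathfrak{g}_s$ as the span of the irreducible factors and $\mathfrak{g}_f$ as the flat factor; the $\mathfrak{h}_i$ are then exactly the minimal nonzero $s$-invariant ideals of the semisimple $\mathfrak{g}_s$, determined by its simple-ideal decomposition and the permutation action of $s$, which gives uniqueness up to order. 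I expect the main obstacle to be Phase 1: the naive attempt to split off $\mathfrak{p}_f$ directly fails, since the isotropy may act on it, and the correct move is to recognize that $\mathfrak{g}_s=[\mathfrak{p}',\mathfrak{p}']\oplus\mathfrak{p}'$ is genuinely semisimple so that the abstract splitting of a semisimple ideal from its centralizer can be applied.
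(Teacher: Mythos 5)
Your argument is correct in substance but is organized genuinely differently from the paper's, and in one respect it goes further. The paper never splits off the flat part first: it compares the two invariant forms on $\mathfrak{p}_o$ by writing $B(x,y)=Q(Ax,y)$ for a $Q$-symmetric operator $A$ commuting with $\ad(\mathfrak{k}_o)$, orthogonally diagonalizes $A$, takes the $0$-eigenspace $V_0$ (the radical of $B|_{\mathfrak{p}_o\times\mathfrak{p}_o}$) as the flat $\mathfrak{p}$-part, proves the cross-brackets vanish via $B([x,y],[x,y])=B(x,[y,[x,y]])=0$ together with negative definiteness of $B$ on $\mathfrak{k}_o$, refines the nonzero eigenspaces into simple $\mathfrak{k}_o$-modules $\mathfrak{p}_i$, and only then forms $\mathfrak{h}_i=[\mathfrak{p}_i,\mathfrak{p}_i]\oplus\mathfrak{p}_i$ and the flat complement $C_{\mathfrak{g}_o}(\mathfrak{h})$. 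You instead identify the flat $\mathfrak{p}$-part intrinsically as $\mathfrak{p}_f=C_{\mathfrak{g}_o}(\mathfrak{p}_o)$, split off the semisimple ideal $\mathfrak{g}_s=[\mathfrak{p}',\mathfrak{p}']\oplus\mathfrak{p}'$ via the semisimplicity lemma and the centralizer complement, and then run essentially the paper's Phase-2 argument inside $\mathfrak{g}_s$; your checks there (ideal property, vanishing cross-brackets from negative definiteness of $B$ on $\mathfrak{k}'$, factoring the $\mathfrak{k}_o$-action through $\ad([\mathfrak{p}_i,\mathfrak{p}_i])$) all go through. What your route buys is the uniqueness statement: the paper asserts uniqueness but its proof never addresses it, whereas your intrinsic characterizations $\mathfrak{k}'=[\mathfrak{p}_o,\mathfrak{p}_o]$, $\mathfrak{p}'=[\mathfrak{k}',\mathfrak{p}_o]$, $\mathfrak{g}_f=C_{\mathfrak{g}_o}(\mathfrak{g}_s)$, together with the identification of the $\mathfrak{h}_i$ as the minimal nonzero $s$-invariant ideals of $\mathfrak{g}_s$, actually deliver it (your convention that an irreducible factor has non-abelian $\mathfrak{p}$-part is also what makes uniqueness well-posed, since a one-dimensional flat factor would otherwise count as irreducible).

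One step you assert without justification, and it is the one place where the paper's operator $A$ does real work: the existence of a decomposition of $\mathfrak{p}'$ into \emph{mutually $B_{\mathfrak{g}_s}$-orthogonal} $\ad(\mathfrak{k}')$-irreducibles. Since $B_{\mathfrak{g}_s}$ can be indefinite on $\mathfrak{p}'$ (compact and noncompact factors may coexist), complete reducibility via $Q$ only yields $Q$-orthogonal summands, and two isomorphic irreducible summands can pair nontrivially under $B$; your computation $B_{\mathfrak{g}_s}([X,Y],K)=B_{\mathfrak{g}_s}(X,[Y,K])=0$ genuinely needs $B$-orthogonality of $\mathfrak{p}_i$ and $\mathfrak{p}_j$, not $Q$-orthogonality, because $Q$ is only $\ad(\mathfrak{k}_o)$-invariant and cannot absorb brackets with $\mathfrak{p}$-elements. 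The repair is one line and is exactly the paper's device: define $A$ on $\mathfrak{p}'$ by $B_{\mathfrak{g}_s}(x,y)=Q(Ax,y)$; distinct eigenspaces of $A$ are both $Q$- and $B$-orthogonal and $\ad(\mathfrak{k}')$-invariant, and on each eigenspace $B_{\mathfrak{g}_s}=\lambda Q$, so any $Q$-orthogonal refinement into irreducibles is automatically $B$-orthogonal. With that line inserted, your proof is complete.
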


\begin{proof}
The forms $Q, B_{\mathfrak{g}_o}|_{\mathfrak{p}_o\times\mathfrak{p}_o}$ are both $\ad(\mathfrak{k}_o)$-invariant, so there exists $A:\mathfrak{p}_o\to\mathfrak{p}_o$, symmetric with respect to $Q$, such that $B(x,y)=Q(Ax,y)$.  So $A$ is orthogonally diagonalizable over $\mathbb{R}$.  Let $0,\lambda_1,\dots,\lambda_n$ be the eigenvalues and $V_0,V_1,\dots,V_n$ be the corresponding eigenspaces, which are orthogonal with respect to $Q$ (as well as $B$), and are $\ad(\mathfrak{k}_o)$-invariant.

We claim that $[V_i,V_j]=0$ for all $i\not=j$, and $[V_0,V_0]$.  Indeed, if $x\in V_i$ and $y\in V_j$, then $B([x,y],[x,y])=B(x,[y,[x,y]])=0$ since $x\in V_i$ and $[y,[x,y]]\in V_j$.  Thus $[x,y]\in V_0\cap\mathfrak{k}_o=0$.

Now $A$ commutes with $\ad(\mathfrak{k}_o)$, and so each $V_i\, (i\not=0)$ decomposes into a sum of simple $\mathfrak{k}_o$-modules, so that
$$V_1\oplus\cdots\oplus V_n = \mathfrak{p}_1\oplus\cdots\oplus\mathfrak{p}_t.$$
Now $\mathfrak{p}_o=V_0\oplus \mathfrak{p}_1\oplus\cdots\oplus\mathfrak{p}_t$ and $[\mathfrak{p}_i,\mathfrak{p}_j]=0$ for $i\not=j$. Let $\mathfrak{h}_i=[\mathfrak{p}_i,\mathfrak{p}_i]\oplus\mathfrak{p}_i$.  This is an ideal in $\mathfrak{g}_o$, since it is a $\mathfrak{k}_o$-module that stable under brackets with each of the the $\mathfrak{p}_j$.  Because $B_{\mathfrak{h}_i}=B_{\mathfrak{g}_o}|_{\mathfrak{h}_i\times\mathfrak{h}_i}$ is non-degenerate, $\mathfrak{h}_i$ is a semisimple ideal.  Hence there is a complete reduction
$$\mathfrak{g}_o=C_{\mathfrak{g}_o}(\mathfrak{h})\oplus(\mathfrak{h}_1\oplus\cdots\oplus\mathfrak{h}_t)$$
where we have denoted $\mathfrak{h}=\mathfrak{h}_1\oplus\cdots\oplus\mathfrak{h}_t$.  Each factor $(\mathfrak{h}_i,s|_{\mathfrak{h_i}})$ is a symmetric orthogonal Lie algebra.  Let $\mathfrak{h}_f = C_{\mathfrak{g}_o}(\mathfrak{h})$.  This is an $s$-invariant ideal and so
$$\mathfrak{h}_f=C_{\mathfrak{g}_o}(\mathfrak{h})=(C_{\mathfrak{g}_o}(\mathfrak{h})\cap\mathfrak{k}_o)\oplus(C_{\mathfrak{g}_o}(\mathfrak{h})\cap\mathfrak{p}_o) = (C_{\mathfrak{g}_o}(\mathfrak{h})\cap\mathfrak{k}_o)\oplus V_o.$$

Hence
$$\mathfrak{k}_o=(C_{\mathfrak{g}_o}(\mathfrak{h})\cap\mathfrak{k}_o)\oplus[\mathfrak{p}_1,\mathfrak{p}_1]\oplus\cdots\oplus[\mathfrak{p}_t,\mathfrak{p}_t].$$
Moreover, the action of $\mathfrak{k}_o$ on $\mathfrak{p}_i$ factors through $\ad([\mathfrak{p}_i,\mathfrak{p}_i])$:
$$
\xymatrix{
\mathfrak{k}_o\ar[rr]^\ad\ar[dr]&&\mathfrak{gl}(\mathfrak{p}_i)\\
&[\mathfrak{p}_i,\mathfrak{p}_i]\ar[ur]^\ad&
}
$$
So if $\mathfrak{p}_i$ were reducible as a $[\mathfrak{p}_i,\mathfrak{p}_i]$-module, then it would also be reducible as a $\mathfrak{k}_o$-module.  However it is simple by construction, and therefore each $(\mathfrak{h}_i,s_i)$ is an irreducible symmetric orthogonal Lie algebra.
\end{proof}

\begin{theorem}
Let $(\mathfrak{g}_o,s)$ be an irreducible orthogonal symmetric Lie algebra.  Then $B=cQ$ with
\begin{itemize}
\item $c=0$: flat
\item $c>0$: $\mathfrak{g}_o$ is noncompact and semisimple.  In this case $\mathfrak{k}_o$ is a maximal compact subalgebra.
\item $c<0$: $\mathfrak{g}_o$ is compact and $s$ is an order $2$ automorphism.
\end{itemize}
\end{theorem}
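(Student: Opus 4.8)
The plan is to reduce everything to a single application of Schur's lemma and then read off the trichotomy from the sign of the resulting proportionality constant. From the proof of Theorem~\ref{decomposition} I already have a $Q$-self-adjoint endomorphism $A:\mathfrak{p}_o\to\mathfrak{p}_o$, commuting with $\ad(\mathfrak{k}_o)$, characterized by $B(x,y)=Q(Ax,y)$ for $x,y\in\mathfrak{p}_o$. Irreducibility means precisely that $\mathfrak{p}_o$ is a simple $\mathfrak{k}_o$-module, so I would invoke the real form of Schur's lemma in the following self-adjoint guise: since $A$ is symmetric with respect to the positive definite $Q$, it is diagonalizable with real eigenvalues, and each eigenspace is an $\ad(\mathfrak{k}_o)$-submodule; simplicity forces a single eigenspace, hence $A=c\,\mathrm{Id}$ for some $c\in\mathbb{R}$. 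This yields $B|_{\mathfrak{p}_o}=c\,Q|_{\mathfrak{p}_o}$, which is the proportionality $B=cQ$ (understood on $\mathfrak{p}_o$, where it is meaningful).

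Before the case analysis I would record two facts already implicit earlier. First, $s$-invariance of $B$ together with $s|_{\mathfrak{k}_o}=+1$ and $s|_{\mathfrak{p}_o}=-1$ gives $B(\mathfrak{k}_o,\mathfrak{p}_o)=0$, so the decomposition $\mathfrak{g}_o=\mathfrak{k}_o\oplus\mathfrak{p}_o$ is $B$-orthogonal. Second, $B|_{\mathfrak{k}_o}$ is the trace form of the representation $\ad:\mathfrak{k}_o\to\mathfrak{so}(\mathfrak{g}_o,Q)$, which is negative semidefinite, and is in fact negative definite because effectiveness makes this representation faithful.

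Now the trichotomy. If $c=0$, then $B$ vanishes on $\mathfrak{p}_o$; using the invariance identity $B([x,y],[x,y])=B(x,[y,[x,y]])$ with $[x,y]\in\mathfrak{k}_o$ and $[y,[x,y]]\in\mathfrak{p}_o$, the right-hand side vanishes, so $[x,y]$ lies in the negative-definite space $\mathfrak{k}_o$ with $B([x,y],[x,y])=0$, forcing $[\mathfrak{p}_o,\mathfrak{p}_o]=0$. Since the curvature of the associated symmetric space is $R(X,Y)Z=[[X,Y],Z]$ for $X,Y,Z\in\mathfrak{m}_o=\mathfrak{p}_o$, this says $R=0$, i.e.\ flat. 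If $c>0$, then $B$ is positive definite on $\mathfrak{p}_o$ and negative definite on $\mathfrak{k}_o$, so by orthogonality $B$ is nondegenerate; Cartan's criterion gives semisimplicity, while indefiniteness of $B$ (here $\mathfrak{p}_o\ne 0$) shows $\mathfrak{g}_o$ is noncompact. If $c<0$, then $B$ is negative definite on both summands, hence negative definite on all of $\mathfrak{g}_o$, so $\mathfrak{g}_o$ is compact and semisimple, and $s$ is an order-$2$ automorphism by hypothesis.

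The one step that is more than bookkeeping is the claim that for $c>0$ the subalgebra $\mathfrak{k}_o$ is a \emph{maximal} compact subalgebra; this is exactly the statement that $\mathfrak{k}_o\oplus\mathfrak{p}_o$ is a Cartan decomposition and $\mathfrak{k}_o$ is maximal among compactly embedded subalgebras. I would obtain it from the structure theory of Cartan decompositions: any compactly embedded subalgebra of the semisimple $\mathfrak{g}_o$ has negative definite $B$, whereas $B$ is positive definite on the nonzero space $\mathfrak{p}_o$, and one argues that no compactly embedded subalgebra can strictly contain $\mathfrak{k}_o$. I expect this maximality---rather than the sign analysis---to be the main obstacle, since it genuinely exploits the interaction of the involution $s$ with the Killing form, not merely the eigenvalue $c$.
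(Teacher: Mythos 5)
The paper states this theorem without proof, so there is no official argument to compare against; but the proof of Theorem~\ref{decomposition} immediately preceding it contains every ingredient you use---the $Q$-symmetric intertwiner $A$ with $B(x,y)=Q(Ax,y)$, its orthogonal diagonalizability into $\ad(\mathfrak{k}_o)$-invariant eigenspaces, the invariance identity $B([x,y],[x,y])=B(x,[y,[x,y]])$, and the negative definiteness of $B|_{\mathfrak{k}_o\times\mathfrak{k}_o}$ via effectiveness (recorded in the semisimplicity lemma earlier in the chapter). Your specialization of that machinery to the irreducible case by Schur's lemma, the $B$-orthogonality of $\mathfrak{k}_o$ and $\mathfrak{p}_o$, and the sign trichotomy ($c=0$ forces $[\mathfrak{p}_o,\mathfrak{p}_o]=0$, hence flat; $c>0$ gives a nondegenerate indefinite Killing form, hence semisimple noncompact; $c<0$ gives $B$ negative definite, hence compact) are all correct and are surely the intended argument.

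The one genuine gap is the one you flagged yourself: maximal compactness of $\mathfrak{k}_o$ when $c>0$, and the sketch you give does not close it. Negative definiteness of $B$ on a compactly embedded subalgebra $\mathfrak{l}\supsetneq\mathfrak{k}_o$ produces no contradiction with positivity of $B$ on $\mathfrak{p}_o$, because a \emph{mixed} element $X=X_{\mathfrak{k}}+X_{\mathfrak{p}}\in\mathfrak{l}$ with $X_{\mathfrak{p}}\neq 0$ has $B(X,X)=B(X_{\mathfrak{k}},X_{\mathfrak{k}})+cQ(X_{\mathfrak{p}},X_{\mathfrak{p}})$, which can perfectly well be negative; sign bookkeeping alone cannot exclude such an $\mathfrak{l}$, since nothing forces $\mathfrak{l}$ to be $s$-invariant. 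What your computation does establish is that $B_s(X,Y)=-B(X,sY)$ is positive definite, i.e.\ that $s$ is a Cartan involution, and that is the correct hinge. Maximality then follows from material the paper develops \emph{after} this theorem: most concretely, from the global Cartan decomposition $K\times\mathfrak{p}_o\xrightarrow{\cong}G$ for $G=\Int(\mathfrak{g}_o)$ proved in the chapter on noncompact symmetric spaces. If $\mathfrak{l}\supsetneq\mathfrak{k}_o$ were compactly embedded, the compact subgroup $L\le\Int(\mathfrak{g}_o)$ it generates would contain the connected $K$ properly, hence would contain some $\exp X$ with $X\in\mathfrak{p}_o\setminus\{0\}$; but $\ad X$ is $B_s$-symmetric and nonzero (effectiveness), so it has a real eigenvalue $\lambda\neq 0$ and $\Ad(\exp X)^n=e^{n\ad X}$ has unbounded eigenvalues $e^{n\lambda}$, contradicting compactness of $L$. (Alternatively one can invoke the conjugacy of Cartan involutions, also proved in this chapter, to see that all maximal compactly embedded subalgebras have the same dimension as $\mathfrak{k}_o$.) Either way, the dependence on later results is presumably why the paper omits the proof here, and your instinct that this step, not the sign analysis, is the real content is exactly right.
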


{\em Remark.}  In the $c<0$ case, if $\mathfrak{g}_o$ is not simple, $\mathfrak{g}_o=\bigoplus_{i=1}^t\mathfrak{g}_i$ where $\mathfrak{g}_i\trianglelefteq\mathfrak{g}_o$ are simple ideals.  The involution $s$ must interchange them, say $s(\mathfrak{g}_1)=\mathfrak{g}_2$.  In particular $\mathfrak{g}_1\cong\mathfrak{g}_2$.  So $(\mathfrak{g}_1\oplus\mathfrak{g}_2,s)$ is an orthogonal symmetric Lie algebra, and therefore by irreducibility, $\mathfrak{g}_o=\mathfrak{g}_1\oplus\mathfrak{g}_2$.  Thus, up to isomorphism,
$$\mathfrak{g}_o = \mathfrak{r}_o\oplus\mathfrak{r}_o$$
where $\mathfrak{r}_o$ is a compact simple Lie lagebra and $s$ is the flip map $s(x\oplus y)=y\oplus x$.

{\em Notation.}  Henceforth we fix the following notation.  $\mathfrak{g}_o$ will denote a real Lie algebra.  $\mathfrak{g}=(\mathfrak{g}_o)_{\mathfrak{C}}$ is the complexification of $\mathfrak{g}_o$.  Finally, $\mathfrak{g}^{\mathbb{R}}$ is $\mathfrak{g}$ regarded as a real Lie algebra.

\subsection{Real forms}\index{Real form} The real Lie algebra $\mathfrak{g}_o$ is called a real form of the complex Lie algebra $\mathfrak{g}$. Associated to any complex semisimple Lie algebra $\mathfrak{g}$ there are precisely two real forms up to isomorphism.  To describe these, let $\mathfrak{t}\subset\mathfrak{g}$ be a maximal toral subalgebra.  Let $\mathfrak{t}_o$ be the real abelian Lie algebra such that $\mathfrak{t}_o\otimes\mathbb{C}=\mathfrak{t}$.  Let $\Phi\subset\mathfrak{t}^*$ be a set of roots for $\mathfrak{g}$, so that each $\alpha\in\Phi$ is an eigenvalue for the action of $\mathfrak{t}$ on $\mathfrak{g}$ and there is an eigenspace decomposition
$$\mathfrak{g}=\mathfrak{t}\oplus\bigoplus_{\alpha\in\Phi}\mathfrak{g}_\alpha.$$
Then each $\mathfrak{g}_\alpha$ is one-dimensional, each spanned by a single element $x_\alpha$.  Let $h_\alpha$, $\alpha\in\Phi$, be elements that are dual to $\alpha$ under the Killing form of $\mathfrak{g}$.  The $x_\alpha$ can be chosen so that $[x_\alpha,x_{-\alpha}]=h_\alpha$ and $[x_\alpha,x_\beta]=N_{\alpha,\beta}x_{\alpha+\beta}$ whenever $\alpha+\beta\in\Phi$ where $N_{\alpha,\beta}\in\mathbb{R}$ satisfy $N_{\alpha,beta}=N_{-\alpha,-\beta}$.

Divide the roots into positive and negative roots $\Phi=\Phi^+\cup\Phi^-$ (relative to some generic hyperplane in $\mathfrak{t}^*$).  Let $\mathfrak{k}_o = \bigoplus_{\alpha\in\Phi^+}\mathbb{R}(x_\alpha + x_{-\alpha})$ and $\mathfrak{p}_o=\mathfrak{t}_o\oplus\bigoplus_{\alpha\in\Phi^+}\mathbb{R}(x_\alpha - x_{-\alpha}).$  The Lie algebra
$$\mathfrak{g}_o = \mathfrak{k}_o\oplus i\mathfrak{p}_o$$
is a compact real form of $\mathfrak{g}$.  Indeed, its Killing form is the restriction to $\mathfrak{g}_o$ of the Killing form on $\mathfrak{g}$.  But, since $[x_\alpha,x_{-\alpha}]=h_\alpha$, $B(x_\alpha,x_{-\alpha})=1$, so
\begin{align*}
B(x_\alpha-x_{-\alpha},x_\alpha - x_{-\alpha})&<0\\
B(i(x_\alpha+x_{-\alpha}),i(x_\alpha + x_{-\alpha}))&<0\\
B(x_\alpha-x_{-\alpha},i(x_\alpha + x_{-\alpha}))&=0\\
B(ih_\alpha,ih_\alpha)&<0.
\end{align*}
Hence $B|_{\mathfrak{g}_o\times\mathfrak{g}_o}$ is negative-definite.

The Lie algebra
$$\widetilde{\mathfrak{g}}_o = \mathfrak{k}_o\oplus \mathfrak{p}_o$$
is the noncompact real form of $\mathfrak{g}_o$.  Note that $\mathfrak{k}_o$ is the maximal compact subalgebra of $\widetilde{\mathfrak{g}}_o$ since the restriction of the Killing form to $\mathfrak{k}_o$ is negative semidefinite and the restriction to $\mathfrak{p}_o$ is positive definite.

The functor $\mathfrak{g}_o\mapsto \widetilde{\mathfrak{g}}_o$ interchanges the compact and non-compact forms of a given real orthogonal symmetric Lie algebra $\mathfrak{g}_o$.  The Lie algebrs $\mathfrak{g}_o$ and $\widetilde{\mathfrak{g}}_o$ are said to be dual to each other.

\section{Rough classification of orthogonal symmetric Lie algebras}
The initial classification of orthogonal symmetric Lie algebras is as follows:
\begin{enumerate}[Type I]\index{Orthogonal symmetric Lie algebra!Type I--IV}
\item $\mathfrak{g}_o$ is compact and simple
\item $(\mathfrak{g}_o,\mathfrak{g}_o, s)$ where $s$ is the flip and $\mathfrak{g}_o$ is compact and simple.
\item $\mathfrak{g}_o$ is simple, non-compact, and $\mathfrak{k}_o$ is a maximal compact subalgebra.
\item $\mathfrak{g}_o = \mathfrak{g}^{\mathbb{R}}$ is the underlying real Lie algebra of a complex simple Lie algebra.  $s: \mathfrak{g}^{\mathbb{R}}\to  \mathfrak{g}^{\mathbb{R}}$ is complex conjugation.
\end{enumerate}

Of these, types I and II are compact and types III and IV are noncompact.  The duality functor $\mathfrak{g}_o\mapsto\widetilde{\mathfrak{g}}_o$ interchanges types I and III and types II and IV.  For types I and II, $Q=-B_{\mathfrak{g}_o}|_{\mathfrak{p}_o\times\mathfrak{p}_o}$; for types III and IV, $Q = B_{\mathfrak{g}_o}|_{\mathfrak{p}_o\times\mathfrak{p}_o}$.

\section{Cartan involutions}
\begin{definition}\index{Cartan involution}
Let $\mathfrak{g}_o$ be a real Lie algebra.  An order $2$ automorphism $\theta : \mathfrak{g}_o\to\mathfrak{g}_o$ is said to be a Cartan involution if $B_\theta(X,Y) =  -B_{\mathfrak{g}_o}(X,\theta T)$ is positive definite.
\end{definition}

{\em Remark.}
\begin{enumerate}
\item Let $\theta$ be a Cartan involution.  Then $B_{\mathfrak{g}_o}$ is $\theta$-invariant:
$$B_{\mathfrak{g}_o}(\theta X,\theta Y) = \tr(\ad\theta X\,\ad\theta Y) = B_{\mathfrak{g}_o}(X,Y)$$
since $\theta$ is an automorphism of $\mathfrak{g}_o$.
\item Let $\mathfrak{g}$ be complex semisimple and $\mathfrak{u}_o$ a compact real form.  Then the complex conjugattion map
$$\mathfrak{g} = \mathfrak{u}_o\oplus i\mathfrak{u}_o\xrightarrow{\tau} \mathfrak{u}_o\oplus i\mathfrak{u}_o$$
is a Cartan involution.  Indeed, if $X,Y\in\mathfrak{u}_o$, then
\begin{align*}
B_\tau(X+iY,X+iY)&=-2B_{\mathfrak{g}^{\mathbb{R}}}(X+iY,X-iY) = -2\operatorname{Re}B_{\mathfrak{g}}(X+iY,X-iY)\\
&=-2\operatorname{Re}\left(B_{\mathfrak{g}}(X,X)+B_{\mathfrak{g}}(Y,Y)\right)>0.
\end{align*}
\end{enumerate}

\begin{lemma}
Let $\mathfrak{g}_o$ be a real semisimple Lie algebra, $\theta$ a Cartan involution and $\tau$ an involution.  Then there exists an interior automorphism $\phi\in\operatorname{Int}(\mathfrak{g}_o)$ such that $\phi\theta\phi^{-1}$ and $\tau$ commute.
\end{lemma}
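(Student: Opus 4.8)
The plan is to realize $\phi$ as a real power of a positive-definite automorphism manufactured from $\theta$ and $\tau$, using the inner product $B_\theta(X,Y)=-B_{\mathfrak{g}_o}(X,\theta Y)$ that the Cartan involution furnishes. Throughout I use that every automorphism of $\mathfrak{g}_o$ preserves the Killing form $B_{\mathfrak{g}_o}$ (as recorded in the Remark above), so $\theta$ and $\tau$ are both $B_{\mathfrak{g}_o}$-orthogonal.

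First I would compute the $B_\theta$-adjoints of $\theta$ and $\tau$. A direct calculation gives $\theta^{*}=\theta$, so $\theta$ is self-adjoint with eigenvalues $\pm1$, and $\tau^{*}=\theta\tau\theta$. Hence the automorphism $\omega:=\tau\theta$ is self-adjoint, since $\omega^{*}=\theta^{*}\tau^{*}=\theta(\theta\tau\theta)=\tau\theta=\omega$. I then set $P:=\omega^{2}=(\tau\theta)^{2}$. Because $P=\omega^{*}\omega$ with $\omega$ invertible, $P$ is positive-definite and self-adjoint with respect to $B_\theta$; in particular it is an automorphism all of whose eigenvalues are positive. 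A short manipulation using $\theta^{2}=\tau^{2}=1$ yields the two conjugation identities $\theta P\theta^{-1}=P^{-1}$ and $\tau P\tau^{-1}=P^{-1}$, together with $\tau\theta\tau=P\theta$.

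The crucial step --- and the one I expect to be the main obstacle --- is to make sense of the real powers $P^{t}$ and to show they are \emph{inner}. Writing $P=\exp D$ with $D$ the real self-adjoint logarithm, I would decompose $\mathfrak{g}_o$ into the $B_\theta$-orthogonal eigenspaces $\mathfrak{g}_\lambda$ of $P$ (each $\lambda>0$). The automorphism property $P[X,Y]=[PX,PY]$ forces $[\mathfrak{g}_\lambda,\mathfrak{g}_\mu]\subset\mathfrak{g}_{\lambda\mu}$, so $D$ acts as $\log\lambda$ on $\mathfrak{g}_\lambda$ and obeys the Leibniz rule; that is, $D$ is a derivation. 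Since $\mathfrak{g}_o$ is semisimple, every derivation is inner, so $D=\ad Z$ and $P^{t}=\exp(t\,\ad Z)\in\Int(\mathfrak{g}_o)$ for all $t\in\mathbb{R}$. The same eigenspace picture promotes the identities above to $\theta P^{t}\theta^{-1}=P^{-t}$ and $\tau P^{t}\tau^{-1}=P^{-t}$.

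Finally I would take $\phi=P^{1/4}\in\Int(\mathfrak{g}_o)$. From $\theta P^{t}\theta^{-1}=P^{-t}$ one obtains $\phi\theta\phi^{-1}=P^{1/2}\theta$, and then, using $\tau P^{t}\tau^{-1}=P^{-t}$ together with $\tau\theta\tau=P\theta$, a direct computation gives $\tau(P^{1/2}\theta)\tau^{-1}=P^{-1/2}(P\theta)=P^{1/2}\theta$. Thus $\phi\theta\phi^{-1}$ commutes with $\tau$, as required; one also checks $(P^{1/2}\theta)^{2}=1$, confirming that the conjugate is again an involution, necessarily a Cartan involution since it is inner-conjugate to $\theta$.
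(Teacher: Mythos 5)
Your proof is correct and follows essentially the same route as the paper's: the $B_\theta$-self-adjointness of $\omega=\tau\theta$, the positive-definite square $P=\omega^2$, the identities $\theta P^t\theta^{-1}=\tau P^t\tau^{-1}=P^{-t}$, and the choice $\phi=P^{1/4}$. The only variation is in justifying that $P^t\in\Int(\mathfrak{g}_o)$: you show $\log P$ is a derivation via the eigenspace grading $[\mathfrak{g}_\lambda,\mathfrak{g}_\mu]\subset\mathfrak{g}_{\lambda\mu}$ and invoke that derivations of a semisimple Lie algebra are inner, whereas the paper appeals to $r\mapsto P^r$ being a one-parameter subgroup connecting $\phi$ to the identity in $\operatorname{Aut}(\mathfrak{g}_o)=\Int(\mathfrak{g}_o)\cdot$(outer part) with $\Int(\mathfrak{g}_o)=\operatorname{Aut}(\mathfrak{g}_o)_e$ --- your version actually spells out a detail (why $P^r$ is an automorphism at all) that the paper leaves implicit.
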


\begin{proof}
The group $\operatorname{Int}(\mathfrak{g}_o)$ is generated by the exponential of the adjoint representation:
$$ \operatorname{Int}(\mathfrak{g}_o) = \left\langle e^{\ad X}\mid X\in\mathfrak{g}_o\right\rangle = \operatorname{Aut}(\mathfrak{g}_o)_e$$
since $\mathfrak{g}_o$ is semisimple.  Let $\omega=\tau\theta$ so that $\omega^{-1}=theta\tau$.  Since this is a Lie algebra automorphism, $B_{\mathfrak{g}_o}$ is $\omega$-invariant.  As a result, $\omega$ is symmetric with respect to $B_\theta$: indeed,
\begin{align*}
B_\theta(\omega X,Y) &= -B(\omega X,\theta Y) = -B(X,\omega^{-1}\theta Y) = -B(X,(\theta\tau)\theta Y)\\
&=-B(X,\theta\omega Y) = B_\theta(X,\omega Y).
\end{align*}
Since $B_\theta$ is positive definite, $\omega$ is diagonalizable over $\mathbb{R}$.  Let $\rho=\omega^2$.  This is diagonalizable over $\mathbb{R}$ with positive eigenvalues $\lambda$.  Hence $\rho^r$ are diagonalizable with positive eigenvalues $\lambda^r$.  Moreover, by the spectral mapping theorem,
$$\rho^r\omega = \omega\rho^r$$
for all $r\in\mathbb{R}$.  Also, since $\rho\theta = \tau\theta\tau = \theta\rho^{-1},$
the action of $\theta$ sends the eigenspace $V_\lambda$ to $V_{1/\lambda}$ for each eigenvalue $\lambda$ of $\rho$.  Hence we have for all $r$ that
$$\rho^r\theta = \theta\rho^{-r}.$$
Set $\phi=\rho^{1/4}$.  Then
\begin{align*}
\phi\theta\phi^{-1}\tau &= \rho^{1/4}\theta\rho^{-1/4}\tau = \rho^{1/2}\theta\tau = \rho^{1/2}\omega^{-1}\\
&=\rho^{-1/2}\rho\omega = \rho^{-1/2}\omega = \omega\rho^{-1/2}\\
&=\tau\theta\rho^{-1/2} = \tau\rho^{1/4}\theta\rho^{-1/4} = \tau\phi\theta\phi^{-1}.
\end{align*}
So $\phi\theta\phi^{-1}$ commutes with $\tau$.  Moreover, $r\mapsto\rho^r$ is a 1-parameter subgroup of $\operatorname{Aut}(\mathfrak{g}_o)$, so $\phi$ is connected to the identity in $\operatorname{Aut}(\mathfrak{g}_o)$, and therefore $\phi\in\operatorname{Int}(\mathfrak{g}_o)$.
\end{proof}

\begin{proposition}
Let $\mathfrak{g}_o$ be a real semisimple noncompact Lie algebra.  Then $\mathfrak{g}_o$ has a Cartan involution.
\end{proposition}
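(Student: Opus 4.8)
The plan is to pass to the complexification, borrow the Cartan involution that a compact real form automatically carries, and transport it back to $\mathfrak{g}_o$ by means of the previous lemma. Write $\mathfrak{g}=(\mathfrak{g}_o)_{\mathbb{C}}$; since $\mathfrak{g}_o$ is semisimple so is $\mathfrak{g}$, and by the construction in the subsection on real forms $\mathfrak{g}$ admits a compact real form $\mathfrak{u}_o$, giving $\mathfrak{g}=\mathfrak{u}_o\oplus i\mathfrak{u}_o$. I would work inside $\mathfrak{g}^{\mathbb{R}}$, the realification of $\mathfrak{g}$, which is again real semisimple. On $\mathfrak{g}^{\mathbb{R}}$ there are two natural involutions: the conjugation $\tau$ with respect to $\mathfrak{u}_o$ (the real-linear map fixing $\mathfrak{u}_o$ and negating $i\mathfrak{u}_o$) and the conjugation $\sigma$ with respect to $\mathfrak{g}_o$ (fixing $\mathfrak{g}_o$ and negating $i\mathfrak{g}_o$). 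By Remark (2) following the definition of Cartan involution, $\tau$ is a Cartan involution of $\mathfrak{g}^{\mathbb{R}}$, whereas $\sigma$ is merely an involution.

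Next I would apply the preceding lemma to $\mathfrak{g}^{\mathbb{R}}$, with $\tau$ playing the role of the Cartan involution and $\sigma$ the role of the arbitrary involution. This produces $\phi\in\operatorname{Int}(\mathfrak{g}^{\mathbb{R}})$ such that $\tau':=\phi\tau\phi^{-1}$ commutes with $\sigma$. Because $\phi$ is an automorphism of $\mathfrak{g}^{\mathbb{R}}$ it preserves $B_{\mathfrak{g}^{\mathbb{R}}}$, and applying $\phi^{-1}$ to both arguments gives $B_{\tau'}(X,Y)=B_{\tau}(\phi^{-1}X,\phi^{-1}Y)$; since $\phi^{-1}$ is a linear isomorphism, $\tau'$ is again a Cartan involution of $\mathfrak{g}^{\mathbb{R}}$, now commuting with $\sigma$.

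Finally I would restrict $\tau'$ to $\mathfrak{g}_o$. As $\tau'$ commutes with $\sigma$, it preserves the $+1$-eigenspace of $\sigma$, which is exactly $\mathfrak{g}_o\subset\mathfrak{g}^{\mathbb{R}}$, so $\theta:=\tau'|_{\mathfrak{g}_o}$ is an order-$2$ automorphism of $\mathfrak{g}_o$. To check that $\theta$ is a Cartan involution I would use that the Killing form of the realification satisfies $B_{\mathfrak{g}^{\mathbb{R}}}(X,Y)=2\operatorname{Re}B_{\mathfrak{g}}(X,Y)$, which on the real form reduces to $B_{\mathfrak{g}^{\mathbb{R}}}(X,Y)=2\,B_{\mathfrak{g}_o}(X,Y)$ for $X,Y\in\mathfrak{g}_o$. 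Then for nonzero $X\in\mathfrak{g}_o$, using $\tau'X=\theta X$, one gets $2\,B_{\theta}(X,X)=-B_{\mathfrak{g}^{\mathbb{R}}}(X,\tau'X)=B_{\tau'}(X,X)>0$, since $B_{\tau'}$ is positive definite on all of $\mathfrak{g}^{\mathbb{R}}$ and its restriction to the subspace $\mathfrak{g}_o$ remains positive definite. Hence $B_{\theta}$ is positive definite and $\theta$ is the required Cartan involution.

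The genuinely hard work—exhibiting an inner automorphism that forces the compact-form conjugation to commute with $\sigma$—is carried out entirely by the preceding lemma through its polar/spectral argument with $\rho=\omega^2$, so the principal obstacle here is organizational: correctly assigning the roles of ``$\theta$'' and ``$\tau$'' in that lemma and confirming that restricting the resulting Cartan involution of $\mathfrak{g}^{\mathbb{R}}$ to the $\sigma$-fixed subalgebra $\mathfrak{g}_o$ preserves positive-definiteness. The one computational point requiring care is the comparison of Killing forms between $\mathfrak{g}^{\mathbb{R}}$ and $\mathfrak{g}_o$; once the positive proportionality constant is pinned down, positive-definiteness transfers immediately by restriction to the subspace.
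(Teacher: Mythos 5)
Your proof is correct and takes essentially the same route as the paper's: complexify, take a compact real form, invoke the preceding lemma to make the compact-form conjugation commute with the conjugation $\sigma$ fixing $\mathfrak{g}_o$, and restrict the resulting Cartan involution of $\mathfrak{g}^{\mathbb{R}}$ to $\mathfrak{g}_o$. You even supply a detail the paper leaves implicit, namely the comparison $B_{\mathfrak{g}^{\mathbb{R}}}(X,Y)=2B_{\mathfrak{g}_o}(X,Y)$ for $X,Y\in\mathfrak{g}_o$, which is what transfers positive-definiteness of $B_{\tau'}$ down to $B_\theta$ on $\mathfrak{g}_o$.
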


\begin{proof}
Let $\mathfrak{g}_o\subset\mathfrak{g}^{\mathbb{R}}$, and let $\mathfrak{u}_o$ be a compact real form of $\mathfrak{g}$.  Let $\tau$ be conjugation with respect to $\mathfrak{g}_o$ and let $\theta$ be conjugation with respect to $\mathfrak{u}_o$.  Then $\theta$ is a Cartan involution.  So there exists $\phi\in\operatorname{Int}(\mathfrak{g}^{\mathbb{R}})=\operatorname{Int}(\mathfrak{g})$ such that $\phi\theta\phi^{-1}$ commutes with $\tau$.  As a result, $\phi\theta\phi^{-1}$ preserves the set of fixed points of $\tau$, and so
$$\phi\theta\phi^{-1} : \mathfrak{g}_o\to\mathfrak{g}_o$$
is a Lie algebra involution.  Furthermore, $\phi(\mathfrak{u}_o)\subset\mathfrak{g}$ is a compact form and
$$\phi\theta\phi^{-1} : \mathfrak{g}^{\mathbb{R}}\to\mathfrak{g}^{\mathbb{R}}$$
is conjugation with respect to $\phi(\mathfrak{u}_o)$.  So $\phi\theta\phi^{-1}$ is a Cartan involution of $\mathfrak{g}^{\mathbb{R}}$, and so $B_{\phi\theta\phi^{-1}}$ is positive definite.
\end{proof}

Any two Cartan involutions are conjugate by an interior automorphism:

\begin{proposition}
Let $\mathfrak{g}_o$ be a real semisimple non-compact Lie algebra and let $\theta_1,\theta_2$ be Cartan involutions.  Then there exists $\phi\in\operatorname{Int}(\mathfrak{g}_o)$ such that
$$\phi\theta_1\phi^{-1} = \theta_2.$$
\end{proposition}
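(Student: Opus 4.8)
The plan is to reduce to the case of two \emph{commuting} Cartan involutions by invoking the preceding lemma, and then to show directly that two commuting Cartan involutions must coincide. First I would apply the lemma with $\theta=\theta_1$ in the role of the Cartan involution and $\tau=\theta_2$ in the role of the arbitrary involution (legitimate, since $\theta_2$ is in particular an order $2$ automorphism). This produces $\phi\in\Int(\mathfrak{g}_o)$ such that $\theta_1':=\phi\theta_1\phi^{-1}$ commutes with $\theta_2$. Before going further I would record that $\theta_1'$ is again a Cartan involution: it is an order $2$ automorphism, and since $\phi$ is an automorphism $B_{\mathfrak{g}_o}$ is $\phi$-invariant, whence a short computation gives $B_{\theta_1'}(X,Y)=B_{\theta_1}(\phi^{-1}X,\phi^{-1}Y)$; as $B_{\theta_1}$ is positive definite and $\phi^{-1}$ is invertible, $B_{\theta_1'}$ is positive definite. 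Thus it suffices to prove the statement in the special case of commuting Cartan involutions, for then $\theta_1'=\theta_2$ would give $\phi\theta_1\phi^{-1}=\theta_2$.

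For the commuting case I would exploit that each involution is a semisimple operator with eigenvalues in $\{+1,-1\}$, and that commuting semisimple operators are simultaneously diagonalizable. Decompose $\mathfrak{g}_o$ into the joint eigenspaces on which $\theta_1'$ acts by $\epsilon_1$ and $\theta_2$ acts by $\epsilon_2$, with $\epsilon_1,\epsilon_2\in\{+1,-1\}$. Let $X\neq 0$ lie in such a joint eigenspace. Then
\begin{align*}
B_{\theta_1'}(X,X) &= -B_{\mathfrak{g}_o}(X,\theta_1'X) = -\epsilon_1\,B_{\mathfrak{g}_o}(X,X),\\
B_{\theta_2}(X,X) &= -B_{\mathfrak{g}_o}(X,\theta_2X) = -\epsilon_2\,B_{\mathfrak{g}_o}(X,X).
\end{align*}
Both left-hand sides are strictly positive by positive-definiteness, which already forces $B_{\mathfrak{g}_o}(X,X)\neq 0$; the two identities then show that $\epsilon_1$ and $\epsilon_2$ both carry the sign opposite to that of $B_{\mathfrak{g}_o}(X,X)$, so $\epsilon_1=\epsilon_2$. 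Since this holds on every joint eigenspace and these span $\mathfrak{g}_o$, I conclude $\theta_1'=\theta_2$, completing the proof.

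I do not expect a serious obstacle here: the essential work is carried by the lemma that arranges commutativity. The only subtlety in the remaining step is noticing that positive-definiteness of \emph{both} Cartan forms pins down the eigenvalue signs from the single scalar $B_{\mathfrak{g}_o}(X,X)$, and that this scalar cannot vanish on a nonzero joint eigenvector (again by positive-definiteness), which is exactly what makes the sign comparison decisive.
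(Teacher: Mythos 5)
Your proposal is correct and follows essentially the same route as the paper: invoke the commuting lemma with $\tau=\theta_2$, then use simultaneous diagonalizability and positive-definiteness of both forms $B_{\theta_1'}$ and $B_{\theta_2}$ to force the eigenvalues to agree (the paper phrases this as a contradiction on a hypothetical eigenvector with opposite eigenvalues, while you argue directly on each joint eigenspace). Your explicit check that $\theta_1'=\phi\theta_1\phi^{-1}$ is again a Cartan involution, via $B_{\theta_1'}(X,Y)=B_{\theta_1}(\phi^{-1}X,\phi^{-1}Y)$, is a point the paper uses silently, so making it explicit is a small improvement rather than a deviation.
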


\begin{proof}
There exists $\phi\in\operatorname{Int}(\mathfrak{g}_o)$ such that $\phi\theta_1\phi^{-1}$ commutes with $\theta_2$.  In particular, $\phi\theta_1\phi^{-1}$ and $\theta_2$ are simultaneously diagonalizable.  We are done if we show that each eigenvector of one is an eigenvector of the other with respect to the same eigenvalue.  Suppose by contradiction that there is $X\in\mathfrak{g}_o$, $X\not=0$, such that
$$\phi\theta_1\phi^{-1}(X) = X,\quad \theta_2(X) = -X.$$
Then
\begin{align*}
0 &< B_{\phi\theta_1\phi^{-1}}(X,X) = -B(X,\phi\theta_1\phi^{-1}(X)) = - B(X,X)\\
0 &< B_{\theta_2}(X,X) = -B(X,\theta_2X) = B(X,X),
\end{align*}
a contradiction.  Hence $\phi\theta_1\phi^{-1}$ and $\theta_2$ have the same eigenspaces, and therefore $\phi\theta_1\phi^{-1}=\theta_2$.
\end{proof}

\begin{corollary}
Let $\mathfrak{g}$ be a complex semisimple Lie algebra.  Then any two compact real forms of $\mathfrak{g}$ are conjugate with respect to $\operatorname{Int}(\mathfrak{g})$.  Any Cartan involution of $\mathfrak{g}$ is conjugation with respect to some compact real form.
\end{corollary}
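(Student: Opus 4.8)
The plan is to reduce both assertions to the two preceding propositions — that every real semisimple noncompact Lie algebra admits a Cartan involution, and that any two Cartan involutions of such an algebra are conjugate by an interior automorphism — applied to the realification $\mathfrak{g}^{\mathbb{R}}$. The bridge is the Remark following the definition of Cartan involution: if $\mathfrak{u}_o$ is a compact real form of $\mathfrak{g}$, then conjugation $\tau_{\mathfrak{u}_o}$ with respect to $\mathfrak{u}_o$ is a Cartan involution of $\mathfrak{g}^{\mathbb{R}}$. So I would first record the standing facts that make the propositions applicable: $\mathfrak{g}^{\mathbb{R}}$ is semisimple (its Killing form $2\operatorname{Re}B_{\mathfrak{g}}$ is nondegenerate), it is noncompact (for any compact real form $\mathfrak{u}_o$ the Killing form is negative definite on $\mathfrak{u}_o$ but positive definite on $i\mathfrak{u}_o$, hence indefinite), and $\operatorname{Int}(\mathfrak{g}^{\mathbb{R}})=\operatorname{Int}(\mathfrak{g})$, since both are generated by the maps $e^{\operatorname{ad}X}$, $X\in\mathfrak{g}$, each of which is complex linear. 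A compact real form exists at all by the explicit construction in the section on real forms.

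For the first assertion, let $\mathfrak{u}_1,\mathfrak{u}_2$ be compact real forms with conjugations $\tau_1,\tau_2$. By the Remark these are Cartan involutions of $\mathfrak{g}^{\mathbb{R}}$, so the conjugacy proposition supplies $\phi\in\operatorname{Int}(\mathfrak{g}^{\mathbb{R}})=\operatorname{Int}(\mathfrak{g})$ with $\phi\tau_1\phi^{-1}=\tau_2$. Comparing fixed-point sets gives $\phi(\mathfrak{u}_1)=\mathfrak{u}_2$, which is exactly conjugacy under $\operatorname{Int}(\mathfrak{g})$.

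For the second assertion, fix one compact real form $\mathfrak{u}_o$, with conjugation $\tau_{\mathfrak{u}_o}$, and let $\theta$ be a given Cartan involution of $\mathfrak{g}^{\mathbb{R}}$. The conjugacy proposition gives $\phi\in\operatorname{Int}(\mathfrak{g})$ with $\theta=\phi\tau_{\mathfrak{u}_o}\phi^{-1}$. Here I would use that $\phi$ is complex linear: the $+1$ eigenspace of $\theta$ is $\phi(\mathfrak{u}_o)$ and its $-1$ eigenspace is $\phi(i\mathfrak{u}_o)=i\phi(\mathfrak{u}_o)$, so $\theta$ acts as $u+iv\mapsto u-iv$ relative to the decomposition $\mathfrak{g}=\phi(\mathfrak{u}_o)\oplus i\phi(\mathfrak{u}_o)$; that is, $\theta$ is conjugation with respect to $\phi(\mathfrak{u}_o)$. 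Finally $\phi(\mathfrak{u}_o)$ is again a compact real form, because a complex automorphism preserves the Killing form and hence its negative-definiteness.

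The computations are all routine; the one place demanding care — and what I would flag as the crux — is the interplay between real and complex linearity. The conjugations $\tau_i$ are only conjugate-linear, so they are automorphisms of $\mathfrak{g}^{\mathbb{R}}$ and not of $\mathfrak{g}$; it is essential that the conjugating element $\phi$ comes from $\operatorname{Int}(\mathfrak{g})$ and is therefore genuinely complex linear, for only then does conjugating a real form yield another real form and does $\phi\tau_{\mathfrak{u}_o}\phi^{-1}$ remain conjugation with respect to $\phi(\mathfrak{u}_o)$. Verifying $\operatorname{Int}(\mathfrak{g}^{\mathbb{R}})=\operatorname{Int}(\mathfrak{g})$ and that $\mathfrak{g}^{\mathbb{R}}$ genuinely falls under the noncompact hypothesis of the cited propositions is the essential bookkeeping that legitimizes the reduction.
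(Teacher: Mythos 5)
Your proof is correct and is precisely the argument the paper intends: the corollary is stated without proof because it is the direct application of the two preceding propositions to $\mathfrak{g}^{\mathbb{R}}$, and indeed your mechanism for the second assertion (that $\phi\tau_{\mathfrak{u}_o}\phi^{-1}$ is conjugation with respect to the compact real form $\phi(\mathfrak{u}_o)$, with $\phi\in\operatorname{Int}(\mathfrak{g}^{\mathbb{R}})=\operatorname{Int}(\mathfrak{g})$ complex linear) appears verbatim inside the paper's proof of the existence proposition. Your explicit bookkeeping---semisimplicity and noncompactness of $\mathfrak{g}^{\mathbb{R}}$, and the linearity issues for the antilinear conjugations---is exactly the right set of points to check and is handled correctly.
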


Let $\mathfrak{g}_o$ be a noncompact real semisimple Lie algebra and $\theta$ a Cartan involution on $\mathfrak{g}_o$.  Under the adjoint map, $\mathfrak{g}_o$ is a subalgebra of $\mathfrak{gl}(n)$ for some $n$:
$$\mathfrak{g}_o\xrightarrow{\cong}\ad(\mathfrak{g}_o)\subset\mathfrak{gl}(\mathfrak{g}_o).$$
Relative to the form $B_\theta$ on $\mathfrak{g}_o$, define the transpose of an endomorphism of $\mathfrak{g}_o$ by
$$B_\theta(A^Tx,y) = B_\theta(x,Ay),\quad \text{for all $x,y\in\mathfrak{g}_o$}.$$
Notice that $\ad(X)^T = \ad(-\theta X)$: indeed,
\begin{align*}
B_\theta(ad(X)^Tx,y) &= B_\theta(x,\ad(X)y) = -B_{\mathfrak{g}_o}(x,\theta\ad(X)y)\\
&=-B_{\mathfrak{g}_o}(x,\ad(\theta X)\theta y) = B_{\mathfrak{g}_o}(\ad(\theta X)x,\theta y)\\
&=-B_\theta(\ad(\theta X)x,y).
\end{align*}
Thus $\ad(\mathfrak{g}_o)\subset\mathfrak{gl}(\mathfrak{g}_o)$ is invariant under the transpose.  Moreover, the form $B_\theta$ is
$$B_\theta(X,Y) = -\tr_{\mathfrak{g}_o}(\ad X,\ad(\theta Y)) = \tr_{\mathfrak{g}_o}(\ad X \ad(Y)^t).$$

To summarize:

{\em Remark:}  We can confine attention to linear Lie algebras $\mathfrak{g}_o\subset\mathfrak{gl}(n,\mathbb{R})$ that are closed under the transpose.  The form $(X,Y)\to \tr(XY^t)$ is positive-definite on $\mathfrak{g}_o$.

\section{Flat subspaces}
\subsection{Regular elements}  Let $\mathfrak{g}$ be a complex Lie algebra, $\pi : \mathfrak{g}\to\mathfrak{gl}(V)$ a representation of $\mathfrak{g}$.  For $x\in\mathfrak{g}$, let $V_0(x) = \ker \pi(x)^\infty$ be the maximal subspace on which $\pi(x)$ is nilpotent (the generalized eigenspace for the eigenvalue $0$).  The dimension of $V_0(x)$ can be read off the characteristic polynomial of $\pi(x)$:
$$\det(\lambda-\pi(x)) = \lambda^2-\sum_j\lambda^ja_j(x)$$
where the $a^j$ are some polynomial functions on $\mathfrak{g}_o$ depending only on $\pi$.  Then $\dim V_0(x)$ is the minimum $j$ such that $a_j(x)\not=0$. Define
$$\ell_G(V) = \min_{x\in\mathfrak{g}}\dim V_0(x).$$
Call an element $x\in\mathfrak{g}$ $\pi$-regular if $\dim V_0(x) = \ell_G(V)$.  This is the complement of an algebraic set in $\mathfrak{g}$.  In particular, it is a dense open set.

\begin{definition}\index{Regular element}
An element $x\in\mathfrak{g}$ is called {\em regular} if it is $\ad$-regular.  The set of regular elements of $\mathfrak{g}$ is denoted $\mathfrak{g}'$.  It is a Zariski open subset of $\mathfrak{g}$.
\end{definition}

\subsection{Cartan subalgebras}
Let $\mathfrak{g}$ be a complex reductive Lie algebra.  A subalgebra $\mathfrak{h}\subset\mathfrak{g}$ is a Cartan subalgebra if it satisfies any of the following equivalent conditions
\begin{enumerate}
\item $\mathfrak{h}$ is a nilpotent Lie algebra such that $N_{\mathfrak{g}}(\mathfrak{h})=\mathfrak{h}$.
\item $\mathfrak{h}=C_{\mathfrak{g}}(x)$ where $x\in\mathfrak{g}'$ is a regular element.  
\end{enumerate}
If $\mathfrak{g}$ is also semisimple, then these are equivalent to
\begin{enumerate}[(3)]
\item $\mathfrak{h}$ is a maximal abelian subalgebra, and $\ad(H)$ is diagonalizable for all $H\in\mathfrak{h}$.
\end{enumerate}

The conjugacy theorem, which we shall not prove (see Humphreys, {\em Introduction to Lie algebras and representation theory}) is
\begin{theorem}
Any two Cartan subalgebras are conjugate with respect to $\operatorname{Int}(\mathfrak{g}).$
\end{theorem}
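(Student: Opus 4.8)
The plan is to prove the theorem first for $\mathfrak{g}$ semisimple and then reduce the reductive case to it. For the reduction, write $\mathfrak{g}=Z(\mathfrak{g})\oplus[\mathfrak{g},\mathfrak{g}]$; every Cartan subalgebra contains $Z(\mathfrak{g})$ and meets $[\mathfrak{g},\mathfrak{g}]$ in a Cartan subalgebra of the semisimple part, while $\Int(\mathfrak{g})$ fixes $Z(\mathfrak{g})$ pointwise and restricts to $\Int([\mathfrak{g},\mathfrak{g}])$. So it suffices to conjugate Cartan subalgebras of a semisimple $\mathfrak{g}$. For such a $\mathfrak{g}$, I would exploit characterization (2): given a Cartan subalgebra $\mathfrak{h}$, there is a regular $h_0\in\mathfrak{h}\cap\mathfrak{g}'$ with $\mathfrak{h}=C_{\mathfrak{g}}(h_0)$, so in particular $\mathfrak{h}\cap\mathfrak{g}'$ is nonempty. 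By characterization (3), $\ad(h)$ is semisimple for each $h\in\mathfrak{h}$, and when $h$ is regular its $0$-eigenspace is exactly $\mathfrak{h}=C_{\mathfrak{g}}(h)$; hence one has the splitting $\mathfrak{g}=\mathfrak{h}\oplus[h,\mathfrak{g}]$, where $[h,\mathfrak{g}]=\im(\ad h)$.

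The central construction is the orbit map. Since $\mathfrak{g}$ is semisimple, $\Int(\mathfrak{g})=\Aut(\mathfrak{g})_e$ is a connected algebraic group acting algebraically on $\mathfrak{g}$, and I would consider the morphism
$$f_{\mathfrak{h}}:\Int(\mathfrak{g})\times(\mathfrak{h}\cap\mathfrak{g}')\to\mathfrak{g},\qquad (\sigma,h)\mapsto\sigma(h).$$
Its differential at $(\mathrm{id},h)$ is computed from $\tfrac{d}{dt}\big|_{0}\,e^{t\,\ad x}(h)=[x,h]$ together with the fact that $\mathfrak{h}\cap\mathfrak{g}'$ is open in the linear space $\mathfrak{h}$, so its tangent space there is $\mathfrak{h}$. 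The image of $df_{\mathfrak{h}}$ is therefore $\im(\ad h)+\mathfrak{h}=[h,\mathfrak{g}]+\mathfrak{h}$, which equals $\mathfrak{g}$ by the splitting above. Thus $df_{\mathfrak{h}}$ is surjective at a regular $h$, so $f_{\mathfrak{h}}$ is dominant.

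The conclusion then comes from dominance together with irreducibility. Since $\mathfrak{g}\cong\mathbb{C}^n$ is irreducible and $f_{\mathfrak{h}}$ is dominant, its image contains a dense Zariski-open subset of $\mathfrak{g}$ (generic smoothness in characteristic zero, or Chevalley's theorem on constructible images). Applying this to two Cartan subalgebras $\mathfrak{h}_1,\mathfrak{h}_2$ produces two dense open sets, which necessarily intersect; hence there exist $\sigma_1,\sigma_2\in\Int(\mathfrak{g})$ and regular elements $h_i\in\mathfrak{h}_i$ with $\sigma_1(h_1)=\sigma_2(h_2)$. Setting $\phi=\sigma_2^{-1}\sigma_1\in\Int(\mathfrak{g})$ gives $\phi(h_1)=h_2$, and because $h_1$ and $h_2$ are regular the centralizer characterization yields
$$\phi(\mathfrak{h}_1)=\phi\bigl(C_{\mathfrak{g}}(h_1)\bigr)=C_{\mathfrak{g}}\bigl(\phi(h_1)\bigr)=C_{\mathfrak{g}}(h_2)=\mathfrak{h}_2,$$
which is the required conjugacy.

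The genuinely substantive step is the passage from \emph{$df_{\mathfrak{h}}$ surjective at one point} to \emph{the image of $f_{\mathfrak{h}}$ contains a dense open set}: this is where one must know that $\Int(\mathfrak{g})$ is a connected, hence irreducible, algebraic group and that $f_{\mathfrak{h}}$ is a genuine morphism of varieties, so that dominance and the intersection of dense opens may be invoked. In a purely analytic treatment one would instead argue that $f_{\mathfrak{h}}$ is an open map near $(\mathrm{id},h)$ and separately establish connectedness of $\mathfrak{g}'$ in order to force the two Euclidean-open images to meet. By contrast, verifying the splitting $\mathfrak{g}=\mathfrak{h}\oplus[h,\mathfrak{g}]$ and the nonemptiness of $\mathfrak{h}\cap\mathfrak{g}'$ is routine given the stated characterizations of Cartan subalgebras and of regular elements.
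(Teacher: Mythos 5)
Your proof is correct, but note that the paper itself offers no proof at all: it states the theorem and explicitly defers to Humphreys, \emph{Introduction to Lie algebras and representation theory}. So the honest comparison is with that cited source, and your route is genuinely different from it. You give the classical density/dominance argument (the one found in Serre and Bourbaki): regular elements supply the splitting $\mathfrak{g}=\mathfrak{h}\oplus\im(\ad h)$, the orbit map $f_{\mathfrak{h}}:\Int(\mathfrak{g})\times(\mathfrak{h}\cap\mathfrak{g}')\to\mathfrak{g}$ is submersive at $(\operatorname{id},h)$ hence dominant, and two dense constructible images in the irreducible space $\mathfrak{g}$ must meet. Humphreys instead argues purely algebraically: he proves conjugacy first for solvable Lie algebras by induction using Engel subalgebras, working with the group generated by $\exp(\ad x)$ for strongly ad-nilpotent $x$, and then handles the general case through Borel subalgebras; that proof needs no algebraic geometry beyond the Zariski topology on $\mathfrak{g}$ itself and works over any algebraically closed field of characteristic zero. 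What your approach buys is brevity and a perfect fit with the paper's preceding setup --- the characterization $\mathfrak{h}=C_{\mathfrak{g}}(x)$ for $x\in\mathfrak{g}'$, the function $\dim V_0(x)$, and the Zariski-openness of $\mathfrak{g}'$ are exactly the ingredients you use --- at the cost of invoking the algebraicity of $\Int(\mathfrak{g})=\Aut(\mathfrak{g})_e$ and Chevalley constructibility (or, analytically, connectedness of $\mathfrak{g}'$, as you correctly flag); the identity $\Int(\mathfrak{g})=\Aut(\mathfrak{g})_e$ for semisimple algebras is in any case a fact the paper itself uses in the section on Cartan involutions. The steps you label routine really are: $C_{\mathfrak{g}}(h)=\mathfrak{h}$ for every $h\in\mathfrak{h}\cap\mathfrak{g}'$ follows from $\mathfrak{h}\subseteq C_{\mathfrak{g}}(h)=V_0(h)$ (semisimplicity of $\ad h$) together with $\dim V_0(h)=\ell=\dim\mathfrak{h}$, and your reductive-to-semisimple reduction via $\mathfrak{g}=Z(\mathfrak{g})\oplus[\mathfrak{g},\mathfrak{g}]$ is sound since every Cartan subalgebra, being a centralizer of a regular element, contains $Z(\mathfrak{g})$.
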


\subsection{Flat subspaces}\index{Flat subspace $\mathfrak{a}_o$}  Let $(\mathfrak{g}_o,s)$ be an orthogonal symmetric Lie algebra.  A {\em flat subspace} is a maximal abelian subalgebra of $\mathfrak{p}_o$.  The terminology is justified by the following situation.  Let $(G,K,\sigma)$ be a Riemannian symmetric space with $\Lie(G)=\mathfrak{g}_o$, $\Lie(K)=\mathfrak{k}_o$, and $\sigma_{*,e}=s_o$.  The exponential map defines a local diffeomorphism $\Exp_e : \mathfrak{p}_o\to G/K$.  This local mapping sends every flat subspace of $\mathfrak{p}_o$ to a maximal totally geodesic submanifold on which the curvature tensor vanishes.

\begin{lemma}
Let $\mathfrak{a}_o$ be a flat subspace.  Then $\mathfrak{a}_o$ acts semisimply on $\mathfrak{g}_o$.
\end{lemma}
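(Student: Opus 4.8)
The plan is to prove the stronger statement that for each $H\in\mathfrak{a}_o$ the endomorphism $\ad(H)$ of $\mathfrak{g}_o$ is semisimple; since $\mathfrak{a}_o$ is abelian this is precisely what it means for $\mathfrak{a}_o$ to act semisimply. The strategy is to exhibit a positive-definite inner product on $\mathfrak{g}_o$ with respect to which every such $\ad(H)$ is self-adjoint, and then to quote the spectral theorem: a self-adjoint operator on a finite-dimensional real inner product space is orthogonally diagonalizable, hence semisimple.

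To build such a form I would take $\theta=s$ and use the bilinear form $B_\theta(X,Y)=-B_{\mathfrak{g}_o}(X,\theta Y)$ from the section on Cartan involutions, where it was computed that $\ad(X)^T=\ad(-\theta X)$ when the transpose is taken relative to $B_\theta$. For $H\in\mathfrak{p}_o$ one has $\theta H=sH=-H$, so $\ad(H)^T=\ad(H)$; that is, $\ad(H)$ is self-adjoint. Equivalently, as in the closing remark of that section, I realize $\ad(\mathfrak{g}_o)\subset\mathfrak{gl}(\mathfrak{g}_o)$ as a linear Lie algebra closed under the transpose determined by $B_\theta$, and then each $\ad(H)$ with $H\in\mathfrak{p}_o$ lies in its symmetric part. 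The spectral theorem then finishes the argument.

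The step that requires care, and which I expect to be the main obstacle, is verifying that $B_\theta$ is genuinely positive definite, equivalently that $s$ is a Cartan involution; this is where the orthogonality hypothesis enters. Passing to the effective case and using Theorem \ref{decomposition}, I would split $\mathfrak{g}_o$ into a flat part and irreducible parts, split $\mathfrak{a}_o$ compatibly, and argue factor by factor. Effectiveness forbids a nonzero ideal inside $\mathfrak{k}_o$, so when $\mathfrak{g}_o$ is semisimple the flat part vanishes and only compact and noncompact irreducible factors remain, which is the substantive case. On a noncompact factor the trichotomy $B=cQ$ (with $B$ negative definite on $\mathfrak{k}_o$, positive definite on $\mathfrak{p}_o$, and these two subspaces $B$-orthogonal by $s$-invariance of $B$) shows that $B_\theta$ is positive definite, so $s$ is a Cartan involution and the self-adjointness argument applies. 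On a compact factor the conclusion is even easier: $\ad(H)$ is skew-adjoint with respect to the positive-definite form $-B_{\mathfrak{g}_o}$, hence normal and again semisimple. Thus the real content is the construction of the positive-definite form in which $\ad(H)$ becomes normal; once that is secured, semisimplicity is immediate.
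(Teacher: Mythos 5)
Your proposal is correct, but at the decisive step it takes a genuinely different route from the paper. After the same reduction via Theorem \ref{decomposition} (discard the flat part, treat irreducible factors separately), the paper finishes in one line by duality: replace the factor by its compact dual, where $\ad x$ is semisimple with imaginary eigenvalues for \emph{every} $x\in\mathfrak{g}_o$. That is shorter, but it silently uses the fact that semisimplicity of $\ad(H)$ survives the passage $H\mapsto iH$ through the complexification and back, i.e.\ that $\ad_{\mathfrak{g}_o}(H)$ is semisimple iff $\ad_{\mathfrak{g}}(H)$ is, iff $\ad_{\widetilde{\mathfrak{g}}_o}(iH)$ is. You instead stay inside the real form and handle the noncompact factors directly: you verify that $s$ itself is a Cartan involution there (via $B=cQ$ with $c>0$ on $\mathfrak{p}_o$, negativity of $B$ on $\mathfrak{k}_o$ --- which indeed requires effectiveness, in force by the standing hypothesis --- and $B$-orthogonality of $\mathfrak{k}_o$ and $\mathfrak{p}_o$ from $s$-invariance), so each $\ad(H)$ with $H\in\mathfrak{p}_o$ is $B_\theta$-self-adjoint and the spectral theorem applies; and you correctly split off the compact factors, where $B_\theta$ with $\theta=s$ is indefinite and one uses skew-adjointness for $-B$ instead. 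Your route buys strictly more than the lemma: simultaneous orthogonal diagonalizability of $\ad\mathfrak{a}_o$ with \emph{real} eigenvalues on the noncompact factors, which is exactly the fact the paper later re-derives by your mechanism in the section on relative root systems (``the roots here are in fact real, since for all $x\in\mathfrak{p}_o$, $\ad x$ is self-adjoint with respect to $B_\theta$''); the paper's route buys brevity and a single uniform argument with no case split. One caveat, which you share with the paper rather than introduce: both proofs discard the flat factor without comment, and there the statement can genuinely fail --- for the effective flat orthogonal symmetric Lie algebra $\mathfrak{g}_o=\mathfrak{so}(n)\ltimes\mathbb{R}^n$ ($n\ge 2$) one has $\mathfrak{a}_o=\mathfrak{p}_o=\mathbb{R}^n$, and $\ad(H)$ is a nonzero nilpotent for $H\neq 0$. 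The lemma is only applied in the reductive setting of Theorem \ref{RegularElementsFlatSubspaces}, where the flat part is central and acts by zero; a sentence to that effect would close this gap in your write-up (and in the original).
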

\begin{proof}
By the decomposition Theorem \ref{decomposition}, it is enough to consider the case where $\mathfrak{g}_o=[\mathfrak{p}_o,\mathfrak{p}_o]\oplus\mathfrak{p}_o$.  By duality, we can take $\mathfrak{g}_o$ to be compact.  But then $\ad x$ is semisimple with imaginary eigenvalues for any $x\in\mathfrak{g}_o$.
\end{proof}

The following theorem characterizes flat subspaces:

\begin{theorem}\label{RegularElementsFlatSubspaces}
Let $\mathfrak{g}_o$ be a reductive orthogonal symmetric Lie algebra.  Then $\mathfrak{a}_o\subset\mathfrak{p}_o$ is a flat subspace if and only if $\mathfrak{a}_o=C_{\mathfrak{p}_o}(x)$ for some regular element $x\in\mathfrak{p}_o\cap\mathfrak{g}_o'$.
\end{theorem}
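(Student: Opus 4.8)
The plan is to reduce everything to the simultaneous eigenspace (restricted root) decomposition of $\ad(\mathfrak{a}_o)$ and then treat the two implications separately; the converse is the soft direction, and the forward direction carries the real content. First I would set up the decomposition. By the preceding lemma every flat subspace acts semisimply on $\mathfrak{g}_o$, and more generally any single $x\in\mathfrak{p}_o$ lies in some maximal abelian subspace of $\mathfrak{p}_o$ and hence acts semisimply. Thus $\ad(\mathfrak{a}_o)$ is a commuting family of semisimple operators, simultaneously diagonalizable over the complexification $\mathfrak{g}=(\mathfrak{g}_o)_{\mathbb{C}}$. Writing $\mathfrak{g}=\mathfrak{g}_0\oplus\bigoplus_{\lambda\in\Sigma}\mathfrak{g}_\lambda$, with $\mathfrak{g}_0=C_{\mathfrak{g}}(\mathfrak{a}_o)$ and $\Sigma\subset\mathfrak{a}_o^*$ the finite set of nonzero weights, I record two facts used throughout: on $\mathfrak{g}_\lambda$ the operator $\ad(x)$ acts by the scalar $\lambda(x)$ for $x\in\mathfrak{a}_o$; and, by maximality of $\mathfrak{a}_o$, one has $C_{\mathfrak{p}_o}(\mathfrak{a}_o)=\mathfrak{a}_o$, since any $y\in\mathfrak{p}_o$ commuting with $\mathfrak{a}_o$ would enlarge it to an abelian subspace of $\mathfrak{p}_o$.

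For the converse, suppose $x\in\mathfrak{p}_o\cap\mathfrak{g}_o'$. Since $x$ is semisimple, $V_0(x)=\ker\ad(x)=C_{\mathfrak{g}}(x)$, and because $x$ is regular this is a Cartan subalgebra by condition (2) of the Cartan subalgebra characterization; a Cartan subalgebra of a reductive Lie algebra is abelian. Set $\mathfrak{a}_o=C_{\mathfrak{p}_o}(x)=\mathfrak{p}_o\cap C_{\mathfrak{g}}(x)$. As a subspace of the abelian algebra $C_{\mathfrak{g}}(x)$ it is abelian, and it is maximal: if $\mathfrak{b}_o\supset\mathfrak{a}_o$ is abelian in $\mathfrak{p}_o$, then $x\in\mathfrak{a}_o\subset\mathfrak{b}_o$ forces every element of $\mathfrak{b}_o$ to commute with $x$, so $\mathfrak{b}_o\subset C_{\mathfrak{p}_o}(x)=\mathfrak{a}_o$. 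Hence $\mathfrak{a}_o$ is a flat subspace. This direction is clean and uses only the cited Cartan subalgebra characterization.

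For the forward direction, let $\mathfrak{a}_o$ be flat. Each $\ker\lambda$, $\lambda\in\Sigma$, is a proper subspace of $\mathfrak{a}_o$, so I can choose $x\in\mathfrak{a}_o$ with $\lambda(x)\neq 0$ for every $\lambda\in\Sigma$; such $x$ fill a dense open set. For this $x$ the operator $\ad(x)$ is invertible on each $\mathfrak{g}_\lambda$ with $\lambda\neq 0$, so $V_0(x)=\mathfrak{g}_0=C_{\mathfrak{g}}(\mathfrak{a}_o)$; intersecting with $\mathfrak{p}_o$ and invoking $C_{\mathfrak{p}_o}(\mathfrak{a}_o)=\mathfrak{a}_o$ yields $C_{\mathfrak{p}_o}(x)=\mathfrak{a}_o$. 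The same weight-avoiding choice shows $\mathfrak{g}_0$ is self-normalizing: if $[y,\mathfrak{a}_o]\subset\mathfrak{g}_0$ then, decomposing $y=\sum_\mu y_\mu$, the relation $[y,x]=\sum_\mu\mu(x)y_\mu$ forces $y_\mu=0$ for $\mu\neq 0$, so $y\in\mathfrak{g}_0$.

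The main obstacle is the remaining step of the forward direction: upgrading ``$C_{\mathfrak{p}_o}(x)=\mathfrak{a}_o$'' to the statement that $x$ is genuinely $\ad$-regular, i.e. that $\dim C_{\mathfrak{g}}(\mathfrak{a}_o)$ realizes the minimum $\ell_G(\mathfrak{g})$. Self-normalization alone is not enough, since a self-normalizing reductive subalgebra is a Cartan subalgebra only when it is also nilpotent; thus the crux is to show that $\mathfrak{g}_0=C_{\mathfrak{g}}(\mathfrak{a}_o)$ is nilpotent, equivalently abelian. I would approach this by exploiting the reductive hypothesis together with the $s$-stability of $\mathfrak{g}_0$ and the conjugacy theorem for Cartan subalgebras, comparing $\mathfrak{g}_0$ with a Cartan subalgebra containing $x$. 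This structural identification of $C_{\mathfrak{g}}(\mathfrak{a}_o)$ as a Cartan subalgebra is the delicate point on which the equivalence turns; the two centralizer computations and the maximality argument above are routine by comparison.
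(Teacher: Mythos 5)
Your converse direction is complete and is essentially the paper's argument: regularity plus semisimplicity of $x\in\mathfrak{p}_o$ make $C_{\mathfrak{g}_o}(x)$ a Cartan subalgebra, hence abelian, and maximality of $C_{\mathfrak{p}_o}(x)$ follows since any abelian subspace of $\mathfrak{p}_o$ containing it must commute with $x$. The gap is in the forward direction, and it is not a deferred verification but the entire content of the theorem; moreover, the route you announce for closing it cannot be completed. You propose to show that $\mathfrak{g}_0=C_{\mathfrak{g}}(\mathfrak{a}_o)$ is nilpotent, hence abelian, hence a Cartan subalgebra, so that your weight-avoiding $x\in\mathfrak{a}_o$ becomes $\ad$-regular. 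But $C_{\mathfrak{g}_o}(\mathfrak{a}_o)=\mathfrak{l}_o\oplus\mathfrak{a}_o$ with $\mathfrak{l}_o=C_{\mathfrak{k}_o}(\mathfrak{a}_o)$, and $\mathfrak{l}_o$ is in general nonabelian. Concretely, for $\mathfrak{g}_o=\mathfrak{so}(1,4)$ with the standard involution, $\mathfrak{p}_o\cong\mathbb{R}^4$, every flat subspace $\mathfrak{a}_o$ is a line, and $\mathfrak{l}_o\cong\mathfrak{so}(3)$; every nonzero $x\in\mathfrak{p}_o$ is semisimple with $\dim V_0(x)=\dim C_{\mathfrak{g}_o}(x)=4$, whereas $\operatorname{rank}\mathfrak{g}=2$. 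Since any $x$ with $C_{\mathfrak{p}_o}(x)=\mathfrak{a}_o$ necessarily lies in $\mathfrak{a}_o$, this gives $\mathfrak{p}_o\cap\mathfrak{g}_o'=\emptyset$ in that example: the abelianness you are after is simply false whenever $\mathfrak{l}_o$ is nonabelian, and no sharpening of your self-normalization computation (which is correct but, as you say, insufficient) will produce it.

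The idea missing from your outline is the paper's: do not try to make $C_{\mathfrak{g}_o}(\mathfrak{a}_o)$ itself a Cartan subalgebra, but take a Cartan subalgebra $\mathfrak{h}_o$ \emph{of} $C_{\mathfrak{g}_o}(\mathfrak{a}_o)$ and promote it to $\mathfrak{g}_o$ via the subalgebras $\mathfrak{r}_{\mathfrak{g}_o}(\cdot)$: semisimplicity of $\ad\mathfrak{a}_o$ yields $N_{\mathfrak{g}_o}(\mathfrak{h}_o)\subset\mathfrak{r}_{\mathfrak{g}_o}(\mathfrak{h}_o)\subset\mathfrak{r}_{\mathfrak{g}_o}(\mathfrak{a}_o)=C_{\mathfrak{g}_o}(\mathfrak{a}_o)$, whence $N_{\mathfrak{g}_o}(\mathfrak{h}_o)=N_{C_{\mathfrak{g}_o}(\mathfrak{a}_o)}(\mathfrak{h}_o)=\mathfrak{h}_o$, so $\mathfrak{h}_o$ is a Cartan subalgebra of $\mathfrak{g}_o$ containing $\mathfrak{a}_o$ (one may take $\mathfrak{h}_o=\mathfrak{t}_o\oplus\mathfrak{a}_o$ with $\mathfrak{t}_o$ maximal toral in $\mathfrak{l}_o$); a generic element $x\in\mathfrak{h}_o$ then satisfies $C_{\mathfrak{g}_o}(x)=\mathfrak{h}_o$ and $C_{\mathfrak{p}_o}(x)=\mathfrak{h}_o\cap\mathfrak{p}_o=\mathfrak{a}_o$. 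That said, your difficulty has located a genuine soft spot: such an $x$ need not lie in $\mathfrak{p}_o$, and in the $\mathfrak{so}(1,4)$ example it cannot, so the forward implication as literally stated (with $x\in\mathfrak{p}_o\cap\mathfrak{g}_o'$) holds only if one either allows $x\in\mathfrak{h}_o$ or reads ``regular'' as $\ad_{\mathfrak{g}_o}\mathfrak{a}_o$-regularity --- the notion the paper's proof actually invokes when it chooses $x\in\mathfrak{a}_o$, and whose closing assertion $C_{\mathfrak{g}_o}(x)\subset\mathfrak{h}$ is the same over-claim you were attempting to prove. In short: your converse stands, but the forward direction is not done, and its announced strategy must be replaced by the Cartan-subalgebra-of-the-centralizer argument rather than elaborated.
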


For this reason, flat subspaces are often known in the literature as Cartan subalgebras of $\mathfrak{p}_o$.\footnote{Cf. Borel, {\em Semisimple groups and Riemannian symmetric spaces}, 1998, \S II.2.}  They are also known as {\em maximal} flat subspaces, but we shall not be interested in any flat subspaces that are not maximal.

\begin{proof}
The proof requires the following additional notation.  If $\mathfrak{h}\subset\mathfrak{g}_o$ is a subalgebra, define the nil radical of $\mathfrak{h}$ by
$$\mathfrak{r}_{\mathfrak{g}_o}(\mathfrak{h}) = \{ x\in\mathfrak{g} \mid (\ad\mathfrak{h})^\infty x=0 \}.$$
Note that in the special case where $\ad\mathfrak{h}$ acts semisimply, $\mathfrak{r}_{\mathfrak{g}_o}(\mathfrak{h})=C_{\mathfrak{g}_o}(\mathfrak{h})$.

Let $\mathfrak{a}_o$ be a flat subspace.  Let $\mathfrak{h}_o$ be a Cartan subalgebra of $C_{\mathfrak{g}_o}(\mathfrak{a}_o)$.  We claim that $\mathfrak{h}_o$ is a Cartan subalgebra of $\mathfrak{g}_o$ as well.  Indeed, since $\ad_{\mathfrak{g}_o}\mathfrak{a}_o$ acts semisimply,
$$N_{\mathfrak{g}_o}(\mathfrak{h}_o) \subset \mathfrak{r}_{\mathfrak{g}_o}(\mathfrak{h}_o) \subset \mathfrak{r}_{\mathfrak{g}_o}(\mathfrak{a}_o) = C_{\mathfrak{g}_o}(\mathfrak{a}_o).$$
Hence
$$N_{\mathfrak{g}_o}(\mathfrak{h}_o) = N_{\mathfrak{g}_o}(\mathfrak{h}_o)\cap C_{\mathfrak{g}_o}(\mathfrak{a}_o) = N_{C_{\mathfrak{g}_o}(\mathfrak{a}_o)}(\mathfrak{h}_o) = \mathfrak{h}_o$$
by hypothesis.  Hence $\mathfrak{h}_o$ is a Cartan subalgebra of $\mathfrak{g}_o$.

Now, since $\mathfrak{a}_o$ acts semisimply on $\mathfrak{g}_o$, there is an $\ad_{\mathfrak{g}_o}\mathfrak{a}_o$-regular element $x\in\mathfrak{a}_o$.  So $\mathfrak{h}\subset C_{\mathfrak{g}_o}(\mathfrak{a}_o) = C_{\mathfrak{g}_o}(x)$.  But the inclusion $C_{\mathfrak{g}_o}(x)\subset\mathfrak{h}$ must also hold, and so the two are equal.  Thus $x\in\mathfrak{g}_o'$.

Conversely, if $\mathfrak{a}_o=C_{\mathfrak{p}_o}(x)$ for $x$ a regular element, then $\mathfrak{a}_o$ is maximal abelian, for
\begin{align*}
C_{\mathfrak{g}_o}(x)&=C_{\mathfrak{k}_o}(x)\oplus C_{\mathfrak{p}_o}(x) =C_{\mathfrak{k}_o}(x)\oplus \mathfrak{a}_o\\
C_{\mathfrak{g}_o}(\mathfrak{a}_o)&=C_{\mathfrak{k}_o}(\mathfrak{a}_o)\oplus C_{\mathfrak{p}_o}(\mathfrak{a}_o).
\end{align*}
But these are both equal for a Cartan subalgebra, and so $ \mathfrak{a}_o=C_{\mathfrak{p}_o}(\mathfrak{a}_o)$, and $\mathfrak{a}_o$ is maximal abelian.
\end{proof}

Any two flat subspaces are conjugate via an automorphism from $K$.  More precisely,

\begin{proposition}
Let $\mathfrak{g}$ be orthogonal symmetric and $\mathfrak{a}_1,\mathfrak{a}_2$ flat subspaces.  Let $K\subset \operatorname{GL}(\mathfrak{p}_o)$ be the connected Lie subgroup with Lie algebra $\mathfrak{k}_o$: $K=\left\langle e^{\ad X}\mid X\in\mathfrak{k}_o\right\rangle$.  Then there exists a $\phi\in K$ such that $\mathfrak{a}_2=\phi(\mathfrak{a}_1)$.
\end{proposition}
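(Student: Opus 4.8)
The plan is to run Cartan's classical compactness-and-maximum argument. First I would record that the group $K=\langle e^{\ad X}\mid X\in\mathfrak{k}_o\rangle\subseteq\operatorname{GL}(\mathfrak{p}_o)$ is compact: by orthogonality there is a positive-definite $\ad(\mathfrak{k}_o)$-invariant form $Q$ on $\mathfrak{g}_o$, and $\ad\mathfrak{k}_o\subseteq\mathfrak{so}(\mathfrak{p}_o,Q)$, so $K$ lies in the compact group $\operatorname{O}(\mathfrak{p}_o,Q)$ and is itself compact. Using Theorem~\ref{decomposition} I would also reduce to the case where $\mathfrak{g}_o$ is semisimple: the flat summand contributes the common central subspace $V_0=C_{\mathfrak{p}_o}(\mathfrak{p}_o)$, which lies in every maximal abelian subalgebra of $\mathfrak{p}_o$ and is centralized by the semisimple part $\bigoplus_i[\mathfrak{p}_i,\mathfrak{p}_i]$ of $\mathfrak{k}_o$; hence conjugating the semisimple parts via the corresponding subgroup of $K$ suffices.

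Next, invoking Theorem~\ref{RegularElementsFlatSubspaces}, I would pick regular elements $H_1\in\mathfrak{a}_1$ and $H_2\in\mathfrak{a}_2$ with $\mathfrak{a}_i=C_{\mathfrak{p}_o}(H_i)$, and define $f:K\to\mathbb{R}$ by $f(\phi)=Q(\phi H_1,H_2)$. Since $K$ is compact, $f$ attains its maximum at some $\phi_0\in K$. Differentiating along the curve $t\mapsto e^{t\ad X}\phi_0$ for $X\in\mathfrak{k}_o$ and setting the derivative to zero at the critical point gives
$$Q([X,\phi_0 H_1],H_2)=0\qquad\text{for all }X\in\mathfrak{k}_o.$$

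The crux is to upgrade this orthogonality relation to the genuine bracket identity $[\phi_0 H_1,H_2]=0$, and this is where semisimplicity is essential. Writing $B=\phi_0 H_1\in\mathfrak{p}_o$, I would use the Killing form $B_{\mathfrak{g}_o}$, which is invariant under all of $\ad(\mathfrak{g}_o)$ (in particular under $\ad(\mathfrak{p}_o)$) and which equals $\pm Q$ on $\mathfrak{p}_o$, to transfer the bracket:
$$B_{\mathfrak{g}_o}([B,H_2],X)=B_{\mathfrak{g}_o}(H_2,[X,B])=\pm Q([X,B],H_2)=0$$
for every $X\in\mathfrak{k}_o$. Since $[B,H_2]\in[\mathfrak{p}_o,\mathfrak{p}_o]\subseteq\mathfrak{k}_o$ and $B_{\mathfrak{g}_o}$ is negative definite on $\mathfrak{k}_o$, this forces $[B,H_2]=0$. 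I expect this passage from the variational condition to the vanishing bracket, which genuinely needs the full $\ad(\mathfrak{g}_o)$-invariance of $B_{\mathfrak{g}_o}$ together with its definiteness on $\mathfrak{k}_o$, to be the main technical point; the naive $\ad(\mathfrak{k}_o)$-invariance of $Q$ alone does not suffice.

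Finally I would conclude as follows. Each generator $e^{\ad X}$ with $X\in\mathfrak{k}_o$ commutes with $s$ (because $sX=X$ gives $s\,\ad(X)\,s^{-1}=\ad(X)$), so $\phi_0$ is an automorphism preserving $\mathfrak{p}_o$, whence $\phi_0\mathfrak{a}_1=\phi_0 C_{\mathfrak{p}_o}(H_1)=C_{\mathfrak{p}_o}(\phi_0 H_1)$. From $[\phi_0 H_1,H_2]=0$ we get $\phi_0 H_1\in C_{\mathfrak{p}_o}(H_2)=\mathfrak{a}_2$, and $\phi_0 H_1$ is again regular since automorphisms conjugate $\ad$ and so preserve regularity. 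Thus $C_{\mathfrak{p}_o}(\phi_0 H_1)$ is a flat subspace by Theorem~\ref{RegularElementsFlatSubspaces}; as it is abelian and contains the maximal abelian subalgebra $\mathfrak{a}_2$, the two coincide. Combining, $\phi_0\mathfrak{a}_1=C_{\mathfrak{p}_o}(\phi_0 H_1)=\mathfrak{a}_2$, which is exactly the asserted conjugacy with $\phi=\phi_0\in K$.
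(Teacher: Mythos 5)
Your proof is correct, and at bottom it is the paper's own argument: Cartan's compactness-and-extremum trick applied to a bilinear pairing of the two regular elements $H_1,H_2$, the critical-point identity upgraded to $[\phi_0H_1,H_2]=0$ via definiteness of the Killing form on $\mathfrak{k}_o$, and then the centralizer/maximality argument (which you spell out more carefully than the paper does). The one genuine difference is your choice of objective function, and it is instructive. The paper extremizes $k\mapsto B_{\mathfrak{g}_o}(X_1,\Ad(k)X_2)$ directly; since $B_{\mathfrak{g}_o}$ is invariant under all of $\ad(\mathfrak{g}_o)$, the critical-point equation is immediately $B_{\mathfrak{g}_o}(Y,[X_1,\Ad(k)X_2])=0$ for all $Y\in\mathfrak{k}_o$, and negative definiteness of $B_{\mathfrak{g}_o}|_{\mathfrak{k}_o\times\mathfrak{k}_o}$ --- which holds for \emph{every} effective orthogonal symmetric Lie algebra, flat part included, because $\ad$ embeds $\mathfrak{k}_o$ into $\mathfrak{so}(\mathfrak{g}_o,Q)$ --- finishes the proof with no case analysis. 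By extremizing $Q(\phi H_1,H_2)$ instead, you are forced into the detour you describe: the flat summand must be split off via Theorem~\ref{decomposition} (there $B_{\mathfrak{g}_o}$ vanishes on $\mathfrak{p}_o$ and your transfer step would fail), and even in the semisimple case the relation on $\mathfrak{p}_o$ is $B_{\mathfrak{g}_o}=c_iQ$ with a \emph{different} nonzero constant $c_i$ on each irreducible factor, not a uniform ``$\pm Q$''; your transfer should therefore be run factor by factor, taking $X\in[\mathfrak{p}_i,\mathfrak{p}_i]$ and using $\mathfrak{k}_o=\bigoplus_i[\mathfrak{p}_i,\mathfrak{p}_i]$, which is harmless but needs saying. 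One smaller slip: your compactness argument for $K$ is incomplete as stated, since an analytic subgroup of the compact group $O(\mathfrak{p}_o,Q)$ need not be closed (an irrational line in a torus is the standard counterexample); compactness of $K$ is really the content of the hypothesis that $\mathfrak{k}_o$ is compactly embedded --- $K$ is the restriction to $\mathfrak{p}_o$ of the compact subgroup of $\Int(\mathfrak{g}_o)$ generated by $\ad\mathfrak{k}_o$, hence a continuous image of a compact group.
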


\begin{proof}
Write $\mathfrak{a}_1=C_{\mathfrak{p}_o}(X_1)$ and $\mathfrak{a}_2=C_{\mathfrak{p}_o}(X_2)$ with $X_1,X_2\in \mathfrak{p}_o\cap\mathfrak{g}_o'$.  Define a mapping $K\to\mathbb{R}$ by
$$k \mapsto B_{\mathfrak{g}_o}(X_1,\Ad(k)X_2).$$
Since $K$ is compact, this has an extremum at some $k\in K$.  Fix such a $k$, and then consider for a fixed $Y\in\mathfrak{k}_o$,
$$\phi(t) = B_{\mathfrak{g}_o}(X_1,e^{t\ad Y}\Ad(k)X_2).$$
Then $\phi'(0)=0$, so
$$0 = B_{\mathfrak{g}_o}(X_1,[Y,\Ad(k)X_2]) = -B_{\mathfrak{g}_o}(Y,[X_1,\Ad(k)X_2])$$
by invariance.  Since this holds for all $Y\in\mathfrak{k}_o$, non-degeneracy of $B_{\mathfrak{g}_o}|_{\mathfrak{k}_o\times\mathfrak{k}_o}$ implies that $[X_1,\Ad(k)X_1]=0$.  So
$$\mathfrak{a}_1 = C_{\mathfrak{p}_o}(X_1) = C_{\mathfrak{p}_o}(\Ad(k)X_2) = \Ad(k)\mathfrak{a}_2.$$
\end{proof}

\begin{corollary}\label{coverbyflats}
Let $\mathfrak{a}_o\subset\mathfrak{p}_o$ be a flat subspace.  Then
$$\mathfrak{p}_o = \bigcup_{k\in K} \Ad(k)\mathfrak{a}_o.$$
\end{corollary}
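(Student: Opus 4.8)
The plan is to reuse the extremum technique of the preceding proposition, but now pairing a fixed regular element of $\mathfrak{a}_o$ against an arbitrary target vector. One inclusion is immediate: since $\Ad(k)$ preserves $\mathfrak{p}_o$ and $\mathfrak{a}_o\subset\mathfrak{p}_o$, we have $\bigcup_{k\in K}\Ad(k)\mathfrak{a}_o\subset\mathfrak{p}_o$. So the whole content is the reverse inclusion: every $X\in\mathfrak{p}_o$ lies in $\Ad(k)\mathfrak{a}_o$ for some $k\in K$.

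First I would fix a regular element. By Theorem \ref{RegularElementsFlatSubspaces}, $\mathfrak{a}_o=C_{\mathfrak{p}_o}(X_0)$ for some $X_0\in\mathfrak{p}_o\cap\mathfrak{g}_o'$. Given an arbitrary $X\in\mathfrak{p}_o$, consider the smooth function $K\to\mathbb{R}$ defined by $k\mapsto B_{\mathfrak{g}_o}(X_0,\Ad(k)X)$. Because $K$ is compact, it attains an extremum at some $k_0\in K$. Differentiating the curve $t\mapsto B_{\mathfrak{g}_o}(X_0,e^{t\ad Y}\Ad(k_0)X)$ at $t=0$, for each $Y\in\mathfrak{k}_o$, and using the invariance of $B_{\mathfrak{g}_o}$ exactly as in the proposition, gives
$$0 = B_{\mathfrak{g}_o}(X_0,[Y,\Ad(k_0)X]) = -B_{\mathfrak{g}_o}(Y,[X_0,\Ad(k_0)X]).$$

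Next I would conclude. Since $X_0,\Ad(k_0)X\in\mathfrak{p}_o$ and $[\mathfrak{p}_o,\mathfrak{p}_o]\subset\mathfrak{k}_o$, the bracket $[X_0,\Ad(k_0)X]$ lies in $\mathfrak{k}_o$; as the displayed identity holds for all $Y\in\mathfrak{k}_o$, nondegeneracy of $B_{\mathfrak{g}_o}|_{\mathfrak{k}_o\times\mathfrak{k}_o}$ forces $[X_0,\Ad(k_0)X]=0$. Thus $\Ad(k_0)X\in C_{\mathfrak{p}_o}(X_0)=\mathfrak{a}_o$, where the last equality uses that $X_0$ is regular, and therefore $X\in\Ad(k_0^{-1})\mathfrak{a}_o$. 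Since $k_0^{-1}\in K$, this places $X$ in $\bigcup_{k\in K}\Ad(k)\mathfrak{a}_o$, completing the reverse inclusion.

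The step I expect to require the most care is the invocation of nondegeneracy of $B_{\mathfrak{g}_o}|_{\mathfrak{k}_o\times\mathfrak{k}_o}$, which the preceding proposition uses in the semisimple setting. To be safe I would reduce to that case via the decomposition Theorem \ref{decomposition}: on the flat summand $\mathfrak{h}_f$ the $\mathfrak{p}$-part is abelian and already contained in (the corresponding piece of) $\mathfrak{a}_o$, so the claim is trivial there, and on each semisimple irreducible factor the extremum argument applies verbatim. Alternatively, one can sidestep any case analysis by arguing topologically: the regular elements $\mathfrak{p}_o\cap\mathfrak{g}_o'$ are dense in $\mathfrak{p}_o$, each of them lies in $\bigcup_{k\in K}\Ad(k)\mathfrak{a}_o$ by the preceding proposition (for regular $X$ the centralizer $C_{\mathfrak{p}_o}(X)$ is a flat subspace conjugate to $\mathfrak{a}_o$ and contains $X$), and the union is closed because a convergent sequence $\Ad(k_n)A_n\to Y$ yields, after passing to a convergent subsequence $k_n\to k$ in the compact group $K$, the limit $A_n\to\Ad(k^{-1})Y\in\mathfrak{a}_o$, whence $Y\in\Ad(k)\mathfrak{a}_o$.
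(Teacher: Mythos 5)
Your proof is correct, but it takes a more self-contained route than the paper, which offers no proof at all because the statement is meant as an immediate consequence of the preceding conjugacy proposition: given $X\in\mathfrak{p}_o$, the line $\mathbb{R}X$ is abelian and hence extends, by finite-dimensionality, to a maximal abelian subalgebra $\mathfrak{a}_X\subset\mathfrak{p}_o$, i.e.\ a flat subspace containing $X$; the proposition then supplies $k\in K$ with $\mathfrak{a}_X=\Ad(k)\mathfrak{a}_o$, so $X\in\Ad(k)\mathfrak{a}_o$. You instead rerun the variational argument directly, pairing one fixed regular element $X_0$ (so that $\mathfrak{a}_o=C_{\mathfrak{p}_o}(X_0)$ by Theorem \ref{RegularElementsFlatSubspaces}) against an arbitrary $X\in\mathfrak{p}_o$. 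This is sound, and it has a small structural advantage: your version needs regularity on only one side, whereas the proposition's proof uses it on both sides to identify the two centralizers; it also avoids invoking the extension step ``every vector lies in some flat subspace,'' which the paper's implicit argument requires. What the paper's route buys is brevity. Your worry about the nondegeneracy of $B_{\mathfrak{g}_o}|_{\mathfrak{k}_o\times\mathfrak{k}_o}$ is more caution than necessary: under the paper's standing effectiveness assumption this restriction is negative definite (the paper notes this in the proof of the lemma characterizing semisimplicity, where it is an input rather than a consequence of semisimplicity), so the extremum argument, and hence the proposition you quote, is already valid for a general effective orthogonal symmetric Lie algebra, and neither the reduction via Theorem \ref{decomposition} nor the density-plus-closedness argument is needed. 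Both fallbacks are nonetheless correct as written; for the topological one, observe that regularity is the nonvanishing of the single polynomial $a_\ell$, which is not identically zero on $\mathfrak{p}_o$ because $\mathfrak{a}_o$ contains a regular element, so $\mathfrak{p}_o\cap\mathfrak{g}_o'$ is indeed open and dense in $\mathfrak{p}_o$, and your subsequence argument correctly shows the union is closed.
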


\section{Symmetric spaces from orthogonal symmetric Lie algebras}
Let $(\mathfrak{g}_o,s_o)$ be an orthogonal symmetric Lie algebra.  Write
$$\mathfrak{g}_o=\mathfrak{k}_o\oplus\mathfrak{p}_o.$$
Let $G$ be a connected Lie group with $\Lie(G)=\mathfrak{g}_o$.  Assume for now that
\begin{enumerate}
\item There exists an involution $s:G\to G$ such that $s_{*,e}=s_o$.  Let $K\le G$ be such that $\Lie(K)=\mathfrak{k}_o$; i.e., $G_{s,e} \le K\le G_s=\{g\mid s(g)=g\}$.  Note that because $G_{s,e}$ is compact, it is closed in $G$.
\item $K\le G$ is closed.
\end{enumerate}
(Assumption (1) holds for instance when $G$ is simply connected; assumption (2) always holds, see below.)

It follows that $G/K$ is a manifold.  Let $o=eK$ be the coset of the identity in $G/K$.  Under the submersion $G\to G/K$, $\mathfrak{p}_o\xrightarrow{\cong} T(G/K)_o$.  The form $Q$ on $\mathfrak{p}_o$ is an $\ad_{\mathfrak{g}_o}$-invariant bilinear form.  This induces  corresponding form on $T(G/K)_o$.  Extend this to a $G$-invariant Riemannian form on $G/K$.  In this way $G/K$ becomes a Riemannian manifold.

Since $G$ acts by isometries on $G/K$, there is a Lie group morphism $\pi : G\to I(G/K)$, $\phi(g)(xK) = gxK$.  Moreover, restricting to $K$, we have
$$\pi:K\to I(G/K)_o = \{ \phi\in I(G/K)\mid \phi(o)=o\},$$
the stabilizer of $o$.  The involution $s:G\to G$ is compatible with the quotient in the sense that we may define $s:G/K\to G/K$ by $s(gK)=s(g)K$: this is well-defined since $K$ is fixed by $s$.  Thus the diagram
$$\xymatrix{
G \ar[r]\ar[d]^{\cong}_{s} & G/K\ar@{-->}^{\cong}_s[d]\\
G \ar[r]&G/K
}$$
commutes: $s\pi(g) = \pi(s(g)).$

Now let $g\in G$ be fixed.  Let $\tau(g):G/K\to G/K$ be the right translation by $G$.  We have
$$\xymatrix{
T_{gK}G/K \ar[r]^{s_{*,gK}}\ar[d]^{\tau(g^{-1})}&T_{s(g)K}G/K\ar[d]^{\tau(s(g)^{-1})}\\
T_oG/K\ar[r]^{s_*,o}& T_oG/K
}$$
Hence $s_{*,gK}$ is an isometry, and so $s\in I(G/K)_o$.  The geodesic symmetry at $gK$ is given by $s_{gK} = \tau(g)\circ s\circ\tau(g^{-1})$.  Hence $G/K$ is a Riemannian globally symmetric space.

Write $\Lie(I(G/K))=\widetilde{\mathfrak g}_o$ and $\Lie(I(G/K)_o)=\widetilde{\mathfrak k}_o$.  The involution $s$ on $G/K$ defines an involution on $I(G/K)$ by conjugation: $c(s)\phi = s\phi s^{-1}$ for $\phi\in I(G/K)$.  Then $(\widetilde{g}_o,c(s)_{*,o})$ is an orthogonal symmetric Lie algebra.  Decompose it as
$$\widetilde{\mathfrak g}_o = \widetilde{\mathfrak k}_o\oplus\widetilde{\mathfrak{p}}_o.$$
Then there is a natural isomorphism of $G$ modules of $\widetilde{\mathfrak p}_o$ with $\mathfrak{p}_o$, since both of them are isomorphic to $T_oG/K$.  Because of the map $G\to I(G/K)$ there is a mapping of Lie algebras $\mathfrak{g}_o\to\widetilde{\mathfrak g}_o$, which may or may not be surjective.\footnote{For example, if $G=\mathbb{R}^n$ and $K=\{e\}$, then $I(G/K)$ is the Euclidean group, and the mapping is not surjective.}

\subsection{Transvections and the exponential map} The transvections of $G/K$ are of the form $s_{gK}\circ s$.  This is the transvection along the geodesic from $o$ to $g^2K$ passing through $gK$.   Let $\exp : \mathfrak{p}_o\to G$ be the Lie algebra exponential.  Since for any $g\in G$, we have $s_{gK}=\tau(g)s\tau(g^{-1})$, the isometry $\tau(\exp X)s \tau(\exp(-X)) s$ is the transvection sending $o$ to $\exp (2X)$.  Now, $\exp:\mathfrak{p}_o\to G\to G/K$ is a local diffeomorphism onto a neighborhood of $o$, so the geodesics through $o$ are all of the form $t\mapsto \exp(tX)K$ for some $X\in\mathfrak{p}_o$.  As a result, the Lie algebra exponential and Riemannian exponential coincide, modulo the identification of $T_oG/K$ with $\mathfrak{p}_o$: $\exp = \Exp$.  Note that since $G/K$ is complete, $\Exp : T_oG/K\to G/K$ is surjective onto the connected component of $o$, and so $\exp : \mathfrak{p}_o\to G\to G/K$ is as well.

In particular, as a result ee have $G=K\exp(\mathfrak{p}_o)=\exp(\mathfrak{p}_o)K = K\exp(\mathfrak{p}_o)K$.  In terms of this decomposition, we must have
$$s(k\exp(X)) = k\exp(-X).$$

\subsection{$K$ is closed}  In these next paragraphs, we dispense with assumptions (1) and (2) above.  First, we deal with assumption (2), that $K$ was assumed to be closed.

Let $\mathfrak{a}_o\subset\mathfrak{p}_o$ be a flat subspace.  Then $\mathfrak{p}_o=\bigcup_{k\in K}\Ad(k)\mathfrak{a}_o$.  This holds in particular for $K=G_{s,e}$.  Take $K_e\le G_{s,e}\le K\le G_s$.  We argue that $|K/K_e| < \infty$, so that $K$ is closed (i.e., the assumption (2) above is always satisfied.)  Indeed, let $x\in K\le G=\exp(\mathfrak{p}_o)K_e$ so that it is possible to write  $xK_e = \exp Z K_e$ for some $Z\in\mathfrak{p}_o$.  The involution $s$ is the identity on $K$ and coincides with inversion on $\exp\mathfrak{p}_o$, so that since $\exp Z\in K\cap \exp\mathfrak{p}_o$, it follows that $\exp(Z)^2=e$.

Write $Z=\Ad(k)W$ for some $k\in K_e$ and $W\in\mathfrak{a}_o$.  Then
$$xK_e = \exp(Z)K_e = k\exp(W) K_e = \exp(W)\underbrace{(\exp(-W)k\exp(W))}_{\in\, K_e}K_e.$$
It follows that $\exp(W)^2=e$ as well, and so $\exp(W)\in K$.  Since $W\in\mathfrak{p}_o$ already, $\exp(W)\in K\cap \exp\mathfrak{p}_o$.  To summarize then
$$|K/K_e| \le |\{ g\in \exp\mathfrak{a}_o\mid g^2=e\}|.$$
On the other hand, $\exp\mathfrak{a}_o$ is a closed subgroup of $G$, since $\operatorname{cl}\exp\mathfrak{a}_o$ is a connected abelian subgroup (a torus) whose Lie algebra must be equal to $\mathfrak{a}_o$ (by maximality).  But the set of elements of order two in a connected abelian Lie group is finite.  Thus $|K/K_e|<\infty$, proving that $K$ is closed.

\subsection{Structure of Riemannian symmetric spaces}  Suppose that we have a Riemannian symmetric pair $(G,s)$.  Then $\operatorname{Lie}(I(G/K))=\widetilde{\mathfrak{g}}_o$ and $\Lie(I(G/K)_o) = \widetilde{\mathfrak{k}}_o$ and there is a map
$$\mathfrak{g}_o\to \widetilde{g}_o$$
such that the restriction to $\mathfrak{p}_o$ is an isomorphism with $\widetilde{\mathfrak{p}}_o$.  If $(\mathfrak{g}_o,s)$ is semisimple and irreducible, then $\mathfrak{k}_o$ is simple and so $\mathfrak{g}_o\hookrightarrow\widetilde{\mathfrak{g}}_o$.  But if $\mathfrak{g}_o$ is semisimple, then it is maximal, and so in that case $\mathfrak{g}_o=\widetilde{\mathfrak{g}}_o$.

Assumption (1) above holds when $G$ is simply-connected.  In that case, we claim that $G/G_{s,e}$ is simply-connected as well.  Indeed, if $\gamma$ is a loop in $G/G_{s,e}$ based at $o$, then we can lift it to a curve $\tilde{\gamma}$ in $G$ from $e$ to some point $x\in G_{s,e}$.  Since $G_{s,e}$ is connected, there is a path $\tilde{\mu}$ lying entirely inside $G_{s,e}$ connecting $x$ back to $e$.  The compositum $\tilde{\gamma}\cup\tilde{\mu}$ is a loop in $G$.  Since $G$ is simply connected, this loop is contractible, and so its image $\gamma$ under $G\to G/G_{s,e}$ is also contractible.  Consequently, $G/G_{s,e}$ is a simply connected Riemannian globally symmetric space.

\begin{theorem}
Let $M$ be a simply-connected Riemannian globally symmetric space.  Then there is a unique decomposition (up to ordering)
$$M=M_f \times M_1\times\cdots\times M_t$$
with $M_f$ flat and $M_i$ simply-connected irreducible globally Riemannian symmetric with a semisimple group of isometries.  The sectional curvature\footnote{Given by $K(X,Y)=\frac{1}{c}B([X,Y],[X,Y])$ where $c$ is the constant of proportionality relating the invariant form on $\mathfrak{p}_o$ to the Killing form on $\mathfrak{g}_o$: $B(X,Y)=cQ(X,Y)$.} of $M_i$ is positive if $M_i$ is compact, negative if $M_i$ is non-compact.
\end{theorem}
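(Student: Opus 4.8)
The plan is to reduce the geometric statement to the algebraic decomposition of Theorem~\ref{decomposition} by means of the group-theoretic construction of the preceding section. Fix a base point $o\in M$ and attach to $M$ its orthogonal symmetric Lie algebra $(\widetilde{\mathfrak g}_o,s)$, where $\widetilde{\mathfrak g}_o=\Lie(I(M))$, the involution $s$ is conjugation by the geodesic symmetry $s_o$ at $o$, and $\widetilde{\mathfrak p}_o\cong T_oM$ under the canonical submersion. This pair is effective because $I(M)$ acts effectively on $M$. Applying Theorem~\ref{decomposition} produces an $s$-invariant, pairwise-commuting direct sum of ideals $\widetilde{\mathfrak g}_o=\mathfrak h_f\oplus\mathfrak h_1\oplus\cdots\oplus\mathfrak h_t$, with $\mathfrak h_f$ flat and each $\mathfrak h_i$ irreducible, and hence a $Q$-orthogonal, mutually commuting decomposition $T_oM=\mathfrak p_f\oplus\mathfrak p_1\oplus\cdots\oplus\mathfrak p_t$ of the tangent space at $o$.

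Next I would globalize this splitting. Let $\widetilde G$ be the simply connected group with Lie algebra $\widetilde{\mathfrak g}_o$. Since the ideals are $s$-invariant and commute, $\widetilde G=\widetilde G_f\times\prod_i\widetilde G_i$ and the lifted involution splits as a product $\widetilde s=\widetilde s_f\times\prod_i\widetilde s_i$, with connected fixed-point group $\widetilde K=\widetilde K_f\times\prod_i\widetilde K_i$. Because $M$ is simply connected, $\widetilde G$ acts transitively on $M$ with connected stabilizer $\widetilde K$ (the homotopy exact sequence of $\widetilde K\to\widetilde G\to M$ forces $\pi_0(\widetilde K)=0$), so $M=\widetilde G/\widetilde K=(\widetilde G_f/\widetilde K_f)\times\prod_i(\widetilde G_i/\widetilde K_i)$. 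Writing $M_f=\widetilde G_f/\widetilde K_f$ and $M_i=\widetilde G_i/\widetilde K_i$, the $\widetilde G$-invariant metric induced by $Q$ is a genuine product metric exactly because the $\mathfrak p_\bullet$ are $Q$-orthogonal and satisfy $[\mathfrak p_i,\mathfrak p_j]=0$; thus $M$ is an isometric product of globally symmetric spaces, the geodesic symmetry of $M$ being the product of those of the factors. (Alternatively one may observe that each $\mathfrak p_\bullet$ is $\ad(\mathfrak k_o)$-stable, that the restricted holonomy of a symmetric space lies in $\ad(\mathfrak k_o)$ by covariant constancy of $R$ together with Ambrose--Singer, so the $\mathfrak p_\bullet$ are parallel, and then invoke the de Rham decomposition theorem.)

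It remains to identify the factors and read off the curvature. For $\mathfrak h_f$ the space $\mathfrak p_f$ is abelian, so $R$ vanishes on it and $M_f$ is a complete simply connected flat manifold, i.e.\ $M_f\cong\mathbb R^k$. Each $\mathfrak h_i$ is semisimple, hence maximal by the structure results, so $\Lie(I(M_i))=\mathfrak h_i$ is semisimple; and since $\mathfrak p_i$ is $\mathfrak k$-irreducible by construction, $M_i$ is irreducible. For the sign of the curvature I would invoke the trichotomy $B=c_iQ$ for irreducible orthogonal symmetric Lie algebras together with the sectional-curvature formula $K(X,Y)=\tfrac1{c_i}B([X,Y],[X,Y])$ from the footnote: as $[X,Y]\in\mathfrak k$ and $B$ is negative definite on $\mathfrak k$, we have $B([X,Y],[X,Y])\le0$, whence $K\ge0$ when $c_i<0$ (compact type) and $K\le0$ when $c_i>0$ (noncompact type). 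Uniqueness up to ordering then descends from the uniqueness clause of Theorem~\ref{decomposition}, since any product decomposition of $M$ into flat and irreducible symmetric factors induces, via tangent spaces and isometry algebras, an $s$-invariant decomposition of $(\widetilde{\mathfrak g}_o,s)$ of the same type.

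The main obstacle is this globalization: turning the purely algebraic splitting of $\widetilde{\mathfrak g}_o$ into an honest isometric product of manifolds. The delicate points are choosing a simply connected presentation $M=\widetilde G/\widetilde K$ and checking that the stabilizer is connected and factors; verifying that the induced homogeneous metric is truly a product metric, which is exactly where $Q$-orthogonality and $[\mathfrak p_i,\mathfrak p_j]=0$ enter; and correctly handling the flat factor, whose isometry algebra $\mathfrak h_f$ is the full Euclidean algebra of $\mathbb R^k$ (translations together with the rotations $\mathfrak{so}(k)$), not merely the translation subalgebra, so that $\widetilde{\mathfrak g}_o$ may be strictly larger than the translational model of the flat directions.
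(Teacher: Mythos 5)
Your proposal is correct, and while it starts exactly where the paper does, it finishes the decisive globalization step by a genuinely different route. Both arguments begin by setting $\widetilde{\mathfrak{g}}_o=\Lie(I(M))$ with the involution given by conjugation by the geodesic symmetry, and then applying the algebraic decomposition of Theorem \ref{decomposition}. The paper then concludes in one stroke of rigidity: it associates to each algebraic summand its simply connected globally symmetric space, observes that $M$ is locally isometric to the product, and invokes the local-to-global principle for complete, simply connected globally symmetric spaces (Theorem \ref{localglobal} and its corollaries) to promote the local isometry to a global one. You instead globalize at the group level: you present $M=\widetilde{G}/\widetilde{K}$ with $\widetilde{G}$ simply connected, use the homotopy exact sequence of the fibration $\widetilde{K}\to\widetilde{G}\to M$ to force $\pi_0(\widetilde{K})=0$, identify the connected stabilizer with the identity component of the fixed group of the lifted involution (two connected subgroups with Lie algebra $\mathfrak{k}_o$ coincide), and then split group, involution, isotropy, and invariant metric as direct products, the metric identification resting on the standard fact that two $\widetilde{G}$-invariant metrics agreeing at $o$ are equal; your de Rham/holonomy alternative is a third valid route. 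What the paper's argument buys is brevity, at the cost of leaning entirely on the extension theorem for local isometries; what yours buys is an explicit homogeneous presentation of each factor, a visible proof that the isotropy is connected and factors, and an explicit treatment of the uniqueness clause and the flat factor (correctly noting that $\mathfrak{h}_f$ contains a rotational part beyond the translations), all of which the paper leaves implicit. Your curvature reading is also slightly more careful than the literal statement: from $K(X,Y)=\tfrac{1}{c}B([X,Y],[X,Y])$ with $B$ negative definite on $\mathfrak{k}_o$, the honest conclusion is $K\ge 0$ in the compact case and $K\le 0$ in the noncompact case, strict exactly when $[X,Y]\neq 0$, which is the right interpretation of ``positive/negative'' here.
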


\begin{proof}
Let $\mathfrak{g}_o=\operatorname{Lie}(I(M))$.   Conjugation by $\sigma$ defines the involution on $\mathfrak{g}_o$ in terms of which it becomes an orthogonal symmetric Lie algebra.  There is the decomposition
$$(\mathfrak{g}_o,s_o) = (\mathfrak{h}_f,s_f)\oplus (\mathfrak{h}_1,s_1)\oplus\cdots\oplus  (\mathfrak{h}_t,s_t).$$
Associate to each $(\mathfrak{h}_i,s_i)$ its simply connected globally Riemannian symmetric space $M_i$.  Then $M$ is locally isometric to $M_f\times M_1\times\cdots\times M_t$, and therefore is globally isometric, since they are both simply-connected globally Riemannian symmetric spaces.
\end{proof}

{\em Exercise.}  If $M$ is a semisimple irreducible Riemannian symmetric space, show that every affine diffeomorphism is an isometry, $I(M)=\operatorname{Aff}(M)$.

{\em Remarks:}
\begin{enumerate}
\item Let $(\mathfrak{g}_o,s_o)$ be an orthogonal symmetric Lie algebra and $G$ a connected Lie group $\Lie(G)=\mathfrak{g}_o$.  Suppose that there exists an involution $s:G\to G$ lifting $s_o$; i.e., such that $s_{*,e}=s_o$.  If $K$ is an open subgroup of $G_s$, then
$$\widetilde{G/K} = \widetilde{G}/\widetilde{K} \to G/K$$
is a local isometry of Riemannian globally symmetric spaces, and $G/K$ decomposes in the same way as $\widetilde{G/K}$, except the factors are replaced by the appropriate non-simply connected quotient factors.
\item  If there is no lift to $s:G\to G$, but there is a closed subgroup $K\subset G$ with $\Lie(K)=\mathfrak{k}_o$, then $G/K$ and $\widetilde{G}/\widetilde{G}_{s,e}$ are locally isometric and $G/K$ becomes a locally isometric space.  In that case, it only decomposes locally in a neighborhood of each point.
\item Suppose that $M$ is a Riemannian manifold and $\widetilde{M}\xrightarrow{\pi} M$ is the universal cover.  The group of covering transformations $\Gamma$ is the subgroup of $I(\widetilde{M})$ that cover the identity diffeomorphism of $M$.  This is a discrete group that acts freely and properly discontinuously on $\widetilde{M}$, so there is a canonical identification $M=\widetilde{M}/\Gamma$.  Suppose now that $M,\widetilde{M}$ are globally symmetric, with symmetries $s,\tilde{s}$.  (It follows by rigidity that $\pi\circ\tilde{s}=s\circ\pi$.)  Let $\gamma\in \Gamma$ and $\tilde{m}\in\widetilde{M}$.  Then
\begin{align*}
\pi(\tilde{s}(\gamma.\tilde{m}) &= s(\pi(\gamma,\tilde{m})) = s(\pi(\tilde{m}))\\
&=\pi(\tilde{s}(\tilde{m}))
\end{align*}
so $\tilde{s}(\gamma.\tilde{m})$ and $\tilde{s}(\tilde{m})$ are in the same fiber.  Because covering transformations act transitively on the fiber, there exists a $\gamma_{\tilde{m}}\in\Gamma$ such that $\tilde{s}\gamma\tilde{s}.\tilde{m} = \gamma_{\tilde{m}}.\tilde{m}$.  This establishes a continuous map $\tilde{m}\mapsto \gamma_{\tilde{m}}$:
$$\widetilde{M}\to \Gamma.$$
But $\Gamma$ is discrete, so this map is constant: $\gamma_{\tilde{m}}$ does not depend on $\tilde{m}$.  Thus $\Gamma$ is normalized by the group generated by all local symmetries.  If $\widetilde{M}$ is semisimple, then $I(\widetilde{M})_e\le \{\text{group generated by local symmetries}\}$, so $I(\widetilde{M})_e$ normalizes $\Gamma$.  For each $\gamma\in\Gamma$, the conjugation map
\begin{align*}
\phi &\mapsto \phi\gamma\phi^{-1}\\
I(\widetilde{M})_e&\to\Gamma
\end{align*}
is a continuous map from a connected space to a discrete space, and must therefore be constant.  Hence $\Gamma$ must be central: $\Gamma\le C_{I(\widetilde{M})}(I(\widetilde{M})_e)$.

Conversely, if $\Gamma\le C_{I(\widetilde{M})}(I(\widetilde{M})_e)$ is discrete, then it acts freely and properly discontinuously, and $\widetilde{M}/\Gamma$ is a Riemannian globally symmetric space.
\end{enumerate}

\section{Cartan immersion}\index{Cartan immersion}

\begin{lemma}
Let $(\mathfrak{g}_o,s_o)$ be an orthogonal symmetric Lie algebra.  Let $G$ be a connected Lie group with $\Lie(G)=\mathfrak{g}_o$ and let $s:G\to G$ be an involution such that $s_{*,e}=s_o$, and let $K$ be an open subgroup of $G_s$.  Then $\exp\mathfrak{p}_o$ is closed in $G$.  If $\mathfrak{a}_o$ is a flat subspace of $\mathfrak{p}_o$, then $\exp\mathfrak{a}_o$ is closed in $G$ as well.
\end{lemma}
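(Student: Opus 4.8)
The plan is to prove the two closedness assertions in the reverse of the stated order, since the statement about a flat subspace $\mathfrak{a}_o$ is self-contained and feeds directly into the statement about all of $\mathfrak{p}_o$.

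First I would treat $\exp\mathfrak{a}_o$. As $\mathfrak{a}_o$ is an abelian subalgebra, $A_0 := \exp\mathfrak{a}_o$ is the connected analytic subgroup of $G$ with Lie algebra $\mathfrak{a}_o$. Let $A = \operatorname{cl}(A_0)$, a closed connected abelian subgroup, and set $\mathfrak{b} = \Lie(A) \supseteq \mathfrak{a}_o$. The decisive observation is that $s$ acts on $A$ by inversion: for $X\in\mathfrak{a}_o\subseteq\mathfrak{p}_o$ one has $s(\exp X) = \exp(s_o X) = \exp(-X) = (\exp X)^{-1}$, so $s$ and the inversion map agree on the dense subgroup $A_0$, hence on $A$ by continuity. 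Differentiating at the identity, $s_o$ restricts to $-\operatorname{id}$ on $\mathfrak{b}$, i.e. $\mathfrak{b}\subseteq\mathfrak{p}_o$. Thus $\mathfrak{b}$ is an abelian subalgebra of $\mathfrak{p}_o$ containing the maximal abelian $\mathfrak{a}_o$, and maximality forces $\mathfrak{b}=\mathfrak{a}_o$. Then $A$ and $A_0$ are connected subgroups with the same Lie algebra, so $A=A_0$; that is, $\exp\mathfrak{a}_o$ is closed.

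For $\exp\mathfrak{p}_o$ I would combine this with Corollary \ref{coverbyflats} and the compactness of $G_{s,e}$. Since $G_{s,e}$ is connected with Lie algebra $\mathfrak{k}_o$, its adjoint image on $\mathfrak{p}_o$ is precisely the compact group appearing in that corollary, so $\mathfrak{p}_o = \bigcup_{k\in G_{s,e}}\Ad(k)\mathfrak{a}_o$. Applying $\exp$ and using $\exp(\Ad(k)X)=k\exp(X)k^{-1}$ gives $\exp\mathfrak{p}_o = \bigcup_{k\in G_{s,e}} k(\exp\mathfrak{a}_o)k^{-1}$. As $G_{s,e}$ is compact (noted earlier), closedness follows by a subsequence argument: if $g_n = k_n a_n k_n^{-1}\to g$ with $k_n\in G_{s,e}$ and $a_n\in\exp\mathfrak{a}_o$, pass to a subsequence with $k_n\to k\in G_{s,e}$; then $a_n = k_n^{-1}g_n k_n\to k^{-1}gk$, which lies in the (now known) closed set $\exp\mathfrak{a}_o$, so $g = k(k^{-1}gk)k^{-1}\in\exp\mathfrak{p}_o$.

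The main obstacle is the first step, namely identifying $\operatorname{cl}(\exp\mathfrak{a}_o)$ with $\exp\mathfrak{a}_o$ itself. The naive worry is that the closure is a genuinely larger torus whose Lie algebra acquires directions outside $\mathfrak{p}_o$; what rules this out is that the symmetry $s$ pins the closure down by acting as inversion, forcing its Lie algebra into the $(-1)$-eigenspace $\mathfrak{p}_o$, where maximality of $\mathfrak{a}_o$ closes the gap. Once $\exp\mathfrak{a}_o$ is known to be closed, the passage to $\exp\mathfrak{p}_o$ is routine, resting only on the compactness of $G_{s,e}$ and on the flat-subspace covering of $\mathfrak{p}_o$.
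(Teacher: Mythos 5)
Your proof is correct, but it runs in the opposite direction from the paper's and uses different tools at both stages. The paper proves closedness of $\exp\mathfrak{p}_o$ \emph{first}, by a metric argument: writing $g_n=\exp X_n\to g$, it identifies $\|X_n\|=d(o,g_n\cdot o)$, so convergence of $g_n$ bounds the $X_n$ and a subsequence converges in $\mathfrak{p}_o$; closedness of $\exp\mathfrak{a}_o$ is then \emph{deduced} from this, by observing that $\operatorname{cl}(\exp\mathfrak{a}_o)$ is a closed connected abelian subgroup contained in the already-closed set $\exp\mathfrak{p}_o$, so that its Lie algebra $\mathfrak{a}_1$ satisfies $\mathfrak{a}_o\subset\mathfrak{a}_1\subset\mathfrak{p}_o$ and maximality forces equality. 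You instead make the flat case self-contained: your observation that $s$ agrees with inversion on $\exp\mathfrak{a}_o$, hence on its closure by continuity, pins the closure's Lie algebra into the $(-1)$-eigenspace without any appeal to the $\mathfrak{p}_o$ statement --- a cleaner justification of precisely the step the paper obtains by containment (and the same maximality argument recurs, unsupported on this point, in the paper's later ``$K$ is closed'' subsection). You then recover the $\mathfrak{p}_o$ statement from Corollary \ref{coverbyflats} together with compactness, which the paper never does. As to what each approach buys: the paper's metric step is shorter but tacitly uses $d(o,\exp(X)\cdot o)=\|X\|$, i.e.\ that the exponential at $o$ is metrically exact --- a noncompact-type (nonpositive-curvature) fact that fails in compact type --- whereas your route avoids metric geometry entirely and works uniformly for both types. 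Your only external input is the compactness of $G_{s,e}$, which the paper does assert; if one wants to be scrupulous (it can fail, e.g.\ for the universal cover of $SL(2,\mathbb{R})$), note that your argument really needs only compactness of $\Ad(G_{s,e})|_{\mathfrak{p}_o}$ --- exactly what the ``orthogonal'' hypothesis guarantees --- since conjugation by $k$ depends only on $\Ad(k)$ and the conjugation action factors continuously through the quotient $G\to\Ad(G)$; with that substitution your subsequence argument goes through unchanged.
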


\begin{proof}
Let $g_n$ be a Cauchy sequence in $\exp\mathfrak{p}_o$, say $g_n=\exp X_n$, such that $g_n\to g$ in $G$.  Then $\|X_n\|=d(o,g_n\cdot o)$.  Since $\|X_n\|$ is bounded, it has a subsequence that converges in $\mathfrak{p}_o$ to an element $X$.   By continuity of the exponential, $g=\exp X$.  For the second assertion, $\overline{\exp\mathfrak{a}_o}$ is a connected closed abelian subroup of $\exp\mathfrak{p}_o$.  So its Lie algebra $\mathfrak{a}_1$ is abelian and satisfies $\mathfrak{a}_o\subset\mathfrak{a}_1\subset\mathfrak{p}_o$.  But by maximality, $\mathfrak{a}_o=\mathfrak{a}_1$, and hence $\overline{\exp\mathfrak{a}_o}=\exp\mathfrak{a}_o$, as required.
\end{proof}

Under the same hypotheses as the Lemma, the mapping $G\times G\to G$, $(x,g)\mapsto x\cdot g := xgs(x)^{-1}$ is a continuous group action.  Let
$$P=G\cdot e = \{ xs(x)^{-1} \mid x\in G\}.$$
Since any $x\in G$ can be written $x=\exp(X)k$ for some $X\in\mathfrak{p}_o$ and $k\in K$, relative to this decomposition we have $s(x) = \exp(-X)k$ and in particular $xs(x)^{-1} = \exp(2X)$.  Thus
$$P = \{\exp(2X)\mid X\in\mathfrak{p}_o\} = \exp\mathfrak{p}_o$$
But since this is also an image of the map $x\mapsto xs(x)^{-1}$, it is a closed subset of $G$.  Let $G_s$  be the set of fixed points of $s$.  Then
$$G/G_s \xrightarrow{\cong} P = \exp \mathfrak{p}_o$$
is a diffeomorphism.  Note that $G/G_s$ is the symmetric space and $P$ is a closed submanifold of $G$.  The inclusion of the symmetric space into $G$ in this manner is called the {\em Cartan immersion}.  

The geodesics are of the form $t\mapsto\exp(2tX)$ for $X\in\mathfrak{p}_o$.  If $\mathfrak{a}_o$ is a flat subspace of $\mathfrak{p}_o$, then $\exp\mathfrak{a}_o$ is a closed submanifold of $P$.  It is flat, since the bracket, which defines the curvature, vanishes on $\mathfrak{a}_o$.  It is totally geodesic since the transvection from a point $A_1 = \exp (a_1)$ to $A_2=\exp(a_2)$ is the one parameter group $\exp(t(a_2-a_1))$ which remains tangent to $\exp\mathfrak{a}_o$ for all $t$.

\chapter{Examples}


\section{Flat examples}
\subsection{$\mathbb{R}^n$ with the standard Euclidean metric $\sum_i dx^i\otimes dx^i$} The geodesics are straight lines.  The symmetry at the point $0$ is $s_0(x) = -x$.  The geodesic transvections are the translations, and since these commute with each other, the symmetric space is flat.
\subsection{$\mathbb{R}_+\times\mathbb{R}_+$ with the metric $\frac{dx^2}{x^2}+\frac{dy^2}{y^2}$}  The geodesics are $t\mapsto (e^{ta},e^{tb})$ and so the symmetry at the point $(1,1)$ is $s_{(1,1)}(x,y) = \left(\frac{1}{x},\frac{1}{y}\right)$.  The geodesic transvections are $(x,y)\mapsto (e^{ta},ye^{tb})$.  The symmetric space is flat, since the geodesic transvections commute.  Note that this is symmetric space is isometric to $\mathbb{R}^2$ via $x\mapsto\log x$ and $y\mapsto\log y$.
\subsection{$\mathbb{C}^*$ with the metric $\frac{|dz|^2}{|z|^2}$}  The geodesics are the logarithmic spirals $e^{t(a+ib)}$.  The symmetry at the point $z=1$ is $s_1(z) = \frac{1}{z}$.  The geodesic transvections are $z\mapsto e^{t(a+ib)}$.  These commute, and therefore the space is flat.  Note that $\mathbb{C}(=\mathbb{R}^2)$ is the universal cover of this symmetric space via $z\mapsto e^z$.
\subsection{$S^1$ with the metric $d\theta^2$.}  The isometry group is $I(S^1)=O(2)$ with identity component $I(S^1)_e = SO(2) = U(1) = S^1$ (the circle group).  The geodesic inversion at the point $z=1$ is $z\mapsto -\overline{z}$.  In terms of the group $SO(2)$, this is $\sigma_1:A\mapsto A^{-1}$.  On the Lie algbra,
\begin{align*}
\mathfrak{so}(2) &\xrightarrow{s_1} \mathfrak{so}(2) \\
A &\mapsto -A.
\end{align*}

\subsection{$\mathbb{T}^2$}
The flat torus $\mathbb{T}^2=S^1\times S^1$ is a symmetric space.  Regard $\mathbb{T}^2$ as the quotient space $\mathbb{R}^2/\mathbb{Z}^2$.  The automorphisms of $\mathbb{T}^2$ fixing the point $o$ lift to isometries of $\mathbb{R}^2$ that fix the origin and stabilize the lattice $\mathbb{Z}^2$.  Thus such automorphisms lie within $SL(2,\mathbb{Z})\cap O(2) = \{\pm\operatorname{id}\}$.  Hence the symmetry at $o$ is $\sigma_o(x)=-x$, and likewise the symmetry through any other point is $\sigma_y(x)=y-x$.  The connected group of automorphisms of the torus consists entirely of transvections, so as a symmetric space we have $\mathfrak{g}_o=\mathfrak{p}_o$ is the two-dimensional abelian Lie algebra.

\section{Spheres}
\subsection{$S^2$ with the induced metric from $\mathbb{R}^3$.}   Let $o\in S^2$ be the north pole $(0,0,1)$.  The involution at $o$ is the rotation through an angle of $\pi$ about the axis through $o$, since this reverses the direction of all geodesics through $o$.  This is the reflection in the $o$-axis, given in terms of the Euclidean dot product by $x\mapsto 2(x\cdot o)o-x$.  The isometry group of the sphere is $I(S^2)=O(3)$, with identity component $SO(3)$, and the stabilizer of $o$ is a subgroup isomorphic to $SO(2)$.  The Lie algebra of $SO(3)$ is
$$\mathfrak{g}_o = \mathfrak{so}(3) = \left\{\begin{pmatrix}A & \xi\\ -\xi^t & 0 \end{pmatrix}\mid A\in \mathfrak{so}(2), \xi\in\mathbb{R}^2\right\}$$
The isotropy algebra of $o$ is
$$\mathfrak{k}_o = \left\{\begin{pmatrix}A & 0\\ 0 & 0 \end{pmatrix}\mid A\in \mathfrak{so}(2), \xi\in\mathbb{R}^2\right\}.$$
Hence, the involution in the Lie algebra is
$$\begin{pmatrix}A & \xi\\ -\xi^t & 0 \end{pmatrix} \mapsto\begin{pmatrix}A & -\xi\\ \xi^t & 0 \end{pmatrix}.$$
The $-1$ eigenspace is therefore
$$\mathfrak{p}_o =  \left\{\begin{pmatrix}0 & \xi\\ -\xi^t & 0 \end{pmatrix}\mid \xi\in\mathbb{R}^2\right\}.$$

To describe the involution at the group level, it is convenient to work in the spin group $SU(2)$.  This is the group of $2\times 2$ complex matrices satisfying $A^*A=I$; or equivalently the group of unit quaternions $Sp(1)$ (see below).  The Lie algebra $\mathfrak{g}$ of $SU(2)$ is the space of trace-free antihermitian matrices.  Then $SU(2)$ acts on $\mathfrak{g}$ by the adjoint representation
$$\Ad(x) : H\mapsto  xvx^*, \quad x\in SU(2), \ v\in\mathfrak{g}.$$
The Killing form on $\mathfrak{g}$ induces the Euclidean norm $\|v\| = \sqrt{\det v}$, in terms of which $\mathfrak{g}$ is identified with the standard three-dimensional Euclidean space $\mathbb{R}^3$.  Since $\Ad(SU(2))$ is a connected $3$-dimensional Lie group preserving the Euclidean metric on $\mathfrak{g}$, $\Ad(SU(2))=SO(3)$.  Thus $\Ad:SU(2)\to SO(3)$ is a cover of $SO(3)$.

The involution of $SU(2)$ is defined by complex conjugation of matrices $\sigma_o:x\mapsto \bar{x}$.  The fixed points in $SU(2)$ are the real matrices of $SO(2)\subset SU(2)$.  However, $\Ad(\sigma x)y = \Ad(x)y$ for all $y\in\mathfrak{g}$ if and only if $\sigma x = \pm x$.  Thus, either $x\in SO(2)$ or $x$ is of the form
$$x = \begin{pmatrix} iu& iv\\ iv& -iu\end{pmatrix}, \quad u,v\in\mathbb{R},\quad u^2+v^2=1.$$ 
So the set $SO(3)_{\sigma_o}$ of elements of $SO(3)$ fixed by the involution $\sigma_o$ has two connected components.

Note the familiar fact that $S^2\cong\mathbb{CP}^1$ carries a complex structure (and is a K\"{a}hler manifold).  This can be seen in a variety of ways from the symmetric space structure:
\begin{itemize}
\item It is easy to show that if a manifold of dimension $2n$ has restricted holonomy in $U(n)$, then it is K\"{a}hler, and in particular is a complex manifold (see Kobayashi \& Nomizu, Vol. 2).\footnote{As a result, any orientable surface can be given a complex structure that makes it K\"{a}hler.}  In this case, the holonomy algebra is $[\mathfrak{p}_o,\mathfrak{p}_o]\subset \mathfrak{k}_o$ and the isotropy group is $SO(2)\cong U(1)$, so $S^2$ is K\"{a}hler.
\item Define
$$J_o = \begin{pmatrix}0&1\\-1&0\end{pmatrix} \in SO(2) = K.$$
This is a complex structure in the tangent space at $o$, meaning that $J_o^2=-\operatorname{id}$, and $J_o$ is metric-preserving (being a member of $K$).  Use transvections to transport the complex structure $J$ to every other point of $S^2$.  Since $K$ is abelian, $J_o$ commutes with $K$, and so $J$ is invariant under $SU(2)$.  With this complex structure, the metric on $S^2$ is K\"{a}hler, and so $S^2$ becomes a complex manifold.
\item $SO(2)\subset SU(2)$ is a maximal torus, and any two maximal tori are conjugate.  Therefore $SU(2)/SO(2)\cong SU(2)/U(1)$ by an inner automorphism of $SU(2)$.  But in the latter homogeneous space, $SU(2)$ acts on $\mathbb{C}^2$ by unitary transformations and $U(1)$ is the stabilizer of a complex line.  But $SU(2)$ is transitive on the set of complex lines (one-dimensional subspaces), and so $SU(2)/U(1)$ is identified with the space of complex lines $\mathbb{CP}^1$.
\end{itemize}

The significant point here in the second item above is that  the complex structure lies in the center of $K$ (see Chapter \ref{HermitianSymmetricSpaces}).  For irreducible symmetric spaces, the center is one-dimensional, and the complex structure is effectively unique (the only ambiguity is in replacing $J$ by $-J$).  We note also that the involution on $\mathfrak{su}(2)$ can be written instead as conjugation by $J_o$:
$$\sigma_o(x) = J_oxJ_o^{-1}.$$
This is also a general feature of the Hermitian symmetric spaces, that the symmetry is conjugation by the complex structure, that is {\em a fortiori} an inner automorphism.

\subsection{$S^3$ with the induced metric.}
The unit sphere $S^3\subset\mathbb{R}^4$ naturally carries a group structure induced by its realization as the group of unit quaternions $Sp(1)$.  Let $\mathbb{H}$ denote the quaternions.  This carries a real Euclidean inner product $\langle p, q\rangle = \operatorname{re}(p\overline{q})$.  The set of unit quaternions are the solutions of the equation $p\overline{p}=1$.  This is a group, since $(pq)\overline(pq) = pq\overline{q}\overline{p} = 1$ if $p,q\in Sp(1)$.  The group $Sp(1)\times Sp(1)$ acts on $\mathbb{H}$ via
$$\rho(p,q)x = px\overline{q}.$$
Observe that $\rho : Sp(1)\times Sp(1)\to SO(4)$.  Both groups are compact connected and have the same dimension, and $\ker\rho = \{(1,1),(-1,-1)\}$.  So $\rho$ is a double cover.

The involution on $Sp(1)\times Sp(1)$ is $\sigma_o:(p,q)\mapsto (q,p)$.  The group of fixed points for the involution is $K_o=\Delta(Sp(1))$, the diagonal subgroup.  This descends under the quotient to the involution $s_o(p)=\overline{p}=p^{-1}$ on $S^3$ itself.  There is a natural section of the quotient map $Sp(1)\times Sp(1)\to (Sp(1)\times Sp(1))/\Delta(Sp(1)) = S^3$ given by $\tau : p\mapsto (p,p^{-1})$.  Note $\tau\circ s_o = \sigma_o\circ\tau$.

\subsection{$S^4$ with the induced metric.}
The group $SO(5)$ is covered by the symplectic group
$$Sp(2) = \left\{A\in GL(2,\mathbb{H})\mid A^*A=I\right\}$$
where $*$ is the quaternionic conjugate-transpose.  This is the compact real form of the symplectic group $Sp(4,\mathbb{C})$.  With respect to the natural action on $\mathbb{H}^2$, the orbit of the vector $(1\ \ 0)^T$ under the action of $\operatorname{Sp}(2)$ is the unit sphere $S^7$, and the stabilizer of  $(1\ \ 0)^T$ is $SU(2)\cong S^3$. So topologically, $\operatorname{Sp}(2)$ is an $S^3$ bundle over $S^7$, and therefore is simply connected by the long exact homotopy sequence.

The Lie algebra is
$$\mathfrak{sp}(2) = \left\{\begin{pmatrix}x&z\\-\bar{z} & y\end{pmatrix} \mid x,y\in\im\mathbb{H}, z\in\mathbb{H}\right\}.$$
The adjoint representation of $Sp(2)$ on $\mathfrak{sp}(2)$ leaves invariant the positive-definite quadratic form $Q(X) = \tr(X^*X)$.  Thus $Sp(2)\to SO(5)$.  This mapping is a double cover as well, with kernel $\pm I\in Sp(2)$.

The involution is conjugation by the matrix $S=\begin{pmatrix}-1&0\\ 0&1\end{pmatrix}$.  Note that this is not an inner involution.

{\em Digression: The Grassmannian of Lagrangian $2$-planes in $\mathbb{C}^4$.}  Note that one family of involutions on $Sp(2)$ are given by the quaternionic conjugation maps $\sigma_o(x) = ux\overline{u}$ where $u$ is a fixed imaginary unit quaternion.  Any involution of this form gives a complex structure on the quotient.

For definiteness, there is no loss of generality in taking $u=j$.  Multiplication by $j$ induces a complex structure on $\mathbb{H}^2$; so we shall identify $\mathbb{H}^2$ with $\mathbb{C}^4$ relative to this complex structure.  The isotropy group $\operatorname{Sp}(2)_{\sigma_o}$ is the subgroup
$$U(2) = \left\{\begin{pmatrix} \alpha & \beta\\ -\overline{\beta} & \overline{\alpha} \end{pmatrix} \mid \alpha,\beta\in\mathbb{R}[j], \ |\alpha|^2+|\beta|^2 = 1\right\}.$$

With a choice of complex structure $j$, $\mathfrak{sp}(2)$ can be identified with the Lie algebra of $4\times 4$ complex matrices
$$\mathfrak{sp}(2) = \left\{ \begin{pmatrix}U & V\\ -\overline{V} &\overline{U} \end{pmatrix}\mid \text{$U, V\in \mathfrak{gl}_2(\mathbb{C})$,\quad $U$ antihermitian, $V$ symmetric}\right\} $$
where the bar denotes ordinary complex conjugation.  This is the algebra of antihermitian matrices on $\mathbb{C}^4$ that preserve the complex symplectic form
$$\omega = e_1\wedge e_3 + e_2\wedge e_4.$$

A $2$-plane $P$ in $\mathbb{C}^4$ is called {\em Lagrangian} if $\omega|_P=0$.  The group $\operatorname{Sp}(2)$ acts transitively on the Lagrangian $2$-planes in $\mathbb{C}^4$, and $U(2)$ is the isotropy group of a particular $2$-plane.  Thus the symmetric space $\operatorname{Sp}(2)/U(2)$ is identified with the set of Lagrangian $2$-planes through the origin in $\mathbb{C}^4$.  This is an algebraic subvariety of the Grassmannian $\operatorname{Gr}_2(\mathbb{C}^4)$, and naturally has a K\"{a}hler structure.

\subsection{$S^n$}  Exercise: discuss $S^n$ as a symmetric space.

\section{Projective spaces}
\subsection{$\mathbb{RP}^n$} 
Let $\tau:S^n\to S^n$ be the antipodal map $\tau(x) = -x$.  Note that $\tau$ is a fixed point free isometry, so the quotient $\mathbb{RP}^n := S^n/\tau$ carries (uniquely) the structure of a Riemannian manifold in such a way that $\tau$ becomes a local isometry.  This is the real projective space, and it is identified as a point set with the space of lines in $\mathbb{R}^{n+1}$ passing through the origin, since the line through the origin and a point $x\in\mathbb{R}^{n+1}$ passes through $x/\|x\|\in S^n$ and $\tau(x/\|x\|)\in S^n$ and conversely the line through the origin is uniquely determined by the two antipodal points that it passes through on the sphere.

Since it is the quotient of a symmetric space by a discrete group, $\mathbb{RP}^n$ is a locally symmetric space.  But in fact, it is easy to see that it is symmetric as well.  Consider the symmetry $\sigma_o:S^n\to S^n$ on the $n$-sphere about the north pole.  This was given by
$$\sigma_o(x) = 2(o\cdot x)o - x.$$
Since $\sigma_o\circ\tau = \tau\circ\sigma_o$, $\sigma_o$ descends to an isometry of the quotient space $\mathbb{RP}^n$.  The group $SO(n+1)$ acts transitively on $\mathbb{RP}^n$ by isometries.  The stabilizer of the point $o$ is $O(n)$.  (Note the difference with the sphere.)

Of particular interest with $\mathbb{RP}^n$ is that it has a nontrivial {\em antipodal set}.\index{Antipodal set}  This is the set of fixed points of $\sigma_o$.  On $S^n$, the fixed points of $\sigma_o$ are $o$ and $-o$.  But the fixed points of $\sigma_o$ of $\mathbb{RP}^n$ correspond to points on $S^n$ that are either fixed or sent to their negatives.  But $\sigma_o(x) = -x$ if and only if $o\cdot x=0$, which is the equation of the equatorial sphere $S^{n-1}\subset S^n$.  Thus the antipodal set of $\sigma_o:\mathbb{RP}^n\to\mathbb{RP}^n$ is $\mathbb{RP}^{n-1}=S^{n-1}/\tau\subset S^n/\tau = \mathbb{RP}^n$.

In any symmetric space of rank one (like $\mathbb{RP}^n$), the antipodal set consists of all points a maximum distance from $o$.  If the isotropy group acts transitively on the unit sphere of the tangent space at $o$, then it acts transitively on the antipodal set.  The antipodal set is the cut locus for the exponential map at $o$, and its complement is a geodesic ball. This fact is significant because it gives a {\em cell decomposition} of $\mathbb{RP}^n$: topologically, $\mathbb{RP}^n$ is the result of attaching an $n$-cell (the $n$-ball) to $\mathbb{RP}^{n-1}$.\index{Cell decomposition}

As a result of the cell decomposition, it is possible to compute the cohomology of $\mathbb{RP}^n$.  Let $U=B_n$ (a ball) and let $V$ be a thickening of the antipodal set $\mathbb{RP}^{n-1}$ so that $\mathbb{RP}^{n-1}$ is a deformation retract of $V$.  Then $S^{n-1}$ is a deformation retract of $U\cap V$.  The Mayer--Vietoris sequence is\index{Cohomology}\footnote{All cohomology groups are real.}
$$H^{q-1}(S^{n-1})\to H^q(\mathbb{RP}^n)\to H^q(\mathbb{RP}^{n-1})\oplus H(\mathbb{R}^n) \to H^q(S^{n-1}).$$
Now
\begin{align*}
H^q(S^{n-1}) &= \begin{cases}
\mathbb{R} & q=0,n-1\\
0 & \text{otherwise,}
\end{cases}\\
H^q(\mathbb{R}^n) &= \begin{cases}
\mathbb{R} & q=n\\
0 & \text{otherwise.}
\end{cases}
\end{align*}
So we determine inductively that
$$H^q(\mathbb{RP}^n,\mathbb{R})=\begin{cases}
\mathbb{R} & q=0\\
0 & 0<q<n\\
\mathbb{R} & \text{$q=n$ odd}\\
0 & \text{$q=n$ even}
\end{cases}.$$\footnote{A different way to compute the cohomology is via the de Rham isomorphism.  Let $(\Omega S^n)^\tau$ be the ring of $\tau$-invariant differential forms on $S^n$.  The exterior derivative commutes with $\tau$ and with the pullback along $S^n\to\mathbb{RP}^n$, and so there is an isomorphism of cochain complexes $((\Omega S^n)^\tau,d)\cong (\Omega\mathbb{RP}^n,d)$.  At the level of cohomology, $H^*((\Omega S^n)^\tau,d) \cong H^*(\mathbb{RP}^n)$.  The inclusion of $(\Omega S^n)^\tau\subset \Omega S^n$ induces is a natural map on cohomology $H^*((\Omega S^n)^\tau,d) \to H^*(S^n,d)$.  This is injective since an invariant form $\omega\in(\Omega S^n)^\tau$ is exact if and only if $\omega = d\sigma$ for some form $\sigma$, but in that case $\omega=\tfrac12 d(\sigma + \tau^*\sigma)$ is the exterior derivative of an invariant form.  Thus the cohomology of $\mathbb{RP}^n$ must be a subspace of the cohomology of $S^n$ in each degree.  Moreover, $\tau : S^n\to S^n$ is orientation preserving if and only if $n$ is odd, so the top degree cohomology class (represented by a volume form) is invariant if and only if $n$ is odd.}
The homology has more structure over $\mathbb{Z}_2$.  Here the homology algebra is the truncated polynomial algebra over $\mathbb{Z}_2$ generated by a single characteristic class $b\in H^1(\mathbb{RP}^n,\mathbb{Z}_2)$ subject to the single relation $b^{n+1}=0$.

The general feature to observe here is that the antipodal set of a compact symmetric space $M$ of rank one is also a symmetric space, and $M$ arises by attaching a cell to its antipodal set.  (This is the analog of the Schubert decomposition of a generalized flag manifold.)  In principle, this allows the cohomology of $M$ to be computed combinatorially.

\subsection{$\mathbb{CP}^n$} Let $\mathbb{CP}^n$ be the space of complex lines through the origin in $\mathbb{C}^{n+1}$.  Introduce coordinates $(Z_0,Z_1,\dots,Z_n)$ on $\mathbb{C}^{n+1}$.  Introduce an equivalence relation on $\mathbb{C}^{n+1}\setminus\{0\}$ via $(Z_0,Z_1,\dots,Z_n)\sim (Z'_0,Z'_1,\dots,Z'_n)$ if there is $\lambda\in\mathbb{C}$, $\lambda\not=0$, such that $Z'_i=\lambda Z_i$ for each $i$.  Then
$$\mathbb{CP}^n = \left.\mathbb{C}^{n+1}\setminus\{0\}\right/\sim.$$
Denote by $Z\mapsto [Z]$ the quotient map $\mathbb{C}^{n+1}\setminus\{0\}\to \mathbb{CP}^n$.

In fact, $\mathbb{CP}^n$ is a complex manifold.  A set of holomorphic coordinate charts is furnished by the open sets $U_i=\{ [Z]\in \mathbb{CP}^n\mid Z_i\not=0\}$.  In each of these, we can choose a scale so that $Z_i=1$, and use the remaining $Z_j$, $j\not=i$, as coordinate on $U_i$.  For instance, in $U_0$, let $z_1=Z_1/Z_0, \dots, z_n=Z_n/Z_0$.  These define $n$ complex-valued coordinate functions on $U_0$.  It is easily seen that the transition functions on the overlaps are holomorphic, and therefore this atlas gives $\mathbb{CP}^n$ the structure of a complex manifold.

Concretely, we can relate $\mathbb{CP}^n$ to the sphere $S^{2n+1}$ by means of the {\em Hopf fibration}.  On $\mathbb{C}^{n+1}$, introduce the Hermitian norm
$$\|Z\|^2 = |Z_0|^2 + |Z_1|^2 + \cdots + |Z_n|^2.$$
The locus $\|Z\|=1$ is the sphere $S^{2n+1}$.  A scaling argument shows that each complex line must intersect $S^{2n+1}$.  In fact, any complex line intersects $S^{2n+1}$ in a circle, since if $Z$ is on $S^{2n+1}$, then $\lambda Z$ is also on $S^{2n+1}$ if (and only if) $|\lambda|=1$.  Identifying $U(1)$ with the group of $\lambda\in\mathbb{C}$ such that $|\lambda|=1$, $Z\mapsto\lambda Z$ defines a group action of $U(1)$ on $S^{2n+1}$, and the above argument shows that (as a point set, at least) $\mathbb{CP}^n = S^{2n+1}/U(1)$.  Since $U(1)$, a compact Lie group, acts effectively, this quotient carries the structure of a differentiable manifold.  Furthermore, $U(1)$ acts by isometries on $S^{2n+1}$, and so the metric on $S^{2n+1}$ descends to a metric on $\mathbb{CP}^n$, the {\em Fubini--Study metric}.

The Fubini--Study metric is K\"{a}hler.  In the homogeneous coordinates $Z_i$, the K\"{a}hler form of the metric is $\Omega = i\partial\overline\partial\log{\|Z\|^2}$.  This 2-form is independent of the choice of holomorphic section of the bundle $\mathbb{C}^{n+1}\setminus\{0\}\to \mathbb{CP}^n$, and it is easy to verify that it is closed (from the decomposition $d=\partial + \overline{\partial}$.  The K\"{a}hler metric itself is then $g(X,Y) = \omega(X,iY).$  

At the group level, $\mathbb{CP}^n$ is a homogeneous space for the group $U(n+1)$ consisting of the isometries of $\mathbb{C}^{n+1}$ that preserve the complex structure.  This acts transitively on the unit sphere $S^{2n+1}$.  If $x\in U(n+1)$ fixes a point $o\in S^{2n+1}$, then $x$ also stabilizes the orthogonal complement of $x$, and so the stabilizer of a point is $U(n)$.  Consequently, the stabilizer of a point is $U(1)\times U(n)\subset U(n+1)$.  Thus\footnote{The isotropy representation of $S(U(1)\times U(n))$ at the base point $o$ is the usual action of $U(n)$ on the tangent space, so this is sometimes written $\mathbb{CP}^n=SU(n+1)/U(n)$, although this is something of an abuse of notation.}
$$\mathbb{CP}^n = \frac{U(n+1)}{U(1)\times U(n)} = \frac{SU(n+1)}{S(U(1)\times U(n))}.$$

Take the base point $o$ to be the coset of the north pole $[1\ 0\ \cdots\ 0]^t\in S^{2n+1}$.  Let $L$ be the complex line through $o$.  Define a linear operator $S:\mathbb{C}^{n+1}\to \mathbb{C}^{n+1}$ such that
$$S|_L = \operatorname{id},\qquad S|_{L^\perp} = -\operatorname{id}.$$
That is,
$$S(Z_0,Z_1,\dots,Z_n) = (Z_0,-Z_1,\dots,-Z_n).$$
Because it is linear, $S_L$ factors through the quotient to $\mathbb{CP}^n$ to define the involution $s_o:\mathbb{CP}^n\to\mathbb{CP}^n$.  The subgroup $S(U(1)\times U(n))\subset SU(n+1)$ is the subset of $(n+1)\times (n+1)$ special unitary matrices that are fixed by conjugation by the element
$$S = \begin{pmatrix}
1 & 0 & 0 &\cdots & 0\\
0& -1 & 0 &\cdots & 0\\
0& 0 & -1 &\cdots & 0\\
\vdots&\vdots&\vdots&\ddots &\vdots\\
0&0&0&\cdots&-1
\end{pmatrix}.$$
Note that, as we saw with the special case of  $S^2\cong\mathbb{CP}^1$, this is an inner automorphism of $SU(n+1)$.  A complex structure that gives rise to the same inner automorphism is $J_o=\operatorname{diag}(i,-i,\dots,-i)$.

A point of $\mathbb{CP}^n$ is fixed by $s_o$ if and only if
$$(Z_0,Z_1,\dots,Z_n) = (\lambda Z_0,-\lambda Z_1,\dots,-\lambda Z_n)$$
which can ony happen if $\lambda=1$ and $Z_1=Z_2=\cdots=Z_n=0$, or $\lambda=-1$ and $Z_0=0$.  The first case corresponds to the base point $o$ itself, while the second case is the antipodal set.  The antipodal set is a cell that is isomorphic to $\mathbb{CP}^{n-1}$.

As in the real case, knowing the antipodal set allows us to compute the cohomology of the complex projective space.  The Mayer--Vietoris sequence is
$$H^{q-1}(S^{2n-1})\to H^q(\mathbb{CP}^n)\to H^q(\mathbb{CP}^{n-1})\oplus H(\mathbb{C}^n) \to H^q(S^{2n-1}).$$
Inductively, we find
$$
H^q(\mathbb{CP}^n) = 
\begin{cases}
\mathbb{R} &\text{for $q=0,2,\dots,2n$}\\
0 & \text{otherwise.}
\end{cases}
$$
A more detailed analysis shows that the cohomology ring of $\mathbb{CP}^n$ is the truncated polynomial algebra generated by a single Chern class $c_1\in H^2(\mathbb{CP}^n,\mathbb{Z})$ subject to only one relation $c_1^2=0$.

\subsection{$\mathbb{HP}^n$}  Let $\mathbb{HP}^n$ be the space of quaternionic lines through the origin in $\mathbb{H}^{n+1}$.  This is the quotient of $\mathbb{H}^{n+1}\setminus\{0\}$ by the equivalence relation $Z\sim \lambda Z$ whenever $Z=(Z_0,\dots,Z_n)\in\mathbb{H}^{n+1}\setminus\{0\}$ and $\lambda$ is a nonzero quaternion.  The unit sphere $S^{4n+3}\subset\mathbb{H}^{n+1}$ is the set of points such that $Z\cdot\overline{Z}=1$, where $\cdot$ is the usual dot product and the bar denotes the quaternion conjugation.  A quaternionic line in $\mathbb{H}^{n+1}$ must intersect the unit sphere.  In fact, if $Z\in S^{4n+3}$, then $\lambda Z$ is also in $S^{4n+3}$ if and only if $|\lambda|=1$.  Indentifying $Sp(1)$ with the group of unit quaternions, this argument shows that (as a point set) $\mathbb{HP}^n=S^{4n+3}/Sp(1)$.  Since $Sp(1)$ is a compact Lie group acting effectively, $\mathbb{HP}^n$ carries the structure of a differentiable manifold.

At the group level, let $Sp(n+1)$ denote the set of $(n+1)\times (n+1)$ quaternionic matrices preserving the quaternion Hermitian form $X\cdot\overline{Y}=X_0\overline{Y}_0+\cdots+X_n\overline{Y}_n$.  This acts transitively on the unit sphere in $\mathbb{H}^{n+1}$.  The stabilizer of the line through $[1\ 0\ cdots\ 0]^t$ is $Sp(1)Sp(n)$.  Indeed, take the base point $o$ to be the coset of the north pole $[1\ 0\ \cdots\ 0]^t\in S^{4n+3}$.  Let $L$ be the quaternionic line through $o$.  Define a linear operator $S:\mathbb{H}^{n+1}\to \mathbb{H}^{n+1}$ such that
$$S|_L = \operatorname{id},\qquad S|_{L^\perp} = -\operatorname{id}.$$
That is,
$$S(Z_0,Z_1,\dots,Z_n) = (Z_0,-Z_1,\dots,-Z_n).$$
Because it is linear, $S_L$ factors through the quotient to $\mathbb{HP}^n$ to define the involution $s_o:\mathbb{HP}^n\to\mathbb{HP}^n$.  The subgroup $Sp(1)Sp(n)\subset Sp(n+1)$ is the subset of $(n+1)\times (n+1)$ quaternionic unitary matrices that are fixed by conjugation by the element
$$S = \begin{pmatrix}
1 & 0 & 0 &\cdots & 0\\
0& -1 & 0 &\cdots & 0\\
0& 0 & -1 &\cdots & 0\\
\vdots&\vdots&\vdots&\ddots &\vdots\\
0&0&0&\cdots&-1
\end{pmatrix}.$$
This is an inner automorphism of $Sp(n+1)$.  However, unlike the case of $SU(n+1)$, it cannot be written as a conjugation by a complex structure.  The group $Sp(1)Sp(n)$ has trivial center, and so there is no compatible complex structure on $\mathbb{HP}^n$.  (Note that the complex structure we wrote down for $SU(n+1)$ is not in the center of $Sp(n+1)$.)

The space $\mathbb{HP}^n$ carries what is known instead as a quaternion-K\"{a}hler structure.  This is a triple of local almost-complex structures $I,J,K$ satisfying the quaternion relations $IJ=K,JK=I,KI=J$, compatible with $g$ such that the subbundle {\em spanned by} $I,J,K$ is invariant under parellism.  This is in contrast to so-called hyper-K\"{a}hler structures, in which there exist local almost complex structures $I,J,K$ satisfying the quaternion relations compatible with the metric that are each individually parallel.  Compact hyper-K\"{a}hler manifolds are very difficult to construct; see Joyce, {\em Compact manifolds with special holonomy}.  In particular, there are no compact symmetric spaces of this kind.

We can work out the homology groups of $\mathbb{HP}^n$ in the same way that we did with the other projective spaces.  It turns out that
$$H^k(\mathbb{HP}^n) =
\begin{cases}
\mathbb{R} & 0\le k\le n, \quad k\equiv 0\pmod{4}\\
0 & \text{otherwise.}
\end{cases}$$
In fact, the cohomology ring is generated by a Chern class $c_2\in H^4(\mathbb{HP}^n,\mathbb{Z})$ with the single relation $c_2^{n+1}=0$.

\subsection{$\mathbb{OP}^2 \cong F_4/\operatorname{Spin}(9)$ (the Cayley plane)}

See Baez \cite{Baez}.  For a synthetic construction, see Besse \cite{BesseClosedGeodesics}.

\section{Negatively curved spaces}
\subsection{The hyperbolic plane}
Let $H^2=\{x+iy\in\mathbb{C}\mid y>0\}$ be the upper half-plane, with the metric defined by
\begin{equation}\label{hyperbolicmetric}
ds^2 = \frac{dx^2+dy^2}{y^2} = \frac{dz\,d\bar{z}}{\operatorname{Im}(z)^2}.
\end{equation}
One can show easily that the group
$$SL(2,\mathbb{R})=\left\{ \begin{pmatrix}a&b\\c&d\end{pmatrix}\mid ad-bc=1, \ a,b,c,d\in\mathbb{R}\right\}$$
when acting by fractional linear transformations
$$Tz = \frac{az+b}{cz+d}$$
preserves the upper half-plane; that is, for any $T\in SL(2,\mathbb{R})$, $\operatorname{Im}(Tz)>0$ if $\operatorname{Im}(z)>0$.  Conversely, if $T\in SL(2,\mathbb{C})$ stabilizes the upper half-plane, then $T\in SL(2,\mathbb{R})$.  Any element $T\in SL(2,\mathbb{R})$ can be written as a composition of operations:
\begin{itemize}
\item Translation: $z\mapsto z+a$, $a\in\mathbb{R}$
\item Dilation: $z\mapsto bz$, $b>0$
\item Inversion: $z\mapsto -1/z$
\end{itemize}
These each act by isometries on the metric \eqref{hyperbolicmetric}.  Moreover, the inversion is an involution with fixed point $i$, and $SL(2,\mathbb{R})$ acts transitively on $H^2$. Thus $H^2$ is a symmetric space, with base point $o=i$, and symmetry at $i$ given by $s_o=-1/z$.  The isotropy group of $o$ is $K=SO(2)\subset SL(2,\mathbb{R})$.  Thus
$$H^2=SL(2,\mathbb{R})/SO(2).$$
The isometry group is
$$I(H^2) = PSL(2,\mathbb{R})\ltimes\mathbb{Z}_2,$$
where $\mathbb{Z}_2$ acts by reflection in the imaginary axis.  In particular,
$$I(H^2)_e = PSL(2,\mathbb{R}).$$

We can work out the geodesics (and their lengths) in $H^2$ as follows.  The curve of shortest length between two points $iy_0, iy_1$ on the imaginary axis is the straight line connecting them, since $y^{-1}\sqrt{dx^2+dy^2} \ge y^{-1}dy$ with equality if and only if $dx=0$. The length of this curve is calculated as
$$\int_{y_0}^{y^1} \frac{dy}{y} = \log\frac{y_1}{y_0}.$$

Now, the group $PSL(2,\mathbb{R})$ is conformal, preserves the real axis, and sends circles to circles.  It follows that the geodesics are circles perpendicular to the real axis.  The length is somewhat awkward to compute in these coordinates (it is simpler in the disc model).

At the Lie algebra level, $s_o=\begin{pmatrix}0&-1\\1&0\end{pmatrix},$ and $\mathfrak{sl}(2,\mathbb{R})$ decomposes via
$$\mathfrak{sl}(2,\mathbb{R}) = \underbrace{\left\{\begin{pmatrix}0&b\\-b&0\end{pmatrix}\mid b\in\mathbb{R}\right\}}_{\mathfrak{so}(2) = \mathfrak{k}_o} \oplus \underbrace{\left\{\begin{pmatrix}a&c\\-a&c\end{pmatrix}\mid a,c\in\mathbb{R}\right\}}_{\mathfrak{p}_o}.$$
The Cartan embedding is
$$H^2\xrightarrow{\cong} P=\exp\mathfrak{p}_o=\left\{\text{positive definite symmetric unimodular matrices}\right\}.$$
Explicitly, this is the embedding
\begin{align*}
x+iy &\mid\!\rightarrow\begin{pmatrix}y + x^2/y & x/y\\ x/y & 1/y\end{pmatrix}\\
\frac{B}{D}+\frac{i}{D} &\leftarrow\!\mid\begin{pmatrix}A & B\\ B & D\end{pmatrix}.
\end{align*}

There is also the {\em Cayley transform}
$$z\mapsto \frac{z-i}{z+i}$$
that maps $H^2$ to $D$, the unit disc.  This becomes an isometry when $D$ is equipped with the hyperbolic metric
$$ds_D^2 = \frac{dz\,d\bar{z}}{4(1-|z|^2)^2}.$$
The inverse Cayley transform is
$$w\mapsto -i\frac{w+1}{w-1}.$$

The hyperbolic plane is particularly important in arithmetic.  The arithmetic subgroup $SL(2,\mathbb{Z})\subset SL(2,\mathbb{R})$ acts properly discontinuously on $H^2$.  The quotient
$$SL(2,\mathbb{Z})\backslash H^2$$
is a locally symmetric space.  The geometrical significance of this quotient is as follows.  Let $L\subset\mathbb{C}$ be a (unimodular) lattice.  Then $\mathbb{C}/L$ is an elliptic curve.  Modulo rotation, any unimodular lattice is equivalent to a lattice of the form $L_\tau=\mathbb{Z}+\mathbb{Z}\tau$ where $\tau\in H^2$, and so any elliptic curve can be written
$$E_\tau = \mathbb{C}/L_\tau.$$
Any two unimodular lattices are conjugate under $SL(2,\mathbb{R})$, and $SL(2,\mathbb{Z})$ is the stabilizer of $L_i$.  Thus the moduli space of isomorphism classes of elliptic curves is
\begin{align*}
SL(2,\mathbb{Z})\backslash H^2 &\leftrightarrow E_\tau = \mathbb{C}/L_\tau \\
\tau &\leftrightarrow E_\tau.
\end{align*}

\subsection{Hyperbolic space}  Define the quadratic form $Q$ on $\mathbb{R}^{n+2}$ by
$$Q(x) = -x_0^2 + x_1^2+\cdots + x_{n+1}^2$$
and let $B(x,y)$ be the nondegenerate bilinear form induced by polarization. Let $o=(1,0,\dots,0)$.  Hyperbolic $n$-space is defined to be the locus
$$H^n = \{x\in\mathbb{R}^{n+2}\mid Q(x) = -1,\quad B(x,o)<0\}.$$
This is a connected component of the hyperboloid of two sheets $Q(x)=-1$.  The quadratic form $-B$ induces a Riemannian metric on $H^n$.  Indeed, the normal vector to $H^n$ in $\mathbb{R}^{n+2}$ is timelike ($Q(n)<0$), so by Sylvester's law of inertia $Q|_{n^\perp}$ is a positive-definite form.

\begin{theorem}
The group $SO(Q)_e$ acts transitively on $H^n$.
\end{theorem}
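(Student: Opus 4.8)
The plan is to reduce the problem to moving an arbitrary point $x\in H^n$ to the basepoint $o=(1,0,\dots,0)$, which suffices since $SO(Q)_e$ is a group. I would carry this out by an explicit normal-form argument in two stages. First, the rotations of the last $n+1$ coordinates form a copy of $SO(n+1)$ that fixes $x_0$ and preserves $Q$; being connected to the identity, they lie in $SO(Q)_e$. Given $x=(x_0,x_1,\dots,x_{n+1})\in H^n$, I choose $k\in SO(n+1)$ sending the spatial part $(x_1,\dots,x_{n+1})$ to $(r,0,\dots,0)$ with $r=\sqrt{x_1^2+\cdots+x_{n+1}^2}$, reducing $x$ to a point $(x_0,r,0,\dots,0)$ on the hyperbola $x_0^2-r^2=1$, $x_0>0$.

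Second, I apply a hyperbolic rotation in the $(x_0,x_1)$-plane. The matrix
$$h_t=\begin{pmatrix}\cosh t & \sinh t\\ \sinh t & \cosh t\end{pmatrix},$$
acting on $(x_0,x_1)$ and fixing the other coordinates, preserves $Q$, has determinant $1$, and is connected to the identity through $h_0=I$; hence $h_t\in SO(Q)_e$. Writing $x_0=\cosh s$ and $r=\sinh s$ for the unique $s\ge 0$ (possible because $x_0\ge 1$), the boost $h_{-s}$ carries $(\cosh s,\sinh s)$ to $(1,0)$. Thus $h_{-s}k$ sends $x$ to $o$, and its inverse lies in $SO(Q)_e$ and sends $o$ to $x$, giving transitivity.

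The step requiring the most care --- and the main obstacle --- is confirming that the maps employed lie in the \emph{identity} component, equivalently that $SO(Q)_e$ preserves the sheet $H^n$ instead of interchanging the two sheets of $\{Q=-1\}$. Since $B(x,o)=-x_0$, the sheet is precisely $\{x_0>0\}$, and $x_0$ cannot vanish on $\{Q=-1\}$; therefore the sign of $x_0$ is locally constant there, so any connected group of isometries preserves each sheet, and in particular the rotations and boosts above do. A more intrinsic variant avoids the normal form altogether: one notes that $H^n$ is connected, being a graph $x_0=\sqrt{1+x_1^2+\cdots+x_{n+1}^2}$ over $\mathbb{R}^{n+1}$, and that the infinitesimal action is surjective onto $T_oH^n=o^\perp$, since for $v\in o^\perp$ the $B$-skew endomorphism $w\mapsto B(v,w)o-B(o,w)v$ lies in $\mathfrak{so}(Q)$ and sends $o$ to $v$; then the orbit of $o$ is open, and since the orbits partition the connected manifold $H^n$ the orbit of $o$ must be all of $H^n$.
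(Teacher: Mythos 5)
Your proof is correct, and it takes a genuinely different route from the paper's. The paper proves transitivity by choosing the $2$-plane $W$ through the origin containing the two given points, observing that $Q|_W$ is nonsingular and that the branch of $\{Q|_W=-1\}$ through both points is an orbit of the one-parameter boost group in that plane, and then invoking Witt's theorem to extend the partial isometry to all of $\mathbb{R}^{n+2}$ --- which, as written, only yields transitivity of the full group $O(Q)$. Your normal-form argument (a rotation $k\in SO(n+1)$ of the spatial coordinates followed by a boost $h_{-s}$ in the $(x_0,x_1)$-plane) is more explicit and, importantly, proves the statement actually claimed: each map you use visibly lies in a connected subgroup through the identity, hence in $SO(Q)_e$, and your observation that $x_0$ cannot vanish on $\{Q=-1\}$ (since $x_0^2=1+x_1^2+\cdots+x_{n+1}^2\ge 1$) cleanly settles the sheet-preservation point that the paper's proof leaves implicit. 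Your intrinsic variant is also sound --- the maps $w\mapsto B(v,w)o-B(o,w)v$ are indeed $B$-skew and realize all of $T_oH^n=o^\perp$, so the orbit of $o$ is open --- with the small caveat that concluding from the partition into orbits requires noting that the same computation at an arbitrary point shows \emph{every} orbit is open, whence the orbit of $o$ is also closed in the connected manifold $H^n$. What the paper's approach buys is brevity and generality, since Witt's theorem works uniformly in any signature; what yours buys is an explicit $KAK$-style factorization (echoing the Cartan decomposition used elsewhere in the text) and a complete argument for the identity component rather than for $O(Q)$.
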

To prove transitivity of the full group $O(Q)$, let $p,q$ be two points on $H^n$, and let $W$ be the $2$-plane through the origin containing $p,q$.  Then $Q|_W$ defines a nonsingular quatradic form on $W$, and the locus $Q|_W=-1$ is clearly an orbit of the one-dimensional Lorentz group
$$t\mapsto \begin{pmatrix}\cosh t & \sinh t \\ \sinh t & \cosh t\end{pmatrix}.$$
Thus transitivity follows by Witt's theorem:
\begin{theorem}[Witt's theorem]
Let $V$ be a vector space over a field and $Q$ a nondegenerate quadratic form on $V$.  Let $W\subset V$ be a linear subspace.  Then a function $f:W\to V$ can be extended to an isometry $f:V\to V$ if and only if $Qf(w)=Q(w)$ for all $w\in W$.
\end{theorem}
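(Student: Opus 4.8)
The plan is to dispose of the ``only if'' direction immediately --- if $f$ extends to an isometry of $V$, its restriction to $W$ certainly preserves $Q$ --- and to put all the work into the converse. Throughout I assume $V$ is finite-dimensional and $\operatorname{char} k \neq 2$ (which covers the application, where $k=\mathbb{R}$), and I read $f:W\to V$ as a \emph{linear} isometric embedding, so that polarization gives $B(f(w),f(w'))=B(w,w')$ for the symmetric bilinear form $B$ attached to $Q$. The workhorse is the hyperplane reflection: for nonisotropic $v$, the map $\tau_v$ fixing $v^\perp$ and sending $v\mapsto -v$ (explicitly $\tau_v(x)=x-\tfrac{2B(x,v)}{B(v,v)}v$) is an isometry of $V$.

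The first key step is a one-vector lemma: if $Q(x)=Q(y)\neq 0$, then some isometry of $V$ carries $x$ to $y$. Indeed $B(x+y,x-y)=Q(x)-Q(y)=0$ while $Q(x-y)+Q(x+y)=2Q(x)+2Q(y)\neq 0$, so at least one of $x\pm y$ is nonisotropic. Writing $x=\tfrac12(x+y)+\tfrac12(x-y)$ and using the orthogonality just noted, reflecting in $x-y$ sends $x\mapsto y$ when $Q(x-y)\neq 0$, and otherwise $\tau_y\tau_{x+y}$ does the job; in either case a product of at most two reflections suffices.

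With this in hand I would induct on $\dim W$. If $W$ contains a nonisotropic vector $w_1$, then $Q(f(w_1))=Q(w_1)\neq 0$, so by the lemma there is an isometry $\sigma$ of $V$ with $\sigma(f(w_1))=w_1$; replacing $f$ by $\sigma\circ f$ I may assume $f(w_1)=w_1$. Since $f$ preserves $B$, it then carries $W_1:=W\cap w_1^\perp$ into the nondegenerate hyperplane $w_1^\perp$, and $\dim W_1=\dim W-1$. The inductive hypothesis applied inside $w_1^\perp$ extends $f|_{W_1}$ to an isometry of $w_1^\perp$; combining this with the identity on $\langle w_1\rangle$ (using $V=\langle w_1\rangle\perp w_1^\perp$) extends $f$ to all of $V$, and undoing $\sigma$ completes this case.

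The remaining --- and genuinely harder --- case is when $W$ is totally isotropic, where the reflection lemma offers no foothold. The plan is to reduce to the previous case by manufacturing a nonisotropic vector. Picking $0\neq w_1\in W$, nondegeneracy of $V$ furnishes, after an affine correction, an isotropic $u$ with $B(w_1,u)=1$, so that $\langle w_1,u\rangle$ is a hyperbolic plane containing the nonisotropic vector $w_1+u$, and the same construction around $f(w_1)$ produces a matching hyperbolic plane downstream. The delicate point is to extend $f$ across these planes \emph{compatibly with the rest of $W$}: the clean route is to extend $f$ first on a hyperplane $W_1\subset W$ by induction, normalize so that $f$ fixes $W_1$ pointwise, and then work in $W_1^\perp$, where the induced form on the quotient $W_1^\perp/W_1$ is nondegenerate of strictly smaller dimension --- turning the isotropic extension of the final vector into an instance handled by a secondary induction on $\dim V$. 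This isotropic bookkeeping is where I expect essentially all the difficulty to lie; the anisotropic reduction and the reflection lemma are routine.
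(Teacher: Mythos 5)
The paper never proves this statement: Witt's theorem is quoted as a classical black box to obtain transitivity of $SO(Q)_e$ on $H^n$, so your proposal has to stand on its own merits.  Much of it does.  The reflection $\tau_v$, the one-vector lemma (via $B(x+y,x-y)=Q(x)-Q(y)=0$ and $Q(x+y)+Q(x-y)=4Q(x)\neq 0$, so one of $x\pm y$ is anisotropic), and the induction when $W$ contains an anisotropic $w_1$ are all correct and complete: $W_1=W\cap w_1^\perp$ has codimension one in $W$, the normalized $f$ carries $W_1$ into the nondegenerate hyperplane $w_1^\perp$, and the extension glues across $V=\langle w_1\rangle\perp w_1^\perp$.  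Your added hypotheses ($f$ linear and injective, $\operatorname{char}k\neq 2$) are also necessary repairs rather than conveniences: as the paper literally states it the ``if'' direction is false --- $f\equiv 0$ on a totally isotropic $W$ preserves $Q$ but extends to no isometry.

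The genuine gap is the totally isotropic case, which you explicitly defer (``this isotropic bookkeeping is where I expect essentially all the difficulty to lie''), and it is not bookkeeping --- moreover the gap infects the anisotropic induction too, since $W_1=W\cap w_1^\perp$ may itself have a large radical, so the unresolved case is invoked at every level.  Concretely, after normalizing $f$ to fix the hyperplane $W_1\subset W$ pointwise, an isometry of the nondegenerate quotient $W_1^\perp/W_1$ carrying $[w_n]$ to $[f(w_n)]$ lifts to $V$ only modulo $W_1$: the lift sends $w_n$ to $f(w_n)+z$ for an uncontrolled $z\in W_1$, and both the existence of the lift (a statement about the parabolic stabilizer of $W_1$ in $O(V)$) and the removal of $z$ require explicit constructions you never give.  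The missing correction is an Eichler--Siegel transvection: since $B(w_n,\cdot)$ does not vanish on $W_1^\perp$ (else $w_n\in W_1^{\perp\perp}=W_1$), one can pick $v\in W_1^\perp$ with $B(w_n,v)=-1$ and check that $x\mapsto x+B(x,z)v-B(x,v)z-\tfrac12 Q(v)B(x,z)z$ is an isometry fixing $W_1$ pointwise and sending $w_n\mapsto w_n+z$; without something of this kind the induction does not close.  A more economical standard route, which avoids quotients and lifting entirely, is to complete a basis of $\operatorname{rad}(W)$ to hyperbolic pairs on both the source and target sides, extend $f$ isometrically to the enlarged subspaces, and thereby reduce to the case of nondegenerate $W$ --- which in characteristic $\neq 2$ always contains an anisotropic vector, so your reflection induction alone finishes the proof.
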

The group $SO(Q)_e=SO(n,1)_e$.  The stabilizer of the point $o$ is $SO(n)$.  Hence
$$H^n = SO(n,1)_e/SO(n).$$

The metric on $H^n$ can be described concretely as follows.  Let $S=(-1,0,0,\dots,0)$ be the ``south pole''.  Let $\rho_S:H^n\to \{0\}\times\mathbb{R}^n\subset\mathbb{R}^n$ be the stereographic projection with respect to $S$.  This is given by
$$\rho_S(x) = S - 2\frac{(x-S)}{\langle x-S,x-S\rangle}.$$
This is a diffeomorphism of $H^n$ onto the ball $\{\xi\in\mathbb{R}^n\mid |\xi|^2<1\}$, and the induced metric is
$$\frac{4}{(1-|\xi\|^2)^2}d\xi^i\otimes d\xi^i.$$
This is the Poincar\'{e} ball model for hyperbolic space.  The geodesics are circles perpendicular to the boundary of the ball.

Related is the Klein model, which is the stereographic projection of $H^n$ onto $TH^n_o$ with respect to the origin in $\mathbb{R}^{n+1}$.  Since the geodesics on $H^n$ are the intersections of $H^n$ with $2$-planes through the origin of $\mathbb{R}^{n+1}$, the geodesics go over to straight lines in the Klein model.

\subsection{Siegel space}
Let $B\subset\mathbb{R}^n$ be an open subset.  The {\em tube over $B$}, $T_B$, is the subset of $\mathbb{C}^n$ given by
$$T_B = \{ z=x+iy\in\mathbb{C}^n\mid y\in B\}.$$
When $B$ is an open convex cone in $\mathbb{R}^n$, $T_B$ is a natural domain for the Hardy space of several variables.  A special case of this is when $B\subset\mathbb{R}^{n\times n}$ is the convex cone of positive-definite matrices.  Then $T_B:=\mathfrak{H}_n$ is called the {\em Siegel space}, and plays an important role in the study of abelian varieties and modular forms: the quotient of the Siegel space by an arithmetic subgroup of the symplectic group is the moduli space of polarized abelian varieties.

We can identify $\mathfrak{H}_n$ with the space of (positive definite) K\"{a}hler structures on $\mathbb{R}^{2n}$ compatible with a given symplectic structure.  Thus as a homogeneous space, $\mathfrak{H}_n$ is the symmetric space $Sp(n,\mathbb{R})/U(n)$.

\chapter{Noncompact symmetric spaces}
We return to the general study of noncompact symmetric spaces.  We shall here (without loss of generality) confine attention to closed subgroups $G\le GL(n,\mathbb{R})$ that are closed under the transpose.  In particular, with this simplification, we can identify the Lie algebra of $G$ with a subalgebra of $\mathfrak{gl}(n,\mathbb{R})$:
$$\mathfrak{g}_o=\{X\in\mathfrak{gl}(n,\mathbb{R}) \mid \exp tX\in G \quad\text{for all $t\in R$}\}.$$
This is of course also closed under the transpose. Here the exponential is just the usual matrix exponential.

To study such groups, it is necessary to understand first the case when $G=GL(n,\mathbb{R})$ itself and $K=O(n)$.  As a point-set the resulting symmetric space $G/K$ is already well understood from linear algebra.  Indeed, the {\em polar factorization} of a nonsingular real matrix $A$ asserts that there is a unique symmetric matrix $X$ and orthogonal matrix $k$ such that $A=ke^X$.  To understand $G/K$ (or equivalently, to understand the symmetric space $SL(n,\mathbb{R})/SO(n,\mathbb{R})$\footnote{A somewhat detailed account of this symmetric space can be found in J\"{u}rgen Jost, {\em Riemannian geometry and geometric analysis}, 5th ed., \S 5.5.}, we shall establish that the polar decomposition is a diffeomorphism.  Let $\operatorname{Symm}(n,\mathbb{R})$ denote the space of $n\times n$ real symmetric matrices and denote by $\Phi : O(n)\times \operatorname{Symm}(n,\mathbb{R})$ the polar factorization $\Phi(k,X) = k\exp X$.

The following Lemma is a topological constraint on the kinds of 1-parameter subgroups that can arise
\begin{lemma}[Chevalley's lemma]\index{Chevalley's lemma}
Let $f$ be be a polynomial function on $n\times n$ matrices and suppose that $X$ is a real $n\times n$ symmetric matrix.  If $f(\exp mX) = 0$ for infinitely many integers $m$, then $f(\exp tX)=0$ for all $t$.
\end{lemma}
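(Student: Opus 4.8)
The plan is to reduce the statement to a fact about finite exponential sums evaluated at integers. Since $X$ is real symmetric, the spectral theorem supplies an orthogonal matrix $k$ and a real diagonal matrix $D=\operatorname{diag}(\lambda_1,\dots,\lambda_n)$ with $X=kDk^{-1}$, so that
$$\exp(tX) = k\,\operatorname{diag}(e^{\lambda_1 t},\dots,e^{\lambda_n t})\,k^{-1}.$$
Every entry of $\exp(tX)$ is therefore a real linear combination of the functions $e^{\lambda_i t}$. Substituting these entries into $f$ and expanding, each monomial of degree $d$ contributes terms of the form $e^{(\lambda_{i_1}+\cdots+\lambda_{i_d})t}$; collecting terms with a common exponent, I obtain
$$g(t) := f(\exp tX) = \sum_{j=1}^{N} c_j\, e^{\mu_j t},$$
a finite sum with distinct real exponents $\mu_1>\cdots>\mu_N$ and real coefficients $c_j$.

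Next I would restrict to integer arguments. Writing $r_j=e^{\mu_j}$, the numbers $r_j$ are distinct and positive, and
$$g(m)=\sum_{j=1}^N c_j\, r_j^{\,m},\qquad m\in\mathbb{Z}.$$
The claim is then equivalent to: if $g(m)=0$ for infinitely many integers $m$, then every $c_j$ vanishes (so that $g$ is identically zero, and hence $f(\exp tX)=0$ for all $t$). Since there are infinitely many integer zeros, at least one of the directions $m\to+\infty$ or $m\to-\infty$ contains infinitely many of them. Suppose infinitely many lie with $m\to+\infty$, and suppose toward a contradiction that not all $c_j$ vanish; let $j_0$ be the least index with $c_{j_0}\neq 0$, so $r_{j_0}$ is the largest among the surviving $r_j$. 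Dividing by $r_{j_0}^{\,m}$ gives
$$r_{j_0}^{-m}\,g(m) = c_{j_0} + \sum_{j>j_0} c_j\,(r_j/r_{j_0})^{\,m} \;\longrightarrow\; c_{j_0}\neq 0$$
as $m\to+\infty$, since each ratio $r_j/r_{j_0}<1$. Hence $g(m)\neq 0$ for all sufficiently large $m$, contradicting the existence of arbitrarily large integer zeros; the case $m\to-\infty$ is symmetric, with the smallest $r_j$ dominating. Therefore all $c_j=0$.

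The hard part is precisely this last dominant-term argument. The exponents $\mu_j$ are in general incommensurable, so one cannot simply set $z=e^t$ and reduce to the elementary fact that a polynomial with infinitely many roots is zero; the substitution would require the $\mu_j$ to be integer multiples of a common value. The essential input is thus that a nontrivial real exponential sum cannot vanish along an unbounded set of integers, which the comparison above establishes (equivalently, one could phrase it via a Vandermonde determinant or a linear-recurrence / characteristic-root argument). Everything preceding it — the diagonalization of $X$ and the bookkeeping that presents $f(\exp tX)$ as a finite exponential sum — is routine.
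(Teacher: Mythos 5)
Your proof is correct and follows the same route as the paper's (sketchier) argument: diagonalize $X$, recognize $f(\exp tX)$ as a finite exponential sum $\sum_j c_j e^{\mu_j t}$ with distinct real exponents, and conclude via dominance of the extreme exponent that such a sum cannot vanish on an unbounded set of integers unless it vanishes identically. You have simply made explicit two points the paper glosses over — the conjugation bookkeeping that reduces to the diagonal case, and the need to treat both directions $m\to+\infty$ and $m\to-\infty$ — which is all to the good.
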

\begin{proof}
We may assume that $X$ is diagonal and that $f$ is a polynomial in the diagonal entries $d_i$.  Then
$$f(\exp tX) = f(e^{td_1},\dots,e^{td_n}).$$
For large $t$, $ f(e^{td_1},\dots,e^{td_n})\not=0$ unless $ f(e^{td_1},\dots,e^{td_n}) = 0$ for all $t$.
\end{proof}

\begin{proposition}
$\Phi : O(n)\times \operatorname{Symm}(n,\mathbb{R})\to GL(n,\mathbb{R})$ is a diffeomorphism.
\end{proposition}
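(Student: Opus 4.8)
The plan is to reduce everything to the behaviour of the exponential on the linear space $\operatorname{Symm}(n,\mathbb{R})$ and then read off the polar decomposition. First I would record that $\Phi$ is manifestly smooth, being the composite of the (entire, hence smooth) matrix exponential with matrix multiplication, and that it indeed lands in $GL(n,\mathbb{R})$ since $\exp X$ is invertible (even positive definite) and $k\in O(n)$. The real content is therefore that $\Phi$ is a smooth bijection with smooth inverse.

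The key lemma I would isolate is that $\exp$ restricts to a diffeomorphism from $\operatorname{Symm}(n,\mathbb{R})$ onto the open submanifold $\operatorname{Pos}(n,\mathbb{R})$ of positive-definite symmetric matrices. Surjectivity and injectivity follow from the spectral theorem: writing a symmetric $X=U\operatorname{diag}(\lambda_i)U^t$ with $U\in O(n)$ gives $\exp X=U\operatorname{diag}(e^{\lambda_i})U^t\in\operatorname{Pos}(n,\mathbb{R})$, while every $P\in\operatorname{Pos}(n,\mathbb{R})$ has a well-defined symmetric logarithm $\log P=U\operatorname{diag}(\log\mu_i)U^t$, and this is a genuine two-sided inverse because $t\mapsto e^t$ is a bijection of $\mathbb{R}$ onto $(0,\infty)$. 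For smoothness of $\log$ I would either invoke the holomorphic functional calculus (the spectrum of $P$ lies in $(0,\infty)$, so $\log P$ is a contour integral of $(\zeta-P)^{-1}\log\zeta$ and depends analytically on $P$), or apply the inverse function theorem after diagonalizing: the differential $d\exp_X$ acts on the $(i,j)$ entry by the scalar $\frac{e^{\lambda_i}-e^{\lambda_j}}{\lambda_i-\lambda_j}$ (read as $e^{\lambda_i}$ when $\lambda_i=\lambda_j$), and since the $\lambda_i$ are real these scalars are all strictly positive, so $d\exp_X$ is an isomorphism of $\operatorname{Symm}(n,\mathbb{R})$ at every $X$.

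With this lemma secured, the polar decomposition yields the bijectivity. Given $A\in GL(n,\mathbb{R})$, the matrix $A^tA$ is positive definite, so I set $X=\tfrac12\log(A^tA)\in\operatorname{Symm}(n,\mathbb{R})$ and $k=A\exp(-X)$; then $k^tk=\exp(-X)A^tA\exp(-X)=\exp(-X)\exp(2X)\exp(-X)=I$, so $k\in O(n)$ and $\Phi(k,X)=A$, giving surjectivity. For injectivity, if $\Phi(k_1,X_1)=\Phi(k_2,X_2)=A$ then $\exp(2X_i)=A^tA$, so $\exp(2X_1)=\exp(2X_2)$ forces $X_1=X_2$ by injectivity of $\exp$ on $\operatorname{Symm}(n,\mathbb{R})$, whence also $k_1=k_2$. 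The inverse map $A\mapsto\bigl(A\exp(-X),\ \tfrac12\log(A^tA)\bigr)$ is smooth because $\log$ is smooth and $A\mapsto A^tA$ is polynomial, so $\Phi$ is a diffeomorphism.

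The main obstacle is exactly the lemma on $\exp|_{\operatorname{Symm}(n,\mathbb{R})}$, and within it the smoothness of the logarithm (equivalently, the invertibility of $d\exp$ on symmetric matrices); the bijectivity and the algebraic identities of the polar decomposition are routine once that is in hand. I would also note that Chevalley's lemma is \emph{not} needed for this proposition: it is the tool reserved for the later passage from $GL(n,\mathbb{R})$ to an arbitrary closed, transpose-stable subgroup $G\le GL(n,\mathbb{R})$.
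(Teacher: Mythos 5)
Your proof is correct, but it departs from the paper's at both places where the real work happens, and the comparison is instructive. The paper never constructs an explicit inverse: it proves $\Phi$ is regular by computing $d\exp_X = \frac{1-e^{-\ad X}}{\ad X}$ and killing the two components of a putative kernel of $d\Phi_{(k,X)}$ separately --- the skew-symmetric part $Y$ dies because $\Ad \exp X$ has eigenvalues $e^{a_i-a_j}\neq -1$, and the symmetric part $Z$ dies because the relevant series $\sum_k \ad^{2k}(X)/(2k+1)!$ is a sum of squares --- and then combines regularity with surjectivity and injectivity. You instead isolate the lemma that $\exp$ is a diffeomorphism of $\operatorname{Symm}(n,\mathbb{R})$ onto the positive-definite matrices; your divided-difference computation $\bigl(e^{\lambda_i}-e^{\lambda_j}\bigr)/(\lambda_i-\lambda_j)>0$ is exactly the diagonalized form of the paper's kernel analysis for the $Z$-component (note $\frac{e^{\lambda_i}-e^{\lambda_j}}{\lambda_i-\lambda_j}=e^{(\lambda_i+\lambda_j)/2}\,\frac{\sinh((\lambda_i-\lambda_j)/2)}{(\lambda_i-\lambda_j)/2}$, whose second factor is the paper's sum of squares), and your explicit smooth inverse $A\mapsto\bigl(A\exp(-X),\,\tfrac12\log(A^tA)\bigr)$ packages regularity, injectivity of $d\Phi$, and smoothness of $\Phi^{-1}$ in one stroke, making the paper's separate $Y$-analysis unnecessary. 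The sharper divergence is injectivity: you get it for free from injectivity of $\exp$ on symmetric matrices via the spectral theorem, whereas the paper --- contrary to your closing remark --- uses Chevalley's lemma precisely in this step, arguing from $\exp 2X_1=\exp 2X_2$ that $[X_1,X_2]=0$ via two applications of the lemma and then concluding $X_1=X_2$ from $\exp(2(X_1-X_2))=e$. You are right that the lemma is mathematically dispensable here, and your spectral argument is shorter and cleaner; but it is not merely ``reserved for later'' as a matter of the paper's exposition. Where it does become genuinely indispensable is the next proposition, in which one must show that the symmetric logarithm $X$ of $g^tg$ lies in $\mathfrak{g}_{\mathbb{C}}$ for an algebraic subgroup: your functional-calculus logarithm produces $X\in\operatorname{Symm}(n,\mathbb{R})$ but does not by itself show the one-parameter group $\exp(tX)$ stays in the group, which is exactly what Chevalley's polynomial argument delivers.
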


\begin{proof}
We first show (1) that $\Phi$ is regular, then (2) that it is surjective, and finally (3) that it is injective.

\begin{enumerate}
\item For regularity, consider $\exp :  \operatorname{Symm}(n,\mathbb{R})\to GL(n,\mathbb{R})$.  We compute the differential of $\exp$ at $X\in \operatorname{Symm}(n,\mathbb{R})$:
\begin{align*}
d\exp_X Y &= \left.\frac{d}{dt}\right|_{t=0}\exp(X+tY)\\
&=\left.\frac{d}{dt}\right|_{t=0}\sum_{n=0}^\infty \frac{1}{n!}(X+tY)^n\\
&=\sum_{n=0}^\infty \frac{1}{n!}(X^{n-1}Y + X^{n-2}YX + \cdots + XYX^{n-2}+YX^{n-1})\\
&=\sum_{n=0}^\infty \frac{1}{n!}(L_X^{n-1} + L_X^{n-2}R_X + \cdots + L_XR_X^{n-2}+R_X^{n-1})(Y)\\
&\qquad\text{where $L_X$ (resp. $R_X$) denotes the multiplication on the left (resp. right) by $X$.}\\
&=\sum_{n=0}^\infty \frac{1}{n!}\frac{L_X^n-R_X^n}{L_X-R_x}(Y)\\
&=\frac{1}{L_X-R_X}(L_{\exp X} - R_{\exp X})(Y)= \frac{1-e^{-\ad X}}{\ad X}(Y).
\end{align*}

Now let $k\in O(n)$, $X,Z\in\operatorname{Symm}(n,\mathbb{R})$, $Y\in \mathfrak{o}(n)$ (skew-symmetric matrices).  Then
\begin{align*}
d\Phi_{(k,X)}(YZ) &= \left.\frac{d}{dt}\right|_{t=0} k\exp(tY)\exp(X+tZ)\\
&=k(Y\exp X + d\exp_X Z).
\end{align*}
If $d\Phi_{k,X}(Y,Z)=0$, then
$$Y\exp X + d\exp_X Z = 0.$$
Now $d\exp_X Z$ is symmetric, and therefore $Y\exp X$ is also symmetric.  Hence, because $X$ is symmetric and $Y$ is skew, $Y\exp(X) = -\exp(X)Y.$  Hence $\exp(X)Y\exp(-X)=-Y$, so if $Y$ were nonzero, then it would be an eigenvector of $\Ad\exp(X)$ with eigenvalue $-1$.  But suppose that $X$ has eigenvalues $a_1,\dots,a_n$.  Then $\ad X$ has eigenvalues $a_i-a_j$, and $\Ad\exp X = \exp \ad X$ has eigenvalues $e^{a_i-a_j}$.  But, for real $a_i$, we have $e^{a_i-a_j}\not=-1$.  Hence $Y=0$.

Thus we have $d\exp_XZ=0$.  Using the above calculation,
$$d\exp_xZ = \underbrace{\sum_{\text{$k$ odd}} \frac{(-1)^{k-1}}{k!}\ad^{k-1}(X)Z}_{\text{symmetric}}+\underbrace{\sum_{\text{$k$ even}} \frac{(-1)^{k-1}}{k!}\ad^{k-1}(X)Z}_{\text{skew-symmetric}}=0.$$
Since the first term above is the symmetric part of $d\exp_XZ$, and the second is the skew part, each one must individually vanish.  In particular,
$$\sum\frac{1}{(2k+1)!} \ad^{2k}(X)Z = 0$$
so $Z$ is an eigenvector for $\sum_{k=0}^\infty\frac{\ad^{2k}(X)}{(2k+1)!}$ with eigenvalue $0$.  But this is a sum of squares
$$\operatorname{id} + \frac{1}{6}\ad^2(X) + \cdots$$
which has no nontrivial eigenvectors with eigenvalue $0$.  Hence $Z=0$ as well.  This shows that $\Phi$ is regular.

\item For surjectivity, let $g\in GL(n,\mathbb{R})$.  Since $g^tg$ is symmetric, positive-definite, $g^tg=\exp(2X)$ for some $X\in\operatorname{Symm}(n,\mathbb{R})$.  Then $k=g\exp(-X)$ satisfies $k^tk=\exp(-X)g^tg\exp(-X) = e$, so $k\in O(n)$ and $g=\Phi(k,X)$.

\item For injectivity, suppose that $g=k_1\exp X_1=k_1\exp X_2$.  Then $g^tg=\exp 2X_1=\exp 2X_2$.  It is therefore sufficient to show that $[X_1,X_2]=0$, for then $e=\exp 2X_1\exp 2X_2 = \exp(2(X_1-X_2))$ which implies that $X_1=X_2$.

By raising to a power, $\exp (2mX_1)$ commutes with $\exp (2X_2)$ for all integers $m$.  But this is a polynomial relation in the entries of $\exp(2mX_1)$.  Hence by Chevalley's lemma, $\exp (tX_1)$ commutes with $\exp (2X_2)$ for every real $t$.  Differentiating the one parameter group $\exp(tX_1)$ then implies that $X_1$ commutes with $\exp(2X_2)$.  So $X_1$ commutes with $\exp(2mX_2)$ for every integer $m$, and so again by Chevalley's lemma, $X_1$ commutes with the one-parameter group $\exp(tX_2)$, and so $[X_1,X_2]=0$. 

\end{enumerate}
\end{proof}

\begin{proposition}
Let $G_{\mathbb{C}}\subset GL(\mathbb{C},n)$ be a linear algebraic group and let $G_{\mathbb{R}}=G_{\mathbb{C}}\cap GL(\mathbb{R},n)$ be a closed subgroup that is closed under the transpose. Let $G_o=(G_{\mathbb{R}})_e, K_o=G_o\cap O(n)$, $\mathfrak{p}_o=\mathfrak{g}_o\cap\operatorname{Symm}(n,\mathbb{R})$.  Then
\begin{align*}
K_o\times\mathfrak{p}_o &\xrightarrow{\Phi} G_o\\
(k,X)&\mapsto k\exp X
\end{align*}
is a diffeomorphism.
\end{proposition}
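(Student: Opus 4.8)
The plan is to deduce the statement from the global polar decomposition $\Phi:O(n)\times\operatorname{Symm}(n,\mathbb{R})\to GL(n,\mathbb{R})$ of the previous Proposition, by showing that the two polar factors of an element of $G_o$ again lie in $G_o$. Once that is known, the map in question is simply the restriction of a diffeomorphism to an embedded submanifold, and everything else is formal. The entire difficulty is concentrated in the claim that the factors stay in the group, and this is exactly where the algebraicity of $G_{\mathbb{C}}$ and Chevalley's lemma enter.

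First I would record the linear-algebra reduction. Given $g\in G_o$, closure under transpose gives $g^t\in G_{\mathbb{R}}$, hence $g^tg\in G_{\mathbb{R}}$. Since $g^tg$ is symmetric and positive definite, we may write $g^tg=\exp(2X)$ for a unique $X\in\operatorname{Symm}(n,\mathbb{R})$, and then $g=\Phi(k,X)$ with $k=g\exp(-X)\in O(n)$. It thus remains only to see that $X\in\mathfrak{p}_o$ and $k\in K_o$. The crucial step is that $\exp(2X)\in G_{\mathbb{C}}$ forces the whole real one-parameter group $t\mapsto\exp(tX)$ into $G_{\mathbb{C}}$. Indeed, $G_{\mathbb{C}}$ is the common zero locus of a family of polynomials $f_\alpha$; from $\exp(2X)\in G_{\mathbb{C}}$ and the group law we get $\exp(2mX)\in G_{\mathbb{C}}$ for every integer $m$, so $f_\alpha(\exp(2mX))=0$ for all $m$. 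Applying Chevalley's lemma to the real and imaginary parts of each $f_\alpha$ (legitimate because $X$ is real symmetric, so $\exp(tX)$ is real) yields $f_\alpha(\exp(2tX))=0$ for all real $t$. Hence $\exp(tX)\in G_{\mathbb{C}}\cap GL(n,\mathbb{R})=G_{\mathbb{R}}$, and being a connected set through the identity it lies in $G_o$. By the very definition of $\mathfrak{g}_o$ this gives $X\in\mathfrak{g}_o$, and since $X$ is symmetric, $X\in\mathfrak{g}_o\cap\operatorname{Symm}(n,\mathbb{R})=\mathfrak{p}_o$. Consequently $\exp X\in G_o$, so $k=g\exp(-X)\in G_o\cap O(n)=K_o$. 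This proves that $\Phi(K_o\times\mathfrak{p}_o)=G_o$; injectivity of the restricted $\Phi$ is inherited immediately from the injectivity of the global $\Phi$.

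Finally I would upgrade the bijection to a diffeomorphism using embedded-submanifold arguments. As a closed subgroup of $O(n)$, $K_o$ is an embedded submanifold, and $\mathfrak{p}_o$ is a linear subspace of $\operatorname{Symm}(n,\mathbb{R})$, so $K_o\times\mathfrak{p}_o$ is embedded in $O(n)\times\operatorname{Symm}(n,\mathbb{R})$; likewise $G_o$ is an embedded submanifold of $GL(n,\mathbb{R})$. The map of the statement is the restriction of the global diffeomorphism $\Phi$ to this submanifold, with image exactly $G_o$, hence smooth. Its set-theoretic inverse is the restriction of the smooth inverse of $\Phi$; by the previous paragraph that inverse carries $G_o$ into $K_o\times\mathfrak{p}_o$, and since the target is an embedded submanifold a smooth map landing in it is smooth into it. Therefore $\Phi:K_o\times\mathfrak{p}_o\to G_o$ is a diffeomorphism.

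The main obstacle is the middle step: the symmetric polar factor $\exp X$ is a priori only known to have its \emph{square} $g^tg$ in the group, and there is no reason in general for a square root to remain in a Lie subgroup. It is precisely the Zariski-closedness of $G_{\mathbb{C}}$, exploited through Chevalley's lemma, that rigidifies the situation and pushes the entire real one-parameter group $\exp(tX)$ into $G_{\mathbb{R}}$; without algebraicity the conclusion can fail.
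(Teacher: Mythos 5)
Your proof is correct and follows essentially the same route as the paper's: reduce to surjectivity via the global polar decomposition, then use Chevalley's lemma on the defining polynomials of the algebraic group $G_{\mathbb{C}}$ to force the whole one-parameter group $\exp(tX)$ into $G_{\mathbb{R}}$, whence $X\in\mathfrak{p}_o$ and $k\in K_o$. If anything, you are slightly more careful than the paper at two points it glosses over—splitting the complex polynomials into real and imaginary parts before invoking Chevalley's lemma, and using connectedness of $t\mapsto\exp(tX)$ to land in the identity component $G_o$ rather than merely in $G_{\mathbb{R}}$.
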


\begin{proof}
Since $\Phi$ is the restriction of a regular injective mapping on $GL(\mathbb{R},n)$ to a submanifold, $\Phi$ is regular and injective, so it is enough to prove surjectivity.  Let $g\in G_o$ be given.  Then $g^tg\in G_{\mathbb{R}}$ can be written $g^tg=\exp(2X)$ for some $X\in\operatorname{Symm}(n,\mathbb{R})$.  Also $\exp(2mX)\in G_{\mathbb{C}}$ for all integers $m$.  Since the algebraic group $G_{\mathbb{C}}$ is the locus of a set of polynomial equations, Chevalley's lemma implies that the one-parameter group $\exp(tX)\in G_{\mathbb{C}}$.  Differentiating at $t=0$ gives $X\in\mathfrak{g}_{\mathbb{C}}$ as well.  Thus $X\in\operatorname{Symm}(n,\mathbb{R})\cap\mathfrak{g}_{\mathbb{C}}=\mathfrak{p}_o$.

Since $X\in\mathfrak{p}_o$, $\exp X\in G_{\mathbb{R}}$, and so $k:=g\exp(-X)\in O(n)\cap G_{\mathbb{R}} = K_o$.  We have $g=k\exp X=\Phi(k,X)$ for $k\in K_o$ and $X\in\mathfrak{p}_o$, as required.
\end{proof}

\begin{proposition}
Let $G_o$ and $K_o$ be as above.  Let $\rho : G\to G_o$ be a covering map and let $K=\rho^{-1}(K_o)$.  Then $K\times\mathfrak{p}_o\xrightarrow{\Phi} G, \Phi(k,X)=k\exp X$, is a difeomorphism.
\end{proposition}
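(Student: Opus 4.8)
The plan is to reduce everything to the previous proposition, which provides a diffeomorphism $\Phi_o : K_o\times\mathfrak{p}_o\to G_o$, $\Phi_o(k,X)=k\exp X$. First I would fix the bookkeeping. Since $\rho$ is a covering homomorphism, $\rho_{*,e}$ is a Lie algebra isomorphism, and I identify $\Lie(G)$ with $\mathfrak{g}_o$ through it; with this identification the standard relation $\rho\circ\exp=\exp\circ\,\rho_{*,e}$ reads $\rho(\exp X)=\exp X$ for $X\in\mathfrak{g}_o$ (the left exponential is that of $G$, the right that of $G_o$). Consequently, for $(k,X)\in K\times\mathfrak{p}_o$ one has $\rho(\Phi(k,X))=\rho(k)\exp X=\Phi_o(\rho(k),X)$, so that the square formed by $\Phi$, $\Phi_o$, the vertical map $\rho\times\operatorname{id}_{\mathfrak{p}_o}$, and $\rho$ commutes. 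Here I also note that $\rho$ restricts to a covering $K=\rho^{-1}(K_o)\to K_o$, so $\rho\times\operatorname{id}_{\mathfrak{p}_o}$ is a local diffeomorphism.

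Next I would show $\Phi$ is a local diffeomorphism. From the commuting relation, $\rho\circ\Phi=\Phi_o\circ(\rho\times\operatorname{id}_{\mathfrak{p}_o})$ is the composite of a diffeomorphism with a local diffeomorphism, hence itself a local diffeomorphism. Differentiating at an arbitrary point $(k,X)$ gives $d\rho_{\Phi(k,X)}\circ d\Phi_{(k,X)}=d(\rho\circ\Phi)_{(k,X)}$; since the right-hand side and $d\rho$ are isomorphisms, $d\Phi_{(k,X)}$ is an isomorphism, and the inverse function theorem yields that $\Phi$ is a local diffeomorphism everywhere.

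It then remains to prove that $\Phi$ is a bijection. For injectivity, suppose $k_1\exp X_1=k_2\exp X_2$; applying $\rho$ and using the identity above gives $\Phi_o(\rho(k_1),X_1)=\Phi_o(\rho(k_2),X_2)$, so injectivity of $\Phi_o$ forces $X_1=X_2$ and $\rho(k_1)=\rho(k_2)$, whereupon cancelling the invertible element $\exp X_1=\exp X_2$ in $G$ gives $k_1=k_2$. For surjectivity, given $g\in G$ I decompose $\rho(g)=\Phi_o(k_o,X)=k_o\exp X$ with $k_o\in K_o$ and $X\in\mathfrak{p}_o$, and set $g'=g\exp(-X)$; then $\rho(g')=\rho(g)\exp(-X)=k_o\in K_o$, so $g'\in\rho^{-1}(K_o)=K$ and $g=\Phi(g',X)$. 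This surjectivity step is precisely where the definition $K=\rho^{-1}(K_o)$ is used. A bijective local diffeomorphism is a diffeomorphism, which completes the argument.

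The proof is essentially formal once the diagram is in place, so I do not expect a genuine obstacle; the one point demanding care is the compatibility of the two exponential maps under $\rho$ and the resulting pointwise identity $\rho(\Phi(k,X))=\Phi_o(\rho(k),X)$, since each of the three substantive steps (local diffeomorphism, injectivity, surjectivity) is pushed down to the already-settled case over $G_o$ through exactly this identity.
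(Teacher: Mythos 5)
Your proof is correct and follows essentially the same route as the paper's: both exploit the commuting square $\rho\circ\Phi=\Phi_o\circ(\rho\times\operatorname{id})$ to deduce regularity locally from $\Phi_o$, push injectivity and surjectivity down to $G_o$, and lift back using $K=\rho^{-1}(K_o)$. Your only additions are details the paper elides — the chain-rule argument for $d\Phi$ and the cancellation of $\exp X_1$ in $G$ to get $k_1=k_2$ (which is indeed the right step, since $\rho(k_1)=\rho(k_2)$ alone would not suffice) — so this is a faithful, slightly more explicit version of the same proof.
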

\begin{proof}
\begin{enumerate}
\item Note that $\exp_G$ covers $\exp_{G_o}$:
$$\xymatrix{
\mathfrak{p}_o\ar[d]^{=} \ar[r]^{\exp_G} & G\ar[d]^\rho\\
\mathfrak{p}_o\ar[r]^{\exp_{G_o}} & G_o
}
$$

Hence $\Phi$ covers the diffeomorphism $\Phi_o:K_o\times\mathfrak{p}_o \to G_o$:
$$\xymatrix{
K\times\mathfrak{p}_o\ar[d]^{\rho\times\operatorname{id}}\ar[r]^{\Phi}&G\ar[d]^\rho\\
K_o\times\mathfrak{p}_o\ar[r]^{\Phi_o}&G
}$$

The bottom arrow $\Phi_o$ is a diffeomorphism, and the vertical arrows are covering maps.  Since regularity is local, $\Phi$ is regular because the corresponding result is true of $\Phi_o$.  
\item For injectivity, suppose that $k_1\exp_G X_1=k_2\exp_G X_2$.  Then
$$\rho(k_1)\exp_{G_o}X_1 = \rho(k_2)\exp_{G_o} X_2$$
by injectivity of $\Phi_o$, it follows that $X_1=X_2$.  But therefore we must also have $k_1=k_2$.
\item For surjectivity, let $g\in G$.  Then there exists $k_o\in K_o$ and $X\in \mathfrak{p}_o$ such that
$$\rho(g) = k_o\exp_{G_o}X.$$
As a result $g\exp_G(-X)\in K$, say $g\exp_G(-X)=k$.  Then
$$g = k\exp_G X = \Phi(k,X),$$
as required.
\end{enumerate}
\end{proof}

\begin{theorem}[Cartan decomposition]\index{Cartan decomposition}
Let $(\mathfrak{g}_o,s_o)$ be a symmetric orthogonal noncompact semisimple Lie algebra, and let $G$ be a connected Lie group with $\Lie G = \mathfrak{g}_o$.  Let $K=\Ad^{-1}(\operatorname{Int}(\mathfrak{g}_o)\cap O(n))$.  Then
\begin{align*}
K\times\mathfrak{p}_o &\xrightarrow{\Phi} G\\
(k,X)&\mapsto k\exp X
\end{align*}
is a diffeomorphism.
\end{theorem}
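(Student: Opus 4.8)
The plan is to deduce this abstract statement from the two preceding propositions---the polar decomposition for the identity component of a transpose-closed linear algebraic group, and its lift along a covering map---by realizing $G$ through its adjoint representation.

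First I would observe that, since $(\mathfrak{g}_o,s_o)$ is orthogonal, noncompact, and semisimple, the involution $\theta:=s_o$ is a Cartan involution: the form $B_\theta(X,Y)=-B_{\mathfrak{g}_o}(X,\theta Y)$ is positive definite. This is the $c>0$ case of the structure theorem, where $B|_{\mathfrak{k}_o}<0$ and $B|_{\mathfrak{p}_o}>0$, so that $B_\theta(X,X)=-B(X,X)>0$ on $\mathfrak{k}_o$ and $B_\theta(X,X)=B(X,X)>0$ on $\mathfrak{p}_o$, with the two factors $B_\theta$-orthogonal. Fixing a $B_\theta$-orthonormal basis of $\mathfrak{g}_o$ (so that the $B_\theta$-transpose becomes the ordinary matrix transpose), the map $\ad$ realizes $\mathfrak{g}_o\cong\ad(\mathfrak{g}_o)\subset\mathfrak{gl}(n,\mathbb{R})$ with $n=\dim\mathfrak{g}_o$. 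From the identity $\ad(X)^T=\ad(-\theta X)$ established earlier, $\ad(\mathfrak{g}_o)$ is closed under the transpose, and $\ad(X)$ is symmetric exactly when $X\in\mathfrak{p}_o$ and skew exactly when $X\in\mathfrak{k}_o$; in particular $\ad(\mathfrak{g}_o)\cap\operatorname{Symm}(n,\mathbb{R})=\ad(\mathfrak{p}_o)$.

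Next I would package $\Int(\mathfrak{g}_o)$ as the identity component of the real points of a complex algebraic group. Take $G_{\mathbb{C}}=\Aut(\mathfrak{g})$, the automorphism group of the complexification, cut out inside $GL(n,\mathbb{C})$ by the (polynomial) bracket-preservation equations, and $G_{\mathbb{R}}=G_{\mathbb{C}}\cap GL(n,\mathbb{R})=\Aut(\mathfrak{g}_o)$. The essential point to verify is that $G_{\mathbb{R}}$ is closed under the transpose. Here I would use that for $g\in\Aut(\mathfrak{g}_o)$ the Killing form is $g$-invariant, whence the $B$-adjoint satisfies $g^{*}=g^{-1}$; a short computation then gives $g^{T}=\theta g^{-1}\theta$, which again lies in $\Aut(\mathfrak{g}_o)$ since $\theta$ is an automorphism. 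With $G_o:=(G_{\mathbb{R}})_e=\Aut(\mathfrak{g}_o)_e=\Int(\mathfrak{g}_o)$ and $K_o=G_o\cap O(n)$, the polar-decomposition proposition applies and shows that $K_o\times\mathfrak{p}_o\to G_o$, $(k,X)\mapsto k\exp X$, is a diffeomorphism, using the identification $\ad(\mathfrak{g}_o)\cap\operatorname{Symm}(n,\mathbb{R})=\ad(\mathfrak{p}_o)$ from the previous step.

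Finally I would run this up the covering $\Ad:G\to\Int(\mathfrak{g}_o)=G_o$. Because $\mathfrak{g}_o$ is semisimple its center vanishes, so $\ker\Ad=Z(G)$ is discrete and $\Ad$ is a covering map onto $\Int(\mathfrak{g}_o)$. Setting $K=\Ad^{-1}(K_o)=\Ad^{-1}(\Int(\mathfrak{g}_o)\cap O(n))$---which is precisely the $K$ of the statement---the covering proposition yields that $K\times\mathfrak{p}_o\to G$, $(k,X)\mapsto k\exp X$, is a diffeomorphism, as required. I expect the main obstacle to be the middle step: confirming that $\Aut(\mathfrak{g}_o)$ is genuinely closed under the $B_\theta$-transpose and is the real locus of a complex algebraic group whose identity component is $\Int(\mathfrak{g}_o)$. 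Once that is secured, the diffeomorphism assertion is a formal consequence of the two earlier propositions.
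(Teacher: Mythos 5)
Your proposal is correct and follows essentially the same route as the paper's proof: realize $\mathfrak{g}_o$ via $\ad$ inside $\mathfrak{gl}(n,\mathbb{R})$ with the $B_\theta$-transpose, apply the polar-decomposition proposition for transpose-closed real points of the algebraic group $\Aut(\mathfrak{g})$, and lift along the covering $\Ad:G\to\operatorname{Int}(\mathfrak{g}_o)$ with $K=\Ad^{-1}(\operatorname{Int}(\mathfrak{g}_o)\cap O(n))$. The only cosmetic difference is that you verify transpose-closure of all of $\Aut(\mathfrak{g}_o)$ via the identity $g^{T}=\theta g^{-1}\theta$, whereas the paper checks it on the generators $e^{\ad X}$ of $\operatorname{Int}(\mathfrak{g}_o)$ via $(e^{\ad X})^{t}=e^{-\ad s_oX}$; both computations serve the same purpose and are equally valid.
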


\begin{proof}
Without loss of generality, we can identify $\mathfrak{g}_o$ with its image under the adjoint map $\ad : \mathfrak{g}_o\xrightarrow{\subset}\mathfrak{gl}(\mathfrak{g}_o)$.  Moreover, we can assume without loss of generality that $\mathfrak{g}_o$ is closed under the transpose, and $s_oX=-X^t$.  With these identifications, $\operatorname{Int}(\mathfrak{g}_o)$ is identified with a subgroup of $\operatorname{Aut}(\mathfrak{g}_o)$.  Now
$$\operatorname{Int}(\mathfrak{g}_o) = \left\langle e^{\ad X}\mid X\in\mathfrak{g}_o\right\rangle$$
and $(e^{\ad X})^t = e^{\ad X^t} = e^{-\ad s_oX}$.  So $\operatorname{Int}(\mathfrak{g}_o)$ is closed under transpose.  Furthermore $\operatorname{Int}(\mathfrak{g}_o)=\operatorname{Aut}(\mathfrak{g}_o)_e$ and $\mathfrak{g}_o=\Lie(G)=\Lie(\operatorname{Int}\mathfrak{g}_o)$.  Thus
$$G \xrightarrow{Ad} \operatorname{Int}(\mathfrak{g}_o)$$
is a covering Lie group morphism.  Hence, the groups
$$\xymatrix{ 
G\ar[d]^{\Ad} & \ar[l]^{\le} K\ar[d]\\
G_o=\operatorname{Int}\mathfrak{g}_o & K_o=\operatorname{Int}\mathfrak{g}_o\cap O(n)
}
$$
satisfies the hypotheses of the previous proposition.  Thus $K\times\mathfrak{p}_o\to G$ is a diffeomoprhism.
\end{proof}

\begin{corollary}
Let $(G,K)$ be as above.  Then $K$ is connected and $G/K$ is simply connected.
\end{corollary}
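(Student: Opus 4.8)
The plan is to read off both assertions directly from the Cartan decomposition diffeomorphism $\Phi : K\times\mathfrak{p}_o\xrightarrow{\sim} G$, $(k,X)\mapsto k\exp X$, just established. For connectedness of $K$, I would note that $\Phi$ is in particular a homeomorphism, so $K\times\mathfrak{p}_o$ is connected because $G$ is (a connected Lie group by hypothesis). Since $\mathfrak{p}_o$ is a nonempty vector space, hence connected, the projection $K\times\mathfrak{p}_o\to K$ is a continuous surjection from a connected space, and therefore $K$ is connected. This step is immediate and is not the real content of the corollary.

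For simple connectivity of $G/K$, the key is to exhibit a diffeomorphism $\mathfrak{p}_o\xrightarrow{\sim} G/K$, $X\mapsto (\exp X)K$. First I would record that $\Ad(K)$ preserves $\mathfrak{p}_o$: in the linear model $\mathfrak{g}_o\subset\mathfrak{gl}(\mathfrak{g}_o)$ with $s_oX=-X^t$ and $\mathfrak{p}_o=\mathfrak{g}_o\cap\operatorname{Symm}$, each $\Ad(k)$ lies in $O(n)$, so conjugation by $\Ad(k)$ carries a symmetric endomorphism to a symmetric one, keeping it inside $\mathfrak{p}_o$. Next I would transport the right $K$-action on $G$ through $\Phi$: since $k\exp(X)k_0 = (kk_0)\exp(\Ad(k_0^{-1})X)$ with $\Ad(k_0^{-1})X\in\mathfrak{p}_o$, the action in these coordinates reads $(k,X)\cdot k_0 = (kk_0,\Ad(k_0^{-1})X)$.

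From this formula the orbit space is transparent. The assignment $(k,X)\mapsto \Ad(k)X$ is constant on orbits, because $\Ad(kk_0)\Ad(k_0^{-1})X=\Ad(k)X$, so it gives a $K$-invariant smooth map $G\to\mathfrak{p}_o$. Each orbit meets $\{e\}\times\mathfrak{p}_o$ in exactly one point, so this descends to a smooth two-sided inverse of $X\mapsto(\exp X)K$, identifying $G/K\cong\mathfrak{p}_o$. As $\mathfrak{p}_o$ is a vector space it is contractible, whence $G/K$ is contractible and \emph{a fortiori} simply connected. Alternatively, since the retraction $(k,tX)$, $t\colon 1\to 0$, shows that $\Phi$ exhibits the fibre inclusion $K=\Phi(K\times\{0\})\hookrightarrow G$ as a deformation retract, the long exact homotopy sequence of the fibration $K\to G\to G/K$ forces $\pi_n(G/K)=0$ for every $n$.

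The only delicate points are bookkeeping: checking the $\Ad(K)$-invariance of $\mathfrak{p}_o$, and verifying that the orbit-invariant map is genuinely smooth and inverse to $X\mapsto (\exp X)K$. I expect the main step to be the computation of the right $K$-action in the $\Phi$-coordinates; once $G/K$ has been identified with the vector space $\mathfrak{p}_o$, both conclusions are formal.
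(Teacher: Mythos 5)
Your argument is correct and is essentially the paper's own proof: the text disposes of the corollary in one line by noting that the Cartan diffeomorphism $K\times\mathfrak{p}_o\xrightarrow{\sim}G$ identifies $G/K$ with $\mathfrak{p}_o$ and exhibits $K$ as a deformation retract of $G$, which is exactly your route. The extra bookkeeping you supply---the $\Ad(K)$-invariance of $\mathfrak{p}_o$ and the computation $k\exp(X)k_0=(kk_0)\exp(\Ad(k_0^{-1})X)$ identifying the orbit space---is a correct filling-in of details the paper leaves implicit, not a different method.
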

Indeed, $G/K$ is diffeomorphic to $\mathfrak{p}_o$, and so $K$ is a deformation retract of $G$.

\begin{proposition}  Let $(G.K)$ be as above and let $(\widetilde{G},\widetilde{K})$ cover the pair $(G,K)$ with $\widetilde{G}$ simply-connected.  Then $Z(\widetilde{G})\le\widetilde{K}$.
\end{proposition}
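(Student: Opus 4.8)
The plan is to identify the center of $\widetilde{G}$ with the kernel of the adjoint representation and then to read off the Cartan decomposition of a central element. Write $G_o=\Int(\mathfrak{g}_o)$ and $K_o=\Int(\mathfrak{g}_o)\cap O(n)$ as in the Cartan decomposition theorem. The first observation is that, since the adjoint representation of a connected group depends only on its Lie algebra, the composite $\widetilde{G}\to G\xrightarrow{\Ad}G_o$ coincides with the adjoint map $\Ad:\widetilde{G}\to\Int(\mathfrak{g}_o)$ and defines a covering homomorphism, which I denote $\widetilde\rho$. Moreover, because $\widetilde{K}$ is the preimage of $K$ and $K=\Ad^{-1}(K_o)$, one has $\widetilde{K}=\widetilde\rho^{-1}(K_o)$, so the hypotheses of the preceding Proposition are met and
$$\Phi:\widetilde{K}\times\mathfrak{p}_o\to\widetilde{G},\qquad\Phi(k,X)=k\exp X,$$
is a diffeomorphism; in particular every element of $\widetilde{G}$ factors uniquely as $k\exp X$ with $k\in\widetilde{K}$ and $X\in\mathfrak{p}_o$.

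Next I would use that $\widetilde{G}$ is connected to conclude $Z(\widetilde{G})=\ker\big(\Ad:\widetilde{G}\to\Int(\mathfrak{g}_o)\big)=\ker\widetilde\rho$: an element is central exactly when it acts trivially in the adjoint representation, since a connected group is generated by any neighborhood of the identity. Then, given $z\in Z(\widetilde{G})$, I would write $z=k\exp X$ with $k\in\widetilde{K}$ and $X\in\mathfrak{p}_o$ and push this decomposition down by $\widetilde\rho$. Because $\widetilde\rho$ restricts to the identity on Lie algebras, $\widetilde\rho(\exp X)=\exp_{G_o}X$, and hence
$$e=\widetilde\rho(z)=\widetilde\rho(k)\,\exp_{G_o}X,\qquad\widetilde\rho(k)\in K_o,\ X\in\mathfrak{p}_o.$$
This exhibits $\widetilde\rho(k)\exp_{G_o}X$ as a Cartan decomposition of the identity in $G_o$; since the diffeomorphism $\Phi_o:K_o\times\mathfrak{p}_o\to G_o$ of the first Proposition satisfies $e=\Phi_o(e,0)$, uniqueness forces $X=0$ (and $\widetilde\rho(k)=e$). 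Therefore $z=k\in\widetilde{K}$, which is the desired containment $Z(\widetilde{G})\le\widetilde{K}$.

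The routine facts---that $\widetilde\rho$ is a covering and that $\exp$ intertwines with it---are immediate from the general theory, so the only point requiring genuine care is the bookkeeping of the two coverings. I must check that the adjoint map of $\widetilde{G}$ is \emph{precisely} the covering onto $G_o$ for which $\widetilde{K}=\widetilde\rho^{-1}(K_o)$, so that the previous Proposition furnishes the Cartan decomposition on $\widetilde{G}$ while the diffeomorphism $\Phi_o$ on $G_o$ supplies the uniqueness that annihilates the $\mathfrak{p}_o$-component. Once those identifications are in place, the conclusion is a one-line application of the uniqueness of the Cartan decomposition at the identity of the adjoint group $\Int(\mathfrak{g}_o)$.
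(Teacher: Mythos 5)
Your proof is correct, but it takes a genuinely different route from the paper's. The paper exploits simple connectivity to lift the involution $s_o$ to a group morphism $s:\widetilde{G}\to\widetilde{G}$, applies it to a central element $g=k\exp X$ to get $s(g)=k\exp(-X)\in Z(\widetilde{G})$, hence $s(g)^{-1}g=\exp 2X\in Z(\widetilde{G})=\ker\Ad$, and then invokes Chevalley's lemma to promote this single relation to the whole one-parameter group $\exp(tX)\in Z(\widetilde{G})$, forcing $X\in Z(\mathfrak{g}_o)\cap\mathfrak{p}_o=0$. You instead stay entirely inside the adjoint covering: $Z(\widetilde{G})=\ker\bigl(\Ad:\widetilde{G}\to\Int(\mathfrak{g}_o)\bigr)$ by connectedness, and pushing $z=k\exp X$ down exhibits $e=\Ad(k)\exp_{G_o}X$ as a polar factorization of the identity in $\Int(\mathfrak{g}_o)$, which the injectivity of $\Phi_o$ annihilates. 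Your bookkeeping is sound: since $\Ad_{\widetilde{G}}$ factors as $\widetilde{G}\to G\xrightarrow{\Ad}\Int(\mathfrak{g}_o)$, one indeed has $\widetilde{K}=\pi^{-1}(K)=\Ad_{\widetilde{G}}^{-1}(K_o)$, so the covering proposition supplies the Cartan decomposition of $\widetilde{G}$, and the identity $\Ad\circ\exp_{\widetilde{G}}=\exp_{G_o}$ on $\mathfrak{p}_o$ is the standard naturality of $\exp$. Notably, your argument never uses simple connectivity of $\widetilde{G}$: it shows $Z(H)\le\Ad^{-1}(K_o)$ for \emph{any} connected $H$ with $\Lie(H)=\mathfrak{g}_o$, so the paper's later corollary $Z(G)\le K$ drops out at once, and Chevalley's lemma is avoided entirely. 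What the paper's longer route buys is the lifted involution $s$ itself, which it needs anyway in the following corollary (to descend $s$ to $G$ and prove $G_s=K$); what yours buys is brevity, greater generality, and a proof that runs on nothing but the uniqueness of the polar decomposition in the adjoint group.
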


\begin{proof}
Because $\widetilde{G}$ is simply-connected, there exists a Lie group morphism $s:\widetilde{G}\to\widetilde{G}$ such that $s_{*,e}=s_o$.  Since $s$ is an automorphism, $s(Z(\widetilde{G}))=Z(\widetilde{G})$.  Suppose that $k\exp X = g\in Z(\widetilde{G})$ where $k\in\widetilde{K}$ and $X\in\mathfrak{p}_o$.  Then, since $s|_{\widetilde{K}}=\operatorname{id}_{\widetilde{K}}$ and $s|_{\mathfrak{p}_o}=-\operatorname{id}_{\mathfrak{p}_o}$,
$$s(g) = k\exp(-X)\in Z(\widetilde{G}).$$
Thus $s(g)^{-1}g=\exp 2X\in Z(\widetilde{G}) = \ker\operatorname{Ad}$.  By Chevalley's lemma, $\exp(tX)\in Z(\widetilde{G})$ for all $t$.  Hence $X\in Z(\mathfrak{g}_o)\cap\mathfrak{p}_o = 0$ by effectiveness.
\end{proof}

\begin{corollary}
Let $(G,K)$ be as in the theorem.  Then there exists a Lie group involution $s:G\to G$ such that $s_{*,e}=s_o$ and $G_s=K$.
\end{corollary}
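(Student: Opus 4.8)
The plan is to build the involution first on the universal cover of $G$, where every Lie algebra automorphism integrates, and then to push it down to $G$. So first I would pass to the universal cover $\rho:\widetilde{G}\to G$, whose kernel $\Gamma=\ker\rho$ is a discrete normal, hence central, subgroup: $\Gamma\subseteq Z(\widetilde{G})$. Since $\widetilde{G}$ is simply connected and $s_o:\mathfrak{g}_o\to\mathfrak{g}_o$ is a Lie algebra automorphism, the integration theorem for morphisms out of simply connected groups (already invoked in the preceding proposition) gives a unique Lie group morphism $\tilde{s}:\widetilde{G}\to\widetilde{G}$ with $\tilde{s}_{*,e}=s_o$. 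Because $s_o^2=\operatorname{id}$ and $\operatorname{id}_{\widetilde{G}}$ is the unique integral of $\operatorname{id}$, uniqueness forces $\tilde{s}^2=\operatorname{id}$, so $\tilde{s}$ is an involution.

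Next I would descend $\tilde{s}$ to $G$, and here the essential input is the previous proposition together with a reading of its proof. It shows not only that $Z(\widetilde{G})\subseteq\widetilde{K}$, but that every $g\in Z(\widetilde{G})$, written $g=k\exp X$ in the Cartan decomposition of $\widetilde{G}$, necessarily has $X=0$, so $g=k\in\widetilde{K}$, on which $\tilde{s}$ is the identity. Hence $\tilde{s}$ fixes $Z(\widetilde{G})$ pointwise, and in particular $\tilde{s}(\Gamma)=\Gamma$. This is exactly what is needed for descent: setting $s(\rho(\tilde g)):=\rho(\tilde{s}(\tilde g))$ is well defined (if $\rho(\tilde g_1)=\rho(\tilde g_2)$ then $\tilde g_2=\tilde g_1\gamma$ with $\gamma\in\Gamma$, and $\tilde{s}(\tilde g_1\gamma)=\tilde{s}(\tilde g_1)\gamma$), smooth since $\rho$ is a covering, a homomorphism, and satisfies $s^2=\operatorname{id}$ and $s_{*,e}=s_o$.

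Finally I would identify $G_s$ with $K$ using the Cartan decomposition $\Phi:K\times\mathfrak{p}_o\xrightarrow{\sim}G$. For $K\subseteq G_s$: by the corollary to the decomposition theorem $K$ is connected with Lie algebra $\mathfrak{k}_o=\ker(s_o-\operatorname{id})$, so $K$ is generated by $\exp\mathfrak{k}_o$ and $s(\exp Y)=\exp(s_oY)=\exp Y$ for all $Y\in\mathfrak{k}_o$. For the reverse inclusion, take $g\in G_s$ and write $g=k\exp X$ uniquely with $k\in K$, $X\in\mathfrak{p}_o$. Then $g=s(g)=s(k)\exp(s_oX)=k\exp(-X)$, so uniqueness of the decomposition forces $\exp X=\exp(-X)$; since $X\mapsto\exp X$ is injective on $\mathfrak{p}_o$ (it is $\Phi(e,\cdot)$), this gives $2X=0$, whence $X=0$ and $g=k\in K$. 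Therefore $G_s=K$.

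The main obstacle is the descent: a priori there is no reason the lifted involution $\tilde{s}$ should preserve $\ker\rho$, and without that it cannot pass to $G$. It is precisely the preceding proposition — that $Z(\widetilde{G})$ sits inside $\widetilde{K}$ and is therefore fixed by $\tilde{s}$ — that supplies $\tilde{s}(\Gamma)=\Gamma$ and makes the construction legitimate; the fixed-point computation at the end is then routine bookkeeping against the Cartan decomposition.
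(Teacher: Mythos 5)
Your proposal is correct and follows essentially the same route as the paper: integrate $s_o$ on the universal cover, use the preceding proposition ($\ker\rho\le Z(\widetilde{G})\le\widetilde{K}$, on which $\tilde{s}=\operatorname{id}$) to descend to $G$, and then pin down $G_s=K$ via the uniqueness of the Cartan decomposition $g=k\exp X$. Your added remarks—that $\tilde{s}^2=\operatorname{id}$ follows from uniqueness of integration, and that $K\subseteq G_s$ uses connectedness of $K$—merely make explicit steps the paper leaves implicit.
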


\begin{proof}
Consider the diagram
$$\xymatrix{
\widetilde{G}\ar[r]^s\ar[d]^\pi&\widetilde{G}\ar[d]^\pi\\
G\ar@{-->}[r]^s&G
}
$$
We wish to show that there is a (unique) mapping $s$ for which this is commutative.  Note that $\ker\pi\le\widetilde{G}$ is a discrete normal subgroup, which is therefore central.  Thus, by the proposition, $\ker\pi\le Z(\widetilde{G})\le\widetilde{K}$.  Since $s=\operatorname{id}$ on $\widetilde{K}$, $s=\operatorname{id}$ on $\ker\pi$.  Hence $s:\widetilde{G}\to\widetilde{G}$ factors through a unique a group morphism $s : G\to G$.  

Since $s_{*,e}=\operatorname{id}$ on $\mathfrak{k}_o$, $s|_K=\operatorname{id}$.  Thus $K\subset G_s$.  For the reverse inclusion, if $g=k\exp X\in G_s$, then $s(g) = k\exp(-X) = g = k\exp X$, from which we obtain $X=0$, and so $g=k\in K$.
\end{proof}

\begin{corollary}
Let $(G,K)$ be as before.  Then $Z(G)\le K$.
\end{corollary}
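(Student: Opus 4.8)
The plan is to imitate the earlier Proposition that established $Z(\widetilde{G})\le\widetilde{K}$, but to run the argument directly on $G$ using the genuine involution $s:G\to G$ provided by the preceding Corollary (with $s_{*,e}=s_o$ and $G_s=K$). The reason one cannot simply transport the result along the covering $\widetilde{G}\to G$ is that $Z(G)$ may strictly contain the image of $Z(\widetilde{G})$: centrality downstairs only requires commuting with $G$ modulo the (central) kernel of the covering, so a fresh argument on $G$ itself is needed.

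First I would take $g\in Z(G)$ and invoke the Cartan decomposition to write $g=k\exp X$ uniquely with $k\in K$ and $X\in\mathfrak{p}_o$. Since $s$ is the identity on $K=G_s$ and satisfies $s(\exp X)=\exp(s_{*,e}X)=\exp(-X)$ by naturality of the exponential, we obtain $s(g)=k\exp(-X)$, and hence $s(g)^{-1}g=\exp(X)k^{-1}\cdot k\exp(X)=\exp(2X)$.

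Next, because $Z(G)$ is a characteristic subgroup and $s$ is an automorphism, $s(Z(G))=Z(G)$, so $s(g)\in Z(G)$ and therefore $\exp(2X)=s(g)^{-1}g\in Z(G)$. For the connected group $G$ we have $Z(G)=\ker\Ad$, so $\Ad(\exp(2mX))=\operatorname{id}$ for every integer $m$. Under the identification $\mathfrak{g}_o\subset\mathfrak{gl}(n,\mathbb{R})$ in which $\mathfrak{p}_o$ consists of symmetric matrices, $\ad X$ is symmetric; thus the vanishing of the (polynomial) entries of $\Ad(\exp(2mX))-\operatorname{id}=e^{2m\,\ad X}-\operatorname{id}$ for infinitely many $m$ allows Chevalley's Lemma to conclude $e^{t\,\ad X}=\operatorname{id}$ for all $t$, i.e. $\exp(tX)\in Z(G)$ for all $t$. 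Differentiating at $t=0$ gives $\ad X=0$, so $X\in Z(\mathfrak{g}_o)$.

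Finally, since $\mathfrak{g}_o$ is semisimple we have $Z(\mathfrak{g}_o)=0$ (equivalently, effectiveness yields $Z(\mathfrak{g}_o)\cap\mathfrak{p}_o=0$), whence $X=0$ and $g=k\in K$. I expect no serious obstacle here; the only delicate points are the one flagged at the start—that the inclusion must be proved on $G$ rather than inherited from $\widetilde{G}$—and the bookkeeping that $s$ acts as the identity on $K$ and as inversion on $\exp\mathfrak{p}_o$, which is exactly what feeds the Chevalley's Lemma step.
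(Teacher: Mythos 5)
Your proof is correct and is essentially the paper's own argument: the paper leaves this corollary to follow by rerunning the proof of the proposition $Z(\widetilde{G})\le\widetilde{K}$ on $G$ itself, which is legitimate precisely because the preceding corollary supplies the involution $s:G\to G$ with $s|_K=\operatorname{id}$ and $G_s=K$ — exactly the route you take, including the steps $s(g)^{-1}g=\exp(2X)\in Z(G)=\ker\Ad$, Chevalley's lemma, and $Z(\mathfrak{g}_o)\cap\mathfrak{p}_o=0$. Your opening remark about why the result cannot simply be transported along the covering $\widetilde{G}\to G$ is a correct and worthwhile observation, but it does not change the method.
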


\begin{theorem}
Let $M$ be a Riemannian irreducible noncompact connected globally symmetric space.  Then $M$ is simply connected.  Moreover $I(M)_e = \operatorname{Int}\mathfrak{g}_o$, $I(M) = \operatorname{Aut}(\mathfrak{g}_o)$.
\end{theorem}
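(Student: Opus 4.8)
The plan is to set $G=I(M)_e$ and run everything off the Cartan decomposition. Since $M$ is irreducible and noncompact, the orthogonal symmetric Lie algebra $\mathfrak{g}_o=\Lie(I(M))$ is semisimple (Type III or IV) and centerless, so the Cartan decomposition theorem applies to $G$: with $K=\Ad^{-1}(\Int(\mathfrak{g}_o)\cap O(n))$ the map $K\times\mathfrak{p}_o\to G$ is a diffeomorphism, $K$ is connected, $G/K$ is simply connected, and $Z(G)\le K$; moreover the accompanying corollary furnishes an involution $s\colon G\to G$ with $s_{*,e}=s_o$ and $G_s=K$. First I would identify $K$ with the isotropy group. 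The geodesic symmetry $s_o\in I(M)$ conjugates $G$ to itself, and the resulting involution of $G$ has differential $s_o$ at $e$; a homomorphism out of the connected group $G$ is determined by its differential, so this conjugation equals $s$, whence $K=G_s=C_{I(M)_e}(s_o)$. Now $\Stab_{I(M)_e}(o)$ has Lie algebra $\mathfrak{k}_o=\Lie K$ and lies inside $C_{I(M)_e}(s_o)=K$ (every isometry fixing $o$ commutes with $s_o$); since $K$ is connected and $\Stab_{I(M)_e}(o)_e\subseteq K$ have the same Lie algebra, $\Stab_{I(M)_e}(o)_e=K$ and hence $\Stab_{I(M)_e}(o)=K$. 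As $G$ acts transitively on the complete connected manifold $M$, we get $M\cong G/\Stab_G(o)=G/K$, which is simply connected.

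Next I would prove $I(M)_e=\Int\mathfrak{g}_o$. The map $\Ad\colon G\to\Int\mathfrak{g}_o$ is a covering with kernel $Z(G)$ because $\mathfrak{g}_o$ is centerless. Any $z\in Z(G)\le K$ fixes $o$ and commutes with all of $G$, so by transitivity $z$ fixes every point of $M$; effectiveness of the isometry action gives $z=e$. Thus $Z(G)=\{e\}$ and $\Ad$ is an isomorphism $I(M)_e\xrightarrow{\ \cong\ }\Int\mathfrak{g}_o$.

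For the full group I would study $\Ad\colon I(M)\to\Aut(\mathfrak{g}_o)$, the conjugation action on $\mathfrak{g}_o=\Lie I(M)$, and show it is bijective. For injectivity, suppose $\Ad(\phi)=\operatorname{id}$; then $\phi$ centralizes the connected group $G$, so $p=\phi(o)$ is fixed by all of $K=\Stab_G(o)$. Here the geometric hypotheses enter decisively: by noncompactness $M$ is a simply connected complete manifold of nonpositive curvature, i.e.\ a Hadamard manifold, so the compact group $K$ has a convex, hence connected, fixed-point set, whose tangent space at $o$ is $(T_oM)^K=0$ by irreducibility of the isotropy representation; therefore the fixed set is $\{o\}$ and $\phi(o)=o$. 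Then $\phi(g\cdot o)=g\cdot(\phi o)=g\cdot o$ for all $g\in G$, forcing $\phi=\operatorname{id}$. This fixed-point argument is the step I expect to be the main obstacle, since it is the one place where noncompactness and irreducibility must be combined in an essential geometric way rather than through the algebra of the Cartan decomposition.

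Finally, for surjectivity, let $\alpha\in\Aut(\mathfrak{g}_o)$. Then $\alpha\,s_o\,\alpha^{-1}$ is again a Cartan involution, so by the conjugacy of Cartan involutions there is $\psi\in\Int\mathfrak{g}_o$ with $\psi\alpha$ commuting with $s_o$; replacing $\alpha$ by $\psi\alpha$ (allowed because $\Int\mathfrak{g}_o=\Ad(G)\subseteq\Ad(I(M))$) I may assume $\alpha$ preserves the splitting $\mathfrak{g}_o=\mathfrak{k}_o\oplus\mathfrak{p}_o$. Then $\alpha|_{\mathfrak{p}_o}$ is a linear isometry of $T_oM\cong\mathfrak{p}_o$ preserving the curvature $R_o(X,Y)Z=[[X,Y],Z]$ (as $\alpha$ respects brackets), so by Proposition \ref{isometriesdeterminedatapoint} together with Theorem \ref{localglobal} it extends to a global isometry $\phi\in I(M)$ fixing $o$ with $\phi_{*,o}=\alpha|_{\mathfrak{p}_o}$. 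Since $\phi$ fixes $o$ it normalizes $K$, so $\Ad(\phi)$ preserves the decomposition and restricts on $\mathfrak{p}_o$ to $\phi_{*,o}=\alpha|_{\mathfrak{p}_o}$; because $\mathfrak{k}_o=[\mathfrak{p}_o,\mathfrak{p}_o]$ and both maps are automorphisms, they also agree on $\mathfrak{k}_o$, giving $\Ad(\phi)=\alpha$. Hence $\Ad\colon I(M)\to\Aut(\mathfrak{g}_o)$ is a bijective homomorphism of Lie groups, which establishes $I(M)=\Aut(\mathfrak{g}_o)$.
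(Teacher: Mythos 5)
Your proof is correct, and it rests on the same backbone as the paper's --- the Cartan decomposition of $G=I(M)_e$ to get $M\cong G/K$ diffeomorphic to $\mathfrak{p}_o$ (hence simply connected), and triviality of $Z(G)$ to get $I(M)_e=\Int(\mathfrak{g}_o)$ --- but your handling of $I(M)=\Aut(\mathfrak{g}_o)$ takes a genuinely different route. The paper normalizes each isometry by a transvection so that it fixes $o$, identifies $I(M)_o$ with $\Aut(\mathfrak{g}_o)\cap O(n)$ (an identification that silently carries the content of Proposition \ref{isometriesdeterminedatapoint} and Theorem \ref{localglobal}), and then matches the two polar decompositions $I(M)_o\times\mathfrak{p}_o\cong I(M)$ and \eqref{AutCartan}; this bookkeeping also yields the maximal-compactness assertions as byproducts. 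You instead prove bijectivity of $\Ad\colon I(M)\to\Aut(\mathfrak{g}_o)$ head-on: surjectivity via conjugacy of Cartan involutions together with the curvature-preservation criterion and global extension, making explicit what the paper leaves implicit (including the use of $\mathfrak{k}_o=[\mathfrak{p}_o,\mathfrak{p}_o]$ to propagate agreement from $\mathfrak{p}_o$ to $\mathfrak{k}_o$); and injectivity via Hadamard geometry, where the fixed set of $K$ is convex with tangent space $(T_oM)^K=0$ and hence equals $\{o\}$. That injectivity step --- which you rightly flag as the crux --- is correct, but nonpositive curvature is dispensable there: since $X\mapsto\exp(X)\cdot o$ is a diffeomorphism $\mathfrak{p}_o\to M$ (already furnished by your Cartan decomposition), writing $\phi(o)=\exp(X)\cdot o$ and applying $k\in K$ gives $\exp(\Ad(k)X)\cdot o=\exp(X)\cdot o$, so $\Ad(k)X=X$ for all $k$, and $X\in\mathfrak{p}_o^K=0$ by irreducibility and effectiveness. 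In sum, your argument is more explicit and self-contained on the bijectivity of $\Ad$, while the paper's decomposition-matching is terser and delivers the statement that $I(M)_o$ is a maximal compact subgroup in the same stroke.
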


{\em Remark.}  It is interesting to compare this result to the {\em Cartan--Hadamard theorem} which states that a complete Riemannian manifold of nonpositive sectional curvature is covered by a vector space.

\begin{proof}
The isometry group $I(M)$ acts transitively on $M$.  Let $\sigma_o$ be the geodesic symmetry on $M$, and $s_o : I(M)\to I(M)$ the conjugation map by $\sigma_o$: $\phi\mapsto \sigma_o\phi\sigma_o$.  Note that an isometry commutes with $\sigma_o$ if and only if it fixes $o$, so $I(M)_o=I(M)_{s_o}$.

The group $I(M)$ is semisimple, noncompact, and
$$M\cong I(M)_e/I(M)_{e,o}.$$
Hence $Z(I(M)_e)\subset I(M)_{e,o}$.  In fact, this is true for all $o\in M$.  But that is only possible if $Z(I(M)_e)=\operatorname{id}_M$.  Thus $I(M)_e=\operatorname{Int}(\mathfrak{g}_o)$.  There is, moreover, a map $I(M)\to\operatorname{Aut}\mathfrak{g}_o$, given as follows.  Let $\phi\in I(M)$ and suppose that $\phi(o)=\overline{o}$.  We can write $\overline{o}=\exp(X).o$ for some unique $X\in\mathfrak{p}_o$ (since $\mathfrak{p}_o$ is diffeomorphic to $M$ via the map $\exp(\cdot).o$).  The map $\phi \mapsto \exp(-X)\circ\phi$ defines the map $I(M)\to\operatorname{Aut}\mathfrak{g}_o$.  This mapping is compatible with the Cartan decomposition
$$I(M)_o\times\mathfrak{p}_o\xrightarrow{\cong} I(M).$$
Now 
$$I(M)_o = \{ f : \mathfrak{g}_o\xrightarrow{\cong}\mathfrak{g}_o \mid fs_o=s_of\} = \operatorname{Aut}(\mathfrak{g}_o)\cap O(n).$$
By applying the Cartan decomposition to $\operatorname{Aut}(\mathfrak{g}_o)$, there is a diffeomorphism
\begin{equation}\label{AutCartan}
(\operatorname{Aut}(\mathfrak{g}_o)\cap O(n))\times\mathfrak{p}_o\xrightarrow{\cong}\operatorname{Aut}(\mathfrak{g}_o).
\end{equation}
Hence this shows that $I(M)_o$ is a maximal compact subgroup of $\operatorname{Aut}(\mathfrak{g}_o)$ and therefore also a maximal compact subgroup of $I(M)$.  Likewise, $I(M)_{e,o}$ is a maximal compact subgroup of $I(M)_e$.  Together with \eqref{AutCartan}, this implies that $I(M)_e = \operatorname{Int}\mathfrak{g}_o$, $I(M) = \operatorname{Aut}(\mathfrak{g}_o)$.
\end{proof}

\chapter{Compact semisimple Lie groups}\label{CompactSemisimple}
\section{Introduction}
Recall that Riemannian globally symmetric spaces of type II are pairs $(G,s_0)$ where $G$ is a connected compact semisimple Lie groups equipped with its unique bi-invariant Riemannian metric of unit volume, and $s_0$ is the involution at the identity $s_0(g) = g^{-1}$.  Let $G_{sc}$ denote the simply connected cover of $G$.  This is also compact, and the identity component of the isometry group $\operatorname{I}(G)_e$ is covered by $G_{sc}\times G_{sc}$ which acts via $(g,h)\cdot x = gxh^{-1}$.  The Cartan involution in $G_{sc}\times G_{sc}$ is the flip map $s_e(g,h)=(h,g)$.  The Lie algebra of the isometry group is the same as that of $G_{sc}\times G_{sc}$, and this is $\mathfrak{g}_0\oplus\mathfrak{g}_0$.  The flip map is $s_{e,*}(x\oplus y) = y\oplus x$, and the eigenspaces are
$$\mathfrak{k}_0 = \Delta(\mathfrak{g}_0) = \{(x,x)\mid x\in\mathfrak{g}_0\},\quad \text{and}\quad\mathfrak{p}_0 = \{(x,-x)\mid x\in\mathfrak{g}_0\}.$$

A classification of Riemannian symmetric spaces of type II is tantamount to a classification of the compact connected semisimple Lie groups.  This classification is achieved by first knowing the classification of complex semisimple Lie groups.  Each complex semisimple Lie group has a unique compact real form, which has a unique simply connected universal cover (also compact).  Finally, any compact $G$ is the quotient of $G_{sc}$ by a discrete subgroup, which is necessarily in the center.  Therefore, understanding type II symmetric spaces ultimately rests on being able to identify the center in a compact simply-connected semisimple Lie group.

The approach is to study the geometry of maximal tori in $G$ and $G_{sc}$.  These tori give rise in a natural manner to various lattices, and a systematic study of these lattices reveals the center of $G_{sc}$.  The Weyl group acts by conjugation on these tori, and this group action lifts to an action of the so-called affine Weyl group on the Lie algebra of the torus.

Many questions can be resolved with a careful study of the fundamental alcove of the affine Weyl group.  For instance, any two maximal tori in $G$ are conjugate, and any element of $G$ must lie inside some maximal torus, so $G=\cup_{g\in G} g^{-1}Tg$ for any maximal torus $T$.  Therefore, any element of $G$ is represented up to conjugacy by a unique element of the fundamental alcove.  This gives a normal form for elements of $G$ when acting by conjugation on itself---compare with the Jordan form of a matrix in $\operatorname{GL}(n,\mathbb{C})$.

\section{Compact groups}
Let $G$ be a compact connected Lie group.  The group $G\times G$ acts naturally on $G$ via the left action
$$(a,b)x = axb^{-1}.$$
As a global Riemannian symmetric space, there is an isomorphism
$$G\cong G\times G/\Delta(G)$$
where $\Delta(G)$ is the diagonal subgroup $\Delta(G) = \{(x,x)\mid x\in G\}$.  The involution in this symmetric space is the {\em flip} mapping
$$s:G\times G \to G\times G,\qquad s(x,y) = (y,x).$$
Let $\mathfrak{g}_o=\Lie(G)$.  Then $\Lie(G\times G)=\mathfrak{g}_o\oplus\mathfrak{g}_o$ and the differential of $s$ at the identity is
$$s_o := s_{*,e} : x\oplus y \mapsto y\oplus x.$$
As usual, decompose $\Lie(G\times G)$ into the eigenspaces for $s_o$
$$\mathfrak{g}_o\oplus\mathfrak{g}_o = \underbrace{\Delta(\mathfrak{g}_o)}_{\mathfrak{k}_o}\oplus \underbrace{\{ (X,-X)\mid X\in\mathfrak{g}_o\}}_{\mathfrak{p}_o}.$$

The composite $\mathfrak{g}_o\xrightarrow{X\mapsto (X,-X)} \mathfrak{p}_o\xrightarrow{\exp} G\times G\xrightarrow{(x,y)\mapsto (x,y)e} G$ sends $X\mapsto (X,-X)\mapsto (\exp(X),\exp(-X))\mapsto \exp(X)\exp(-X)^{-1} = \exp(2X)$.  Thus in particular $\mathfrak{g}_o\xrightarrow{\exp} G$ is surjective.

Let $\mathfrak{t}_o\subset\mathfrak{g}_o$ be a maximal toral subalgebra.  Then
$$\mathfrak{a}_o = \{(X,-X)\mid X\in\mathfrak{t}_o\}\subset\mathfrak{p}_o$$
be a flat subspace.  The subgroup
$$T = \exp\mathfrak{t}_o = \exp\mathfrak{a}_o\cdot e \subset G$$
is closed.  So $T\le G$ is an abelian, closed, connected subgroup (a {\em torus}).  Furthermore, $T$ is a maximal torus.

By Corollary \ref{coverbyflats}, $\mathfrak{p}_o = \bigcup_{k\in \Delta(G)} \Ad(k)\mathfrak{a}_o$ and so
$$ \mathfrak{g}_o = \bigcup_{g\in G} \Ad(g)\mathfrak{t}_o$$
which, at the Lie group level goes over to
\begin{equation}\label{coverbytori}
G = \bigcup_{g\in G} gTg^{-1}.
\end{equation}

\subsection{The center of a compact group}
\begin{definition}
Let $K$ be a compact abelian Lie group.  We say that $x\in K$ is a topological generator if $K = \operatorname{cl} \langle x^n\mid n\in \mathbb{Z}\rangle $.
\end{definition}

{\em Exercise.}  For example, a topological generator of the standard torus $\mathbb{R}^2/\mathbb{Z}^2$ is any point with irrational coordinates.

{\em Remarks:}  \begin{enumerate}
\item If $K$ is connected, then $K$ has topological generators.  Indeed, $\exp : \mathfrak{k}\to K$ is a group homomorphism, since $K$ is abelian, and so $\ker\exp\subset\mathfrak{k}$ is a lattice.  Any element that is irrational with respect to this lattice is a topological generator.
\item If $K/K_e$ is cyclic, then $K$ has topological generators.  Indeed, suppose that
$$K_e =  \operatorname{cl} \langle k_o^n\mid n\in \mathbb{Z}\rangle $$
and let $k_1\in K$ be such that $k_1K_e$ is a generator of $K/K_e$ (a finite cyclic group of order $m$).  Choose $a\in K_e$ such that $a^m=k_1^mk_o\in K_e$ (possible since the exponential map is surjective onto $K_e$).  Let $b=a^{-1}k_1\in K$.  Now $b^m=a^{-m}k_1^{m} = k_o$.  So $\operatorname{cl}\langle b^n\mid n\in \mathbb{Z}\rangle$ contains $K_e$.  In particular, it contains $a$, and hence also $k_1=ab$, which along with $K_e$ generates $K$.  Thus $\operatorname{cl}\langle b^n\mid n\in \mathbb{Z}\rangle = K$.
\end{enumerate}

Let $S\subset G$ be a torus, and $x\in C_G(S)$.  Then $K=\operatorname{cl}\langle S,x\rangle$ is  closed abelian subgroup of $G$ (and therefore compact).  Note that $K/K_e$ is cyclic, being generated by $xK_e$.  So $K$ has a topological generator $g$.  Since $g\in T$ for some maximal torus $T$, we have
$$C_G(S) =\hspace{-18pt} \bigcup_{\begin{matrix}T\supset S\\ \text{$T$ maximal torus}\end{matrix}}\hspace{-18pt} T.$$

\begin{corollary}
\mbox{}
\begin{enumerate}
\item $\displaystyle{C_G(S) =\hspace{-18pt} \bigcup_{\begin{matrix}T\supset S\\ \text{$T$ maximal torus}\end{matrix}}\hspace{-18pt} T}$ \quad is connected.

\vspace{12pt}

\item $C_G(x)$ is connected for all $x\in G$.
\item If $T$ is a maximal torus, then $C_G(T) = T$.  In particular, $T$ is a maximal abelian subgroup of $G$.
\end{enumerate}
\end{corollary}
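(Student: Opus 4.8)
The plan is to read (1) and (3) straight off the union formula $C_G(S)=\bigcup_{T\supseteq S}T$ established just above, and to treat (2) as the genuinely delicate point. For (1): the union runs over maximal tori $T$ containing $S$, each of which is connected and contains $S\ni e$; a union of connected subsets all passing through the common point $e$ is connected, so $C_G(S)$ is connected. For (3): if $T$ is already maximal, then the only maximal torus containing $T$ is $T$ itself, so the formula collapses to $C_G(T)=T$. The maximal-abelian assertion then follows formally: any abelian subgroup $A$ with $T\subseteq A$ has every element commuting with all of $T$, hence $A\subseteq C_G(T)=T$, forcing $A=T$.

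For (2) the first move is to reduce the claim to the statement that $x$ shares a maximal torus with every element commuting with it, so that $C_G(x)=\bigcup_{T\ni x}T$ becomes again a union of connected sets through the common point $x$. Concretely, I would fix by \eqref{coverbytori} a maximal torus $T$ of $G$ with $x\in T$, set $C=C_G(x)_e$, and note $T\subseteq C$. Given $y\in C_G(x)$, conjugation by $y$ preserves $C$, since $yCy^{-1}=C_G(yxy^{-1})_e=C_G(x)_e=C$; write $\alpha=\Int(y)|_C$ for the resulting automorphism of the compact connected group $C$. If one knows $\alpha$ is \emph{inner}, say $\alpha=\Int(c)|_C$ with $c\in C$, then $c^{-1}y$ centralizes $C$, hence centralizes $T$, so by part (3) $c^{-1}y\in C_G(T)=T\subseteq C$ and therefore $y\in cT\subseteq C$. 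This gives $C_G(x)=C$, i.e.\ connectedness.

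The hard part is precisely the claim that $\alpha$ is inner, and this is where the hypotheses bite: the assertion is true when $G$ is simply connected (it is essentially Steinberg's theorem that centralizers of elements are connected in the simply connected case; equivalently, the stabilizer of $x$ in the Weyl group is generated by the reflections already fixing $x$, which uses the identification of $\ker(\exp|_{\mathfrak t})$ with the coroot lattice available only for simply connected $G$). Without simple connectivity the outer part of $\alpha$ can be nontrivial: for $G=SO(3)$ and $x$ a rotation by $\pi$ one has $C_G(x)\cong O(2)$, which is disconnected, so (2) should be read with $G$ simply connected—the case relevant to type II spaces and to locating the center of $G_{sc}$. I therefore expect the step showing $\alpha$ is inner to be the sole real obstacle, with everything else being the bookkeeping above.
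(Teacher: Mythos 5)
Your handling of (1) and (3) is exactly what the text intends: the corollary is stated without proof because both are immediate from the displayed union $C_G(S)=\bigcup_{T\supset S}T$ --- a union of connected tori all containing $S$ (hence $e$) is connected, and when $S=T$ is itself maximal the union collapses since any torus containing a maximal torus equals it; the maximal-abelian consequence is then formal, as you say. Nothing further is needed there.

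On (2) you have correctly identified a genuine defect in the statement itself: it is \emph{not} a consequence of the union formula, and it is false for general compact connected $G$. Your example is right, $C_{SO(3)}(x)\cong O(2)$ for $x$ a rotation by $\pi$, and the precise failure point in the preceding argument is worth naming: for $y\in C_G(x)$ the group $K=\operatorname{cl}\langle x,y\rangle$ has component group generated by \emph{two} elements (in the example $(\mathbb{Z}/2)^2$), so the topological-generator lemma, which requires $K/K_e$ cyclic, does not apply, and $x$ and $y$ need not lie in a common maximal torus. The statement should therefore carry the hypothesis that $G$ is simply connected, and indeed the paper only proves it in that form much later, as the corollary to the theorem in the section on fixed points of involutions (via the remark that the theorem holds for arbitrary automorphisms, applied to $\Int(x)$).

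Where your proposal diverges from the paper is in the proof of the corrected statement, and there it has a genuine gap --- the one you flag yourself. Your reduction is sound bookkeeping: if $\alpha=\Int(y)|_C$ is inner, say $\alpha=\Int(c)|_C$ with $c\in C=C_G(x)_e$, then $c^{-1}y$ centralizes $C\supseteq T$, so by (3) $c^{-1}y\in C_G(T)=T\subseteq C$. But the innerness of $\alpha$ is essentially the theorem being proved (Steinberg-type connectedness in disguise), so citing it leaves the entire content unestablished. The paper's eventual argument does not pass through innerness at all: it writes a fixed regular element as $g=\exp(2\pi iX)$ with $X$ interior to the fundamental alcove, observes that $\sigma_{*,e}X=X+Z$ with $Z\in L_G$ while $\sigma_{*,e}$ carries alcoves to alcoves, forcing $Z=0$ and yielding the path $t\mapsto\exp(2\pi itX)$ inside $G_\sigma$; non-regular elements are then reduced to this case by perturbing within a torus of the fixed group. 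The simple-connectivity hypothesis is what makes the alcove geometry match regularity there (your $SO(3)$ element sits on a wall of the $SO(3)$-alcove, which is exactly how the counterexample evades the argument). So: (1) and (3) complete and as intended; your critique of (2) is correct and valuable; your positive argument for (2) in the simply connected case is an honest reduction but not a proof.
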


\begin{proposition}
$Z(G) = \bigcap_{\text{$T$ maximal torus}}T$.
\end{proposition}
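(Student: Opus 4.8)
The plan is to establish the two inclusions separately; each follows almost immediately from the structural results already proved, so the argument is really an exercise in assembling them in the right order. I will use two ingredients: the covering formula \eqref{coverbytori}, which guarantees that every element of $G$ lies in some maximal torus (namely a conjugate $gTg^{-1}$, itself a maximal torus), and part (3) of the preceding Corollary, which asserts $C_G(T)=T$ for any maximal torus $T$.

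First I would prove $Z(G)\subseteq\bigcap_T T$. Fix an arbitrary maximal torus $T$ and let $z\in Z(G)$. Since $z$ is central it commutes with every element of $G$, in particular with all of $T$, so $z\in C_G(T)$. By the preceding Corollary, $C_G(T)=T$, and therefore $z\in T$. As $T$ was an arbitrary maximal torus, $z$ lies in the intersection of all of them, giving the inclusion. (Alternatively, one could avoid invoking $C_G(T)=T$ by noting that $\operatorname{cl}\langle T,z\rangle$ is an abelian subgroup containing the maximal abelian subgroup $T$, forcing $z\in T$; but routing through $C_G(T)=T$ is cleaner.)

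For the reverse inclusion $\bigcap_T T\subseteq Z(G)$, let $x$ lie in every maximal torus and let $g\in G$ be arbitrary. By \eqref{coverbytori} there is a maximal torus $T'$ (a conjugate of a fixed $T$) with $g\in T'$. Since $x$ belongs to every maximal torus, in particular $x\in T'$, and because $T'$ is abelian we get $xg=gx$. As $g$ was arbitrary, $x$ commutes with all of $G$, so $x\in Z(G)$. Combining the two inclusions yields the equality.

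There is no genuine obstacle here: the statement is essentially a corollary of the fact that conjugates of a maximal torus cover $G$ together with $C_G(T)=T$. The only point requiring a moment's care is the reverse inclusion, where one must remember that \eqref{coverbytori} places each $g$ inside an \emph{honest} maximal torus (the conjugate $g'T(g')^{-1}$), not merely inside the fixed reference torus $T$; this is exactly what lets the hypothesis ``$x$ lies in every maximal torus'' be applied to that conjugate.
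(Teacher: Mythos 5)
Your proof is correct, and your reverse inclusion is word-for-word the paper's argument: an arbitrary $y\in G$ lies in some maximal torus by \eqref{coverbytori}, which also contains $x$ by hypothesis, and commutativity of the torus finishes it. Where you diverge is the forward inclusion. The paper places $z\in Z(G)$ in \emph{some} maximal torus $T$ via \eqref{coverbytori}, observes $z=gzg^{-1}\in gTg^{-1}$ for every $g$, and then invokes the fact that any two maximal tori are conjugate to conclude $z$ lies in all of them. You instead go through $C_G(T)=T$ (part (3) of the immediately preceding Corollary): $z$ is central, hence centralizes each maximal torus $T$, hence lies in it. Both routes are sound, and each rests on a result already available at this point in the text. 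Yours has the modest advantage of bypassing the conjugacy theorem for maximal tori, which the paper uses here without having formally stated it at the group level (it is only derived implicitly from the conjugacy of flat subspaces via the type II correspondence); the paper's route, conversely, needs no centralizer information, only the covering \eqref{coverbytori} plus conjugacy. Your parenthetical alternative --- that $\operatorname{cl}\langle T,z\rangle$ is abelian and contains the maximal abelian subgroup $T$, forcing $z\in T$ --- is also fine, since maximality of $T$ among abelian subgroups is part of the same Corollary.
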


\begin{proof}
For one inclusion, let $z\in (G)$ be given.  There exists a maximal torus $T$ such that $z\in T$.  But for any $g\in G$, $z=gzg^{-1}=gTg^{-1}$, and so $z\in \bigcap_{g\in G} g^{-1}Tg$.  Since any two maximal tori are conjugate by an element of $G$, this implies $Z(G) \subset \bigcap_{\text{$T$ maximal torus}}T$.

For the opposite inclusion, let $x\in \bigcap_{\text{$T$ maximal torus}}T$ and $y\in G$.  Then $y\in T$ form some maximal torus $T$, and yet also $x\in T$ since it is in every maximal torus.  Thus $xy=yx$, and so $x\in Z(G)$.
\end{proof}

\section{Weyl's theorem}\index{Weyl's theorem}
In the next section, for a compact connected semisimple group $G$, we shall study the adjoint group $\Ad G=G_{\ad}$ and the universal covering group $G_{sc}$.  These are compact groups, by a theorem of Hermann Weyl
\begin{theorem}
Let $G$ be a compact connected semisimple Lie group.  Then the fundamental group of $G$ is finite.\end{theorem}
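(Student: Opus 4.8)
The plan is to realize $G$ as a Riemannian manifold of strictly positive Ricci curvature and then invoke the Bonnet--Myers theorem. First I would equip $G$ with the bi-invariant metric $g = -B$, where $B$ is the Killing form of $\mathfrak{g}_o = \Lie(G)$. Since $G$ is compact and semisimple, $B$ is negative definite (by the Killing criterion quoted in the footnote above), so $g$ is a genuine positive-definite Riemannian metric; its bi-invariance is immediate from the $\ad$-invariance of $B$.

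Next I would compute the curvature of this metric using the standard fact that for a bi-invariant metric the Levi--Civita connection on left-invariant fields is $\nabla_X Y = \tfrac12 [X,Y]$, which follows from the Koszul formula together with the skew-symmetry of $\ad(X)$ with respect to $g$. Feeding this into the definition of $R$ and applying the Jacobi identity gives $R(X,Y)Z = -\tfrac14 [[X,Y],Z]$. Tracing against a $g$-orthonormal basis $\{e_i\}$ then yields $\operatorname{Ric}(X,X) = \tfrac14 \sum_i |[X,e_i]|^2$. Because $\sum_i |[X,e_i]|^2 = -\tr(\ad(X)^2) = -B(X,X) = g(X,X)$, the metric $g$ is Einstein with $\operatorname{Ric} = \tfrac14\, g$; in particular the Ricci curvature is bounded below by the positive constant $\tfrac14$.

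Finally I would pass to the universal cover $\widetilde{G}$, which is again a Lie group and hence complete, and carries the pullback metric with the same local curvature, so $\operatorname{Ric}_{\widetilde{G}} \geq \tfrac14\, g$ as well. The Bonnet--Myers theorem then forces $\widetilde{G}$ to be compact. Since the deck transformation group, which is isomorphic to $\pi_1(G)$, acts freely and properly discontinuously on the compact manifold $\widetilde{G}$, it must be finite, which is the assertion.

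I expect the curvature computation in the second paragraph to be the main obstacle: although each step is routine, assembling the connection formula, the Jacobi reduction of $R$, and the trace identity $\sum_i |[X,e_i]|^2 = -B(X,X)$ with consistent sign conventions for $R$ and $\operatorname{Ric}$ is where the real content lies. A secondary point is that the Bonnet--Myers theorem is not developed in the preceding text and would need to be invoked as a standard result of Riemannian comparison geometry.
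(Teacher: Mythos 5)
Your argument is correct, but it is genuinely different from the one in the text. You prove the theorem by Riemannian comparison geometry: equip $G$ with the bi-invariant metric $g=-B$ (positive definite by the Killing criterion), compute $\nabla_XY=\tfrac12[X,Y]$, $R(X,Y)Z=-\tfrac14[[X,Y],Z]$, and $\operatorname{Ric}=\tfrac14 g$ --- all of which check out with the curvature convention $R(X,Y)=[\nabla_X,\nabla_Y]-\nabla_{[X,Y]}$ used here --- and then invoke Bonnet--Myers on the universal cover. The text instead argues cohomologically: $\pi_1(G)$ is central, hence abelian, and finitely generated since $G$ is compact; averaging a closed $1$-form over Haar measure reduces $H^1(G,\mathbb{R})$ to left-invariant closed forms, and the Maurer--Cartan equation shows such a form kills $[\mathfrak{g},\mathfrak{g}]=\mathfrak{g}$, so $H^1(G,\mathbb{R})=0$ and $\pi_1(G)=H_1(G,\mathbb{Z})$ is finitely generated torsion, hence finite. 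Both proofs consume semisimplicity in equivalent but differently packaged ways: you use negative definiteness of $B$ to get $\operatorname{Ric}>0$, the text uses perfectness of $\mathfrak{g}$ to kill invariant $1$-forms. What your route buys is stronger geometric output --- the universal cover is shown outright to be compact, with an explicit diameter bound, and you never need the finite generation or abelianness of $\pi_1$ --- at the cost of importing Bonnet--Myers, which, as you note, is nowhere developed in these notes, whereas the text's proof is self-contained modulo de Rham theory and is closer in spirit to the Hodge-theoretic exercise that follows it. One small point to tighten: ``$\widetilde{G}$ is a Lie group and hence complete'' should be ``$\widetilde{G}$ with the pulled-back metric is Riemannian homogeneous (left translations are isometries of an invariant metric), hence complete'' --- or simply cite that a Riemannian covering of a complete manifold is complete.
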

\begin{proof}[Proof sketch]\footnote{This proof is loosely based on an idea in Chevalley and Eilenberg, Cohomology theory of Lie groups and Lie algebras, {\em Trans. AMS}, 63 (1):86--124, 1948.  I am grateful to Alexey Solovyev for pointing it out.}
The fundamental group is the kernel of the universal covering map $G_{sc}\to G$.  It is a discrete subgroup of $G_{sc}$, which is therefore central, since for each fixed $\gamma\in\pi_1(G)$, the mapping $g\mapsto g\to g\gamma g^{-1}$ is a continuous map from $G\to \pi_1(G)$, but $G$ is connected and $\pi_1(G)$ is discrete, so this must reduce to the constant map. We shall show that $\pi_1(G)=H_1(G,\mathbb{Z})$ is pure torsion, and the result will follow since the fundamental group of a compact topological space is finitely-generated.  For this, it is enough to show that the first cohomology group $H^1(G,\mathbb{R})$ vanishes, since there is a perfect pairing between $H_1(G,\mathbb{R})=H_1(G,\mathbb{Z})\otimes_{\mathbb{Z}}\mathbb{R}$ and $H^1(G,\mathbb{R})$.

Let $\omega$ be a closed differential form on $G$.  Then there is a left $G$-invariant closed differential form in the same de Rham cohomology class as $\omega$.  Indeed, first note that for every $g\in G$, $L_g^*\omega$ and $\omega$ lie in the same cohomology class, since $g$ can be connected to the identity by a smooth path.  Thus $\omega$ is cohomologous to its average over the group with respect to the left Haar measure, which is left invariant.

Let $\omega^i$ be a basis of left-invariant one-forms on $G$.  The equation of Maurer--Cartan is
$$d\omega^i = -c_{jk}^i \omega^j\wedge\omega^k$$
where $c_{jk}^i$ are the structure constants of $\mathfrak{g}$ relative to the dual basis of $\omega^i$.  A left-invariant one-form $\omega = \sum_i a_i\omega^i$ is closed if and only if
$$\sum_i a_i c^i_{jk} = 0.$$
This holds if and only if $\omega([X,Y])=0$ for all $X,Y\in\mathfrak{g}$.  That is, $\omega([\mathfrak{g},\mathfrak{g}])=0$.  But $\mathfrak{g}$ is semisimple, and so is equal to its derived subalgebra.  Hence $\omega=0$.
\end{proof}

The following alternative approach is more in the spirit of representation theory\commentx{ and Chapter \ref{Analysis}}.  See Hodge, {\em The theory and applications of harmonic integrals}:

{\em Exercise.}  Prove this in the following different way.  Fix a bi-invariant metric on $G$, and show that the Hodge Laplacian on 1-forms is the Casimir operator for the representation of $G$ on $L^2(G,\mathfrak{g}^*)$.  Show that a 1-form on $G$ is invariant if and only if it is harmonic.  Then apply the Hodge theorem and the argument in the last paragraph to show that $H^1(G,\mathbb{R})=0$.

\section{Weight and root lattices}\index{Weight lattice}\index{Root lattice}\index{Unit lattice}
Let $G$ be a compact connected semisimple Lie group and $T\subset G$ a maximal torus with Lie algebra $i\mathfrak{t}_0$.  The exponential map $\exp : i\mathfrak{t}_0\to T$ is a homomorphism of the additive group $i\mathfrak{t}_0$ to the multiplicative group $T$.  The kernel is a lattice, $\ker\exp = 2\pi i L_G \subset i\mathfrak{t}_0$, called the {\em unit lattice}. The dual lattice to $L_G$ is defined by 
$$\Lambda_G = \{\alpha\in \mathfrak{t}_0^*\mid \alpha(x) \in\mathbb{Z} \text{\ for all $x\in L_G$}\},$$ 
and is called the {\em weight lattice}.

Let $G_{sc}\xrightarrow{\pi} G$ be the simply connected cover of $G$, and let $G_{\ad}\subset GL(\mathfrak{g}_0)$ be the image of the adjoint representation (also a compact Lie group).  Since $\ker\pi$ is a discrete normal subgroup of $G_{sc}$, it must be central.\footnote{Indeed, let $\Gamma=\ker\pi$.  If $\gamma\in\Gamma$, then $\gamma(g) = g^{-1}\gamma g\in\Gamma$ defines a continuous function from the connected space $G$ to the discrete space $\Gamma$.  It must therefore be constant: $\gamma(g)=g^{-1}\gamma g\equiv\gamma$ for all $g$.  So $\gamma\in Z(G)$.}  Therefore there are surjective maps
$$G_{sc}\xrightarrow{\pi} G \xrightarrow{\ad} G_{\ad}.$$
All of these groups have the same Lie algebra and $\pi$ and $\ad$ commute with the exponential maps.  Moreover $\pi^{-1}(T)$ and $\ad(T)$ are both maximal tori in $G_{sc}$ and $G_{\ad}$, respectively.  Hence, relative to these tori, there is an inclusion of the kernels of the exponential maps:
$$L_{G_{sc}}\subset L_G\subset L_{G_{\ad}}$$
and dually
$$\Lambda_{G_{\ad}}\subset\Lambda_G\subset\Lambda_{G_{sc}}.$$
The center of $G$ and fundamental group of $G$ can be determined from these various lattices:
\begin{align*}
Z(G) &= L_G/L_{G_{\ad}} = \Lambda_{G_{\ad}}/\Lambda_G\\
\pi_1(G) &= L_{G_{sc}}/L_G = \Lambda_G/\Lambda_{G_{sc}}.
\end{align*}
So, to classify all possible compact groups with a given Lie algebra, it is enough to study these lattices.

Here we fix some notation that will help to relate these lattices to the standard ones that arise in representation theory.  If $\Phi=\{\alpha_1,\dots,\alpha_r\}$ is any root system in a Euclidean space $E$, then let $L(\Phi)$ be the lattice generated by $\Phi$ (as a $\mathbb{Z}$-module).  Let $L(\Phi^\vee)$ denote the lattice generated by the dual root system $\Phi^\vee = \{\alpha_1^\vee,\dots,\alpha_r^\vee\}$, where $\alpha^\vee\in E^*$ is defined by
$$\alpha^\vee(\beta) = 2\frac{(\alpha,\beta)}{(\alpha,\alpha)}.$$
Finally, let $\widehat{L}(\Phi)$ be the {\em dual lattice}, defined by
$$\widehat{L}(\Phi) = \{v\in E^* \mid v(\alpha) \in\mathbb{Z}\ \ \text{for all $\alpha\in\Phi$}\}.$$

Specialize now to the case where $\Phi = \Phi(\mathfrak{g}_0,\mathfrak{t}_0)\subset i\mathfrak{t}_0^*$ is  the root system for the adjoint representation of $\mathfrak{g}_0$ on $\mathfrak{t}_0$.  Then
\begin{align*}
Q &= \Lambda_{G_{ad}} = L(\Phi)\\
P &= \Lambda_{G_{sc}} = \widehat{L}(\Phi^\vee)\\
Q^\vee &= L_{G_{sc}} = L(\Phi^\vee)\\
P^\vee &= L_{G_{ad}} = \widehat{L}(\Phi).
\end{align*}
The letters $P$ for the weight lattice and $Q$ for the root lattice here are traditional.  The last equation in particular means that the coroot lattice $P^\vee$ is generated as follows.  Let $\Delta=\{\alpha_1,\dots,\alpha_r\}$ be a base of the root system $\Phi$.  The {\em fundamental coweights} are $\lambda_{\alpha_1}^\vee,\dots,\lambda_{\alpha_r}^\vee$ defined by
$$\lambda_{\alpha_i}^\vee(\alpha_j) = \delta_{ij}.$$
Then $P^\vee$ is generated by $\{\lambda_{\alpha_1}^\vee,\dots,\lambda_{\alpha_r}^\vee\}$.

\section{The Weyl group and affine Weyl group}\index{Weyl group}
The normalizer of $T$ in $G$ acts on $T$ by conjugation, and this action lifts to the adjoint action on $i\mathfrak{t}_0$ which preserves the lattice $2\pi i L_G$.  Thus $\Ad : N_G(T) \to GL(2\pi i L_G)$.  Since the latter group is discrete, the identity component $N_G(T)_e$ acts trivially of $2\pi i L_G$ and therefore also on $2\pi i L_G\otimes_{\mathbb{Z}}\mathbb{R}=i\mathfrak{t}_0$.  Thus $N_G(T)_e\subset C_G(T)$.  Since $T$ was a maximal torus, $C_G(T)=T$, and so $N_G(T)_e=T$.

The {\em Weyl group} of $G$ is defined by
$$W_G = N_G(T)/T.$$
This is a finite group, since it is discrete and $G$ is compact.  The Weyl group can also be described as a group generated by reflections.  For $\alpha\in\mathfrak{t}_0$, let $\sigma_\alpha$ be the reflection in the hyperplane orthogonal to $\alpha$.

Equivalence of this definition with the usual one involving reflections can be found in Helgason, Ch. VII, \S 2, Corollary 2.13.
\begin{proposition}
Let $\Phi = \Phi(\mathfrak{g}_0,\mathfrak{t}_0)$ be a set of roots for $G$.  Then $W_G$ is generated by the reflections $\sigma_\alpha$ for $\alpha\in \Phi$.
\end{proposition}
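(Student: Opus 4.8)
The plan is to establish two things: that every reflection $\sigma_\alpha$ already lies in $W_G=N_G(T)/T$, and that these reflections generate all of $W_G$. For the first point I would, for each $\alpha\in\Phi$, produce a copy of $SU(2)$ inside $G$ adapted to $\alpha$. Concretely, pick a nonzero vector $E_\alpha$ in the complex root space $\mathfrak{g}_\alpha$ and combine it with its complex conjugate to obtain a three-dimensional subalgebra $\mathfrak{s}_\alpha\cong\mathfrak{su}(2)$ of $\mathfrak{g}_0$ whose intersection with $i\mathfrak{t}_0$ is the single coroot line $\mathbb{R}h_\alpha$. Because $[H,E_\alpha]=\alpha(H)E_\alpha$ vanishes for $H\in\ker\alpha$, the hyperplane $\ker\alpha\subset i\mathfrak{t}_0$ centralizes $\mathfrak{s}_\alpha$. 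The connected subgroup $S_\alpha\le G$ integrating $\mathfrak{s}_\alpha$ is a quotient of $SU(2)$ and contains its own order-two Weyl element $n_\alpha$, which acts on $\mathbb{R}h_\alpha$ by $-1$ and fixes $\ker\alpha$ pointwise. Thus $\Ad(n_\alpha)$ preserves $i\mathfrak{t}_0$ and induces precisely the reflection $\sigma_\alpha$, so $n_\alpha\in N_G(T)$ and $\sigma_\alpha\in W_G$. Write $W_0\le W_G$ for the subgroup they generate.

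The heart of the argument is the following rigidity statement: if $w\in W_G$ fixes a regular element $H\in i\mathfrak{t}_0$, then $w=1$. To prove it, represent $w$ by $n\in N_G(T)$; since $\Ad(n)H=H$, the element $n$ commutes with $\exp(tH)$ for all $t$, hence with the subtorus $S=\operatorname{cl}\{\exp(tH)\}$, so $n\in C_G(S)$. Regularity of $H$ means $\alpha(H)\neq 0$ for every root, so the centralizer $C_{\mathfrak{g}_0}(H)$ reduces to $\Lie(T)$, and therefore $C_G(S)$ has Lie algebra $\Lie(T)$. By the corollary that the centralizer of a torus is connected, a connected subgroup being determined by its Lie algebra, we get $C_G(S)=T$; whence $n\in T$ and $w=1$ in $W_G$.

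To finish, I would use that $W_G$ permutes the set of Weyl chambers (the components of the complement of $\bigcup_{\alpha}\ker\alpha$ in $i\mathfrak{t}_0$), since $\Ad(N_G(T))$ permutes the roots $\Phi$, and that the finite reflection group $W_0$ acts transitively on these chambers by the theory of finite reflection groups \cite{HumphreysReflectionGroups}. Given $w\in W_G$, choose $w_0\in W_0$ with $w_0w(C)=C$ for the fundamental chamber $C$, and set $v=w_0w$. Since $C$ is an open convex cone and $v(C)=C$, averaging a regular point $H_0\in C$ over the finite cyclic group $\langle v\rangle$ produces $\bar H=\frac{1}{m}\sum_{j=0}^{m-1}v^j(H_0)\in C$, which is regular and fixed by $v$. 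The rigidity lemma forces $v=1$, so $w=w_0^{-1}\in W_0$; as $w$ was arbitrary, $W_G=W_0$. The main obstacle is the first stage: constructing $n_\alpha$ and checking that its adjoint action is exactly $\sigma_\alpha$ and normalizes $T$. Once the centralizer-connectedness corollary is in hand the generation step is soft, the only subtlety being the convex-averaging trick that converts a chamber-preserving element into one with a fixed regular vector.
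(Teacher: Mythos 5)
Your proposal is essentially correct and complete; note that the paper itself offers no proof of this proposition, deferring instead to Helgason (Ch.~VII, \S 2, Corollary 2.13), so you have supplied an argument where the text leaves a citation. What is pleasant about your route is that it runs entirely on tools already established in the paper: the inclusion $\sigma_\alpha\in W_G$ comes from the standard $\mathfrak{su}(2)$-subalgebra $\mathrm{span}_{\mathbb{R}}\{E_\alpha-F_\alpha,\ i(E_\alpha+F_\alpha),\ ih_\alpha\}$ attached to each root; the rigidity lemma (a Weyl group element fixing a regular $H\in i\mathfrak{t}_0$ is trivial) uses exactly the paper's corollary that the centralizer of a torus is connected, since $C_G(S)$ then has Lie algebra $C_{\mathfrak{g}_0}(H)=\mathfrak{t}_0$ and so equals $T$; and the generation step combines finiteness of $W_G$ (proved in the paper) with simple transitivity of the finite reflection group on chambers from \cite{HumphreysReflectionGroups}, with the convex-averaging trick correctly converting a chamber-preserving element $v$ into one fixing a regular vector, whence $v=1$. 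One small imprecision worth fixing: the element $n_\alpha$ is \emph{not} of order two in $S_\alpha$ when $S_\alpha\cong SU(2)$ --- the usual choice $\begin{pmatrix}0&1\\-1&0\end{pmatrix}$ has order four, with $n_\alpha^2=\exp(\pi i h_\alpha^\vee)\in T$; only its class in $N_G(T)/T$ has order two. Since your argument uses only that $\Ad(n_\alpha)$ fixes $\ker\alpha$ pointwise (because $\ker\alpha$ centralizes $\mathfrak{s}_\alpha$) and inverts the coroot line, hence induces $\sigma_\alpha$ on $i\mathfrak{t}_0$ and normalizes the connected group $T$, this slip does not affect the proof.
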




\subsection{Affine Weyl group}\index{Weyl group!affine}
The affine Weyl group of $G$ is defined by\footnote{See Macdonald, {\em Symmetric functions and orthogonal polynomials}, AMS University Lecture Series, Volume 12, 1998. pp. 39--40.}
$$\widetilde{W}_G = W_G\ltimes\tau(L_G)$$
where $\tau(L_G)$ is the group of translations of $L_G$ on $\mathfrak{t}_0$.  The affine Weyl group is generated by the reflections $\sigma_{\alpha,n}$ in the affine hyperplane
$$H_{\alpha,n} = \{x\in\mathfrak{t}_0 \mid \alpha(x) = n\}$$
for $\alpha\in\Phi(\Lambda_G)$, a root system of $\Lambda_G$, and $n\in\mathbb{Z}$.  Indeed, for each such $\alpha$
$$\sigma_{\alpha,n} = \tau(n\alpha^\vee)\circ\sigma_{\alpha,0}\quad\text{and}\quad \tau(n\alpha^\vee) = \sigma_{\alpha,n}\circ\sigma_{\alpha,0}.$$

The {\em alcoves} of the affine Weyl group are the connected components of
$$\mathfrak{t}_0 \setminus \bigcup_{\alpha\in\Phi(\Lambda_G),n\in\mathbb{Z}} H_{\alpha,n}.$$
Each alcove is a fundamental domain for the affine Weyl group, meaning that each orbit of $\widetilde{W}_G$ either lies on a {\em wall} (one of the hyperplanes $H_{\alpha,n}$) or contains exactly one element of the alcove.  There is a unique fundamental alcove of highest weight relative to choice of positive roots $\Phi^+(\Lambda_G)$ for $\Lambda_G$:
$$\Delta_G = \{x\in\mathfrak{t}_0\mid 0<\alpha(x)<1\ \ \text{for all $\alpha\in\Phi^+(\Lambda_G)$}\}$$

\subsection{Example: $SO(3)$ and $SU(2)$}  We consider the examples $SO(3)$ and its simply connected cover $SU(2)$, both of which have the same Lie algebra $\mathfrak{su}(2)$ which is generated by the Pauli matrices
$$\frac{1}{\sqrt{2}}\begin{pmatrix}0&1\\-1&0\end{pmatrix},\quad\frac{1}{\sqrt{2}}\begin{pmatrix}0&i\\i&0\end{pmatrix}, \quad\frac{1}{\sqrt{2}}\begin{pmatrix}i&0\\0&-i\end{pmatrix}.$$
A maximal toral subgroup of $SU(2)$ is the group of diagonal matrices
$$ \begin{pmatrix}e^{i\theta}&0\\0&e^{-i\theta}\end{pmatrix}$$
with Lie algebra spanned by the matrix
$$iv=\frac{i}{\sqrt{2}}\begin{pmatrix}1&0\\0&-1\end{pmatrix}.$$
With this generator, $\ker\exp\cong 2\pi i\sqrt{2}\mathbb{Z}$ and so $Q^\vee = L_{SU(2)}=\sqrt{2}\mathbb{Z}$.  The affine Weyl group is generated by reflections in the hyperplanes $H_{\sqrt{2},\,n} = n/\!\sqrt{2}$.  So the fundamental domain for the action of the affine Weyl group is the interval $[0,1/\!\sqrt{2}]$.

With the same conventions, since $SU(2)$ is a double cover of $SO(3)$, the lattice $L_{SO(3)} = \mathbb{Z}/\!\sqrt{2}$.  The affine Weyl group is generated by reflections in the hyperplanes $H_{2\!\sqrt{2},\,n}=\frac{1}{2\!\sqrt{2}}$.  The fundamental domain is $[0,1/2\!\sqrt{2}]$.

\subsection{Example: $\operatorname{SO}(5)$ and $\operatorname{Sp}(2)$} We consider next the orthogonal group $SO(5)$ in 5-dimensions and its simply connected double-cover $\operatorname{Sp}(2)$ which is the group of $2\times 2$ quaternionic matrices $A$ preserving the Hermitian form $\langle x,y\rangle = x_1\overline{y}_1+x_2\overline{y}_2$ on $\mathbb{H}^2$.  Equivalently, 
$$\operatorname{Sp}(2) = \{ A\in \operatorname{GL}(2,\mathbb{H}) \mid A^*A=I\}$$
where $*$ is the quaternionic conjugate-transpose.  This is the compact real form of the symplectic group $\operatorname{Sp}_4(\mathbb{C})$.  With respect to its natural action on $\mathbb{H}^2$, the orbit of the vector $(1\ \ 0)^T$ under the action of $\operatorname{Sp}(2)$ is the unit sphere $S^7$, and the stabilizer of  $(1\ \ 0)^T$ is $SU(2)\cong S^3$. So topologically, $\operatorname{Sp}(2)$ is an $S^3$ bundle over $S^7$, and therefore is simply connected by the long exact homotopy sequence.

The Lie algebra is
$$\mathfrak{sp}(2) = \left\{\begin{pmatrix}x&z\\-\bar{z} & y\end{pmatrix} \mid x,y\in\im\mathbb{H}, z\in\mathbb{H}\right\}.$$
A maximal toral subalgebra is (using the standard $i,j,k$ generators of $\mathbb{H}$):
$$i\mathfrak{t}_0 = \left\{\begin{pmatrix}ix&0\\0 & iy\end{pmatrix} \mid x,y\in\mathbb{R}\right\}.$$
Now on $i\mathfrak{t}_0$, the kernel of the exponential map is $\ker\exp = 2\pi i L_{\operatorname{Sp}(2)}$ where
$$L_{\operatorname{Sp}(2)} = \begin{pmatrix} 1 & 0 \\ 0 & 0\end{pmatrix}\mathbb{Z}\,\oplus\, \begin{pmatrix} 0 & 0 \\ 0 & 1\end{pmatrix}\mathbb{Z}.$$
This is written as the lattice spanned by a single short coroot and long coroot:
$$Q^\vee = L_{\operatorname{Sp}(2)} = \begin{pmatrix} 1 & 0 \\ 0 & 0\end{pmatrix}\mathbb{Z}\,\oplus\, \begin{pmatrix} -1 & 0 \\ 0 & 1\end{pmatrix}\mathbb{Z} = \alpha^\vee\mathbb{Z}\oplus\beta^\vee\mathbb{Z}.$$
The root system is isomorphic to $C_2$. It is convenient to normalize the Euclidean structure so that the short coroot $\alpha^\vee$ has length $\sqrt{2}$.  Then $Q^\vee$ can be realized as the lattice in Euclidean $\mathbb{R}^2$ by setting $\alpha^\vee = (\sqrt{2},0)$ and $\beta^\vee=(-\sqrt{2},\sqrt{2})$.  The reciprocal lattice $Q=(Q^\vee)^\vee$ is generated by $\alpha=\alpha^\vee=(\sqrt{2},0)$ and $\beta=(-\sqrt{2}/2,\sqrt{2}/2)$.  The dual of this lattice is $P^\vee$ which is generated by $\lambda_\alpha^\vee =(\sqrt{2}/2,\sqrt{2}/2)$ and $\lambda_\beta^\vee=(0,\sqrt{2})$.  Finally, $P$ is generated by $\theta=\lambda_\alpha=(\sqrt{2},\sqrt{2})$ (the {\em highest root}) and $\lambda_\beta=\lambda_\beta^\vee = \theta_s = (0,\sqrt{2})$ (the {\em highest short root}).

The affine Weyl group for $\operatorname{Sp}(2)$ is generated by the affine reflections in the roots of $P$: $\sigma_{\theta,n}, \sigma_{\beta,n}$.  The first of these is reflection across the line $x+y=n/\!\sqrt{2}$ and the second is reflection across the line $y=n/\!\sqrt{2}$.  The fundamental alcove is the region in the first quadrant bounded by $y=x$ and $y=-x+1/\!\sqrt{2}$.

The affine Weyl group for $\operatorname{SO}(5)$ is generated by the affine reflections in the roots of $Q$: $\sigma_{\alpha,n}$ in the line $x=n/\!\sqrt{2}$ and $-x+y=n\sqrt{2}$, $\sigma_{\beta,n}$.  The fundamental alcove is the region in the first quadrant bounded by $y=x$ and $y=1/2\sqrt{2}$.

\section{Determining the center of $G$}\index{Center}
\begin{definition} 
Let $\Delta = \{\alpha_1,\dots,\alpha_n\}$ be a base for the root system $\Phi$ and let $\theta\in i\mathfrak{t}_o^*$ be the highest root.  A coweight $\lambda_{\alpha_i}^\vee\in P^\vee$ is called {\em minuscule} if $\lambda_{\alpha_i}^\vee(\theta)=1$
\end{definition}

In other words, the minuscule coweights are $\lambda_{\alpha_i}^\vee$ for those $\alpha_i$ whose coefficient in the expansion of the heighest root is one.  Geometrically, the minuscule coweights are the fundamental coweights $\lambda_{\alpha_i}^\vee$ that are vertices of the fundamental alcove $W_{G_{sc}}$ since the fundamental alcove is the portion of the Weyl chamber bounded on one side by the single hyperplane $H_{\theta,1}$ (since $\theta$ is the highest root).

\begin{lemma}
There is a one-to-one correspondence between the nontrivial elements of $Z(G_{sc})$ and the minuscule coweights.
\end{lemma}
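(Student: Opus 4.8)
The plan is to reduce the statement to a combinatorial fact about the closed fundamental alcove of $G_{sc}$ and the lattice $P^\vee$. First I would pin down the center explicitly. Since the unit lattices satisfy $L_{G_{sc}} = Q^\vee$ and $L_{G_{\ad}} = P^\vee$, an element $\exp(2\pi i x)$ of the maximal torus of $G_{sc}$ (with $x\in\mathfrak{t}_0$) is central precisely when $\Ad(\exp(2\pi i x))$ acts trivially on every root space, that is, when $e^{2\pi i\,\alpha(x)}=1$ for all $\alpha\in\Phi$; this says $\alpha(x)\in\mathbb{Z}$ for every root, i.e.\ $x\in\widehat{L}(\Phi)=P^\vee$. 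Such an element is trivial iff $x\in L_{G_{sc}}=Q^\vee$. Hence the map $x\mapsto\exp(2\pi i x)$ induces an isomorphism
\[
P^\vee/Q^\vee \xrightarrow{\ \cong\ } Z(G_{sc}),
\]
and the task becomes to biject the nonzero cosets of $Q^\vee$ in $P^\vee$ with the minuscule coweights.

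Next I would show that the map $P^\vee\cap\overline{\Delta}_{G_{sc}}\to P^\vee/Q^\vee$, $\nu\mapsto\nu+Q^\vee$, is a bijection, where $\overline{\Delta}_{G_{sc}}$ is the closed fundamental alcove for $\widetilde{W}_{G_{sc}}=W\ltimes\tau(Q^\vee)$. Two ingredients are needed. The first is that $W$ acts trivially on $P^\vee/Q^\vee$, since $s_i\lambda-\lambda=-\alpha_i(\lambda)\,\alpha_i^\vee\in Q^\vee$ for each simple reflection; consequently, on points of $P^\vee$ the $\widetilde{W}_{G_{sc}}$-orbit relation coincides with congruence modulo $Q^\vee$. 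The second is that $\overline{\Delta}_{G_{sc}}$ is a \emph{strict} fundamental domain, so each $\widetilde{W}_{G_{sc}}$-orbit meets it in exactly one point. Surjectivity then follows because $P^\vee$ is $\widetilde{W}_{G_{sc}}$-stable, so the orbit representative in $\overline{\Delta}_{G_{sc}}$ of any $\lambda\in P^\vee$ again lies in $P^\vee\cap\overline{\Delta}_{G_{sc}}$ and is congruent to $\lambda$ mod $Q^\vee$; injectivity follows because two points of $P^\vee\cap\overline{\Delta}_{G_{sc}}$ that are congruent mod $Q^\vee$ differ by a translation in $\widetilde{W}_{G_{sc}}$, and the strict fundamental-domain property forces them to coincide.

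Finally I would identify $P^\vee\cap\overline{\Delta}_{G_{sc}}$ explicitly. Writing $\overline{\Delta}_{G_{sc}}=\{x:\alpha_i(x)\ge 0\text{ for all }i,\ \theta(x)\le 1\}$ and $\theta=\sum_i a_i\alpha_i$ with all marks $a_i\ge 1$, a point $x\in P^\vee$ has $\alpha_i(x)\in\mathbb{Z}$, so membership in $\overline{\Delta}_{G_{sc}}$ forces $\alpha_i(x)\in\mathbb{Z}_{\ge 0}$ with $\sum_i a_i\alpha_i(x)\le 1$. Either all $\alpha_i(x)=0$, giving $x=0$, or exactly one $\alpha_j(x)=1$ with $a_j=1$ and the rest zero, giving $x=\lambda_{\alpha_j}^\vee$ with $a_j=1$. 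Since $\theta(\lambda_{\alpha_j}^\vee)=a_j$, the condition $a_j=1$ is exactly $\lambda_{\alpha_j}^\vee(\theta)=1$, i.e.\ that $\lambda_{\alpha_j}^\vee$ is minuscule. Thus $P^\vee\cap\overline{\Delta}_{G_{sc}}=\{0\}\cup\{\text{minuscule coweights}\}$, with $0$ corresponding to the identity of $Z(G_{sc})$; discarding it yields the asserted bijection between the nontrivial central elements and the minuscule coweights.

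I expect the main obstacle to be the careful treatment of boundary points in the middle step: the alcove statement recorded earlier only guarantees a unique representative for orbits meeting the \emph{open} alcove, whereas every point of $P^\vee$ lies on walls, so I must upgrade this to the stronger fact that the closed alcove is a strict fundamental domain meeting each orbit exactly once, which rests on the simple transitivity of $\widetilde{W}_{G_{sc}}$ on alcoves. The verification that $W$ acts trivially on $P^\vee/Q^\vee$ is the other point requiring care, although it is the one-line computation displayed above, and the final intersection count is entirely elementary.
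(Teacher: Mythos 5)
Your proof is correct, and it is in fact more complete than the one in the text, whose own argument is left unfinished: the paper proves that coweights in $P^\vee$ exponentiate into $Z(G_{sc})$, identifies the minuscule coweights with the coweight vertices of the fundamental alcove, argues surjectivity by appealing to the fact that the alcove meets every conjugacy class, and then breaks off before establishing injectivity. Your route shares the first step---centrality of $\exp(2\pi i x)$ is detected by $\Ad$ acting trivially on the root spaces, so that $x\mapsto \exp(2\pi i x)$ induces an isomorphism $P^\vee/Q^\vee\cong Z(G_{sc})$, which for $G=G_{sc}$ is the content of the paper's earlier lattice formula for $Z(G)$ (note the paper writes that quotient in the reversed order; with $L_{G_{sc}}\subset L_G\subset L_{G_{\ad}}$ it should read $Z(G)=L_{G_{\ad}}/L_G$)---but you then replace the conjugacy-class argument with pure lattice combinatorics: triviality of the $W$-action on $P^\vee/Q^\vee$ via $s_\alpha\lambda-\lambda=-\alpha(\lambda)\,\alpha^\vee\in Q^\vee$, the strict fundamental-domain property of the closed alcove for $\widetilde{W}_{G_{sc}}=W\ltimes\tau(Q^\vee)$, and the elementary enumeration $P^\vee\cap\overline{\Delta}_{G_{sc}}=\{0\}\cup\{\lambda_{\alpha_j}^\vee \mid d_j=1\}$. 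This buys injectivity and surjectivity simultaneously, precisely the part the paper leaves open, and avoids having to know anything about conjugacy classes in $G_{sc}$ beyond the torus theorem already used to get $Z(G_{sc})$ inside $\exp(i\mathfrak{t}_0)$. You were also right to flag the boundary issue as the real burden: the text only records that each $\widetilde{W}$-orbit either lies on a wall or meets the open alcove exactly once, and since every point of $P^\vee$ lies on walls you genuinely need the stronger statement that $\operatorname{cl}\Delta_{G_{sc}}$ meets each orbit exactly once; this follows from simple transitivity of the affine Weyl group on alcoves and is standard (Humphreys, \emph{Reflection groups and Coxeter groups}, \S 4.8, or Bourbaki), but it should be cited or proved since it is not available verbatim in the text.
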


\begin{proof}
Any coweight exponentiates to an element of the center. Indeed, if $v\in Q^\vee$, then $\alpha(v)\in\mathbb{Z}$ for all roots $\alpha\in Q$.  So $\Ad e^{2\pi i v}$ acts as the identity on each of the eigenspaces $\mathfrak{g}_\alpha$.  Conversely, in order for  $\Ad e^{2\pi i v}$ to act trivially, $\alpha(v)\in\mathbb{Z}$ for all $\alpha\in Q$.  Thus there is a mapping from the set of minuscule coweights into the center of $G_{sc}$.

We claim that this is a bijection onto $Z(G_{sc})\setminus\{e\}$.  A coweight is minuscule if and only if it is a coweight that is a vertex of the fundamental alcove.  But the fundamental alcove contains a representative from each conjugacy class of $G_{sc}$, and so the mapping is surjective.  The mapping is injective since... \commentx{Finish proof.}
\end{proof}

\subsection{Dynkin diagrams with highest roots}\index{Dynkin diagrams}
Here are the Dynkin diagrams for the simple Lie algebras with their highest root labelled by positive integers under the corresponding vertices.

\begin{align*}
A_n:&\quad \underset{1}{\bullet}\!\!\onebar\!\!\underset{1}{\bullet}\!\!\onebar\!\!\underset{1}{\bullet}\dots\underset{1}{\bullet}\!\!\onebar\!\!\underset{1}{\bullet}\\
&\\
B_n:&\quad\underset{1}{\bullet}\!\!\onebar\!\!\underset{2}{\bullet}\!\!\onebar\!\!\underset{2}{\bullet}\dots\underset{2}{\bullet}\!\!\onebar\!\!\underset{2}{\bullet}\!\!\rightbars\!\!\underset{2}{\bullet}\\
&\\
C_n:&\quad\underset{2}{\bullet}\!\!\onebar\!\!\underset{2}{\bullet}\!\!\onebar\!\!\underset{2}{\bullet}\dots\underset{2}{\bullet}\!\!\onebar\!\!\underset{2}{\bullet}\!\!\leftbars\!\!\underset{1}{\bullet}\\
&\\
D_n:&\quad \underset{1}{\bullet}\!\!\onebar\!\!\underset{2}{\bullet}\!\!\onebar\!\!\underset{2}{\bullet}\dots\underset{2}{\bullet}\!\!^{\diagup^{\!\!\textstyle{\bullet}^1}}_{\diagdown_{\!\!\textstyle{\bullet}_1}}\\
&\\
E_6:&\quad \underset{1}{\bullet}\!\!\onebar\!\!\underset{2}{\bullet}\!\!\onebar\!\!\underset{3}{\bullet}^{{}^{\!\!\!\!\!\displaystyle{\arrowvert}^{\displaystyle{\!\!\overset{2}{\bullet}}}}}\!\!\!\onebar\!\!\underset{2}{\bullet}\!\!\onebar\!\!\underset{1}{\bullet}\\
&\\
E_7:&\quad \underset{2}{\bullet}\!\!\onebar\!\!\underset{3}{\bullet}\!\!\onebar\!\!\underset{4}{\bullet}^{{}^{\!\!\!\!\!\displaystyle{\arrowvert}^{\displaystyle{\!\!\overset{2}{\bullet}}}}}\!\!\!\onebar\!\!\underset{3}{\bullet}\!\!\onebar\!\!\underset{2}{\bullet}\!\!\onebar\!\!\underset{1}{\bullet}\\
&\\
E_8:&\quad \underset{2}{\bullet}\!\!\onebar\!\!\underset{4}{\bullet}\!\!\onebar\!\!\underset{6}{\bullet}^{{}^{\!\!\!\!\!\displaystyle{\arrowvert}^{\displaystyle{\!\!\overset{3}{\bullet}}}}}\!\!\!\onebar\!\!\underset{5}{\bullet}\!\!\onebar\!\!\underset{4}{\bullet}\!\!\onebar\!\!\underset{3}{\bullet}\!\!\onebar\!\!\underset{2}{\bullet}\\
&\\
F_4:&\quad \underset{2}{\bullet}\!\!\onebar\!\!\underset{3}{\bullet}\!\!\rightbars\!\!\underset{4}{\bullet}\!\!\onebar\!\!\underset{2}{\bullet}\\
&\\
G_2:&\quad  \underset{2}{\bullet}\!\!\!\rightthreebars\!\underset{3}{\bullet}
\end{align*}

\subsection{Elements of order $2$}  From the fundamental alcove, it is possible to identify up to conjugacy all of the elements of order two in $G_{ad}$.   An element of order two in $G_{ad}$ is conjugate to some one $e^{2\pi i\sigma}$ where $\sigma$ is an element of $\Delta_G$ such that $2\sigma\in L_{G_{ad}}$.  This can only occur at a vertex of the fundamental alcove.  Therefore:

\begin{proposition}\footnote{\commentx{The proof is unconvincing.}  See Borel, {\em Semisimple groups and Riemannian symmetric spaces}, Proposition II.4.4, p. 39.}
Let $\theta = \sum_{\alpha_j\in\Delta} d_j\alpha_j$ be the highest root.  Any element of order $2$ in $G_{ad}$ is conjugate to one of the $e^{\pi i\lambda_{\alpha_j}^\vee}$ for $d_j=1$ or to $e^{2\pi i\lambda_{\alpha_j}^\vee}$ for $d_j=2$.
\end{proposition}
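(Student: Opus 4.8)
The plan is to reduce the statement to a counting problem on the fundamental alcove, exactly as the paragraph preceding the proposition suggests, and then to solve an explicit arithmetic condition. First I would use the conjugacy theory already in place: by \eqref{coverbytori} every element of $G_{ad}$ lies in some maximal torus, and all maximal tori are conjugate, so every element is conjugate to some $\exp(2\pi i\sigma)\in T$. Two elements of $T$ are $G_{ad}$-conjugate precisely when they are $W_G$-conjugate, so the conjugacy classes of $G_{ad}$ are the $W_G$-orbits on $T=i\mathfrak{t}_0/2\pi i L_{G_{ad}}$, i.e.\ the orbits of $\widetilde{W}=W_G\ltimes\tau(L_{G_{ad}})$ on $\mathfrak{t}_0$. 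Since $L_{G_{ad}}=P^\vee$, I would take for a representative the unique point $\sigma$ of the closed fundamental alcove lying in a given orbit.

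Next I would impose the order condition. Because $\ker(\exp\colon i\mathfrak{t}_0\to T)=2\pi i P^\vee$, the element $g=\exp(2\pi i\sigma)$ satisfies $g^2=e$ iff $2\sigma\in P^\vee$, and $g\neq e$ iff $\sigma\notin P^\vee$. Now I would solve $2\sigma\in P^\vee$ on the closed alcove. Using that the simple roots form a $\mathbb{Z}$-basis of $Q=L(\Phi)$ and that $P^\vee=\{v:\alpha(v)\in\mathbb{Z}\ \forall\alpha\in\Phi\}$, the condition becomes $c_i:=\alpha_i(\sigma)\in\tfrac12\mathbb{Z}_{\ge0}$ for every simple $\alpha_i$, subject to the highest-root wall $\theta(\sigma)=\sum_i d_ic_i\le 1$ that cuts out the alcove. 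Since every $d_i\ge1$, any nonzero solution has $\sum_i d_ic_i\ge\tfrac12$, and a short case analysis forces either a single $c_j=\tfrac12$ (possible only when $d_j\le 2$), a single $c_j=1$ with $d_j=1$ (which gives $\sigma=\lambda_{\alpha_j}^\vee\in P^\vee$, hence the identity), or two coordinates equal to $\tfrac12$ with $d_j=d_k=1$. In alcove coordinates these are exactly the midpoints $\tfrac12\lambda_{\alpha_j}^\vee$ and the edge midpoints $\tfrac12(\lambda_{\alpha_j}^\vee+\lambda_{\alpha_k}^\vee)$, relative to the vertices $0$ and $v_j=\tfrac1{d_j}\lambda_{\alpha_j}^\vee$ coming from the Dynkin data $\theta=\sum_j d_j\alpha_j$.

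Reading off representatives then gives the list. For a vertex with $d_j=2$ one has $v_j=\tfrac12\lambda_{\alpha_j}^\vee$ and $\exp(2\pi i v_j)=\exp(\pi i\lambda_{\alpha_j}^\vee)$ is genuinely of order $2$ (indeed $2v_j=\lambda_{\alpha_j}^\vee\in P^\vee$ but $v_j\notin P^\vee$); for $d_j=1$ the vertex $v_j=\lambda_{\alpha_j}^\vee$ itself exponentiates to the identity in $G_{ad}$, so the order-$2$ element sits at the midpoint $\tfrac12\lambda_{\alpha_j}^\vee$ and is again $\exp(\pi i\lambda_{\alpha_j}^\vee)$. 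This accounts for the dichotomy between the $d_j=1$ and $d_j=2$ nodes. The main obstacle is the bookkeeping hidden in passing from the simplex to $G_{ad}$: the reflections $\sigma_{\alpha,n}$ generate only $W_G\ltimes\tau(Q^\vee)$, whose strict fundamental domain is the simplex with the vertices above, whereas $G_{ad}$-conjugacy uses the finer lattice $P^\vee$. The discrepancy is the group $\Omega=P^\vee/Q^\vee\cong Z(G_{sc})$ of symmetries of the alcove permuting its special vertices, and I must use its action to move each edge-midpoint solution $\tfrac12(\lambda_{\alpha_j}^\vee+\lambda_{\alpha_k}^\vee)$ (with $d_j=d_k=1$) onto one of the listed midpoints $\tfrac12\lambda_{\alpha_m}^\vee$, and to decide which of the listed elements are actually conjugate. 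Verifying the strictness of the fundamental domain and controlling this $\Omega$-action is where the real work lies; this is precisely the geometry of the fundamental alcove carried out in the cited Borel reference, and I would invoke it to finish.
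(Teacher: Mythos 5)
Your proposal is correct in substance and follows the same alcove strategy the paper gestures at, but it is considerably more careful than the paper's own argument, which consists of the single assertion that an order-two class is represented by a $\sigma$ in the alcove with $2\sigma\in L_{G_{ad}}$, that ``this can only occur at a vertex of the fundamental alcove,'' and a citation of Borel.  That assertion is false as stated, and your enumeration shows exactly why: the half-integral points of the closed alcove are $\tfrac12\lambda_{\alpha_j}^\vee$ with $d_j\le 2$ (a vertex only when $d_j=2$; for $d_j=1$ it is the midpoint of the edge $[0,\lambda_{\alpha_j}^\vee]$), together with the edge midpoints $\tfrac12(\lambda_{\alpha_j}^\vee+\lambda_{\alpha_k}^\vee)$ with $d_j=d_k=1$ sitting on the wall $\theta=1$, which the paper's sketch misses entirely.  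Your computation also silently corrects the formula in the statement: for $d_j=2$ the representative is $e^{2\pi i\cdot\frac12\lambda_{\alpha_j}^\vee}=e^{\pi i\lambda_{\alpha_j}^\vee}$, whereas $e^{2\pi i\lambda_{\alpha_j}^\vee}=e$ in $G_{ad}$ because $\lambda_{\alpha_j}^\vee\in P^\vee=L_{G_{ad}}$; this agrees with the $C_2$ example and with the later chapter's statement that involutions come from $e^{\pi i\lambda_j^\vee}$ with $d_j=1,2$.

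Two adjustments make your route self-contained instead of deferred to Borel.  First, you demand more than the proposition asserts: since it only claims that every order-two element is conjugate to \emph{some} listed element, you do not need strictness of the fundamental domain or a determination of which representatives are mutually conjugate; it suffices that the closed alcove meets every orbit of $W_G\ltimes\tau(P^\vee)$, which already follows from the fundamental-domain property of the reflection group $W_G\ltimes\tau(Q^\vee)$.  Second, the $\Omega$-folding step you postpone closes in two lines.  The extended affine action preserves the condition $2\sigma\in P^\vee$ (as $wP^\vee=P^\vee$ for $w\in W_G$ and $2\lambda\in P^\vee$ for $\lambda\in P^\vee$).  The alcove symmetry $\omega\in\Omega$ carrying the vertex $\lambda_{\alpha_j}^\vee$ to $0$ carries the midpoint $\tfrac12(\lambda_{\alpha_j}^\vee+\lambda_{\alpha_k}^\vee)$ to the midpoint $\tfrac12v'$ of an edge $[0,v']$, where $v'=\lambda_{\alpha_m}^\vee/d_m$ is another vertex; preservation of the order-two condition forces $v'\in P^\vee$, hence $d_m=1$, so the image is $\tfrac12\lambda_{\alpha_m}^\vee$ and the class is $e^{\pi i\lambda_{\alpha_m}^\vee}$, already on your list.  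With these two observations your argument is a complete proof---in effect the proof the paper's footnote concedes it does not give.
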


\section{Symmetric spaces of type II}\index{Symmetric space!type II}
Let $(\mathfrak{g}_o\oplus\mathfrak{g}_o,\operatorname{flip})$ be an irreducible orthogonal symmetric Lie algebra of type II.  The $\widetilde{G}$ be a simply connected Lie group with Lie algebra $\mathfrak{g}_o$.  As a homogeneous space $\widetilde{G}=\widetilde{G}\times\widetilde{G}/\Delta(\widetilde{G})$.  Let $M$ be a connected Riemannian globally symmetric space with local Lie algebra $(\mathfrak{g}_o\oplus\mathfrak{g}_o,\operatorname{flip})$.  Then there is a covering map $\pi : \widetilde{G}\to M$, since $\widetilde{G}$ is the simply-connected symmetric space with the given local Lie algebra.  There is also a cover $\phi : \widetilde{G}\times\widetilde{G}\to I(M)_e$ with $\phi_{*,e} = \operatorname{id}_{\mathfrak{g}_o\oplus\mathfrak{g}_o}$, and these two covers are compatible in the sense that the diagram commutes:

$$ 
\xymatrix{
\widetilde{G}\ar[d]^{\pi}&\widetilde{G}\times\widetilde{G}\ar[l]\ar[d]^\phi&\ar[l]^{\ge} \phi^{-1}(I(M)_{e,o})\\
M&I(M)_e\ar[l]&\ar[l]^{\ge} I(M)_{e,o}.
}
$$

Now, $\Delta(\widetilde{G})$ is a connected Lie group with the same Lie algebra as $\phi^{-1}(I(M)_{e,o})$, whose identity component is simply connected, and therefore $\Delta(\widetilde{G})=\phi^{-1}(I(M)_{e,o})_e\trianglelefteq \phi^{-1}(I(M)_{e,o})$ is a normal subgroup, and
$$\Gamma := \pi^{-1}(o) \cong \frac{\phi^{-1}(I(M)_{e,o})}{\Delta(\widetilde{G})}$$
carries a group structure.  Thus $\Gamma\subset\widetilde{G}$ is a subgroup.

Let $\gamma\in\Gamma$.  Since $\pi(\gamma)=o$, there is a diagram
$$ 
\xymatrix{
\gamma \ar[d]&\ar[l] (\gamma,e)\ar[d]^\phi\\
o=\phi(\gamma,e).o & \ar[l]\phi(\gamma,e)
}
$$

Let $g\in\widetilde{G}$ be given.  Then note that $\phi(g,g).o=o$ (since $(g,g)\in\Delta(\widetilde{G})$ which covers the identity component of $I(M)_o$), so
$$ 
\xymatrix{
g\gamma g^{-1}\ar[d]&\ar[l] (g\gamma,g)\ar@{=}[r]\ar[d]^\phi&(g,g)(\gamma,e)\\
o=\phi(g,g)\phi(\gamma,e).o & \ar[l]\phi(g,g)\phi(\gamma,e)&
}
$$

Hence $g\gamma g^{-1}\in\Gamma$: so $\Gamma$ is a normal subgroup of $\widetilde{G}$.  On the other hand, $\Gamma$ is discrete, so it follows that $\Gamma\subset Z(\widetilde{G})$, since normal discrete subgroups of connected Lie groups are central.  This proves the following theorem:

\begin{theorem}
The connected Riemannian globally symmetric spaces of type II are exactly the compact Lie groups.
\end{theorem}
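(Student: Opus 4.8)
The plan is to establish the stated equality of two classes of objects by proving both inclusions, drawing on the two halves of this chapter; the bulk of the work lies in showing that a type II symmetric space \emph{is} a compact Lie group, since the converse realization is already in hand.

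For one inclusion I would invoke the realization set up in the section on compact groups: any compact connected semisimple Lie group $G$ with $\mathfrak{g}_o=\Lie(G)$ appears as the homogeneous space $(G\times G)/\Delta(G)$ carrying the flip involution $s(x,y)=(y,x)$, whose differential fixes $\Delta(\mathfrak{g}_o)$ and negates $\{(X,-X)\}$. Equipped with its bi-invariant metric this is a Riemannian globally symmetric space, and when $\mathfrak{g}_o$ is simple the associated orthogonal symmetric Lie algebra $(\mathfrak{g}_o\oplus\mathfrak{g}_o,\operatorname{flip})$ is irreducible, i.e.\ of type II; the general compact semisimple case is then a product of such factors via the decomposition theorem. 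So the substance of the argument is the reverse inclusion.

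For the reverse inclusion I would assemble, in order, the pieces built up in the preceding discussion. First, because $\widetilde G$ is the simply-connected symmetric space carrying the local Lie algebra $(\mathfrak{g}_o\oplus\mathfrak{g}_o,\operatorname{flip})$, there is a covering $\pi:\widetilde G\to M$ together with a compatible cover $\phi:\widetilde G\times\widetilde G\to I(M)_e$ satisfying $\phi_{*,e}=\operatorname{id}$. Second, since $\Delta(\widetilde G)$ is connected with the same Lie algebra as $\phi^{-1}(I(M)_{e,o})$ and is simply connected, it is the identity component of, hence normal in, $\phi^{-1}(I(M)_{e,o})$; this endows $\Gamma:=\pi^{-1}(o)\cong \phi^{-1}(I(M)_{e,o})/\Delta(\widetilde G)$ with a group structure realizing it as a subgroup of $\widetilde G$. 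Third, using $\phi(g,g).o=o$ for every $g\in\widetilde G$, the conjugation diagram gives $g\gamma g^{-1}\in\Gamma$, so $\Gamma\trianglelefteq\widetilde G$; being discrete, $\Gamma$ is then central, $\Gamma\le Z(\widetilde G)$.

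It remains to convert this into the conclusion that $M$ is a compact Lie group. Since $\Gamma$ is a discrete (hence closed) central subgroup, the quotient $\widetilde G/\Gamma=M$ is itself a Lie group with Lie algebra $\mathfrak{g}_o$; it is compact because $\widetilde G$ is, the latter following from Weyl's theorem that a compact semisimple group has finite fundamental group, so that $\widetilde G$ is a finite---hence compact---cover of $\Int(\mathfrak{g}_o)$. A final check, which I expect to be the most delicate bookkeeping, is that the transported symmetric-space structure on $M$ is precisely the canonical type II structure of the group $M$: the involution descends to inversion and the metric to a bi-invariant one, which I would read off from the compatibility of the covers $\pi$ and $\phi$. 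The main obstacle throughout is keeping these two coverings and the identification $\Gamma\cong\phi^{-1}(I(M)_{e,o})/\Delta(\widetilde G)$ coherent; once $\Gamma\le Z(\widetilde G)$ is secured, the remaining deduction is formal.
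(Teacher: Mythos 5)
Your proposal is correct and follows essentially the same route as the paper: the covering $\pi:\widetilde G\to M$, the compatible cover $\phi:\widetilde G\times\widetilde G\to I(M)_e$, the identification $\Gamma=\pi^{-1}(o)\cong\phi^{-1}(I(M)_{e,o})/\Delta(\widetilde G)$, and the conjugation argument showing $\Gamma$ is normal, hence discrete central, so $M=\widetilde G/\Gamma$ is a compact group. Your additions---citing Weyl's theorem for compactness of $\widetilde G$, and checking that the transported structure is the canonical type II one---merely make explicit what the paper leaves implicit, and the forward inclusion you include is exactly the realization $G\cong G\times G/\Delta(G)$ established earlier in the chapter.
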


\section{Relative root systems}
\commentx{Why do we start again with noncompact type only to move on to compact type again?  Is there a clearer exposition?}

Let $(\mathfrak{g}_o,s_o)$ be a reductive orthogonal symmetric Lie algebra of noncompact type.  Let $\theta$ be the Cartan involution on $\mathfrak{g}$ defined by conjugation with respect to the compact real form of $\mathfrak{g}_o$. Let $\mathfrak{a}_o\subset \mathfrak{p}_o$ be a flat subspace.  Our first goal is to discuss a decomposition of $\mathfrak{g}_o$ into eigenspaces for $\ad\mathfrak{a}_o$:
$$\mathfrak{g}_o = C_{\mathfrak{g}_o}(\mathfrak{a}_o)\oplus\left(\bigoplus_{\alpha\in\Phi(\mathfrak{g}_o,\mathfrak{a}_o)} \mathfrak{g}_{o,\alpha}\right)$$
where $\Phi(\mathfrak{g}_o,\mathfrak{a}_o)\subset \mathfrak{a}_o^*$.  The roots here are in fact real, since for all $x\in\mathfrak{p}_o$, $\ad x$ is self-adjoint with respect to $B_\theta$.

The centralizer $C_{\mathfrak{g}_o}(\mathfrak{a}_o)$ can be decomposed as $\mathfrak{l}_o\oplus\mathfrak{a}_o$ where $\mathfrak{l}_o\subset \mathfrak{k}_o$.  Let $\mathfrak{t}_o\subset\mathfrak{l}_o$ be a maximal abelian subalgebra of $\mathfrak{l}_o$ that acts semisimply on $\mathfrak{k}_o$.

\begin{proposition}
$\mathfrak{h}_o=\mathfrak{t}_o\oplus\mathfrak{a}_o\subset\mathfrak{g}_o$ is a Cartan subalgebra that is stable under $s_o$.
\end{proposition}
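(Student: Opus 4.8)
The plan is to verify the two defining features separately: $s_o$-stability, which is immediate, and the Cartan property, for which I will show that $\mathfrak{h}_o$ is an abelian subalgebra consisting of semisimple elements that equals its own centralizer. For stability, recall that $\mathfrak{k}_o$ and $\mathfrak{p}_o$ are the $+1$ and $-1$ eigenspaces of $s_o$; since $\mathfrak{t}_o\subset\mathfrak{l}_o\subset\mathfrak{k}_o$ and $\mathfrak{a}_o\subset\mathfrak{p}_o$, both summands are $s_o$-stable, so $s_o(\mathfrak{h}_o)=\mathfrak{h}_o$. The subalgebra $\mathfrak{h}_o$ is abelian: $\mathfrak{t}_o$ and $\mathfrak{a}_o$ are each abelian, and $[\mathfrak{t}_o,\mathfrak{a}_o]=0$ because $\mathfrak{t}_o\subset\mathfrak{l}_o\subset C_{\mathfrak{g}_o}(\mathfrak{a}_o)$; in particular $\mathfrak{h}_o$ is nilpotent.

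Next I would check that $\ad H$ is semisimple for every $H=T+A$ with $T\in\mathfrak{t}_o$ and $A\in\mathfrak{a}_o$. The earlier lemma gives that $\ad A$ is semisimple (a flat subspace acts semisimply on $\mathfrak{g}_o$), while $\ad T$ is semisimple because $\mathfrak{k}_o$ is compactly embedded, so $\ad_{\mathfrak{g}_o}T$ lies in the Lie algebra of a compact subgroup of $\Int(\mathfrak{g}_o)$ and is therefore semisimple (equivalently, $\ad T$ is skew-adjoint for $B_\theta$). Since $[T,A]=0$, the operators $\ad T$ and $\ad A$ commute, and a sum of commuting semisimple operators is semisimple (simultaneous diagonalizability over $\mathbb{C}$); hence $\ad H$ is semisimple.

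The heart of the argument is the centralizer computation. Because $\mathfrak{a}_o\subset\mathfrak{h}_o$, any element centralizing $\mathfrak{h}_o$ lies in $C_{\mathfrak{g}_o}(\mathfrak{a}_o)=\mathfrak{l}_o\oplus\mathfrak{a}_o$. Writing such an element as $X_{\mathfrak{l}}+X_{\mathfrak{a}}$ and using $[\mathfrak{a}_o,\mathfrak{t}_o]=0$, the requirement that it commute with $\mathfrak{t}_o$ reduces to $X_{\mathfrak{l}}\in C_{\mathfrak{l}_o}(\mathfrak{t}_o)$, so $C_{\mathfrak{g}_o}(\mathfrak{h}_o)=C_{\mathfrak{l}_o}(\mathfrak{t}_o)\oplus\mathfrak{a}_o$. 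Since $\mathfrak{t}_o$ is maximal abelian in $\mathfrak{l}_o$, and a maximal abelian subalgebra is always self-centralizing (if $Y$ commutes with $\mathfrak{t}_o$ then $\mathfrak{t}_o+\mathbb{R}Y$ is abelian, forcing $Y\in\mathfrak{t}_o$), we obtain $C_{\mathfrak{l}_o}(\mathfrak{t}_o)=\mathfrak{t}_o$ and therefore $C_{\mathfrak{g}_o}(\mathfrak{h}_o)=\mathfrak{t}_o\oplus\mathfrak{a}_o=\mathfrak{h}_o$.

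Finally I would assemble these facts. Self-centralizing forces $\mathfrak{h}_o$ to be maximal abelian (any abelian subalgebra containing it lies in its centralizer), and together with semisimplicity of every $\ad H$ this is exactly the criterion for a Cartan subalgebra; alternatively, complexifying gives $C_{\mathfrak{g}_o\otimes\mathbb{C}}(\mathfrak{h}_o\otimes\mathbb{C})=\mathfrak{h}_o\otimes\mathbb{C}$, and a weight-space decomposition of the normalizer (a nonzero weight vector in the normalizer would have to lie in the zero weight space, a contradiction) shows $N(\mathfrak{h}_o\otimes\mathbb{C})=\mathfrak{h}_o\otimes\mathbb{C}$, so by characterization (1) $\mathfrak{h}_o\otimes\mathbb{C}$ is Cartan in $\mathfrak{g}_o\otimes\mathbb{C}$, whence $\mathfrak{h}_o$ is Cartan in $\mathfrak{g}_o$. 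I expect the main obstacle to be the bookkeeping in the reductive (rather than strictly semisimple) case: one must confirm that the center of $\mathfrak{g}_o$ is absorbed correctly, since it commutes with $\mathfrak{a}_o$ (hence lies in $\mathfrak{l}_o\oplus\mathfrak{a}_o$) and with $\mathfrak{t}_o$ (hence lands in $\mathfrak{t}_o\oplus\mathfrak{a}_o=\mathfrak{h}_o$), so that the centralizer identity and the passage to the Cartan criterion remain valid; the semisimplicity and stability steps are then routine given the earlier results.
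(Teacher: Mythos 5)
Your proof is correct and takes essentially the same route as the paper: both reduce by complexification to showing $\mathfrak{h}=\mathfrak{h}_o\otimes\mathbb{C}$ is maximal abelian with every $\ad H$ diagonalizable, the diagonalizability coming from the commuting semisimple actions of $\mathfrak{t}_o$ and $\mathfrak{a}_o$. The only difference is cosmetic, in the self-centralizer step: you work through $C_{\mathfrak{g}_o}(\mathfrak{a}_o)=\mathfrak{l}_o\oplus\mathfrak{a}_o$ and the fact that $\mathfrak{t}_o$ is self-centralizing in $\mathfrak{l}_o$, whereas the paper splits a centralizing element $Z\in\mathfrak{g}_o$ into its $s_o$-eigencomponents $Z\pm s_oZ$ and places them in $\mathfrak{t}_o$ and $\mathfrak{a}_o$ respectively --- and your version is if anything slightly more careful, since it makes explicit why the $\mathfrak{k}_o$-component lands in $\mathfrak{l}_o$ (it centralizes $\mathfrak{a}_o$) before maximality of $\mathfrak{t}_o$ can be invoked.
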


\begin{proof}
By complexifying, it is enough to show that $\mathfrak{h}\subset\mathfrak{g}$ is a Cartan subalgebra.  Since $\mathfrak{g}$ is reductive, we must verify
\begin{enumerate}
\item For all $H\in\mathfrak{h}$, $\ad H$ is diagonalizable over $\mathbb{C}$.
\item $\mathfrak{h}\subset\mathfrak{g}$ is maximal abelian.
\end{enumerate}
The first of these follows since $\mathfrak{t}_o$ and $\mathfrak{a}_o$ both act semisimply on $\mathfrak{g}_o$ and commute with each other.  For the second, suppose $Z\in \mathfrak{g}=\mathfrak{g}_o\oplus i\mathfrak{g}_o$ commutes with $\mathfrak{h}_o$.  Writing $Z=X+iY$ where $X,Y\in\mathfrak{g}_o$, wew have $[X,\mathfrak{h}_o]=[Y,\mathfrak{h}_o]=0$.  So we can assume without loss of generality that $Z\in\mathfrak{g}_o$.  Since $[Z,\mathfrak{h}_o]=0$, $[s_o(Z),\mathfrak{h}_o]=0$ as well.  So $Z+s_oZ\in C_{\mathfrak{g}_o}(\mathfrak{t}_o)\cap\mathfrak{k}_o = \mathfrak{t}_o$.  But we also have $Z-s_oZ\in C_{\mathfrak{g}_o}(\mathfrak{a}_o)\cap\mathfrak{p}_o = \mathfrak{a}_o$.  Hence $Z\in\mathfrak{t}_o\oplus\mathfrak{a}_o=\mathfrak{h}_o$.
\end{proof}

{\em Remark.}  The subalgebra $\mathfrak{t}_o$ is a maximal toral subalgebra when regarded as a subalgebra of $\mathfrak{k}_o$.

The system $\Phi(\mathfrak{g}_o,\mathfrak{a}_o)$ is called a {\em relative root system}.  Decompose $\mathfrak{g}$ in terms of the Cartan subalgebra $\mathfrak{h}$:\index{Root!relative}
$$\mathfrak{g} = \mathfrak{h}\oplus\left(\bigoplus_{\alpha\in\Phi(\mathfrak{g},\mathfrak{h})}\mathfrak{g}_\alpha\right).$$

$$\mathfrak{l}=\mathfrak{t}\oplus\left(\underset{\alpha|_{\mathfrak{a}_o=0}}{\bigoplus_{\alpha\in\Phi(\mathfrak{g},\mathfrak{h})}}\mathfrak{g}_\alpha\right)$$.  

The eigenspaces in $\mathfrak{g}_o$ are
$$\mathfrak{g}_{o,\alpha}=\mathfrak{g}_o\cap\left(\underset{\lambda|_{\mathfrak{a}_o=\alpha}}{\bigoplus_{\lambda\in\Phi(\mathfrak{g},\mathfrak{h})}}\mathfrak{g}_\lambda\right)$$

$$\xymatrix{
i\mathfrak{t}_o^*\oplus\mathfrak{a}_o^* \ar[r]^{\operatorname{Res}|_{\mathfrak{a}_o}}& \mathfrak{a}_o\\
\Phi(\mathfrak{g},\mathfrak{h})\ar[u]^{\subset} \ar[r] & \Phi(\mathfrak{g}_o,\mathfrak{a}_o)\cup\{0\}
}$$

{\em Remarks.}
\begin{enumerate}
\item $\Phi(\mathfrak{g}_o,\mathfrak{a}_o)$ is a possibly {\em non-reduced} root system.
\item The only non-reduced root system $BC_n$: $\{\pm E_i\}_{1\le i\le n}\cup\{\pm E_i\pm E_j\}_{i<j}\cup \{\pm 2E_i\}_{1\le i\le n}$
\item Let $H_1\in\mathfrak{a}_o$ be regular with respect to $\Phi(\mathfrak{g}_o,\mathfrak{a}_o)$ and $H_2\in i\mathfrak{t}_o$ regular with respect to $\{\alpha|_{i\mathfrak{t}_o}\not=0\mid \alpha\in\Phi(\mathfrak{g},\mathfrak{h})\}$.  Scale $H_2$ such that $|\alpha(H_1)|>|\alpha(H_2)|$ for all $\alpha\in\Phi(\mathfrak{g},\mathfrak{h}), \alpha|_{\mathfrak{a}_o}\not=0$.  Then $H=H_1+H_2$ is regular with respect to $\Phi(\mathfrak{g},\mathfrak{h})$ and the sign of $\alpha(H)$ is the signa of $\alpha(H_1)$.
$$H\to \Phi^+(\mathfrak{g},\mathfrak{h})\xrightarrow{\operatorname{Res}|_{\mathfrak{a}_o}} \Phi^+(\mathfrak{g}_o,\mathfrak{a}_o)\cup\{0\}.$$
\item $s_o(\mathfrak{g}_{o,\alpha})=\mathfrak{g}_{o,-\alpha}$.  So we can decompose $\mathfrak{g}_{o,\alpha}\oplus\mathfrak{g}_{o,-\alpha}$ into its eigenspaces under $s_o$: $\mathfrak{k}_{o,\alpha}\oplus \mathfrak{p}_{o,\alpha}$.  If $H\in\mathfrak{a}_o$ is regular with respect to $\Phi(\mathfrak{g}_o,\mathfrak{a}_o)$, then $[H,\mathfrak{k}_{o,\alpha}]=\mathfrak{p}_{o,\alpha}$.  In particular,
$$\dim\mathfrak{g}_{o,\alpha} = \dim\mathfrak{g}_{o,-\alpha} = \dim\mathfrak{k}_{o,\alpha} = \dim\mathfrak{p}_{o,\alpha}.$$
\item Let $H\in\mathfrak{a}_o$.  Then
$$C_{\mathfrak{g}_o}(H) = \mathfrak{l}_o\oplus\mathfrak{a}_o\oplus\left(\underset{\alpha(H)=0}{\bigoplus_{\alpha\in\Phi(\mathfrak{g}_o,\mathfrak{a}_o)}}\mathfrak{g}_{o,\alpha}\right)$$
which we split at
$$C_{\mathfrak{k}_o}(H) = \mathfrak{l}_o\oplus\left(\underset{\alpha(H)=0}{\bigoplus_{\alpha\in\Phi(\mathfrak{g}_o,\mathfrak{a}_o)}}\mathfrak{k}_{o,\alpha}\right),\quad C_{\mathfrak{p}_o}(H) = \mathfrak{a}_o\oplus\left(\underset{\alpha(H)=0}{\bigoplus_{\alpha\in\Phi(\mathfrak{g}_o,\mathfrak{a}_o)}}\mathfrak{g}_{o,\alpha}\right).$$
So, by the previous remark,
$$\dim C_{\mathfrak{k}_o}(H) -\dim C_{\mathfrak{p}_o}(H)  = \dim\mathfrak{l}_o - \dim\mathfrak{a}_o = \dim\mathfrak{k}_o-\dim\mathfrak{p}_o.$$
\item As a result of the previous remark, the following are equivalent:
\begin{itemize}
\item $\dim C_{\mathfrak{k}_o}(H)=\dim\mathfrak{l}_o$
\item $\dim C_{\mathfrak{p}_o}(H) = \dim\mathfrak{a}_o$
\item $H$ is regular with respect to $\Phi(\mathfrak{g}_o,\mathfrak{a}_o)$
\item $\dim C_{\mathfrak{k}_o}(H)=\dim\mathfrak{h}_o$
\item $H\in\mathfrak{g}_o'\cap\mathfrak{a}_o$.
\end{itemize}
\item If an element $X=L+\sum_{\alpha\in\Phi(\mathfrak{g}_o,\mathfrak{a}_o)} (E_\alpha + s_o(E_\alpha))\in\mathfrak{k}_o$ were to normalize $\mathfrak{a}_o$, then note that all $E_\alpha=0$.  Hence $X\in \mathfrak{l}_o = C_{\mathfrak{k}_o}(\mathfrak{a}_o)$.  This shows that $N_{\mathfrak{k}_o}(\mathfrak{a}_o)=C_{\mathfrak{k}_o}(\mathfrak{a}_o)=\mathfrak{l}_o$.  In particular, $N_K(\mathfrak{a}_o)$ and $C_K(\mathfrak{a}_o)$ have the same Lie algebra.  This suggests the following definition.
\end{enumerate}

\begin{definition}
Let $G$ be a connected Lie group with $\Lie(G)=\mathfrak{g}_o$, and let $\sigma:G\to G$ be an involution such that $\sigma_{*,e}=s_o$, and let $K=G_\sigma$.  The geometric {\em relative Weyl group} is
$$W(G,\mathfrak{a}_o) = \frac{N_K(\mathfrak{a}_o)}{C_K(\mathfrak{a}_o)}.$$
The algebraic relative Weyl group is
$$W(\mathfrak{g}_o,\mathfrak{a}_o) = W(\Phi(\mathfrak{g}_o,\mathfrak{a}_o))$$
(the group generated by reflections in the hyperplanes of $\Phi(\mathfrak{g}_o,\mathfrak{a}_o)$.)
\end{definition}

{\em Fact.}  $W(G,\mathfrak{a}_o)=W(\mathfrak{g}_o,\mathfrak{a}_o)\subset\operatorname{GL}(\mathfrak{a}_o)$

\commentx{The following is a total mess.  The goal is to find a system of representatives for $K$-conjugacy classes in $P=\exp\mathfrak{p}_o$.  Compare with the non-compact case where $K\times\exp\mathfrak{p}_o\to G$ was a diffeomorphism.}
{\em Summary.}  Let $(\mathfrak{g}_o,s_o)$ be a type I orthgonal symmetric Lie algebra, $\mathfrak{g}_o=\mathfrak{k}_o\oplus\mathfrak{p}_o$, and let $\mathfrak{a}_o\subset\mathfrak{p}_o$ be a flat subspace.  Let $C_{\mathfrak{k}_o}(\mathfrak{a}_o)=\mathfrak{l}_o$ and $\mathfrak{t}_o\subset\mathfrak{l}_o$ be a maximal toral subalgebra.  Then
$$\mathfrak{h}_o = \mathfrak{t}_o\oplus\mathfrak{a}_o$$
is a Cartan subalgebra of $\mathfrak{g}_o$.\footnote{This is the so-called {\em maximally split} CSA, in contrast to the {\em maximally compact} CSA that starts with a maximal toral $\mathfrak{t}_o\subset\mathfrak{k}_o$ and sets $\mathfrak{h}_o=C_{\mathfrak{g}_o}(\mathfrak{h}_o)$. \commentx{Incorporate this footnote elsewhere.}}
There is a decomposition
$$\mathfrak{g}_o = (\mathfrak{l}_o\oplus\mathfrak{a}_o) \oplus\left(\bigoplus_{\alpha\in\Phi(\mathfrak{g}_o,\mathfrak{a}_o)}\mathfrak{g}_{o,\alpha}\right).$$
The root system $\Phi(\mathfrak{g},\mathfrak{h})$ that arises by complexifying $\mathfrak{g}_o$ and the Cartan subalgebra $\mathfrak{h}_o$ takes real values on restriction to $i\mathfrak{h}_o$, and the relative root system is such that restriction to $i\mathfrak{a}_o$ induces a mapping of root systems $\Phi(\mathfrak{g},\mathfrak{h})\xrightarrow{\operatorname{Res}|_{i\mathfrak{a}_o^*}}\Phi(\mathfrak{g}_o,\mathfrak{a}_o)\cup \{o\}$

Let
$$Q_{s_o} = \bigoplus_{\alpha\in\Phi(\mathfrak{g}_o,\mathfrak{a}_o)} \mathbb{Z}\alpha\subset i\mathfrak{a}_o^*,\qquad Q^\vee_{s_o} = \bigoplus_{\alpha\in\Phi(\mathfrak{g}_o,\mathfrak{a}_o)} \mathbb{Z}{\alpha^\vee}\subset i\mathfrak{a}_o^*.$$
$$P_{s_o}^\vee = \{\lambda \in i\mathfrak{a}_o\mid \langle\lambda, Q_{s_o}\rangle \subset \mathbb{Z} \}$$
Let $G$ be a connected Lie group with $\Lie(G)=\mathfrak{g}_o$.  Let $\sigma:G\to G$ be an involution such that $\sigma_{*,e}=s_o$.  Then
$$\xymatrix{
\mathfrak{h}_o\ar[d]_{\exp} &\supset \ker\exp|_{\mathfrak{h}_o} = 2\pi i L_G\\
H\le G
}$$
where $H$ is a maximal torus in $G$.  We have
$$L_{G,s_o} = L_G\cap i\mathfrak{a}_o, \quad L_{G_{sc},s_o} = Q^\vee_{s_o}, \quad L_{G_{\ad},s_o} = P^\vee_{s_o}$$
$$\widetilde{G}_{G,\mathfrak{a}_o} = W(G,\mathfrak{a}_o)\ltimes L_{G,\mathfrak{a}_o}\quad\text{acts on $i\mathfrak{a}_o$.}$$
Let $\Delta_{G,\mathfrak{a}_o}$ be the relative fundamental domain for $\widetilde{W}_{G,\mathfrak{a}_o}$.

\begin{theorem}
If there exists an involution $\sigma:G\to G$ such that $\sigma_{*,e}=s_o$, then $e^{2\pi i\Delta_{G,\mathfrak{a}_o}}$ is a system of representatives for $K$-conjugacy classes in $P=\exp(\mathfrak{p}_o)$.  Furthermore, $Z(G)\cap A = L_{G,\mathfrak{a}_o}/L_{G_{\ad},\mathfrak{a}_o}$ (here $A=\exp\mathfrak{a}_o$).  Elements of $Z(G)\cap A$ correspond to $\{o\}$ or minuscule coweight vertices of $\operatorname{cl}\Delta_{G,s_o}$.
\end{theorem}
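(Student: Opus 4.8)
The plan is to imitate, in the relative setting, the analysis of conjugacy classes in a compact group via the fundamental alcove. First I would observe that $K$ acts on $P=\exp\mathfrak{p}_o$ by conjugation, and since $k(\exp X)k^{-1}=\exp(\Ad(k)X)$ with $\Ad(k)\mathfrak{p}_o=\mathfrak{p}_o$ for $k\in K$, the $K$-conjugacy classes in $P$ are exactly the images under $\exp$ of the $\Ad(K)$-orbits in $\mathfrak{p}_o$. By Corollary \ref{coverbyflats}, $\mathfrak{p}_o=\bigcup_{k\in K}\Ad(k)\mathfrak{a}_o$, so every such orbit meets $\mathfrak{a}_o$; hence every $K$-conjugacy class in $P$ meets $A=\exp\mathfrak{a}_o$. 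It therefore remains to determine precisely when two elements $\exp Y_1,\exp Y_2$ with $Y_1,Y_2\in\mathfrak{a}_o$ are $K$-conjugate.

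The heart of the argument, and what I expect to be the main obstacle, is the claim that if $\exp Y_1$ and $\exp Y_2$ are $K$-conjugate then they are already conjugate by an element of $N_K(\mathfrak{a}_o)$; that is, conjugacy inside $A$ is governed by the geometric relative Weyl group $W(G,\mathfrak{a}_o)=N_K(\mathfrak{a}_o)/C_K(\mathfrak{a}_o)$. I would prove this exactly as in the compact group case: writing $k(\exp Y_1)k^{-1}=\exp Y_2=:a$, both $\mathfrak{a}_o$ and $\Ad(k)\mathfrak{a}_o$ are flat subspaces contained in $C_{\mathfrak{p}_o}(a)$ (each centralizes $a$, since $a$ is the exponential of one of their elements) and each is maximal abelian there. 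The centralizer $C_{\mathfrak{g}_o}(a)$ is $s_o$-stable and again orthogonal symmetric, so the conjugacy of flat subspaces (the Proposition preceding Corollary \ref{coverbyflats}), applied inside it, produces $h\in C_K(a)$ with $h\,(\Ad(k)\mathfrak{a}_o)\,h^{-1}=\mathfrak{a}_o$. Then $hk\in N_K(\mathfrak{a}_o)$ and $(hk)(\exp Y_1)(hk)^{-1}=h\,a\,h^{-1}=a=\exp Y_2$ because $h$ centralizes $a$. Combined with the Fact that $W(G,\mathfrak{a}_o)=W(\mathfrak{g}_o,\mathfrak{a}_o)$, this shows $\exp Y_1$ and $\exp Y_2$ are $K$-conjugate if and only if $Y_2\in W(\mathfrak{g}_o,\mathfrak{a}_o)\cdot Y_1+\ker(\exp|_{\mathfrak{a}_o})$.

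Since $\exp|_{\mathfrak{a}_o}$ has kernel $2\pi i\,L_{G,\mathfrak{a}_o}$, this condition says precisely that $Y_1,Y_2$ lie in one orbit of the affine relative Weyl group $\widetilde{W}_{G,\mathfrak{a}_o}=W(G,\mathfrak{a}_o)\ltimes\tau(L_{G,\mathfrak{a}_o})$ acting on $i\mathfrak{a}_o$, the linear reflections contributing $W(G,\mathfrak{a}_o)$ and the lattice translations contributing the kernel. Because $\Delta_{G,\mathfrak{a}_o}$ is a fundamental domain for $\widetilde{W}_{G,\mathfrak{a}_o}$ (its closure meeting each orbit in exactly one point), $e^{2\pi i\Delta_{G,\mathfrak{a}_o}}$ meets each $K$-conjugacy class of $P$ in exactly one point, which proves the first assertion.

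For the center, I would compute directly: for $Y\in\mathfrak{a}_o$, $\Ad(\exp Y)=\exp(\ad Y)$ acts by $e^{\alpha(Y)}$ on each restricted root space $\mathfrak{g}_{o,\alpha}$ and trivially on $C_{\mathfrak{g}_o}(\mathfrak{a}_o)$, so $\exp Y\in Z(G)=\ker\Ad$ if and only if $\alpha(Y)\in 2\pi i\mathbb{Z}$ for every $\alpha\in\Phi(\mathfrak{g}_o,\mathfrak{a}_o)$, i.e.\ if and only if $Y\in 2\pi i\,L_{G_{\ad},\mathfrak{a}_o}$ (the restricted coweight lattice $P^\vee_{s_o}$). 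Passing through $\exp$, whose kernel is $2\pi i\,L_{G,\mathfrak{a}_o}$, identifies $Z(G)\cap A$ with $L_{G,\mathfrak{a}_o}/L_{G_{\ad},\mathfrak{a}_o}$, the relative analogue of the formula $Z(G)=L_G/L_{G_{\ad}}$ established earlier. Finally, to match these central elements with alcove vertices, I would intersect $L_{G_{\ad},\mathfrak{a}_o}=P^\vee_{s_o}$ with the closed fundamental alcove $\operatorname{cl}\Delta_{G,s_o}$: since the alcove is cut out by the walls $H_{\alpha,0}$ together with the single wall $H_{\theta,1}$ for the highest restricted root $\theta$, its only coweight-lattice points are the origin $o$ and those fundamental coweights $\lambda^\vee_{\alpha_i}$ that are vertices, which by definition are exactly the minuscule ones ($\lambda^\vee_{\alpha_i}(\theta)=1$). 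By the first part these yield distinct representatives, so the nontrivial elements of $Z(G)\cap A$ correspond bijectively to the minuscule coweight vertices of $\operatorname{cl}\Delta_{G,s_o}$, exactly as in the Lemma identifying $Z(G_{sc})$ with the minuscule coweights.
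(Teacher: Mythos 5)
The paper states this theorem without any proof at all---it appears as a bare assertion at the end of a ``Summary'' paragraph---so there is no argument of record to compare yours against; what you have written is, in effect, the missing proof, and it is correct. Your route assembles exactly the ingredients the paper has already prepared: Corollary \ref{coverbyflats} reduces everything to $A=\exp\mathfrak{a}_o$; the genuinely nontrivial step is your centralizer argument, and you supply it cleanly. Both $\mathfrak{a}_o$ and $\Ad(k)\mathfrak{a}_o$ sit inside $C_{\mathfrak{p}_o}(a)$ as maximal abelian subspaces, the fixed-point algebra $C_{\mathfrak{g}_o}(a)$ is $s_o$-stable and reductive (fixed points of an automorphism of a compact algebra), and the flat-conjugacy proposition run inside this smaller symmetric pair produces $h$ generated by $\exp\bigl(C_{\mathfrak{k}_o}(a)\bigr)$, so that $h$ does centralize $a$ and $hk\in N_K(\mathfrak{a}_o)$---you correctly flag this as the crux, and it is the standard device for upgrading $K$-conjugacy in $A$ to $N_K(\mathfrak{a}_o)$-conjugacy. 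From there, quotienting by $\ker(\exp|_{i\mathfrak{a}_o})=2\pi i\,L_{G,\mathfrak{a}_o}$ and invoking the paper's Fact $W(G,\mathfrak{a}_o)=W(\mathfrak{g}_o,\mathfrak{a}_o)$ turns the question into orbits of $\widetilde{W}_{G,\mathfrak{a}_o}$ on $i\mathfrak{a}_o$, for which $\operatorname{cl}\Delta_{G,\mathfrak{a}_o}$ is by definition a strict fundamental domain; and your center computation is the verbatim relative analogue of the paper's minuscule-coweight lemma for $Z(G_{sc})$.

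Two small caveats. First, the quotient in the theorem, which you reproduce faithfully, is written upside down: since $L_{G,\mathfrak{a}_o}\subseteq L_{G_{\ad},\mathfrak{a}_o}$, your own computation ($\exp Y$ central iff $\alpha(Y)\in 2\pi i\mathbb{Z}$ for all restricted roots, i.e.\ $Y\in 2\pi i\,P^\vee_{s_o}$, taken modulo the kernel of $\exp$) identifies $Z(G)\cap A$ with $L_{G_{\ad},\mathfrak{a}_o}/L_{G,\mathfrak{a}_o}$; the paper's earlier formula $Z(G)=L_G/L_{G_{\ad}}$ suffers the same inversion, so you have inherited a typo rather than committed an error. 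Second, in the vertex count bear in mind that $\Phi(\mathfrak{g}_o,\mathfrak{a}_o)$ may be non-reduced (type $BC$): the alcove is still bounded by the walls $H_{\alpha,0}$ and the single wall $H_{\theta,1}$ for the highest restricted root, but in type $BC$ every coefficient $d_j$ in $\theta$ equals $2$, so there are no minuscule coweights and $Z(G_c^{sc})\cap A$ is trivial---consistent with the paper's tables listing $|Z(G_c^{sc})\cap A|=1$ in the $BC$ rows. Your argument handles this case automatically, but it is worth saying explicitly that ``minuscule vertices'' can be an empty set there.
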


\subsection{Maximal compact Cartan subalgebras}
\commentx{What's this about?  Why are we now discussing noncompact again?}
Let $(\mathfrak{g}_o,s_o)$ be a noncompact orthgonal symmetric Lie algebra, $\mathfrak{g}=\mathfrak{p}_o\oplus\mathfrak{k}_o$.  Let $\mathfrak{a}_o\subset\mathfrak{p}_o$ be a flat subspace, and as above let $C_{\mathfrak{k}_o}(\mathfrak{a}_o)=\mathfrak{l}_o$ and let $\mathfrak{t}_o\subset\mathfrak{l}_o$ be a maximal toral subalgebra.  Then
$$\mathfrak{h}_o = \mathfrak{t}_o\oplus\mathfrak{a}_o$$
is a Cartan subalgebra of $\mathfrak{g}_o$, called a {\em maximally split} Cartan subalgebra.  It maximizes the split part $\mathfrak{a}_o$.

By contrast, if instead $\mathfrak{t}_o\subset\mathfrak{k}_o$ is a maximal toral subalgebra, then
$$\mathfrak{h}_o = C_{\mathfrak{g}_o}(\mathfrak{t}_o) = \mathfrak{t}_o\oplus\mathfrak{b}_o,\qquad (\mathfrak{b}_o\subset\mathfrak{p}_o)$$
is called a {\em maximally compact} Cartan subalgebra.  This maximizes the compact part $\mathfrak{t}_o$.

\section{Fixed points of involutions}

\begin{lemma}
Let $G$ be a compact semisimple Lie group and $\sigma$ an automorphism of $G$.  Then the fixed point set $G_\sigma$ contains a regular element.
\end{lemma}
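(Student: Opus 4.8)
The plan is to reduce the statement to a purely Lie-algebraic assertion about the differential $s_o=\sigma_{*,e}$ and then prove that by induction on $\dim\mathfrak{g}_o$. Since $\sigma$ is a Lie group homomorphism, $\sigma\circ\exp=\exp\circ\, s_o$, so for any $H$ in the fixed subalgebra $\mathfrak{k}_o:=\{X\in\mathfrak{g}_o\mid s_oX=X\}$ we have $\sigma(\exp tH)=\exp(t\,s_oH)=\exp(tH)$; thus $\exp\mathfrak{k}_o\subset G_\sigma$. It therefore suffices to produce $H\in\mathfrak{k}_o$ such that $\exp(tH)$ is regular in $G$ for some $t$. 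I claim the key is to find $H\in\mathfrak{k}_o$ that is already a \emph{regular element of the Lie algebra} $\mathfrak{g}_o$, i.e. $C_{\mathfrak{g}_o}(H)=\mathfrak{t}_o$ is a Cartan subalgebra. Granting such an $H$, set $T=\exp\mathfrak{t}_o$; since $\ad H$ is semisimple with nonzero imaginary eigenvalues on each root space, $\alpha(H)\neq 0$ for every root $\alpha\in\Phi(\mathfrak{g},\mathfrak{t})$, and $\operatorname{Lie}C_G(\exp tH)=\mathfrak{t}_o\oplus\bigoplus_{\alpha(tH)\in 2\pi i\mathbb{Z}}\mathfrak{g}_{o,\alpha}$. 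Each root excludes only a discrete set of $t$, so for generic small $t>0$ no root space survives, whence $C_G(\exp tH)$ has Lie algebra $\mathfrak{t}_o$; by the corollary that centralizers $C_G(x)$ are connected, $C_G(\exp tH)=T$ and $x=\exp(tH)\in G_\sigma$ is regular.

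The main content is the Lie-algebra statement: \emph{the fixed subalgebra $\mathfrak{g}_o^{s_o}$ of any automorphism $s_o$ of a compact semisimple $\mathfrak{g}_o$ contains a regular element of $\mathfrak{g}_o$.} I would prove this by induction on $\dim\mathfrak{g}_o$. Choose $H\in\mathfrak{g}_o^{s_o}$ with $\dim C_{\mathfrak{g}_o}(H)$ minimal; one checks $\mathfrak{g}_o^{s_o}\neq 0$ and any nonzero element has centralizer strictly smaller than $\mathfrak{g}_o$, so $\mathfrak{z}:=C_{\mathfrak{g}_o}(H)\subsetneq\mathfrak{g}_o$. Because $s_oH=H$, the reductive subalgebra $\mathfrak{z}$ is $s_o$-stable, hence so is its compact semisimple derived algebra $[\mathfrak{z},\mathfrak{z}]$. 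If $H$ is not regular then $[\mathfrak{z},\mathfrak{z}]\neq 0$ and $\dim[\mathfrak{z},\mathfrak{z}]<\dim\mathfrak{g}_o$ (since $H\in Z(\mathfrak{z})\neq 0$), so by the inductive hypothesis applied to $s_o|_{[\mathfrak{z},\mathfrak{z}]}$ there is $H'\in[\mathfrak{z},\mathfrak{z}]\cap\mathfrak{g}_o^{s_o}$ regular in $[\mathfrak{z},\mathfrak{z}]$. Then $H$ and $H'$ commute and are ad-semisimple, so for generic $\varepsilon$ one has $C_{\mathfrak{g}_o}(H+\varepsilon H')=C_{\mathfrak{g}_o}(H)\cap C_{\mathfrak{g}_o}(H')=C_{\mathfrak{z}}(H')$, which is a Cartan subalgebra of $\mathfrak{z}$ and hence of dimension $\operatorname{rank}\mathfrak{z}<\dim\mathfrak{z}$. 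Since $H+\varepsilon H'\in\mathfrak{g}_o^{s_o}$, this contradicts minimality; therefore $\mathfrak{z}$ is abelian and $H$ is the desired regular element.

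The inner case is essentially free: if $s_o=\Ad(g)$ then $\mathfrak{g}_o^{s_o}\supseteq$ a full Cartan subalgebra through $g$, which plainly contains regular elements. The genuine obstacle is the outer case, where $\mathfrak{g}_o^{s_o}$ contains no Cartan subalgebra of $\mathfrak{g}_o$ (e.g. $\mathfrak{so}(3)\subset\mathfrak{su}(3)$ under complex conjugation, or $\mathfrak{g}_2\subset\mathfrak{so}(8)$ under triality), so a regular element of $\mathfrak{g}_o$ lying in the fixed algebra generates a Cartan that is merely $s_o$-stable rather than fixed pointwise. The minimal-centralizer induction above is precisely what handles this uniformly, avoiding any case analysis by root systems; the only points needing care are the standard facts that commuting ad-semisimple elements have $C(H+\varepsilon H')=C(H)\cap C(H')$ for generic $\varepsilon$ and that $[\mathfrak{z},\mathfrak{z}]$ is compact semisimple of strictly smaller dimension, both of which follow from the reductive structure of centralizers in a compact Lie algebra.
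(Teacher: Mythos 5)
Your reduction to the Lie-algebra statement and the exponentiation step are fine (the appeal to connectedness of centralizers is even unnecessary: once $\Lie C_G(\exp tH)=\mathfrak{t}_o$, any maximal torus containing $\exp tH$ has Lie algebra inside $\mathfrak{t}_o$, so $\exp tH$ lies in a unique maximal torus, which is regularity). The minimal-centralizer induction itself is also internally sound: $\mathfrak{z}=C_{\mathfrak{g}_o}(H)$ is $s_o$-stable because $s_oH=H$, its derived algebra is compact semisimple of strictly smaller dimension since $H\in Z(\mathfrak{z})\setminus\{0\}$, and the generic-$\varepsilon$ identity $C_{\mathfrak{g}_o}(H+\varepsilon H')=C_{\mathfrak{g}_o}(H)\cap C_{\mathfrak{g}_o}(H')$ holds for commuting ad-semisimple elements. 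Note this is a genuinely different route from the paper's, which simply adapts the proof of Theorem \ref{RegularElementsFlatSubspaces}: take a maximal abelian subalgebra $\mathfrak{t}_1$ of the fixed algebra, show that a Cartan subalgebra of $C_{\mathfrak{g}_o}(\mathfrak{t}_1)$ is self-normalizing in $\mathfrak{g}_o$ and hence a Cartan subalgebra of $\mathfrak{g}_o$, and locate the regular element there.

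There is, however, a genuine gap: the parenthetical ``one checks $\mathfrak{g}_o^{s_o}\neq 0$.'' This is not a routine verification but the crux of the lemma, in exactly the outer case you yourself identify as the obstacle, and your induction needs this seed not once but for every algebra appearing in the recursion (each $[\mathfrak{z},\mathfrak{z}]$); it does not follow from the inductive hypothesis, which concerns strictly smaller algebras. The fact is trivial for involutions (an involution with zero fixed space would be $-\operatorname{id}$, an automorphism only of an abelian algebra), easy for inner automorphisms ($\Ad(g)$ fixes the Lie algebra of any maximal torus through $g$), and easy when the closure of $\langle s_o\rangle$ in the compact group $\Aut(\mathfrak{g}_o)$ is infinite (then $s_o$ commutes with a nontrivial torus $e^{t\ad X}$ of inner automorphisms, and $\ad(s_oX)=\ad X$ forces $s_oX=X\neq 0$). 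What remains is precisely the finite-order outer case, and that case is what the paper actually needs: in the connectedness theorem the lemma is applied to the fixed set of $\sigma$ twisted by conjugation $c(g)$, which is in general not an involution, so you cannot retreat to the involutive situation. For finite-order outer automorphisms, nonvanishing of the fixed algebra is a classical but genuinely nontrivial theorem --- it fails for solvable algebras (a rotation of an abelian algebra is fixed-point free), so semisimplicity must enter; the standard proofs produce an $s_o$-invariant maximal torus and reduce $s_o$ to a diagram automorphism composed with some $\Ad(t)$. Worse for the economy you claim: in that normal form $s_o$ visibly fixes $\sum_j\lambda_{\alpha_j}^\vee$, which is already a regular element, so any honest proof of your seed essentially subsumes the entire lemma and renders the induction superfluous. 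As written, the main content of the statement has been displaced into an unproved assertion rather than resolved.
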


\begin{proof}
Identical to the proof of Theorem \ref{RegularElementsFlatSubspaces}.\commentx{Check.}
\end{proof}

\begin{theorem}
Let $G$ be a connected simply-connected compact Lie group and $\sigma : G\to G$ an involution.  Then $G_\sigma$ is connected.
\end{theorem}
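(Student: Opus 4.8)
The plan is to show that every $x$ in the fixed-point group $K:=G_\sigma$ can be joined to the identity by a path lying in $K$, i.e.\ that $x$ lies in the identity component $K_e$. Write $\mathfrak g_o=\mathfrak k_o\oplus\mathfrak p_o$ for the $\pm 1$ eigenspace decomposition of $s_o=\sigma_{*,e}$, and fix a flat subspace $\mathfrak a_o\subseteq\mathfrak p_o$ with $A=\exp\mathfrak a_o$. Since $K_e=(G_\sigma)_e$ is a closed subgroup with $\Lie K_e=\mathfrak k_o$, the pair $(G,K_e)$ is of the type used to build Riemannian symmetric spaces, so $G=K_e\exp(\mathfrak p_o)$; combining this with $\mathfrak p_o=\bigcup_{k\in K_e}\Ad(k)\mathfrak a_o$ (Corollary \ref{coverbyflats}) lets me write any $x\in G$ as $x=(k_1k_2)\exp(W)k_2^{-1}$ with $k_1,k_2\in K_e$ and $W\in\mathfrak a_o$. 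First I would feed $x\in K$ into this decomposition: because $\sigma$ is the identity on $K_e$ and inverts $\exp(\mathfrak p_o)$, the relation $\sigma(x)=x$ forces $\exp(2W)=e$, and then $\exp(W)$ itself lies in $K$ and satisfies $x\in K_e\iff\exp(W)\in K_e$. This reduces the theorem to the single assertion that every $a=\exp(W)$ with $W\in\mathfrak a_o$ and $a^2=e$ belongs to $K_e$.

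Next I would translate this into a lattice statement. Extend $\mathfrak a_o$ to a maximally split Cartan subalgebra $\mathfrak h_o=\mathfrak t_o\oplus\mathfrak a_o$, with $\mathfrak t_o\subseteq C_{\mathfrak k_o}(\mathfrak a_o)$ maximal toral; it is $\sigma$-stable, with $\sigma=\operatorname{id}$ on $T=\exp\mathfrak t_o$ and $\sigma$ acting by inversion on $A$, and $H=\exp\mathfrak h_o$ is a maximal torus of $G$. Let $\Lambda=\ker(\exp|_{\mathfrak h_o})$ be its unit lattice. This is where simple connectedness enters: for $G$ simply connected one has $\Lambda=Q^\vee$, the full coroot lattice $L(\Phi^\vee)$. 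The condition $a^2=e$ now reads $2W\in\Lambda=Q^\vee$, so the elements to be captured are exactly the $\exp(W)$ with $W\in\tfrac12 Q^\vee\cap\mathfrak a_o$.

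The heart of the argument, and the step I expect to be the main obstacle, is to show that each such order-two element actually lands in $K_e$ and not merely in $K$; this is precisely where the hypothesis that $G$ is simply connected (i.e.\ $\Lambda=Q^\vee$) is indispensable, as the contrast between $SU(2)$ and $SO(3)$ already illustrates. The idea is to exploit the relative root decomposition $\mathfrak g_o=(\mathfrak l_o\oplus\mathfrak a_o)\oplus\bigoplus_{\alpha\in\Phi(\mathfrak g_o,\mathfrak a_o)}\mathfrak g_{o,\alpha}$ together with $s_o(\mathfrak g_{o,\alpha})=\mathfrak g_{o,-\alpha}$. For a single coroot direction $H_\alpha\in\mathfrak a_o$, the subalgebra spanned by $\mathbb R H_\alpha$ and $\mathfrak g_{o,\pm\alpha}$ generates a compact rank-one subgroup isomorphic to $SU(2)$ or $SO(3)$ on which $\sigma$ swaps $\mathfrak g_{o,\pm\alpha}$; there the order-two element $\exp(W)$ is realized as $\exp(Y)$ for a suitable $Y=E_\alpha+s_o(E_\alpha)\in\mathfrak k_o$ (the computation $-I\in SO(2)$ for $SU(2)$), so the path $t\mapsto\exp(tY)$ joins it to the identity inside $K_e$. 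The genuine difficulty is to pass from one coroot to an arbitrary half-lattice point $W\in\tfrac12 Q^\vee\cap\mathfrak a_o$: the rank-one subgroups do not commute, so one must argue with the fundamental alcove of the relative affine Weyl group $\widetilde W_{G,\mathfrak a_o}$ and the minuscule-coweight description of order-two elements, verifying that because the lattice is the full coroot lattice no vertex giving an order-two element of $A$ is stranded outside $K_e$, and confirming that the relative Weyl group $W(G,\mathfrak a_o)=N_K(\mathfrak a_o)/C_K(\mathfrak a_o)$ is represented by elements of $K_e$. Completing this combinatorial step is where the main work resides.
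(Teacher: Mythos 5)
Your reduction steps are correct, and they faithfully reproduce the paper's own earlier argument (the subsection proving that $K$ is closed): $G=K_e\exp(\mathfrak{p}_o)$ together with Corollary \ref{coverbyflats} does reduce the theorem to the claim that every $\exp(W)$ with $W\in\mathfrak{a}_o$ and $\exp(2W)=e$ lies in $K_e$, which for $G$ simply connected is the claim $\exp\bigl(\tfrac12 Q^\vee\cap\mathfrak{a}_o\bigr)\subset K_e$. But that claim is the entire content of the theorem, and you do not prove it; you say so yourself (``completing this combinatorial step is where the main work resides''). This is a genuine gap, not a deferred verification. To see that nothing soft can close it, take $G=SO(3)$ and $\sigma$ conjugation by $\operatorname{diag}(1,-1,-1)$: then $G_\sigma\cong O(2)$ is disconnected, $K_e=SO(2)$, and the disconnecting element is exactly an order-two element of $A$ fixed by $\sigma$ (the half-turn about the $\mathfrak{a}_o$-axis), corresponding to $W\in\tfrac12 P^\vee\setminus\tfrac12 Q^\vee$. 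So any correct completion must genuinely distinguish $\tfrac12 Q^\vee\cap\mathfrak{a}_o$ from $\tfrac12 L_G\cap\mathfrak{a}_o$ for an overlattice $L_G\supsetneq Q^\vee$, and your sketch never does: the rank-one computation handles only $W=\tfrac12\alpha^\vee$ for a single relative root, and the passage to a general half-lattice point rests on two inputs you have not secured --- the identification of $Q^\vee\cap\mathfrak{a}_o$ with the relative coroot lattice $Q^\vee_{s_o}$ (stated in the paper's section on relative root systems without proof), and representatives of $W(G,\mathfrak{a}_o)=N_K(\mathfrak{a}_o)/C_K(\mathfrak{a}_o)$ lying in $K_e$ rather than $K$. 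Quoting anything phrased in terms of $K$-conjugacy here risks circularity, since connectedness of $K$ is what is being proved.

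For comparison, the paper closes the theorem by a different and much shorter route that avoids the lattice combinatorics in $\mathfrak{a}_o$ entirely. If $g\in G_\sigma$ is regular, then $\sigma$ preserves the maximal torus $T=C_G(g)$; writing $g=\exp(2\pi iX)$ with $X$ interior to the fundamental alcove, one gets $\sigma_{*,e}X=X+Z$ with $Z\in L_G$, and since $\sigma_{*,e}$ permutes alcoves while --- this is where $L_G=Q^\vee$, i.e.\ simple connectedness, is used --- the affine Weyl group acts simply transitively on alcoves, it follows that $Z=0$; hence the entire one-parameter subgroup $\exp(2\pi itX)$ lies in $G_\sigma$ and joins $e$ to $g$. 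If $g$ is not regular, a maximal torus $S$ of the identity component of the subgroup fixed by $\sigma$ and by conjugation by $g$ contains $G$-regular elements (the Lemma preceding the theorem), and the straight path $\exp(2\pi i(X+tZ))$ with $Z\in\Lie(S)$ moves $g$ inside $G_\sigma$ to a regular fixed element, reducing to the first case. Either carry out your relative alcove analysis in full, or adopt this regular-element argument, which shows how to make the simple connectedness hypothesis bite with far less combinatorial overhead.
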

\begin{proof}
Let $g\in G_\sigma$ be given.  We shall construct a path $\gamma:[0,1]\to G_\sigma$ such that $\gamma(0)=e$ and $\gamma(1)=g$.  There are two cases: (1) $g\in G_{reg}$, (2) $g\not\in G_{reg}$.
\begin{enumerate}
\item If $g\in G_{reg}$, then $C_G(g)=T$ is a maximal torus.  Let $i\mathfrak{t}_o=\Lie(T)$.  Then $g=\exp (2\pi i X)$ for some $X\in \Delta_G$.  Now $\sigma$ acts on $T$, and so $\sigma_{*,e}:i\mathfrak{t}_o\to i\mathfrak{t}_o$.  In particular, since $\sigma(g) =\exp (2\pi i\sigma_{*,e} X) = \exp 2\pi i X = g$, $\sigma_{*,e}X = X+Z$ for some $Z\in L_G$.  But on the other hand $\sigma_{*,e}$ acts as a Weyl reflection on $i\mathfrak{t}_o$, and therefore $\sigma_{*,e}\Delta_G$ is also a fundamental alcove for the affine Weyl group.  But since $X\in\Delta_G$ is regular, it is an interior point of $\Delta_G$, and therefore is also an interior point of $\Delta_G+Z$.  Since the Weyl alcoves are connected and disjoint, $\Delta_G=\Delta_G+Z$.  But this is only possible if $Z=0$.

Hence $\sigma_{*,e}X=X$.  Therefore the curve $\gamma(t) = \exp(2\pi i tX)$ is also fixed by $\sigma$, and $\gamma(0)=e,\gamma(1)=g$, as required.

\item Now suppose that $g\not\in G_{reg}$.  Act via conjugation by $g$ on $G_\sigma$:
$$G_\sigma\xrightarrow{c(g)} G_\sigma, \qquad x\mapsto gxg^{-1}.$$
Then $(G_{\sigma,c(g)})_{reg}\subset (G_\sigma)_{reg}\subset G_{reg}$.  Let $S\subset (G_{\sigma, c(g)})_e$ be a maximal torus.  Then $S$ contains regular points.  So $S\subset C_G(S) = T$, a maximal torus of $G$.  Now $g\in T$, say $g=\exp(2\pi i X)$ where $X\in i\mathfrak{t}_o$.  Let $i\mathfrak{s}_o\subset i\mathfrak{t}_o$ be the Lie algebra of $S$.  Then $\mathfrak{s}_o$ contains regular elements, so it is not contained in any reflection hyperplane of the affine Weyl group.  Thus $X+i\mathfrak{s}_o$ contains regular elements as well.  Suppose that $X+Z$ is regular, $Z\in i\mathfrak{s}_o$.  Then $n=\exp(2\pi i (X+Z))\in G_{\sigma,reg}$.  In particular, there is a path from $e$ to $n$ inside $G_\sigma$ (by case (1)).  Then the curve $\exp(2\pi i (X+tZ))$ is a path from $g$ to $n$ inside $G_\sigma$.  Composing these two paths gives a path from $e$ to $g$.
\end{enumerate}
\end{proof}

{\em Remark.}  The theorem is true for $\sigma$ an arbitrary automorphism.

\begin{corollary}
Let $G$ be a compact, connected, simply-connected Lie group and $g\in G$.  Then $C_G(g)$ is connected.
\end{corollary}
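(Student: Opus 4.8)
The plan is to reduce the statement directly to the preceding theorem by choosing the automorphism cleverly. Fix $g\in G$ and let $\sigma=c(g):G\to G$ be conjugation by $g$, that is $\sigma(x)=gxg^{-1}$. This is a Lie group automorphism of $G$, and its fixed-point set is exactly the centralizer,
$$G_\sigma=\{x\in G\mid gxg^{-1}=x\}=C_G(g).$$
Thus it suffices to show that $G_\sigma$ is connected. Since $G$ is compact, connected, and simply connected, this is precisely the content of the preceding theorem, once one permits $\sigma$ to be an arbitrary automorphism rather than an involution---which is exactly the assertion of the Remark immediately following that theorem. Granting the Remark, the corollary is immediate.

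What I would then address is why the proof of the theorem survives the weakening of the hypothesis from involution to arbitrary automorphism, since that is the only substantive content. The single place where $\sigma^2=\operatorname{id}$ was used is in describing the action of $\sigma_{*,e}$ on the Lie algebra of a $\sigma$-stable maximal torus as a Weyl reflection. For a general $\sigma$ I would replace this by the weaker but sufficient observation that $\sigma$ has a regular fixed point (by the Lemma above), hence stabilizes a maximal torus $T$ and induces on $i\mathfrak{t}_o$ a lattice-preserving linear automorphism that permutes the roots and therefore permutes the alcoves of the affine Weyl group. With this substitution both cases of the original argument go through verbatim: in the regular case, writing $g=\exp(2\pi i X)$ with $X$ interior to the fundamental alcove $\Delta_G$ and $\sigma_{*,e}X=X+Z$ with $Z\in L_G$, the alcoves $\sigma_{*,e}\Delta_G$ and $\Delta_G+Z$ share the interior point $\sigma_{*,e}X$, hence coincide, forcing $Z=0$ and $\sigma_{*,e}X=X$, so the curve $t\mapsto\exp(2\pi i tX)$ lies in $G_\sigma$ and joins $e$ to $g$; the singular case is reduced to the regular one by conjugating inside $G_\sigma$ by $g$ and perturbing within a maximal torus of $(G_{\sigma,c(g)})_e$, exactly as before.

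The main obstacle is therefore not the reduction, which is a one-line identification of $C_G(g)$ with $G_{c(g)}$, but the verification that the alcove argument in the theorem only ever uses that $\sigma_{*,e}$ is a lattice-preserving automorphism permuting alcoves, and never genuinely uses that it is a reflection or that $\sigma$ is an involution. Once that is confirmed---which is the substance of the Remark---connectedness of $C_G(g)$ follows at once.
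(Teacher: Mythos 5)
Your proposal is correct and is exactly the paper's intended route: the corollary appears immediately after the remark that the connectedness theorem holds for an arbitrary automorphism, so the paper's (implicit) proof is precisely your one-line identification $C_G(g)=G_{c(g)}$ combined with that remark, and your sketch of why the alcove argument never uses involutivity mirrors the theorem's own proof. The only step deserving one more line---in your write-up and in the paper's equally terse original---is why $\sigma_{*,e}\Delta_G=\Delta_G+Z$ forces $Z=0$: since $\sigma_{*,e}$ is linear it fixes the origin, so $-Z\in\operatorname{cl}\Delta_G\cap L_G$, and for simply connected $G$ one has $L_G=Q^\vee$, whose $\widetilde{W}_G$-orbit of $0$ meets the closed fundamental alcove only at the origin; this is precisely where simple connectedness enters, and the corollary indeed fails without it (e.g.\ for $SO(3)$).
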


\commentx{What exactly is the goal of this long sequence of remarks?}
{\em Remarks.}  Let $G$ be a connected compact semisimple Lie group and $\sigma:G\to G$ an involution.  Let $K=G_{\sigma,e}$, $P=\exp\mathfrak{p}_o$ and $A=\exp \mathfrak{a}_o$.
\begin{enumerate}
\item $C_G(K)\cap P = C_G(K)\cap A \subset Z(G)\cap A =Z(G)\cap P =: F$.

\begin{proof}
\begin{description}
\item[$C_G(K)\cap P = C_G(K)\cap A$]  One inclusion is clear.  For the other inclusion, let $x\in C_G(K)\cap P$.  There exists a $k\in K$ such that $kxk^{-1}\in A$, since all flat subspaces are conjugate by an element of $K$.  But since $x$ centralizes $K$, $kxk^{-1}=x$.
\item[$C_G(K)\cap A\subset Z(G)\cap A$] Let $x\in C_G(K)\cap A$.  For any $g\in G$, we can write $g=k_1ak_2$ for some $k_1,k_2\in K$ and $a\in A$.  Then $xg=xk_1ak_2=k_1xak_2=k_1axk_2=gx$.  Thus $x\in Z(G)$.
\item[$Z(G)\cap A =Z(G)\cap P$] Clear.
\end{description}
\end{proof}
\item $N_G(K)\cap A = \{ x\in A\mid x^2=F\}$
\begin{proof}
\begin{description}
\item[$\subseteq$]  Let $x\in N_G(K)\cap A$.  If $k\in K$, then $xkx^{-1}=\overline{k}\in K$ as well.  Since $K$ is fixed by $\sigma$ and $A$ is inverted by $\sigma$,
$$xkx^{-1}=\overline{k}=\sigma(\overline{k}) = x^{-1}kx$$
whence $x^2kx^{-2}=k$.  That is, $x^2\in C_G(K)\cap A\subset F$.
\item[$\supseteq$] Let $x\in A$, $x^2\in F$.  Let $k\in K=G_\sigma$.  Then $\sigma(xkx^{-1}) = x^{-1}kx= x(x^{-2}kx^2)x^{-1} = xkx^{-1}$ since $x^2\in F$.  Thus $xkx^{-1}\in K$ as well.  Since this is true for all $k\in K$, $x\in N_G(K)\cap A$.
\end{description}
\end{proof}

\item $N_G(K)\subset A\cdot K$.

\begin{proof}
Let $g\in N_G(K)$.  Write $g=k_1ak_2$ for some $k_1,k_2\in K$ and $a\in A$.  Since $g$ normalizes $K$, $k_1^{-1} g = gk_3$ for some $k_3$; that is, $ak_2=gk_3$.  Hence $g=ak_2k_3^{-1}$.
\end{proof}

\item $N_G(K) = (N_G(K)\cdot A)\cdot K$

\item Ket $G\xrightarrow{\Psi} P$ be the Cartan map $g\mapsto g\sigma(g)^{-1}$, which when passed to the quotient defines the immersion $G/K\xrightarrow{\Psi} P\subset G$.  Note that
$$\xymatrix{
N_G(K)\cap A\ar[r] &F\\
x\ar@{}[u]|{\bigcup\!\!\!\hspace{-0.4pt}\mid}\ar@{|->}[r] & x^2\ar@{}[u]|{\bigcup\!\!\!\hspace{-0.4pt}\mid}
}$$
and $N_G(K)\to F$ induces an isomorphism $N_G(K)/K\xrightarrow{\cong}F$.  Also,
$$\Psi(gn) = \Psi(g)\Psi(n)\qquad\text{for all $g\in G$ and $n\in N_G(K)$.}$$
Indeed,
$$\Psi(gn) = g\underbrace{n\sigma(n)^{-1}}_{\in F}\sigma(g)^{-1} =  g\sigma(g)^{-1}n\sigma(n)^{-1} = \Psi(g)\Psi(n).$$

There are therefore two actions of $G$ on $F$.  For the first,
$$(P,F)\in (p,f)\mapsto pf \in F.$$
For the second, we use the identification of $F$ with $N_G(K)/K$.  Then
$$(G/K,N_G(K)/K) \ni (gK,nK)\mapsto gnK\in N_G(K)/K.$$
But the isomorphism $\Psi$ is equivariant with respect to these actions.
\end{enumerate}

\section{Symmetric spaces of type I}
Let $M$ be an irreducible Riemannian globally symmetric space of type I, with a given basepoint $o\in M$.  Let $(\mathfrak{g}_o,s_o)$ be the local Lie algebra of Killing fields at $o$.  Let $G$ be a simply-connected, connected, compact Lie group with $\Lie(G)=\mathfrak{g}_o$, and $\sigma:G\to G$ an involution for which $\sigma_{*,e}=s_o$.  We have shown that this implies that $G_\sigma=K$ is connected.  The homogeneous space $G/K$ is a connected and simply connected Riemannian globally symmetric space with the same local Lie algebra $(\mathfrak{g}_o,s_o)$.

$$\xymatrix{
e\in P\ar[d]^\pi & G\ar[d]^\phi\ar[l]^\Psi & \ge \phi^{-1}(I(M)_{e,o}) &\ge& K=\phi^{-1}(I(M)_{e,o})_e\\
o\in M & I(M)_e\ar[l] &\ge I(M)_{e,o} &
}$$

The abelian group $F$ contains as a subgroup $\Psi(\phi^{-1}(I(M)_{e,o}))$.  But by a diagram chase $\Psi(\phi^{-1}(I(M)_{e,o}))=\pi^{-1}(o)$, and $\pi^{-1}(o)\subset F$ acts on $P$ by covering transformations.  Thus
$$M = \frac{P}{\Psi(\phi^{-1}(I(M)_{e,o}))}.$$
Conversely, if $H\le F$ is a subgroup, then $P/H$ is a Riemannian globally symmetric space with local Lie algebra $\mathfrak{g}_o$.  ($F$ acts properly discontinuously on $P$.)

Summarizing:
\begin{theorem}
If $M$ is an irreducible Riemannian globally symmetric space of type I, then $M=P/H$for $H\le G=Z(G)\cap A$.
\end{theorem}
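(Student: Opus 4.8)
The plan is to realize $M$ as a quotient of its universal cover, identify that cover with $P = \exp\mathfrak{p}_o$ via the Cartan immersion, and then show that the deck group is a subgroup of $F = Z(G)\cap A$. First I would collect the ingredients already in place. Since $G$ is simply connected and compact with $\Lie(G)=\mathfrak{g}_o$ and $K=G_\sigma$ connected, $G/K$ is a connected, simply connected Riemannian globally symmetric space whose local Lie algebra $(\mathfrak{g}_o,s_o)$ agrees with that of $M$. The Cartan map $\Psi:G\to P$, $g\mapsto g\sigma(g)^{-1}$, descends to the Cartan immersion $G/K\xrightarrow{\cong}P$, so I may work with $P$ in place of $G/K$; note that $\Psi$ already surjects onto $P$, since $\Psi(\exp X)=\exp(2X)$ for $X\in\mathfrak{p}_o$. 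Because $M$ shares the local Lie algebra, $G/K\cong P$ is its universal cover, and I write $\pi:P\to M$ for the covering with $\pi(e)=o$. Thus $M=P/\Gamma$, where $\Gamma$ is the deck group, which acts freely and properly discontinuously and is in bijection with the fiber $\pi^{-1}(o)$.

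The central step is to locate $\pi^{-1}(o)$ inside $P$. Using equivariance of $\pi$ under $\phi:G\to I(M)_e$, for any $g\in G$ one has $\pi(\Psi(g))=\phi(g)(o)$; hence $\Psi(g)\in\pi^{-1}(o)$ if and only if $\phi(g)$ fixes $o$, i.e. $g\in\phi^{-1}(I(M)_{e,o})$. Surjectivity of $\Psi$ then yields $\pi^{-1}(o)=\Psi(\phi^{-1}(I(M)_{e,o}))$. Next I would show $\phi^{-1}(I(M)_{e,o})\subseteq N_G(K)$: any isometry fixing $o$ commutes with the geodesic symmetry $s_o$, so conjugation by $g\in\phi^{-1}(I(M)_{e,o})$ preserves $\phi^{-1}(I(M)_{e,o})$ and therefore its identity component $K$, whence $g\in N_G(K)$. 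Together with $K\subseteq\phi^{-1}(I(M)_{e,o})$, this places $\phi^{-1}(I(M)_{e,o})$ between $K$ and $N_G(K)$.

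I would then invoke the earlier structural remark that $\Psi$ is a homomorphism on $N_G(K)$ with kernel $K$, inducing an isomorphism $N_G(K)/K\xrightarrow{\cong}F=Z(G)\cap A$. Applied to the subgroup $\phi^{-1}(I(M)_{e,o})/K$, this shows that $H:=\Psi(\phi^{-1}(I(M)_{e,o}))=\pi^{-1}(o)$ is a subgroup of $F$. Finally I would confirm that the deck action is precisely translation by $H$: for $h\in H\subseteq Z(G)$ and $p=\Psi(g)\in P$, centrality and the homomorphism property give $hp=\Psi(g)\Psi(n)=\Psi(gn)$ with $\phi(n)$ fixing $o$, so $\pi(hp)=\phi(g)\phi(n)(o)=\phi(g)(o)=\pi(p)$. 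Hence left translation by $H$ preserves fibers, is free (these are genuine elements of the group $G$), and realizes the deck group, giving $M=P/H$ with $H\le Z(G)\cap A$.

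I expect the main obstacle to be exactly the central step: verifying $\phi^{-1}(I(M)_{e,o})\subseteq N_G(K)$ and then pushing the fiber through the isomorphism $N_G(K)/K\cong F$, since this is where the topology of the cover $\pi$ must be matched against the group theory of $Z(G)$. The remaining points—surjectivity of $\Psi$, freeness of the translation action, and finiteness of $F$ (whence proper discontinuity)—are routine once the identification $\pi^{-1}(o)=H\subseteq F$ is secured.
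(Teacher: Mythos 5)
Your proposal is correct and takes essentially the same route as the paper: identify the universal cover of $M$ with $P$ via the Cartan map $\Psi$, establish $\pi^{-1}(o)=\Psi(\phi^{-1}(I(M)_{e,o}))$, and push this through the isomorphism $N_G(K)/K\cong F=Z(G)\cap A$ furnished by the earlier remarks on $N_G(K)$ and the homomorphism property $\Psi(gn)=\Psi(g)\Psi(n)$. You merely make explicit the ``diagram chase'' and the inclusion $K=\phi^{-1}(I(M)_{e,o})_e\le\phi^{-1}(I(M)_{e,o})\le N_G(K)$ that the paper leaves implicit.
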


{\em Remark.}  Let $M$ be a Riemannian globally symmetric space, and let $\gamma$ be a closed geodesic, so that $\gamma(0)=\gamma(1)$, say.  Then
$$\dot{\gamma}(1) = \pi_\gamma \dot{\gamma}(0) = \dot{\gamma}(0).$$
So $\gamma$ loops.  If $M$ has rank $1$, the $\dim\mathfrak{a}_o=1$, and so all lines in $\mathfrak{p}_o$ are conjugate with respect to $\ad\mathfrak{k}_o$.  As a result, all geodesics in $K$ are conjugate with respect to $K$.  In particular, if one geodesic is closed, then all geodesics are closed and have the same length.

The {\em antipodal set} $A_o$ is the set of fixed points of $s_o$, other than $o$ itself.  For a rank one symmetric space, the antipodal set is the set of points that are a maximal distance from $o$.  The antipodal set is also a rank one symmetric space.

\chapter{Hermitian symmetric spaces}\label{HermitianSymmetricSpaces}

\section{Hermitian symmetric Lie algebras}
\begin{proposition}
Let $(\mathfrak{g}_o,s_o)$ be an irreducible simple orthogonal symmetric Lie algebra.  Then $s_o\in\operatorname{Int}(\mathfrak{g}_o)$ if and only if $\operatorname{rank}\mathfrak{g}_o=\dim\mathfrak{a}_o=\operatorname{rank}\mathfrak{k}_o$
\end{proposition}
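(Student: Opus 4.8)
The plan is to reduce the statement to the classical \emph{equal-rank criterion}: for an involution $s_o$ of a compact simple Lie algebra with fixed subalgebra $\mathfrak{k}_o$, one has $s_o\in\Int(\mathfrak{g}_o)$ if and only if $\operatorname{rank}\mathfrak{k}_o=\operatorname{rank}\mathfrak{g}_o$. First I would pass to the compact dual: under the duality functor $\mathfrak{g}_o\mapsto\widetilde{\mathfrak{g}}_o$ the eigenspace data $(\mathfrak{k}_o,\mathfrak{p}_o)$, the complexification $\mathfrak{g}$, and hence $\operatorname{rank}\mathfrak{g}_o=\operatorname{rank}\mathfrak{g}$ and $\operatorname{rank}\mathfrak{k}_o$, are all unchanged; innerness of $s_o$ is detected on the complexification through the outer automorphism group (the component group $\Aut(\mathfrak{g})/\Int(\mathfrak{g})$ attached to the Dynkin diagram), which is common to the two real forms, so we may assume $\mathfrak{g}_o$ is compact. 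I would also fix the reading of the middle term, since $\mathfrak{a}_o$ here cannot be the flat subspace of $\mathfrak{p}_o$ (e.g.\ $\mathbb{CP}^2=SU(3)/S(U(1)\times U(2))$ is inner yet has symmetric-space rank $1<2$): instead $\mathfrak{a}_o$ is a maximal toral subalgebra of $\mathfrak{k}_o$, i.e.\ the compact part of a maximally compact Cartan subalgebra $\mathfrak{h}_o=\mathfrak{a}_o\oplus\mathfrak{b}_o$ with $\mathfrak{b}_o\subset\mathfrak{p}_o$, as in the maximally compact construction above. Then $\dim\mathfrak{a}_o=\operatorname{rank}\mathfrak{k}_o$ holds by construction and $\operatorname{rank}\mathfrak{g}_o=\dim\mathfrak{a}_o+\dim\mathfrak{b}_o$.

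For the forward implication, since $\Int(\mathfrak{g}_o)$ is compact and connected its exponential is surjective, so $s_o=e^{\ad X}$ for some $X\in\mathfrak{g}_o$; choosing a maximal toral $\mathfrak{c}_o\ni X$, every $H\in\mathfrak{c}_o$ satisfies $[X,H]=0$ and hence $s_oH=e^{\ad X}H=H$, so $\mathfrak{c}_o\subseteq\mathfrak{k}_o$ and $\operatorname{rank}\mathfrak{g}_o=\dim\mathfrak{c}_o\le\operatorname{rank}\mathfrak{k}_o$. Since a maximal torus of $\mathfrak{k}_o$ extends to one of $\mathfrak{g}_o$, the reverse inequality is automatic, so the ranks are equal, forcing $\mathfrak{b}_o=0$ and the full chain. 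For the converse I assume equal rank, so $\mathfrak{b}_o=0$ and $\mathfrak{a}_o$ is already a Cartan subalgebra of $\mathfrak{g}_o$ fixed pointwise by $s_o$. Complexifying, $s_o$ commutes with $\ad H$ for all $H\in\mathfrak{a}_o$, hence preserves each one-dimensional root space $\mathfrak{g}_\alpha$, acting there by a sign $\epsilon(\alpha)=\pm1$; the automorphism property $s_o[\mathfrak{g}_\alpha,\mathfrak{g}_\beta]=[\mathfrak{g}_\alpha,\mathfrak{g}_\beta]$ forces $\epsilon$ to be multiplicative on the root lattice $Q$. I would then produce $H_0\in i\mathfrak{a}_o$ with $\alpha(H_0)\in\mathbb{Z}$ or $\alpha(H_0)\in 2\mathbb{Z}+1$ according as $\epsilon(\alpha)=+1$ or $-1$, which is possible because it suffices to prescribe $H_0$ on a basis of simple roots; then $s_o=\Ad(\exp(\pi i H_0))\in\Int(\mathfrak{g}_o)$. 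This is precisely the realization of order-two inner automorphisms by vertices of the fundamental alcove and minuscule coweights established in the previous chapter, which I would cite.

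Assembling the pieces: equal rank $\iff\mathfrak{b}_o=0\iff\operatorname{rank}\mathfrak{g}_o=\dim\mathfrak{a}_o$, and since $\dim\mathfrak{a}_o=\operatorname{rank}\mathfrak{k}_o$ always, the stated chain $\operatorname{rank}\mathfrak{g}_o=\dim\mathfrak{a}_o=\operatorname{rank}\mathfrak{k}_o$ is equivalent to equal rank, which by the two implications is equivalent to $s_o\in\Int(\mathfrak{g}_o)$. The main obstacle I anticipate is the converse: passing from the abstract sign character $\epsilon\colon Q\to\{\pm1\}$ to an actual torus element $H_0$ realizing it, and checking that $\exp(\pi i H_0)$ genuinely lies in the compact adjoint group so that $s_o$ is inner and not merely an automorphism stabilizing $\mathfrak{a}_o$. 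A secondary point to get right is the duality reduction and the nonstandard meaning of $\mathfrak{a}_o$, which I flag explicitly to avoid conflating it with the flat subspace of $\mathfrak{p}_o$ used earlier; simplicity is needed only to keep the root system irreducible so that these lattice computations are unambiguous, not for the rank comparison itself.
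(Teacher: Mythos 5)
Your architecture is essentially the paper's --- forward direction by exhibiting a Cartan subalgebra of $\mathfrak{g}_o$ inside $\mathfrak{k}_o$, converse by producing $H$ in a maximal toral subalgebra $\mathfrak{t}_o\subset\mathfrak{k}_o$ with $s_o=e^{\ad H}$ --- and in two places you are more careful than the source. Your reading of the middle term is a genuine correction: the paper's own gloss (``the involution is an interior automorphism iff the rank of $\mathfrak{g}_o$ is the rank of the symmetric space'') would make $\mathfrak{a}_o$ the flat subspace of $\mathfrak{p}_o$, and your $\mathbb{CP}^2$ example ($s_o=\Ad(\operatorname{diag}(1,-1,-1))$ inner, symmetric-space rank $1$, $\operatorname{rank}\mathfrak{g}_o=\operatorname{rank}\mathfrak{k}_o=2$) refutes that reading; the defensible statement is the equal-rank criterion, exactly as you set it up. Likewise your sign character $\epsilon:\Phi\to\{\pm1\}$, extended multiplicatively over the simple roots to define $H_0$ with $s_o=\Ad(\exp(\pi i H_0))$, is precisely the step the paper waves at (``we can construct such an element of $\mathfrak{t}_o$ in terms of the root spaces'') and leaves unverified; your flagged worry about extending $\epsilon$ from roots to the lattice is the right one and is settled by induction on height. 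Note also that the paper's forward step ``$s_o|_{C_{\mathfrak{g}_o}(X)}=\operatorname{id}$'' for an arbitrary regular $X\in\mathfrak{k}_o$ is not automatic: for $\mathfrak{su}(3)$ with the \emph{outer} involution of complex conjugation, $\mathfrak{so}(3)$ contains elements regular in $\mathfrak{su}(3)$, yet the involution acts by $-1$ on part of their centralizer. Your version (surjectivity of $\exp$ onto the compact connected $\Int(\mathfrak{g}_o)$, then a maximal torus through $X$) is a correct repair of the compact case.

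The one genuine gap is the duality reduction. Your justification --- that innerness is detected on the complexification because the component group $\Aut(\mathfrak{g})/\Int(\mathfrak{g})$ ``is common to the two real forms'' --- is false as a general principle: $\Aut(\mathfrak{sl}(2,\mathbb{R}))/\Int(\mathfrak{sl}(2,\mathbb{R}))\cong\mathbb{Z}/2$ while $\Aut(\mathfrak{sl}(2,\mathbb{C}))/\Int(\mathfrak{sl}(2,\mathbb{C}))$ is trivial, and $\Ad(\operatorname{diag}(1,-1))$ is outer on $\mathfrak{sl}(2,\mathbb{R})$ yet inner in $\mathfrak{sl}(2,\mathbb{C})$. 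So ``inner after complexifying'' does not imply ``inner over $\mathbb{R}$'', and as written the noncompact (type III) half of your forward implication is not covered. Two repairs. (i) Your converse never needed duality: the element $\pi i H_0$ lies in $\mathfrak{t}_o\subset\mathfrak{k}_o\subset\mathfrak{g}_o$ whatever the signature, so $e^{\ad(\pi i H_0)}\in\Int(\mathfrak{g}_o)$ directly. (ii) For the forward direction in the noncompact case, use that $s_o$, having order two, lies in a maximal compact subgroup of $\Int(\mathfrak{g}_o)$, which up to conjugacy is the connected subgroup with Lie algebra $\ad(\mathfrak{k}_o)$; writing $s_o=\Ad(\exp H)$ with $H$ in a maximal toral $\mathfrak{t}_o\subset\mathfrak{k}_o$, the automorphism $s_o$ fixes $C_{\mathfrak{g}_o}(\mathfrak{t}_o)=\mathfrak{t}_o\oplus\mathfrak{b}_o$ pointwise (since $\ad H$ annihilates the centralizer), while $s_o=-\operatorname{id}$ on $\mathfrak{b}_o\subset\mathfrak{p}_o$; hence $\mathfrak{b}_o=0$ and a Cartan subalgebra of $\mathfrak{g}_o$ lies in $\mathfrak{k}_o$. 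With that substitution your proof is complete, uniform in both signatures, and in fact closer to the paper's terse intent than the detour through the compact dual.
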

Here the rank is defined as the dimension of a Cartan subalgebra.  Thus the involution is an interior automorphism if and only if the rank of $\mathfrak{g}_o$ is the rank of the symmetric space.

\begin{proof}
Note that any one of the equalities $\operatorname{rank}\mathfrak{g}_o=\dim\mathfrak{a}_o=\operatorname{rank}\mathfrak{k}_o$ implies the others, since $\mathfrak{a}_o$ lies inside a Cartan subalgebra for $\mathfrak{g}_o$.

For the forward implication, let $X\in\mathfrak{k}_o$ be a regular element.  Then $C_{\mathfrak{g}_o}(X)\subset\mathfrak{g}_o$ is a Cartan subalgebra that is $s_o$-invariant, since $s_oX=X$.  So $s_o|C_{\mathfrak{g}_o}(X) = \operatorname{id}$ and thus $C_{\mathfrak{g}_o}(X)\subset\mathfrak{k}_o$ is a Cartan subalgebra of $\mathfrak{g}_o$.  Thus $\operatorname{rank}\mathfrak{g}_o = \operatorname{rank}\mathfrak{k}_o$.

Conversely, assume that $\operatorname{rank}\mathfrak{g}_o=\operatorname{rank}\mathfrak{k}_o$.  Then any maximal toral subalgebra $\mathfrak{t}_o\subset\mathfrak{k}_o$ is a Cartan subalgebra of $\mathfrak{g}$.  Since $s_o|_{\mathfrak{t}_o}=\operatorname{id}$, it follows that there exists $H\in\mathfrak{t}_o$ such that $s_o=e^{\ad H} : \mathfrak{g}_o\to\mathfrak{g}_o$.  (Indeed, we can construct such an element of $\mathfrak{t}_o$ in terms of the root spaces in $\mathfrak{g}_o$.)\commentx{Check.}
\end{proof}

\begin{proposition}
Let $(\mathfrak{g}_o,s_o)$ be an irreducible semisimple orthogonal symmetric Lie algebra.  Then $\mathfrak{k}_o$ is a maximal proper subalgebra of $\mathfrak{g}_o$.
\end{proposition}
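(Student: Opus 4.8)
The plan is to reduce maximality of $\mathfrak{k}_o$ to the irreducibility of the isotropy representation $\ad:\mathfrak{k}_o\to\mathfrak{gl}(\mathfrak{p}_o)$. By the construction in Theorem \ref{decomposition}, an \emph{irreducible} orthogonal symmetric Lie algebra is precisely one whose factor $\mathfrak{p}_o$ is a simple $\mathfrak{k}_o$-module, and this is the only structural input I expect to need. The heart of the argument is the observation that any subalgebra strictly between $\mathfrak{k}_o$ and $\mathfrak{g}_o$ must intersect $\mathfrak{p}_o$ in a nonzero proper $\mathfrak{k}_o$-submodule, which the simplicity of $\mathfrak{p}_o$ forbids.

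Concretely, I would start from an arbitrary subalgebra $\mathfrak{h}$ with $\mathfrak{k}_o\subseteq\mathfrak{h}\subseteq\mathfrak{g}_o$ and exploit the eigenspace splitting $\mathfrak{g}_o=\mathfrak{k}_o\oplus\mathfrak{p}_o$ to write $\mathfrak{h}=\mathfrak{k}_o\oplus(\mathfrak{h}\cap\mathfrak{p}_o)$. This decomposition is valid because $\mathfrak{k}_o\subseteq\mathfrak{h}$: writing $x\in\mathfrak{h}$ as $x=x_{\mathfrak{k}}+x_{\mathfrak{p}}$, the component $x_{\mathfrak{k}}\in\mathfrak{k}_o\subseteq\mathfrak{h}$, so $x_{\mathfrak{p}}=x-x_{\mathfrak{k}}\in\mathfrak{h}\cap\mathfrak{p}_o$.

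Next I would verify that $\mathfrak{q}:=\mathfrak{h}\cap\mathfrak{p}_o$ is a $\mathfrak{k}_o$-submodule of $\mathfrak{p}_o$. Since $\mathfrak{h}$ is a subalgebra containing $\mathfrak{k}_o$, we have $[\mathfrak{k}_o,\mathfrak{q}]\subseteq\mathfrak{h}$; and since $[\mathfrak{k}_o,\mathfrak{p}_o]\subseteq\mathfrak{p}_o$ by \eqref{commutators}, we also have $[\mathfrak{k}_o,\mathfrak{q}]\subseteq\mathfrak{p}_o$, whence $[\mathfrak{k}_o,\mathfrak{q}]\subseteq\mathfrak{h}\cap\mathfrak{p}_o=\mathfrak{q}$. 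Because $\mathfrak{p}_o$ is a simple $\mathfrak{k}_o$-module, $\mathfrak{q}$ equals either $0$ or $\mathfrak{p}_o$, and correspondingly $\mathfrak{h}=\mathfrak{k}_o$ or $\mathfrak{h}=\mathfrak{g}_o$. Thus no proper subalgebra properly contains $\mathfrak{k}_o$, which is the desired maximality.

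I do not anticipate a genuine obstacle, since the computation is elementary; the only point deserving care is to pin down that the paper's notion of irreducibility coincides with irreducibility of $\ad:\mathfrak{k}_o\to\mathfrak{gl}(\mathfrak{p}_o)$, which is exactly what the proof of Theorem \ref{decomposition} provides. It is worth noting that semisimplicity of $\mathfrak{g}_o$ plays no role here—only the $\mathfrak{k}_o$-irreducibility of $\mathfrak{p}_o$ is used—so the statement could be recorded in slightly greater generality if desired.
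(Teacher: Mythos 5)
Your proposal is correct and follows essentially the same route as the paper's own proof: the paper likewise writes $\mathfrak{l}_o=\mathfrak{k}_o\oplus(\mathfrak{l}_o\cap\mathfrak{p}_o)$ for an intermediate subalgebra and invokes the irreducibility of $\mathfrak{p}_o$ as a $\mathfrak{k}_o$-module to force $\mathfrak{l}_o\cap\mathfrak{p}_o=0$ or $\mathfrak{p}_o$. You merely make explicit the two small verifications the paper leaves tacit (the validity of the splitting and the $\mathfrak{k}_o$-invariance of the intersection), and your observation that semisimplicity is not actually used is accurate.
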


\begin{proof}
Suppose that $\mathfrak{k}_o\subset\mathfrak{l}_o\subset\mathfrak{g}_o$.  Then $\mathfrak{l}_o$ splits as $\mathfrak{l}_o=\mathfrak{k}_o\oplus (\mathfrak{l}_o\cap\mathfrak{p}_o)$.  Now $\mathfrak{l}_o\cap\mathfrak{p}_o$ is a $\mathfrak{k}_o$-invariant subspace, but by assumption $\mathfrak{k}_o$ acts irreducibly on $\mathfrak{p}_o$, so $\mathfrak{l}_o\cap\mathfrak{p}_o=0$ or $\mathfrak{p}_o$.
\end{proof}

\begin{proposition}
Let $(\mathfrak{g}_o,s_o)$ be an irreducible semisimple orthogonal symmetric Lie algebra with Killing form $B$ and positive definite $\mathfrak{k}_o$-invariant form $Q=cB|_{\mathfrak{p}_o\times\mathfrak{p}_o}$.  Let $A:\mathfrak{g}_o\to\mathfrak{g}_o$ be a linear isomorphism such that
\begin{enumerate}
\item $[A,s_o]=0$
\item $A|_{\mathfrak{k}_o}$ is a Lie algebra morphism
\item $A|_{\mathfrak{p}_o} : \mathfrak{p}_o\to\mathfrak{p}_o$ is an equivariant map of $\mathfrak{k}_o$-modules (meaning $A[k,p]=[Ak,Ap]$).
\end{enumerate}
Then there exists $a\not=0$ such that $B(Ax,Ay)=aB(x,y)$ and $[Ax,Ay]=aA[x,y]$ for all $x,y\in\mathfrak{p}_o$.  In particular, if $A$ leaves $Q$ invariant, then $A\in\operatorname{Aut}(\mathfrak{g}_o)$.
\end{proposition}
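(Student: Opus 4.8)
The plan is to exploit the irreducibility of $\mathfrak{p}_o$ as a $\mathfrak{k}_o$-module together with the nondegeneracy and $\operatorname{ad}$-invariance of the Killing form $B$. First I record the structural facts I will use repeatedly. Since $[A,s_o]=0$, the map $A$ preserves the eigenspaces $\mathfrak{k}_o$ and $\mathfrak{p}_o$ of $s_o$, and because $A$ is a linear isomorphism, $A|_{\mathfrak{k}_o}$ and $A|_{\mathfrak{p}_o}$ are themselves isomorphisms; in particular $A|_{\mathfrak{k}_o}$ is a bijective Lie algebra morphism, hence an automorphism of $\mathfrak{k}_o$. As $\mathfrak{g}_o$ is semisimple, $B$ is nondegenerate; since $s_o$ is a $B$-isometry one has $B(\mathfrak{k}_o,\mathfrak{p}_o)=0$, so $B|_{\mathfrak{k}_o}$ and $B|_{\mathfrak{p}_o}$ are each nondegenerate, and $B|_{\mathfrak{p}_o}=c^{-1}Q$ is definite.

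For the scalar $a$, I would consider the symmetric bilinear form $\beta(x,y)=B(Ax,Ay)$ on $\mathfrak{p}_o$. Using condition (3) to write $A[k,x]=[Ak,Ax]$ and the $\operatorname{ad}(Ak)$-invariance of $B$, one checks that $\beta$ is $\mathfrak{k}_o$-invariant: $\beta([k,x],y)+\beta(x,[k,y])=B([Ak,Ax],Ay)+B(Ax,[Ak,Ay])=0$. Writing $\beta(x,y)=B(Tx,y)$ with $T\colon\mathfrak{p}_o\to\mathfrak{p}_o$, the symmetry of $\beta$ makes $T$ self-adjoint for $B|_{\mathfrak{p}_o}$, and the invariance of both forms forces $T$ to commute with $\operatorname{ad}(\mathfrak{k}_o)|_{\mathfrak{p}_o}$. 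Since $B|_{\mathfrak{p}_o}$ is definite, $T$ is diagonalizable over $\mathbb{R}$ and so has a real eigenvalue $a$; its eigenspace is $\mathfrak{k}_o$-invariant, hence all of $\mathfrak{p}_o$ by irreducibility. Thus $T=a\,\operatorname{id}$ and $B(Ax,Ay)=aB(x,y)$, with $a\neq0$ because $A$ and $B|_{\mathfrak{p}_o}$ are nondegenerate. This Schur-type step, arranged so as to avoid classifying $\operatorname{End}_{\mathfrak{k}_o}(\mathfrak{p}_o)$, is the technical heart of the argument.

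For the bracket identity I first note that $A|_{\mathfrak{k}_o}$ preserves $B|_{\mathfrak{k}_o}$. Indeed $B|_{\mathfrak{k}_o}=B_{\mathfrak{k}_o}+B_{\mathfrak{p}}$, where $B_{\mathfrak{k}_o}$ is the Killing form of $\mathfrak{k}_o$ (preserved by the automorphism $A|_{\mathfrak{k}_o}$) and $B_{\mathfrak{p}}(k_1,k_2)=\operatorname{tr}_{\mathfrak{p}_o}(\operatorname{ad}k_1\,\operatorname{ad}k_2)$; condition (3) gives $\operatorname{ad}(Ak)|_{\mathfrak{p}_o}=A|_{\mathfrak{p}_o}\operatorname{ad}(k)|_{\mathfrak{p}_o}(A|_{\mathfrak{p}_o})^{-1}$, so $B_{\mathfrak{p}}$ is preserved as a conjugation-invariant trace form. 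Now for $x,y\in\mathfrak{p}_o$ and $\kappa\in\mathfrak{k}_o$, invariance of $B$, condition (3), and the scalar from the previous paragraph give
$$B([Ax,Ay],A\kappa)=B(Ax,A[y,\kappa])=aB(x,[y,\kappa])=aB([x,y],\kappa)=aB(A[x,y],A\kappa),$$
the last equality by the $B|_{\mathfrak{k}_o}$-isometry applied to $[x,y],\kappa\in\mathfrak{k}_o$. Since $A|_{\mathfrak{k}_o}$ is onto and $B|_{\mathfrak{k}_o}$ is nondegenerate, this yields $[Ax,Ay]=aA[x,y]$.

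Finally, if $A$ leaves $Q$ invariant then $Q(x,y)=Q(Ax,Ay)=aQ(x,y)$, forcing $a=1$; hence $A[x,y]=[Ax,Ay]$ for $x,y\in\mathfrak{p}_o$. Combining this with condition (2) on $\mathfrak{k}_o\times\mathfrak{k}_o$ and condition (3) on $\mathfrak{k}_o\times\mathfrak{p}_o$, the identity $A[u,v]=[Au,Av]$ holds for $u,v$ in each graded piece, hence for all $u,v\in\mathfrak{g}_o$ by bilinearity; as $A$ is invertible, $A\in\operatorname{Aut}(\mathfrak{g}_o)$. The only point requiring care is keeping track of the two different scalings—$a$ on $\mathfrak{p}_o$ but $1$ on $\mathfrak{k}_o$—which is precisely what makes the pairing computation close.
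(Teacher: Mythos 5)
Your proof is correct and follows essentially the same route as the paper: your operator $T$ is exactly the paper's $A^*A$, reduced to a scalar by the same Schur-type argument using irreducibility of $\mathfrak{p}_o$ and definiteness of $B|_{\mathfrak{p}_o}$; your block-by-block trace computation is the paper's identity $\operatorname{ad}(Az)=A\operatorname{ad}(z)A^{-1}$ establishing that $A|_{\mathfrak{k}_o}$ preserves $B|_{\mathfrak{k}_o}$; and your closing five-term pairing chain is identical to the paper's. The only divergences are minor: the paper also disposes of the degenerate case $B|_{\mathfrak{p}_o}\equiv 0$ (vacuous under the stated semisimplicity hypothesis, as you correctly note via nondegeneracy), while you, unlike the paper, actually spell out the final deduction that $a=1$ and hence $A\in\operatorname{Aut}(\mathfrak{g}_o)$.
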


\begin{proof}
If $c=0$, then $B\equiv 0$, $[\mathfrak{p}_o,\mathfrak{p}_o]=0$, and so we can take $a=1$.  Otherwise, if $c\not=0$, then $B$ is a nondegenerate form on $\mathfrak{p}_o$.  Let $A^*:\mathfrak{p}_o\to\mathfrak{p}_o$ be the adjoint of $A$ with respect to $B$.  Since $B(A\cdot,A\cdot)$ is $\mathfrak{k}_o$-invariant, $A^*A$ commutes with the action of $\mathfrak{k}_o$ on $\mathfrak{p}_o$.  On the other hand $A^*A$ is symmetric with respect to $B$, and therefore has real eigenvalues.   Because the eigenspaces are $\mathfrak{k}_o$-invariant, irreducibility of $\mathfrak{p}_o$ gives $A^*A=a\operatorname{id}_{\mathfrak{p}_o}$ for some real $a\not=0$.  Thus $B(Ax,Ay)=aB(x,y)$ for all $x,y\in \mathfrak{p}_o$.  

Note that since $A|_{\mathfrak{k}_o}$ is a Lie algebra morphism, it preserves $B|_{\mathfrak{k}_o\times\mathfrak{k}_o}$.  Indeed, if $z,w\in\mathfrak{k}_o$, then
\begin{equation}\label{BisAinvariant}
B(Az,Aw) = \tr_{\mathfrak{g}_o}(\ad(Az)\ad(Aw)) =\tr_{\mathfrak{g}_o}(A\ad(z)A^{-1}\,A\ad(w)A^{-1}) = B(z,w).
\end{equation}
Now let $x,y\in\mathfrak{p}_o$ and $z\in\mathfrak{k}_o$.  Then
\begin{align*}
B([Ax,Ay],Az) &= B(Ax,[Ay,Az]) = B(Ax,A[y,z])\\
&=aB(x,[y,z]) = aB([x,y],z)\\
&=aB(A[x,y],Az)
\end{align*}
by \eqref{BisAinvariant}.  Since this is true for all $z\in\mathfrak{k}_o$, by nondegeneracy of $B$ it follows that $[Ax,Ay]=aA[x,y]$.
\end{proof}

\begin{definition}\index{Hermitian!symmetric Lie algebra}
A triple $(\mathfrak{g}_o,s_o,J)$ is said to be a Hermitian symmetric Lie algebra if
\begin{enumerate}
\item $(\mathfrak{g}_o,s_o)$ is orthogonal symmetric
\item $J:\mathfrak{p}_o\to\mathfrak{p}_o$ is a real-linear map with $J^2=-\operatorname{id}$.
\item $J$ is $\mathfrak{k}_o$-invariant: $J[k,p] = [k,Jp]$ for all $k\in\mathfrak{k}_o$, $p\in\mathfrak{p}_o$
\end{enumerate}
\end{definition}

This definition is justified by the following general considerations.  If $V_c$ is a complex vector space of dimension $n$, let $V$ be the underlying real vector space.  Multiplication by $i$ induces a real-linear map $J:V\to V$  satisfying $J^2=-\operatorname{id}$.  Conversely, if $(V,J)$ is a pair consisting of a real vector space $V$ of dimension $2n$ and a mapping $J:V\to V$  satisfying $J^2=-\operatorname{id}$, then $J$ is called a {\em complex structure} on $V$.  If $(V,J)$ is a complex structure, then $V\otimes\mathbb{C}$ splits into $+i$ and $-i$ eigenspaces of $J$.  A complex-valued bilinear form $H$ on $V$ is called {\em Hermitian} if\index{Hermitian!form}
\begin{enumerate}
\item $H(y,x)=\overline{H(x,y)}$
\item $H$ is $\mathbb{R}$-bilinear
\item $H(JX,Y)=iH(X,Y)$
\end{enumerate}

In the present situation, $J$ defines a complex structure on $\mathfrak{p}_o$ and by the previous proposition, $Q$ is $J$-invariant.  It follows that the form defined by
$$H(x,y) = Q(x,y) + iQ(x,Jy)$$
is a Hermitian form on $\mathfrak{p}_o$ that is $\mathfrak{k}_o$-invariant.

\subsection{Necessary and sufficient conditions for an orthogonal symmetric Lie algebra}
Let $(\mathfrak{g}_o,s_o)$  be an irreducible semisimple orthogonal symmetric Lie algebra.  We here investigate necessary and sufficient conditions for the existence of an invariant complex structure $J\in\operatorname{End}_{\mathfrak{k}_o}(\mathfrak{p}_o)$.  There are two cases:

{\bf Case I.}  $\mathfrak{p}$ is an irreducible $\mathfrak{k}_o$-module.

In this case, there can be no complex structure, since the eigenspaces of $J$ are also $\mathfrak{k}_o$-invariant.  We claim that in this case $\mathfrak{k}_o$ has trivial center.  Indeed, if $Z\in Z(\mathfrak{k}_o)$, then $\ad Z:\mathfrak{p}\to\mathfrak{p}$ is $\mathfrak{k}_o$-invariant.  By Schur's lemma, $\ad Z=\lambda\operatorname{id}_{\mathfrak{p}}$ for some $\lambda$.  On the other hand, since $\ad Z$ preserves the quadratic form $Q$, it must be tracefree, so $\lambda=0$.

{\bf Case II.}  $\mathfrak{p}$ is a reducible $\mathfrak{k}_o$-module.

Let $\theta:\mathfrak{p}\to\mathfrak{p}$ be the conjugation map with respect to the real subspace $\mathfrak{p}_o$.  There is no $\theta$-invariant proper submodule of $\mathfrak{p}$, since $(\mathfrak{g}_o,s_o)$ is irreducible and semisimple.  So there exists at least one decomposition $\mathfrak{p} = \mathfrak{p}_1\oplus\mathfrak{p}_2$ into $\mathfrak{k}_o$-submodules such that $\theta(\mathfrak{p}_1)=\mathfrak{p}_2$.

Define a mapping $A:U(1)\to \operatorname{End}_{\mathbb{C}}(\mathfrak{p})$ ($\lambda\mapsto A_\lambda$) by $A_\lambda|_{\mathfrak{p}_1}=\lambda \operatorname{id}$ and  $A_\lambda|_{\mathfrak{p}_2}=\overline{\lambda} \operatorname{id}$.  Then $A$ is a group morphism onto the subgroup of invertible elements.  Moreover, $A_\lambda$ is $\mathfrak{k}_o$-invariant and $\theta$-invariant, and so $A_\lambda :\mathfrak{p}_o\to\mathfrak{p}_o$ is also $\mathfrak{k}_o$ invariant.  Extend $A_\lambda$ to all of $\mathfrak{g}_o$ so that $A_\lambda|_{\mathfrak{k}_o}=\operatorname{id}$.

Then:
\begin{enumerate}
\item $A_\lambda:\mathfrak{g}_o\to\mathfrak{g}_o$ is a linear isomorphism
\item $A|_{\mathfrak{k}_o}$ is a Lie algebra morphism
\item $[A_\lambda,s_o]=0 $
\item $A_\lambda|_{\mathfrak{p}_o}$ is an equivariant map of $\mathfrak{k}_o$-modules
\item $A_\lambda$ preserves $Q$
\end{enumerate}
Hence, by the earlier proposition, $A_\lambda\in\operatorname{Aut}(\mathfrak{g}_o)$.

Now $A:S^1\to \operatorname{Aut}(\mathfrak{g}_o)$ is a one-parameter subgroup.  The derivative at the identity is a derivation of $\mathfrak{g}_o$, which is equal to $\ad X$ for some $X$, by semisimplicity:
$$\ad X = \left.\frac{d}{dt}\right|_{t=0} A_{e^{2\pi i t}} \in\operatorname{Der}(\mathfrak{g}_o).$$
Since $A_\lambda$ commutes with $s_o$, $X\in\mathfrak{k}_o$.  Furthermore, since $A_\lambda$ commutes with $\ad\mathfrak{k}_o$, $X\in Z(\mathfrak{k}_o)$.  In particular, $\dim Z(\mathfrak{k}_o)\ge 1$.

Consider now $J=A_i:\mathfrak{p}_o\to\mathfrak{p}_o$.  This satisfies $J^2=-\operatorname{id}$.  Note that by the construction of $A$, $\mathfrak{p}_1$ is the $+i$-eigenspace of $J$ and $\mathfrak{p}_2$ is the $-i$-eigenspace.  Suppose that $Z\in Z(\mathfrak{k}_o)$.  Then
\begin{itemize}
\item $e^{t\ad(Z)}\in\operatorname{Aut}(\mathfrak{g}_o)$ commutes with $s_o$
\item $e^{t\ad(Z)}|_{\mathfrak{p}_o}$ is a $\mathfrak{k}_o$-invariant map
\item  $e^{t\ad(Z)}:\mathfrak{p}\to\mathfrak{p}$ preserves the $\mathfrak{p}_1\oplus\mathfrak{p}_2$ decomposition of $\mathfrak{p}$.  
\item $e^{t\ad(Z)}$ commutes with $\theta$ (conjugation of $\mathfrak{g}$ with respect to $\mathfrak{g}_o$)
\end{itemize}

Since $\mathfrak{p}_1$ and $\mathfrak{p}_2$ are simple $\mathfrak{k}_o$ modules, it follows that
$$e^{t\ad (Z)}|_{\mathfrak{p}_1} = \lambda_t\operatorname{id},\quad e^{t\ad (Z)}|_{\mathfrak{p}_2} = \overline{\lambda}_t\operatorname{id}$$
for some eigenvalue $\lambda_t$.  Thus $e^{t\ad Z}=A_{\lambda_t}$.  Differentiating at the identity implies that $Z$ is a scalar multiple of $X$.  Thus $Z(\mathfrak{k}_o)$ is one-dimensional.

Write
$$\mathfrak{k}_o = Z(\mathfrak{k}_o)\oplus[\mathfrak{k}_o,\mathfrak{k}_o]$$
Let $i\mathfrak{t}_o\subset [\mathfrak{k}_o,\mathfrak{k}_o]$ be a maximal toral subalgebra, so
$$Z(\mathfrak{k}_o)\oplus i\mathfrak{t}_o\subset\mathfrak{k}_o $$
is a maximal toral subalgebra so $C_{\mathfrak{g}_o}(Z(\mathfrak{k}_o)\oplus i\mathfrak{t}_o)\subset\mathfrak{g}_o$ is a Cartan subalgebra.  On the other hand, 
$$C_{\mathfrak{g}_o}(Z(\mathfrak{k}_o)\oplus i\mathfrak{t}_o)\subset C_{\mathfrak{g}_o}(Z(\mathfrak{k}_o)) \subset C_{\mathfrak{k}_o}(Z(\mathfrak{k}_o)) = \mathfrak{k}_o.$$
So $\operatorname{rank}\mathfrak{k}_o = \operatorname{rank}\mathfrak{g}_o$.  In particular, by ???, $s_o\in\operatorname{Int}(\mathfrak{g}_o)$.

Summarizing,
\begin{theorem}
Let $(\mathfrak{g}_o,s_o)$ be an irreducible semisimple orthogonal symmetric Lie algebra.  The following are equivalent:
\begin{enumerate}
\item $(\mathfrak{g}_o,s_o)$ is Hermitian
\item $\mathfrak{p}$ is redicuble as a $\mathfrak{k}_o$-module
\item $Z(\mathfrak{k}_o)\not=0$.
\end{enumerate}
If these conditions hold, then $\operatorname{rank}\mathfrak{k}_o= \operatorname{rank}\mathfrak{g}_o$ and $s_o\in\operatorname{Int}(\mathfrak{g}_o)$
\end{theorem}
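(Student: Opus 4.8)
The plan is to deduce the theorem from the dichotomy already set up in the preceding discussion: the complexification $\mathfrak{p}=\mathfrak{p}_o\otimes\mathbb{C}$ is either irreducible as a $\mathfrak{k}_o$-module (Case~I) or reducible (Case~II). Since these exhaust all possibilities, it suffices to show that conditions (1) and (3) both \emph{fail} in Case~I and that all three conditions \emph{hold} in Case~II; the equivalence then drops out, with (2) serving as the very alternative distinguishing the two cases. So I would first isolate this split and then dispatch each case.

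First, for Case~I I would argue that (1) and (3) fail. If a complex structure $J$ existed, its $\mathbb{C}$-linear extension to $\mathfrak{p}$ would have $\pm i$-eigenspaces that are proper nonzero $\mathfrak{k}_o$-invariant complex subspaces, contradicting irreducibility; hence (1) fails. For (3), given $Z\in Z(\mathfrak{k}_o)$, the operator $\ad Z$ commutes with $\ad\mathfrak{k}_o$ on $\mathfrak{p}$, so Schur's lemma gives $\ad Z=\lambda\operatorname{id}_{\mathfrak{p}}$; since $\ad Z$ preserves $Q$ it is tracefree, so $\lambda=0$, whence $Z\in C_{\mathfrak{g}_o}(\mathfrak{p}_o)$, which vanishes by semisimplicity (the earlier lemma that $\mathfrak{g}_o$ is semisimple iff $C_{\mathfrak{g}_o}(\mathfrak{p}_o)=0$). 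Thus $Z(\mathfrak{k}_o)=0$ and (3) fails.

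Next, for Case~II the preceding paragraphs have already produced the needed objects: a decomposition $\mathfrak{p}=\mathfrak{p}_1\oplus\mathfrak{p}_2$ with $\theta(\mathfrak{p}_1)=\mathfrak{p}_2$ yields the one-parameter family $A_\lambda$, which the earlier proposition certifies to lie in $\operatorname{Aut}(\mathfrak{g}_o)$ (using that $A_\lambda$ preserves $Q$). Differentiating at $\lambda=1$ produces a nonzero $X\in Z(\mathfrak{k}_o)$, giving (3), while $J=A_i$ is a $\mathfrak{k}_o$-invariant endomorphism with $J^2=-\operatorname{id}$, giving (1); and (2) holds by the defining property of this case. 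I would then assemble the equivalence: Case~I shows $\neg(2)\Rightarrow\neg(1)\wedge\neg(3)$ and Case~II shows $(2)\Rightarrow(1)\wedge(3)$, so (1), (2), (3) are mutually equivalent.

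Finally, for the concluding assertions under the standing hypothesis, I would invoke the sharpening obtained in Case~II that $\dim Z(\mathfrak{k}_o)=1$ together with its Cartan-subalgebra argument: for a maximal toral $i\mathfrak{t}_o\subset[\mathfrak{k}_o,\mathfrak{k}_o]$, the subalgebra $Z(\mathfrak{k}_o)\oplus i\mathfrak{t}_o$ is maximal toral in $\mathfrak{k}_o$ and its centralizer in $\mathfrak{g}_o$ is a Cartan subalgebra contained in $\mathfrak{k}_o$, forcing $\operatorname{rank}\mathfrak{k}_o=\operatorname{rank}\mathfrak{g}_o$; then $s_o\in\operatorname{Int}(\mathfrak{g}_o)$ follows from the earlier proposition characterizing interior involutions by the rank equality $\operatorname{rank}\mathfrak{g}_o=\dim\mathfrak{a}_o=\operatorname{rank}\mathfrak{k}_o$. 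The main obstacle, were one proving this from scratch rather than citing the case analysis above, is Case~II: verifying that every $A_\lambda$ is a genuine Lie algebra automorphism (which is exactly where the structural proposition on $Q$-invariant equivariant maps is essential) and pinning down $\dim Z(\mathfrak{k}_o)=1$, since that one-dimensionality is what ultimately yields both the essential uniqueness of $J$ and the rank equality.
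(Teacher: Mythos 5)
Your proposal is correct and follows essentially the same route as the paper: the same dichotomy on the $\mathfrak{k}_o$-module $\mathfrak{p}$, the Schur-plus-tracefree argument killing $Z(\mathfrak{k}_o)$ in the irreducible case, the construction of $A_\lambda\in\operatorname{Aut}(\mathfrak{g}_o)$ (via the structural proposition on $Q$-preserving equivariant maps) yielding $J=A_i$ and a nonzero central element in the reducible case, and the same $\dim Z(\mathfrak{k}_o)=1$ plus Cartan-subalgebra argument for the rank equality and $s_o\in\operatorname{Int}(\mathfrak{g}_o)$. Your only addition—closing the Case~I center argument by noting $Z\in C_{\mathfrak{g}_o}(\mathfrak{p}_o)=0$ by the semisimplicity lemma—is a detail the paper leaves implicit, and it is correct (effectiveness of the pair would serve equally well).
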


\begin{proposition}
Let $(\mathfrak{g}_o,s_o,J)$ be an irreducible Hermitian symmetric Lie algebra.  Let $G$ be a connected Lie group with $\Lie(G)=\mathfrak{g}_o$ and $K\le G$ a closed subgroup such that $\Lie(K)=\mathfrak{k}_o$.  If $J:\mathfrak{p}_o\to\mathfrak{p}_o$ is $K$-invariant, then $K$ is connected.
\end{proposition}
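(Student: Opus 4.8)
The plan is to reduce the connectedness of $K$ to the connectedness of the $\Ad$-stabilizer of the central element producing $J$. Recall from the analysis of Case II that $Z(\mathfrak{k}_o)=\mathbb{R}X$ is one-dimensional, and that its generator $X$ satisfies $\ad(X)|_{\mathfrak{p}_o}=2\pi J$; in particular $\ad(X)$ is invertible on $\mathfrak{p}_o$, so $C_{\mathfrak{g}_o}(X)=\mathfrak{k}_o$. Set $G^X=\{g\in G\mid \Ad(g)X=X\}$. Its Lie algebra is $C_{\mathfrak{g}_o}(X)=\mathfrak{k}_o=\Lie(K)$, so $(G^X)_e=K_e$. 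Thus if I can show both that $K\subseteq G^X$ and that $G^X$ is connected, then $K_e\subseteq K\subseteq G^X=(G^X)_e=K_e$, forcing $K=K_e$.

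First I would establish $K\subseteq G^X$, and this is the one step where the $K$-invariance of $J$ genuinely enters. For $k\in K$, the automorphism $\Ad(k)$ preserves $\mathfrak{k}_o$ (since $K$ normalizes its own Lie algebra) and preserves the Killing form, hence preserves $\mathfrak{p}_o=\mathfrak{k}_o^{\perp}$ and restricts to an automorphism of $\mathfrak{k}_o$ carrying $Z(\mathfrak{k}_o)=\mathbb{R}X$ into itself; write $\Ad(k)X=cX$. Conjugating the identity $\ad(X)|_{\mathfrak{p}_o}=2\pi J$ by $\Ad(k)$ gives $\Ad(k)\,J\,\Ad(k)^{-1}=\tfrac{1}{2\pi}\ad(\Ad(k)X)|_{\mathfrak{p}_o}=cJ$. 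But $K$-invariance of $J$ means exactly $\Ad(k)\,J\,\Ad(k)^{-1}=J$, so $c=1$ and $\Ad(k)X=X$; that is, $k\in G^X$. Note that this pins down $c$ directly, with no appeal to the Killing norm of $X$.

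It remains to prove that $G^X=C_G(\exp(\mathbb{R}X))$ is connected, and this is the main obstacle, since it is where the global topology of $G$ must be controlled and the compact and noncompact types require different inputs. In the compact case the closure $S=\overline{\exp(\mathbb{R}X)}$ is a torus and $G^X=C_G(S)$, which is connected by the earlier Corollary that the centralizer of a torus in a compact connected group is the union of the maximal tori containing it. In the noncompact case I would instead invoke the Cartan decomposition $K_e\times\mathfrak{p}_o\xrightarrow{\sim}G$ (with $K_e$ connected): writing $g=k\exp(Y)\in G^X$ with $k\in K_e$ and $Y\in\mathfrak{p}_o$, I apply the Lie-algebra involution $s_o$ (which is $\operatorname{id}$ on $\mathfrak{k}_o$, $-\operatorname{id}$ on $\mathfrak{p}_o$, and fixes $X$) to $\Ad(k)e^{\ad Y}X=X$; since $s_o$ commutes with $\Ad(k)$ and conjugates $e^{\ad Y}$ to $e^{-\ad Y}$, this yields $\Ad(k)e^{-\ad Y}X=X$, whence $e^{2\ad Y}X=X$. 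Because $\ad Y$ is symmetric for $B_\theta$, the operator $e^{2\ad Y}$ is positive-definite symmetric, so the eigenvalue $1$ forces $[Y,X]=0$, i.e. $Y\in C_{\mathfrak{p}_o}(X)=0$ and $g=k\in K_e$; thus $G^X=K_e$. In either case $G^X$ is connected, which finishes the proof. The reduction in the first two paragraphs is purely formal once the one-dimensionality of $Z(\mathfrak{k}_o)$ and the identity $\ad(X)|_{\mathfrak{p}_o}=2\pi J$ from Case II are in hand; the only real care is in treating both types uniformly in the final step.
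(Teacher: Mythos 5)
There is a genuine gap: your proof silently assumes $(\mathfrak{g}_o,s_o)$ is semisimple, but the proposition also covers the flat irreducible case, which the paper's decomposition theorem for Hermitian symmetric Lie algebras explicitly allows (the summand $(\mathfrak{g}_f,s_f,J_f)$ is flat \emph{and} irreducible) and which the paper's own proof treats first. Your whole reduction rests on two facts imported from the Case II analysis --- $\dim Z(\mathfrak{k}_o)=1$ and $\ad(X)|_{\mathfrak{p}_o}=2\pi J$ --- and that analysis was carried out under the standing hypothesis that $(\mathfrak{g}_o,s_o)$ is semisimple; indeed, the key step there (the derivative of $\lambda\mapsto A_\lambda$ is an \emph{inner} derivation $\ad X$) uses semisimplicity outright. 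In the flat case these facts can fail: take $\mathfrak{g}_o=\mathfrak{su}(n)\ltimes\mathbb{C}^n$ with $s_o=\operatorname{id}\oplus(-\operatorname{id})$ and $J$ multiplication by $i$; this is an effective, irreducible, flat Hermitian symmetric Lie algebra with $Z(\mathfrak{k}_o)=0$, so there is no $X$ with $\ad(X)|_{\mathfrak{p}_o}$ proportional to $J$ and your group $G^X$ does not exist. Even in flat examples where the center is one-dimensional (e.g.\ $\mathfrak{u}(n)\ltimes\mathbb{C}^n$), neither of your two connectedness mechanisms applies, since $G$ is then neither compact nor semisimple. So the flat case needs a separate (short) argument, as in the paper: $G=K\ltimes\exp\mathfrak{p}_o$, so $K\cong G/\exp\mathfrak{p}_o$ is a quotient of a connected group, hence connected.

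Apart from this omission, your argument is sound, and in the semisimple cases it tracks the paper while tightening it in places. In the compact case the paper also passes through $K\le C_G(Z(\mathfrak{k}_o))$ and the connectedness of torus centralizers; your linear pinning of the scalar ($\Ad(k)X=cX$ conjugates $\ad(X)|_{\mathfrak{p}_o}=2\pi J$ to $2\pi cJ$, and $cJ=J$ forces $c=1$) is cleaner than arguing through the exponentiated identity $J=\exp(\ad Z)$. In the noncompact case the paper simply cites the homeomorphism $G\cong K\times\mathfrak{p}_o$ (implicitly identifying the given $K$ with the Cartan-decomposition factor), whereas your $s_o$-trick --- deriving $e^{2\ad Y}X=X$, then using $B_\theta$-symmetry of $\ad Y$ to conclude $Y\in C_{\mathfrak{p}_o}(X)=0$ --- is a correct and pleasantly self-contained way to see $G^X=K_e$, though it still leans on the decomposition $G=K_e\exp(\mathfrak{p}_o)$ for the factorization $g=k\exp Y$. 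Add the flat case and the proof is complete.
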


\begin{proof}
If $(\mathfrak{g}_o,s_o)$ is flat, then the connected group $G$ is a semidirect product $G=K\ltimes\mathfrak{p}_o$, so $K$ is connected.

If $(\mathfrak{g}_o,s_o)$ is noncompact, then $G$ is homeomorphic to $K\times\mathfrak{p}_o$, and so $K$ is connected.

Next suppose that $(\mathfrak{g}_o,s_o)$ is compact.  Then $J=\exp(\ad(Z))$ for some $Z\in Z(\mathfrak{k}_o)$.  Since $J$ is $K$-invariant, $K\le C_G(Z(\mathfrak{k}_o))$ which is connected.  On the other hand, $\Lie C_G(Z(\mathfrak{k}_o)) = C_{\mathfrak{g}_o}(Z(\mathfrak{k}_o)) = \mathfrak{k}_o$.  So $C_G(Z(\mathfrak{k}_o))=K_e$.  Combined with the inclusion $K\le K_e$, so $K=K_e$.
\end{proof}

\begin{theorem}
Let $(\mathfrak{g}_o,s_o,J)$ be a Hermitian symmetric Lie algebra.  Then there is a decomposition
$$ (\mathfrak{g}_o,s_o,J) = (\mathfrak{g}_f,s_f,J_f)\oplus(\mathfrak{g}_1,s_1,J_1)\oplus\cdots\oplus(\mathfrak{g}_t,s_t,J_t).$$
where $(\mathfrak{g}_f,s_f,J_f)$ is flat and irreducible, $(\mathfrak{g}_i,s_i,J_i)$ is semisimple and irreducible.  The decomposition is unique up to order.
\end{theorem}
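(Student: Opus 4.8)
The plan is to reduce everything to the orthogonal case already settled in Theorem \ref{decomposition} and then to argue that the complex structure is forced to respect that decomposition. First I would discard $J$ and apply Theorem \ref{decomposition} to the underlying orthogonal symmetric Lie algebra $(\mathfrak{g}_o,s_o)$, obtaining
$$(\mathfrak{g}_o,s_o) = (\mathfrak{h}_f,s_f)\oplus(\mathfrak{h}_1,s_1)\oplus\cdots\oplus(\mathfrak{h}_t,s_t),$$
together with the splitting $\mathfrak{p}_o = V_0\oplus\mathfrak{p}_1\oplus\cdots\oplus\mathfrak{p}_t$, where $V_0$ is the flat part, $\mathfrak{h}_i = [\mathfrak{p}_i,\mathfrak{p}_i]\oplus\mathfrak{p}_i$ is a semisimple ideal, and each $\mathfrak{p}_i$ is a simple module for $\mathfrak{k}_i:=[\mathfrak{p}_i,\mathfrak{p}_i]$.

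The crux is to show that $J$ preserves this splitting of $\mathfrak{p}_o$. Here I would use that $J\in\End_{\mathfrak{k}_o}(\mathfrak{p}_o)$ together with the bracket relations $[\mathfrak{p}_i,\mathfrak{p}_j]=0$ for $i\ne j$. By the Jacobi identity these give $[\mathfrak{k}_j,\mathfrak{p}_i]=[[\mathfrak{p}_j,\mathfrak{p}_j],\mathfrak{p}_i]=0$ for $j\ne i$, while $[\mathfrak{k}_i,V_0]=0$ because $V_0\subset\mathfrak{h}_f=C_{\mathfrak{g}_o}(\mathfrak{h}_1\oplus\cdots\oplus\mathfrak{h}_t)$ centralizes every $\mathfrak{h}_i$. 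Thus $\mathfrak{k}_i$ acts nontrivially and simply on $\mathfrak{p}_i$ (if it acted trivially, $\mathfrak{k}_i$ would be central in the semisimple $\mathfrak{h}_i$, which is impossible), but trivially on every other summand. Since $J$ is $\mathfrak{k}_o$-equivariant, each of its components $\mathfrak{p}_i\to\mathfrak{p}_j$ ($j\ne i$) and $\mathfrak{p}_i\to V_0$ is a morphism of $\mathfrak{k}_i$-modules from a nontrivial simple module to a module on which $\mathfrak{k}_i$ acts trivially, hence vanishes by Schur's lemma. Therefore $J(\mathfrak{p}_i)\subseteq\mathfrak{p}_i$ for each $i\ge1$, and since $J$ is invertible it also preserves the complement $V_0$.

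With the splitting in hand I would set $J_i=J|_{\mathfrak{p}_i}$ and $J_f=J|_{V_0}$. Each $J_i$ satisfies $J_i^2=-\operatorname{id}$ and is $\mathfrak{k}_i$-invariant, so $(\mathfrak{h}_i,s_i,J_i)$ carries a complex structure; being an irreducible semisimple factor of Theorem \ref{decomposition} and admitting $J_i$, it is an irreducible semisimple Hermitian symmetric Lie algebra. The restriction $J_f$ is a complex structure on $V_0$, and since $[V_0,V_0]=0$ the triple $(\mathfrak{h}_f,s_f,J_f)$ is the flat Hermitian factor. Finally, uniqueness up to order follows from the uniqueness in Theorem \ref{decomposition}: any Hermitian decomposition forgets to an orthogonal one, which is unique up to reordering, and $J$ is recovered as the direct sum of the restrictions $J_i$ and $J_f$; hence the Hermitian decomposition is unique up to order as well.

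The step I expect to be the main obstacle is the equivariance/Schur argument of the second paragraph — specifically, ruling out that $J$ might interchange two factors $\mathfrak{p}_i,\mathfrak{p}_j$ that happen to be isomorphic as abstract $\mathfrak{k}_o$-modules. The resolution is that the factors are distinguished by which ideal $\mathfrak{k}_i$ of $\mathfrak{k}_o$ acts nontrivially, so there is no nonzero $\mathfrak{k}_o$-equivariant map between distinct summands, and $J$ cannot mix them.
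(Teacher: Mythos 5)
Your proposal follows exactly the paper's (two-line) proof of this theorem: decompose $(\mathfrak{g}_o,s_o)$ by Theorem \ref{decomposition}, then observe that $\mathfrak{k}_o$-invariance of $J$ forces $J$ to preserve the components of $\mathfrak{p}_o$. Your Schur-type argument is the correct way to fill in the paper's second sentence, and your closing remark---that the summands $\mathfrak{p}_i$ are distinguished by which ideal $\mathfrak{k}_i=[\mathfrak{p}_i,\mathfrak{p}_i]$ acts nontrivially, so no $\mathfrak{k}_o$-equivariant map can mix them even if they are abstractly isomorphic---is precisely the point the paper leaves implicit.

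One step is misjustified, although your conclusion stands: ``since $J$ is invertible it also preserves the complement $V_0$'' is false as a general principle. An invertible map with $J^2=-\operatorname{id}$ preserving a subspace $W$ need not preserve a given complement of $W$: on $\mathbb{R}^4$ set $Je_1=e_2$, $Je_2=-e_1$, $Je_3=e_4+e_1$, $Je_4=-e_3-e_2$; then $J^2=-\operatorname{id}$ and $J$ preserves $\operatorname{span}(e_1,e_2)$ but not $\operatorname{span}(e_3,e_4)$. What saves you is the same equivariance argument you already used for the other components: the component of $J$ mapping $V_0\to\mathfrak{p}_i$ is $\mathfrak{k}_i$-equivariant, and $\mathfrak{k}_i$ annihilates $V_0$ (since $V_0\subset\mathfrak{h}_f=C_{\mathfrak{g}_o}(\mathfrak{h})$), hence annihilates the image; the image is then a $\mathfrak{k}_i$-submodule of the simple module $\mathfrak{p}_i$ on which $\mathfrak{k}_i$ acts trivially, and since $\mathfrak{k}_i$ acts nontrivially on $\mathfrak{p}_i$ the image must be zero. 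Note that this restriction to the $\mathfrak{k}_i$-action is genuinely needed, because $V_0$ need not be a trivial module for all of $\mathfrak{k}_o$ (the flat factor $\mathfrak{h}_f$ can have a nontrivial isotropy part). With that one substitution your proof is complete and coincides with the paper's intended argument.
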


\begin{proof}
Decompose the Lie algebra in the usual way.  Since $J$ is $\mathfrak{k}_o$-invariant, the components of $\mathfrak{p}_o$ are invariant under $J$ as well.
\end{proof}

\commentx{Some corollary was here, but didn't make sense.}

\section{Hermitian symmetric spaces}
Let $M$ be a $2n$-dimensional real manifold.  Let $J:T^1_0M\to T^1_0M$ be a bundle map such that $J^2=-\operatorname{id}$.  Then $J$ is called an almost complex structure and the pair $(M,J)$ an almost complex manifold.  If $M$ is a complex manifold and $J$ is the operation of multiplication by $i$, then the $(M^{\mathbb{R}},J)$ is an almost complex manifold.  The {\em Nijenhuis tensor} is defined by
$$N(X,Y) = [JX,JY] - J[JX,Y] - J[X,JY] - [X,Y].$$
It is easily checked that $N\in \mathscr{T}^1_2M$.  If $(M,J)$ is associated to a complex manifold, then the integrability of the eigenspaces of $J$ in $T^1_0M\otimes\mathbb{C}$ implies that $N\equiv 0$.  The Newlander--Nirenberg theorem asserts the converse:\footnote{A proof of this in the real-analytic category can be found in Kobayashi and Nomizu, Volume 2.  Since symmetric spaces are analytic, this version of the theorem is sufficient for our purposes.}
\begin{theorem}
$(M,J)$ is a complex manifold if and only if $N\equiv 0$.
\end{theorem}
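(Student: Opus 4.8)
The plan is to treat the two implications separately, after rephrasing the vanishing of $N$ as an involutivity condition on the eigenbundles of $J$. Complexify the tangent bundle, $T_{\mathbb C}M = TM\otimes\mathbb C$, and let $T^{1,0}$ and $T^{0,1}$ denote the $+i$ and $-i$ eigenbundles of the $\mathbb C$-linear extension of $J$, so that $T_{\mathbb C}M = T^{1,0}\oplus T^{0,1}$ with $T^{0,1}=\overline{T^{1,0}}$. The first step is the algebraic identity that, for sections $Z,W$ of $T^{0,1}$, one has $N(Z,W) = -4\,\pi^{1,0}[Z,W]$, where $\pi^{1,0}$ is the projection onto $T^{1,0}$; this follows by expanding the definition of $N$ using $JZ=-iZ$, $JW=-iW$ and collecting the $(1,0)$ and $(0,1)$ parts of $[Z,W]$. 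Consequently $N\equiv 0$ is equivalent to the involutivity $[T^{0,1},T^{0,1}]\subset T^{0,1}$ of the distribution $T^{0,1}$, and, by conjugation, of $T^{1,0}$.

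The forward implication is then immediate: on a complex manifold, in a holomorphic chart $(z^1,\dots,z^n)$ the bundle $T^{0,1}$ is spanned by the pairwise commuting fields $\partial/\partial\bar z^j$, so $T^{0,1}$ is trivially involutive and $N\equiv 0$. For the converse I would work in the real-analytic category, which suffices since symmetric spaces are analytic. Because $M$ and $J$ are real-analytic, I would first complexify: embed $M$ as a totally real, maximal-dimensional real-analytic submanifold of a complex manifold $\widetilde M$ of complex dimension $2n$ (a Bruhat--Whitney complexification), into which the real-analytic distribution $T^{0,1}$ extends to a holomorphic distribution $\mathcal D$ of rank $n$ on a neighborhood of $M$. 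The involutivity $[T^{0,1},T^{0,1}]\subset T^{0,1}$ is a real-analytic identity, so it propagates to $\mathcal D$, making $\mathcal D$ a holomorphic involutive distribution. Applying the holomorphic Frobenius theorem yields, near each point, holomorphic coordinates $(w^1,\dots,w^{2n})$ on $\widetilde M$ in which $\mathcal D$ is spanned by $\partial/\partial w^{n+1},\dots,\partial/\partial w^{2n}$.

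Finally I would restrict the first $n$ coordinates, setting $z^j = w^j|_M$ for $1\le j\le n$. By construction $dw^j$ annihilates $\mathcal D$ for $j\le n$, hence $dz^j$ annihilates $T^{0,1}$, which is exactly the statement that each $z^j$ is $J$-holomorphic ($\bar\partial_J z^j=0$), i.e.\ that its differential is complex-linear for $J$. A dimension count shows the $z^j$ are independent and define a diffeomorphism onto an open set of $\mathbb C^n$ whose induced almost complex structure is $J$. Transition functions between two such charts are maps that are $J$-holomorphic expressed in $J$-holomorphic coordinates, hence honestly holomorphic, so the charts assemble into a complex-analytic atlas inducing $J$.

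The main obstacle is the converse direction, and within it the holomorphic Frobenius step: producing the complexification $\widetilde M$ and integrating the holomorphic involutive distribution $\mathcal D$. In the analytic setting this rests on the Cauchy--Kovalevskaya theorem (equivalently, the analytic Frobenius theorem), and it is precisely here that real-analyticity is essential, since the smooth case instead requires the substantially harder elliptic estimates of the original Newlander--Nirenberg argument.
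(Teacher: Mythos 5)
Your proof is correct, and it is essentially the argument the paper itself relies on: the paper states this theorem without proof, deferring in a footnote to the real-analytic case in Kobayashi--Nomizu (Volume 2), whose proof proceeds exactly as yours does --- recasting $N\equiv 0$ as involutivity of $T^{0,1}$ (your identity $N(Z,W) = -4\,\pi^{1,0}[Z,W]$ for $Z,W$ sections of $T^{0,1}$ checks out, and $N$ vanishes automatically on mixed types) and then complexifying and integrating the resulting holomorphic involutive distribution via the analytic Frobenius theorem. You also correctly flag the same caveat the paper makes, namely that the real-analytic category suffices for symmetric spaces, while the general smooth statement requires the full elliptic machinery of Newlander--Nirenberg.
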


\begin{definition}
A triple $(M,g,J)$ is said to be almost Hermitian if
\begin{enumerate}
\item $(M,g)$ is a Riemannian manifold
\item $(M,J)$ is an almost complex manifold
\item $g$ is $J$-invariant
\end{enumerate}
\end{definition}

In this situation,
$$H(X,Y) = g(X,Y) + i g(X,JY)$$
is a Hermitian form on $TM$ (defined previously).  Conversely, the metric $g$ is the real part of the form $H$.  Hence, all Hermitian forms on a given almost complex manifold $(M,J)$ arise in this way.

\begin{definition}
An almost Hermitian manifold $(M,g,J)$ is called K\"{a}hler if $\nabla J=0$.
\end{definition}

A more convenient formulation of the K\"{a}hler condition uses the {\em fundamental form}: the two form
$$\Omega(X,Y) = g(X,JY).$$
Since $J^2=-\operatorname{id}$ and $g$ is $J$-invariant, it follows that $\Omega$ is skew-symmetric (so $\Omega\in\Omega^2M$).

\begin{theorem}
Let $(M,g,J)$ be an almost Hermitian manifold.  The following are equivalent:
\begin{enumerate}
\item $(M,g,J)$ is K\"{a}hler
\item $\nabla H =0$
\item $d\Omega=0$
\end{enumerate}
Moreover, if any of these holds, then $(M,J)$ is a complex manifold.
\end{theorem}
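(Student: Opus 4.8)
The plan is to route everything through the covariant derivative of the fundamental form and to treat the three conditions as successive antisymmetrizations of a single tensor, $\nabla\Omega$. First I would record that the Levi--Civita connection is metric ($\nabla g = 0$) and torsion free, and that from the definition $\Omega(X,Y) = g(X,JY)$ one gets, after a one-line computation using metric compatibility, $(\nabla_Z\Omega)(X,Y) = g(X,(\nabla_Z J)Y)$. Since $g$ is nondegenerate this yields $\nabla\Omega = 0 \iff \nabla J = 0$, i.e. condition (1). Likewise $\nabla H = \nabla g + i\nabla\Omega = i\nabla\Omega$, so $\nabla H = 0 \iff \nabla\Omega = 0$; this disposes of (1)$\iff$(2) with essentially no work.

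For (1)$\implies$(3) I would invoke the standard fact that, for a torsion-free connection, the exterior derivative of a $2$-form is the cyclic antisymmetrization of its covariant derivative:
\[
d\Omega(X,Y,Z) = (\nabla_X\Omega)(Y,Z) + (\nabla_Y\Omega)(Z,X) + (\nabla_Z\Omega)(X,Y).
\]
Thus $\nabla\Omega = 0$ forces $d\Omega = 0$ at once, and by the first paragraph (1) implies (3).

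The substantive direction is (3)$\implies$(1), and here the main obstacle is precisely that $d\Omega = 0$ by itself controls only the totally skew part of $\nabla\Omega$, not the whole tensor. The correct instrument is the pointwise algebraic identity (Koszul formula together with the definitions of $\Omega$, $J$, and the Nijenhuis tensor $N$) of the shape
\[
2\,g\bigl((\nabla_X J)Y,\,Z\bigr) = d\Omega(X,Y,Z) - d\Omega(X,JY,JZ) + g\bigl(N(Y,Z),\,JX\bigr),
\]
whose derivation---expanding $\nabla$ by the Koszul formula, using $J^2 = -\operatorname{id}$, the $J$-invariance of $g$, and collecting the Lie brackets into $N$---is the real computational heart of the proof. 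Granting the identity, the implication is immediate: if $d\Omega = 0$ and $N = 0$, the right-hand side vanishes for all $Y,Z$, whence $\nabla J = 0$ by nondegeneracy of $g$. With $N = 0$ in hand, the Newlander--Nirenberg theorem quoted above upgrades $(M,J)$ to a genuine complex manifold, giving the ``Moreover'' clause.

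I would flag one gap in the statement as written: the displayed identity shows that $d\Omega$ determines $\nabla J$ only modulo the Nijenhuis term, so (3) is not by itself equivalent to (1)---the integrability condition $N \equiv 0$ must be added to (3) (equivalently, one works in the Hermitian, not merely almost-Hermitian, category). Indeed symplectic manifolds that admit no K\"ahler metric yet carry compatible almost complex structures show that $d\Omega = 0$ cannot force $\nabla J = 0$ on its own. In the intended application to symmetric spaces the invariant almost complex structure is automatically integrable, so the stated conclusion does hold there; the clean fix is to incorporate $N = 0$ into the hypotheses and let the identity above do the remaining work.
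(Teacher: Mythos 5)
Your argument is correct and is essentially the calculation the paper waves at---its entire proof reads ``Straightforward calculation''---so there is no detailed argument in the text to compare against; yours supplies the standard one: $(\nabla_Z\Omega)(X,Y)=g(X,(\nabla_ZJ)Y)$ from metric compatibility, $\nabla H=\nabla g+i\nabla\Omega$ for $(1)\Leftrightarrow(2)$, the torsion-free cyclic-sum formula for $(1)\Rightarrow(3)$, and the Kobayashi--Nomizu-type identity relating $\nabla J$ to $d\Omega$ and $N$ for the converse (your displayed identity is right up to convention-dependent factors, which you appropriately hedge).

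More importantly, your flagged gap is genuine and is an error in the paper's statement, not in your proof. For a general almost Hermitian manifold, $(3)\Rightarrow(1)$ is false: $d\Omega=0$ says only that $(M,g,J)$ is almost K\"ahler, and almost K\"ahler does not imply K\"ahler. The Kodaira--Thurston nilmanifold is the standard counterexample---a compact symplectic $4$-manifold, hence carrying a compatible almost complex structure with $d\Omega=0$, whose odd first Betti number forbids any K\"ahler structure, so $\nabla J\neq 0$ for every such $J$. The correct equivalence is $(1)\Leftrightarrow(2)\Leftrightarrow\bigl[(3)\text{ and }N\equiv 0\bigr]$, exactly as your identity shows, and the ``Moreover'' clause is then recovered since $\nabla J=0$ together with torsion-freeness forces $N=0$ (or, in your setup, since $(1)$ gives both $d\Omega=0$ and the vanishing of the left side of the identity, whence $g(N(Y,Z),JX)=0$ for all $X$). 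As you note, in the paper's intended application the invariant almost complex structure on a symmetric space is automatically integrable, so nothing downstream breaks, but the hypothesis $N\equiv 0$ should be added to item $(3)$ for the theorem to be true as stated.
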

\begin{proof}
Straightforward calcuation.
\end{proof}

\begin{definition}
An almost Hermitian manifold $(M,g,J)$ is said to be {\em Hermitian symmetric} at $x\in M$ if $(M,g)$ is globally symmetric at $x$ and $s_x\circ J=J\circ s_x$.  $(M,g,J)$ is Hermitian symmetric if it is Hermitian symmetric at every $x\in M$.
\end{definition}

{\em Notation.}  Let $H(M)=\{ f\in I(M)\mid f\circ J=J\circ f\}$ be the group of Hermitian isometries.  It is a closed subgroup of $I(M)$.

{\em Remark.}  If $(M,g,J)$ is Hermitian symmetric, then the set of transvections is a subset of $H(M)$, since transvections are generated by the global symmetries $s_x$.  But transvections act transitively on $M$, and so
$$M = H(M)_e/H(M)_{e,o}.$$
Also $H(M)_{e,o}\subset I(M)_o$ is a compact Lie group.

\begin{theorem}
Let $(M,g,J)$ be Hermitian symmetric.  Then $M$ is K\"{a}hler (in particular, it is complex).
\end{theorem}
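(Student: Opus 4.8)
The plan is to reduce everything to the single identity $\nabla J = 0$, since the preceding theorem then immediately yields that $(M,g,J)$ is K\"{a}hler (and in particular a complex manifold, by the Newlander--Nirenberg theorem already invoked there). Thus the entire content of the statement is $\nabla J \equiv 0$, and I would establish it pointwise by exploiting the geodesic symmetry exactly as in the earlier argument that a locally symmetric space satisfies $T = 0$ and $\nabla R = 0$.

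I would fix a base point $o \in M$ and consider the geodesic symmetry $s_o$. Because $M$ is a Riemannian globally symmetric space, $s_o$ is an isometry, hence an affine diffeomorphism for the Levi--Civita connection; consequently covariant differentiation commutes with the pullback $s_o^*$, so that $s_o^*(\nabla J) = \nabla(s_o^* J)$. By the hypothesis of Hermitian symmetry, $s_o$ commutes with $J$, which at the level of the $(1,1)$-tensor field $J$ says precisely that $s_o^* J = J$. Combining these, I obtain that $\nabla J$ is invariant under $s_o$, i.e. $s_o^*(\nabla J) = \nabla J$.

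The key step is then a parity computation at $o$. The differential $d(s_o)_o = -\operatorname{id}$ on $T_oM$, while $\nabla J$ is a tensor of type $(1,2)$ (sending $X \otimes Y$ to $(\nabla_X J)Y$), that is, a section of $\mathscr{T}^1_2 M$. The natural action of $-\operatorname{id}$ on such a tensor multiplies it by $(-1)^{1+2} = -1$: the two covariant slots cancel and the single contravariant slot contributes the sign. Evaluating the $s_o$-invariance at $o$ therefore gives $(\nabla J)_o = -(\nabla J)_o$, forcing $(\nabla J)_o = 0$. Since $o$ was arbitrary and $M$ is Hermitian symmetric at every point, this shows $\nabla J \equiv 0$, and the preceding theorem completes the argument.

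The only genuine subtlety---more a matter of getting the bookkeeping right than a real obstacle---is the parity step: one must confirm that the hypothesis $s_o \circ J = J \circ s_o$ is correctly read as the invariance $s_o^* J = J$ of the endomorphism field, and that the induced action of $-\operatorname{id}$ on $\nabla J$ carries the sign $(-1)^3$ rather than $(-1)^2$ (which is exactly what distinguishes the vanishing of $\nabla J$ from the $s_o$-invariance of $J$ itself). This is the same mechanism by which odd-degree invariants vanish on symmetric spaces, as with the torsion $T$ and with $\nabla R$, so no ideas beyond that established template are required.
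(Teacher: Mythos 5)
Your proof is correct, but it runs along a different track than the paper's. The paper's proof is a one-liner resting on the transvection machinery: the transvections are Hermitian isometries (being generated by the symmetries $s_x$, each of which commutes with $J$), and since transvections realize parallel transport along geodesics, invariance of $J$ under all of them means precisely that $J$ is parallel, i.e.\ $\nabla J=0$. You instead never mention transvections: you apply the single symmetry $s_o$ at each point and run the odd-parity argument on the $(1,2)$-tensor $\nabla J$ --- exactly the template the paper used in Chapter \ref{SymmetricSpaces} to prove $T=0$ and $\nabla R=0$ for locally symmetric spaces ($T_x=\sigma_xT_x=-T_x$, etc.). Your bookkeeping is right: $s_o$ affine gives $s_o^*(\nabla J)=\nabla(s_o^*J)$, the hypothesis $s_{o}\circ J=J\circ s_{o}$ gives $s_o^*J=J$ (consistently, $J$ has even total rank and so survives $-\operatorname{id}$), and the three slots of $\nabla J$ contribute $(-1)^3$ at the fixed point, forcing $(\nabla J)_o=0$ pointwise. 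The trade-off: your argument is purely local and more elementary --- it needs only the symmetry at each point and the earlier fact that affine maps commute with $\nabla$, so it proves the statement verbatim for Hermitian \emph{locally} symmetric spaces; the paper's argument is shorter given the transvection formalism already built, and its mechanism is more general in a different direction, since invariance under transvections shows \emph{any} invariant tensor is parallel, irrespective of parity, whereas your sign trick can only kill tensors of odd total rank (it shows $\nabla J=0$ but could not, for instance, directly show $\nabla g=0$ or $\nabla\Omega=0$). Both correctly conclude via the preceding theorem that K\"ahler implies complex.
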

\begin{proof}
$J$ is invariant under the set of all transvections, and therefore $\nabla J=0$.
\end{proof}

\begin{proposition}
Let $(M,g,J)$ and $(\overline{M},\overline{g},\overline{J})$ be Hermitian symmetric spaces.  An isometry $f : M\to \overline{M}$ is a Hermitian isometry if and only if $f_{*,o}\circ J_o = \overline{J}_{\overline{o}}\circ f_{*,o}$
\end{proposition}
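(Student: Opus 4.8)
The plan is to reduce the global commutation relation to the pointwise one by exploiting the fact that both $J$ and $\overline J$ are parallel. The forward implication is immediate: if $f$ is a Hermitian isometry, then by definition $f_*\circ J=\overline J\circ f_*$ as maps of tangent bundles, and evaluating at $o$ gives $f_{*,o}\circ J_o=\overline J_{\overline o}\circ f_{*,o}$. So the content is entirely in the converse, and my whole effort goes there.

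For the converse I would first invoke the preceding theorem that a Hermitian symmetric space is K\"ahler, so that $\nabla J=0$ on $M$ and $\nabla\overline J=0$ on $\overline M$. The parallelism of $J$ means precisely that $J$ commutes with parallel transport: for any piecewise $C^1$ curve $\gamma$ from $o$ to a point $p$, one has $J_p\circ\pi_\gamma=\pi_\gamma\circ J_o$, and similarly $\overline J_{f(p)}\circ\pi_{f\circ\gamma}=\pi_{f\circ\gamma}\circ\overline J_{\overline o}$ along the image curve. The second ingredient is that an isometry is an affine diffeomorphism (it preserves the Levi--Civita connection), hence carries parallel fields to parallel fields; this yields the intertwining relation $f_{*,p}\circ\pi_\gamma=\pi_{f\circ\gamma}\circ f_{*,o}$, where on the right $\pi_{f\circ\gamma}$ denotes parallel transport in $\overline M$ along $f\circ\gamma$ from $\overline o=f(o)$ to $f(p)$.

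With these in hand the computation is a short diagram chase. Fix $p\in M$ and a curve $\gamma$ from $o$ to $p$ (possible since $M$ is connected). On one side,
\begin{align*}
f_{*,p}\circ J_p\circ\pi_\gamma
&=f_{*,p}\circ\pi_\gamma\circ J_o
=\pi_{f\circ\gamma}\circ f_{*,o}\circ J_o.
\end{align*}
On the other side,
\begin{align*}
\overline J_{f(p)}\circ f_{*,p}\circ\pi_\gamma
&=\overline J_{f(p)}\circ\pi_{f\circ\gamma}\circ f_{*,o}
=\pi_{f\circ\gamma}\circ\overline J_{\overline o}\circ f_{*,o}.
\end{align*}
The hypothesis $f_{*,o}\circ J_o=\overline J_{\overline o}\circ f_{*,o}$ makes the two right-hand sides equal, so $f_{*,p}\circ J_p\circ\pi_\gamma=\overline J_{f(p)}\circ f_{*,p}\circ\pi_\gamma$. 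Since $\pi_\gamma$ is a linear isomorphism it cancels, giving $f_{*,p}\circ J_p=\overline J_{f(p)}\circ f_{*,p}$ at the arbitrary point $p$, which is exactly the statement that $f$ is a Hermitian isometry.

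I do not expect a serious obstacle here; the argument is structurally identical to how Corollary \ref{covariantlyconstant} and Proposition \ref{isometriesdeterminedatapoint} propagate pointwise data using covariant constancy. The only points requiring care are bookkeeping ones: correctly recording that $\overline o=f(o)$ and transporting along the image curve $f\circ\gamma$ rather than $\gamma$, and using connectedness of $M$ to guarantee a curve from $o$ to each $p$ exists. The substantive input—that $J$ and $\overline J$ are parallel—is supplied by the Hermitian symmetric (hence K\"ahler) hypothesis, so no new analysis is needed beyond quoting that theorem.
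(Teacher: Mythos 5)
Your proof is correct, and it differs from the paper's only by a change of dictionary, though the difference is worth recording. The paper disposes of the converse in one line: since $f$ is an isometry, rigidity (Proposition \ref{faithful}) forces $f\circ s_x\circ f^{-1}=s_{f(x)}$, so $f$ is equivariant with respect to transvections, and the identity $J_x=\tau_{[o,x]}\circ J_o\circ\tau_{[x,o]}$ then propagates the hypothesis at $o$ to every point. Your argument is the parallel-transport translation of this: in a symmetric space the transvection along $[o,x]$ realizes $\pi_{[o,x]}$ on tangent vectors, so the paper's transvection formula for $J_x$ is precisely the statement that $J$ commutes with parallel transport along radial geodesics, i.e.\ $\nabla J=0$ restricted to those curves, while your intertwining relation $f_{*,p}\circ\pi_\gamma=\pi_{f\circ\gamma}\circ f_{*,o}$ is the affine-map counterpart of the paper's transvection equivariance. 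What your version buys is generality and economy of hypotheses: by quoting the K\"{a}hler theorem ($\nabla J=0$, $\nabla\overline{J}=0$) and using only that an isometry is affine, you never use that $f$ conjugates geodesic symmetries to geodesic symmetries, and your diagram chase works along arbitrary curves, so it actually proves the statement for any isometry between connected K\"{a}hler manifolds with parallel complex structures. What the paper's version buys is brevity within its own framework: given the transvection machinery already developed, the symmetric structure does all the work directly, with no appeal to the K\"{a}hler theorem (which the paper in any case derives from the same transvection invariance you are implicitly re-using). One bookkeeping remark: your cancellation of $\pi_\gamma$ is legitimate since parallel transport is a linear isomorphism, and your appeal to connectedness is consistent with the paper's standing convention that symmetric spaces are connected.
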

(Here $o$ as usual is a fixed base point of $M$ and $\overline{o}$ is its image under $\overline{f}$.)
\begin{proof}
We need to show that $f_*$ respects the complex structure at every point.  Since $f_*$ respects the symmetry at each point, it is equivariant with respect to transvections, and the result follows since
$$J_x = \tau_{[o,x]}\circ J_o\circ\tau_{[x,o]}.$$
\end{proof}

\begin{proposition}
Let $(M,g,J)$ and $(\overline{M},\overline{g},\overline{J})$ be Hermitian symmetric spaces.  The differential at $o$ sets up a natural one-to-one correspondence between isometries $f : M\to \overline{M}$ and the set of Lie algebra isomorphisms $\phi : \mathfrak{g}_o\to \overline{\mathfrak{g}}_{\overline{o}}$ that commute with $s_o$ such that $\phi|_{\mathfrak{p}_o}$ is a Hermitian isometry.
\end{proposition}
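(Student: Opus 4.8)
The plan is to deduce this proposition as the Hermitian strengthening of Theorem~\ref{affineversusriemannian}, overlaying the characterization of Hermitian isometries from the preceding proposition. The correspondence sends an isometry $f$ to the pushforward $\phi:=f_*$ on Killing fields; under the identification $\mathfrak{p}_o\cong TM_o$ (and $\overline{\mathfrak{p}}_{\overline{o}}\cong T\overline{M}_{\overline{o}}$) its restriction $\phi|_{\mathfrak{p}_o}$ is exactly the differential $f_{*,o}$, and $f$ is in turn recovered from $\phi$ by rigidity. So the plan is to intersect the Riemannian correspondence of Theorem~\ref{affineversusriemannian} with the Hermitian condition, which on the two sides reads ``$f$ is Hermitian'' and ``$\phi|_{\mathfrak{p}_o}$ commutes with $J$.'' I will use throughout that a Hermitian isometry of tangent spaces is in particular an $\mathbb{R}$-linear isometry, since $g=\operatorname{Re}H$; thus requiring $\phi|_{\mathfrak{p}_o}$ to be a Hermitian isometry is the same as requiring it to be an isometry that commutes with the complex structures.

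First I would check that the map is well defined with image in the stated set, and injective. Given a Hermitian isometry $f$, Theorem~\ref{affineversusriemannian} shows $\phi=f_*$ is a Lie algebra isomorphism $\mathfrak{g}_o\to\overline{\mathfrak{g}}_{\overline{o}}$ commuting with $s_o$ and restricting to a real isometry on $\mathfrak{p}_o$; the preceding proposition adds $f_{*,o}\circ J_o=\overline{J}_{\overline{o}}\circ f_{*,o}$, so $\phi|_{\mathfrak{p}_o}$ commutes with the complex structures and is therefore a Hermitian isometry, placing $\phi$ in the target set. Injectivity is immediate from rigidity: an isometry is an affine diffeomorphism, hence determined by its differential at $o$ by Proposition~\ref{faithful}, and $\phi|_{\mathfrak{p}_o}=f_{*,o}$ determines $\phi=f_*$.

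For surjectivity, let $\phi$ be a Lie algebra isomorphism commuting with $s_o$ with $\phi|_{\mathfrak{p}_o}$ a Hermitian isometry. Then $\phi|_{\mathfrak{p}_o}$ is a real isometry of $\mathfrak{p}_o$ onto $\overline{\mathfrak{p}}_{\overline{o}}$, and since $\phi$ is a Lie algebra isomorphism it intertwines the curvature tensors, which on a symmetric space are given by $R_o(X,Y)Z=[[X,Y],Z]$ for $X,Y,Z\in\mathfrak{p}_o$; hence $\phi|_{\mathfrak{p}_o}$ satisfies the hypotheses of Proposition~\ref{isometriesdeterminedatapoint} and extends to a local, then global, isometry $f$ with $f_{*,o}=\phi|_{\mathfrak{p}_o}$ and $f_*=\phi$. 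Because $\phi|_{\mathfrak{p}_o}$ commutes with the complex structures, the preceding proposition makes $f$ a Hermitian isometry, so $f$ is a preimage of $\phi$.

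The main obstacle is the passage from the local isometry furnished by Proposition~\ref{isometriesdeterminedatapoint} to a global one: this is where completeness of the globally symmetric space $M$ and, in general, the extension Theorem~\ref{localglobal} enter. If one does not wish to assume $M$ simply connected, I would instead construct $f$ directly by transport of structure, setting $f=\Exp_{\overline{o}}\circ(\phi|_{\mathfrak{p}_o})\circ\Exp_o^{-1}$ on a normal neighborhood and propagating by transvections, then verifying $f\circ J=\overline{J}\circ f$ pointwise from the identity $J_x=\tau_{[o,x]}\circ J_o\circ\tau_{[x,o]}$ exactly as in the preceding proposition. Everything else is a direct application of Theorem~\ref{affineversusriemannian} and the preceding proposition.
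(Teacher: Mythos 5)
Your proof is correct and is essentially the argument the paper intends: the paper states this proposition without proof, as the overlay of Theorem \ref{affineversusriemannian} with the preceding proposition (an isometry is Hermitian iff its differential at $o$ commutes with the complex structures), and your injectivity-via-rigidity (Proposition \ref{faithful}) and surjectivity-via-curvature (Proposition \ref{isometriesdeterminedatapoint} followed by Theorem \ref{localglobal}) supply exactly the steps left implicit.  One point worth writing out: after producing $f$ with $f_{*,o}=\phi|_{\mathfrak{p}_o}$ you assert $f_*=\phi$, and this uses effectiveness --- for $k\in\mathfrak{k}_o$ one has $[\phi(k)-f_*(k),\phi(X)]=\phi([k,X])-f_*([k,X])=0$ for all $X\in\mathfrak{p}_o$, and faithfulness of $\ad:\mathfrak{k}_{\overline{o}}\to\mathfrak{gl}(\overline{\mathfrak{p}}_{\overline{o}})$ then forces $\phi(k)=f_*(k)$.

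Your closing paragraph, however, overreaches.  Simple connectivity of $M$ is not a removable convenience: without it the statement fails as a bijection, and no propagation by transvections can repair this.  Take $M=\overline{M}=\mathbb{C}/L$ a flat torus with generic lattice $L$, with $o=\overline{o}$.  Here $\mathfrak{g}_o=\mathfrak{p}_o\cong\mathbb{R}^2$ is abelian and $s_{o,*}=-\operatorname{id}$, so \emph{every} rotation $e^{i\theta}$ is a Lie algebra automorphism commuting with $s_o$ whose restriction to $\mathfrak{p}_o$ is a Hermitian isometry; yet only the finitely many rotations preserving $L$ arise as differentials of isometries of $M$.  The transport-of-structure map $\Exp_{\overline{o}}\circ(\phi|_{\mathfrak{p}_o})\circ\Exp_o^{-1}$ is well defined only locally, and its continuation along curves acquires monodromy; the homotopy-independence argument in the proof of Theorem \ref{localglobal} is precisely where $\pi_1(M)=1$ enters and cannot be bypassed.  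So the correct scope of your argument is: the correspondence holds as stated when $M$ is simply connected (as the irreducible Hermitian symmetric spaces of this chapter are shown to be), and for general $M$ only at the level of germs of local isometries at $o$.
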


{\em Remark.}  If $(M,g,J)$ is a Hermitian symmetric space, then $M=H(M)_e/H(M)_{e,o}$ and $(\Lie H(M), s_{o,*}, J_o)$ is a Hermitian symmetric Lie algebra.

{\em Remark.}  Let $(\mathfrak{g}_o,s_o,J_o)$ be a Hermitian symmetric Lie algebra and $G$ a connected Lie group such that $\Lie(G) =\mathfrak{g}_o$.  Let $\sigma:G\to G$ be an involution such that $\sigma_{*,e}=s_o$.  Let $K$ be an open subgroup of $G_\sigma$ such that $J$ is $\Ad K$-invariant; this is always possible since $J$ is invariant under $\Ad G_{\sigma,e}$.   Then $M=G/K$ is a Riemannian symmetric space, and we can extend $J$ from $o=eK\in G/K$ to a $G$-invariant almost complex structure on all of $M$; this extension is well-defined, since $J$ is $\Ad K$-invariant.  The pair $(g,J)$ is invariant under $G$, so the associated Hermitian metric is also invariant under $G$, and therefore $\nabla J=0$, so $(M,g,J)$ is  K\"{a}hler manifold.

Let $\bar{\sigma}:G/K\to G/K$ be the geodesic symmetry at $o=eK$:
$$\xymatrix{
G\ar[r]^\sigma&G\\
G/K\ar[r]^{\bar{\sigma}}&G/K
} $$
We claim that $\bar{\sigma}$ is Hermitian.  Let $gK=(\exp X)K$, $X\in\mathfrak{p}_o$.  Then the parallel transport
$$T_{\exp(X)} G/K \xrightarrow{\sigma_{*,\exp(X)K}}T_{\exp(-X)K}G/K$$
preserves the Hermitian structure, since $X\in\mathfrak{p}_o$.  Hence $\overline{\sigma}\circ J = J\circ\overline{\sigma}$.  Thus the transvections are Hermitian and $\bar{\sigma}$ is Hermitian, so $(G/K,g,J)$ is a Hermitian symmetric space.

{\em Exercise.}  Let $M$ be a rank one compact Hermitian symmetric space.
\begin{enumerate}[(a)]
\item Show that the second Betti number of $M$ is equal to $1$.  ({\em Hint:}  Consider first how to do this with $\mathbb{CP}^n$.)
\item Use the Kodaira embedding theorem to show that $M$ is projective algebraic.  (This is true for compact Hermitian symmetric spaces of any rank, but requires other methods.)
\end{enumerate}

Let $(\mathfrak{g}_o,s_o)$ be an irreducible Hermitian symmetric space.  Then $K$ is connected and $\dim_{\mathbb{R}} Z(K) = 1$.  Consider the universal cover of $G$
$$
\xymatrix{
\widetilde{G}\ar[d]^\pi& \ge \widetilde{G}_{\sigma,e}\\
G & \ge G_{\sigma,e}=K
}
$$
Also, $\widetilde{G}/\widetilde{G}_{\sigma,e}$ is the universal cover of $G/K$, and so is also a Hermitian symmetric space.  Note $\pi^{-1}(K)\le G_\sigma$ and $J$ is $\pi^{-1}(K)$-invariant.  Hence $\pi^{-1}(K)=\widetilde{G}_{\sigma,e}$.  That is, $G/K=\widetilde{G}/\widetilde{G}_{\sigma,e}$ is simply connected.

{\em Exercise.}  Give a different proof in of this fact in rank one along the following lines.  Prove that a compact Hermitian symmetric space $M$ of rank one has postive curvature, and then apply Synge's theorem to conclude that $M$ is simply-connnected.

\begin{theorem}
Let $M$ be a Hermitian symmetric space.  Then
$$M\cong M_f\times M_1\times\cdots\times M_t$$
(a Hermitian isometry of Hermitian symmetric spaces) where $M_f$ is flat and $M_i$ are irreducible and simply-connected.
\end{theorem}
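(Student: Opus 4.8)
The plan is to transport the decomposition from the Lie-algebra level to the space level, exactly as in the Riemannian case (cf.\ Theorem~\ref{decomposition} and its geometric product consequence for simply connected globally symmetric spaces). First I would attach to $(M,g,J)$ its Hermitian symmetric Lie algebra $(\mathfrak{g}_o,s_o,J_o)$, where $\mathfrak{g}_o=\Lie H(M)$, $s_o$ is conjugation by the geodesic symmetry $\sigma_o$, and $J_o$ is the complex structure on $\mathfrak{p}_o\cong T_oM$. The remark preceding this theorem guarantees that this is indeed a Hermitian symmetric Lie algebra and that $M=H(M)_e/H(M)_{e,o}$.

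Next I would apply the Hermitian Lie-algebra decomposition theorem just established to obtain
$$(\mathfrak{g}_o,s_o,J_o)=(\mathfrak{g}_f,s_f,J_f)\oplus(\mathfrak{g}_1,s_1,J_1)\oplus\cdots\oplus(\mathfrak{g}_t,s_t,J_t),$$
with the first factor flat and irreducible and the remaining factors semisimple and irreducible. Each summand is itself a Hermitian symmetric Lie algebra, so by the construction recorded in the remarks above (taking the simply connected group $\widetilde{G}_i$ with $\Lie\widetilde{G}_i=\mathfrak{g}_i$, the lifted involution $\sigma_i$, and $K_i=(\widetilde{G}_i)_{\sigma_i}$) it integrates to a K\"{a}hler symmetric space $M_i=\widetilde{G}_i/K_i$, and likewise a flat factor $M_f$. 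For the semisimple factors $M_i$ is automatically simply connected --- in the compact case because $\widetilde{G}_i/(\widetilde{G}_i)_{\sigma_i,e}$ was shown to be simply connected, and in the noncompact case because $M_i$ is diffeomorphic to $\mathfrak{p}_i$ by the Cartan decomposition --- and irreducible by construction; whatever fundamental group $M$ carries is absorbed into $M_f$.

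I would then form the product $N=M_f\times M_1\times\cdots\times M_t$, which is again Hermitian symmetric, and observe that its Hermitian symmetric Lie algebra is canonically the direct sum above: the ideals $\mathfrak{g}_i$ give commuting normal subgroups whose product acts, and the complex structures assemble to $J_o$ because the decomposition of $\mathfrak{p}_o$ is $J$-invariant. Finally I would invoke the rigidity proposition: the identity on $(\mathfrak{g}_o,s_o,J_o)$ is a Lie algebra isomorphism commuting with $s_o$ whose restriction to $\mathfrak{p}_o$ is a Hermitian isometry, so under the one-to-one correspondence between such isomorphisms and Hermitian isometries it yields a Hermitian isometry $M\to N$. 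Uniqueness up to order follows from the uniqueness of the Lie-algebra decomposition.

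The main obstacle will be the passage from this infinitesimal isomorphism to a genuine global Hermitian isometry $M\cong N$. This is where simple connectivity is essential: the correspondence produces only a Hermitian isometry of germs at $o$, and promoting it to a global one requires the local-to-global extension Theorem~\ref{localglobal}. I would therefore first run the argument on the simply connected universal covers $\widetilde{M}$ and $\widetilde{N}$ and then descend, checking that the deck group of $M$ is carried into the flat factor. Keeping careful track of the complex structure throughout --- in particular verifying that $J$ is preserved at \emph{every} point and not merely at $o$, which uses that transvections are Hermitian so that $J_x=\tau_{[o,x]}\circ J_o\circ\tau_{[x,o]}$ --- is the part that requires genuine care rather than routine bookkeeping.
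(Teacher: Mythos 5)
Your plan follows the paper's proof at the Lie-algebra level---both start from the decomposition of $(\Lie H(M),s_o,J)$ into a flat factor and irreducible semisimple factors---but the globalization step is handled quite differently, and the step you explicitly defer (``checking that the deck group of $M$ is carried into the flat factor'') is not a bookkeeping matter: it is the entire content of the paper's proof, and the tool that closes it is one you never invoke, namely the earlier proposition that if $K$ is a closed subgroup with $\Lie(K)=\mathfrak{k}_o$ and $J$ is $K$-invariant, then $K$ is connected. The paper's argument runs as follows: let $G=G_f\times G_1\times\cdots\times G_t$ be the simply connected group with Lie algebra $\mathfrak{h}_f\oplus\mathfrak{h}_1\oplus\cdots\oplus\mathfrak{h}_t$ covering $H(M)_e$ via $\pi$, so that $M=G/\pi^{-1}(H(M)_{e,o})$. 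The projection $\rho_i(\pi^{-1}(H(M)_{e,o}))$ to each semisimple factor leaves $J_i$ invariant, hence by the connectedness proposition it \emph{equals} the connected group $K_i=(G_i)_{\sigma,e}$; since each $K_i$ sits inside $\pi^{-1}(H(M)_{e,o})$, the isotropy group splits as $\bigl(G_f\cap\pi^{-1}(H(M)_{e,o})\bigr)\times K_1\times\cdots\times K_t$, and the product decomposition
$$M\cong \frac{G_f}{G_f\cap\pi^{-1}(H(M)_{e,o})}\times\frac{G_1}{K_1}\times\cdots\times\frac{G_t}{K_t}$$
falls out directly, with simple connectivity of the $G_i/K_i$ supplied by the earlier remark. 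No universal cover, no descent, and no appeal to the local-to-global extension Theorem~\ref{localglobal} is needed.

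Your route through $\widetilde{M}$ and Theorem~\ref{localglobal} can be made to work, but as written it is circular at the decisive moment: you assert that ``whatever fundamental group $M$ carries is absorbed into $M_f$'' before having any mechanism to prove it, and then list proving it as the remaining obstacle. Without the $J$-invariance-forces-connectedness proposition, the descent genuinely fails---for a plain Riemannian symmetric space the deck group can very well act nontrivially on semisimple factors (e.g.\ real projective space as a quotient of the sphere, or the $+1$ multiply-connected forms tabulated in the classification chapter), so the claim is special to the Hermitian setting and needs exactly the argument above: any deck transformation composes into the full isotropy group $\pi^{-1}(H(M)_{e,o})$, whose semisimple projections preserve $J_i$ and are therefore already the connected groups $K_i$, leaving nothing for the deck group to contribute outside the flat factor. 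If you insert that proposition where you currently write ``requires genuine care,'' your proof closes; the paper's version simply shows that once you have it, the covering-space machinery is unnecessary.
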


\begin{proof}
The decomposition of Hermitian symmetric Lie algebras
$$(\Lie(H(M)), s_o,J)\cong (\mathfrak{h}_f,s_f,J_f)\oplus  (\mathfrak{h}_1,s_1,J_1)\oplus\cdots\oplus (\mathfrak{h}_t,s_t,J_t) $$
induces an inclusion
$$G_f\times G_1\times \cdots G_t = G \ge \pi^{-1}(H(M)_{e,o}) $$
where $G\xrightarrow{\pi} H(M)_{e,o}$.  Let $\rho_i : G\to G_i$ be the projection onto the factor $G_i$.  Then $\rho_i( \pi^{-1}(H(M)_{e,o}))$ leaves $J_i$ invariant, and so $K_i=(G_i)_{\sigma,e}$.  (Here $K=K_f\times K_1\times\cdots\times K_t$ is the isotropy group in $G$.)  Then
$$M \cong \frac{G_f}{G_f\cap\pi^{-1}(H(M)_{e,o})} \times \frac{G_1}{K_1}\times\cdots\times\frac{G_t}{K_t}.$$
The factors $G_1/K_1,\dots,G_t/K_t$ are simply connected, by the earlier remark.
\end{proof}

\section{Bounded symmetric domains}
A {\em domain} is an open connected subset of $\mathbb{C}^N$.  A domain is {\em bounded} if it is bounded with respect to the standard Euclidean metric on $\mathbb{C}^N$. Let $\mu$ be the Lebesgue measure on $\mathbb{C}^N$.  Denote by $H(D)$ the subspace of $L^2(D,\mu)$ of square-integrable functions in $D$ that are also holomorphic.

The following key lemma has a number of important corollaries:
\begin{lemma} 
For any compact set $A\subset D$, we have
$$\sup_{z\in A} |f(z)| \le C_A \|f\|_{L^2(D,\mu)}$$
for all $f\in H(D)$.
\end{lemma}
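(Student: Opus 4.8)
The plan is to reduce the estimate to the mean value property for harmonic functions together with the compactness of $A$. First I would observe that any $f\in H(D)$ is harmonic when regarded as a function on $D\subset\mathbb{R}^{2N}$: writing the Laplacian on $\mathbb{C}^N$ as $\Delta = 4\sum_{j=1}^N \partial_{z_j}\partial_{\bar z_j}$, the holomorphy condition $\partial_{\bar z_j}f=0$ gives $\Delta f=0$. Consequently $f$ satisfies the exact mean value property over any Euclidean ball $B(z,r)$ whose closure lies in $D$:
$$ f(z) = \frac{1}{\mu(B(z,r))}\int_{B(z,r)} f\,d\mu. $$

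Next, fixing such a ball, I would estimate the right-hand side by Cauchy--Schwarz:
$$ |f(z)| \le \frac{1}{\mu(B(z,r))}\int_{B(z,r)}|f|\,d\mu \le \frac{1}{\mu(B(z,r))^{1/2}}\left(\int_{B(z,r)}|f|^2\,d\mu\right)^{1/2} \le \frac{1}{\mu(B(z,r))^{1/2}}\,\|f\|_{L^2(D,\mu)}. $$
Since $\mu(B(z,r)) = c_N r^{2N}$ for a dimensional constant $c_N$, this reads $|f(z)| \le c_N^{-1/2} r^{-N}\|f\|_{L^2(D,\mu)}$, a pointwise bound valid as soon as $\overline{B(z,r)}\subset D$. (Alternatively one can invoke the sub-mean value inequality for the plurisubharmonic function $|f|^2$ and skip the Cauchy--Schwarz step entirely.)

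Finally, to pass from the pointwise estimate to the uniform bound over $A$, I would use that $A$ is compact and $D$ is open, so that $r_0 := \tfrac12\,\operatorname{dist}(A,\mathbb{C}^N\setminus D) > 0$. For every $z\in A$ the ball $B(z,r_0)$ has closure contained in $D$, so the preceding estimate applies with the single radius $r_0$, yielding $\sup_{z\in A}|f(z)| \le C_A\|f\|_{L^2(D,\mu)}$ with $C_A = c_N^{-1/2} r_0^{-N}$. The only genuine point requiring care is the mean value identity for harmonic functions (equivalently, the sub-mean value inequality for $|f|^2$); the remaining ingredients are Cauchy--Schwarz and compactness, so I do not expect any serious obstacle.
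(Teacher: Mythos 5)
Your proof is correct and takes essentially the same route as the paper's: the mean value property on a ball of radius $\tfrac12\operatorname{dist}(A,\partial D)$, followed by Cauchy--Schwarz and the observation that compactness of $A$ makes this radius (and hence the constant $C_A$) uniform over $A$. The only difference is cosmetic---you justify the mean value identity via harmonicity of holomorphic functions, a step the paper leaves implicit.
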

\begin{proof}
Let $r=\operatorname{dist}(A,\partial D)/2$.  By the mean value theorem,
\begin{align*}
|f(z)| &= \frac{1}{|B(z,r)|}\left|\int_{B(z,r)} f(\zeta)\,d\mu(\zeta)\right|\\
&\le \frac{\|f\|_{L^2(B(z,r),\mu)}}{\sqrt{|B(z,r)|}}\qquad\text{by Cauchy--Schwarz}\\
&\le \frac{1}{\sqrt{|B(z,r)|}}\,\|f\|_{L^2(D,\mu)}
\end{align*}
Letting $C_A=\frac{1}{\sqrt{|B(z,r)|}}$ (independent of $z$) and then taking the supremum over $z\in A$ implies the lemma.
\end{proof}

\begin{corollary}
$H(D)$ is a closed subspace of $L^2(D,\mu)$.  Hence $H(D)$ is a Hilbert space with respect to the $L^2$ inner product.
\end{corollary}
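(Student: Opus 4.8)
The plan is to combine the preceding lemma with the completeness of $L^2(D,\mu)$ and the Weierstrass theorem that locally uniform limits of holomorphic functions are holomorphic. First I would take a sequence $f_n\in H(D)$ with $f_n\to f$ in $L^2(D,\mu)$; since $L^2$ is complete, the only thing to check is that the limit $f$ agrees almost everywhere with a holomorphic function, i.e.\ that $H(D)$ is a closed subspace. The sequence $(f_n)$ is Cauchy in $L^2$, so applying the lemma to the differences $f_n-f_m\in H(D)$ gives
$$\sup_{z\in A}|f_n(z)-f_m(z)|\le C_A\|f_n-f_m\|_{L^2(D,\mu)}$$
for every compact $A\subset D$. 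Hence $(f_n)$ is uniformly Cauchy on each compact subset of $D$, and therefore converges locally uniformly to some function $g$ on $D$.

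By the Weierstrass convergence theorem (a locally uniform limit of holomorphic functions is holomorphic; one can verify this via Morera's theorem, since uniform convergence permits interchanging limit and contour integral), the limit $g$ is holomorphic on $D$. The key step is then to identify $g$ with the $L^2$-limit $f$. For this I would recall that $L^2$ convergence implies convergence almost everywhere along a subsequence $f_{n_k}\to f$ a.e.; but $f_{n_k}\to g$ pointwise everywhere on $D$ by the locally uniform convergence just established, so $f=g$ almost everywhere. Thus $f$ is represented by the holomorphic function $g\in H(D)$, which shows $H(D)$ is closed in $L^2(D,\mu)$.

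Finally, a closed subspace of a complete inner product space is itself complete with respect to the restricted inner product, so $H(D)$ is a Hilbert space, as claimed. I expect the only genuinely delicate point to be the matching of the two notions of limit in the previous paragraph: the locally uniform (hence everywhere pointwise) limit $g$ and the $L^2$-limit $f$ must be reconciled through an almost-everywhere-convergent subsequence. Everything else is a routine appeal to the lemma, completeness of $L^2$, and the stability of holomorphy under local uniform limits.
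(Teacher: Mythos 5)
Your proof is correct and follows essentially the same route as the paper's: apply the lemma to differences to get locally uniform convergence, then invoke the stability of holomorphy under such limits. Your reconciliation of the locally uniform limit $g$ with the $L^2$-limit $f$ via an almost-everywhere-convergent subsequence is a careful articulation of a step the paper glosses over (it simply asserts $f_n\to f$ uniformly on compacta), but it is the same argument.
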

\begin{proof}
If $f_n$ is a sequence of functions in $H(D)$ that converges to a function $f\in L^2(D,\mu)$, then by the Lemma, $f_n\to f$ uniformly on compact subsets.  Since the compact limit of holomorphic functions is holomorphic, $f\in H(D)$ as well.
\end{proof}

\begin{corollary}
For all $z\in D$, the evaluation map $\operatorname{ev}_z : f\to f(z)$ is a continuous linear functional on $H(D)$.
\end{corollary}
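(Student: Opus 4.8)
The plan is to deduce continuity directly from the key Lemma by specializing the compact set to a single point. Linearity of $\operatorname{ev}_z$ is immediate: for $f,g\in H(D)$ and scalars $a,b$, the function $af+bg$ is again square-integrable and holomorphic, and $(af+bg)(z)=af(z)+bg(z)$, so $\operatorname{ev}_z(af+bg)=a\,\operatorname{ev}_z(f)+b\,\operatorname{ev}_z(g)$. Thus the only substantive point is boundedness.

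First I would observe that the singleton $A=\{z\}$ is a compact subset of $D$. Applying the Lemma to this $A$ yields a constant $C=C_{\{z\}}$ such that
$$|f(z)| = \sup_{w\in\{z\}}|f(w)| \le C\,\|f\|_{L^2(D,\mu)}$$
for every $f\in H(D)$. This is precisely the assertion that the linear functional $\operatorname{ev}_z$ is bounded, with operator norm at most $C$.

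Finally, since $H(D)$ is a Hilbert space (by the preceding Corollary it is a closed subspace of $L^2(D,\mu)$, hence complete in the $L^2$ norm), a bounded linear functional on it is continuous; indeed, for maps between normed spaces boundedness and continuity of linear functionals are equivalent. Hence $\operatorname{ev}_z$ is continuous, as claimed.

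There is no real obstacle here: the statement is an immediate corollary of the Lemma once one notes that a point is compact. The only thing to be careful about is that the Lemma is stated for arbitrary compact $A\subset D$, so it applies verbatim to $A=\{z\}$; one could equally take any compact neighborhood of $z$ and use monotonicity of the supremum, but the singleton suffices. I note in passing that, by the Riesz representation theorem, this continuity guarantees that evaluation at $z$ is represented by an element $K_z\in H(D)$, the Bergman reproducing kernel, though this lies beyond the present statement.
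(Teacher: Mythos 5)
Your proof is correct and takes essentially the same route as the paper: the paper also applies the key Lemma with $A=\{z\}$ to obtain $|\operatorname{ev}_z f|\le C_A\|f\|_{L^2(D,\mu)}$, whence continuity. Your explicit remarks on linearity and on the equivalence of boundedness and continuity merely spell out what the paper leaves implicit.
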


\begin{proof}
Let $A=\{z\}$ in the lemma.  Then
$$|\operatorname{ev}_zf| \le C_A\|f\|_{H(D)}$$
for all $f\in H(D)$.
\end{proof}

Since the evaluation map is a continuous linear functional on a Hilbert space, the Riesz representation theorem implies that there exists, for each $z\in D$, an element $\phi_z\in H(D)$ such that
$$f(z) = \int_D f(\zeta) \overline{\phi_z(\zeta)}\,d\mu(\zeta).$$
Let $K(z,\overline{\zeta})  = \overline{\phi_z(\zeta)}$.  This is called the {\em Bergman kernel} of $D$.\index{Bergman!kernel}

We note that $K(z,\overline{\zeta}) = \overline{K(\zeta,\overline{z})}$.  Indeed,
\begin{align*}
\overline{K(\zeta,\overline{z})} &= \phi_\zeta(z) = \int_D \phi_\zeta(w)\overline{\phi_z(w)}\,d\mu(w)\\
&= \int_D \overline{K(\zeta,\overline{w})}K(z,\overline{w})\,d\mu(w)\\
&=\overline{\int_D K(\zeta,\overline{w})\overline{K(z,\overline{w})}\,d\mu(w)}\\
&=K(z,\overline{\zeta}).
\end{align*}
It follows that $K$ is holomorphic in its first argument and antiholomorphic in its second.  In particular, $K$ is smooth.

{\em Notation.}  We introduce the Wirtinger derivatives in $\mathbb{C}$:
$$\frac{\partial}{\partial z} := \frac{1}{2}\left(\frac{\partial}{\partial x} - i\frac{\partial}{\partial y}\right),\qquad \frac{\partial}{\partial \bar{z}} := \frac{1}{2}\left(\frac{\partial}{\partial x} + i\frac{\partial}{\partial y}\right).$$
In $\mathbb{C}^N$, a point $z$ has coordinates $(z_1,\dots,z_N)$.  Define
$$Z_j = \frac{\partial}{\partial z_j},\qquad Z_j^* = \frac{\partial}{\partial\bar{z}_j}.$$

\begin{definition}
Let $H = Z_i Z_j^* \log K(z,\overline{z}) dz_i\otimes d\overline{z}_j$ and put $g=\operatorname{Re} H$.  This is the {\em Bergman metric}.\index{Bergman!metric}
\end{definition}

\begin{proposition}
\mbox{}
\begin{enumerate}
\item $(D,g)$ is a K\"{a}hler manifold.
\item If $\phi:D\to D'$ is a biholomorphism, then $(D,g)\xrightarrow{\phi} (D',g')$ is an isometry.
\item In particular, the biholomorphisms of $(D,g)$ to itself are precisely the Hermitian isometries of $(D,g)$.  Denote these by $\mathscr{H}(D)$.
\end{enumerate}
\end{proposition}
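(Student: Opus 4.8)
The plan is to derive all three parts from the single structural fact that $g$ arises from the K\"{a}hler potential $\log K(z,\overline z)$, so that the whole metric is encoded in the $(1,1)$-form $\partial\overline\partial\log K$. Fixing an orthonormal basis $\{\phi_n\}$ of $H(D)$, the reproducing property gives $K(z,\overline z)=\sum_n|\phi_n(z)|^2$, which is everywhere positive (the constant function lies in $H(D)$ because $D$ is bounded), so $\log K$ is a well-defined smooth function and $H$ makes sense.

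For part (1) I would first check that $g=\operatorname{Re}H$ is a genuine Riemannian metric, i.e.\ that the Hermitian matrix $Z_iZ_j^*\log K$ is positive definite. Differentiating $\log K$ and using holomorphicity of the $\phi_n$, a direct computation gives, for a tangent vector with components $v^i$,
\begin{equation*}
\sum_{i,j}v^i\overline{v^j}\,Z_iZ_j^*\log K=\frac{\big(\sum_n|a_n|^2\big)\big(\sum_m|\phi_m|^2\big)-\big|\sum_n a_n\overline{\phi_n}\big|^2}{K^2},\qquad a_n=\sum_i v^i Z_i\phi_n,
\end{equation*}
which is nonnegative by Cauchy--Schwarz. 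The equality case forces the sequences $(a_n)$ and $(\phi_n)$ to be proportional, i.e.\ $(Vf)(z)=\lambda f(z)$ for every $f\in H(D)$, where $V=\sum_i v^iZ_i$ and one uses continuity of $f\mapsto f(z)$ and $f\mapsto (Vf)(z)$ on $H(D)$. Taking $f$ to be the constant function gives $\lambda=0$, and then taking $f$ to be each coordinate function $z_i$ (again in $H(D)$ because $D$ is bounded) gives $v^i=0$. Hence $g$ is positive definite. Since the fundamental form is a positive multiple of $i\,\partial\overline\partial\log K$, which is manifestly closed ($\partial^2=\overline\partial^2=0$), the characterization of the K\"{a}hler condition proved earlier ($d\Omega=0$) shows that $(D,g)$ is K\"{a}hler.

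For part (2) the key is the transformation law of the Bergman kernel. The map $U\colon H(D')\to H(D)$, $Uf=(f\circ\phi)\cdot J_\phi$ with $J_\phi=\det(Z_j\phi_i)$ the holomorphic Jacobian, is a Hilbert-space isomorphism: it sends holomorphic functions to holomorphic functions, and it is $L^2$-isometric because the real Jacobian of $\phi$ equals $|J_\phi|^2$, so the change-of-variables formula gives $\|Uf\|_{L^2(D)}=\|f\|_{L^2(D')}$. Carrying an orthonormal basis of $H(D')$ over to one of $H(D)$ and summing the reproducing kernels (whose sum is basis-independent) yields
\begin{equation*}
K_D(z,\overline z)=|J_\phi(z)|^2\,K_{D'}\big(\phi(z),\overline{\phi(z)}\big).
\end{equation*}
Taking logarithms, $\log K_D=\log|J_\phi|^2+(\log K_{D'})\circ\phi$, and since $\log|J_\phi|^2=\log J_\phi+\log\overline{J_\phi}$ is locally the sum of a holomorphic and an antiholomorphic function it is annihilated by $\partial\overline\partial$. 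As $\partial\overline\partial$ commutes with pullback along the holomorphic map $\phi$, we get $\partial\overline\partial\log K_D=\phi^*\big(\partial\overline\partial\log K_{D'}\big)$, that is $\phi^*g'=g$, so $\phi$ is an isometry.

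Part (3) is then formal. A biholomorphism $\phi\colon D\to D$ preserves the complex structure $J$ by definition and is an isometry of $(D,g)$ by part (2), hence is a Hermitian isometry; conversely a Hermitian isometry is a diffeomorphism whose differential commutes with the integrable almost complex structure $J$, which is precisely the condition that it be holomorphic, and since its inverse is again a Hermitian isometry and hence holomorphic, it is a biholomorphism. Thus $\mathscr{H}(D)$ coincides with the group of biholomorphisms. I expect the main obstacle to lie in part (2), specifically in rigorously establishing the unitarity of $U$ and the resulting kernel transformation law, where the analytic change of variables must be combined with the basis-independence of the reproducing kernel; the strict-positivity (degeneracy) argument in part (1) is the other delicate point, but it is handled cleanly once one uses that the constants and coordinate functions belong to $H(D)$.
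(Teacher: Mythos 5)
Your proof is correct, and its backbone is the same as the paper's: everything flows from the potential $\log K$, the K\"{a}hler condition in (1) reduces to closedness of $i\,\partial\overline{\partial}\log K$, part (2) comes from the transformation law of the Bergman kernel after taking logarithms (the Jacobian term splits as $\log J_\phi + \log\overline{J_\phi}$ and is annihilated by $\partial\overline{\partial}$), and (3) is formal. You diverge in two places, both to your advantage. First, the paper establishes the kernel law by substituting $w=\phi(\zeta)$ inside the reproducing integral; you instead exhibit the unitary isomorphism $U\colon H(D')\to H(D)$, $Uf=(f\circ\phi)J_\phi$, and transport an orthonormal basis, using basis-independence of $\sum_n \psi_n(z)\overline{\psi_n(\zeta)}$. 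The two mechanisms are equivalent, but yours is cleaner and sidesteps a slip in the paper: the displayed off-diagonal formula $K(z,\overline{\zeta})=K'(\phi(z),\overline{\phi(\zeta)})\,|J\phi(\zeta)|^2$ has the Jacobian factors wrong (the correct law is $K(z,\overline{\zeta})=J\phi(z)\,\overline{J\phi(\zeta)}\,K'(\phi(z),\overline{\phi(\zeta)})$, which agrees with the paper's only on the diagonal), whereas you claim and prove only the diagonal version, which is all that is used. Second, you verify that $g$ is actually positive definite, via $K(z,\overline{z})=\sum_n|\phi_n(z)|^2$, Cauchy--Schwarz, and the degeneracy analysis using the constants and coordinate functions (both in $H(D)$ by boundedness of $D$); the paper's proof of (1) checks only $d\Omega=0$ and never shows $g$ is a Riemannian metric at all, so your argument fills a genuine gap. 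Likewise in (3) the paper one-lines the claim, while you supply the converse direction --- an isometry commuting with $J$ has complex-linear differential, hence is holomorphic by Cauchy--Riemann, and so is its inverse --- which is what the stated equivalence actually requires.
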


\begin{proof}
\mbox{}
\begin{enumerate}
\item The fundamental form is $\Omega = \partial\overline{\partial}K(z,\overline{z}) = Z_i Z_j^* \log K(z,\overline{z}) dz_i\wedge d\overline{z}_j$.  This is closed since $d=\partial + \overline{\partial}$ and $\partial^2=0, \overline{\partial}^2=0, \partial\overline{\partial}=-\overline{\partial}\partial$.
\item By change of variables, for $f\in H(D')$, $w\in D'$, and $z=\phi^{-1}(w)\in D$, we have
\begin{align*}
f(w) &= \int_{D'} f(\zeta)K'(w,\overline{\xi})\,d\mu(\xi) \\
&= \int_D f(\phi(\zeta)) K'(w,\overline{\phi(\zeta)}) |J\phi(\zeta)|^2\,d\mu(\zeta)\\
&= \int_D f(\phi(\zeta)) K'(\phi(z),\overline{\phi(\zeta)}) |J\phi(\zeta)|^2\,d\mu(\zeta) \\
&= f(\phi(z))
\end{align*}
where $J\phi = \det(\partial\phi_i/\partial \zeta_j)_{1\le i,j\le N}$ is the holomorphic Jacobian.  Hence,
$$K(z,\overline{\zeta}) = K'(\phi(z), \overline{\phi(\zeta)})|J\phi(\zeta)|^2.$$
In particular,
\begin{align*}
\log K(z,\overline{z}) &= \log \left[K'(\phi(z), \overline{\phi(\zeta)})|J\phi(\zeta)|^2\right]\\
&= \log K'(\phi(z), \overline{\phi(\zeta)}) + \log J\phi(\zeta) + \log \overline{J\phi(\zeta)}.
\end{align*}
But $\log J\phi$ is holomorphic, and so annihilated by $Z_j^*$ and $\log\overline{J\phi}$ is antiholomorphic, and so annihilated by $Z_i$.  Thus
$$g = \phi^* g',$$
as required.
\item Since Hermitian isometries are biholomorphic, this follows from (2).
\end{enumerate}
\end{proof}

\begin{definition}
Let $D$ be a bounded domain.
\begin{enumerate}
\item $D$ is said to be {\em homogeneous} if $\mathscr{H}(D)$ acts transitively on $D$.
\item $D$ is said to be {\em symmetric} if each $z\in D$ is an isolated fixed point of some involution $\phi\in\mathscr{H}(D)$.
\item A bounded symmetric domain $D$ is called {\em reducible} if it is biholomorphic to a product $D_1\times D_2$ of bounded symmetric domains.  A bounded symmetric domain is {\em irreducible} if it is not reducible.
\end{enumerate}
\end{definition}

\begin{theorem}
If $D$ is a bounded symmetric domain, then $D$ is a Hermitian symmetric space.  (In particular, $D$ is homogeneous.)
\end{theorem}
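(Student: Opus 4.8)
The plan is to exhibit the Bergman metric $g$ together with its complex structure $J$ as the data realizing $D$ as a Hermitian symmetric space, using the involutions already built into the definition of a bounded symmetric domain. By the preceding proposition, $(D,g)$ is K\"ahler and the biholomorphisms of $D$ are exactly the Hermitian isometries $\mathscr{H}(D)$; in particular each involution $\phi_z\in\mathscr{H}(D)$ occurring in the definition of a symmetric domain is automatically an isometry of $(D,g)$ that commutes with $J$ (being holomorphic, $\phi_{z,*}$ is complex linear at every point). Thus no work is needed to make the symmetries isometric, and the entire argument concentrates on identifying $\phi_z$ with the geodesic symmetry $s_z$ and on deducing homogeneity.

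The crux is to show $\phi_{z,*,z}=-\operatorname{id}$ on $T_zD$. First I would observe that $L:=\phi_{z,*,z}$ is a $\mathbb{C}$-linear involution, hence diagonalizable with eigenvalues $\pm 1$. Then I would linearize $\phi_z$ near its fixed point by averaging: in holomorphic coordinates centered at $z$, set $h=\tfrac12(\operatorname{id}+L\circ\phi_z)$. Using $\phi_z^2=\operatorname{id}$ and $L^2=\operatorname{id}$ one checks $h_{*,z}=\operatorname{id}$ and $h\circ\phi_z=L\circ h$, so $h$ is a local biholomorphism conjugating $\phi_z$ to its linear part $L$. Consequently the germ at $z$ of the fixed-point set of $\phi_z$ is the $(+1)$-eigenspace of $L$; since $z$ is an \emph{isolated} fixed point, this eigenspace is zero and $L=-\operatorname{id}$. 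By Proposition \ref{faithful}, $\phi_z$ then agrees on a normal neighborhood with the geodesic symmetry $s_z$, of which it is a global isometric extension.

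This already shows $(D,\nabla)$ is Riemannian locally symmetric with every $s_z$ extending to a global isometry, and since each $s_z=\phi_z$ commutes with $J$, the triple $(D,g,J)$ is Hermitian symmetric at every point once homogeneity (and hence the global symmetric structure) is in hand. For homogeneity I would run the standard transvection argument: given $z$ and a point $y$ in a normal neighborhood, let $m$ be the midpoint of the geodesic $[z,y]$; then $s_m\in\mathscr{H}(D)$ carries $z$ to $y$, so the orbit $\mathscr{H}(D)\cdot z$ contains a neighborhood of $z$. As $D$ is connected and every orbit is open, the action is transitive, giving homogeneity; a homogeneous Riemannian manifold is complete, so $g$ is complete and $D$ is globally symmetric in the required sense.

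The main obstacle will be the fixed-point analysis of the second paragraph, namely converting the purely topological hypothesis that $z$ is an isolated fixed point of $\phi_z$ into the infinitesimal statement $\phi_{z,*,z}=-\operatorname{id}$. Everything downstream—local symmetry, the Hermitian condition, homogeneity, and completeness—is then a formal consequence of results already available in the excerpt; the one genuinely analytic input is the holomorphic linearization of the finite-order map $\phi_z$, which is precisely where the complex structure does essential work.
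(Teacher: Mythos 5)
Your proof is correct, but it reaches the key infinitesimal statement by a genuinely different route than the paper. The paper's entire proof is a one-line appeal to the observation in Chapter \ref{SymmetricSpaces}: an affine diffeomorphism $f$ fixing $x$ has $x$ as an \emph{isolated} fixed point if and only if $1$ is not an eigenvalue of $f_{*,x}$ (if $f_{*,x}v=v$, the whole geodesic $\Exp_x(tv)$ is fixed pointwise), so an involution with isolated fixed point has differential $-\operatorname{id}$ and must be $s_x$. Since the preceding proposition makes every biholomorphism an isometry of the Bergman metric, the defining involutions $\phi_z$ are affine and the Chapter \ref{SymmetricSpaces} fact applies verbatim; this is purely Riemannian and needs no holomorphy beyond the Bergman construction itself. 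You instead prove $\phi_{z,*,z}=-\operatorname{id}$ by Bochner-type holomorphic linearization, conjugating $\phi_z$ to its linear part $L$ via $h=\tfrac12(\operatorname{id}+L\circ\phi_z)$ and reading off that the fixed-point germ is the $(+1)$-eigenspace of $L$. Your computation is right ($h_{*,z}=\operatorname{id}$ and $h\circ\phi_z=L\circ h$ both follow from $\phi_z^2=\operatorname{id}$, $L^2=\operatorname{id}$), and this argument has its own virtues: it is independent of the connection and generalizes to automorphisms of any finite order, at the cost of being heavier where the paper's geodesic eigenvalue argument is elementary. One cosmetic point: Proposition \ref{faithful} compares two local \emph{affine} diffeomorphisms, and at that stage you have not yet shown $s_z$ is affine; it is cleaner to use the identity $(\phi\circ\Exp_p)(X)=\Exp_{\phi(p)}(\phi_{*,p}X)$ stated just before it, which gives $\phi_z=\Exp_z\circ(-\operatorname{id})\circ\Exp_z^{-1}=s_z$ directly and simultaneously exhibits $s_z$ as affine. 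Your transvection argument for homogeneity matches the remark the paper makes just before the theorem (transvections lie in $\mathscr{H}(D)$ and act transitively), and the completeness observation, while standard and correct, is not needed for the paper's definition of globally symmetric.
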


Indeed, we showed in Chapter \ref{SymmetricSpaces} that $s_x$ is the only the involution with isolated fixed point $x$.  

{\em Examples.}

\begin{enumerate}
\item For $n=1$, a domain is homogeneous if and only if it is simply connected.  In that case, the Riemann mapping theorem assures that every domain is biholomorphic to the disc.  As a symmetric space, the disc is the hyperbolic plane.   The Bergman metric is the hyperbolic metric.
\item For $n=2,3$ every homogeneous bounded domain is symmetric.
\item For $n=4$, the set of bounded symmetric domains is a proper subset of the bounded homogeneous domains (Piatetsky \& Shapiro)
\item In general, if $D$ is a bounded homogeneous domain such that $\mathscr{H}(D)$ is unimodular, then $D$ is symmetric.
\end{enumerate}

\begin{theorem}
Let $D$ be a bounded symmetric domain.  The $D\cong M_1\times\cdots\times M_t$ factors as a product of irreducible non-compact Hermitial symmetric spaces.
\end{theorem}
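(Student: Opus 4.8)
The plan is to combine the two structural results already established---that a bounded symmetric domain is a Hermitian symmetric space, and that every Hermitian symmetric space splits as a product of a flat factor and irreducible simply-connected factors---and then to use boundedness to eliminate the flat factor and every compact factor.

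First I would invoke the preceding theorem to regard $D$ as a Hermitian symmetric space, and then apply the decomposition theorem to obtain a Hermitian isometry
$$D \cong M_f \times M_1 \times \cdots \times M_t$$
with $M_f$ flat and each $M_i$ irreducible and simply connected. Since a Hermitian isometry is in particular a biholomorphism, this is an isomorphism of complex manifolds, and each factor sits inside $D$ as a complex submanifold upon fixing the coordinates in the remaining factors. By the trichotomy for irreducible orthogonal symmetric Lie algebras, each $M_i$ is of compact or non-compact type, the flat irreducible pieces being collected into $M_f$.

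The essential input from boundedness is that, since $D \subset \mathbb{C}^N$ is bounded, any holomorphic map $\iota : X \to D$ from a complex manifold $X$, followed by the inclusion $D \hookrightarrow \mathbb{C}^N$, yields $N$ bounded holomorphic functions on $X$, namely its coordinate components. I would apply this in two ways. Taking $X = M_f$: a flat simply-connected Hermitian symmetric space is biholomorphic to $\mathbb{C}^k$, and bounded entire functions on $\mathbb{C}^k$ are constant by Liouville's theorem, so the holomorphic embedding of $M_f$ into $\mathbb{C}^N$ would be constant; hence $k=0$ and $M_f$ is a point. Taking $X = M_i$ with $M_i$ of compact type: then $M_i$ is a compact connected complex manifold, on which every holomorphic function is constant by the maximum principle, so again the embedding into $\mathbb{C}^N$ collapses to a point, forcing $\dim M_i = 0$ and contradicting that $M_i$ is a positive-dimensional irreducible factor. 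Thus no compact factor can occur, and we conclude $D \cong M_1 \times \cdots \times M_t$ with each $M_i$ an irreducible non-compact Hermitian symmetric space.

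I expect the only point requiring care---rather than a genuine obstacle---to be the justification that each factor genuinely embeds holomorphically into the ambient $\mathbb{C}^N$, so that Liouville's theorem and the maximum principle apply; this follows directly from the fact that the decomposition is realized by a biholomorphism of $D$ with the product, composed with the bounded inclusion $D \hookrightarrow \mathbb{C}^N$. A secondary bookkeeping point is to recall that flat irreducible summands are, by the convention of the decomposition theorem, absorbed into $M_f$, so that ruling out $M_f$ together with the compact $M_i$ leaves precisely the non-compact irreducible factors.
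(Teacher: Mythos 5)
Your proposal is essentially the paper's own proof: decompose $D$ as a Hermitian symmetric space into $M_f\times M_1\times\cdots\times M_t$, then kill the flat factor with Liouville's theorem and the compact factors with the constancy of holomorphic functions on compact manifolds, using the bounded coordinate functions of $D\subset\mathbb{C}^N$. One small correction: the decomposition theorem guarantees that the $M_i$ are simply connected but says nothing of the sort about $M_f$, so your claim that $M_f$ is biholomorphic to $\mathbb{C}^k$ is unjustified as stated; the paper repairs exactly this point by lifting to the universal cover $\widetilde{M}_f=\mathbb{C}^k$ and applying Liouville to the composite $\mathbb{C}^k\to M_f\hookrightarrow D\xrightarrow{z_i}\mathbb{C}$, after which your argument goes through verbatim.
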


\begin{proof}
Decompose $D\cong M_f\times M_1\times\cdots\times M_t$ with $M_f$ flat.  Then $\mathbb{C}^N=\widetilde{M}_f$ is the universal cover of $M_f$, for some $N$.  Let $z_i$ be the $i$th coordinate function on $D$.  Then
$$\xymatrix{
\mathbb{C}^N = \widetilde{M}_f \ar[d]\ar@{-->}[ddr] &\\
M_f \ar@{^(->}[r]& D\ar[d]^{z_i}\\
&\mathbb{C}
} $$
The indicated composite mapping is a bounded holomorphic function on $\mathbb{C}^N$, which is therefore constant.  This is true for all coordinate functions, and so $\mathbb{C}^N=\{pt\}$.

Likewise, if some $M_k$ were compact, then $z_i|_{M_k}$ would be a holomorphic function on the compact manifold $M_k$, which is again reduced to a constant.  So, if $M_k$ were compact, then $M_k=\{pt\}$ as well.
\end{proof}

The converse question is more interesting:

{\em Question.}  Let $M$ be a non-compact Hermitian symmetric space.  Is $M$ locally a bounded symmetric domain?

The answer, which turns out to be {\em Yes}, is addressed in the next sections.  The general result is that it is always possible to embed a Hermitian symmetric space into a generalized flag manifold.  In the compact case, this embedding is essentially an isomorphism.  In the noncompact case, the image is an open subset of a flag manifold, and there is a natural projection to a bounded symmetric domain in an affine complex space.

\section{Structure of Hermitian symmetric Lie algebras}
Let $(\mathfrak{g}_o,s_o)$ be an orthogonal symmetric simple Lie algebra which is not of compact type.  Write
$$\mathfrak{g}_o=\mathfrak{k}_o\oplus\mathfrak{p}_o$$
and let $i\mathfrak{t}_o\subset\mathfrak{k}_o$ be a maximal toral subalgebra.  Let $\mathfrak{h}_o = C_{\mathfrak{g}_o}(i\mathfrak{t}_o) = i\mathfrak{t}_o\oplus\mathfrak{b}_o$.  This is a Cartan subalgebra of $\mathfrak{g}_o$.  Now decompose the complexification into root spaces
$$\mathfrak{k}\oplus\mathfrak{p} = \mathfrak{g} = \mathfrak{h}\oplus\left(\bigoplus_{\alpha\in\Phi(\mathfrak{g},\mathfrak{h})} \mathfrak{g}_\alpha\right).$$
Since $\mathfrak{g}_o$ is of noncompact type, $\Phi(\mathfrak{g},\mathfrak{h})$ is real on $i\mathfrak{t}_o\oplus\mathfrak{b}_o$.  Relative to this decomposition of $\mathfrak{g}$, we can decompose $\mathfrak{g}_o$ via
$$\mathfrak{g}_o = \mathfrak{h}_o\oplus\left(\bigoplus_{\lambda\in\Phi^+(\mathfrak{g}_o,\mathfrak{t}_o)}\left(\underset{\alpha|_{i\mathfrak{t}_o} = \lambda}{\bigoplus_{\alpha\in\Phi(\mathfrak{g},\mathfrak{h})}}\mathfrak{g}_{\pm\alpha}\right)\cap\mathfrak{g}_o\right).$$

Indeed, let $\theta:\mathfrak{g}\to\mathfrak{g}$ be the antilinear involution of complex conjugation with respect to the real subspace $\mathfrak{g}_o$.  For all $H\in\mathfrak{h}$ and $x\in\mathfrak{g}_\alpha$, $[H,\theta x] = \overline{\alpha\circ\theta(H)}\, \theta x$.  Thus $\theta\mathfrak{g}_{\alpha}=\mathfrak{g}_{\overline{\alpha\circ\theta}}$.  Now if $H\in i\mathfrak{t}_o$, then $\theta(H)=H$ and so $\overline{\alpha\circ\theta(H)} = -\alpha(H)$, since $\alpha$ is imaginary on $i\mathfrak{t}_o$.  Hence $(\alpha\circ\theta)|_{i\mathfrak{t}_o} = -\alpha|_{i\mathfrak{t}_o}$, as required.

\begin{definition}\index{Root!compact}\index{Root!noncompact}\index{Root!complex}
\mbox{}
\begin{enumerate}
\item If $\mathfrak{g}_\alpha\subset\mathfrak{k}$, then $\alpha$ is called a {\em compact root}.
\item If $\mathfrak{g}_\alpha\subset\mathfrak{p}$, then $\alpha$ is called a {\em noncompact root}.
\item Otherwise $\alpha$ is called a {\em complex root}.
\end{enumerate}
\end{definition}
If $\alpha$ is either compact or noncompact, then $s(\alpha)=\alpha$ for in the former case $s|_{\mathfrak{g}_\alpha} = \operatorname{id}$ and in the latter $s|_{\mathfrak{g}_\alpha} = -\operatorname{id}$.  If $\alpha$ is complex, then $s(\alpha) \not=\alpha$.

\begin{lemma}
If $\operatorname{rank}(\mathfrak{k})=\operatorname{rank}(\mathfrak{g})$, then $\mathfrak{h}_o=\mathfrak{t}_o$, and so there are no complex roots.
\end{lemma}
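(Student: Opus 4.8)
The plan is to reduce the claim to a dimension count over the maximally compact Cartan subalgebra $\mathfrak{h}_o = i\mathfrak{t}_o\oplus\mathfrak{b}_o$. First I would express both ranks concretely. Since $(\mathfrak{g}_o,s_o)$ is of noncompact type and simple, $\mathfrak{k}_o$ is a compact Lie algebra, so its maximal toral subalgebra $i\mathfrak{t}_o$ is in fact a Cartan subalgebra of $\mathfrak{k}_o$; complexifying, $\operatorname{rank}(\mathfrak{k}) = \dim_{\mathbb{R}} i\mathfrak{t}_o$. By construction $\mathfrak{h}_o = C_{\mathfrak{g}_o}(i\mathfrak{t}_o)$ is a Cartan subalgebra of $\mathfrak{g}_o$, so $\operatorname{rank}(\mathfrak{g}) = \dim_{\mathbb{R}} \mathfrak{h}_o = \dim_{\mathbb{R}} i\mathfrak{t}_o + \dim_{\mathbb{R}} \mathfrak{b}_o$.

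Under the hypothesis $\operatorname{rank}(\mathfrak{k}) = \operatorname{rank}(\mathfrak{g})$ these two expressions coincide, which forces $\dim_{\mathbb{R}} \mathfrak{b}_o = 0$. Hence $\mathfrak{b}_o = 0$ and $\mathfrak{h}_o = i\mathfrak{t}_o$; this is the content of the abbreviated statement $\mathfrak{h}_o = \mathfrak{t}_o$, which should be read as saying that the split part $\mathfrak{b}_o$ of the Cartan subalgebra vanishes.

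For the root statement I would invoke the dichotomy recorded just above the lemma: $s(\alpha)=\alpha$ holds exactly when $\alpha$ is compact or noncompact, and by definition $\alpha$ is complex precisely when $s(\alpha)\neq\alpha$. The involution $s$ acts as $+\operatorname{id}$ on $i\mathfrak{t}_o\subset\mathfrak{k}_o$ and as $-\operatorname{id}$ on $\mathfrak{b}_o\subset\mathfrak{p}_o$, so dually $s$ fixes a functional on $\mathfrak{h}$ if and only if that functional vanishes on $\mathfrak{b}_o$. Once $\mathfrak{b}_o = 0$ we have $s|_{\mathfrak{h}_o} = \operatorname{id}$, hence $s$ acts trivially on $\mathfrak{h}^*$ and $s(\alpha)=\alpha$ for every root $\alpha$. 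Therefore no root is complex.

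There is no substantial obstacle in this argument; it is essentially bookkeeping. The only points needing care are the identification of $i\mathfrak{t}_o$ as a genuine Cartan subalgebra of $\mathfrak{k}$ (which uses that $\mathfrak{k}_o$ is compact, so that maximal toral subalgebras are Cartan subalgebras) and keeping real versus complex dimensions consistent when passing through the complexification.
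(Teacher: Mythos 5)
Your proof is correct and takes essentially the same route as the paper: the paper's own argument (given for the twin lemma in the strongly orthogonal section) is precisely your dimension count --- $\mathfrak{t}$ is a Cartan subalgebra of $\mathfrak{k}$, $C_{\mathfrak{g}}(\mathfrak{t})=\mathfrak{t}\oplus\mathfrak{b}$ is a Cartan subalgebra of $\mathfrak{g}$, and equality of ranks forces $\mathfrak{b}=0$. Your closing step, that $\mathfrak{b}_o=0$ makes $s$ act trivially on $\mathfrak{h}^*$ so every root satisfies $s(\alpha)=\alpha$ and is therefore compact or noncompact, is exactly the dichotomy the paper records just above the lemma.
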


\subsection{Example}  Consider the root lattice $C_2$.  The compact simply-connected group is $Sp(2)$.

The root space decomposition is
$$\mathfrak{g} = \mathfrak{t}\oplus\mathfrak{g}_{\pm\alpha}\oplus\mathfrak{g}_{\pm\beta}\oplus\mathfrak{g}_{\pm\theta_s}\oplus\mathfrak{g}_{\pm\theta}.$$
The involutions in $G_{\ad}$ are given by $e^{2\pi i\lambda_\beta^\vee/2} = e^{\pi i \lambda_\beta^\vee}$ and $e^{2\pi i\lambda_\alpha^\vee/2=e^2\pi i \lambda_\alpha^\vee}$.  Here are the Cartan decompositions for these involutions:
\begin{description}
\item[$\sigma = \Ad e^{\pi i \lambda_\beta^\vee}$]  Here $\mathfrak{k} = \mathfrak{g}_\sigma = \mathfrak{t}\oplus\mathfrak{g}_{\pm\alpha}$ and $\mathfrak{p}=\mathfrak{g}_{\pm\beta}\oplus\mathfrak{g}_{\pm\theta_s}\oplus\mathfrak{g}_\theta$.  Note $\dim Z(\mathfrak{k})=1$, so this symmetric space supports a complex structure.  (It is locally isometric to the Grassmannian $\operatorname{Gr}_2(\mathbb{C}^4)$.)
\item[$\sigma=\Ad e^{\pi i \lambda_\alpha^\vee}$]  Here $\mathfrak{k}= \mathfrak{g}_\sigma = \mathfrak{t}\oplus\mathfrak{g}_{\pm\beta}\oplus\mathfrak{g}_{\pm\theta}$ and $\mathfrak{p}=\mathfrak{g}_{\pm\alpha}\oplus\mathfrak{g}_{\pm\theta_s}$. Then $Z(\mathfrak{k})=0$, and there is no complex structure.  We already observed that this quotient was locally isometric to $S^4$, but in fact $S^4$ carries no almost complex structure (let alone a homogeneous complex structure).\footnote{Suppose $S^4$ had an almost complex structure, and split the complexified tangent bundle into eigenspaces $TS^4\otimes\mathbb{C} = E\oplus\overline{E}$.  Then the Pontrjagin numbers are related to the Chern classes by $p_2(TS^4) = c_1(E)^2 - 2c_2(E)$.  But $c_1(E)=0$ since $H^2(S^4,\mathbb{Z})=0$, $c_2(E)=2$ is the Euler characteristic, and $p_2(TS^4)=0$ since $S^4$ is cobordant to a point.  Contradiction.}
\end{description}

\subsection{General case}
Let us return to the general case.  Let $(\mathfrak{g}_o,s_o)$ be a semisimple orthogonal symmetric Lie algebra.  Let $\Delta = \{\alpha_1,\dots,\alpha_n\}$ be a base for $\Phi(\mathfrak{g}_o,\mathfrak{h}_o)$.  Let $\theta=d_1\alpha_1+\cdots+d_n\alpha_n$ be the highest root.  The fundamentla coroots $\lambda_j^\vee$ satisfy $\lambda_j^\vee(\theta)=d_j$.  In general, involutions come from $e^{\pi i \lambda_j^\vee}$ with $d_j=1,2$.  We deal with these two cases as above:

If $d_j=2$: For any root $\alpha$, the possible pairings with $\lambda_j^\vee$ are $\alpha(\lambda_j^\vee)=0,\pm 1,\pm 2$, since $\alpha_j$ appears in the {\em highest} root with coefficient $2$.  Now the involution $e^{\pi i \lambda_j^\vee}$ acts as the identity on the eigenspaces where $\lambda_j^\vee$ is even and acts as the negative identity on eigenspaces where $\lambda_j^\vee$ is odd.  Thus
\begin{align*}
\mathfrak{k}&=\mathfrak{t}\oplus\left(\bigoplus_{\alpha(\lambda_j^\vee)=0}\mathfrak{g}_{\pm\alpha}\right)\oplus\left(\bigoplus_{\beta(\lambda_j^\vee=2)}\mathfrak{g}_{\pm\beta}\right)\\
\mathfrak{p}&=\bigoplus_{\gamma(\lambda_j^\vee)=0}\mathfrak{g}_{\pm\gamma}.
\end{align*}
In this case, note that $Z(\mathfrak{k})=0$ and so there is no complex structure.

\vspace{24pt}

If $d_j=1$: $\mathfrak{k}=\mathfrak{t}\oplus \left(\bigoplus_{\alpha(\lambda_j^\vee)=0} \mathfrak{g}_{\pm\alpha}\right)$.  In this case $Z(\mathfrak{k})$ is 1-dimensional: it is the subspace of $\mathfrak{t}$ spanned by the single coroot $\lambda_j^\vee$.  We also have $\mathfrak{p}=\oplus_{\beta(\lambda_j^\vee)=1}\mathfrak{g}_{\pm\beta}$.  (Note that for any root $\beta$, there are only the three possibilities $\beta(\lambda_j^\vee)=0,1,-1$ since $\alpha_j$ appears in the {\em highest} root with coefficient $1$.)  Let
$$\mathfrak{u}^+ = \sum_{\beta(\lambda_j^\vee)=1}\mathfrak{g}_\beta,\quad \mathfrak{u}^- = \sum_{\beta(\lambda_j^\vee)=1}\mathfrak{g}_{-\beta}.$$
Then $[\mathfrak{u}^+,\mathfrak{u}^+]=[\mathfrak{u}^-,\mathfrak{u}^-]=0$.  

\subsection{Strongly orthogonal root systems}
Let $(\mathfrak{g}_o,s_o)$ be an irreducible symmetric orthogonal Lie algebra with $\mathfrak{g}$ simple (i.e., type I or III).  Let $\theta:\mathfrak{g}\to\mathfrak{g}$ be the antilinear conjugation with respect to the real subspace $\mathfrak{g}_o$.  As usual, put $\mathfrak{g}_o=\mathfrak{k}_o\oplus\mathfrak{p}_o$ and let $i\mathfrak{t}_o\subset\mathfrak{k}_o$ be a maximal toral subalgebra.  We have shown that
$$C_{\mathfrak{g}_o}(i\mathfrak{t}_o) = i\mathfrak{t}_o\oplus\mathfrak{b}_o$$
is a Cartan subalgebra.  Let $\Phi(\mathfrak{g}_o,\mathfrak{t}_o)$ be the relative root system for $i\mathfrak{t}_o$ on $\mathfrak{g}_o$, and decompose $\mathfrak{g}$ and $\mathfrak{g}_o$ via
\begin{align*}
\mathfrak{g} &= \mathfrak{h}\oplus\left(\bigoplus_{\alpha\in\Phi(\mathfrak{g},\mathfrak{h})}\mathfrak{g}_\alpha\right)\\
\mathfrak{g}_o &=\mathfrak{h}_o\oplus\bigoplus_{\lambda\in\Phi^+(\mathfrak{g}_o,\mathfrak{t}_o)}\Biggl(\biggl[\underset{\alpha|_{i\mathfrak{t}_o=\lambda}}{\bigoplus_{\alpha\in\Phi(\mathfrak{g},\mathfrak{h})}}\mathfrak{g}_{\pm\alpha}\biggr]\Biggr).
\end{align*}

\begin{lemma}
Let $\alpha\in \Phi(\mathfrak{g},\mathfrak{h})$.  Then $s_o\alpha = \alpha$ if and only if $\alpha|_{\mathfrak{b}_o}=0$.
\end{lemma}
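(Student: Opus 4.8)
The plan is to reduce the whole statement to the action of $s_o$ on the Cartan subalgebra $\mathfrak{h}_o = i\mathfrak{t}_o\oplus\mathfrak{b}_o$ and its dual. First I would record the two facts that make the claim nearly automatic. Since $i\mathfrak{t}_o\subset\mathfrak{k}_o$ and $\mathfrak{b}_o\subset\mathfrak{p}_o$, and since $s_o$ is by definition $+\operatorname{id}$ on $\mathfrak{k}_o$ and $-\operatorname{id}$ on $\mathfrak{p}_o$, the map $s_o$ restricts to $+\operatorname{id}$ on $i\mathfrak{t}_o$ and to $-\operatorname{id}$ on $\mathfrak{b}_o$. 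In particular $s_o$ stabilizes $\mathfrak{h}_o$, hence (after $\mathbb{C}$-linear extension) $\mathfrak{h}$, so it genuinely permutes the roots in $\Phi(\mathfrak{g},\mathfrak{h})$, and the induced action on $\mathfrak{h}^*$ is $(s_o\alpha)(H) = \alpha(s_o^{-1}H) = \alpha(s_o H)$, using $s_o^2=\operatorname{id}$.

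For the forward implication, I would assume $s_o\alpha=\alpha$ and take an arbitrary $H\in\mathfrak{b}_o$. Then $\alpha(H) = (s_o\alpha)(H) = \alpha(s_o H) = \alpha(-H) = -\alpha(H)$, which forces $\alpha(H)=0$. As $H$ was arbitrary, this gives $\alpha|_{\mathfrak{b}_o}=0$.

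For the converse, I would assume $\alpha|_{\mathfrak{b}_o}=0$ and decompose an arbitrary $H\in\mathfrak{h}_o$ as $H=H_t+H_b$ with $H_t\in i\mathfrak{t}_o$ and $H_b\in\mathfrak{b}_o$. Then $(s_o\alpha)(H) = \alpha(H_t-H_b) = \alpha(H_t) = \alpha(H_t+H_b) = \alpha(H)$, where the middle two equalities both use $\alpha(H_b)=0$. Thus $s_o\alpha$ and $\alpha$ agree on $\mathfrak{h}_o$, and by $\mathbb{C}$-linearity on all of $\mathfrak{h}$, so $s_o\alpha=\alpha$.

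The argument is essentially formal once the eigenvalue description of $s_o$ on $\mathfrak{h}_o$ is in hand; the only point that deserves care — and the one I would state explicitly — is the convention for how an automorphism acts on the dual space (so that $s_o$ acts on roots by precomposition with $s_o$), together with the observation that $s_o$ preserves $\mathfrak{h}$, which is what makes this dual action well defined on $\Phi(\mathfrak{g},\mathfrak{h})$ in the first place. Beyond this bookkeeping there is no genuine obstacle.
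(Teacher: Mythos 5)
Your proof is correct and takes essentially the same route as the paper, whose entire argument is the one-line observation that $s_o$ acts as $-\operatorname{id}$ on $\mathfrak{b}$ (and as $+\operatorname{id}$ on $i\mathfrak{t}_o$), which is precisely the eigenvalue bookkeeping you make explicit. Your added care about the dual action $(s_o\alpha)(H)=\alpha(s_oH)$ being well defined on $\Phi(\mathfrak{g},\mathfrak{h})$, and the explicit check of the converse via the decomposition $H=H_t+H_b$, simply fills in details the paper leaves implicit.
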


\begin{proof}
$s_o$ acts as $-\operatorname{id}$ on $\mathfrak{b}$.
\end{proof}

In particular, if $\mathfrak{b}_o=0$, then every root is either compact ($\mathfrak{g}_\alpha\subset\mathfrak{k}$) or noncompact ($\mathfrak{g}_\alpha\subset\mathfrak{p}$).  Let $\Phi_c(\mathfrak{g},\mathfrak{h})\subset\Phi(\mathfrak{g},\mathfrak{h})$ be the set of compact roots and $\Phi_{nc}(\mathfrak{g},\mathfrak{h})\subset\Phi(\mathfrak{g},\mathfrak{h})$ be the set of noncompact roots.  Then we have

\begin{lemma}
If $\operatorname{rank}\mathfrak{k}=\operatorname{rank}\mathfrak{g}$, then $\Phi(\mathfrak{g},\mathfrak{h})=\Phi_c(\mathfrak{g},\mathfrak{h})\cup\Phi_{nc}(\mathfrak{g},\mathfrak{h})$.
\end{lemma}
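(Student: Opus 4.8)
The plan is to reduce the whole statement to the single assertion that $\mathfrak{b}_o = 0$, after which the preceding lemma finishes the argument with no further work. Recall that by the discussion above a root $\alpha$ is complex precisely when $s_o\alpha \neq \alpha$; hence proving $\Phi(\mathfrak{g},\mathfrak{h}) = \Phi_c(\mathfrak{g},\mathfrak{h}) \cup \Phi_{nc}(\mathfrak{g},\mathfrak{h})$ is the same as showing that there are no complex roots, i.e. that $s_o\alpha = \alpha$ for every $\alpha \in \Phi(\mathfrak{g},\mathfrak{h})$. The previous lemma asserts that $s_o\alpha = \alpha$ if and only if $\alpha|_{\mathfrak{b}_o} = 0$, so once $\mathfrak{b}_o = 0$ is known, the condition $\alpha|_{\mathfrak{b}_o}=0$ holds vacuously for all $\alpha$ and the lemma follows at once.

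Thus the entire content is a dimension count. First I would record that $i\mathfrak{t}_o \subset \mathfrak{k}_o$ is a maximal toral subalgebra of the compact Lie algebra $\mathfrak{k}_o$; its complexification is then a Cartan subalgebra of $\mathfrak{k}$, so that $\dim_{\mathbb{R}}(i\mathfrak{t}_o) = \operatorname{rank}\mathfrak{k}$. On the other hand, $\mathfrak{h}_o = C_{\mathfrak{g}_o}(i\mathfrak{t}_o) = i\mathfrak{t}_o \oplus \mathfrak{b}_o$ is a Cartan subalgebra of $\mathfrak{g}_o$, whose complexification is a Cartan subalgebra of $\mathfrak{g}$, so $\dim_{\mathbb{R}}\mathfrak{h}_o = \operatorname{rank}\mathfrak{g}$. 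Combining the two decompositions gives
$$\operatorname{rank}\mathfrak{g} = \dim_{\mathbb{R}}\mathfrak{h}_o = \dim_{\mathbb{R}}(i\mathfrak{t}_o) + \dim_{\mathbb{R}}\mathfrak{b}_o = \operatorname{rank}\mathfrak{k} + \dim_{\mathbb{R}}\mathfrak{b}_o.$$
Under the hypothesis $\operatorname{rank}\mathfrak{k} = \operatorname{rank}\mathfrak{g}$, this forces $\dim_{\mathbb{R}}\mathfrak{b}_o = 0$, hence $\mathfrak{b}_o = 0$, which is exactly what the reduction above requires.

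The only point that needs any care is the identification $\dim_{\mathbb{R}}(i\mathfrak{t}_o) = \operatorname{rank}\mathfrak{k}$, namely the fact that a maximal toral subalgebra of the compact reductive Lie algebra $\mathfrak{k}_o$ is a real form of a Cartan subalgebra of $\mathfrak{k}$ and therefore has dimension equal to the rank. This is part of the standard structure theory of compact Lie algebras and is already implicit in the earlier construction of the maximally compact Cartan subalgebra $\mathfrak{h}_o$, so I would simply invoke it rather than reprove it. Everything else is the elementary observation that $\operatorname{rank}\mathfrak{g} = \dim_{\mathbb{R}}\mathfrak{h}_o$ splits as the contribution of $i\mathfrak{t}_o$ plus that of $\mathfrak{b}_o$. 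I do not expect a genuine obstacle: the lemma is essentially a corollary of the preceding lemma together with the rank hypothesis, and the work is confined to this one-line dimension comparison.
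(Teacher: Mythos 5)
Your proof is correct and is essentially the paper's own argument: the paper likewise observes that $\mathfrak{t}$ is a Cartan subalgebra of $\mathfrak{k}$ while $C_{\mathfrak{g}}(\mathfrak{t})=\mathfrak{t}\oplus\mathfrak{b}$ is a Cartan subalgebra of $\mathfrak{g}$, so equal rank forces $\mathfrak{b}=0$, after which the preceding lemma rules out complex roots. Your write-up merely makes explicit the dimension count and the appeal to that lemma, both of which the paper leaves implicit in ``and the result follows.''
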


\begin{proof}
The maximal toral subalgebra $\mathfrak{t}$ is a Cartan subalgebra of $\mathfrak{k}$, and $C_{\mathfrak{g}}(\mathfrak{t})=\mathfrak{t}\oplus\mathfrak{b}$ is a Cartan subalgebra of $\mathfrak{g}$.  These have the same dimension only if $\mathfrak{b}=0$, and the result follows.
\end{proof}

Let us therefore suppose that $\operatorname{rank}\mathfrak{k}=\operatorname{rank}\mathfrak{g}$ (equivalently $\mathfrak{b}=0$), so that every root is either compact or noncompact.  Then, by virtue of the identity $\overline{\alpha\circ\theta} = -\alpha$, we showed that $\theta :\mathfrak{g}_\alpha\to\mathfrak{g}_{-\alpha}$.  In particular,
$$\mathfrak{g}_{\alpha}\oplus\mathfrak{g}_{-\alpha} = \underbrace{[(\mathfrak{g}_{\alpha}\oplus\mathfrak{g}_{-\alpha})\cap\mathfrak{g}_o]}_{\subset\mathfrak{k}_\alpha} \oplus \underbrace{[(\mathfrak{g}_{\alpha}\oplus\mathfrak{g}_{-\alpha})\cap i\mathfrak{g}_o]}_{\subset\mathfrak{p}_\alpha}.$$
For $\alpha\in\Phi^+(\mathfrak{g},\mathfrak{h})$, let $E_\alpha$ be the generator of $\mathfrak{g}_\alpha$ and $F_\alpha$ the generator of $\mathfrak{g}_{-\alpha}$.  Then by the above
\begin{align*}
\mathfrak{k}_o &= \mathfrak{t}_o \oplus\left(\bigoplus_{\alpha\in\Phi_c^+(\mathfrak{g},\mathfrak{t})} \mathbb{R}[E_\alpha + F_\alpha]\right)\oplus\left(\bigoplus_{\alpha\in\Phi_c^+(\mathfrak{g},\mathfrak{t})} \mathbb{R}[iE_\alpha - iF_\alpha]\right)\\
\mathfrak{p}_o &= \left(\bigoplus_{\alpha\in\Phi_{nc}^+(\mathfrak{g},\mathfrak{t})} \mathbb{R}[E_\alpha + F_\alpha]\right)\oplus\left(\bigoplus_{\alpha\in\Phi_{nc}^+(\mathfrak{g},\mathfrak{t})} \mathbb{R}[iE_\alpha - iF_\alpha]\right).
\end{align*}

If (in addition) $s_o$ is conjugation with respect to $e^{\pi i \lambda_j^\vee}$ with $d_j=1$ (so that $(\mathfrak{g}_o,s_o)$ is Hermitian), then
\begin{align*}
\mathfrak{k} &= \mathfrak{t}\oplus\underset{\alpha(\lambda_j^\vee)=0}{\bigoplus_{\alpha\in\Phi(\mathfrak{g},\mathfrak{t})}}\mathfrak{g}_\alpha\\
&\\
\mathfrak{p} &= \underset{\beta(\lambda_j^\vee)=\pm 1}{\bigoplus_{\beta\in\Phi(\mathfrak{g},\mathfrak{t})}}\mathfrak{g}_\beta = \mathfrak{u}^-\oplus\mathfrak{u}^+\\
&\\
\mathfrak{u}^+ &=\underset{\beta(\lambda_j^\vee)=+ 1}{\bigoplus_{\beta\in\Phi(\mathfrak{g},\mathfrak{t})}}\mathfrak{g}_\beta\\
\mathfrak{u}^- &=\underset{\beta(\lambda_j^\vee)=- 1}{\bigoplus_{\beta\in\Phi(\mathfrak{g},\mathfrak{t})}}\mathfrak{g}_\beta.
\end{align*}
Equating the two decompositions, we see that
\begin{align*}
\Phi_c(\mathfrak{g},\mathfrak{t}) &= \Phi(\mathfrak{k},\mathfrak{t}) = \{\alpha\in\Phi(\mathfrak{g},\mathfrak{t})\mid \alpha(\lambda_j^\vee)=0\}\\
\Phi_{nc}(\mathfrak{g},\mathfrak{t}) &= \{\alpha\in\Phi(\mathfrak{g},\mathfrak{t})\mid \alpha(\lambda_j^\vee)=\pm 1\}.
\end{align*}

\begin{definition}\index{Strongly orthogonal}
A subset $F\subset\Phi$ of a root system $\Phi$ is {\em strongly orthogonal} if for any two linearly independent $\alpha,\beta\in F$, $\alpha\pm\beta\not\in \Phi$.
\end{definition}

{\em Remark.} Notice that any two linearly independent roots in $F$ are orthogonal, for if $\alpha,\beta\in F$ were not orthogonal, then the Weyl reflection $\sigma_\alpha\beta$ of $\beta$ in $\alpha$ in $\Phi$.  But $\sigma_\alpha\beta = \beta + k\alpha$ for some $k\in\mathbb{Z}$, and then the whole $\alpha$-string $\beta, \beta \pm\alpha,\dots,\beta +k\alpha$ would need to be in $\Phi$ as well.

\begin{definition}
The strongly orthogonal rank of a root system $\Phi$, denoted $\operatorname{SO-rank}(\Phi)$, is defined as
$$\operatorname{SO-rank}(\Phi) = \max\{ |F\cap\Phi^+|\mid \text{$F\subset\Phi$ is a strongly orthogonal root system}\}.$$
This is the maximal cardinality among the strongly orthogonal sets of positive roots. A strongly orthogonal set of positive $\Phi$ is called {\em maximal} if
$$|F| = \operatorname{SO-rank}(\Phi).$$
\end{definition}

\begin{theorem}
Let $(\mathfrak{g}_o,s_o)$ be an irreducible Hermitian symmetric Lie algebra.  Then $\operatorname{rank}_{\mathbb{R}}(\mathfrak{g}_o)=\operatorname{SO-rank}(\Phi_{nc}(\mathfrak{g},\mathfrak{t}))$.
\end{theorem}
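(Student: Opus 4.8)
The plan is to prove the two inequalities $\operatorname{rank}_{\mathbb{R}}(\mathfrak{g}_o)\ge\operatorname{SO-rank}(\Phi_{nc}(\mathfrak{g},\mathfrak{t}))$ and $\operatorname{rank}_{\mathbb{R}}(\mathfrak{g}_o)\le\operatorname{SO-rank}(\Phi_{nc}(\mathfrak{g},\mathfrak{t}))$ at once, by building a single explicit flat subspace out of roots. Since $(\mathfrak{g}_o,s_o)$ is irreducible Hermitian, the earlier lemma gives $\operatorname{rank}\mathfrak{k}=\operatorname{rank}\mathfrak{g}$, hence $\mathfrak{b}_o=0$, so $\mathfrak{h}_o=i\mathfrak{t}_o$ is a maximally compact Cartan subalgebra, every root is compact or noncompact, and $\mathfrak{p}=\mathfrak{u}^+\oplus\mathfrak{u}^-$ with $\mathfrak{u}^{\pm}=\sum_{\beta\in\pm\Phi_{nc}^+}\mathfrak{g}_\beta$ both abelian. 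Normalizing $F_\alpha=\theta E_\alpha$ for each positive root, the real points of $\mathfrak{p}$ are $\mathfrak{p}_o=\{\sum_{\beta\in\Phi_{nc}^+}(a_\beta E_\beta+\overline{a_\beta}F_\beta)\mid a_\beta\in\mathbb{C}\}$, and $E_\gamma+F_\gamma\in\mathfrak{p}_o$ for each $\gamma\in\Phi_{nc}^+$.

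First I would fix a strongly orthogonal set $F=\{\gamma_1,\dots,\gamma_r\}\subset\Phi_{nc}^+$ maximal with respect to inclusion, and put $X_k=E_{\gamma_k}+F_{\gamma_k}$, $\mathfrak{a}_o^F=\sum_k\mathbb{R}X_k\subset\mathfrak{p}_o$. That $\mathfrak{a}_o^F$ is abelian is immediate: for $k\ne\ell$ each of the four brackets in $[X_k,X_\ell]$ lands in a root space $\mathfrak{g}_{\pm\gamma_k\pm\gamma_\ell}$, and strong orthogonality forces $\pm\gamma_k\pm\gamma_\ell\notin\Phi$, so $[X_k,X_\ell]=0$. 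The $X_k$ are linearly independent, so $\dim\mathfrak{a}_o^F=r$; as any abelian subspace of $\mathfrak{p}_o$ extends to a maximal one, $\operatorname{rank}_{\mathbb{R}}(\mathfrak{g}_o)\ge r$, and taking $F$ of maximal cardinality yields the lower bound.

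The crux is to show $\mathfrak{a}_o^F$ is in fact maximal abelian, i.e. $C_{\mathfrak{p}_o}(\mathfrak{a}_o^F)=\mathfrak{a}_o^F$. I would take $Y=\sum_\beta(a_\beta E_\beta+\overline{a_\beta}F_\beta)$ with $[Y,X_k]=0$ for all $k$ and decompose $[Y,X_k]$ along root spaces. Since $\mathfrak{u}^{\pm}$ are abelian, $[E_\beta,E_{\gamma_k}]=[F_\beta,F_{\gamma_k}]=0$, so only the mixed brackets survive, contributing to $\mathfrak{h}$ and to the compact root spaces $\mathfrak{g}_{\pm(\beta-\gamma_k)}$. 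The $\mathfrak{h}$-component is $(a_{\gamma_k}-\overline{a_{\gamma_k}})H_{\gamma_k}=2i\,\mathrm{Im}(a_{\gamma_k})H_{\gamma_k}$, so linear independence of the $H_{\gamma_k}$ forces each $a_{\gamma_k}\in\mathbb{R}$. The remaining task is to show $a_\beta=0$ for every $\beta\in\Phi_{nc}^+\setminus F$. Here I would analyse the $\gamma_k$-string through such a $\beta$: using $\beta+\gamma_k\notin\Phi$ together with the $\mathfrak{u}^{\pm}$-grading, it has the shape $\beta,\beta-\gamma_k,\beta-2\gamma_k$ with $\langle\beta,\gamma_k^\vee\rangle\in\{0,1,2\}$, and invoke maximality of $F$ — if $a_\beta\ne0$ then $\beta$ cannot be strongly orthogonal to all $\gamma_k$, producing a compact root $\beta-\gamma_k$ whose $\mathfrak{g}_{\beta-\gamma_k}$-component in $[Y,X_k]$ forces a relation among the surviving coefficients. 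Carrying out this string analysis and the attendant cancellation bookkeeping (the only coupling is the involution $\beta\leftrightarrow 2\gamma_k-\beta$) is the main obstacle, and is precisely where the $C_r$/$BC_r$ shape of the restricted system does the work. An alternative that sidesteps the bookkeeping is the simultaneous Cayley transform $c=\prod_k\mathrm{Ad}\,\exp\bigl(\tfrac{\pi}{4}(F_{\gamma_k}-E_{\gamma_k})\bigr)$ attached to the commuting triples $\langle E_{\gamma_k},F_{\gamma_k},H_{\gamma_k}\rangle$, which carries $\mathfrak{a}_o^F$ into $\mathfrak{h}$ and reduces maximality to a centralizer computation inside the Cartan subalgebra.

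Finally, with $\mathfrak{a}_o^F$ established as a flat subspace of dimension $r$, I would invoke the earlier proposition that any two flat subspaces are conjugate under $K$, hence equidimensional; therefore every flat subspace has dimension $r$, so $\operatorname{rank}_{\mathbb{R}}(\mathfrak{g}_o)=r=|F|$. Since this holds for any inclusion-maximal strongly orthogonal $F\subset\Phi_{nc}^+$, all such sets share the common cardinality $r$, which is then $\operatorname{SO-rank}(\Phi_{nc}(\mathfrak{g},\mathfrak{t}))$, completing the proof.
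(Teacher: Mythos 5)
Your framework is the right one and the easy pieces are correct: the abelianness of $\mathfrak{a}_o^F=\sum_k\mathbb{R}(E_{\gamma_k}+F_{\gamma_k})$ via strong orthogonality, the lower bound $\operatorname{rank}_{\mathbb{R}}(\mathfrak{g}_o)\ge|F|$, the reality of the $a_{\gamma_k}$ from the $\mathfrak{h}$-component (this matches the step in the paper's proof where the coefficients $c_\gamma$ of $iE_\gamma-iF_\gamma$, $\gamma\in F$, are killed), and the appeal to $K$-conjugacy of flat subspaces at the end. But the proposal has a genuine gap, and you name it yourself: the claim $C_{\mathfrak{p}_o}(\mathfrak{a}_o^F)=\mathfrak{a}_o^F$, i.e.\ that $a_\beta=0$ for every $\beta\in\Phi_{nc}^+\setminus F$, is deferred as ``the main obstacle.'' That obstacle \emph{is} the theorem. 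The paper's proof consists precisely of this computation: expand $[X,E_\gamma+F_\gamma]$ in root spaces (only mixed brackets $[E_\alpha,F_\gamma]\in\mathfrak{g}_{\alpha-\gamma}$, $[F_\alpha,E_\gamma]\in\mathfrak{g}_{\gamma-\alpha}$ survive, since $\mathfrak{u}^\pm$ are abelian), separate the real-coefficient and imaginary-coefficient equations, use the grading by $\lambda_j^\vee$ (every noncompact root pairs to $\pm1$) to rule out spurious coincidences of root spaces, and conclude the dichotomy: for each $\alpha$ with a nonzero coefficient, either some bracket component survives (impossible) or $[E_\alpha,E_\gamma]=[E_\alpha,F_\gamma]=0$ for all $\gamma\in F$, so that $F\cup\{\alpha\}$ is strongly orthogonal, contradicting maximality of $F$. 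Without carrying out this dichotomy you have only $|F|\le\operatorname{rank}_{\mathbb{R}}(\mathfrak{g}_o)$, not equality.

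Two further remarks. First, your logical shape (contrapositive: an inclusion-maximal $F$ has maximal abelian span) and the paper's (a maximal-cardinality $F$ with $|F|<\operatorname{rank}$ can be strictly extended) are equivalent reorganizations of the same argument, so nothing is saved by the reformulation --- the identical bookkeeping is required. Ironically, the coupling you correctly identify, $\beta\leftrightarrow 2\gamma_k-\beta$ arising from $[E_\beta,F_{\gamma_k}]$ and $[F_{2\gamma_k-\beta},E_{\gamma_k}]$ landing in the same space $\mathfrak{g}_{\beta-\gamma_k}$, is the genuine residual case, whereas the paper's stated pairing involves terms $[E_\alpha,E_\gamma]$ that vanish identically because $\mathfrak{u}^+$ is abelian; so if you actually executed your string analysis you would be supplying detail the paper elides --- but as written you have not. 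Second, the simultaneous Cayley transform $c=\prod_k\operatorname{Ad}\exp\bigl(\tfrac{\pi}{4}(F_{\gamma_k}-E_{\gamma_k})\bigr)$ is a legitimate, standard alternative route (it moves $\mathfrak{a}_o^F$ into the Cartan subalgebra and converts maximality into a centralizer computation there), and it would be a genuinely different proof from the paper's; but it too is only named, not carried out, so it does not close the gap either.
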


\begin{proof}
Let $F\subset \Phi^+_{nc}(\mathfrak{g},\mathfrak{t})$ be strongly orthogonal.  Since
$$\bigoplus_{\alpha\in F} \mathbb{R}[E_\alpha+F_\alpha] \subset\mathfrak{p}_o$$
is an abelian subalgebra, $|F|\le\operatorname{rank}\mathfrak{g}_o$.  Suppose that the inequality is strict.  Then there exists a flat subspace $\mathfrak{a}_o$ such that
$$\bigoplus_{\alpha\in F} \mathbb{R}[E_\alpha+F_\alpha] \subsetneq \mathfrak{a}_o.$$
Let $X\in\mathfrak{a}_o$, $X\not\in \bigoplus_{\alpha\in F} \mathbb{R}[E_\alpha+F_\alpha]$.  Write
$$X = \sum_{\alpha\in\Phi_{nc}^+}a_\alpha(E_\alpha+F_\alpha) + b_\alpha(iE_\alpha-iF_\alpha) + \sum_{\beta\in F}c_\beta(iE_\beta-iF_\beta).$$
Since $\mathfrak{a}_o$ is abelian, $X$ commutes with $E_\gamma+F_\gamma$ for all $\gamma\in F$.  In particular, the $\mathfrak{t}_o$-component of $[X,E_\gamma+F_\gamma]$ is $c_\gamma[iE_\gamma-iF_\gamma, E_\gamma+F_\gamma]=c_\gamma 2i[E_\gamma,F_\gamma]$.  For this to be zero, $c_\gamma=0$.  Thus the last term in $X$ vanishes completely:
$$X = \sum_{\alpha\in\Phi_{nc}^+}a_\alpha(E_\alpha+F_\alpha) + b_\alpha(iE_\alpha-iF_\alpha).$$
So, commuting with $E_\gamma+F_\gamma$ for $\gamma\in F$ now gives
$$[X,E_\gamma+F_\gamma] = \sum_{\alpha\in\Phi^+_{nc}\setminus F} a_\alpha[E_\alpha+F_\alpha,E_\gamma+F_\gamma] + b_\alpha[iE_\alpha-iF_\alpha,E_\gamma+F_\gamma]=0.$$
Since $a_\alpha$ and $b_\alpha$ are real, it follows that
$$\sum_{\alpha\in\Phi^+_{nc}\setminus F} a_\alpha[E_\alpha+F_\alpha,E_\gamma+F_\gamma] =  \sum_{\alpha\in\Phi^+_{nc}\setminus F}b_\alpha[iE_\alpha-iF_\alpha,E_\gamma+F_\gamma]=0.$$
The first sum splits into pairs of the form
$$a_\alpha\underbrace{[E_\alpha,E_\gamma]}_{\mathfrak{g}_{\alpha+\gamma}} + a_\beta\underbrace{[E_\beta,F_\gamma]}_{\in\mathfrak{g}_{\beta-\gamma}} = 0$$
(and similarly for the second sum) which implies that one of three possibilities holds: either $\alpha+\gamma=\beta-\gamma$, $a_\alpha=a_\beta=0$, or $[E_\alpha,E_\gamma]=[E_\beta,F_\gamma]=0$.  But if the first possibility were true, then applying $\lambda_j^\vee$ to both sides would give
$$2=\lambda_j^\vee(\alpha+\gamma) = \lambda_j^\vee(\beta-\gamma) = 0,$$
a contradiction, so this cannot happen for {\em any} $\gamma$.  If the second possibility were not true, then $[E_\alpha,E_\gamma]=[E_\alpha,F_\gamma]=0$ for all $\gamma\in F$, which implies that $F\cup \{\alpha\}$ is strongly orthogonal.
\end{proof}

\section{Embedding theorems}
Let $(\mathfrak{g}_o,s_o)$ be an irreducible Hermitian symmetric Lie algebra whose complexification $\mathfrak{g}$ is semisimple.  Let $\mathfrak{g}_o=\mathfrak{k}_o\oplus\mathfrak{p}_o$ be the eigenspace decomposition for $s_o$, and $\mathfrak{g}=\mathfrak{k}\oplus\mathfrak{p}$ its complexification.  Let $i\mathfrak{t}_o\subset\mathfrak{k}_o$ be a maximal toral subalgebra,  Decompose these various spaces via, in the notation of the previous section,
\begin{align*}
\mathfrak{k} &= \mathfrak{t}\oplus \bigoplus_{\alpha\in\Phi(\mathfrak{g},\mathfrak{t})}\mathfrak{g}_\alpha\\
\mathfrak{k}_o &= i\mathfrak{t}_o\oplus \left(\bigoplus_{\alpha\in\Phi_c^+(\mathfrak{g},\mathfrak{t})} \mathbb{R}[E_\alpha+F_\alpha]\right)\oplus \left(\bigoplus_{\alpha\in\Phi_c^+(\mathfrak{g},\mathfrak{t})} \mathbb{R}[iE_\alpha-iF_\alpha]\right)\\
\mathfrak{p}_o &= \left(\bigoplus_{\alpha\in\Phi^+_{nc}(\mathfrak{g},\mathfrak{t})} \mathbb{R}[E_\alpha+F_\alpha]\right)\oplus \left(\bigoplus_{\alpha\in\Phi_{nc}^+(\mathfrak{g},\mathfrak{t})} \mathbb{R}[iE_\alpha-iF_\alpha]\right)\\
\mathfrak{a}_o &= \bigoplus_{\alpha\in F} \mathbb{R}[E_\alpha+F_\alpha]
\end{align*}
where $F\subset\Phi_{nc}^+$ is a maximal strongly orthogonal system of roots.  Finally, put
$$\mathfrak{u}^{\pm} = \bigoplus_{\alpha\in\Phi_{nc}^+(\mathfrak{g},\mathfrak{t})}\mathfrak{g}_{\pm\alpha},\qquad [\mathfrak{u}^+,\mathfrak{u}^+]=[\mathfrak{u}^-,\mathfrak{u}^-]=0.$$

We shall employ the notation in Table 1 to refer to the various Lie algebras involved, and associated connected Lie groups.

\begin{table}
\begin{tabular}{|c|c|}
\hline
Connected Lie group & Lie algebra\\
$G_{\mathbb{C}}$ & $\mathfrak{g}$\\
$K_{\mathbb{C}}$ & $\mathfrak{k}$\\
$K$ & $\mathfrak{k}_o$\\
$G$ & $\mathfrak{g}_o=\mathfrak{k}_o\oplus \mathfrak{p}_o$\\
$\widetilde{G}$ & $\widetilde{\mathfrak{g}}_o =\mathfrak{k}_o\oplus i\mathfrak{p}_o$\\
$U^{\pm}$ & $\mathfrak{u}^{\pm}$\\
$A$ & $\mathfrak{a}_o$\\
$P^{\pm}$ & $\mathfrak{k}\oplus\mathfrak{u}^\pm$\\
\hline
\end{tabular}
\caption{}
\end{table}

\subsection{Example: $\mathbb{sl}(2,\mathbb{C})$}  In the following discussion, it is helpful to keep in mind the following simple example. Let $\mathfrak{g}_o=\mathfrak{su}(1,1)$ and $\mathfrak{g}=\mathfrak{sl}(2,\mathbb{C})$.  Then
\begin{align*}
\mathfrak{sl}(2,\mathbb{C}) &= \underbrace{\operatorname{diag}(x,-x)}_{\mathfrak{k}}\oplus\underbrace{\begin{pmatrix} 0&w\\0&0\end{pmatrix}}_{\mathfrak{u}^+}\oplus\underbrace{\begin{pmatrix} 0&0\\w&0\end{pmatrix}}_{\mathfrak{u}^-}\\
\mathfrak{su}(1,1) &= \underbrace{\operatorname{diag}(ia,-ia)}_{\mathfrak{k}_o}\oplus\underbrace{\begin{pmatrix} 0&c+ib\\c-ib&0\end{pmatrix}}_{\mathfrak{p}_o},\qquad \mathfrak{a}_o=\begin{pmatrix}0&c\\c&0\end{pmatrix}
\end{align*}
The dual Lie algebra is $\widetilde{\mathfrak{su}(1,1)} = \mathfrak{su}(2)$ and then we have
$$\mathfrak{su}(2) = \underbrace{\operatorname{diag}(ia,-ia)}_{\mathfrak{k}_o}\oplus\underbrace{\begin{pmatrix} 0&-b+ic\\b-ic&0\end{pmatrix}}_{\mathfrak{p}_o}.$$

The groups are
\begin{align*}
G_{\mathbb{C}} &= SL(2,\mathbb{C})\\
K &= \left\{ \begin{pmatrix}\cos a&-\sin a\\ \sin a&\cos a\end{pmatrix}\mid a\in\mathbb{R}\right\}\\
G = SU(1,1) &= \left\{ \begin{pmatrix}\alpha&\beta\\ \bar{\beta}&\alpha\end{pmatrix}\mid \alpha,\beta\in\mathbb{C},\ |\alpha|^2-|\beta|^2 = 1\right\}\\
\widetilde{G} = SU(2) &= \left\{ \begin{pmatrix}\alpha&\beta\\ -\bar{\beta}&\alpha\end{pmatrix}\mid \alpha,\beta\in\mathbb{C},\ |\alpha|^2+|\beta|^2 = 1\right\}\\
U^+ &= \left\{ \begin{pmatrix}1&w\\ 0&1\end{pmatrix}\mid w\in\mathbb{C}\right\}\cong (\mathbb{C},+)\\
U^- &= \left\{ \begin{pmatrix}1&0\\ w&1\end{pmatrix}\mid w\in\mathbb{C}\right\}\cong (\mathbb{C},+)\\
P^+ &= \left\{ \begin{pmatrix}z&w\\ 0&z^{-1}\end{pmatrix}\mid w,z\in\mathbb{C},\ z\not=0\right\}\\
P^- &= \left\{ \begin{pmatrix}z&0\\ w&z^{-1}\end{pmatrix}\mid w,z\in\mathbb{C},\ z\not=0\right\}\\
A &= \left\{ \begin{pmatrix}\cosh c&\sinh c\\ \sinh c&\cosh c\end{pmatrix}\mid c\in\mathbb{R}\right\}.
\end{align*}

Now, let $SL(2,\mathbb{C})$ act on $\mathbb{C}^2$ in the usual manner.  Then $P^-$ is the stabilizer of the complex line spanned by $[1\ 0]^t$, and $SL(2,\mathbb{C})$ acts transitively on the space of complex lines through the origin, whence
$$\frac{SL(2,\mathbb{C})}{P^-} \cong \mathbb{CP}^1 = S^2. $$
Now, $SU(2)$ acts transitively and freely on $S^3\subset\mathbb{C}^4$, and we have
$$S^2 = \frac{SL(2,\mathbb{C})}{P^-} = \frac{SU(2)}{P^-\cap SU(2)} = \frac{SU(2)}{U(1)}$$
(which is precisely the Hopf fibration $S^2=S^3/S^1$).  

There is also an embedding
$$\frac{SU(1,1)}{U(1)} \to \frac{SL(2,\mathbb{C})}{P^-} = \mathbb{CP}^1.$$
However $SU(1,1)$ does not act transitively on $\mathbb{CP}^1$.  Indeed, the orbit of the point $[0\ 1]^t$ is
$$\begin{pmatrix}\alpha & \beta\\ \overline{\beta} & \overline{\alpha}\end{pmatrix} = \begin{pmatrix}\beta\\ \overline{\alpha}\end{pmatrix} \sim\begin{pmatrix}\beta/\overline{\alpha}\\ 1\end{pmatrix}.$$
From $|\alpha|^2-|\beta|^2=1$ it follows that $1=\frac{1}{|\beta|^2}+\left|\frac{\beta}{\alpha}\right|^2$, so $|\beta/\alpha|<1$.  So $SU(1,1)$ embeds into a disc in $\mathbb{CP}^1$.

Note the factorization
\begin{equation}\label{su11decomposition}
\begin{pmatrix}\cosh t& \sinh t\\\sinh t& \cosh t\end{pmatrix} = 
\begin{pmatrix}1& 0\\\tanh t& 1\end{pmatrix} 
\begin{pmatrix}\cosh t& 0\\0& \cosh t\end{pmatrix} 
\begin{pmatrix}1& \tanh t\\0& 1\end{pmatrix}.
\end{equation}

\subsection{General case}
We return now to the general case.  We have the following lemmata:

\begin{description}
\item[$N_{G_{\mathbb{C}}}(U^{\pm})=K_{\mathbb{C}}\ltimes U^{\pm} = P^{\pm}$]
\mbox{}
\begin{proof}\commentx{Check.}
Indeed, $\Lie N_{G_{\mathbb{C}}}(U^+) = N_{\mathfrak{g}}(\mathfrak{u}^+) = \mathfrak{k}\oplus\mathfrak{u}^+$, so $(N_{G_{\mathbb{C}}}(U^+))_e = P^+$.  Let $g\in N_{G_{\mathbb{C}}}(U^+)$.  Then $g$ acts on $\mathfrak{k}\oplus\mathfrak{u}^+$.  Let $\mathfrak{t}\subset\mathfrak{k}$ be a Cartan subalgebra.  There exists $x\in P^+$ such that $xg(\mathfrak{t})=\mathfrak{t}$.  There exists $y\in K_{\mathbb{C}}$ such that $yxg(\mathfrak{t})=\mathfrak{t}$ and, in addition, $yxg \left( \Phi^+(\mathfrak{k},\mathfrak{t})\right)=\Phi^+(\mathfrak{k},\mathfrak{t})$.  Since $yxg$ preserves the compact roots, $yxg\in K_{\mathbb{C}}$.  Thus $g\in P^+$.  So $N_{\mathbb{G}_{\mathbb{C}}}(U^\pm) = P^\pm$.

It suffices to show tht $K_{\mathbb{C}}\cap U^+=1$.  First, observe that $K_{\mathbb{C}}\cap U^+\trianglerighteq K_{\mathbb{C}}$ so $K_{\mathbb{C}}\subset C_{G_{\mathbb{C}}}(K_{\mathbb{C}}\cap U^+)$.  Also $\Lie(K_{\mathbb{C}}\cap U^+)=0$.  Let $s\in K_{\mathbb{C}}\cap U^+$.  Write $s=\exp X$, $X\in\mathfrak{u}^+$.  Since $X$ is centralized by $K_{\mathbb{C}}$, $X=0$.  So $s=e$.
\end{proof}

\item[$P^+\cap P^-=K_{\mathbb{C}}$]
\mbox{}
\begin{proof}
The inclusion $\supseteq$ is clear.  For the opposite inclusion, let $g\in P^+\cap P^-$.  There exists $k\in K_{\mathbb{C}}$ such that $kg\in P^+\cap U^-$.  In particular, $kg=\exp X$ for some $X\in\mathfrak{u}^-$.  On the other hand, $\exp X\in N_{G_{\mathbb{C}}}(U^+)$, so $X\in N_{\mathfrak{g}}(\mathfrak{u}^+)$.  But $\mathfrak{u}^-\cap N_{\mathfrak{g}}(\mathfrak{u}^+)=0$\commentx{(Why?)}, so $X=0$.  Hence $g\in K$.
\end{proof}

\item[$G\cap P^{\pm} = K = \widetilde{G}\cap P^\pm$] 
\mbox{}
\begin{proof}
We will show that $G\cap P^\pm=K$, the other equality being similar.  The inclusion $\supseteq$ is clear.  For the opposite inclusion, first note that $\Lie(G\cap P^+)=\mathfrak{g}_o\cap(\mathfrak{k}\oplus\mathfrak{u}^+) = \mathfrak{k}_o$.  So $K\trianglelefteq G\cap P^+$ and $(G\cap P^+)_e=K$.  Let $g\in G\cap P^+$.  Let $\mathfrak{t}_o\subset\mathfrak{k}_o$ be a maximal toral subalgebra, and $T\subset K$ a maximal torus, with Lie algebra $\Lie(T)=\mathfrak{t}_o$.  There exists $k\in K$ such that $\Ad(kg)\mathfrak{t}_o=\mathfrak{t}_o$.  Thus $kg\in C_G(\mathfrak{t}_o) = C_G(T) = T$.  Since $T\subset K$, $g\in K$. 
\end{proof}
\end{description}

Combining these remarks with the Cartan immersion, we conclude
\begin{proposition}
The mapping $U^+\times K_{\mathbb{C}}\times U^-\xrightarrow{\phi} G_{\mathbb{C}}$ defined by $\phi(x,k,y)=xky$ is injective, everywhere regular, and complex analytic.
\end{proposition}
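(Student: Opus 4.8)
The plan is to dispatch the three assertions in increasing order of effort, leaning throughout on the three lemmata just established ($N_{G_{\mathbb{C}}}(U^\pm)=P^\pm$ with $K_{\mathbb{C}}\cap U^\pm=\{e\}$, the intersection formula $P^+\cap P^-=K_{\mathbb{C}}$, and the abelianness $[\mathfrak{u}^\pm,\mathfrak{u}^\pm]=0$) together with the vector-space decomposition $\mathfrak{g}=\mathfrak{u}^+\oplus\mathfrak{k}\oplus\mathfrak{u}^-$ coming from $\mathfrak{g}=\mathfrak{k}\oplus\mathfrak{p}$ and $\mathfrak{p}=\mathfrak{u}^+\oplus\mathfrak{u}^-$. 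First I would record complex analyticity, which is essentially free: $U^+,K_{\mathbb{C}},U^-$ are complex Lie subgroups of $G_{\mathbb{C}}$, their inclusions are holomorphic, multiplication $G_{\mathbb{C}}\times G_{\mathbb{C}}\to G_{\mathbb{C}}$ is holomorphic, and $\phi$ is the composite of these; hence $\phi$ is complex analytic. At the same time I would note the dimension count $\dim_{\mathbb{C}}(U^+\times K_{\mathbb{C}}\times U^-)=\dim\mathfrak{u}^++\dim\mathfrak{k}+\dim\mathfrak{u}^-=\dim\mathfrak{g}=\dim_{\mathbb{C}}G_{\mathbb{C}}$, so that for regularity it suffices to prove that $d\phi$ is injective at each point (full rank then forces a local isomorphism).

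For injectivity, suppose $x_1k_1y_1=x_2k_2y_2$ with $x_i\in U^+,\,k_i\in K_{\mathbb{C}},\,y_i\in U^-$. Rearranging gives $x_2^{-1}x_1k_1=k_2y_2y_1^{-1}$. The left side lies in $U^+K_{\mathbb{C}}=P^+$ and the right side in $K_{\mathbb{C}}U^-=P^-$ (using that $P^\pm=K_{\mathbb{C}}\ltimes U^\pm$), so the common element $p$ lies in $P^+\cap P^-=K_{\mathbb{C}}$. Then $x_2^{-1}x_1=pk_1^{-1}\in K_{\mathbb{C}}$, while $x_2^{-1}x_1\in U^+$, whence $x_2^{-1}x_1\in U^+\cap K_{\mathbb{C}}=\{e\}$ and $x_1=x_2$. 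Symmetrically $y_2y_1^{-1}=k_2^{-1}k_1\in U^-\cap K_{\mathbb{C}}=\{e\}$ gives $y_1=y_2$, and then $k_1=k_2$. (The identity $U^-\cap K_{\mathbb{C}}=\{e\}$ holds by the same argument as $U^+\cap K_{\mathbb{C}}=\{e\}$, with the roles of $\pm$ interchanged.)

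For regularity, I would compute $d\phi$ at an arbitrary point $(x_0,k_0,y_0)$ by transporting the image to $\mathfrak{g}$ via right translation by $g^{-1}$, where $g=x_0k_0y_0$. Differentiating the three one-parameter families $x_0\exp(tX)k_0y_0$, $x_0k_0\exp(tZ)y_0$, and $x_0k_0y_0\exp(tY)$ and right-translating yields the three partial images
\begin{align*}
\Ad(x_0)X &\quad (X\in\mathfrak{u}^+),\\
\Ad(x_0)\Ad(k_0)Z &\quad (Z\in\mathfrak{k}),\\
\Ad(x_0)\Ad(k_0)\Ad(y_0)Y &\quad (Y\in\mathfrak{u}^-).
\end{align*}
Now $\Ad(k_0)\mathfrak{k}=\mathfrak{k}$ since $k_0\in K_{\mathbb{C}}$; $\Ad(y_0)\mathfrak{u}^-=\mathfrak{u}^-$ because $U^-$ is abelian (so $e^{\ad Y_0}$ fixes $\mathfrak{u}^-$ pointwise); and $\Ad(k_0)\mathfrak{u}^-=\mathfrak{u}^-$ because $K_{\mathbb{C}}\subset P^-=N_{G_{\mathbb{C}}}(U^-)$ normalizes $U^-$. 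Hence the total image of $d\phi$ equals $\Ad(x_0)\bigl(\mathfrak{u}^+\oplus\mathfrak{k}\oplus\mathfrak{u}^-\bigr)=\Ad(x_0)\mathfrak{g}=\mathfrak{g}$, and since these three blocks are the $\Ad(x_0)$-images of complementary summands, $d\phi$ is a linear isomorphism. By the inverse function theorem $\phi$ is everywhere regular.

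The routine parts are injectivity and analyticity; the one step demanding genuine care is the differential computation, where I must track the $\Ad$-twists correctly and invoke exactly the right normalization facts ($K_{\mathbb{C}}$ preserving $\mathfrak{k}$, and both $K_{\mathbb{C}}$ and $U^-$ preserving $\mathfrak{u}^-$) to see that the three partial images recombine, after the common factor $\Ad(x_0)$, into the full decomposition $\mathfrak{u}^+\oplus\mathfrak{k}\oplus\mathfrak{u}^-$ of $\mathfrak{g}$. That reassembly is the crux; everything else is bookkeeping.
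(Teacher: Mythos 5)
Your proof is correct. The paper itself offers no detailed argument here --- it simply says the proposition follows by ``combining these remarks with the Cartan immersion'' --- so the comparison is between your explicit proof and the paper's implicit one. Your injectivity argument is precisely the intended application of the three lemmata: rearranging $x_1k_1y_1=x_2k_2y_2$ to get a common element of $P^+\cap P^-=K_{\mathbb{C}}$ and then cancelling via $U^\pm\cap K_{\mathbb{C}}=\{e\}$ is the standard Harish--Chandra decomposition argument and is exactly what the preceding lemmata were set up to deliver. Where you genuinely add value is the regularity step: the chain-rule computation giving the partial images $\Ad(x_0)\mathfrak{u}^+$, $\Ad(x_0)\Ad(k_0)\mathfrak{k}$, and $\Ad(x_0)\Ad(k_0)\Ad(y_0)\mathfrak{u}^-$, followed by the observation that $\Ad(k_0)$ preserves $\mathfrak{k}$ and $\mathfrak{u}^-$ (the latter because $K_{\mathbb{C}}\subset P^-=N_{G_{\mathbb{C}}}(U^-)$) and that $\Ad(y_0)$ fixes $\mathfrak{u}^-$ pointwise (abelianness of $\mathfrak{u}^-$), is complete and correct, and it makes the appeal to the Cartan immersion unnecessary --- your argument is self-contained given the lemmata and the vector-space decomposition $\mathfrak{g}=\mathfrak{u}^+\oplus\mathfrak{k}\oplus\mathfrak{u}^-$. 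The dimension count reducing regularity of a holomorphic map between equidimensional complex manifolds to surjectivity of $d\phi$ is also handled properly. In short: same skeleton as the paper intends for injectivity, plus a clean direct differential computation that fills the gap the paper leaves open.
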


\begin{theorem}\index{Embedding theorems}
\mbox{}
\begin{description}
\item[First embedding theorem] There exist Hermitian embeddings $\widetilde{G}/K\hookrightarrow G_{\mathbb{C}}/P^-$ and $G/K\hookrightarrow G_{\mathbb{C}}/P^-$.  The embedding $G/K\hookrightarrow G_{\mathbb{C}}/P^-$ is a Hermitian isometry, and the embedding $\widetilde{G}/K\hookrightarrow G_{\mathbb{C}}/P^-$ has open image.
\item[Second embedding theorem] $G\subset U^+K_{\mathbb{C}}U^-$.  Hence
$$\xymatrix{
G/K \ar@{^(->}[r]\ar[dr]^\tau & (U^+K_{\mathbb{C}}U^-)/P^-\ar@{^(->}[r]\ar[d]^{\pi} & G_{\mathbb{C}}/P^-\\
& U^+\cong \mathbb{C}^N &
} $$
is an embedding onto the bounded symmetric domain $\tau(G/K)\subset\mathbb{C}^N$.
\end{description}
\end{theorem}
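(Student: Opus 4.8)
The plan is to run the classical Borel and Harish--Chandra arguments, feeding on the three lemmata ($N_{G_{\mathbb C}}(U^{\pm})=P^{\pm}$, $P^+\cap P^-=K_{\mathbb C}$, $G\cap P^{\pm}=K=\widetilde G\cap P^{\pm}$) and the product proposition ($\phi\colon U^+\times K_{\mathbb C}\times U^-\to G_{\mathbb C}$ injective, regular, analytic) just established. For the \textbf{First embedding theorem} I would first observe that the orbit maps $gK\mapsto gP^-$ and $g\widetilde K\mapsto g P^-$ are well defined and injective: if $g,g'\in G$ and $gP^-=g'P^-$ then $g^{-1}g'\in G\cap P^-=K$, and likewise for $\widetilde G$. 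Both maps are $G$- (resp.\ $\widetilde G$-) equivariant and holomorphic into the compact complex manifold $G_{\mathbb C}/P^-$. To see they are immersions I would compute the differential at the base point $o=eP^-$, where $T_o(G_{\mathbb C}/P^-)=\mathfrak g/(\mathfrak k\oplus\mathfrak u^-)\cong\mathfrak u^+$; the differentials are the composites $\mathfrak p_o\hookrightarrow\mathfrak p\twoheadrightarrow\mathfrak u^+$ and $i\mathfrak p_o\hookrightarrow\mathfrak p\twoheadrightarrow\mathfrak u^+$. Using $\theta\mathfrak u^{\pm}=\mathfrak u^{\mp}$ (from $\theta\mathfrak g_\alpha=\mathfrak g_{-\alpha}$) one checks $\mathfrak p_o\cap\mathfrak u^-=0$ and $i\mathfrak p_o\cap\mathfrak u^-=0$, so these are injective; a dimension count $\dim_{\mathbb R}\mathfrak p_o=2\dim_{\mathbb C}\mathfrak u^+=\dim_{\mathbb R}\mathfrak u^+$ shows they are in fact real-linear isomorphisms. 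Because the source and target have equal real dimension, each immersion is a local diffeomorphism, hence an open map, so both images are open; since $J$ acts as $\pm i$ on $\mathfrak u^{\pm}$ the isomorphism onto $\mathfrak u^+$ is complex linear, giving the Hermitian isometry onto image. Finally $\widetilde G$ is compact, so its open image is also closed, hence all of the connected $G_{\mathbb C}/P^-$, while the noncompact $G/K$ lands on a proper open subset.

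For the \textbf{Second embedding theorem} the crux is the inclusion $G\subset U^+K_{\mathbb C}U^-$. I would reduce to one-parameter pieces via the Cartan decomposition $G=K\exp\mathfrak p_o$ together with $\mathfrak p_o=\bigcup_{k\in K}\Ad(k)\mathfrak a_o$ (Corollary \ref{coverbyflats}), writing a general $g=k_0\exp X$ with $X=\Ad(k)H$, $H\in\mathfrak a_o$. Since $K\subset K_{\mathbb C}$ normalizes $U^{\pm}$ and $K_{\mathbb C}$, the set $U^+K_{\mathbb C}U^-$ is $K$-conjugation invariant and absorbs left multiplication by $K_{\mathbb C}$, so it suffices to prove $\exp H\in U^+K_{\mathbb C}U^-$ for $H\in\mathfrak a_o$. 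Writing $H=\sum_{\alpha\in F}t_\alpha(E_\alpha+F_\alpha)$ with $F$ a maximal strongly orthogonal system, strong orthogonality gives commuting $\mathfrak{sl}_2$-triples, so $\exp H$ is a product over $\alpha\in F$ of elements of commuting $SL_2$-subgroups; in each I would invoke the explicit factorization \eqref{su11decomposition}, whose middle and upper factors stay nonsingular for \emph{all} real $t$ (the $\cosh t$ never vanishes), so the product lies in $U^+K_{\mathbb C}U^-$.

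With the inclusion in hand, I would use the injective regular analytic $\phi$ to define the holomorphic ``$U^+$-component'' map $\zeta\colon U^+K_{\mathbb C}U^-\to U^+$ sending $g=u^+k_{\mathbb C}u^-$ to $u^+$; this is well defined by injectivity of $\phi$ and descends, since $P^-=K_{\mathbb C}U^-$, to a biholomorphism $\pi$ of the open cell $(U^+K_{\mathbb C}U^-)/P^-$ onto $U^+\cong\mathbb C^N$ with $N=\dim_{\mathbb C}\mathfrak u^+$. Composing the first embedding with $\pi$ yields $\tau\colon G/K\to\mathbb C^N$, defined on all of $G/K$ precisely because $G\subset U^+K_{\mathbb C}U^-$; as the composite of an open Hermitian embedding with the chart $\pi$, it is an embedding onto an open subset. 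To finish I would establish boundedness: from \eqref{su11decomposition} each $SL_2$-factor sends $t\mapsto\tanh t$, so $\tau(\exp\mathfrak a_o\cdot o)$ lies in the bounded polydisc $\{\sum_\alpha z_\alpha E_\alpha:\lvert z_\alpha\rvert<1\}$; since $G=KAK$ gives $G\cdot o=K A\cdot o$ and $K$ acts on $U^+\cong\mathfrak u^+$ through the compact group $\Ad(K)$, preserving an invariant norm, the image $\tau(G/K)=K\cdot\tau(A\cdot o)$ is bounded. Together with the fact (shown earlier) that $G/K$ is Hermitian symmetric, this exhibits $\tau(G/K)$ as a bounded symmetric domain. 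The main obstacle is the inclusion $G\subset U^+K_{\mathbb C}U^-$ and the attendant boundedness: both rest on the strongly orthogonal reduction to commuting $\mathfrak{sl}_2$'s and on the singularity-free nature of the $SL_2$ factorization for real parameters, which is exactly where the noncompact real form's geometry enters.
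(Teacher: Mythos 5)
Your proposal is correct and follows essentially the same route as the paper: the core of both arguments is the inclusion $A\subset U^+K_{\mathbb{C}}U^-$ obtained by splitting $\exp H$, $H\in\mathfrak{a}_o$, into commuting $SL_2$-factors via a maximal strongly orthogonal set $F$ and applying the $\mathfrak{su}(1,1)$ factorization \eqref{su11decomposition}, followed by $G=KAK\subset U^+K_{\mathbb{C}}U^-$ and boundedness of $\tau(G/K)\subset K\cdot\tau(A\cdot o)$ from the $\tanh$ bound together with compactness of $K$. The only difference is that you also write out the first embedding theorem in full (injectivity from $G\cap P^-=K$, the differential computation $\mathfrak{p}_o\xrightarrow{\sim}\mathfrak{u}^+$ using $\theta\mathfrak{u}^{\pm}=\mathfrak{u}^{\mp}$, and openness), which the paper treats as already settled by the preceding lemmata and begins its proof with ``it remains to show that $\tau(G/K)$ is bounded.''
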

\begin{proof}
It remains to show that $\tau(G/K)$ is bounded.  For this, let $\mathfrak{a}_o= \bigoplus_{\alpha\in F} \mathbb{R}[E_\alpha+F_\alpha]$ be a flat subspace and let $A=\exp\mathfrak{a}_o\subset G\subset G_{\mathbb{C}}$.  We claim that $A\subset U^+K_{\mathbb{C}}U^-$.  Indeed, for any $\alpha$, $E_\alpha,F_\alpha,[E_\alpha,F_\alpha]$ spans a Lie subalgebra of $\mathfrak{g}$ isomorphic to $\mathfrak{sl}(2,\mathbb{C})$.  Under the isomorphism $E_\alpha+F_\alpha\mapsto \begin{pmatrix}0&1\\1&0\end{pmatrix}$.  The one parameter group $\exp t(E_\alpha+E_\beta)$ is a subgroup of $U^+K_{\mathbb{C}}U^-$, which decomposes by \eqref{su11decomposition} as
$$\exp t(E_\alpha+F_\alpha) = \exp (tE_\alpha)k_1\exp(tF_\alpha).$$
The product of two elements of this form is
\begin{align*}
\exp t(E_\alpha+F_\alpha) \exp t(E_\beta + F_\beta) &=\exp (tE_\alpha)k_1\exp(tF_\alpha) \exp (tE_\beta)k_2\exp(tF_\beta) \\
&=\exp (tE_\alpha)k_1\exp(tE_\beta) \exp (tF_\alpha)k_2\exp(tF_\beta) \\
&\in U^+k_1k_2U^-.
\end{align*}
Hence
$$G=KAK\subset KU^+K_{\mathbb{C}}U^- = U^+K_{\mathbb{C}}U^-.$$
So if $g\in G$, $g=k_1ak_2$ for some $k_i\in K$, $a\in A$, and $a=u^+k_3u^-$ for some $u^\pm\in U^\pm$.  Hence
$$\tau(g) = \tau(k_1ak_2) = \tau(k_1u^+k_3u^-k_2) = \tau(k_1u^+k_1^{-1}) = k_1\tau(a)k_1^{-1} \subset K\tau(A)K.$$
But $K$ is compact and $\tau(A)$ is bounded (by the $\mathfrak{su}(1,1)$ case).
\end{proof}

Combining the embedding theorem with the decomposition theorem for symmetric spaces, we have
\begin{theorem}
Let $D$ be a bounded symmetric domain.  Then $D\cong D_1\times\cdots\times D_t$ where each $D_i$ is a bounded symmetric domain that is irreducible as a symmetric space.  The decomposition is unique apart from the ordering.
\end{theorem}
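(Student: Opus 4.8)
The plan is to combine the two theorems immediately preceding this statement with the decomposition theorem for Hermitian symmetric spaces, so almost all the work has already been done. First I would recall that every bounded symmetric domain $D$, equipped with its Bergman metric $g$ and natural complex structure $J$, is a Hermitian symmetric space. Applying the decomposition theorem for Hermitian symmetric spaces to $(D,g,J)$ yields a Hermitian isometry $D \cong M_f \times M_1 \times \cdots \times M_t$ with $M_f$ flat and each $M_i$ irreducible and simply connected. The preceding theorem (that a bounded symmetric domain factors as a product of irreducible non-compact Hermitian symmetric spaces) shows that the flat factor $M_f$ reduces to a point and that no $M_i$ can be compact; hence $D \cong M_1 \times \cdots \times M_t$ with each $M_i$ an irreducible non-compact Hermitian symmetric space. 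This gives the existence of the factorization, pending the reinterpretation of each $M_i$ as a bounded symmetric domain.

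To promote each factor to a bounded symmetric domain, I would invoke the Second Embedding Theorem: each non-compact irreducible Hermitian symmetric space $M_i = G_i/K_i$ embeds, via the map $\tau$, biholomorphically onto a bounded symmetric domain $D_i := \tau(M_i) \subset \mathbb{C}^{N_i}$. Since the product decomposition $D \cong M_1 \times \cdots \times M_t$ is realized by a Hermitian isometry and each $M_i$ is biholomorphic to $D_i$, we obtain a biholomorphism $D \cong D_1 \times \cdots \times D_t$ expressing $D$ as a product of bounded symmetric domains, each of which is irreducible as a symmetric space by construction.

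The main obstacle, and the real content beyond quoting the prior theorems, is reconciling the two notions of irreducibility: each $D_i$ is irreducible as a \emph{symmetric space}, whereas the conclusion demands that it be irreducible as a \emph{bounded symmetric domain}, i.e.\ not biholomorphic to a product of bounded symmetric domains. The bridge will be the proposition that every biholomorphism of bounded domains is a Hermitian isometry for the Bergman metrics. Thus, if some $D_i$ were biholomorphic to a nontrivial product $D' \times D''$ of bounded symmetric domains, that biholomorphism would be a Hermitian isometry, and decomposing $D'$ and $D''$ in turn would exhibit $M_i \cong D_i$ as a nontrivial product of Hermitian symmetric spaces, contradicting its irreducibility as a symmetric space. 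Hence the two notions of irreducibility coincide, and likewise a biholomorphic product decomposition is the same as an isometric symmetric-space decomposition.

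Finally, for uniqueness I would suppose that $D \cong D_1 \times \cdots \times D_t$ and $D \cong D_1' \times \cdots \times D_s'$ are two factorizations into irreducible bounded symmetric domains. The induced map between the two products is a biholomorphism, hence a Hermitian isometry, so both exhibit $(D,g,J)$ as a product of irreducible simply-connected Hermitian symmetric spaces. The uniqueness clause of the decomposition theorem for Hermitian symmetric spaces then forces $s = t$ and, after reordering, Hermitian isometries $D_i \cong D_{\pi(i)}'$ for some permutation $\pi$; since these isometries are biholomorphisms, this is exactly the asserted uniqueness up to ordering.
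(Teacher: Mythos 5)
Your proof is correct and follows essentially the same route the paper takes: the paper's entire argument is the one-line remark that the theorem follows by ``combining the embedding theorem with the decomposition theorem for symmetric spaces,'' which are precisely the ingredients you assemble (the Hermitian decomposition theorem, the earlier theorem eliminating the flat and compact factors via bounded holomorphic functions, and the second embedding theorem realizing each noncompact irreducible factor as a bounded domain). Your additional paragraph reconciling irreducibility as a symmetric space with irreducibility as a bounded domain goes slightly beyond what the statement requires---it asserts only irreducibility as a symmetric space---but it is sound, modulo the standard fact (worth stating explicitly) that the Bergman kernel of a product domain is the product of the factors' kernels, so that the Bergman metric of a product is the product metric and the biholomorphism-equals-Hermitian-isometry proposition applies to the product decomposition.
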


\chapter{Classification of real simple Lie algebras}\label{Classification}
To classify symmetric spaces, it is necessary (and sufficient) to classify all real simple Lie algebras.  The essential idea is to start with a complex simple Lie algebra $\mathfrak{g}$ and to fix a Cartan subalgebra $\mathfrak{h}\subset\mathfrak{g}$ and a Cartan involution $\theta:\mathfrak{g}\to\mathfrak{g}$.  Then the strategy is to mark the Dynkin diagram in a way that shows the involution, and gives enough information to determine the status of each of the roots that are fixed by $\theta$: whether they are compact, noncompact, real, or imaginary.  There are two possible approaches to this classification.  In one, the Cartan subalgebra $\mathfrak{h}_o$ is the maximal compact CSA, and in the other it is the maximally split CSA.  These lead us, respectively, to introduce the Vogan diagram and Satake diagram for a real form.  Each of these has advantages and disadvantages: the approach via Vogan diagrams gives the classification much more easily and one can read off the subalgebra $\mathfrak{k}_o$ more readily, but with Satake diagrams the rank and restricted root system can be determined more easily.

\section{Classical structures}
We here fix notation for the classical groups and Lie algebras.  Various groups can usefully be regarded as matrix groups over $\mathbb{R}$, $\mathbb{C}$, or $\mathbb{H}$.  We can pass between these via the following identifications.  If $Z=A+iB\in GL(n,\mathbb{C})$ is a complex matrix, with $A,B$ real, then
$$\phi(Z) = \begin{pmatrix}A & -B\\ B& A\end{pmatrix} \in GL(2n,\mathbb{R}).$$
is a Lie group morphism into a closed subgroup of $GL(2n,\mathbb{R})$.  Similarly, if $H=A+Bj\in GL(n,\mathbb{H})$ is a quaternion matrix, with $A,B\in GL(n,\mathbb{C})$ complex (identifying $\mathbb{C}=\mathbb{R}[i]\subset\mathbb{H}$), then
$$\psi(H) = \begin{pmatrix} A & -\overline{B}\\ B&\overline{A}\end{pmatrix} \in GL(2n,\mathbb{C})$$
is a Lie group morphism onto a closed subgroup of $GL(2n,\mathbb{C})$.

\begin{description}
\item[$\mathfrak{sl}(n,\mathbb{R})$] Lie algebra of trace-free real $n\times n$ matrices.
\item[$\mathfrak{sl}(n,\mathbb{C})$] Lie algebra of trace-free complex $n\times n$ matrices.
\item[$\mathfrak{sl}(n,\mathbb{H})$] Lie algebra of $x\in\mathfrak{gl}(n,\mathbb{H})$ such that $\operatorname{Re}\tr x = 0$.  Equivalently, $\mathfrak{sl}(n,\mathbb{H})=\mathfrak{sl}(2n,\mathbb{C})\cap\psi(\mathfrak{gl}(n,\mathbb{H}))$.  This Lie algebra is also denoted by $\mathfrak{su}^*(2n)$.
\item[$\mathfrak{so}(n,\mathbb{R})$] Lie algebra of $x\in\mathfrak{gl}(n,\mathbb{R})$ such that $x+x^t=0$.  We denote it by just $\mathfrak{so}(n)$.
\item[$\mathfrak{so}(n,\mathbb{C})$] Lie algebra of $x\in\mathfrak{gl}(n,\mathbb{C})$ such that $x+x^t=0$.
\item[$\mathfrak{so}(n,\mathbb{H})$] Lie algebra of $x\in\mathfrak{gl}(n,\mathbb{H})$ such that $x+x^t=0$.  Equivalently, $\mathfrak{so}(2n,\mathbb{C})\cap\psi(\mathfrak{gl}(n,\mathbb{H}))$.  We denote it also by $\mathfrak{so}^*(2n)$.
\item[$\mathfrak{su}(n)$] Lie algebra of $x\in\mathfrak{gl}(n,\mathbb{C})$ such that $x+\overline{x}^t=0$.
\item[$\mathfrak{su}(n,\mathbb{H})$]  Lie algebra of $x\in\mathfrak{gl}(n,\mathbb{H})$ such that $x+\overline{x}^t=0$.  This is also denoted by $\mathfrak{sp}(n)$.  Note that $\mathfrak{sp}(n)\not=\mathfrak{sp}(n,\mathbb{R})$, defined below, but we do have $\mathfrak{sp}(n)\cong\mathfrak{sp}(n,\mathbb{C})\cap\psi(\mathfrak{gl}(n,\mathbb{H}))$.
\item[$\mathfrak{sp}(n,\mathbb{R})$]  Lie algebra of $x\in\mathfrak{gl}(2n,\mathbb{R})$ that leaves invariant the symplectic form $J$: $Jx + x^tJ=0$.
\item[$\mathfrak{sp}(n,\mathbb{C})$]  Lie algebra of $x\in\mathfrak{gl}(2n,\mathbb{C})$ that leaves invariant the symplectic form $J$: $Jx + x^tJ=0$.
\end{description}

\subsection{Mixed signature}  Let $I_{p,q}$ denote the pseudo-Hermitian form of signature $(p,q)$:
$$I_{p,q} = \begin{pmatrix}I_p&\\&-I_q\end{pmatrix}.$$
We can define in addition
\begin{description}
\item[$\mathfrak{so}(p,q)$] Lie algebra of $x\in\mathfrak{gl}(n,\mathbb{R})$ leaving $I_{p,q}$ invariant: $I_{p,q}x + x^tI_{p,q} = 0$.  Note that $\mathfrak{so}(n) = \mathfrak{so}(0,n)=\mathfrak{so}(n,0)$.
\item[$\mathfrak{su}(p,q)$] Lie algebra of $x\in\mathfrak{gl}(n,\mathbb{C})$ leaving $I_{p,q}$ invariant: $I_{p,q}x + \overline{x}^tI_{p,q} = 0$.  Note that $\mathfrak{su}(n) = \mathfrak{su}(0,n)=\mathfrak{su}(n,0)$.
\item[$\mathfrak{sp}(p,q)$]  Subalgebra of $x\in\mathfrak{sp}(n,\mathbb{C})$ that leave $I_{p,q}$ invariant: $I_{p,q}x + x^tI_{p,q}=0$.  Note $\mathfrak{sp}(n)=\mathfrak{sp}(0,n)=\mathfrak{sp}(n,0)=\mathfrak{sp}(n,\mathbb{C})\cap \mathfrak{u}(2n)$.
\end{description}

\subsection{Examples}
\begin{itemize}
\item The space of complex structures on $\mathbb{R}^{2n}$ can be identified with $GL(2n,\mathbb{R})/GL(n,\mathbb{C})\cong SO(2n)/U(n)$.
\item The space of real structures on $\mathbb{C}^n$ is $GL(n,\mathbb{C})/GL(n,\mathbb{R})\cong U(n)/O(n)$.
\item The space of quaternionic structures on $\mathbb{C}^{2n}$ is $GL(2n,\mathbb{C})/GL(n,\mathbb{H})\cong SU(2n)/SU^*(2n)$.
\end{itemize}

\begin{table}
\caption{Symmetric spaces of Type II and IV}
\begin{tabular}{cccc}
$\mathbb{C}$-simple& Compact form & Type II & Type IV\\
$\mathfrak{sl}(n,\mathbb{C})$ & $\mathfrak{su}(n)$ & $SU(n)$ & $SL(n,\mathbb{C})/SU(n)$\\
$\mathfrak{so}(n,\mathbb{C})$ & $\mathfrak{so}(n)$ & $SO(n)$ & $SO(n,\mathbb{C})/SO(n)$\\
$\mathfrak{sp}(n,\mathbb{C})$ & $\mathfrak{sp}(n)$ & $Sp(n)$ & $Sp(n,\mathbb{C})/Sp(n)$\\
$\mathfrak{e}_{6,7,8}^{\mathbb{C}}$ & $\mathfrak{e}_{6,7,8}$ & $E_{6,7,8}$ & $E_{6,7,8}^{\mathbb{C}}/E_{6,7,8}$\\
$\mathfrak{f}_4^{\mathbb{C}}$ & $\mathfrak{f}_4$ & $F_4$ & $F_4^{\mathbb{C}}/F_4$\\
$\mathfrak{g}_2^{\mathbb{C}}$ & $\mathfrak{g}_2$ & $G_2$ & $G_2^{\mathbb{C}}/G_2$
\end{tabular}
\end{table}

\section{Vogan diagrams}
Let $(\mathfrak{g}_o,s_o)$ be an irreducible orthgonal symmetric Lie algebra, with $\mathfrak{g}$ semisimple over $\mathbb{C}$.  Let $i\mathfrak{t}_o\subset\mathfrak{k}_o$ be a maximal toral subalgebra and $\mathfrak{h}_o = C_{\mathfrak{g}_o}(i\mathfrak{t}_o)$ the associated maximally compact Cartan subalgebra of $\mathfrak{g}_o$.  This decomposes as $\mathfrak{h}_o=i\mathfrak{t}_o\oplus\mathfrak{b}_o$ for some $\mathfrak{b}_o\subset\mathfrak{p}_o$. Recall that a root $\alpha\in\Phi(\mathfrak{g}_o,\mathfrak{h}_o)$ is:
\begin{itemize}
\item Real if $\alpha\equiv 0$ on $i\mathfrak{t}_o$.
\item Imaginary if $\alpha\equiv 0$ on $\mathfrak{b}_o$.  In this case, a root is compact if $\mathfrak{g}_\alpha\subset \mathfrak{k}$ and noncompact if $\mathfrak{g}_\alpha\subset\mathfrak{p}$.
\item Complex if neither holds.
\end{itemize}

\begin{definition}\footnote{More information on Vogan diagrams can be found in Knapp, {\em Lie groups: beyond and introduction}, Chapter VI.}\index{Vogan diagram}
The Vogan diagram associated to the triple $(\mathfrak{g}_o,\mathfrak{h}_o,\Delta(\mathfrak{g}_o,\mathfrak{h}_o)$ is a triple $(D,\theta,\text{marking in $D^\theta$})$ consisting of
\begin{itemize}
\item The Dynkin diagram $D$ corresponding to the set $\Delta(\mathfrak{g}_o,\mathfrak{h}_o)$ of simple roots.
\item An involution $\theta:D\to D$ of the Dynkin diagram (indicated by folding the diagram so that the nodes correspond).
\item A subset of nodes in the fixed point set $D^\theta$ that are marked if the corresponding simple root is noncompact.
\end{itemize}
\end{definition}

There could be many Vogan diagrams corresponding to a given simple orthogonal symmetric Lie algebra $(\mathfrak{g}_o,s_o)$.  However, it turns out (Proposition \ref{BoreldeSiebenthal}) that associated to any such Lie algebra there is a Vogan diagram having at most one marked node $\alpha_j$, and that this node has $d_j=\langle\lambda_j^\vee,\theta\rangle=1$ or $2$.  Hence we shall confine attention to this case.  We shall not prove here that every possible Vogan diagram gives a simple real Lie algebra, although this is true.  Rather we content ourselves with the fact that once we have listed all of those that are admissible, we shall have obtained the complete classification.

\subsection{Vogan diagrams, no involution}  We first look at the diagrams whose involution is trivial.  In each case, the compact form of the Lie algebra has no marked node.  If a node is marked, the subalgebra $\mathfrak{k}_o$ is the Lie algebra obtained by the following procedure.  Suppose that $\alpha_j$ is marked.  If $d_j=1$, then $\mathfrak{k}_o$ is the Lie algebra with one-dimensional center whose semisimple part is the compact form of the Lie algebra whose (possibly disconnected) Dynkin diagram is obtained by deleting $\alpha_j$.  If $d_j=2$, then $\mathfrak{k}_o$ is the compact form of the Lie algebra obtained by deleting $\alpha_j$ and including the affine root $-\theta$.

\begin{description}
\item[$A_n$]
The Dynkin diagram, with the highest root labeled, and showing the affine root as well, is
$$\xymatrix{
&&&\circ\ar@{--}[drr]\ar@{--}[dll]&&\\
&\osarray{1}{\circ}\ar@{-}[r]\ar@{}[d] & \osarray{1}{\circ}\ar@{}[d]\ar@{-}[r] & \osarray{1}{\circ}\ar@{}[d]\ar@{..}[r] &\osarray{1}{\circ}\ar@{-}[r]\ar@{}[d] & \osarray{1}{\circ}\ar@{}[d]\\
\mathfrak{g}_o:&\mathfrak{su}(1,n) & \mathfrak{su}(2,n-1) & \mathfrak{su}(2,n-2) & \mathfrak{su}(n-1,2) & \mathfrak{su}(n,1) &
}
$$
Also shown is the Lie algebra $\mathfrak{g}_o$ with the associated noncompact root.  The maximal compact subalgebra of $\mathfrak{g}_o$ with the corresponding to the marked root is, up to a center, the Lie algebra whose Dynkin diagram is complementary to the marked root.  Thus for $\mathfrak{g}_o=\mathfrak{su}(p,q)$, the isotropy algebra is $\mathfrak{k}_o=\mathfrak{s}(\mathfrak{u}(p)\oplus\mathfrak{u}(q))$.

We can also read off the compact forms.  These all have $\mathfrak{g}_o=\mathfrak{su}(n+1)$, and their isotropy algebra $\mathfrak{k}_o$ must come from the corresponding dual noncompact form.  Thus these are $\mathfrak{g}_o=\mathfrak{su}(n+1)$, $\mathfrak{k}_o=\mathfrak{s}(\mathfrak{u}(p)\oplus\mathfrak{u}(q))$ with $p+q=n+1$.  In each case the rank of the symmetric space is $\min(p,q)$.

The restricted root system and center are
\begin{align*}
\Phi(\mathfrak{g}_o,\mathfrak{a}_o) &= BC_p \qquad 0<p<q\qquad &|Z(G^{sc}_c)\cap A|=1\\
&=C_p \qquad p=q\qquad &|Z(G^{sc}_c)\cap A|=2.
\end{align*}

The corresponding simply connected forms of the symmetric spaces are all Hermitian.  Summarizing,

\begin{tabular}{cccc}
Type & Space & Note & Cartan type\\
&&&\\
Type III & $\displaystyle\frac{SU(p,q)}{S(U(p)\times U(q))}$ & $1<p\le q$ & AIII\\
&&&\\
 &  $\displaystyle\frac{SU(1,n)}{S(U(1)\times U(n))}$ & Hermitian hyperbolic space & AIV\\
&&&\\
Type I & $\displaystyle\frac{SU(n+1)}{S(U(p)\times U(q))}$ & $1<p\le q$ & AIII\\
&&&\\
 & $\displaystyle\frac{SU(n+1)}{S(U(1)\times U(n))}$ & Complex projective space & AIV
\end{tabular}

\vspace{24pt}

\item[$B_n$]
$$
\xymatrix{
&&\circ\ar@{--}[d]&&&\\
&\osarray{1}{\circ}\ar@{-}[r] & \osarray{2}{\circ}\ar@{-}[r] & \osarray{2}{\circ}\ar@{..}[r] &\osarray{2}{\circ}\ar@{=}[r]\ar@{}[r]|{\rangle} & \osarray{2}{\circ}\\
\mathfrak{g}_o: &\mathfrak{so}(2,2n-1) &\mathfrak{so}(4,2n-3)&\cdots&\mathfrak{so}(3,2n-2)&\mathfrak{so}(1,2n)
}
$$

The noncompact form has $\mathfrak{g}_o=\mathfrak{so}(p,q)$, $p+q=2n+1$, $p,q>0$, and $\mathfrak{k}_o=\mathfrak{so}(p)+\mathfrak{so}(q)$.  The compact form has $\mathfrak{g}_o=\mathfrak{so}(n+1)$ and $\mathfrak{k}_o=\mathfrak{so}(p)+\mathfrak{so}(q)$.  These both have rank $\min(p,q)$.  The restricted root system is
$$\Phi(\mathfrak{g}_o,\mathfrak{a}_o) = B_{\operatorname{rank}},\qquad |Z(B_c^{sc}\cap A| = 2.$$
The space is Hermitian if and only if $p=2$.

\begin{tabular}{cccc}
Type & Space & Note & Cartan type\\
&&&\\
Type III & $\displaystyle\frac{SO(p,q)}{SO(p)\times SO(q))}$ & $p+q=2n+1, p,q\ge 2$ & BI\\
&&&\\
 &  $\displaystyle\frac{SO(1,2n)}{SO(2n)}$ & Hyperbolic space & BII\\
&&&\\
Type I & $\displaystyle\frac{SO(2n+1)}{SO(p)\times SO(q))}$ & $+1$ multiply-connected form& BI\\
&&&\\
& $\displaystyle\frac{SO(2n+1)}{SO(2n)}$ & Sphere ($+1$ multiply-connected form $\mathbb{RP}^{2n}$) & BII
\end{tabular}

\item[$C_n$]
$$
\xymatrix{
\circ\ar@{==}[r]\ar@{}[r]|{\rangle}&\osarray{2}{\circ}\ar@{-}[r] & \osarray{2}{\circ}\ar@{-}[r] & \osarray{2}{\circ}\ar@{..}[r] &\osarray{2}{\circ}\ar@{=}[r]\ar@{}[r]|{\langle} & \osarray{1}{\circ}\\
\mathfrak{g}_o: &\mathfrak{sp}(1,n-1)&\mathfrak{sp}(2,n-2)&\cdots&\mathfrak{sp}(n-1,1)&\mathfrak{sp}(n,\mathbb{R})
}
$$
The noncompact forms are $\mathfrak{g}_o=\mathfrak{sp}(p,q)$ ($p+q=n, p,q\ge 1$) with $\mathfrak{k}_o=\mathfrak{sp}(p)+\mathfrak{sp}(q)$.  The corresponding compact form is $\mathfrak{g}_o=\mathfrak{sp}(n)$ and $\mathfrak{k}_o=\mathfrak{sp}(p)+\mathfrak{sp}(q)$.  The rank is $\min(p,q)$.

The exceptional case (when the last node is colored) is $\mathfrak{g}_o=\mathfrak{sp}(n,\mathbb{R})$ and $\mathfrak{k}_o=\mathfrak{u}(n)$, and the corresponding compact form $\mathfrak{g}_o=\mathfrak{sp}(n)$ and $\mathfrak{k}_o=\mathfrak{u}(n)$.  These two symmetric spaces have rank $n$.  The simply connected forms for these two symmetric spaces are Hermitian, but none of the others are.

The restricted roots and center are

\begin{table}[htp]
\begin{tabular}{cccc}
$\displaystyle\Phi(\mathfrak{g}_o,\mathfrak{a}_o)$ & Note & Group & $|Z(G_c^{sc})\cap A|$\\
$BC_{\operatorname{rank}}$ & $p\not=q$ & $Sp(p,q)$ & 1\\
$C_p$ & $p=q$ & $Sp(p,q)$ & 2\\
$C_n$ & & $Sp(n,\mathbb{R})$ & 2
\end{tabular}
\end{table}

\vspace{24pt}

\begin{table}[htp]
\begin{tabular}{cccc}
Type & Space & Note & Cartan type\\
&&&\\
Type III & $\displaystyle\frac{Sp(p,q)}{Sp(p)\times Sp(q))}$ & $p+q=n, p,q\ge 2$& CII\\
&&&\\
& $\displaystyle\frac{Sp(1,n-1)}{Sp(1)\times Sp(n-1)}$ & Quaternionic hyperbolic space & CII\\
&&&\\
& $\displaystyle\frac{Sp(n,\mathbb{R})}{U(n)}$ & Siegel space & CI\\
&&&\\
Type I & $\displaystyle\frac{Sp(n)}{Sp(p)\times Sp(q))}$ &  $\begin{matrix}\text{Grassmannian of quaternionic $p$-planes in $\mathbb{H}^{p+1+1}$}\\p+q=n, p,q\ge 2\\\text{$+1$ multiply-connected form if $p=q$} \end{matrix}$& CII\\
&&&\\
& $\displaystyle\frac{Sp(1,n-1)}{Sp(1)\times Sp(n-1)}$ & $\mathbb{HP}^1$ & CI\\
&&&\\
& $\displaystyle\frac{Sp(n)}{U(n)}$ & $\begin{matrix}\text{Complex Lagrangian Grassmannian}\\\text{$+1$ multiply-connected form}\end{matrix}$ & CI
\end{tabular}
\end{table}

\item[$D_n$]

$$
\xymatrix{
&&&&&&\mathfrak{so}^*(2n)\\
&&\circ\ar@{--}[d]&&&&\overset{\displaystyle 1}{\circ}\\
&\osarray{1}{\circ}\ar@{-}[r]&\osarray{2}{\circ}\ar@{-}[r]&\osarray{2}{\circ}\ar@{..}[r]&\osarray{2}{\circ}\ar@{-}[r]&\osarray{2}{\circ}\ar@{-}[ur]\ar@{-}[dr]&\\
\mathfrak{g}_o:&\mathfrak{so}(2,2n-2)&\mathfrak{so}(4,2n-4)&\cdots&&\mathfrak{so}(2n-4,4)&\osarray{1}{\circ}\\
&&&&&&\mathfrak{so}^*(2n)
}
$$

The noncompact forms are $\mathfrak{g}_o=\mathfrak{so}(p,q), p+q=2n, p\ge 2,q\ge 4$ (even) with $\mathfrak{k}_o=\mathfrak{so}(p)+\mathfrak{so}(q)$ with rank $\min(p,q)$, and $\mathfrak{g}_o=\mathfrak{so}^*(2n)$ and $\mathfrak{k}_o=\mathfrak{u}(n)$ with rank $\lfloor n/2\rfloor$. (Recall: $\mathfrak{so}^*(2n)=\mathfrak{so}(n,\mathbb{H})$.)  The corresponding compact forms are $\mathfrak{g}_o=\mathfrak{so}(2n)$ and $\mathfrak{k}_o=\mathfrak{so}(p)+\mathfrak{so}(q)$ for $p+q=2n, p\ge 2,q\ge 4$, and $\mathfrak{k}_o=\mathfrak{u}(n)$.  Of these, the forms $(\mathfrak{so}(2,2n-2),\mathfrak{so}(2)+\mathfrak{so}(2n-2))$, $(\mathfrak{so}(2n),\mathfrak{so}(2)+\mathfrak{so}(2n-2))$, $(\mathfrak{so}^*(2n),\mathfrak{u}(n))$ and $(\mathfrak{so}(2n),\mathfrak{u}(n))$ carry a Hermitian structure.

The restricted roots and center are

\begin{table}[htp]
\begin{tabular}{cccc}
$\displaystyle\Phi(\mathfrak{g}_o,\mathfrak{a}_o)$ & Note & Group & $|Z(G_c^{sc})\cap A|$\\
$B_{\operatorname{rank}}$ & $p\not=q$ & $SO(p,q)$ & 2\\
$D_p$ & $p=q$ & $SO^*(n)$ & 4\\
$C_{n/2}$ & $n$ even & $SO^*(n)$ & 2\\
$BC_{(n-1)/2}$ & $n$ odd & $Sp(n,\mathbb{R})$ & 1
\end{tabular}
\end{table}

\vspace{24pt}

\begin{table}[htp]
\begin{tabular}{cccc}
Type & Space & Note & Cartan type\\
&&&\\
Type III & $\displaystyle\frac{SO(p,q)}{SO(p)\times SO(q))}$ & $p+q=2n, p,q\ge 2$ even& DI\\
&&&\\
& $\displaystyle\frac{SO^*(2n)}{U(n)}$ & & DIII\\
&&&\\
Type I & $\displaystyle\frac{SO(2n)}{SO(p)\times SO(q))}$ &  $\begin{matrix}p+q=2n, p,q\ge 2 (\text{even})\\\text{$+1$ multiply-connected form if $p\not=q$}\\\text{$+2$ forms if $p=q$} \end{matrix}$& DI\\
&&&\\
& $\displaystyle\frac{SO(2n)}{U(n)}$ &  & DIII\\
\end{tabular}
\end{table}
\end{description}

\subsection{Exceptional groups}
\begin{definition}\index{Index}
The {\em index} of an orthogonal symmetric Lie algebra $(\mathfrak{g}_o,s_o)$ is
$$-\tr s_o = \dim\mathfrak{p}_o-\dim\mathfrak{k}_o.$$
\end{definition}
The noncompact real forms of the exceptional Lie algebra are labeled by their index.

\begin{description}
\item[$E_6$]
$$
\xymatrix{
&&&\circ\ar@{--}[d]&&\\
&&&\overset{\displaystyle 2}{\circ}\ar@{-}[d]&\mathfrak{e}_6(2)&\\
&\osarray{1}{\circ}\ar@{-}[r] & \osarray{2}{\circ}\ar@{-}[r] & \osarray{3}{\circ}\ar@{-}[r] &\osarray{2}{\circ}\ar@{-}[r] & \osarray{1}{\circ}\\
\mathfrak{g}_o:&\mathfrak{e}_6(-14)&\mathfrak{e}_6(2)&&\mathfrak{e}_6(2)&\mathfrak{e}_6(-14)
}
$$

The compact forms have $\mathfrak{g}_o=\mathfrak{e}_6$ and $\mathfrak{k}_o=\mathfrak{so}(10)\oplus\mathfrak{u}(1)$ (rank $2$) or $\mathfrak{su}(6)\oplus\mathfrak{su}(2)$ (rank $4$); of these only the one with $\mathfrak{k}_o=\mathfrak{so}(10)\oplus\mathfrak{u}(1)$ is Hermitian.  The corresponding noncompact forms are $\mathfrak{g}_o=\mathfrak{e}_6(-14)$ with $\mathfrak{k}_o=\mathfrak{so}(10)\oplus\mathfrak{u}(1)$ (Hermitian, rank $2$) and $\mathfrak{g}_o=\mathfrak{e}_6(2)$ with $\mathfrak{k}_o=\mathfrak{su}(6)\oplus\mathfrak{su}(2)$ (not Hermitian, rank $4$).

The restricted roots and center are

\begin{table}[htp]
\begin{tabular}{cccc}
$\displaystyle\Phi(\mathfrak{g}_o,\mathfrak{a}_o)$ & Group & $|Z(G_c^{sc})\cap A|$\\
$BC_2$ & $E_6(-14)$ & 1\\
$F_4$ & $E_6(2)$ & 1
\end{tabular}
\end{table}

\vspace{24pt}

\begin{table}[htp]
\begin{tabular}{ccc}
Type & Space & Cartan type\\
&&\\
Type III & $\displaystyle\frac{E_6(-14)}{\operatorname{Spin}(10)\times U(1)}$ & EIII\\
&&\\
& $\displaystyle\frac{E_6(2)}{SU(6)\times SU(2)}$ & EII\\
&&\\
Type I & $\displaystyle\frac{E_6}{\operatorname{Spin}(10)\times U(1)}$ &  EIII\\
&&\\
& $\displaystyle\frac{E_6}{SU(6)\times SU(2)}$ & EII
\end{tabular}
\end{table}

\item[$E_7$]
$$
\xymatrix{
&&&\overset{\displaystyle 2}{\circ}\ar@{-}[d]&\mathfrak{e}_7(7)&\\
\circ\ar@{--}[r]&\osarray{2}{\circ}\ar@{-}[r] & \osarray{3}{\circ}\ar@{-}[r] & \osarray{4}{\circ}\ar@{-}[r] &\osarray{3}{\circ}\ar@{-}[r] & \osarray{2}{\circ}\ar@{-}[r] &\osarray{1}{\circ}\\
\mathfrak{g}_o:&\mathfrak{e}_7(-5)&&&\mathfrak{e}_7(-5)&\mathfrak{e}_7(-25)
}
$$

\commentx{Table missing $\Phi(\mathfrak{g}_o,\mathfrak{a}_o)$ and $|Z(G_c^{sc})\cap A|$}
\begin{table}[htp]
\begin{tabular}{cccc}
$\mathfrak{g}_o$ (noncompact) & $\mathfrak{k}_o$ & Note & Rank \\
$\mathfrak{e}_7(-5)$ & $\mathfrak{so}(12)\oplus\mathfrak{su}(2)$ && $4$ \\
$\mathfrak{e}_7(-25)$ & $\mathfrak{e}_6\oplus\mathfrak{u}(1)$ & Hermitian & $3$ \\
$\mathfrak{e}_7(7)$ & $\mathfrak{su}(8)$ && $7$ 
\end{tabular}
\end{table}

\begin{table}[htp]
\begin{tabular}{ccc}
Type & Space & Cartan type\\
&&\\
Type III & $\displaystyle\frac{E_7(-5)}{\operatorname{Spin}(12)\times SU(2)}$ & EVI\\
&&\\
& $\displaystyle\frac{E_7(-25)}{E_6\times U(1)}$ & EVII\\
&&\\
& $\displaystyle\frac{E_7(7)}{SU(8)}$ & EV\\
&&\\
Type I & $\displaystyle\frac{E_7}{\operatorname{Spin}(12)\times SU(2)}$ & EVI\\
&&\\
& $\displaystyle\frac{E_7}{E_6\times U(1)}$ & EVII\\
&&\\
& $\displaystyle\frac{E_7}{SU(8)}$ & EV\\
\end{tabular}
\end{table}

\item[$E_8$]
$$
\xymatrix{
&&&\overset{\displaystyle 2}{\circ}\ar@{-}[d]&&&&\\
&\osarray{2}{\circ}\ar@{-}[r] & \osarray{4}{\circ}\ar@{-}[r] & \osarray{6}{\circ}\ar@{-}[r] &\osarray{5}{\circ}\ar@{-}[r] & \osarray{4}{\circ}\ar@{-}[r] &\osarray{3}{\circ}\ar@{-}[r]&\osarray{2}{\circ}\ar@{--}[r]&\circ\\
\mathfrak{g}_o:&\mathfrak{e}_8(8)&&&&&\mathfrak{e}_8(-24)&
}
$$

\begin{table}[htp]
\begin{tabular}{ccccc}
$\mathfrak{g}_o$ (noncompact) & $\mathfrak{k}_o$ & Rank & $\Phi(\mathfrak{g}_o,\mathfrak{a}_o)$ & $|Z(G_c^{sc})\cap A|$\\
$\mathfrak{e}_8(8)$ & $\mathfrak{so}(16)$ & $8$ & $E_8$ & $1$ \\
$\mathfrak{e}_8(-24)$ & $\mathfrak{e}_7\oplus\mathfrak{su}(2)$ &$4$ & $F_4$ & $1$ 
\end{tabular}
\end{table}

\begin{table}[htp]
\begin{tabular}{ccc}
Type & Space & Cartan type\\
&&\\
Type III & $\displaystyle\frac{E_8(8)}{\operatorname{Spin}(16)}$ & EVIII\\
&&\\
& $\displaystyle\frac{E_8(-24)}{E_7\times SU(2)}$ & EIX\\
&&\\
Type I & $\displaystyle\frac{E_8}{\operatorname{Spin}(16)}$ & EVIII\\
&&\\
& $\displaystyle\frac{E_8}{E_7\times SU(2)}$ & EIX\\
\end{tabular}
\end{table}

\item[$F_4$]  
$$
\xymatrix{
&\osarray{2}{\circ}\ar@{-}[r] & \osarray{4}{\circ}\ar@{=}[r]\ar@{}[r]|{\langle} & \osarray{3}{\circ}\ar@{-}[r] & \osarray{2}{\circ}\ar@{--}[r]&\circ\\
\mathfrak{g}_o: &\mathfrak{f}_4(-20)&&&\mathfrak{f}_4(4)&
}
$$

\begin{table}[htp]
\begin{tabular}{ccccc}
$\mathfrak{g}_o$ (noncompact) & $\mathfrak{k}_o$ & Rank & $\Phi(\mathfrak{g}_o,\mathfrak{a}_o)$ & $|Z(G_c^{sc})\cap A|$\\
$\mathfrak{f}_4(4)$ & $\mathfrak{sp}(3)\oplus\mathfrak{su}(2)$ & $4$ & $F_4$ & $1$ \\
$\mathfrak{f}_4(-20)$ & $\mathfrak{so}(9)$ &$1$ & $BC_1$ & $1$ 
\end{tabular}
\end{table}

\begin{table}[htp]
\begin{tabular}{ccc}
Type & Space & Cartan type\\
&&\\
Type III & $\displaystyle\frac{F_4(4)}{\operatorname{Sp}(3)\times SU(2)}$ & FI\\
&&\\
& $\displaystyle\frac{F_4(-20)}{Spin(9)}$ & FII\\
&&\\
Type I & $\displaystyle\frac{F_4}{\operatorname{Sp}(3)\times SU(2)}$ & FI\\
&&\\
& $\displaystyle\frac{F_4}{Spin(9)}$ & FII\\
&&\\
\end{tabular}
\end{table}

\item[$G_2$]
$$
\xymatrix{
&\osarray{3}{\circ}\ar@3{-}[r]\ar@{}[r]|{\langle} & \osarray{2}{\circ}\ar@{--}[r]&\circ\\
\mathfrak{g}_o: &&\mathfrak{g}_2(2)&
}
$$

Here $\mathfrak{k}_o=\mathfrak{su}(2)+\mathfrak{su}(2)$, the rank is $2$, $\Phi(\mathfrak{g}_o,\mathfrak{a}_o)=G_2$, and $|Z(G^{sc}_c\cap A)|=1$.  The Type III symmetric space is $G_2(2)/SU(2)\times SU(2)$ and the Type I symmetric space is $G_2(2)/SU(2)\times SU(2)$.  Both are Cartan type $G$.
\end{description}

\subsection{Vogan diagrams with an involution}
The Cartan involution $\theta$ induces an automorphism of the Dynkin diagram, but not every such automorphism is admissible.  The set of fixed points $\mathfrak{k}$ is a reductive Lie algebra, and its Dynkin diagram must arise from that of $\mathfrak{g}$ by folding it along the involution.

\begin{description}
\item[$A_n$]
For $n$ even,
$$\xymatrix{
\circ\ar@{-}[d]\ar@{-}[r]& \circ\ar@{..}[r] &\circ\ar@{-}[r]&\circ\ar@{-}[r]&\circ\\
\circ\ar@{-}[r]& \circ\ar@{..}[r] &\circ\ar@{-}[r]&\circ\ar@{-}[r]&\circ
}$$
For $n$ odd,
$$\xymatrix{
\circ\ar@{-}[d]\ar@{-}[r]& \circ\ar@{..}[r] &\circ\ar@{-}[r]&\circ\ar@{-}[r]&\circ\\
\bullet\ar@{-}[d] &&&&\\
\circ\ar@{-}[r]& \circ\ar@{..}[r] &\circ\ar@{-}[r]&\circ\ar@{-}[r]&\circ
}$$

Then $\mathfrak{g}_o=\mathfrak{sl}(n+1,\mathbb{R})$ in the noncompact case and $\mathfrak{g}_o=\mathfrak{su}(n+1)$ in the compact case.  In both cases $\mathfrak{k}_o=\mathfrak{so}(n+1)$.  The rank is $n$.  We have $\Phi(\mathfrak{g}_o,\mathfrak{a}_o)=A_n$ and $|Z(G_c^{sc})\cap A|=n+1$.

\begin{table}[htp]
\begin{tabular}{ccc}
Type & Space & Cartan type\\
&&\\
Type III & $\displaystyle\frac{SL(n+1,\mathbb{R})}{SO(n+1)}$ & AI\\
&&\\
Type I & $\displaystyle\frac{SL(n+1,\mathbb{R})}{SO(n+1)}$ & AI\\
\end{tabular}
\end{table}

There is a remaining form when $n$ is odd and there are no noncompact (painted) roots.  
$$\xymatrix{
\circ\ar@{-}[d]\ar@{-}[r]& \circ\ar@{..}[r] &\circ\ar@{-}[r]&\circ\ar@{-}[r]&\circ\\
\circ\ar@{-}[d] &&&&\\
\circ\ar@{-}[r]& \circ\ar@{..}[r] &\circ\ar@{-}[r]&\circ\ar@{-}[r]&\circ
}$$

Then $\mathfrak{g}_o=\mathfrak{sl}\left(\frac{n+1}{2},\mathbb{H}\right)\cong \mathfrak{su}^*(n+1)$ in the noncompact case and $\mathfrak{g}_o=\mathfrak{su}(n+1)$ in the compact case.  In both cases $\mathfrak{k}_o=\mathfrak{sp}\left(\frac{n+1}{2}\right)$.  The rank is $\frac{n+1}{2}$.  We have $\Phi(\mathfrak{g}_o,\mathfrak{a}_o)=A_{(n+1)/2}$ and $|Z(G_c^{sc})\cap A|=\frac{n+1}{2}$.

\begin{table}[htp]
\begin{tabular}{ccc}
Type & Space & Cartan type\\
&&\\
Type III & $\displaystyle\frac{SU^*(n+1)}{Sp\left(\frac{n+1}{2}\right)}$ & AII\\
&&\\
Type I & $\displaystyle\frac{SU(n+1)}{Sp\left(\frac{n+1}{2}\right)}$ & AII\\
\end{tabular}
\end{table}

\item[$D_n$]

$$\xymatrix{
&&&&&\circ \\
&\circ\ar@{-}[r]& \circ\ar@{..}[r] &\circ\ar@{-}[r]&\circ\ar@/^/@{-}[ur]\ar@/_/@{-}[dr]&\\
&&&&&\circ\\
\mathfrak{g}_o:&\mathfrak{so}(2n-1,1) & \mathfrak{so}(2n-3,3)  &\cdots&\mathfrak{so}(3,2n-3)&
}$$

We have $\mathfrak{g}_o=\mathfrak{so}(p,q)$ in the noncompact case and $\mathfrak{g}_o=\mathfrak{so}(2n)$ in the compact case, and $\mathfrak{k}_o=\mathfrak{so}(p)\oplus\mathfrak{so}(q)$.  Here $p,q\ge 1$ are both odd, $p+q=2n$.  The rank is $\min(p,q)$, $\Phi(\mathfrak{g}_o,\mathfrak{a}_o)=B_{rank}$, and $|Z(G_c^{sc})\cap A|=2$.

\begin{table}[htp]
\begin{tabular}{ccc}
Type & Space & Cartan type\\
&&\\
Type III & $\displaystyle\frac{SO(p,q)}{SO(p)\times SO(q)}$ & DI\\
&&\\
Type I & $\displaystyle\frac{SO(2n)}{SO(p)\times SO(q)}$ & DI
\end{tabular}
\end{table}

\item[$E_6$]

$$\xymatrix{
&&\circ\ar@{-}[r]&\circ \\
\bullet\ar@{-}[r]&\circ\ar@/^/@{-}[ur]\ar@/_/@{-}[dr]&\\
&&\circ\ar@{-}[r]&\circ
}$$

We have $\mathfrak{g}_o=\mathfrak{e}_6(6)$ in the noncompact case and $\mathfrak{g}_o=\mathfrak{e}_6$ in the compact case, and $\mathfrak{k}_o=\mathfrak{sp}(4)$.  The rank is $6$, $\Phi(\mathfrak{g}_o,\mathfrak{a}_o)=E_6$, and $|Z(G_c^{sc})\cap A|=3$.

\begin{table}[htp]
\begin{tabular}{cccc}
Type & Space & Note &Cartan type\\
&&&\\
Type III & $\displaystyle\frac{E_6(6)}{Sp(4)}$ & & EI\\
&&&\\
Type I & $\displaystyle\frac{E_6}{Sp(4)}$ & $+1$ multiply-connected form &EI
\end{tabular}
\end{table}

The remaining type has no noncompact roots:
$$\xymatrix{
&&\circ\ar@{-}[r]&\circ \\
\circ\ar@{-}[r]&\circ\ar@/^/@{-}[ur]\ar@/_/@{-}[dr]&\\
&&\circ\ar@{-}[r]&\circ\\
}$$
Here $\mathfrak{g}_o=\mathfrak{e}_6(-26)$ in the noncompact case and $\mathfrak{g}_o=\mathfrak{e}_6$ in the compact case, and $\mathfrak{k}_o=\mathfrak{f}_4$.  The rank is $2$, $\Phi(\mathfrak{g}_o,\mathfrak{a}_o)=A_2$, and $|Z(G_c^{sc})\cap A|=3$.

\begin{table}[htp]
\begin{tabular}{cccc}
Type & Space & Note &Cartan type\\
&&&\\
Type III & $\displaystyle\frac{E_6(-26)}{F_4}$ & & EIV\\
&&&\\
Type I & $\displaystyle\frac{E_6}{F_4}$ & $+1$ multiply-connected form &EIV
\end{tabular}
\end{table}

\end{description}

\begin{proposition}[Borel and de Siebenthal theorem]\label{BoreldeSiebenthal}\index{Borel--de Sibenthal theorem}
Let $\mathfrak{g}_o$ be a real simple Lie algebra and $\mathfrak{h}_o$ a maximally compact Cartan subalgebra.  There exists a regular element such that the Vogan diagram relative to the simple root system $\Delta\subset\Phi(\mathfrak{g}_o,\mathfrak{h}_o)$ has at most one marked node.
\end{proposition}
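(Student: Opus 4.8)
The plan is to split the argument according to whether the Vogan diagram involution $\theta$ is trivial, which by the Proposition of the Hermitian chapter is equivalent to $\operatorname{rank}\mathfrak{k}_o=\operatorname{rank}\mathfrak{g}_o$ and to $s_o$ being inner, or nontrivial, in which case $s_o$ is outer and $\mathfrak{b}_o\neq 0$. In both situations a simple root $\alpha$ is marked exactly when it is imaginary (fixed by $\theta$) and noncompact, i.e. $\mathfrak{g}_\alpha\subset\mathfrak{p}$. Since $\theta$ is an automorphism and $\mathfrak{g}=\mathfrak{k}\oplus\mathfrak{p}$ is a $\mathbb{Z}/2$-grading, the compact/noncompact label is multiplicative on sums of imaginary roots, so that noncompactness is a homomorphism $\epsilon\colon \mathbb{Z}\Phi_{\mathrm{im}}\to\{\pm1\}$ on the lattice spanned by the imaginary roots $\Phi_{\mathrm{im}}$. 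First I would record this character together with the fact that the maximally compact Cartan subalgebra has no real roots, so that every root is either imaginary or is moved by $\theta$.

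For the inner case I would reduce directly to the already-proved classification of involutions in $G_{\mathrm{ad}}$. Realize $s_o=\operatorname{Ad}(g)$ with $g\in G_{\mathrm{ad}}$ of order two; then changing the regular element that defines $\Phi^+$ amounts to conjugating $g$ by $N_G(T)$, i.e. to acting by the Weyl group $W=N_G(T)/T$. By the earlier Proposition on elements of order two in $G_{\mathrm{ad}}$, after such a conjugation we may assume
$$ g=e^{\pi i\lambda_{\alpha_j}^\vee}\ \ (d_j=1)\qquad\text{or}\qquad g=e^{2\pi i\lambda_{\alpha_j}^\vee}\ \ (d_j=2) $$
for a single fundamental coweight $\lambda_{\alpha_j}^\vee$. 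For a simple root $\alpha_k$ one then computes that $\operatorname{Ad}(g)$ acts on $\mathfrak{g}_{\alpha_k}$ by $e^{\pi i\langle\alpha_k,\lambda_{\alpha_j}^\vee\rangle}=e^{\pi i\delta_{jk}}$ (respectively its square), which is $-1$ exactly for $k=j$. Hence precisely the node $\alpha_j$ is marked and all others are unmarked, so the diagram has at most one marked node; the regular element sought is any one placing $\lambda_{\alpha_j}^\vee$ in the closed fundamental alcove.

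For the outer case I would run the same alcove argument on the imaginary subsystem. The character $\epsilon$ is realized by an element $H_0\in i\mathfrak{t}_o$ with $\alpha(H_0)\in\tfrac12\mathbb{Z}$ and $\alpha$ compact iff $\alpha(H_0)\in\mathbb{Z}$. Since translating $H_0$ by the coroot lattice $Q^\vee=L_{G_{sc}}$ leaves $s_o$ unchanged, because $\exp(2\pi i\operatorname{ad}\gamma^\vee)=\operatorname{id}$ there, while applying a Weyl element only replaces the real form by an isomorphic one, I may move $H_0$ into the closed fundamental alcove of the imaginary subsystem. There $0\le\alpha_i(H_0)\le 1/d_i$ for the simple imaginary roots, so a marked node forces $\alpha_i(H_0)=\tfrac12$, and the relation $\sum_i d_i\alpha_i(H_0)=\theta_{\max}(H_0)\le 1$, combined with the choice of $H_0$ of minimal norm in its affine-Weyl orbit, pins the marking to a single vertex $\tfrac12\lambda_{\alpha_j}^\vee$ with $d_j\in\{1,2\}$, exactly as in the inner case.

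The hard part will be the bookkeeping of the outer case: one must choose a $\theta$-stable positive system for the full system $\Phi$ whose $\theta$-fixed simple roots are the ones governed by the reduced painting on $\Phi_{\mathrm{im}}$, and then verify that the minimal-norm representative in the alcove genuinely forces at most one $\alpha_i(H_0)=\tfrac12$ rather than two nodes with $d_i=d_j=1$. It is precisely this last exclusion that requires the extremal (minimal-length) choice and the structure of $P^\vee/Q^\vee$, rather than the alcove inequalities alone; everything else is routine once the grading character $\epsilon$ and the identification of conjugation with change of positive system are in place.
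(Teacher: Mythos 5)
Your inner case is essentially sound: in the equal-rank case $s_o$ fixes $\mathfrak{h}_o=i\mathfrak{t}_o$ pointwise, so $s_o=\Ad(x)$ with $x\in T$, and the earlier classification of order-two elements reduces the painting to a single node. Two caveats, though: you must invoke the standard compact-group fact that elements of $T$ conjugate in the group are already conjugate under $N_G(T)$ before you may say the conjugation is merely a change of positive system; and the branch you quote as $g=e^{2\pi i\lambda_{\alpha_j}^\vee}$ acts trivially on every simple root space (that element is the identity in $G_{\mathrm{ad}}$ since $\lambda_{\alpha_j}^\vee\in P^\vee$) --- both branches should be $e^{\pi i\lambda_{\alpha_j}^\vee}$ with $d_j\in\{1,2\}$, a typo you inherited from the paper's own statement.

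The genuine gap is the step you yourself flag as ``the hard part'': excluding a second marked node. That exclusion is the entire content of the proposition --- a two-node painting (e.g.\ the one giving $\mathfrak{su}(p,q)$ with $\alpha_i(H_0)=\alpha_j(H_0)=\tfrac12$ on the alcove boundary) really does occur, so, as you correctly note, the alcove inequalities alone cannot rule it out --- and your proposal only names candidate ingredients (``minimal norm,'' ``structure of $P^\vee/Q^\vee$'') without an argument. The paper closes it, uniformly and with no inner/outer split: inside $\mathcal{H}=\bigl(\bigcap_{\alpha\in\Phi_{im}}\ker\alpha\bigr)^\perp$ it forms the set $\Lambda$ of elements taking integer values on the imaginary simple roots and odd integer values on the noncompact ones; $\Lambda$ is nonempty because it contains $\sum_{\alpha_i\in\Delta_{nc}}\lambda_{\alpha_i}^\vee$ (this is where your grading character $\epsilon$ is encoded); one takes $H_o\in\Lambda$ of minimal norm and chooses $\Delta^+$ making $H_o$ dominant. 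If the resulting diagram had an additional marked node $\omega$, dominance forces $\langle H_o-\omega,\omega\rangle>0$, so $H_o-2\omega$ again lies in $\Lambda$ (the parity conditions on simple roots are preserved) and has strictly smaller norm, contradicting minimality. This norm-decreasing replacement is exactly the mechanism missing from your sketch; the quotient $P^\vee/Q^\vee$ plays no role. Note also that once this argument is in place your inner case is subsumed (there $\mathcal{H}$ is all of $\mathfrak{t}_o$ and every root is imaginary), so the case split and the appeal to the order-two classification buy nothing except consistency with the paper's tables.
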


\begin{proof}[Proof sketch]
Define a subspace $\mathcal{H}\subset\mathfrak{t}_o$ by
$$\mathcal{H}=\left(\bigcap_{\alpha\in\Phi_{im}(\mathfrak{g}_o,\mathfrak{h}_o)}\ker\alpha\right)^\perp.$$
Consider the subset $\Lambda$ of $H$ defined by
$$\Lambda=\left\{ H\in\mathcal{H}\mid 
\begin{matrix}
\alpha(H)\in\mathbb{Z}&\text{for all $\alpha\Delta_{im}(\mathfrak{g}_o,\mathfrak{h}_o)$}\\
\alpha(H)\in 2\mathbb{Z}+1&\text{for all $\alpha\Delta_{nc}(\mathfrak{g}_o,\mathfrak{h}_o)$}
\end{matrix} \right\}$$
Then, if there are any noncompact roots at all, then $\Lambda\not=0$.  Indeed, it contains the element
$$\sum_{\alpha_i\in\Delta_{nc}} \lambda_{\alpha_i}^\vee.$$
Let $H_o$ be an element of this set having smallest norm.  Choose a set of simple roots $\Delta^+$ for which $H_o$ is dominant.  The Vogan diagram with respect to $\Delta^+$ has at most one marked node.  Indeed, suppose that it contained an additional marked node $\omega\in\Delta^+$, then since $H_o$ is dominant with respect to $\Delta^+$,
$$\langle H_o-\omega,\omega\rangle > 0$$
and so $H_o-2\omega_i$ would be dominant with respect to $\Delta^+$, in $\Lambda$, and have smaller norm than $H_o$.  But this contradicts the minimality of $H_o$.
\end{proof}

\commentx{Left out section on Satake diagrams.}

\bibliography{symmetricspaces}{}
\bibliographystyle{plain}

\printindex

\end{document}